\newif\ifarXiv
\arXivtrue

\ifarXiv
\documentclass[12pt,reqno]{amsart}

\else
\documentclass[english, reqno]{smfart}
\usepackage[english,francais]{babel}
\fi


\usepackage{amssymb}

\ifarXiv
\usepackage[left=1.25in,right=1.25in, bottom=1.25in]{geometry}
\usepackage{times}
\usepackage{newtxmath}
\usepackage[cal=cm]{mathalpha}
\usepackage{xcolor}
\definecolor{MyGreen}{rgb}{0,.6,.2}
\definecolor{MyDarkBlue}{rgb}{.1,.1,.75}
\usepackage[unicode,colorlinks=true, citecolor=MyGreen, linkcolor=MyDarkBlue]{hyperref}
\hypersetup{bookmarksopen=true}

\else
\fi

\usepackage{verbatim}
\usepackage{graphicx}
\usepackage{enumerate}
\usepackage{enumitem}
\usepackage{multicol}
\usepackage{mathrsfs}

\usepackage{cleveref}

\usepackage{setspace}


\marginparwidth=2cm

\renewcommand{\Re}{\operatorname{Re}} 

\renewcommand{\bar}{\overline}
\renewcommand{\epsilon}{\varepsilon}

\newcommand{\KN}{\ast} 

\newcommand{\Defn}[1]{{\boldmath\it\bfseries #1}}
\newcommand{\Reals}{\mathbb{R}}
\newcommand{\Ints}{\mathbb{Z}}
\newcommand{\Nats}{\mathbb{N}}
\newcommand{\Hyp}{\mathbb{H}}
\newcommand{\opP}{\mathcal P}
\newcommand{\HypB}{\mathbb{B}}
\newcommand{\curC}{\mathscr{C}}

\def\ip<#1,#2>{\left\langle#1,#2\right\rangle}
\newcommand{\pipesep}{\,|\, }
\newcommand{\deltrange}{\mathcal{D}}

\DeclareMathOperator{\Riem}{Riem}

\DeclareMathOperator{\R}{R}

\DeclareMathOperator{\Hess}{Hess}

\DeclareMathOperator{\tr}{tr}
\DeclareMathOperator{\supp}{supp}
\DeclareMathOperator{\Id}{id}
\DeclareMathOperator{\im}{im}
\DeclareMathOperator{\Vol}{Vol}
\DeclareMathOperator{\diam}{diam}
\DeclareMathOperator{\coker}{coker}
\DeclareMathOperator{\codim}{codim}
\DeclareMathOperator{\Sym}{Sym}
\DeclareMathOperator{\GL}{GL}
\DeclareMathOperator{\GS}{GS}

\DeclareMathOperator{\cont}{c}

\renewcommand{\tilde}{\widetilde}

\newcommand{\loc}{{\rm loc}}

\newcommand{\ghyp}{{\breve g}}
\newcommand{\Rhn}{\breve{\R}_n}

\def\dual{^*}


\theoremstyle{plain}
\newtheorem{theorem}{Theorem}
\newtheorem{lemma}[theorem]{Lemma}
\newtheorem{proposition}[theorem]{Proposition}
\newtheorem{corollary}[theorem]{Corollary}
\newtheorem{definition}[theorem]{Definition}

\newtheorem{lemma-tp}[theorem]{Lemma (To Be Proved)}

\newtheorem{assumption}{Assumption}

\setcounter{assumption}{15}

\makeatletter
\newtheorem*{rep@theorem}{\rep@title}
\newcommand{\newreptheorem}[2]{%
\newenvironment{rep#1}[1]{%
 \def\rep@title{#2 \ref{##1}}%
 \begin{rep@theorem}}%
 {\end{rep@theorem}}}
\makeatother
\newreptheorem{theorem}{Theorem}

\numberwithin{theorem}{section}
\numberwithin{equation}{section}



\title[Sobolev-class asymptotically hyperbolic manifolds]{Sobolev-class asymptotically hyperbolic manifolds and the Yamabe problem}

\ifarXiv
\else
\alttitle{Vari\'et\'es asymptotiquement hyperboliques de la classe de Sobolev et le probl\`eme de Yamabe}
\fi

\author{Paul T.~Allen} 
\address{Lewis \& Clark College, Portland, OR, USA}
\email{ptallen@lclark.edu}

\author{John M.~Lee}
\address{University of Washington, Seattle, WA, USA}
\email{johnmlee@uw.edu}

\author{David Maxwell}
\address{University of Alaska, Fairbanks, AK, USA}
\email{damaxwell@alaska.edu}

\ifarXiv
\date{June 26, 2022}
\else
\date{\today}
\fi

\begin{document}

\ifarXiv
\subjclass[2020]{58J05, 53C21, 46E35}

\begin{abstract}
We consider asymptotically hyperbolic manifolds whose metrics have Sobolev-class regularity, and introduce several technical tools for studying PDEs on such manifolds. Our results 
employ two novel families of function spaces suitable 
for metrics potentially having a large amount of interior differentiability, but
whose Sobolev regularity implies only a H\"older continuous conformal structure at infinity. 
We establish Fredholm theorems for elliptic operators arising from 
metrics in these families.

To demonstrate the utility of our methods, we solve the Yamabe problem in this category.
As a special case, we show that the asymptotically hyperbolic Yamabe problem is solvable so long as the metric admits a $W^{1,p}$ conformal compactification, with $p$ greater than the dimension of the manifold.  
\end{abstract}
\maketitle
\tableofcontents

\else

\frontmatter

\begin{abstract}
We consider asymptotically hyperbolic manifolds whose metrics have Sobolev-class regularity, and introduce several technical tools for studying PDEs on such manifolds. Our results 
employ two novel families of function spaces suitable 
for metrics potentially having a large amount of interior differentiability, but
whose Sobolev regularity implies only a H\"older continuous conformal structure at infinity. 
We establish Fredholm theorems for elliptic operators arising from 
metrics in these families.

To demonstrate the utility of our methods, we solve the Yamabe problem in this category.
As a special case, we show that the asymptotically hyperbolic Yamabe problem is solvable so long as the metric admits a $W^{1,p}$ conformal compactification, with $p$ greater than the dimension of the manifold.  
\end{abstract}

\begin{altabstract}
Nous consid\'erons des vari\'et\'es asymptotiquement hyperboliques dont les m\'etriques ont une r\'egularit\'e de classe de Sobolev, et nous introduisons plusieurs outils techniques pour \'etudier les EDP sur de telles vari\'et\'es. Nos r\'esultats
emploient deux nouvelles familles d'espaces fonctionnels adapt\'es
pour les m\'etriques ayant potentiellement une grande quantit\'e de diff\'erentiabilit\'e 
int\'erieure, mais
dont la r\'egularit\'e de Sobolev n'implique qu'une structure conforme 
continue au sens de H\"older.
Nous \'etablissons les th\'eor\`emes de Fredholm pour les op\'erateurs elliptiques issus de
m\'etriques dans ces familles.

Pour d\'emontrer l'utilit\'e de nos m\'ethodes, nous r\'esolvons le probl\`eme de Yamabe dans cette cat\'egorie.
Comme cas particulier, nous montrons que le probl\`eme asymptotiquement hyperbolique de Yamabe est r\'esoluble tant que la m\'etrique admet une compactification conforme $W^{1,p}$, avec $p$ sup\'erieur \`a la dimension de la vari\'et\'e.
\end{altabstract}



\thanks{We thank Iva Stavrov for helpful conversations.
This work was supported in part by NSF grant DMS-1263544. }

\maketitle

\tableofcontents
\mainmatter
\fi

\ifarXiv
\setlength{\parskip}{1ex}
\parindent 0pt
\fi

\ifarXiv
\else
\onehalfspacing
\fi

\section{Introduction}

Given a Riemannian manifold $(M^n,g)$, the Yamabe problem seeks a metric $\tilde g$ that is conformally related to $g$ and that has constant scalar curvature.
Writing $\tilde g = \Theta^{q_n-2} g$, where $q_n = \frac{2n}{n-2}$, the requirement that the scalar curvature of $\tilde g$ be equal to the constant $\R^*$ is equivalent to requiring that the conformal factor $\Theta$ satisfy the semilinear equation
\begin{equation}
\label{intro:yamabe-eqn}
-(q_n+2)\Delta_g\Theta + \R[g]\Theta = \R^*\Theta^{q_n-1},
\end{equation}
where $\R[g]$ is the scalar curvature of $g$, and where our convention for the Laplacian is $\Delta_g = \tr_g\nabla^2$.
Note that $q = \frac{2n}{n-2}$ is the critical exponent associated to the Sobolev embedding of $W^{1,2}$ into $L^q$.
In the case where $M$ is compact without boundary, the problem has a long and famous history, with important work by Trudinger and Aubin and Schoen ultimately leading to a satisfactory resolution of the problem; see \cite{LeeParker}, and the references within.

In this paper, we consider the case where $(M^n,g)$ is asymptotically hyperbolic in the sense that $(M^n,g)$ is a complete manifold,
and that the sectional curvatures of $g$ approach $-1$ at infinity.
In this setting, we seek to conformally deform $g$ to have scalar curvature $\Rhn:= -n(n-1)$, the curvature of $n$-dimensional hyperbolic space.
We further consider the case where the metric $g$ has Sobolev-scale regularity sufficient that $g$ be continuous, but possibly is not continuously differentiable.

We are motivated by regularity results in previous analyses of the Yamabe problem in the asymptotically hyperbolic setting, along with related work in geometric analysis.
Most previous studies of the Yamabe problem in the asymptotically hyperboic setting have assumed that the metric $g$ admits a regular conformal structure at infinity \cite{AnderssonChruscielFriedrich, Mazzeo-Yamabe}.
Related studies of elliptic operators arising from asymptotically hyperbolic metrics also require the geometry to have a regular conformal structure (see e.g.~\cite{Andersson-EllipticSystems, Lee-FredholmOperators}).

Yet even if $g$ is assumed to have a smooth conformal compactification, the asymptotic behavior of generic solutions to \eqref{intro:yamabe-eqn} is incompatible with the resulting metric $\tilde g$ also admitting smooth conformal structure \cite{AnderssonChruscielFriedrich}; similar results hold for the more general Einstein constraint equations \cite{AnderssonChrusciel-Dissertationes, AHEM}.
Geometries that are smooth on the interior, but that do not admit smooth conformal compactifications also appear in the study of boundary regularity for Poincar\'e-Einstein metrics \cite{Chrusciel-Delay-Lee-Skinner}.
We emphasize that in all these examples, the limited boundary regularity of solutions is due to the algebraic structure of the relevant equations, and is thus an inherent feature of the problem, rather than an artifact of the methods.
Thus for geometric analysis problems involving  asymptotically hyperbolic geometry, the appropriate regularity setting is one for which metrics are sufficiently regular on $M$ for theory of elliptic PDE to be established, while simultaneously metrics are permitted to be less regular along the conformal boundary.

The work \cite{WAH} introduced a category of asymptotically hyperbolic metrics that can have a great deal of H\"older regularity on $M$, but which may only have Lipschitz-continuous conformal compactifications.
Fredholm theory for elliptic operators arising from these metrics was established, following the approach of \cite{Lee-FredholmOperators}, and was subsequently used to show that the Yamabe problem can be solved in this category.
In particular, if a metric is initially of this class, then there is a conformally related metric of the same class having constant scalar curvature.

In this work we address the Sobolev setting.
In particular, we introduce 
Banach spaces that allow us to define two different classes of metrics that have $H^{s,p}$ regularity on $M$, but whose conformal compactification is not better than $H^{m,p}$ on $\bar M$.
Under conditions on $s$, $m$, and $p$ that ensure the conformal structure at infinity is continuous, we show that the sectional curvatures approach $-1$ at infinity in an appropriate sense, and thus that the asymptotically hyperbolic condition is well-defined in these categories.
We establish Fredholm theory for elliptic operators arising from such metrics, and subsequently show that the Yamabe problem can be solved in these categories: every asymptotically hyperbolic metric of each regularity class is conformally related to a unique constant-scalar-curvature metric of the same class.

Our motivation for considering Sobolev-class metrics is two-fold.  First, there
is the intrinsic interest of extending theory that exists for 
other settings to
the asymptotically hyperbolic category.  
Our work provides a number of technical tools, notably the Fredholm theory for geometric elliptic operators of \S\ref{sec:fredholm} and the asymptotic decomposition of tensor fields in \S\ref{sec:asymptotics}, that are relevant to a wide range of geometric analysis problems.
Regularity of the type we consider arises
naturally in variational problems and indeed our technique for solving the 
Yamabe problem highlights this character.  We show that the Yamabe problem is
solvable so long as the metric admits a $W^{1,p}$ 
conformal compactification on $\bar M$ with $p>n$, the threshold needed to ensure
H\"older continuity. Greater regularity (in the interior, or at the boundary) is
obtained secondarily after a weak solution has been established.

Second, metrics having Sobolev-class regularity arise routinely in general relativity.
As discussed in \cite{AnderssonChruscielFriedrich, AHEM},
the Yamabe problem is a special case of the conformal method used to generate
initial data for the gravitational Cauchy problem.  It is natural to construct this
data in regularity classes compatible with the $L^2$-based
hyperbolic theory typically used in evolution problems (see \cite{Ringstrom-CauchyProblem} and references therein).  
Moreover, mixed elliptic-hyperbolic schemes for solving the Einstein equations \cite{AnderssonMoncrief-EllipticHyperbolic} involve solving elliptic problems
 arising from Sobolev-class metrics at each time step during the evolution.
An understanding of the asymptotic features of spacetimes generated with these methods
has the potential to 
shed light on open questions regarding conformal structures in general relativity \cite{Friedrich-Peeling}.

\subsection*{Main results}
In order to give a precise statement of our main results, we introduce a number of key definitions and notations, beginning with a discussion of regularity classes.

Let $\bar M$ be a smooth, connected, compact, $n$-dimensional manifold with boundary; let $M$ be the interior of $\bar M$ and denote the boundary by $\partial M$.
Let $\rho$ be a smooth {defining function} on $\bar M$, meaning $\rho\colon \bar M \to [0,\infty)$ with $\rho^{-1}(0) = \partial M$ and $d\rho\neq 0$ along $\partial M$.
A metric $g$ on $M$ is said to be \Defn{conformally compact} if $\bar g = \rho^2 g$ extends to a non-degenerate, and at least continuous, metric on $\bar M$.

Fixing a smoothly conformally compact reference metric, for $k\in \mathbb N_{\geq 0}$, $1\leq p< \infty$, $\delta\in \mathbb R$ we define weighted Sobolev spaces $W^{k,p}_\delta(M) = \rho^\delta W^{k,p}(M)$ to have norm $\|u \|_{W^{k,p}_\delta(M)} = \|\rho^{-\delta} u \|_{W^{k,p}(M)}$.
Using an equivalent norm, defined via a collection of local M\"obius pa\-ram\-et\-riz\-a\-tions, whose scaling is compatible with hyperbolic geometry (see \S\ref{sec:coords}), we generalize these Sobolev spaces to weighted Bessel potential spaces $H^{s,p}_\delta(M)$ having non-integer order of differentiability; see \S\ref{secsec:fspace-defs}.
The collection of local M\"obius parametrizations further allows us to define spaces $X^{s,p}_\delta(M)$ of functions having local differentiability measured in $H^{s,p}$, but with global behavior measured in a uniform sense akin to H\"older spaces; again, see \S\ref{secsec:fspace-defs}.
These spaces, which we refer to as \Defn{Gicquaud-Sakovich spaces}, generalize those introduced in \cite{GicquaudSakovich}, and play a key technical role in our analysis.

As discussed above, regularity classes appropriate for geometric analysis problems in the asymptotically hyperbolic setting must have admit functions with (possibly a great deal) more regularity on the interior $M$ than on the compactified manifold $\bar M$.
However, neither the standard function spaces on $M$, nor those on $\bar M$, sufficiently handle these competing demands.
For example, functions in $C^{k,\alpha}(\bar M)$ have at most $C^{k,\alpha}(M)$ interior regularity, while functions in $C^{k,\alpha}(M)$ may not have well-defined boundary values at all.

In the H\"older setting, this situation was addressed in \cite{WAH} by introducing spaces $\mathscr C^{k,\alpha;m}(M)$, where we require $k\geq m$, of functions satisfying 
\begin{equation*}
C^{k,\alpha}_m(M) 
\subset \mathscr C^{k,\alpha;m}(M)
\subset C^{m-1,1}(\bar M)\cap C^{k,\alpha}_0(M).
\end{equation*}
In \S\ref{sec:asymptotic-spaces} we define analogous Sobolev-scale spaces $\mathscr H^{s,p;m}(M)$ and $\mathscr X^{s,p;m}(M)$, which we refer to as \Defn{fortified spaces}, for $1<p<\infty$, $s\in\mathbb R$, and $m\in \mathbb N_{\geq 0}$ with $s\geq m$.
These are subspaces of weighted $H^{s,p}$ and $X^{s,p}$ spaces respectively but
enforce additional regularity at the boundary.
For functions, we have
\begin{equation*}
\begin{gathered}
H^{s,p}_{m-n/p}(M)
\subset\mathscr H^{s,p;m}(M) 
\subset H^{m,p}(\bar M)\cap H^{s,p}_{-n/p}(M),
\\
X^{s,p}_m(M)
\subset \mathscr X^{s,p;m}(M)
\subset H^{m,p}(\bar M) \cap X^{s,p}_0(M),
\end{gathered}
\end{equation*}
while similar inclusions exist for tensor fields of various ranks; see Lemma \ref{lem:basic-roman-curly} and Proposition \ref{prop:curly-to-M-bar}.  In particular,
every fortified space is a subspace of some $H^{m,p}(\bar M;E)$.

As a consequence of this boundary regularity, 
when $m\ge 1$, elements of fortified spaces possess boundary traces. 
Importantly, these spaces also admit more detailed boundary structure.
The  Asymptotic Structure Theorem \ref{thm:curly-split} shows that a tensor field $u$ belonging
to either $\mathscr H^{s,p;m}$ or $\mathscr X^{s,p;m}$ admits a decomposition $u = \tau + r$, where $\tau$ is smooth on the interior $M$ and describes the leading order asymptotic behavior, and where the remainder $r$ decays as rapidly as the (limited) boundary regularity permits.  Theorem \ref{thm:curly-split} is a consequence of the fine structure provided by a coordinate-dependent variation of Taylor's Theorem,
Theorem \ref{thm:Taylor}.

We now define the two classes of metrics we consider.
To motivate our definition, recall that the curvature operator $\Riem[g]\colon \Lambda^2(TM) \to \Lambda^2(TM)$ relates to that of $\bar g = \rho^2 g$ via the formula
\begin{equation}
\label{intro:relate-curvature-operators}
\Riem[g] = -|d\rho|^2_{\bar g} \Id
+ 2\rho\,\delta \KN \Hess_{\bar g}(\rho)^\sharp
+ \rho^2 \Riem[\bar g];
\end{equation}
here $\Id$ and $\delta$ are the identity operators on $\Lambda^2(TM)$ and $\Lambda^1(TM)$, respectively, and $\KN$ is the Kulkarni-Nomizu product; see \cite{WAH}.
Thus if $\bar g$ is sufficiently regular that the first term on the right side of \eqref{intro:relate-curvature-operators} dominates as $\rho \to 0$, then the curvatures of $g$ are determined by the value of $|d\rho|^2_{\bar g}$ along $\partial M$.
In particular, if $|d\rho|^2_{\bar g}=1$ along $\partial M$, the curvatures of $g$ approach those of hyperbolic space as $\rho\to 0$.
This motivates the following definition:
a continuously conformally compact metric $g$ is said to be \Defn{asymptotically hyperbolic}  either of class
\begin{align}
&\label{intro:H-class}
\mathscr H^{s,p;m} \text{ with }s\geq m > n/p, 1<p<\infty, \text{ or }
\\
&\label{intro:X-class}
\mathscr X^{s,p;m}\text{ with }s \geq m \geq 1\text{ and }s> n/p, 1<p<\infty,
\end{align}
if $\bar g = \rho^2 g$ is an element of $\mathscr H^{s,p;m}(M)$ or $\mathscr X^{s,p;m}(M)$, respectively, and if 
\begin{equation}
\label{intro:ah-condition}
|d\rho|_{\bar g} = 1 \text{ along }\partial M.
\end{equation}
The condition \eqref{intro:ah-condition} is well-defined due to Proposition \ref{prop:curly-to-M-bar}, which implies that under either \eqref{intro:H-class} or \eqref{intro:X-class} we have
\begin{equation}
\label{intro:alpha-exists}
\bar g = \rho^2 g \in C^{0,\alpha}(\bar M) \text{ for some }\alpha\in (0,1].
\end{equation}
The descriptor `asymptotically hyperbolic' is justified by Proposition \ref{prop:scalar-curvature-H}, which shows that the curvature operator of a metric $g$ of class \eqref{intro:H-class} or \eqref{intro:X-class} for which \eqref{intro:ah-condition} holds indeed satisfies
\begin{equation*}
\Riem[g] = -\Id + \text{ remainder},
\end{equation*}
where the remainder is the sum of a smooth term that is $O(\rho)$ at $\partial M$ and a less regular term decaying as rapidly as the boundary regularity of $\bar g$ permits.

Our main result concerning the Yamabe problem is that it can be solved in both the $\mathscr H^{s,p;m}$ and $\mathscr X^{s,p;m}$ asymptotically hyperbolic categories.
\begin{theorem}[Yamabe Theorem]
\label{thm:main-yamabe}
\strut
\begin{enumerate}
\item Suppose $g$ is an asymptotically hyperbolic metric of class \eqref{intro:H-class} with $m\leq n$.
Then there exists a unique conformal factor $\Theta \in \mathscr H^{s,p;m}(M)$ such that $\Theta\big|_{\partial M} =1$ and such that $\R[\Theta^{q_n-2}g]  = -n(n-1)$.

\item Suppose $g$ is an asymptotically hyperbolic metric of class \eqref{intro:X-class} with $m<n$.
Then there exists a unique conformal factor $\Theta \in \mathscr X^{s,p;m}(M)$ such that $\Theta\big|_{\partial M} =1$ and such that $\R[\Theta^{q_n-2}g] = -n(n-1)$.

\end{enumerate}
\end{theorem}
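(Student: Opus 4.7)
The plan is to substitute $\Theta = 1 + u$ and solve the resulting semilinear Dirichlet-type problem by the sub/super solution method, using the Fredholm theory of Section~\ref{sec:fredholm} at each linear step and then bootstrapping regularity back up to the fortified classes via the asymptotic decomposition of Theorem~\ref{thm:curly-split}. Concretely, the equation becomes
\[
P(u) := -(q_n+2)\Delta_g u + \R[g](1+u) + n(n-1)(1+u)^{q_n-1} = 0,
\]
with $u$ in the trace-zero subspace of the relevant fortified space. Proposition~\ref{prop:scalar-curvature-H} and the hypothesis $|d\rho|_{\bar g}=1$ on $\partial M$ guarantee that the residual $P(0) = \R[g] + n(n-1)$ belongs to a weighted class decaying at $\partial M$, while a direct computation shows that the model operator at $\partial M$ for the linearization $P'(0)$ is $-(q_n+2)(\Delta_g - n)$, whose indicial roots on the hyperbolic model are $-1$ and $n$. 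The Fredholm theorems of Section~\ref{sec:fredholm} therefore furnish good mapping properties for the linearization on weighted fortified spaces in the weight range $(-1,n)$.

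Uniqueness is the easiest step and I would dispatch it first via the maximum principle. Given two solutions $\Theta_1,\Theta_2$ in the prescribed class, set $\tilde g = \Theta_2^{q_n-2}g$ (which has scalar curvature $-n(n-1)$ by construction) and $\Phi = \Theta_1/\Theta_2$. The embedding of fortified spaces into continuous functions on $\bar M$ makes $\Phi$ continuous with $\Phi|_{\partial M}=1$, and a direct computation gives
\[
-(q_n+2)\Delta_{\tilde g}\Phi = n(n-1)(\Phi - \Phi^{q_n-1}).
\]
At an interior maximum $\Phi_0>1$, $-\Delta_{\tilde g}\Phi\geq 0$ forces $\Phi_0\geq \Phi_0^{q_n-1}$, hence $\Phi_0\leq 1$, contradicting $\Phi_0>1$; symmetrically at an interior minimum. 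Thus $\Phi\equiv 1$ and $\Theta_1=\Theta_2$.

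For existence I would construct explicit barriers of the form $\Theta_\pm = 1 \pm c\rho^\beta$, with $\beta\in(0,n)$ chosen below the positive indicial root and matched to the decay rate of $\R[g]+n(n-1)$, and $c>0$ large enough that the $O(\rho^\beta)$ terms of $P(\Theta_\pm)$ dominate the residual near $\partial M$. Since $-(q_n+2)\Delta_g\rho^\beta$ has leading behavior $-(q_n+2)\beta(\beta-(n-1))\rho^\beta$, the coefficient $[n - \beta(\beta-(n-1))]$ has a definite sign on $(0,n)$ and the barriers indeed straddle the equation. A standard monotone iteration, solving $(-(q_n+2)\Delta_g + K)v = F$ at each step for $K\gg 1$ (invertible on the relevant weighted spaces by Section~\ref{sec:fredholm} combined with the maximum principle), then produces a solution $\Theta$ with $\Theta_-\leq \Theta\leq \Theta_+$ and $\Theta|_{\partial M}=1$, lying a priori in a weighted Sobolev or Gicquaud--Sakovich space.

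The principal obstacle is to promote $\Theta$ from a weighted-Sobolev limit to an element of the full fortified space $\mathscr H^{s,p;m}$ or $\mathscr X^{s,p;m}$. Interior regularity is routine elliptic bootstrap using the polynomial nonlinearity and the algebra properties of the spaces in the admissible range. Boundary regularity is sharp, and here I would use Theorem~\ref{thm:curly-split} to decompose $u = \tau + r$: $\tau$ is a smooth-on-$M$ leading asymptotic piece dictated by the indicial structure and the trace of $\R[g]+n(n-1)$ at $\partial M$, while $r$ is a remainder in a deeper weighted space produced by the Fredholm inverse of $P'(0)$. The restrictions $m\leq n$ (respectively $m<n$) encode exactly the positive indicial root $s=n$: generically $u$ decays no faster than $\rho^n$, so demanding more than $n$ derivatives in $H^{m,p}(\bar M)$ would be incompatible with the model asymptotics. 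Tracking this interaction between the indicial data and the fortified-space structure is what I expect to be the bulk of the technical work.
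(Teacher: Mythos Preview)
Your overall architecture---barriers, monotone iteration, bootstrap, and uniqueness via the maximum principle---matches the paper's, and your identification of the linearization as (a multiple of) $-\Delta_g+n$ with indicial roots $-1,n$ is exactly right. But there is a genuine gap in how you propose to reach the fortified space. The paper does \emph{not} get $\Theta\in\mathscr H^{s,p;m}$ (or $\mathscr X^{s,p;m}$) by decomposing $u$ via Theorem~\ref{thm:curly-split}; that theorem takes as input an element already known to lie in the fortified space, so invoking it for this purpose is circular. The actual mechanism is the containment $H^{s,p}_{w+m-n/p}\subset\mathscr H^{s,p;m}$ (resp.\ $X^{s,p}_{w+m}\subset\mathscr X^{s,p;m}$) from Lemma~\ref{lem:basic-roman-curly}: one shows $u$ decays fast enough to land in the inner weighted space, and membership in the fortified space follows for free.

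That in turn requires a step you omit entirely: \emph{conditioning at infinity} (Proposition~\ref{prop:solve-at-infinity}). Straight from Proposition~\ref{prop:scalar-curvature-H}, the anomaly $\R[g]-\Rhn$ generically lies only in $H^{s-2,p}_{1-n/p}$ (decay $\sim\rho$), not in $H^{s-2,p}_{m-n/p}$. With only $\rho^1$ decay on the right-hand side of \eqref{eq:Yamabe-rewrite}, the decay bootstrap for $u$ stalls at weight $1-n/p$, which for $m\ge 2$ is not enough to enter $\mathscr H^{s,p;m}$. The paper first performs an explicit asymptotic conformal change---no PDE, just matching coefficients order by order in $\rho$---to force $\R[g]-\Rhn\in H^{s-2,p}_{m-n/p}$; the restriction $m\le n$ (resp.\ $m<n$) appears already here, because the coefficient $2(k+1)(n-1)(k-n)$ in the matching vanishes at $k=n$. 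Only after this conditioning can the four-stage bootstrap of Theorem~\ref{thm:yamabe-H} drive $u$ down to $H^{s,p}_{m-n/p}$. Your barriers $1\pm c\rho^\beta$ and the paper's (which instead first arrange $\R[g]\le\Rhn$ via Lemma~\ref{lem:smooth-scalar} and then take $u_-=0$, $u_+=Kv$ with $v$ a solution of a linear equation) both produce a low-regularity solution; the substantive missing ingredient is the preconditioning that makes the boundary bootstrap succeed.
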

Note the dimensional restrictions on $m$ in Theorem \ref{thm:main-yamabe}, which represent a restriction on the regularity of solutions along the conformal boundary. 
The results of \cite{AnderssonChruscielFriedrich} show that (generically) these restrictions are sharp; see also \cite{AnderssonChrusciel-Dissertationes}, \cite{WAH}.
Note also that working in the $\mathscr H$ category allows for more regularity on $\bar M$, albeit in the $L^p$, rather than pointwise, sense. 

Theorem \ref{thm:main-yamabe} combines the results of Theorems \ref{thm:yamabe-H} and \ref{thm:Yamabe-X}, which address the Yamabe problem by constructing solutions to \eqref{intro:yamabe-eqn}.
This, in turn, requires theory for elliptic differential operators arising from metrics of classes \eqref{intro:H-class} and \eqref{intro:X-class}.
Our approach follows that of \cite{Lee-FredholmOperators} and \cite{WAH}, with significant technical work required to adapt those methods to the  Sobolev regularity setting.
We restrict attention to operators satisfying the following assumption.

\setcounter{assumption}{15}
\begin{assumption}
\label{Assume-P}
We assume that $\mathcal P[g]$ is a $d^\text{th}$-order differential operator, 
mapping sections of bundle $E$ to that same bundle, 
arising from a metric $g$ of class \eqref{intro:H-class} or \eqref{intro:X-class}, 
and satisfying the following.

\begin{enumerate}
\item $\mathcal P[g]$ acts on sections of a \Defn{geometric tensor bundle $E$}, a subbundle of some tensor bundle $T^{k,l}M$ associated to an invariant subspace of the standard representation of $O(n)$ on $T^{k,l}\mathbb R^n$; see \S\ref{secsec:gtb-early}.

\item $\mathcal P[g]$ is \Defn{geometric}, meaning that $\mathcal P[g]u$ is a linear function of the components of $u$ and their derivatives up to order $d$, with the coefficient of any $k^\text{th}$ order derivative of $u$ being a universal polynomial in components of $g$ and $\det(g_{ij})^{-1/2}$ that involves at most $d-k$ derivatives of $g$.

\item $\mathcal P[g]$ is \Defn{scale natural}, meaning that in any coordinate expression of $\mathcal P[g]u$, the coefficients of the $k^\text{th}$ derivative of $u$ involve at most $d-k$ derivatives of components of $g$ distributed over all factors of that coefficient.

\item $\mathcal P[g]$ is \Defn{elliptic} in the usual sense, a condition that is well-defined due to \eqref{intro:alpha-exists}.

\item $\mathcal P[g]$ satisfies the \Defn{weak $L^2$ conditions} 
\begin{equation}
\label{intro:weak-L2}
s\geq \frac{d}{2}
\quad\text{ and }\quad
\frac{1}{p}-\frac{s}{n} \leq \frac{1}{2} - \frac{d/2}{n}.
\end{equation}

\item  $\mathcal P[g]$ is \Defn{formally self-adjoint} in the sense that for all sections $u,v\in H^{s,p}_0(M)$ we have
\begin{equation*}
\int_M\langle u, \mathcal P[g]v\rangle_{g}\,dV_g
=
\int_M\langle \mathcal P[g]u, v\rangle_{g}\,dV_g,
\end{equation*}
which is interpreted in the sense of duality pairings if $s<d$; see \S\ref{secsec:duality-early}.

\end{enumerate}
\end{assumption}
Two remarks on these assumptions are in order.
First, given a metric $g$ of class \eqref{intro:H-class} or \eqref{intro:X-class}, the multiplication properties of Sobolev spaces imply that there exists a set of {compatible Sobolev indices} $\mathcal S^{s,p}_d$ such that if $(\sigma, q)\in \mathcal S^{s,p}_d$ and if $\delta\in \mathbb R$, then a $d^\text{th}$-order  operator $\mathcal P[g]$ satisfying Assumption \ref{Assume-P} gives rise to continuous maps
\begin{equation}
\label{intro:continuous-maps}
\begin{gathered}
H^{\sigma, q}_\delta(M;E)\to H^{\sigma-d, q}_\delta(M;E),
\\
X^{\sigma, q}_\delta(M;E)\to X^{\sigma-d, q}_\delta(M;E);
\end{gathered}
\end{equation}
see Lemma \ref{lem:geometric-op-mapping} and Proposition \ref{prop:L-mapping-S}.
The scale-natural condition is necessary in order to deduce such elementary mapping properties. 
This condition excludes the operator $\Delta_g + \R[g]^2$, as some lowest order terms involve a total of four metric derivatives ($\partial^2 g\partial^2 g$, etc.), while the operator $\Delta_g + \R[g]$ is permitted under our hypotheses.

Second, the weak $L^2$ conditions of Assumption \ref{Assume-P} imply that the $\mathcal S^{s,p}_d$ is nonempty, a situation that occurs if and only if $(d/2,2)\in \mathcal S^{s,p}_d$; see Lemma \ref{lem:S-members}.
Thus all operators under consideration admit a weak $L^2$ theory, a fact that appears in the characterization of the kernel of $\mathcal P[g]$ in the Fredholm results of Theorem \ref{thm:main-fredholm} below.
Note also that for second-order operators, the weak $L^2$ conditions hold as a consequence of the regularity assumed for the metric $g$.

While the maps \eqref{intro:continuous-maps} are continuous for any $\delta \in \mathbb R$, they may not be Fredholm.
In order to determine those values of $\delta$ for which $\mathcal P[g]$ is Fredholm, we consider the corresponding operator $\breve{\mathcal P} = \mathcal P[\breve g]$ arising from the metric $\breve g$ on the Poincar\'e ball hyperbolic space $\mathbb B$, and acting on sections of corresponding bundle $\breve E$.
As detailed in \cite{Lee-FredholmOperators}, for each geometric operator satisfying Assumption \ref{Assume-P}, there exists a set of complex characteristic exponents corresponding to powers of $\rho$ whose coefficients are arbitrary in formal series solutions to $\breve{\mathcal P}u = f$.
The location of these exponents in the complex plane is symmetric about the line $\Re(z) = n/2-w$, where $w$ is the \Defn{weight} of the bundle on which $\breve{\mathcal P}$ acts, defined as the covariant rank minus the contravariant rank.
The \Defn{indicial radius} of $\breve{\mathcal P}$ is defined to be the smallest $R\geq 0$ such that $\breve{\mathcal P}$ has a characteristic exponent with real part equal to $n/2 -w + R$.
The indicial radius of many important operators can be easily calculated; see \cite[Chapter 7]{Lee-FredholmOperators}.

We use the indicial radius of $\breve{\mathcal P}$ to determine those weights $\delta$ for which $\mathcal P[g]$ is Fredholm as follows.
For any $R\geq 0$ and $1<q\leq \infty$, set
\begin{equation}
\label{intro:define-DqR}
\mathcal D_q(R) 
= \left\{ \delta\in \mathbb R\colon \left| \delta +\frac{n-1}{q} - \frac{n-1}{2}\right|<R \right\}.
\end{equation}
Extending the work of \cite[Chapter 5]{Lee-FredholmOperators}, 
Theorems \ref{thm:ball-H-iso} and \ref{thm:ball-X-iso} imply  
that operator $\breve{\mathcal P}$, having indicial radius $R$, 
provides isomorphisms
\begin{equation*}
\begin{gathered}
H^{\sigma, q}_\delta(\mathbb B;\breve E)\to H^{\sigma-d, q}_\delta(\mathbb B;\breve E) \text{ and }
\\
X^{\sigma, q}_\delta(\mathbb B;\breve E)\to X^{\sigma-d, q}_\delta(\mathbb B;\breve E),
\end{gathered}
\end{equation*}
provided $\delta \in \mathcal D_q(R)$ and provided the following isomorphism assumption holds.

\setcounter{assumption}{8}
\begin{assumption}\label{Assume-I}
We assume that 
\begin{equation*}
\breve{\mathcal P}\colon H^{d,2}_0(\mathbb B;\breve E)\to  H^{0,2}_0(\mathbb B;\breve E)
\end{equation*} 
is an isomorphism. This assumption is equivalent, see \cite{Lee-FredholmOperators}, to assuming the existence of compact $K\subset \mathbb B$ and constant C such that
\begin{equation*}
\| u \|_{L^2(\mathbb B;\breve E)} \leq C \|\breve{\mathcal P}u \|_{L^2(\mathbb B;\breve E)}
\qquad\text{ for all }u\in C^\infty_\text{cpct}(\mathbb H\setminus K;\breve E).
\end{equation*}
\end{assumption}
\noindent
Assumption \ref{Assume-I} implies that the indicial radius $R$ of $\breve{\mathcal P}$ is positive; see \cite{Lee-FredholmOperators}.

The hypothesis that $g$ is asymptotically hyperbolic implies that $\mathcal P[g]$ can be closely approximated by $\breve{\mathcal P}$ in a small neighborhood of each point on $\partial M$.
These approximations, together with the isomorphism properties of $\breve{\mathcal P}$, allow us to adapt the parametrix construction in \cite{Lee-FredholmOperators} to operators arising from metrics of class \eqref{intro:H-class} and \eqref{intro:X-class}.
The result is the following Fredholm theorem, which combines the results of Theorems \ref{thm:H-fredholm} and \ref{thm:X-fredholm}, and is accompanied by structure theorems for $\mathcal P[g]$; see \S\ref{secsec:structure-theorems}.

\begin{theorem}[Fredholm Theorem]
\label{thm:main-fredholm}
Suppose $g$ is an asymptotically hyperbolic of class \eqref{intro:H-class} or \eqref{intro:X-class}, and that $\mathcal P = \mathcal P[g]$ is a $d^\text{th}$ order operator satisfing Assumptions \ref{Assume-P} and \ref{Assume-I}.
Let $R$ be the indicial radius of $\mathcal P[\breve g]$.

\begin{enumerate}
\item If $(\sigma, q)\in \mathcal S^{s,p}_d$ and $\delta\in \mathcal D_q(R)$ then $\mathcal P\colon H^{\sigma,q}_\delta(M;E) \to H^{\sigma-d,q}_\delta(M;E)$ is Fredholm of index zero.

\item If $(\sigma, q)\in \mathcal S^{s,p}_d$ and $\delta\in \mathcal D_\infty(R)$ then $\mathcal P\colon X^{\sigma,q}_\delta(M;E) \to X^{\sigma-d,q}_\delta(M;E)$ is Fredholm of index zero.

\end{enumerate}
In both cases, the kernel of $\mathcal P$ is the kernel of $\mathcal P\colon H^{d/2,2}_0(M;E)\to H^{-d/2,2}_0(M;E)$.
\end{theorem}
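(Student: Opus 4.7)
The plan is to mirror the parametrix-based scheme of \cite{Lee-FredholmOperators} and \cite{WAH}, adapting each step to the present Sobolev regularity. The essential inputs are already in hand: the model isomorphisms on $\mathbb B$ provided by Theorems \ref{thm:ball-H-iso} and \ref{thm:ball-X-iso}; the continuous mapping of $\mathcal P[g]$ on the entire scale \eqref{intro:continuous-maps}; and the Asymptotic Structure Theorem \ref{thm:curly-split}, which separates the smooth leading-order behavior of the metric from a remainder that decays as rapidly as the fortified index $m$ permits.

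The first step is to construct a parametrix. Cover a collar of $\partial M$ by M\"obius parametrizations $\Phi_\alpha\colon U\subset\mathbb B\to M$ whose hyperbolic scaling is uniform. Applying Theorem \ref{thm:curly-split} to $\bar g$, write $\bar g = \bar g_0 + \bar r$ with $\bar g_0$ smooth on $\bar M$ satisfying $|d\rho|_{\bar g_0}=1$ on $\partial M$, so that on each chart
\begin{equation*}
\Phi_\alpha^*\mathcal P[g] = \breve{\mathcal P} + A_\alpha + E_\alpha,
\end{equation*}
where $A_\alpha$ captures the coefficient difference between $\breve g$ and the pullback of $\bar g_0$, while $E_\alpha$ captures the contribution of $\bar r$. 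The condition \eqref{intro:ah-condition} forces the principal coefficients of $A_\alpha$ to vanish on $\partial\mathbb B\cap U$, while Theorem \ref{thm:curly-split} forces $E_\alpha$ to be small in operator norm on the weighted Sobolev and Gicquaud--Sakovich spaces on $\mathbb B$ once the chart is close enough to the boundary. For $\delta\in\mathcal D_q(R)$ (respectively $\mathcal D_\infty(R)$) the isomorphism theorems for $\breve{\mathcal P}$ then combine with a Neumann series argument to yield a local right inverse on each chart. Patching these with a locally finite partition of unity subordinate to the M\"obius cover, and splicing in a standard elliptic parametrix over the compact complement of the collar in $\bar M$, produces a global parametrix $Q$ for which $\mathcal P Q-I$ and $Q\mathcal P-I$ gain differentiability and weighted decay and are therefore compact by the known compact embeddings among the weighted scales.

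Existence of such a parametrix delivers semi-Fredholmness on both scales. The weak $L^2$ condition \eqref{intro:weak-L2} places $(d/2,2)$ in $\mathcal S^{s,p}_d$ and $0$ in $\mathcal D_2(R)$, so $\mathcal P$ is itself Fredholm on $H^{d/2,2}_0$, and the formal self-adjointness in Assumption \ref{Assume-P}(f) identifies cokernel with kernel to deliver index zero at this baseline. To propagate to a general $(\sigma, q, \delta)\in\mathcal S^{s,p}_d\times\mathcal D_q(R)$ (or its $X$-analogue) I would deform $(\sigma, q, \delta)$ continuously within this domain to $(d/2, 2, 0)$; the parametrix estimates are uniform along the deformation, so continuity of the Fredholm index on connected components yields index zero throughout. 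The kernel characterization then follows from elliptic regularity in M\"obius charts combined with the embedding relations among the weighted spaces: a kernel element at any permitted $(\sigma, q, \delta)$ is forced by its indicial-root-governed asymptotic behavior to lie in $H^{d/2,2}_0(M;E)$, with the reverse inclusion immediate from elliptic regularity.

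The principal obstacle will be establishing the smallness of $A_\alpha$ and $E_\alpha$ uniformly across all compatible indices $(\sigma, q)\in\mathcal S^{s,p}_d$ and both function space scales. Because the M\"obius-chart coefficients of $\mathcal P[g]$ are only as regular as the fortified index $m$ permits, the decomposition of Theorem \ref{thm:curly-split} is indispensable, and the resulting operator-norm estimates must survive the combination of local Sobolev multiplication with the weight shifts implicit in the definition \eqref{intro:define-DqR} of $\mathcal D_q(R)$. The $\mathscr X^{s,p;m}$ case is the most delicate, since the Gicquaud--Sakovich topology mixes local Sobolev differentiability with uniform global control; both the Neumann iteration and the compactness of the parametrix remainder have to be carried out in this hybrid topology, each step mirroring the $X$-space theory underlying Theorem \ref{thm:ball-X-iso}.
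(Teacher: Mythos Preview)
Your parametrix strategy is broadly correct and close to the paper's, though the paper does not actually invoke Theorem~\ref{thm:curly-split} here: the operator comparison near $\partial M$ (Lemmas~\ref{lem:r_rescale} and~\ref{lem:local-compare-operators}) uses only the H\"older continuity of $\bar g$ on $\bar M$ from Proposition~\ref{prop:curly-to-M-bar}, which gives the $r^\alpha$ smallness of $\mathcal P_j - \breve{\mathcal P}_j$ directly. The full asymptotic decomposition is reserved for the curvature analysis in \S\ref{sec:ah-metrics} and the scalar curvature conditioning in \S\ref{sec:yamabe}, not for building the parametrix.

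The substantive divergence, and the genuine gap, is in how you get index zero. The paper does \emph{not} deform $(\sigma,q,\delta)$ to $(d/2,2,0)$. For the $H$-scale (Theorem~\ref{thm:H-fredholm}) it instead characterizes the image of $\mathcal P$ directly as the annihilator of $Z$, using the duality isomorphism of Proposition~\ref{prop:dualH-early} together with formal self-adjointness; this yields $\dim\ker = \dim\coker = \dim Z$ at each fixed $(\sigma,q,\delta)$ without any homotopy. Your deformation argument would require a continuous family of Fredholm operators between varying Banach spaces, which for the $H$-scale could perhaps be salvaged via the interpolation identification of Theorem~\ref{thm:H-interp-early}, but for the $X$-scale there is no adequate interpolation theory (Lemma~\ref{lem:Xinterp-early} is one-sided), and the paper explicitly confronts a related obstacle. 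When $s\ge d$ the paper bootstraps from the $H$-theorem via Proposition~\ref{prop:P-regularity-X}; when $s<d$ even this bootstrap fails, and the paper instead approximates $g$ by smooth asymptotically hyperbolic metrics $g_n$ in $X^{s,p}_0$ (Corollary~\ref{cor:metric-approx}), so that $\mathcal P[g_n]\to\mathcal P[g]$ as maps on a \emph{fixed} pair of $X$-spaces, and then invokes local constancy of the semi-Fredholm index. Your proposal does not identify this low-regularity $X$-case difficulty, and the parameter-deformation route you sketch would not resolve it.
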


\subsection*{Organization}
This paper is organized in order to efficiently escort the reader to our main results concerning the Yamabe problem in the Sobolev category, which are contained in \S\ref{sec:yamabe}. Thus a number of more technical theorems are deferred to subsequent sections and the appendices.

In \S\ref{sec:coords} we introduce a preferred collection of coordinate systems, which are used in \S\ref{sec:elementary-spaces} to define H\"older and Sobolev spaces on $M$, along with the ``local Sobolev spaces" of Gicquaud-Sakovich \cite{GicquaudSakovich} that play a key technical role in our analysis.
Section \ref{sec:asymptotic-spaces} contains the definition of
the fortified  spaces.
These spaces are used in \S\ref{sec:ah-metrics} to define, and establish basic properties of, asymptotically hyperbolic metrics in the Sobolev setting.

Section \ref{sec:yamabe} is the heart of the paper, containing our main results concerning the Yamabe problem in the Sobolev category.

The results of \S\ref{sec:yamabe} require technical results that are deferred to subsequent sections: a detailed theory for the asymptotic structure of tensor fields appears in \S\ref{sec:asymptotics},  properties of elliptic operators arising from Sobolev-scale metrics are established in \S\ref{sec:differential-ops}, and in \S\ref{sec:fredholm} we establish Fredholm theory for such operators.
Finally, the  appendices contain additional technical results.

\ifarXiv
\subsection*{Acknowledgements}  
We thank Iva Stavrov for helpful conversations.
This work was supported in part by NSF grant DMS-1263544. 
\fi

\section{Coordinate systems}
\label{sec:coords}

In this section we establish notation and define
a number of coordinate systems required to define
the function spaces that follow.
The constructions are, for the most part,
a specialization of those of \cite[Chapter 2]{Lee-FredholmOperators}, with certain choices imposed to simplify later arguments.

\subsection{Hyperbolic space}
Let $\mathbb H$ be the $n$-dimensional half-space model of hyperbolic space with coordinates
$$
(x,y) = (x^1,\dots, x^{n-1}, y=x^n)\in \mathbb R^{n-1}\times (0,\infty)
$$
and metric
\begin{equation*}
\breve g = y^{-2}\left( (dx^1)^2 + \dots + (dx^{n-1})^2 + dy^2\right).
\end{equation*}

We use two prototypical classes of open sets in $\mathbb H$.
The first are the geodesic balls $B^{\mathbb{H}}_r$ of radius $r>0$
centered at $(x,y)=(0,1)$; it will be convenient at times to use the containment estimate
\begin{equation}\label{eq:ball-containment}
B^{\mathbb{H}}_r\subset \{(x,y)\colon |x|<\sinh(r), e^{-r}<y<e^r\}
\end{equation}
where $|\cdot|$ denotes the Euclidean norm.

The second class of open sets consists of neighborhoods near
infinity.  Given
 $r>0$ we define
\[
Y_r = \{ (x,y) \in \mathbb H \colon |x|<r, 0<y<r\}.
\]

\subsection{Neighborhoods near infinity in \texorpdfstring{$M$}{M}}
For $r>0$ a \Defn{collar neighborhood} is the region
\[
A_r = \{ p\in M \colon \rho(p) < r\},
\]
and we set $\bar A_r = \{ p\in \bar M \colon \rho(p) \le r\}$.
Using compactness of the boundary we can find $r_0>0$ and a smooth
projection
$\Pi_\partial: \bar A_{r_0}\to \partial M$ such that
\[
\Pi_\partial\times\rho: \bar A_{r_0} \to \partial M \times [0,r_0]
\]
is a diffeomorphism.
Fixing this choice of $\Pi_\partial$, we will
sometimes tacitly identify $\bar A_{r_0}$ with
$\partial M\times [0,r_0]$.

Cover the boundary $\partial M$ with finitely many
smooth
\Defn{boundary charts}
$(\Omega\subset \partial M,\theta)$ where each map $\theta$
can be extended smoothly
to a neighborhood of $\bar \Omega$.
Using compactness of the boundary,
and shrinking $r_0$ if needed, we can assume that
for each $\hat p\in \partial M$ we can find
a boundary chart $(\Omega_{\hat p},\theta_{\hat p})$
and a neighborhood $U(\hat p)\subset \Omega_{\hat p}$
such that
\[
\theta_{\hat p}(U(\hat p)) = \{x\in \Reals^{n-1}: |x-\theta_{\hat p}(\hat p)|<r_0\}.
\]
$\theta_{\hat p}(\hat p)=0$.
Let $Z_{r_0}(\hat p) = \Pi_\partial^{-1}(U(\hat p)) \cap A_{r_0}$
which is conveniently identified with $U(\hat p)\times (0,r_0)$.
We define smooth \Defn{coordinates near infinity}
$\Theta_{\hat p} = (\theta_{\hat p}\circ \Pi_\partial - \theta_{\hat p}(\hat p),\rho)$, obtaining diffeomorphisms
\begin{equation*}
\Theta_{\hat p}:  Z_{r_0}(\hat p) \to Y_{r_0}.
\end{equation*}
Extending our earlier notation, for $r\in (0,r_0]$ we have the \Defn{neighborhood near infinity}
\begin{equation*}
Z_r(\hat p) = \Theta^{-1}_{\hat p}(Y_r).
\end{equation*}
\begin{figure}
\centering
\includegraphics[width=.9\textwidth]{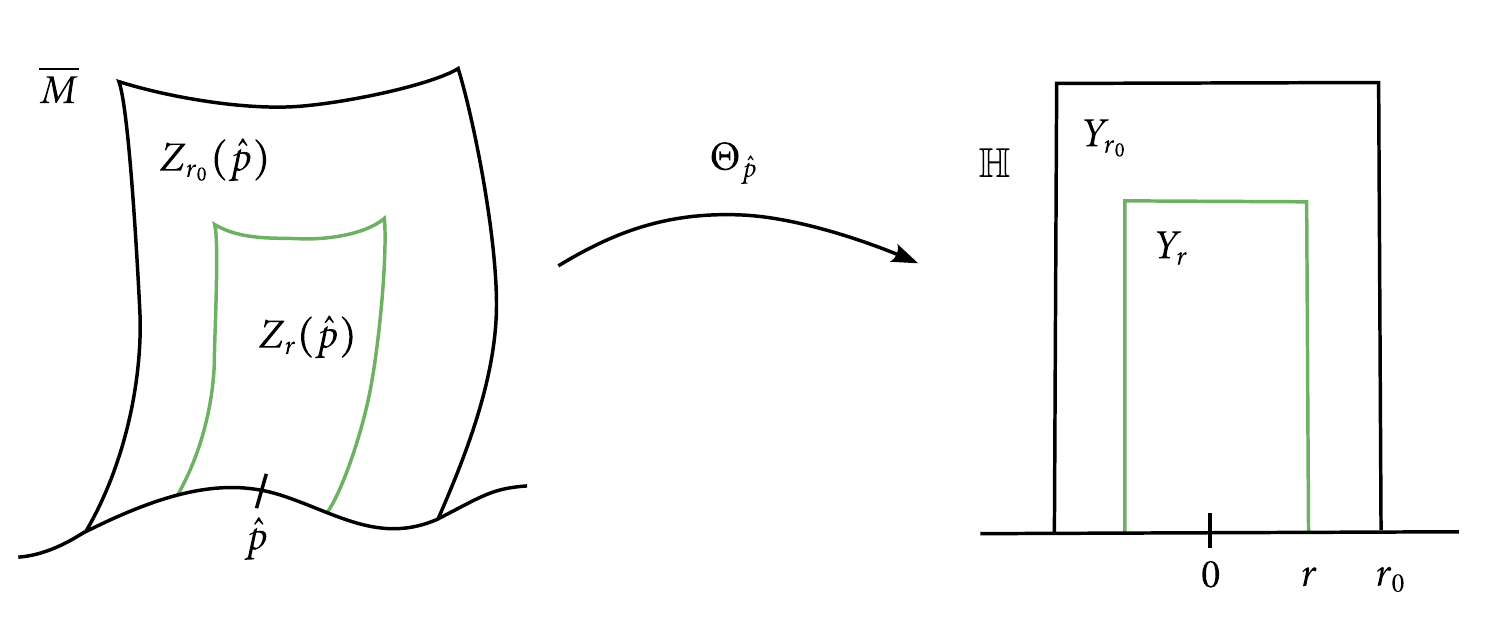}
\caption{Neighborhoods at infinity.}\label{fig:nbhd-at-inf}
\end{figure}

\subsection{M\"obius parametrizations}

For each $p_0\in A_{r_0/8}$, a \Defn{M\"obius parametrization}
centered at $p_0$ is a map $\Phi:B_2^{\mathbb H}\to M$ of the following
form.  First, pick some $\hat p\in \partial M$ such that
$p_0$ lies in $ Z_{r_0/8}(\hat p)$, and let $\Theta=\Theta_{\hat p}$
be the associated coordinates near infinity.
Writing
$\Theta(p_0)=(\theta_0,\rho_0)$ we observe
\begin{equation}\label{eq:ball-containment-gen}
(\theta_0,0) + \rho_0 B_2^\mathbb{H} \subset Y_{r_0};
\end{equation}
demonstrating this containment is an easy exercise using
\eqref{eq:ball-containment} and the
identities $e^2<8$ and $\sinh(2) < 4$.
Relying on the containment \eqref{eq:ball-containment-gen}
we define the M\"obius parametrization
$\Phi :B_2^\Hyp\to M$ by
\begin{equation}
\label{define-Mobius}
(\Theta\circ \Phi)(x,y) = (\theta_0+\rho_0 x, \rho_0 y).
\end{equation}

The restricted images $\Phi(B_{1/2}^\mathbb{H})$ of the M\"obius parametrizations cover $A_{r_0/8}$,
and we choose finitely many additional maps $\Phi:B^\Hyp_2\to M$,
which we will also call M\"obius parametrizations, such
that their restricted images $\Phi(B_{1/2}^\mathbb{H})$ cover $M\setminus A_{r_0/8}$.

For the purposes of defining function spaces it is useful
to choose a preferred countable collection of M\"obius parametrizations.
From the construction of \cite{Lee-FredholmOperators}
Lemma 2.2 we select a countable collection of
points $p_i$ in $A_{r_0/8}$ and corresponding M\"obius parametrizations
$\Phi_i$ such that
\begin{enumerate}
\item the sets $\Phi_i(B_{1/2}^\mathbb{H})$ cover $M$,
\item the sets $\Phi_i(B_{2}^\mathbb{H})$ are uniformly locally finite: there
is an integer $N$ independent of $i$
such that $\Phi_i(B_{2}^\mathbb{H})\cap \Phi_j(B_{2}^\mathbb{H})=\emptyset$ for all but at most
$N$ indices $j$,
\item \label{part:diam-bound-to-I-bound}
more generally, given a metric $\check g$ on $M$
with smooth compactification $\rho^2 \check g$ on $\bar M$,
for every $K>0$ there exists $N_K$
such that if $\diam_{\check g}(U)\le K$
then $\Phi_j(B_2^\mathbb{H})\cap U=\emptyset$ for all but
at most $N_K$ indices $j$.
\end{enumerate}

\section{Elementary function spaces}
\label{sec:elementary-spaces}

A \Defn{tensor bundle} over $\bar M$ is a bundle
$T^{k,l}\bar M$ of tensors with covariant rank $k$
and contravariant rank $l$.
In this section we gather together facts concerning
function spaces of sections of a tensor bundle $E$ over $\bar M$ and its induced
bundle, which we also denote by $E$, over $M$.
In addition to tensor bundles, we also work with `geometric tensor bundles',
which are certain subbundles of tensor bundles
and potentially depend on a metric, and its regularity,
for their definition.  So as to avoid undue initial complication from these
regularity concerns, unless otherwise noted we let $E$
denote a tensor bundle over $\bar M$ throughout this section.
Section \ref{secsec:gtb-early} summarizes the alternations needed to adapt
the results below to geometric tensor bundles.

\subsection{Definitions}\label{secsec:fspace-defs}

Let $\hat g$ be a smooth reference metric on $\bar M$ and
let $\check g= \rho^{-2}\hat g$, which
is a geodesically complete metric on $M$ with sectional curvatures approaching $-|d\rho|^2_{\hat g}$ as $\rho \to 0$; see  \eqref{relate-curvature-operators} below.
The notation $\check g$ is meant to invoke our notation $\breve g$
for the hyperbolic metric.

For $k\in \Nats_{\ge 0}$ and $\alpha\in [0,1]$ we have
the usual Banach space $C^{k,\alpha}(\bar M;E)$
of $k$-times continuously differentiable sections of $E$
with $k^{\rm th}$ derivatives $\hat\nabla^k u$
that are H\"older continuous of order $\alpha$ with
respect to the reference metric $\hat g$.

Given $\delta\in \Reals$
the (interior) weighted H\"older space
$C^{k,\alpha}_\delta(M;E)$ consists of the continuous sections
of $E$ over $M$ satisfying
\begin{equation}\label{eq:holder-def}
\|u\|_{C^{k,\alpha}_\delta(M;E)} =
\sup_i \rho_i^{-\delta} \|\Phi_i^*u \|_{C^{k,\alpha}(B^\Hyp_1)}<\infty,
\end{equation}
where $\rho_i = \rho(p_i)$ and where the $C^{k,\alpha}$ norm on $B^\mathbb{H}_r$ is computed with respect to the Euclidean metric.  When $\alpha=0$
we sometimes omit it from the notation.

For simplicity, for product
bundles $M\times F$ we replace $E$ in the notation 
with the common fiber, e.g. $C^{k,\alpha}_\delta(M;\Reals)$.
It is often clear from context what bundle $E$ is under consideration,
and we drop this part of the notation, especially in the bodies of proofs,
whenever it is convenient to do so to minimize notational clutter.
Indeed, strictly speaking, equation \eqref{eq:holder-def} should
have involved the quantities
$\|\Phi_i^*u \|_{C^{k,\alpha}(B^\Hyp_1;\Phi_i^* E)}$.  

We write $C^{\infty}_{\rm loc}(M;E)$ for the smooth sections of $E$ over $M$
and $C^\infty_{\rm cpct}(M;E)$ consists of the compactly supported
smooth sections. 

For $k\in\Nats_{\ge 0}$ and $1\le p<\infty$
the Sobolev space $W^{k,p}(\bar M)$
consists of distributional sections $u$ of $E$ satisfying
\begin{equation*}
\| u\|_{W^{k,p}(\bar M;E)}:= \left(\sum_{j=0}^k\int_{\bar M} |
\hat\nabla{}^j u |_{\hat g}^p\,dV_{\hat g} \right)^{1/p}<\infty.
\end{equation*}

The interior Sobolev spaces $W^{k,p}(M;E)$ associated with $\check g$
are defined in terms of the analogous norm
\begin{equation}\label{eq:Wkp}
\| u\|_{W^{k,p}(M;E)}:= \left(\sum_{j=0}^k\int_{ M} |
\check\nabla{}^j u |_{\check g}^p\,dV_{\check g} \right)^{1/p}.
\end{equation}
Given $\delta\in\Reals$, we define the weighted Sobolev space
\begin{equation}\label{eq:Wkpdelta}
W^{k,p}_{\delta}(M;E) = \rho^\delta W^{k,p}(M;E)
\end{equation}
with norm $\|u\|_{W^{k,p}_{\delta}(M;E)} = \|\rho^{-\delta} u\|_{W^{k,p}(M;E)}$. We write $L^p_\delta(M;E)$ instead of $W^{0,p}_{\delta}(M;E)$.
If $U$ is an open subset of $M$, the $W^{k,p}_\delta(U)$ norm
is defined by replacing $M$ with $U$ in expression \eqref{eq:Wkp}.

The weighted Sobolev spaces can be generalized to non-integer order of differentiability as
follows.  Recall that for $s\in \mathbb R$ and $1<p<\infty$ the Bessel potential space $H^{s,p}(\mathbb R^n)$ is defined by
\begin{equation*}
H^{s,p}(\mathbb R^n) = \{ u \in S^\prime(\mathbb R^n) \colon \| \mathcal F^{-1}\Lambda^s \mathcal F u\|_{L^p(\mathbb R^n)} <\infty \};
\end{equation*}{}
here $S^\prime(\mathbb R^n)$ is the dual space of the Schwartz functions, $\Lambda(\xi) = (1+|\xi|^2)^{1/2}$, and $\mathcal F$ is the Fourier transform.  We use the same notation for spaces of
tensor-valued distributions over $\Reals^n$ without reference
to the specific bundle, in which case the norm is the sum of
the norms of the tensor components.  If  $U\subset \Reals^n$ is an open set, $H^{s,p}(U)$
consists of the restrictions of distributions in $H^{s,p}(\Reals^n)$
to $U$ and is endowed with the quotient norm.

Let $\chi\ge 0$ be a smooth function on $\Reals^n$  with support contained in
$B_2^\Hyp$ that equals $1$ on $B_{1/2}^\Hyp$.
For $s\in \mathbb R$, $1<p<\infty$, and $\delta\in \mathbb R$ we define the
\Defn{weighted Bessel potential space} $H^{s,p}_\delta(M;E)$
in terms of the preferred M\"obius parametrizations from \S\ref{sec:coords}
to be those distributional fields $u$ such that
\begin{equation}\label{eq:Hspdelta}
\| u \| _{H^{s,p}_\delta(M;E)} := \left( \sum_i \rho_i^{-\delta p} \| \chi \Phi_i^*u\|_{H^{s,p}(\mathbb R^n)}^p\right)^{1/p} <\infty;
\end{equation}
here $\rho_i = \rho(\Phi_i(0,1))$, the value of $\rho$ at the point determining the parametrization $\Phi_i$.
As seen below, these are generalizations of the weighted Sobolev spaces $W^{k,p}_\delta(M)$; furthermore, they are the asymptotically hyperbolic analogues of the
 weighted spaces $h^{s}_{p,\mu}(\Reals^n)$ from \cite{Triebel:1976hk}, which are used in
asymptotically Euclidean contexts.

Gicquaud and Sakovich
\cite{GicquaudSakovich} introduced a class of novel function spaces
with local differentiability measured in Sobolev scales but with
global behavior measured in a uniform sense akin to weighted H\"older norms.
Generalizing their work to real scales of differentiability,
for $s,\delta\in\Reals$ and $1<p<\infty$
we define the \Defn{Gicquaud-Sakovich} space
$X^{s,p}_\delta(M;E)$ to be the set of distributions such that the norm
\begin{equation}
\label{GS-int-norm}
\| u\|_{X^{s,p}_\delta(M;E)} = \sup_i \rho_i^{-\delta} \|\chi \Phi_i^*u \|_{H^{s,p}_{\delta}(\Reals^n)}
\end{equation}
is finite.

We obtain spaces of smooth tensor fields by intersecting the corresponding spaces with finite regularity.
Thus
\begin{gather*}
H^{\infty, p}_\delta(M;E) = \bigcap_{s\in \mathbb R} H^{s,p}_\delta(M;E),
\\
C^\infty_\delta(M;E) = \bigcap_{k\in \mathbb N \atop \alpha\in [0,1]} C^{k,p}_\delta(M;E).
\end{gather*}
Just as the H\"older parameter $\alpha$ plays no role in the latter intersection, the Sobolev embedding of Proposition \ref{prop:SobolevEmbedding} below shows that $p$ is not relevant to the Gicquaud-Sakovich case; thus we define
\begin{equation*}
X^\infty_\delta(M;E) = \bigcap_{s\in \mathbb R} X^{s,p}_\delta(M;E)
\end{equation*}
for any $1<p<\infty$.
Note that the Sobolev embedding, together with Lemma \ref{lem:basic-inclusions}, implies that $X^\infty_\delta(M;E) = C^\infty _\delta(M;E)$.
However, we use the notation $X^\infty_\delta(M;E)$ to emphasize the space's origin.

From the form of equations \eqref{eq:Hspdelta} and \eqref{GS-int-norm} it is clear that
$H^{s,p}_\delta(M;E)$ and $X^{s,p}_\delta(M;E)$
are each part of a spectrum of spaces with the former involving an $\ell^p$ norm and
the latter an $\ell^\infty$ norm.  One could presumably work with the full range of spaces
replacing these with general $\ell^q$ norms and still obtain good elliptic theory.  But special
arguments would presumably be needed in $\ell^\infty$ case regardless, so for simplicity
we work only with these two special cases.

We have the following norm equivalences.  The first two allow us to elide cutoff functions
in the norm definitions, and the second two show that the new weighted spaces reduce to
known spaces when $s=k\in\Nats_{\ge 0}$.
\begin{lemma}\label{lem:many-norms}
Let $s,\delta\in\Reals$ and $1<p<\infty$.
\begin{enumerate}
\item \label{part:noHcutoff} For all $1/2<r<2$,
\[
\displaystyle \|u\|_{H^{s,p}_{\delta}(M;E)} \sim \left[ \sum_i \rho_i^{-\delta p}\|\Phi_i^* u\|_{H^{s,p}(B_r^\Hyp)}\right]^\frac{1}{q}.
\]

\item\label{part:noXcutoff} For all $1/2<r<2$,
\[
\displaystyle \|u\|_{X^{s,p}_{\delta}(M;E)} \sim \sup_i \rho_i^{-\delta}\|\Phi_i^* u\|_{H^{s,p}(B_r^\Hyp)}.
\]

\item\label{part:Hk-is-Wk} For $k\in\Nats_{\ge 0}$
\[
\|u\|_{H^{k,p}_{\delta}(M;E)} \sim \|u\|_{W^{k,p}_{\delta}(M;E)}.
\]

\item\label{part:Xk-is-Xk} Taking a supremum over all M\"obius parametrizations,
\[
\displaystyle \|u\|_{X^{s,p}_{\delta}(M;E)} \sim \sup_{\Phi} \|\Phi^* u\|_{H^{s,p}(B_r^\Hyp)}.
\]
\end{enumerate}
\end{lemma}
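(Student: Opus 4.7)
The plan is to reduce all four parts to a single bounded-geometry principle for the preferred M\"obius cover: the weights $\rho_i$ are uniformly comparable between overlapping parametrizations, transition maps are uniformly smooth, and the cover is uniformly locally finite. I would begin by proving the foundational comparability lemma underlying this principle: if $\Phi$ and $\Phi'$ are any two M\"obius parametrizations of the form \eqref{define-Mobius} whose images overlap, then $\rho(\Phi(0,1)) \sim \rho(\Phi'(0,1))$ uniformly, and the transition $\Phi'{}^{-1}\circ \Phi$ is a smooth diffeomorphism between open subsets of $\Reals^n$ with derivatives of every order bounded uniformly in the pair. This follows from the explicit rescaling-and-translation structure of \eqref{define-Mobius} together with smoothness of the finitely many underlying boundary charts $\theta_{\hat p}$. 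Combined with the standard boundedness of pullback by uniformly smooth diffeomorphisms and of multiplication by smooth compactly supported functions on $H^{s,p}(\Reals^n)$, this allows us to transfer Bessel potential norms between overlapping patches at uniform cost.

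For parts (a) and (b), I would fix a smooth partition of unity $\{\eta_j\}$ on $M$ subordinate to $\{\Phi_j(B_{1/2}^\Hyp)\}$. The inequality $\|u\|_{H^{s,p}_\delta(M;E)} \le C \bigl(\sum_i \rho_i^{-\delta p}\|\Phi_i^*u\|_{H^{s,p}(B_r^\Hyp)}^p\bigr)^{1/p}$ comes from writing $\chi \Phi_i^* u = \sum_{j \in J_i} \chi\,(\eta_j \circ \Phi_i) \Phi_i^* u$ as a uniformly finite sum by local finiteness, transferring the $j$-th summand to the $\Phi_j$-patch via the comparability lemma (where its support sits inside $B_{1/2}^\Hyp$), using $\rho_i \sim \rho_j$ to reconcile weights, and finally summing over $i$ and swapping the order of summation via local finiteness. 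The reverse inequality, which is nontrivial when $r > 1/2$, is proved symmetrically: split $\Phi_i^* u|_{B_r^\Hyp}$ using the same partition of unity, transfer each piece to a $\Phi_j$-patch on which $\chi \equiv 1$ over the relevant support (so that the $H^{s,p}(\Reals^n)$ norm of the piece is controlled by $\|\chi \Phi_j^* u\|_{H^{s,p}(\Reals^n)}$), and sum. Substituting suprema for $\ell^p$ sums throughout yields the analogous equivalence for (b).

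For part (c), combining (a) with $s=k$ and the classical identification $H^{k,p}(\Reals^n) = W^{k,p}(\Reals^n)$ for $1<p<\infty$ reduces the claim to comparing $\sum_i \rho_i^{-\delta p} \|\Phi_i^* u\|_{W^{k,p}(B_r^\Hyp)}^p$ with the intrinsic quantity $\|\rho^{-\delta} u\|^p_{W^{k,p}(M;E)}$ computed via $\check g$ in \eqref{eq:Wkp}. On each M\"obius patch, the pullback $\Phi_i^* \check g$ is uniformly equivalent to the hyperbolic metric $\breve g$ on $B_r^\Hyp$, which is in turn uniformly equivalent to the Euclidean metric there; finite overlap of the cover then yields the norm equivalence by a standard patch-by-patch argument. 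For part (d), given any M\"obius parametrization $\Phi$, the covering property furnishes some preferred $\Phi_i$ whose image meets $\Phi(B_{1/2}^\Hyp)$, and the comparability lemma gives $\|\Phi^* u\|_{H^{s,p}(B_r^\Hyp)} \le C \|\Phi_i^* u\|_{H^{s,p}(B_{r'}^\Hyp)}$ for some $r' < 2$, together with $\rho(\Phi(0,1)) \sim \rho_i$. The supremum over all parametrizations is thus controlled by that over the preferred subcollection via (b); the reverse inequality is immediate.

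The principal technical obstacle is securing the uniform-in-parametrization bounds in the comparability lemma, in particular uniform bounds on derivatives of transition maps of every order and a uniform upper bound on the radius $r'$ to which $B_r^\Hyp$ may be transported under a transition between overlapping parametrizations. The explicit rescaling structure of \eqref{define-Mobius} makes this tractable near the boundary (the rescaling factors cancel exactly), while the extra finitely many parametrizations chosen to cover the compact interior $M \setminus A_{r_0/8}$ are controlled by compactness alone. Once this uniformity is in hand, the remaining arguments are routine applications of $H^{s,p}(\Reals^n)$ multiplier and pullback estimates combined with the finite-overlap property of the cover.
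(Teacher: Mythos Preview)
Your approach to parts (a)--(c) is correct and essentially the same as the paper's: uniform $C^k$ bounds on transition Jacobians, weight comparability $\rho_i\sim\rho_j$ on overlaps, and uniform local finiteness are exactly the ingredients assembled in Lemma~\ref{lem:Hbuilder} and Lemma~\ref{lemma:cutoffequiv} of Appendix~\ref{app:bpspaces}. The paper's organization differs slightly --- it first proves that any two cutoff functions give equivalent norms, then sandwiches the cutoff-free quantity between two cutoff norms by choosing one cutoff supported in $B_r^\Hyp$ and another equal to $1$ on $B_r^\Hyp$ --- but the substance is the same. Part (c) is handled as you propose, with the patch-by-patch comparison deferred to \cite[Lemma~3.5]{Lee-FredholmOperators}.

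Your argument for (d), however, has a gap. You select a \emph{single} preferred $\Phi_i$ whose image meets $\Phi(B_{1/2}^\Hyp)$ and assert that the comparability lemma yields $\|\Phi^* u\|_{H^{s,p}(B_r^\Hyp)} \lesssim \|\Phi_i^* u\|_{H^{s,p}(B_{r'}^\Hyp)}$ for some uniform $r'<2$. But the transition $\Phi_i^{-1}\circ\Phi$ is only defined where the images overlap, and for $r$ close to $2$ there is no reason $\Phi(B_r^\Hyp)\subset\Phi_i(B_2^\Hyp)$: the uniform $r'$ you flag as a technical obstacle simply does not exist in the single-chart setting. The paper (Lemma~\ref{lem:X-other-charts} and Corollary~\ref{cor:X-no-cutoff}) instead uses the bounded-diameter property of the preferred cover --- any set of bounded $\check g$-diameter meets only boundedly many $\Phi_j(B_2^\Hyp)$ --- to control the number of preferred charts meeting $\Phi(B_2^\Hyp)$, then decomposes via the partition $\chi_j/\bar\chi$ and transfers each piece separately. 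This is precisely the patching mechanism you already deploy for (a)--(b); applying it here as well closes the gap.
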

These facts are proved in Appendix \ref{app:bpspaces} except for part \eqref{part:Hk-is-Wk}
which follows from part \eqref{part:noHcutoff} and \cite{Lee-FredholmOperators}
Lemma 3.5.  When $s=k\in\Nats_{\ge 0}$
the equivalent norm in part \eqref{part:Xk-is-Xk} is essentially the
norm of the `local weighted spaces' from \cite{GicquaudSakovich} and techniques from
Appendix \ref{app:bpspaces} can be used to show that the $X^{k,p}_{\delta}(M;E)$ norm
here is indeed equivalent to the norm from that earlier work.

\subsection{Embeddings and dense subsets}
The following elementary inclusions are essentially immediate consequences of the definitions involved. The only subtle point is the strict inequalities that
appear relating weight parameters $\delta,\delta'$;
 the same phenomenon
occurs for integer-based Sobolev spaces (cf. \cite{Lee-FredholmOperators} Lemma 3.6).

\begin{lemma}[Basic inclusions on $M$]\label{lem:basic-inclusions}
Suppose $1<p<\infty$, $s,\delta\in\Reals$, $k\in\Nats_{\ge 0}$, and $0\le\alpha\le 1$.
The following embeddings are continuous under the listed restrictions.
\begin{align*}
& H^{s,p}_\delta(M;E) \hookrightarrow  H^{s,p^\prime}_{\delta^\prime}(M;E),
& &p\geq p^\prime, \quad \delta + \frac{n-1}{p} > \delta^\prime + \frac{n-1}{p^\prime};
\\
& H^{s,p}_\delta(M;E) \hookrightarrow X^{s,p^\prime}_\delta(M;E),
&& p \geq p^\prime;
\\
& X^{s,p}_\delta(M;E) \hookrightarrow X^{s,p^\prime}_\delta(M;E),
&& p \geq p^\prime;
\\
& C^{k,\alpha}_\delta(M;E) \hookrightarrow X^{k,p}_\delta(M;E);
\\
& C^{k,\alpha}_\delta(M;E) \hookrightarrow H^{k,p}_{\delta^\prime}(M;E),
&\quad& \delta > \delta^\prime + \frac{n-1}{p};
\\
& X^{s,p}_\delta(M;E) \hookrightarrow H^{s,p}_{\delta^\prime}(M;E),
&\quad& \delta > \delta^\prime + \frac{n-1}{p}.
\end{align*}
\end{lemma}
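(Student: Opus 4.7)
The plan is to reduce every listed embedding to two basic ingredients: (i) a local comparison of norms on the fixed Euclidean ball $B_2^{\mathbb{H}}$, which is independent of $i$, and (ii) a weighted sequence inequality over the countable family of preferred M\"obius parametrizations $\{\Phi_i\}$. For (i) I will invoke the standard facts that on a bounded domain in $\mathbb{R}^n$ one has $C^{k,\alpha}\hookrightarrow H^{k,p}$ for every $1<p<\infty$ and $H^{s,p}\hookrightarrow H^{s,p'}$ whenever $p\geq p'$, with constants uniform in $i$ once one has pulled back by $\Phi_i$ to $B_2^{\mathbb{H}}$. The whole content of the weight restriction $\delta+(n-1)/p>\delta'+(n-1)/p'$ will sit in ingredient (ii).

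The quantitative input for (ii) is a dyadic counting estimate: the number $N_k$ of indices with $\rho_i\in(2^{-k-1},2^{-k}]$ satisfies $N_k\leq C\,2^{k(n-1)}$. I would establish this from the explicit construction \eqref{define-Mobius} of the preferred parametrizations near infinity: in each of the finitely many boundary charts $(\Omega,\theta_{\hat p})$, the image $\Phi_i(B_{1/2}^{\mathbb{H}})$ corresponds via $\Theta_{\hat p}$ to a Euclidean set of diameter $\sim\rho_i$ in both boundary and radial directions, so the uniform local finiteness property (2) of the preferred collection caps the number of such $\Phi_i$'s fitting into the shell $\{p:\rho(p)\in(2^{-k-1},2^{-k}]\}$ by the number of balls of Euclidean radius $\sim 2^{-k}$ needed to cover the $(n-1)$-dimensional boundary, which is $O(2^{k(n-1)})$.

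With (i) and the counting lemma in hand, each embedding becomes a short sequence-space calculation. Set $a_i:=\rho_i^{-\delta}\|\chi\Phi_i^*u\|_{H^{s,p}(\mathbb{R}^n)}$, so that $\|u\|_{H^{s,p}_\delta}=\|a\|_{\ell^p}$ and $\|u\|_{X^{s,p}_\delta}\sim\|a\|_{\ell^\infty}$. The embeddings $X^{s,p}_\delta\hookrightarrow X^{s,p'}_\delta$, $H^{s,p}_\delta\hookrightarrow X^{s,p'}_\delta$, and $C^{k,\alpha}_\delta\hookrightarrow X^{k,p}_\delta$ then follow immediately from (i) together with $\|a\|_{\ell^\infty}\leq\|a\|_{\ell^p}$ and the fact that a bounded H\"older function on $B_2^{\mathbb{H}}$ is in $H^{k,p}(B_2^{\mathbb{H}})$. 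For $H^{s,p}_\delta\hookrightarrow H^{s,p'}_{\delta'}$ with $p>p'$, applying (i) and then H\"older's inequality with the conjugate pair $(p/p',q/p')$, where $q=pp'/(p-p')$, yields
\[
\|u\|_{H^{s,p'}_{\delta'}}^{p'}\lesssim\sum_i\rho_i^{(\delta-\delta')p'}a_i^{p'}\leq\Bigl(\sum_i\rho_i^{(\delta-\delta')q}\Bigr)^{p'/q}\Bigl(\sum_i a_i^p\Bigr)^{p'/p},
\]
and by the counting lemma $\sum_i\rho_i^{(\delta-\delta')q}\lesssim\sum_k 2^{k(n-1-(\delta-\delta')q)}$, which converges precisely when $(\delta-\delta')q>n-1$, equivalently $\delta+(n-1)/p>\delta'+(n-1)/p'$. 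The remaining embeddings $C^{k,\alpha}_\delta\hookrightarrow H^{k,p}_{\delta'}$ and $X^{s,p}_\delta\hookrightarrow H^{s,p}_{\delta'}$ are handled identically but more directly, pulling $\|u\|_{C^{k,\alpha}_\delta}$ or $\|u\|_{X^{s,p}_\delta}$ out of the sum as a supremum, leaving $\sum_i\rho_i^{(\delta-\delta')p}$, which converges iff $(\delta-\delta')p>n-1$; the case $p=p',\delta>\delta'$ of the first embedding is trivial since $\rho$ is bounded on $M$.

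The only genuine obstacle is the dyadic counting estimate $N_k\lesssim 2^{k(n-1)}$; once this encoding of the $(n-1)$-dimensional nature of $\partial M$ is in place, the rest is routine. The strictness of each weight inequality reflects the same counting: when $(\delta-\delta')q=n-1$ the geometric sum $\sum_k 2^{k(n-1-(\delta-\delta')q)}$ becomes $\sum_k 1=\infty$, so the embedding genuinely fails at the boundary exponent, confirming the parallel with the integer Sobolev setting noted by the authors.
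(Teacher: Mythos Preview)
Your proposal is correct and follows essentially the same route as the paper. The paper does not give a full proof; it remarks that the inclusions are ``essentially immediate consequences of the definitions'' and in Appendix~A sketches one case ($X^{s,p}_\delta\hookrightarrow H^{s,p}_{\delta'}$) by pulling out the supremum and bounding $\sum_i\rho_i^{(\delta-\delta')p}$ via the integrability of $\rho^a$ on $(M,\check g)$, which holds precisely when $a>n-1$. Your dyadic shell count $N_k\lesssim 2^{k(n-1)}$ is exactly the combinatorial content of that integrability statement, and your H\"older argument for $H^{s,p}_\delta\hookrightarrow H^{s,p'}_{\delta'}$ is the natural way to fill in the case the paper leaves to the reference \cite{Lee-FredholmOperators}. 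The two presentations differ only in packaging: you count parametrizations directly, the paper converts the sum to an integral; both encode the $(n-1)$-dimensionality of $\partial M$ in the same way.
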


As discussed in Appendix \ref{app:bpspaces},
Sobolev embedding for the weighted spaces with general $s\in\Reals$
follows from Sobolev embedding for $H^{s,p}(B_r^\Hyp)$
and techniques identical to those used for spaces with $s\in\Nats_{\ge 0}$.
\begin{proposition}[Sobolev embeddings]
\label{prop:SobolevEmbedding}
Suppose $1<p<\infty$, $r,s,\delta\in\Reals$, $k\in\Nats_{\ge 0}$ and $0<\alpha<1$.
The following embeddings are continuous under the listed restrictions.
\begin{align*}
&H^{s,p}_\delta(M;E) \hookrightarrow H^{r,q}_\delta(M;E),
& \quad&  \frac{1}{p} - \frac{s}{n} \leq \frac{1}{q}-\frac{r}{n}
,\quad r\le s,\quad p\le q;
\\
&X^{s,p}_\delta(M;E) \hookrightarrow X^{r,q}_\delta(M;E),
& \quad&  \frac{1}{p} - \frac{s}{n} \leq \frac{1}{q}-\frac{r}{n},\quad r\le s;
\\
& H^{s,p}_\delta(M;E) \hookrightarrow C^{k,\alpha}_\delta(M;E),
& \quad& \frac{1}{p} - \frac{s}{n} \leq -\frac{k+\alpha}{n};
\\
& X^{s,p}_\delta(M;E) \hookrightarrow C^{k,\alpha}_\delta(M;E),
&\quad & \frac{1}{p} - \frac{s}{n} \leq -\frac{k+\alpha}{n}.
\end{align*}
\end{proposition}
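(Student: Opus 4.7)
The plan is to reduce each weighted embedding to the corresponding classical embedding for the Bessel potential space $H^{s,p}$ on $\mathbb{R}^n$ (or on the fixed ball $B^{\mathbb{H}}_r$), localized via the preferred M\"obius parametrizations $\Phi_i$ from \S\ref{sec:coords}. Because each $\Phi_i$ is essentially a rescaling of a bounded coordinate patch, the pullbacks $\Phi_i^*u$ live on a fixed reference domain where standard Euclidean Sobolev embedding theory applies, and the hyperbolic weight $\rho_i^{-\delta}$ in the norm definitions \eqref{eq:Hspdelta} and \eqref{GS-int-norm} is invariant under the pullback. All that remains is to package the estimates over the index $i$ as either an $\ell^p$ sum or a supremum.

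For the first embedding, the classical inclusion $H^{s,p}(\mathbb{R}^n) \hookrightarrow H^{r,q}(\mathbb{R}^n)$ under $\tfrac{1}{p}-\tfrac{s}{n}\le \tfrac{1}{q}-\tfrac{r}{n}$, $r\le s$, $p\le q$ (for the tensor-valued versions, componentwise), applied to the cutoff pullback $\chi\Phi_i^*u$, yields
\begin{equation*}
\rho_i^{-\delta}\|\chi\Phi_i^*u\|_{H^{r,q}(\mathbb{R}^n)} \lesssim \rho_i^{-\delta}\|\chi\Phi_i^*u\|_{H^{s,p}(\mathbb{R}^n)}.
\end{equation*}
Raising to the $q$-th power, summing over $i$, and using the elementary $\ell^p\hookrightarrow \ell^q$ inclusion (valid since $p\le q$) with sequence $a_i=\rho_i^{-\delta}\|\chi\Phi_i^*u\|_{H^{s,p}(\mathbb{R}^n)}$ produces $\|u\|_{H^{r,q}_\delta}\lesssim \|u\|_{H^{s,p}_\delta}$. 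The analogous Gicquaud-Sakovich embedding follows the same way but taking a supremum in place of the sum: no hypothesis relating $p$ and $q$ is needed because $\ell^\infty$ dominates no matter the summation exponent, which matches the stated hypotheses.

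For the embeddings into $C^{k,\alpha}_\delta$, I would use the classical Bessel-potential Sobolev embedding $H^{s,p}(\mathbb{R}^n)\hookrightarrow C^{k,\alpha}(B_1^{\mathbb{H}})$ under $\tfrac{1}{p}-\tfrac{s}{n}\le -\tfrac{k+\alpha}{n}$ with $0<\alpha<1$, applied to $\chi\Phi_i^*u$ (which coincides with $\Phi_i^*u$ on $B_{1/2}^{\mathbb{H}}$, where H\"older norms on $B_1^{\mathbb{H}}$ are controlled by the $H^{s,p}$ norm). Multiplying by $\rho_i^{-\delta}$ and taking the supremum over $i$ gives $\|u\|_{C^{k,\alpha}_\delta}\lesssim \|u\|_{X^{s,p}_\delta}$, which is the fourth embedding. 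For the third, the inequality $\sup_i b_i\le \|b\|_{\ell^p}$ produces $\|u\|_{C^{k,\alpha}_\delta}\lesssim \|u\|_{H^{s,p}_\delta}$ after the local embedding step.

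The only subtle point is the handling of the cutoff $\chi$, since the $C^{k,\alpha}_\delta$ norm in \eqref{eq:holder-def} is computed on $B_1^{\mathbb{H}}$ where $\chi\equiv 1$, so it agrees with the $C^{k,\alpha}$ norm of $\chi\Phi_i^*u$ on that ball; Lemma \ref{lem:many-norms}\eqref{part:noHcutoff}--\eqref{part:noXcutoff} justifies using either $\chi\Phi_i^*u$ on $\mathbb{R}^n$ or $\Phi_i^*u$ on $B_r^{\mathbb{H}}$ with $1/2<r<2$ interchangeably in the weighted norms. The main technical input is simply the local Bessel-potential Sobolev embedding, which holds on $\mathbb{R}^n$ or equivalently on $B_r^{\mathbb{H}}$; beyond invoking that, the argument is purely bookkeeping over the uniformly locally finite cover, identical to the approach used for integer-order spaces.
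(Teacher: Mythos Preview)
Your proposal is correct and matches the paper's approach essentially exactly. The paper's own treatment (in Appendix~\ref{app:bpspaces}) simply states that the result follows from the cutoff-free equivalent norms of Lemma~\ref{lem:many-norms}/Lemma~\ref{lemma:cutoffequiv}, the standard Sobolev embeddings for $H^{s,p}(B_r^\Hyp)$, and the same $\ell^p$/$\ell^\infty$ bookkeeping you describe; your write-up is a faithful expansion of that outline, including the observation that $p\le q$ is needed for the $\ell^p\hookrightarrow\ell^q$ step in the $H$-case but not in the $X$-case.
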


Appendix \ref{app:bpspaces} contains proofs of following compactness and density results.
\begin{proposition}[Rellich Lemma]
\label{prop:rellich-lemma}
Suppose $1<p<\infty$ and $s, s^\prime,\delta,\delta'\in \mathbb R$.
The following inclusions are compact under the listed restrictions.
\begin{align*}
&H^{s,p}_\delta(M;E) \to  H^{s^\prime, p}_{\delta^\prime}(M;E),
&\quad & s> s^\prime,\quad \delta > \delta^\prime;
\\
&X^{s,p}_\delta(M;E) \to  X^{s^\prime, p}_{\delta^\prime}(M;E),
& &s> s^\prime,\quad \delta > \delta^\prime.
\end{align*}
\end{proposition}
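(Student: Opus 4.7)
The plan is to combine the Rellich compactness of Bessel potential spaces on bounded Euclidean domains with a diagonal extraction, using the weight gap $\delta>\delta^\prime$ to control the tail of the M\"obius-indexed norm. Suppose $\{u_k\}$ is bounded by $C_0$ in $H^{s,p}_\delta(M;E)$. For each fixed M\"obius parametrization $\Phi_i$, the pullback $\chi\Phi_i^*u_k$ is supported in $B_2^\Hyp$, which is Euclidean-bounded by \eqref{eq:ball-containment}, so $\{\chi\Phi_i^*u_k\}$ is bounded in $H^{s,p}(\Reals^n)$ with supports in a fixed compact set. Since $s>s^\prime$, the standard Rellich theorem for Bessel potential spaces yields an $H^{s^\prime,p}(\Reals^n)$-convergent subsequence. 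A Cantor diagonal argument across the countable collection of M\"obius parametrizations then produces one subsequence $\{u_{k_j}\}$ for which $\chi\Phi_i^*u_{k_j}$ converges in $H^{s^\prime,p}(\Reals^n)$ for every $i$.

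To show $\{u_{k_j}\}$ is Cauchy in $H^{s^\prime,p}_{\delta^\prime}$, fix $\epsilon>0$ and split the index set into $I_\epsilon^{\ge}=\{i:\rho_i\ge\epsilon\}$ and $I_\epsilon^{<}=\{i:\rho_i<\epsilon\}$. The head $I_\epsilon^{\ge}$ is finite: the compact set $\{p\in\bar M:\rho(p)\ge\epsilon\}$ has $\check g$-diameter at most $\epsilon^{-1}\diam_{\hat g}(\bar M)<\infty$ (since $\check g=\rho^{-2}\hat g$), so property \ref{part:diam-bound-to-I-bound} of the M\"obius parametrizations forces only finitely many $\Phi_i(B^\Hyp_2)$ to intersect it, and hence only finitely many centers $p_i$ to lie in it. Writing $v_{jl}=u_{k_j}-u_{k_l}$, the finite head contribution $\sum_{i\in I_\epsilon^{\ge}}\rho_i^{-\delta^\prime p}\|\chi\Phi_i^*v_{jl}\|_{H^{s^\prime,p}}^p$ tends to zero as $j,l\to\infty$ by the convergence obtained in the previous step.

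The essential estimate is for the tail. For $i\in I_\epsilon^<$, factoring out the weight ratio and invoking the continuous embedding $H^{s,p}(\Reals^n)\hookrightarrow H^{s^\prime,p}(\Reals^n)$ with operator norm $C^\prime$ yields
\begin{equation*}
\rho_i^{-\delta^\prime p}\|\chi\Phi_i^*v_{jl}\|_{H^{s^\prime,p}}^p
= \rho_i^{(\delta-\delta^\prime)p}\,\rho_i^{-\delta p}\|\chi\Phi_i^*v_{jl}\|_{H^{s^\prime,p}}^p
\le (C^\prime)^p\epsilon^{(\delta-\delta^\prime)p}\,\rho_i^{-\delta p}\|\chi\Phi_i^*v_{jl}\|_{H^{s,p}}^p.
\end{equation*}
Summing over $i\in I_\epsilon^<$ bounds the tail by $(C^\prime)^p(2C_0)^p\epsilon^{(\delta-\delta^\prime)p}$, which is arbitrarily small for small $\epsilon$. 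Combining head and tail gives Cauchyness and proves the $H$ case.

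The $X$ case is structurally identical: replace each $\ell^p$ sum by a supremum, run the same diagonal extraction against the same countable family of parametrizations, and note that the analogous estimate $\rho_i^{-\delta^\prime}\|\chi\Phi_i^*v_{jl}\|_{H^{s^\prime,p}}\le C^\prime\epsilon^{\delta-\delta^\prime}\|v_{jl}\|_{X^{s,p}_\delta}$ on $I_\epsilon^<$ furnishes the uniform tail bound directly, while finiteness of the head again makes the sup over $I_\epsilon^{\ge}$ vanish. The only real obstacle beyond bookkeeping is isolating the tail via the weight gap $\delta>\delta^\prime$, once one observes that an additional factor of $\rho_i^{(\delta-\delta^\prime)p}$ can be extracted without spoiling the $H^{s,p}_\delta$-norm bound; once that trick is in hand, the argument is a routine combination of local Rellich and a diagonal extraction.
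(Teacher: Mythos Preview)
Your proof is correct and follows essentially the same strategy as the paper's: local Rellich on each M\"obius patch, a diagonal extraction, and control of the tail via the weight gap $\rho_i^{(\delta-\delta')p}$. The only cosmetic difference is that the paper explicitly builds the limit $w=\sum_i(\Phi_i)_*w_i$ from the local limits and then shows $\bar\chi u_{k_j}\to w$ (whence $u_{k_j}\to\bar\chi^{-1}w$), whereas you argue Cauchyness directly; your route avoids the $\bar\chi$ bookkeeping and is arguably tidier.
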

\begin{proposition}[Density of Smooth Elements]\label{prop:density-early}
Suppose $1<p<\infty$ and $s,\delta\in\Reals$.
\begin{enumerate}
\item\label{part:Hsp-smooth-dense-early}
$C^\infty_{\rm cpct}(M;E)$ is dense in $H^{s,p}_{\delta}(M;E)$.
\item\label{part:Xsp-smooth-dense-early}
$C^\infty_{\rm loc}(M;E)\cap X^{s,p}_{\delta}(M;E)$ is dense in
$X^{s,p}_{\delta}(M;E)$.
\end{enumerate}
\end{proposition}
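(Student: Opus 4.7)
The plan is to prove both parts by a unified strategy: localize $u$ via a partition of unity subordinate to the Möbius cover, approximate each localized piece inside its chart using the classical density of $C^\infty_{\rm cpct}(\Reals^n)$ in $H^{s,p}(\Reals^n)$, and reassemble. The only essential difference between the two parts lies in how the resulting sum over charts is controlled: an $\ell^p$ tail estimate for \eqref{part:Hsp-smooth-dense-early}, and a weighted $\ell^\infty$ approximation for \eqref{part:Xsp-smooth-dense-early}.

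I would first fix a smooth partition of unity $\{\psi_j\}$ subordinate to the preferred cover $\{\Phi_j(B_{1/2}^\Hyp)\}$, constructed by pulling back a fixed bump function on $B_1^\Hyp$ through each $\Phi_j$, so that the pullbacks $\Phi_j^*\psi_j$ are uniformly bounded in $C^k(\Reals^n)$ for every $k$. Standard multiplication estimates in $H^{s,p}(\Reals^n)$, combined with Lemma \ref{lem:many-norms}\eqref{part:noHcutoff}, then imply that each $\psi_j u$ lies in the ambient weighted space and that $\Phi_j^*(\psi_j u)$ is a compactly supported element of $H^{s,p}(\Reals^n)$. The classical density of $C^\infty_{\rm cpct}(\Reals^n)$ in $H^{s,p}(\Reals^n)$ provides, for any preassigned tolerance $\eta_j>0$, a compactly supported smooth $w_j$ with $\|\Phi_j^*(\psi_j u)-w_j\|_{H^{s,p}(\Reals^n)}<\eta_j$, and I set $v_j:=(\Phi_j)_\ast w_j\in C^\infty_{\rm cpct}(M;E)$.

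For \eqref{part:Hsp-smooth-dense-early}, given $\varepsilon>0$, I would use $\ell^p$ summability of $\|u\|_{H^{s,p}_\delta}^p$ to select $N$ with $\sum_{j>N}\rho_j^{-\delta p}\|\Phi_j^*(\psi_j u)\|_{H^{s,p}(\Reals^n)}^p<(\varepsilon/2)^p$, choose $\eta_j=\varepsilon\rho_j^\delta/(2N^{1/p})$ for $j\le N$, and set $v=\sum_{j\le N}v_j$, a finite sum of compactly supported smooth sections. For \eqref{part:Xsp-smooth-dense-early}, no truncation is possible, so I would choose $\eta_j=2^{-j}\varepsilon\rho_j^\delta$ and set $v=\sum_j v_j$, a sum that is locally finite and therefore defines a smooth element of $X^{s,p}_\delta(M;E)$.

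The hardest step, and really the only nontrivial point, is bounding the norm of $u-v$ chart-by-chart. For any preferred chart $\Phi_i$, uniform local finiteness limits the set of indices $j$ with $\supp(\psi_j u-v_j)\cap\Phi_i(B_2^\Hyp)\ne\emptyset$ to at most $N_0$ values, and on such overlaps the weights satisfy $\rho_i\sim\rho_j$. The transition map $\Phi_i^{-1}\circ\Phi_j$ acts in the half-space coordinates by an affine dilation and translation---a hyperbolic isometry---whose pullback is continuous on $H^{s,p}(\Reals^n)$ with a uniform constant on bounded regions. This supplies the chart-transfer estimate $\|\chi\Phi_i^*(\psi_j u-v_j)\|_{H^{s,p}(\Reals^n)}\le C\eta_j$ with $C$ independent of $i,j$. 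Summing over $j$ for \eqref{part:Hsp-smooth-dense-early} (combined with the tail bound) and taking suprema for \eqref{part:Xsp-smooth-dense-early} then yield $\|u-v\|_{H^{s,p}_\delta}<\varepsilon$ and $\|u-v\|_{X^{s,p}_\delta}<C\varepsilon$ respectively. Verifying the uniformity of the constant in the chart-transfer estimate is the main work, and parallels the arguments already needed in Appendix \ref{app:bpspaces} to prove the norm equivalences of Lemma \ref{lem:many-norms}.
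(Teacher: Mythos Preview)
Your proposal is correct and follows essentially the same strategy as the paper: localize via the M\"obius cover, approximate each piece by a smooth compactly supported section in $H^{s,p}(\Reals^n)$, and reassemble using uniform local finiteness, with an $\ell^p$ tail truncation in the $H^{s,p}_\delta$ case. The paper's packaging differs slightly---it encapsulates the chart-transfer estimate in a separate ``builder'' lemma (Lemma~\ref{lem:Hbuilder}) and works with the unnormalized cutoff sum $\bar\chi=\sum_i(\Phi_i)_*\chi$ rather than a partition of unity, dividing by $\bar\chi$ at the end---but the mechanism is identical.

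One small imprecision: the transition maps $\Phi_i^{-1}\circ\Phi_j$ are \emph{not} in general affine dilations or hyperbolic isometries, since different preferred M\"obius parametrizations may be built from different boundary charts $\Theta_{\hat p}$, and the boundary-chart transition enters the composition. What is true, and what you actually need, is that the transition Jacobians admit uniform $C^k$ bounds for every $k$; this is exactly what the paper verifies in the proof of Lemma~\ref{lem:Hbuilder}, and your closing remark that the estimate ``parallels the arguments already needed in Appendix~\ref{app:bpspaces}'' points to the right place.
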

Note that in general $C^\infty_\delta(M;E)$ is not dense in $X^{s,p}_{\delta}(M;E)$;
see Proposition \ref{prop:Cinf-delta-not-dense-in-X}.

\subsection{Covariant derivatives}
When performing computations in the coordinates of a M\"obius parametrization,
is useful to be able to transfer
between the the pullback connection $\Psi^* \hat \nabla$ and the connection
$\nabla_{\mathbb E}$ of the Euclidean connection with respect to local coordinates.
One can generally substitute one for the other, and the following lemma is
the tool needed to justify this.  See also Lemma \ref{lem:CMbar-into-CMw}
below which concerns transfering tensors such as $\hat g$ on $\bar M$
to tensors in weighted spaces on $M$.

As an application, Lemma \ref{lem:alt-H-X-norms} below constructs equivalent norms for Sobolev and Gicquaud-Sakovich spaces using $\hat\nabla$.

\begin{lemma}\label{lem:christoffel-bound}
Let $\Phi_i$ be a M\"obius parametrization and let
$D = \Phi_i^* \hat\nabla -\nabla_{\mathbb E}$.
For each $k\in\Nats_{\ge0}$ the coordinate expressions
\begin{align}
\label{eq:local-metric-bound}
&(\rho_i^{-1} \nabla_{\mathbb E})^{k} \rho_i^{-2}\Phi_i^*\hat g,
\\
\label{eq:local-metric-inv-bound}
&(\rho_i^{-1} \nabla_{\mathbb E})^{k} \rho_i^{2}\Phi_i^*\hat g^{-1},
\\
\label{eq:christoffel-bound}
&(\rho_i^{-1} \nabla_{\mathbb E})^{k} \rho_i^{-1} D
\end{align}
are uniformly bounded (with respect to $i$) in any $C^\ell(B_2^\Hyp)$.
\end{lemma}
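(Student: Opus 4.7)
The strategy is a careful bookkeeping exercise with the single scaling factor built into a M\"obius parametrization. Among the M\"obius parametrizations, all but finitely many are of the near-infinity type centered at some $p_i \in A_{r_0/8}$; for the finitely many remaining ones, $\rho_i$ is bounded away from $0$ and the claim is immediate from the smoothness of $\hat g$. I therefore focus on the near-infinity case. By \eqref{define-Mobius}, in suitable coordinates near infinity one has the affine rescaling $(\Theta\circ\Phi_i)(x,y) = (\theta_0 + \rho_i x, \rho_i y)$, and the containment \eqref{eq:ball-containment-gen} guarantees $\Phi_i(B_2^\mathbb{H}) \subset \bar Z_{r_0}(\hat p)$, a set with compact closure in $\bar M$ on which $\hat g$ and $\hat g^{-1}$ have components of all orders uniformly bounded in the boundary coordinates.

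In such a chart the pullbacks have the explicit form
\[
(\Phi_i^*\hat g)_{ij}(x,y) = \rho_i^{\,2}\,\hat g_{ab}\bigl(\Phi_i(x,y)\bigr), \qquad (\Phi_i^*\hat g^{-1})^{ij}(x,y) = \rho_i^{-2}\,\hat g^{ab}\bigl(\Phi_i(x,y)\bigr),
\]
so $\rho_i^{-2}\Phi_i^*\hat g$ and $\rho_i^{\,2}\Phi_i^*\hat g^{-1}$ are simply the boundary-coordinate components of $\hat g$ and $\hat g^{-1}$ evaluated along $\Phi_i$. Each application of $\nabla_{\mathbb E}$ produces, by the chain rule, one additional partial derivative of $\hat g$ (or $\hat g^{-1}$) evaluated at the same point, together with a scaling factor $\rho_i$ coming from the Jacobian of the affine map. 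This factor is cancelled precisely by the prefactor $\rho_i^{-1}$, so $(\rho_i^{-1}\nabla_{\mathbb E})^{k}$ applied to either expression is a $k$\textsuperscript{th}-order partial derivative of components of $\hat g$ or $\hat g^{-1}$ evaluated on $\bar Z_{r_0}(\hat p)$, and is uniformly bounded in any $C^\ell(B_2^\mathbb{H})$. This establishes the bounds for \eqref{eq:local-metric-bound} and \eqref{eq:local-metric-inv-bound}.

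For \eqref{eq:christoffel-bound}, note that $\nabla_{\mathbb E}$ is flat in the Euclidean coordinates on $B_2^\mathbb{H}$, so $D$ is the $(1,2)$-tensor of Christoffel symbols of the metric $\Phi_i^*\hat g$; schematically, $D \sim (\Phi_i^*\hat g)^{-1}\cdot\nabla_{\mathbb E}(\Phi_i^*\hat g)$. Combining the identities above, $(\Phi_i^*\hat g)^{-1}$ contributes a factor $\rho_i^{-2}$ and a Euclidean derivative of $\Phi_i^*\hat g$ contributes $\rho_i^{\,3}$, so $D$ equals $\rho_i$ times a tensor whose entries are polynomial in the components of $\hat g^{ab}$ and $\partial\hat g_{ab}$ evaluated at $\Phi_i(x,y)$, uniformly bounded on $\bar Z_{r_0}(\hat p)$. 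The prefactor $\rho_i^{-1}$ cancels this remaining $\rho_i$, and iterating the chain-rule computation of the previous paragraph shows that each further application of $\rho_i^{-1}\nabla_{\mathbb E}$ merely introduces one more uniformly bounded partial derivative of $\hat g$ or $\hat g^{-1}$, with no uncontrolled factor of $\rho_i$. Hence $(\rho_i^{-1}\nabla_{\mathbb E})^{k}(\rho_i^{-1}D)$ is uniformly bounded in any $C^\ell(B_2^\mathbb{H})$. The only real point to watch is the scaling accounting, which is engineered so that every factor $\rho_i$ introduced by a Euclidean derivative is matched by a $\rho_i^{-1}$ built into the differential operator.
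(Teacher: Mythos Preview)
Your proof is correct and follows essentially the same approach as the paper: compute the pullbacks explicitly via the affine form \eqref{define-Mobius}, observe that each Euclidean derivative introduces a factor of $\rho_i$ via the chain rule which is cancelled by the prefactor $\rho_i^{-1}$, and handle $D$ by writing it as a contraction of $(\Phi_i^*\hat g)^{-1}\nabla_{\mathbb E}\Phi_i^*\hat g$. Your additional remark separating the finitely many interior M\"obius parametrizations is a helpful clarification the paper leaves implicit.
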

\begin{proof}
Writing $\hat g = \hat g_{ab}d\Theta^a d\Theta^b$ with respect to
the boundary coordinates associated with $\Phi_i$,
\begin{align*}
(\Phi_i^* \hat g) = (\rho_i)^2 \hat g_{ab}
(\theta_0+\rho_i x,\rho_i y) dx^a dx^b\\
(\Phi_i^* \hat g^{-1}) = (\rho_i)^{-2} \hat g^{ab}\partial_{x^a}\partial_{x^b}.
\end{align*}
Since $\hat g$ and $\hat g^{-1}$ are smooth up to the boundary of $\bar M$,
bounds for the expressions \eqref{eq:local-metric-bound}
and \eqref{eq:local-metric-inv-bound} follow from
bounds for derivatives in boundary coordinates along with the
observation that every derivative of the pullbacks introduces
an additional factor of $\rho_i$.  Bounds for
 \eqref{eq:christoffel-bound} follow from writing $D$ as a
 contraction of
 \[
(\Phi_i^* \hat g)^{-1} \nabla_{\mathbb E} \Phi_i^* \hat g
= \rho_i^2(\Phi_i^* \hat g)^{-1} \nabla_{\mathbb E} \rho_i^2\Phi_i^* \hat g.
 \]
and bounds for \eqref{eq:local-metric-bound}--\eqref{eq:local-metric-inv-bound}.
\end{proof}

\begin{lemma}\label{lem:nabla-HX-map} Let $1<p<\infty$, $s\in\Reals$ and $\delta\in\Reals$.
Then $\hat \nabla$
and $\check \nabla$ are continuous maps
\begin{align*}
H^{s,p}_\delta(M;E) \to H^{s-1,p}_\delta(M;T^{0,1}M\otimes E),
\\
X^{s,p}_\delta(M;E) \to X^{s-1,p}_\delta(M;T^{0,1}M\otimes E).
\end{align*}
\end{lemma}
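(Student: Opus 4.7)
The plan is to localize everything via the M\"obius parametrizations and reduce the statement to a uniform estimate on a fixed Euclidean domain. Fix a radius $r$ with $1/2<r<2$ (say $r=1$) and use the equivalent norms of Lemma \ref{lem:many-norms}(\ref{part:noHcutoff})--(\ref{part:noXcutoff}), so that (suppressing bundle names)
\begin{equation*}
\|\hat\nabla u\|_{H^{s-1,p}_\delta(M)}^p \sim \sum_i \rho_i^{-\delta p}\|\Phi_i^*(\hat\nabla u)\|_{H^{s-1,p}(B_1^\Hyp)}^p,
\end{equation*}
and analogously for the Gicquaud--Sakovich norm with a supremum over $i$. It therefore suffices to produce, for each parametrization $\Phi_i$, an estimate
\begin{equation*}
\|\Phi_i^*(\hat\nabla u)\|_{H^{s-1,p}(B_1^\Hyp)} \le C\,\|\Phi_i^* u\|_{H^{s,p}(B_r^\Hyp)}
\end{equation*}
for some $r\in(1,2)$ and constant $C$ independent of $i$, and then to sum or sup in $i$.

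For such a local estimate, decompose
\begin{equation*}
\Phi_i^*(\hat\nabla u) = \nabla_{\mathbb E}(\Phi_i^* u) + D_i\cdot\Phi_i^* u,
\qquad D_i := \Phi_i^*\hat\nabla - \nabla_{\mathbb E},
\end{equation*}
where $D_i$ acts algebraically on tensor components in the M\"obius coordinates. The first term is controlled by the standard mapping $\nabla_{\mathbb E}\colon H^{s,p}(\mathbb R^n)\to H^{s-1,p}(\mathbb R^n)$. For the second, Lemma \ref{lem:christoffel-bound} (with $\rho_i\le r_0$) provides uniform $C^\ell(B_2^\Hyp)$ bounds for $D_i$, so multiplication by $D_i$ (against a fixed smooth compactly supported cutoff to localize to $B_r^\Hyp$) is a bounded operator on $H^{s-1,p}(\mathbb R^n)$ with norm independent of $i$; this is the standard multiplication property of Bessel potential spaces by smooth, compactly supported multipliers (proved in Appendix \ref{app:bpspaces}). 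Combining, we obtain the required uniform local bound, and summing (resp. taking suprema) with the weight $\rho_i^{-\delta}$ gives continuity for both $H^{s,p}_\delta$ and $X^{s,p}_\delta$.

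To handle $\check\nabla$, observe that $\check g = \rho^{-2}\hat g$ is conformally related to $\hat g$, so by the Koszul formula the difference $\check\nabla - \hat\nabla$ is a pointwise tensor-algebraic operator whose coefficients are polynomial in $d\rho/\rho$ and $\hat g^{\pm 1}$. In a M\"obius parametrization $\rho$ pulls back to $\rho_i y$, so $d\rho/\rho$ pulls back to $dy/y$, which is smooth and uniformly bounded on $B_2^\Hyp$ together with all its $\nabla_{\mathbb E}$ derivatives (as $B_2^\Hyp\subset\{e^{-2}<y<e^2\}$). Hence $\check\nabla - \hat\nabla$ is a bounded multiplier on each of the local spaces, uniformly in $i$, and the result for $\check\nabla$ follows from that for $\hat\nabla$. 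The main technical point is the uniform-in-$i$ multiplication estimate for $D_i$; once Lemma \ref{lem:christoffel-bound} is in hand, and the multiplication lemma in the appendix is set up for $H^{s,p}(\mathbb R^n)$ with non-integer $s$, the rest is bookkeeping with the two equivalent norms.
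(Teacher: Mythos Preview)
Your proposal is correct and follows essentially the same approach as the paper: localize via M\"obius parametrizations, split $\Phi_i^*\hat\nabla$ as $\nabla_{\mathbb E}$ plus the difference tensor $D_i$ controlled by Lemma \ref{lem:christoffel-bound}, and then handle $\check\nabla-\hat\nabla$ by the conformal-change formula whose pullback involves only $dy/y$. The paper's proof is extremely terse (essentially just citing Lemma \ref{lem:christoffel-bound} and noting that $\Phi_i^*\check\nabla-\Phi_i^*\hat\nabla$ is a universal tensor in coordinate derivatives of $y$); you have simply written out the details it omits.
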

\begin{proof}
The continuity of $\hat \nabla$ follows from working in the
domains of M\"obius parametrizations and Lemma \ref{lem:christoffel-bound}.
Since $\Phi_i^* \check g = y^{-2}\rho_i^{-2} \check g$
a computation shows
$\Phi_i^*\check \nabla  = \Phi_i^* \hat \nabla + \Gamma$
where $\Gamma$ is a universal tensor with coefficients only involving
coordinate derivatives of $y$ and the continuity of $\check\nabla$ follows.
\end{proof}

The following lemma, a consequence of the Gagliardo-Nirenberg inequality, is used below to establish alternate norms for Sobolev and Gicquaud-Sakovich spaces. 

\begin{lemma}\label{lem:missing}
Let $1<p<\infty$, $m\in\Nats_{\ge 0}$, and $s \in\Reals$ with
$s\ge m$. Suppose for some $r>0$ that $u$ is a
real-valued function with
$u\in H^{0,p}(B^\Hyp_r)$
and $\nabla^m_{\mathbb E} u\in H^{s-m,p}(B^\Hyp_r)$.  Then
$u\in H^{s,p}(B^\Hyp_r)$ and
\[
\|u\|_{H^{s,p}(B^\Hyp_r)} \sim
\|u\|_{H^{0,p}(B^\Hyp_r)}
+ \|\nabla^m_{\mathbb E} u\|_{H^{s-m,p}(B^\Hyp_r)}
\]
with implicit constants independent of $u$.
\end{lemma}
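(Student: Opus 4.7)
The forward inequality is automatic from the quotient-norm definition of $H^{\sigma,p}(B_r^{\mathbb{H}})$: since $H^{s,p}(\mathbb{R}^n)\hookrightarrow H^{0,p}(\mathbb{R}^n)$ for $s\ge 0$ and the constant-coefficient differential operator $\nabla^m_{\mathbb{E}}$ is bounded $H^{s,p}(\mathbb{R}^n)\to H^{s-m,p}(\mathbb{R}^n)$, both pass to the quotient, yielding $\|u\|_{H^{0,p}(B_r^{\mathbb{H}})}+\|\nabla^m_{\mathbb{E}}u\|_{H^{s-m,p}(B_r^{\mathbb{H}})}\lesssim\|u\|_{H^{s,p}(B_r^{\mathbb{H}})}$.

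For the reverse inequality, the plan is a two-step argument: handle integer $s$ directly by Gagliardo--Nirenberg, then recover non-integer $s\ge m$ by complex interpolation. In the integer case $s=k\in\mathbb{N}_{\ge m}$, the hyperbolic ball $B_r^{\mathbb{H}}$ is a smooth bounded Euclidean region (lying in a compact subset of $\{y>0\}$), so $H^{k,p}(B_r^{\mathbb{H}})=W^{k,p}(B_r^{\mathbb{H}})$ with equivalent norm. On this bounded Lipschitz domain, the Gagliardo--Nirenberg interpolation inequality bounds each intermediate derivative $\|\nabla^j u\|_{L^p}$ for $0<j<m$ by $\|u\|_{L^p}+\|\nabla^m u\|_{L^p}$; for $m\le j\le k$ we instead write $\nabla^j u=\nabla^{j-m}(\nabla^m u)$ and bound by $\|\nabla^m u\|_{W^{k-m,p}}$. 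Summing and rearranging gives
\[
\|u\|_{W^{k,p}(B_r^{\mathbb{H}})} \lesssim \|u\|_{L^p(B_r^{\mathbb{H}})} + \|\nabla^m u\|_{W^{k-m,p}(B_r^{\mathbb{H}})},
\]
which is the integer case.

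For non-integer $s\ge m$, I would select integers $k_0\le k_1$ with $m\le k_0\le s\le k_1$ and $s=(1-\theta)k_0+\theta k_1$ for some $\theta\in(0,1)$, and introduce the graph space $Y^{\sigma,p}:=\{u\in L^p(B_r^{\mathbb{H}}):\nabla^m_{\mathbb{E}}u\in H^{\sigma-m,p}(B_r^{\mathbb{H}})\}$ with norm $\|u\|_{L^p}+\|\nabla^m_{\mathbb{E}}u\|_{H^{\sigma-m,p}}$. The integer case provides bounded inclusions $Y^{k_i,p}\hookrightarrow H^{k_i,p}(B_r^{\mathbb{H}})$ for $i=0,1$. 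Calder\'on's complex interpolation of Bessel potential spaces on smooth bounded Euclidean domains gives $[H^{k_0,p},H^{k_1,p}]_\theta = H^{s,p}(B_r^{\mathbb{H}})$, and the analogous graph-space interpolation identity $[Y^{k_0,p},Y^{k_1,p}]_\theta = Y^{s,p}$ for the closed operator $\nabla^m_{\mathbb{E}}$, combined with interpolation of the integer-case embeddings, then yields $Y^{s,p}\hookrightarrow H^{s,p}(B_r^{\mathbb{H}})$, the reverse inequality sought.

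The main technical obstacle will be the graph-space interpolation identity $[Y^{k_0,p},Y^{k_1,p}]_\theta=Y^{s,p}$: although classical (Lions--Peetre, Triebel), its rigorous justification requires verifying that $\nabla^m_{\mathbb{E}}$ is closed on the relevant scale of Bessel-potential domains and that the complex interpolation method commutes with the graph construction for this operator---routine but requiring care.
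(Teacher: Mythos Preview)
Your forward inequality and integer-case treatment via Gagliardo--Nirenberg are fine and match the paper. The issue is the non-integer step.

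The graph-space interpolation identity $[Y^{k_0,p},Y^{k_1,p}]_\theta=Y^{s,p}$ is not the routine fact you suggest. Once your integer case gives $Y^{k_i,p}=H^{k_i,p}(B_r^\Hyp)$ with equivalent norms, you already have $[Y^{k_0,p},Y^{k_1,p}]_\theta=H^{s,p}(B_r^\Hyp)$; so the remaining content of your claimed identity is precisely $Y^{s,p}=H^{s,p}(B_r^\Hyp)$, which is the lemma itself. To prove the identity independently from the graph structure you would need something like a bounded right inverse to $\nabla_{\mathbb E}^m$ on the Bessel scale, but $\nabla_{\mathbb E}^m$ maps into a proper closed subspace of symmetric tensors (for $m=1$ the range consists of curl-free fields), so no naive right inverse exists. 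The classical Lions--Peetre results you cite concern the domain of a fixed closed operator between fixed spaces; here the \emph{target} space $H^{\sigma-m,p}$ varies with $\sigma$ while the base $L^p$ is fixed, which is a different situation. A correct argument along your lines would have to pass through extension to $\Reals^n$ and a Mikhlin-multiplier computation relating $\nabla_{\mathbb E}^m$ to $(1-\Delta)^{m/2}$---doable, but substantially more than ``routine''.

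The paper avoids interpolation entirely. It first reduces to $m=k:=\lfloor s\rfloor$ via $\|\nabla_{\mathbb E}^k u\|_{H^{s-k,p}}\lesssim\|\nabla_{\mathbb E}^m u\|_{H^{s-m,p}}$, then treats $s=k$ by Gagliardo--Nirenberg as you do, and for $s=k+\alpha$ with $0<\alpha<1$ invokes the explicit equivalent norm
\[
\|u\|_{H^{k+\alpha,p}(B_r^\Hyp)}\sim\sum_{j=0}^{k}\|\nabla_{\mathbb E}^j u\|_{H^{\alpha,p}(B_r^\Hyp)}
\]
from \cite[Theorem~3.3.5]{triebel2010theory}. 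Each term with $j<k$ is absorbed via $\|\nabla_{\mathbb E}^j u\|_{H^{\alpha,p}}\lesssim\|\nabla_{\mathbb E}^j u\|_{H^{1,p}}\lesssim\|u\|_{H^{k,p}}$, and $\|u\|_{H^{k,p}}$ is already controlled by the integer case. This is a two-line finish once the equivalent norm is in hand, with no interpolation machinery required.
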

\begin{proof}
Let $k = \lfloor s\rfloor$ so $s=k+\alpha$ for some $0\le \alpha < 1$.
Observe that since
$\|\nabla_{\mathbb E}^k u\|_{H^{\alpha,p}(B_r^\Hyp)}\lesssim \|\nabla_{\mathbb E}^m u\|_{H^{s-m,p}(B_r^\Hyp)}$
it suffices to prove the result assuming $m=k$.  That is, we wish to show
\[
\|u\|_{H^{s,p}(B_r^\Hyp)} \lesssim
\|u\|_{H^{0,p}(B_r^\Hyp)}
+ \|\nabla^k_{\mathbb E} u\|_{H^{\alpha,p}(B_r^\Hyp)};
\]
the reverse inequality is obvious.

If $u\in H^{0,p}(B_r^\Hyp)$ and $\nabla_{\mathbb E}^k u\in H^{\alpha,p}(B_r^\Hyp)$
then $\nabla_{\mathbb E}^k u\in H^{0,p}(B_r^\Hyp)$ as well. So the
Gagliardo– {Nirenberg} interpolation
inequality implies $u\in H^{k,p}(B_r^\Hyp)$ and
\begin{equation}\label{eq:extension-0}
\|u\|_{H^{k,p}(B_r^\Hyp)}\lesssim \|u\|_{H^{0,p}(B_r^\Hyp)} + \|\nabla_{\mathbb E}^k u\|_{H^{0,p}(B_r^\Hyp)}.
\end{equation}
This completes the proof when $\alpha=0$.

Now suppose $0<\alpha <1$. Then
\cite[Theorem 3.3.5]{triebel2010theory}  implies that an equivalent norm on $H^{s,p}(B_r^\Hyp)$ is
\[
\|u\|_{H^{s,p}(B_r^\Hyp)} =  \|u\|_{H^{k+\alpha,p}(B_r^\Hyp)} \sim \sum_{0\le j \le k} \|\nabla_{\mathbb E}^j u\|_{H^{\alpha,p}(B_r^\Hyp)}.
\]
Moreover, if $j<k$ then $\|\nabla_{\mathbb E}^j u\|_{H^{\alpha,p}(B_r^\Hyp)}\lesssim \|\nabla_{\mathbb E}^j u\|_{H^{1,p}(B_r^\Hyp)}
\lesssim \|u\|_{H^{k,p}(B_r^\Hyp)}$.  Hence
\begin{equation}\label{eq:extension-1}
\|u\|_{H^{s,p}(B_r^\Hyp)} \lesssim \|\nabla^k_{\mathbb E} u\|_{H^{\alpha,p}(B_r^\Hyp)}
+ \|u\|_{H^{k,p}(B_r^\Hyp)}.
\end{equation}
The result now follows from equations \eqref{eq:extension-0} and \eqref{eq:extension-1}.
\end{proof}

\begin{lemma}\label{lem:alt-H-X-norms}
Let $1<p<\infty$, $m\in\Nats$, and $s\in\Reals$ with
$s\ge m$.  We have the following
norm equivalences for sections of tensor bundle $E$.
\begin{enumerate}
  \item\label{part:alt-H-norm} $\displaystyle \|u\|_{H^{s,p}_\delta(M;E)} \sim \|u\|_{H^{0,p}_{\delta}(M;E)}
  + \|\hat \nabla^m u\|_{H^{s-m,p}_{\delta}(M;T^{0,m}M\otimes E)}$.

  \item $\displaystyle \|u\|_{X^{s,p}_\delta(M;E)} \sim \|u\|_{X^{0,p}_{\delta}(M;E)}
  + \|\hat \nabla^m u\|_{X^{s-m,p}_{\delta}(M;T^{0,m}M\otimes E)}$.
\end{enumerate}
\end{lemma}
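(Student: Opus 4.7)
The natural approach is to reduce to a local, uniform-in-$i$ equivalence on a fixed ball $B_r^\Hyp$ (for some $1/2<r<2$) and then promote it to the global spaces via the cutoff-free characterizations in Lemma~\ref{lem:many-norms}\eqref{part:noHcutoff} and \eqref{part:noXcutoff}. Specifically, it suffices to show, with constants independent of the M\"obius parametrization $\Phi_i$,
\begin{equation*}
\|\Phi_i^* u\|_{H^{s,p}(B_r^\Hyp)} \sim \|\Phi_i^* u\|_{H^{0,p}(B_r^\Hyp)} + \|\Phi_i^*(\hat\nabla^m u)\|_{H^{s-m,p}(B_r^\Hyp)}.
\end{equation*}
Once this is in hand, part (a) follows by raising to the $p$-th power, multiplying by $\rho_i^{-\delta p}$, and summing in $i$; part (b) follows by multiplying by $\rho_i^{-\delta}$ and taking the supremum.

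\textbf{Step 1: pullback covariant vs. Euclidean derivatives.} Writing $\Phi_i^*\hat\nabla = \nabla_{\mathbb E} + D$ and iterating produces a combinatorial identity
\begin{equation*}
\Phi_i^*(\hat\nabla^m u) = \nabla_{\mathbb E}^m(\Phi_i^* u) + \sum_{k=0}^{m-1} A_{k,i}\cdot \nabla_{\mathbb E}^k(\Phi_i^* u),
\end{equation*}
where each coefficient $A_{k,i}$ is a universal polynomial contraction of $D$ with its iterated $\nabla_{\mathbb E}$-derivatives (and with $\Phi_i^*\hat g$, $\Phi_i^*\hat g^{-1}$ used to form contractions). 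Lemma~\ref{lem:christoffel-bound} then guarantees, for every $\ell$, a bound $\|A_{k,i}\|_{C^\ell(B_2^\Hyp)}\le C_\ell$ independent of $i$; a symmetric identity expressing $\nabla_{\mathbb E}^m(\Phi_i^* u)$ in terms of $\Phi_i^*(\hat\nabla^m u)$ and lower-order $\nabla_{\mathbb E}^k(\Phi_i^* u)$ is obtained analogously.

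\textbf{Step 2: local equivalence.} Since $H^{s-m,p}(B_r^\Hyp)$ admits smooth uniformly bounded functions as multipliers, Step~1 gives, uniformly in $i$,
\begin{equation*}
\bigl|\|\Phi_i^*(\hat\nabla^m u)\|_{H^{s-m,p}(B_r^\Hyp)} - \|\nabla_{\mathbb E}^m(\Phi_i^* u)\|_{H^{s-m,p}(B_r^\Hyp)}\bigr|
\lesssim \|\Phi_i^* u\|_{H^{s-1,p}(B_r^\Hyp)}.
\end{equation*}
Lemma~\ref{lem:missing} applied component-wise to $\Phi_i^* u$ yields the Euclidean-derivative version of the equivalence, and the lower-order term $\|\Phi_i^* u\|_{H^{s-1,p}}$ is then absorbed via the standard interpolation inequality $\|v\|_{H^{s-1,p}}\le \epsilon\|v\|_{H^{s,p}} + C_\epsilon \|v\|_{H^{0,p}}$. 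Choosing $\epsilon$ small enough to be absorbed into the left-hand side (possible because the constants in the interpolation and in Step~1 are $i$-independent) produces the desired local equivalence.

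\textbf{Step 3 and main obstacle.} The passage to the global norms is then immediate: for (a) we take the $\ell^p$-sum of the $p$-th powers weighted by $\rho_i^{-\delta p}$ and invoke Lemma~\ref{lem:many-norms}\eqref{part:noHcutoff}, and for (b) we take the supremum with weight $\rho_i^{-\delta}$ and invoke Lemma~\ref{lem:many-norms}\eqref{part:noXcutoff}. The substantive work is concentrated in Steps~1 and 2: the combinatorial expansion of $\hat\nabla^m$ must be kept uniform in $i$, which is precisely the purpose of the scalings $\rho_i^{-1}\nabla_{\mathbb E}$ and $\rho_i^{-1}D$ controlled by Lemma~\ref{lem:christoffel-bound}. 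The only analytic subtlety is ensuring that the interpolation/absorption is performed on a fixed ball $B_r^\Hyp$ with constants independent of $i$; once this is confirmed, the argument is essentially bookkeeping.
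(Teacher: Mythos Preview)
Your proposal is correct and follows essentially the same route as the paper: reduce to a local estimate on $B_r^\Hyp$ by comparing $\Phi_i^*(\hat\nabla^m u)$ and $\nabla_{\mathbb E}^m(\Phi_i^* u)$ via Lemma~\ref{lem:christoffel-bound}, invoke Lemma~\ref{lem:missing}, absorb the $H^{s-1,p}$ remainder by interpolation, and then globalize through Lemma~\ref{lem:many-norms}. The paper streamlines slightly by noting that the direction $\|u\|_{H^{0,p}_\delta}+\|\hat\nabla^m u\|_{H^{s-m,p}_\delta}\lesssim\|u\|_{H^{s,p}_\delta}$ is immediate from Lemma~\ref{lem:nabla-HX-map}, so only the reverse inequality needs the local argument; otherwise the proofs coincide.
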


\begin{proof}
The two results are proved nearly identically; we treat part \eqref{part:alt-H-norm}.  It suffices to show
\[
\|u\|_{H^{s,p}_\delta(M)} \lesssim \|u\|_{H^{s,p}_{\delta}(M)}
  + \|\hat \nabla^m u\|_{H^{s-m,p}_{\delta}(M)},
\]
as the reverse estimate is obvious.

Let $\Phi_i$ be a M\"obius parametrization. Applying Lemma \ref{lem:christoffel-bound}
to the components of $\Phi_i^*u$ we find
\begin{align*}
\|\nabla_{\mathbb E}^{m} \Phi_i^* u\|_{H^{s-m,p}(B_1^\Hyp)}
&\lesssim
\| \Phi_i^* \hat\nabla^m u\|_{H^{s-m,p}(B_1^\Hyp)}
+ \sum_{j=0}^{m-1} \|\nabla_{\mathbb E}^j \Phi_i^* u\|_{H^{s-m,p}(B_1^\Hyp)}\\
&\lesssim
\| \Phi_i^* \hat\nabla^m u\|_{H^{s-m,p}(B_1^\Hyp)}
+ \|\Phi_i^* u\|_{H^{s-1,p}(B_1^\Hyp)}.
\end{align*}
Hence from Lemma \ref{lem:missing} we conclude
\[
\|\Phi_i^* u\|_{H^{s,p}(B_1^\Hyp)} \lesssim
\|\Phi_i^* u\|_{H^{0,p}(B_1^\Hyp)}
+\| \Phi_i^* \hat\nabla^m u\|_{H^{s-m,p}(B_1^\Hyp)}
+\|\Phi_i^* u\|_{H^{s-1,p}(B_1^\Hyp)}.
\]
A standard interpolation argument to control the final term in this expression then leads to
\[
\|\Phi_i^* u\|_{H^{s,p}(B_1^\Hyp)} \lesssim
\|\Phi_i^* u\|_{H^{0,p}(B_1^\Hyp)}
+\| \Phi_i^* \hat\nabla^m u\|_{H^{s-m ,p}(B_1^\Hyp)}
\]
and the result now follows from multiplying by an appropriate factor of $\rho_i$,
raising to $p^{\rm th}$ powers and summing.
\end{proof}

\subsection{Tensor operations}

It is easy to prove the continuity of contraction operations by working in the domains
of M\"obius parametrizations.
\begin{lemma}[Contractions]
Suppose $E=T^{k_1,k_2}\bar M$ for some $k_1,k_2\in \Nats$ and let
$\cont:E\to E'=T^{k_1-1,k_2-1}\bar M$ be a contraction operation
on a pair of indices.
Then $\cont$ extends to the continuous maps
\begin{align*}
H^{s,p}_\delta(M;E)\to H^{s,p}_{\delta}(M;E'),\\
X^{s,p}_\delta(M;E)\to X^{s,p}_{\delta}(M;E')
\end{align*}
for all $1<p<\infty$ and all $s,\delta\in\Reals$.
\end{lemma}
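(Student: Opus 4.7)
The plan is to reduce continuity to a purely local, component-level estimate inside each M\"obius parametrization, exploiting that contraction is a natural tensor operation that commutes with diffeomorphism pullback. First I would fix a M\"obius parametrization $\Phi_i$ and observe that $\Phi_i^* \cont(u) = \cont(\Phi_i^* u)$, where the right-hand side is the coordinate contraction in the standard Cartesian coordinates on $B_2^{\mathbb H}$. In those coordinates each component of $\cont(\Phi_i^* u)$ is a fixed linear combination, with coefficients in $\{0,1\}$, of components of $\Phi_i^* u$; the pattern is universal, depending only on $n$, $k_1$, $k_2$, and not on $i$ or on $u$.

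Next, since $H^{s,p}(\mathbb R^n)$ of tensor-valued distributions is defined componentwise (as noted after \eqref{eq:Wkp}) and multiplication by the fixed scalar cutoff $\chi$ commutes with summing components, the triangle inequality immediately gives
\[
\| \chi\, \Phi_i^* \cont(u) \|_{H^{s,p}(\mathbb R^n)} \le C \| \chi\, \Phi_i^* u \|_{H^{s,p}(\mathbb R^n)},
\]
with $C$ depending only on $n$, $k_1$, $k_2$, and in particular independent of $i$. Assembling these local estimates via the definitions \eqref{eq:Hspdelta} and \eqref{GS-int-norm} -- raising to the $p$-th power, weighting by $\rho_i^{-\delta p}$, and summing for the $H^{s,p}_\delta$ case, or weighting by $\rho_i^{-\delta}$ and taking a supremum for the $X^{s,p}_\delta$ case -- yields both continuity bounds simultaneously.

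Finally, to justify the distributional interpretation when $s<0$ (so that $\cont(u)$ is a priori only defined by continuous extension), I would invoke Proposition \ref{prop:density-early} to extend $\cont$ from the dense smooth subspace on which it is classically defined. I do not expect any substantive obstacle: the entire argument turns on the naturality identity $\Phi_i^* \cont = \cont \circ \Phi_i^*$ together with the fact that in any fixed coordinate chart the contraction acts by a bounded constant-coefficient linear map on tensor components, which is exactly the situation the M\"obius-parametrization-based norms are designed to accommodate.
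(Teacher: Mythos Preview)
Your proposal is correct and follows exactly the approach the paper indicates: the paper's entire proof is the single sentence ``It is easy to prove the continuity of contraction operations by working in the domains of M\"obius parametrizations,'' and your argument is precisely the unpacking of that sentence. If anything, you have supplied more detail than the paper does, including the density remark for the distributional case.
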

Multiplication in Sobolev spaces involves a number of compatability
conditions between the spaces parameters that each encode
either a notion of `multiplication cannot generically improve regularity'
or `multiplication must respect duality of Sobolev spaces'.
See, for example, the appendix of \cite{HMT2020} for a demonstration
of the continuity of multiplication between Bessel potential spaces
over bounded open subsets of $\Reals^n$ under the conditions listed
below.  Continuity of tensor products follows from this scalar
result and Lemma \ref{lem:many-norms}.
\begin{proposition}[Product estimates]
\label{prop:multiplication}
Let $E_1$ and $E_2$ be tensor bundles over $\bar M$.
Suppose $s_1, s_2, \sigma\in\Reals$, $1<p_1,p_2,q<\infty)$,
and $\delta_1,\delta_2\in\Reals$.
and define
\[
\frac{1}{r_i} = \frac{1}{p_i}-\frac{s_i}{n},\qquad \frac{1}{r}=\frac{1}{q}-\frac{\sigma}{n}, \qquad i=1,2.
\]
Tensor product is a continuous map
\begin{equation*}
\begin{aligned}
H^{s_1, p_1}_{\delta_1}(M;E_1) \times H^{s_2, p_2}_{\delta_2}(M;E_2) &\to H^{\sigma,q}_{\delta_1+\delta_2}(M;E_1\otimes E_2),\\
X^{s_1, p_1}_{\delta_1}(M;E_1) \times H^{s_2, p_2}_{\delta_2}(M;E_2) &\to H^{\sigma,q}_{\delta_1+\delta_2}(M;E_1\otimes E_2),\\
X^{s_1, p_1}_{\delta_1}(M;E_1) \times X^{s_2, p_2}_{\delta_2}(M;E_2) &\to X^{\sigma,q}_{\delta_1+\delta_2}(M;E_1\otimes E_2)
\end{aligned}
\end{equation*}
provided
\begin{equation*}
\begin{gathered}
\frac{1}{q} \le  \frac{1}{p_1} + \frac{1}{p_2},\\
0\le s_1+s_2,\\
\sigma\leq \min (s_1, s_2),\\
\max\left(\frac 1{r_1},\frac 1{r_2}\right) \le \frac 1r,\\
\frac{1}{r_1}+\frac{1}{r_2} \le \min\left(\frac{1}{r},1\right),\\
\end{gathered}
\end{equation*}
where the last inequality is strict if
\begin{equation*}
\min\left( \frac 1{r_1},\frac1{r_2}\right) = 0.
\end{equation*}
\end{proposition}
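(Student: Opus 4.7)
The plan is to reduce everything to a pointwise (per M\"obius parametrization) multiplication estimate on the bounded domain $B_r^{\mathbb H}$ and then sum or take supremum depending on which of the three claims we are proving. Using the norm equivalences in Lemma \ref{lem:many-norms} parts \eqref{part:noHcutoff} and \eqref{part:noXcutoff} with some fixed $r\in(1/2,2)$, it suffices to control $\|\Phi_i^*(u\otimes v)\|_{H^{\sigma,q}(B_r^{\mathbb H})}=\|(\Phi_i^* u)\otimes(\Phi_i^* v)\|_{H^{\sigma,q}(B_r^{\mathbb H})}$ for each $i$ and then reassemble with the appropriate weights $\rho_i^{-\delta_1-\delta_2}$. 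Because the $H^{s,p}$ norm on $B_r^{\mathbb H}$ is computed componentwise with respect to the Euclidean coordinates provided by the M\"obius parametrization, this reduces the tensor product to scalar multiplication of functions on a fixed bounded Euclidean domain.

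The local ingredient is the scalar multiplication theorem for Bessel potential spaces on bounded open subsets of $\mathbb R^n$, as presented (for example) in the appendix of \cite{HMT2020}. That result, applied at every index $i$, yields
\begin{equation*}
\|\Phi_i^*(u\otimes v)\|_{H^{\sigma,q}(B_r^{\mathbb H})}
\lesssim
\|\Phi_i^* u\|_{H^{s_1,p_1}(B_r^{\mathbb H})}\,
\|\Phi_i^* v\|_{H^{s_2,p_2}(B_r^{\mathbb H})},
\end{equation*}
with a constant independent of $i$, precisely under the stated conditions tying together $s_1,s_2,\sigma,p_1,p_2,q$ and the Sobolev-critical indices $r_1,r_2,r$ (including the strict form of the last inequality when $\min(1/r_1,1/r_2)=0$).

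To assemble, I would split the weight as $\rho_i^{-(\delta_1+\delta_2)}=\rho_i^{-\delta_1}\rho_i^{-\delta_2}$ and treat the three cases separately. For $X^{s_1,p_1}_{\delta_1}\times X^{s_2,p_2}_{\delta_2}\to X^{\sigma,q}_{\delta_1+\delta_2}$, take suprema in each factor; this is immediate. For $X^{s_1,p_1}_{\delta_1}\times H^{s_2,p_2}_{\delta_2}\to H^{\sigma,q}_{\delta_1+\delta_2}$, pull out $\|u\|_{X^{s_1,p_1}_{\delta_1}}=\sup_i \rho_i^{-\delta_1}\|\Phi_i^*u\|_{H^{s_1,p_1}(B_r^{\mathbb H})}$ as a uniform bound and let the remaining $\ell^q$-sum reproduce a quantity comparable with $\|v\|_{H^{s_2,p_2}_{\delta_2}}^q$ using $q\le p_2$ (which follows from $1/q\le 1/p_1+1/p_2$ together with $p_1<\infty$; if $q>p_2$ we first use the $\ell^{p_2}\hookrightarrow\ell^q$ inclusion for the counting measure, since sequences in $\ell^{p_2}$ with $p_2\le q$ lie in $\ell^q$). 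Finally, for $H\times H\to H$, use H\"older's inequality on the counting-measure $\ell^q$-sum with exponents $p_1/q'$ and $p_2/q'$, where $1/q'=1/p_1+1/p_2$ and $q'\le q$ (again using $\ell^{q'}\hookrightarrow \ell^q$ to reduce to the case $q=q'$).

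The main obstacle is the careful bookkeeping that shows the local constant in the bounded-domain multiplication theorem is genuinely uniform in $i$ (so that the subsequent $\ell^p$ or $\ell^\infty$ assembly makes sense); once that is in place, the remaining weight manipulation is Hölder's inequality plus the discrete embedding $\ell^{q'}\hookrightarrow\ell^q$ for $q'\le q$. Uniformity follows because $B_r^{\mathbb H}$ is a \emph{fixed} Euclidean domain independent of $i$: the M\"obius parametrizations $\Phi_i$ only enter through pullback of components, not through the underlying domain on which the $H^{s,p}$ norm is computed, so the local multiplication constant is a single constant depending only on $s_1,s_2,\sigma,p_1,p_2,q,n,r$.
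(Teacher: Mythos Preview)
Your approach matches the paper's exactly: apply the local multiplication theorem for Bessel potential spaces on the fixed domain $B_r^{\mathbb H}$ (the appendix of \cite{HMT2020}) and then assemble using the cutoff-free norms of Lemma~\ref{lem:many-norms}. The $H\times H\to H$ and $X\times X\to X$ assemblies you give are correct; in particular your H\"older step with $1/q'=1/p_1+1/p_2$ followed by $\ell^{q'}\hookrightarrow\ell^q$ is exactly right.

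Your handling of $X\times H\to H$ contains a genuine error. The assertion that $q\le p_2$ follows from $1/q\le 1/p_1+1/p_2$ and $p_1<\infty$ is false (take $p_1=p_2=4$, $q=2$). After pulling out the $X$-norm of $u$, what you need is $\|b\|_{\ell^q}\lesssim\|b\|_{\ell^{p_2}}$ for the sequence $b_i=\rho_i^{-\delta_2}\|\Phi_i^*v\|_{H^{s_2,p_2}(B_r^{\mathbb H})}$, and this holds precisely when $q\ge p_2$; your parenthetical covers that case but the regime $q<p_2$ is left open and cannot be closed by this route. In fact the stated $X\times H\to H$ mapping appears to be false in that regime: with $s_1=s_2=\sigma=0$, $p_1=p_2=4$, $q=2$, $\delta_1=\delta_2=0$ all hypotheses hold, yet $u\equiv 1\in X^{0,4}_0(M;\Reals)$ and any $v\in L^4_0(M;\Reals)\setminus L^2_0(M;\Reals)$ (such $v$ exist since $M$ has infinite $\check g$-volume) give $uv=v\notin H^{0,2}_0(M;\Reals)$. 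The paper's one-line sketch does not address this either; in every application in the paper the $X$-factor satisfies $s_1>n/p_1$ and the target Lebesgue exponent coincides with that of the $H$-factor, so the difficulty does not arise in practice.
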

We frequently use Proposition \ref{prop:multiplication}
in a context where we are multiplying
\[
\hat\nabla^{k_1}u_1\otimes\cdots\otimes\hat\nabla^{k_\ell}u_\ell
\]
with each $u_i\in H^{s,p}_{\rm loc}(M;E_i)$ with $s>n/p$.
The result lies in $H^{s-k,p}_{\loc}(M;E)$, where $k=k_1+\cdots+k_\ell$ and $E$ is the appropriate bundle, 
under the two restrictions on $k$ in \eqref{eq:multi-mult} below.
We have, for example, the following.
\begin{corollary}
\label{cor:multiplication}
Suppose $1<p<\infty$, $s\in\Reals$ and $s>n/p$.  Given $k_1,\cdots,k_\ell\in\Nats_{\ge 0}$
define $k=k_1+\cdots+k_\ell$ and suppose
\begin{equation}\label{eq:multi-mult}
\begin{aligned}
s &\ge k/2,\\
\frac{1}{p}-\frac{s}{n} &\le \frac{1}{2}-\frac{k}{2n}.
\end{aligned}
\end{equation}
Given tensor bundles $E_k$ over $\bar M$, $1\le k\le \ell$,
tensor product is a continuous map
\[
H^{s-k_1, p}_{\delta_1}(M;E_1) \times \cdots \times H^{s-k_\ell, p}_{\delta_\ell}(M;E_\ell) \to
H^{s-k,}_{\delta_1+\cdots+\delta_\ell}(M;\otimes_{k=1}^\ell E_k)
\]
for any weights $\delta_i$.
\end{corollary}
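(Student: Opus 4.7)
The plan is to prove this by induction on the number of factors $\ell$, reducing each inductive step to a single application of Proposition \ref{prop:multiplication}. The base case $\ell = 1$ is trivial.

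For $\ell \ge 2$, set $K = k_2 + \cdots + k_\ell$ and $\Delta = \delta_2 + \cdots + \delta_\ell$. The hypotheses \eqref{eq:multi-mult} for $K$ in place of $k$ follow from those for $k$ (since $K \le k$), so the inductive hypothesis places the partial tensor product $v := u_2 \otimes \cdots \otimes u_\ell$ in $H^{s-K,p}_\Delta(M; \bigotimes_{i=2}^\ell E_i)$ with continuous dependence on the factors. It then remains to bound $u_1 \otimes v$ in $H^{s-k, p}_{\delta_1 + \Delta}$, which I obtain from Proposition \ref{prop:multiplication} applied with $p_1 = p_2 = q = p$, $s_1 = s - k_1$, $s_2 = s - K$, $\sigma = s - k$, and weights $\delta_1$ and $\Delta$.

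The substance of the argument is verifying the hypotheses of Proposition \ref{prop:multiplication}. The conditions $s_1 + s_2 = 2s - k \ge 0$ and $\sigma \le \min(s_1, s_2)$ follow immediately from $s \ge k/2$ and the identity $k = k_1 + K \ge \max(k_1, K)$. Writing $1/r_i = 1/p - s_i/n$ and $1/r = 1/p - \sigma/n$, a direct computation gives $1/r_1 + 1/r_2 - 1/r = 1/p - s/n$, which is strictly negative by the hypothesis $s > n/p$; thus $1/r_1 + 1/r_2 < 1/r$ with room to spare. The bound $1/r_1 + 1/r_2 \le 1$ rearranges exactly to $1/p - s/n \le 1/2 - k/(2n)$, which is the second inequality in \eqref{eq:multi-mult}.

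The main obstacle, and the point I would check most carefully, is the extra strictness required in the final hypothesis of Proposition \ref{prop:multiplication} when $\min(1/r_1, 1/r_2) = 0$. The strict comparison with $1/r$ is already supplied by $s > n/p$. For the strict comparison with $1$, one inspects $1/r_j = 1/p - s_j/n$ at the non-critical index $j$ and uses that $s_j \ge s - k$ together with \eqref{eq:multi-mult}; any degenerate configuration in which the proposition does not directly give the required estimate (for instance, $k_i = 0$ for all $i \neq j$) is absorbed by the algebra property of $H^{s,p}$ for $s > n/p$, which handles multiplication by factors of full regularity. The restrictions in \eqref{eq:multi-mult} appear to have been chosen precisely so that this inductive accounting closes, so I expect no deeper difficulty.
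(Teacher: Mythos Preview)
Your inductive approach via repeated application of Proposition~\ref{prop:multiplication} is exactly what the paper has in mind; the paper does not give a detailed proof and simply presents the corollary as a consequence of the proposition. Your verification of the hypotheses $1/q\le 1/p_1+1/p_2$, $s_1+s_2\ge 0$, $\sigma\le\min(s_1,s_2)$, and $1/r_1+1/r_2\le 1/r$ is correct, as is the observation that $1/r_1+1/r_2-1/r=1/p-s/n<0$ gives strict inequality against $1/r$.

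However, your handling of the strictness condition against $1$ has a genuine gap. You assert that the only degenerate configuration is ``$k_i=0$ for all $i\ne j$'', but this is false. Take $n=3$, $p=2$, $s=5/2$, $\ell=2$, $k_1=1$, $k_2=4$ (so $k=5$): then $s>n/p$, $s=k/2$, and equality holds in the second line of \eqref{eq:multi-mult}. One computes $s_1=s-k_1=3/2=n/p$, giving $1/r_1=0$, while $1/r_2=1$, so $1/r_1+1/r_2=1=\min(1/r,1)$ (here $1/r=4/3>1$) with $\min(1/r_1,1/r_2)=0$. Proposition~\ref{prop:multiplication} does not apply, and neither factor has full regularity $s$, so the algebra property is of no help.

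The gap is real but shallow. One clean fix is duality: with $\ell=2$, the map $H^{s-k_1,p}\times H^{s-k_2,p}\to H^{s-k,p}$ is continuous if and only if multiplication $H^{s-k_1,p}\times H^{k-s,p^*}\to H^{k_2-s,p^*}$ is (dualize in the second slot). For this dual product one finds $1/\tilde r_1=1/r_1$ and $1/\tilde r_2=1-1/r$; in the edge case the identity $1/r_1+1/r_2-1/r=1/p-s/n<0$ forces $1/r>1$, hence $1/\tilde r_2<0$ and $\min(1/\tilde r_1,1/\tilde r_2)<0$, so no strictness is required and the remaining hypotheses are routine. Thus the $\ell=2$ step goes through, and the general induction then proceeds as you wrote.
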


\subsection{Interpolation}
Suppose $X_1$ and $X_2$ are two Banach spaces that both embed as subspaces of
an ambient Hausdorff topological space; such a pair is called an interpolation couple.  For each
$0<\theta<1$ there is a complex interpolation space $[X_1,X_2]_\theta$
satisfying $X_1\cap X_2\subset [X_1,X_2]_\theta \subset X_1+X_2$.
See \S\ref{secsec:interpolation} for a quick overview complex interpolation
and the proof of the following two interpolation results.  Note that Lemma
\ref{lem:Xinterp-early} concerning interpolation and Gicquaud-Sakovich spaces
is far from optimal but suffices for our purposes.

\begin{theorem}[Complex Interpolation]\label{thm:H-interp-early}
Let $1<p_2<p_2<\infty$, $s_1,s_2\in\Reals$ and $\delta_1,\delta_2\in\Reals$.
Suppose $\theta\in(0,1)$ and let
\begin{equation}\label{eq:spdelta_interp-early}
\begin{aligned}
s &= (1-\theta) s_1 + \theta s_2,\\
\frac 1p &= (1-\theta) \frac{1}{p_1} + \theta \frac{1}{p_2},\\
\delta &= (1-\theta) \delta_1 + \theta \delta_2.
\end{aligned}
\end{equation}
Then
\begin{equation}
[H^{s_1,p_1}_{\delta_1}(M;E),H^{s_2,p_2}_{\delta_2}(M;E)]_\theta = H^{s,p}_{\delta}(M;E)
\end{equation}
as sets, and the norms are equivalent.
\end{theorem}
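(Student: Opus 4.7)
The plan is to present $H^{s,p}_\delta(M;E)$ as a retract of a weighted $\ell^p$-space of $H^{s,p}(\mathbb{R}^n)$-valued sequences, and then invoke known complex interpolation results in that ambient discrete setting. Since complex interpolation commutes with retracts, this will yield both the equality of sets and the equivalence of norms in one stroke. For $j=1,2$, introduce
\[
Y_j = \ell^{p_j}\bigl((\rho_i^{-\delta_j p_j});\,H^{s_j,p_j}(\mathbb{R}^n)\bigr),\qquad \|(v_i)\|_{Y_j}^{p_j} = \sum_i \rho_i^{-\delta_j p_j}\|v_i\|_{H^{s_j,p_j}(\mathbb{R}^n)}^{p_j}.
\]
By the very definition \eqref{eq:Hspdelta}, the analysis map $Tu = (\chi \Phi_i^* u)_i$ is an isometric embedding of $H^{s_j,p_j}_{\delta_j}(M;E)$ into $Y_j$. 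Next, choose a smooth partition of unity $\{\eta_i\}$ on $M$ with $\supp\eta_i\subset \Phi_i(B_{1/2}^{\mathbb{H}})$, obtained by renormalizing the pullbacks of a fixed bump adapted to the cover from \S\ref{sec:coords}, and set $S(v_i) = \sum_i \eta_i\,(\Phi_i)_* v_i$. Because $\chi \equiv 1$ on $B_{1/2}^{\mathbb{H}}$, direct computation shows $S\circ T = \mathrm{Id}$ on $H^{s_j,p_j}_{\delta_j}(M;E)$.

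To see that $S\colon Y_j \to H^{s_j,p_j}_{\delta_j}(M;E)$ is bounded, I would compute the target norm via pullbacks $\Phi_k^*$ using Lemma~\ref{lem:many-norms}\eqref{part:noHcutoff}. The uniform local finiteness of the cover bounds, independently of $k$, the number of indices $i$ for which the $k$-th pullback is nonzero, while each transition map $\Phi_k^{-1}\circ \Phi_i$ is a diffeomorphism of subdomains of $B_2^{\mathbb{H}}$ whose coordinate derivatives are bounded uniformly in $i,k$ (an exercise in the explicit form \eqref{define-Mobius} of the parametrizations). Pullback by such a bounded-geometry diffeomorphism is a bounded operator on $H^{s,p}$ of an open subset of $\mathbb{R}^n$, and the cutoff factors $\Phi_k^*\eta_i$ are uniformly bounded multipliers. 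Combining these estimates with $\rho_k \sim \rho_i$ on overlapping parametrizations yields the desired bound for $S$, making $(T,S)$ a retract pair $H^{s_j,p_j}_{\delta_j}(M;E)\rightleftarrows Y_j$ for both $j=1,2$.

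The functorial retract property of complex interpolation then identifies $[H^{s_1,p_1}_{\delta_1}(M;E), H^{s_2,p_2}_{\delta_2}(M;E)]_\theta$ with a closed complemented subspace of $[Y_1,Y_2]_\theta$, with equivalent norms. To identify $[Y_1,Y_2]_\theta$ I would apply Calder\'on's complex interpolation theorem for weighted Bochner--Lebesgue spaces on the counting measure of $\mathbb{N}$ (with weights $\rho_i^{-\delta_j p_j}$): a short arithmetic confirms that the interpolated weight reads $\rho_i^{-\delta p}$ with $\delta$ as in \eqref{eq:spdelta_interp-early}, and the classical identity $[H^{s_1,p_1}(\mathbb{R}^n), H^{s_2,p_2}(\mathbb{R}^n)]_\theta = H^{s,p}(\mathbb{R}^n)$ identifies the interpolated fibre. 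Thus $[Y_1,Y_2]_\theta = \ell^p((\rho_i^{-\delta p});\,H^{s,p}(\mathbb{R}^n))$, which is precisely the model space for $H^{s,p}_\delta(M;E)$ under $T$, completing the identification.

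The main obstacle I expect is the uniform boundedness of $S$ on $Y_j$ for \emph{non-integer} $s_j$: one must know that pullback by a smooth diffeomorphism with uniformly bounded derivatives preserves $H^{s,p}$ of an open subset of $\mathbb{R}^n$ with a norm bound depending only on the supremum norms of finitely many derivatives of the diffeomorphism and its inverse. This is a standard fact (reducing, via Stein extension, to the coordinate invariance of $H^{s,p}(\mathbb{R}^n)$ established by Littlewood--Paley or difference-quotient characterizations), but it has to be invoked carefully since the rest of the argument is purely formal manipulation of retracts and interpolation functors.
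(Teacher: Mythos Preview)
Your proposal is correct and follows essentially the same strategy as the paper: realize $H^{s,p}_\delta(M;E)$ as a retract of a weighted $\ell^p$-sequence space with $H^{s,p}(\mathbb{R}^n)$ fibers, then apply known interpolation for such sequence spaces together with $[H^{s_1,p_1}(\mathbb{R}^n),H^{s_2,p_2}(\mathbb{R}^n)]_\theta = H^{s,p}(\mathbb{R}^n)$. The only cosmetic difference is in the reconstruction map: you use a partition of unity $\{\eta_i\}$ subordinate to $\{\Phi_i(B_{1/2}^{\mathbb{H}})\}$, whereas the paper pushes forward against a second cutoff $\eta$ with $\eta\chi=\chi$ and then divides by the resulting $\bar\chi=\sum_i(\Phi_i)_*\chi$; both produce a bounded left inverse to the analysis map, and the uniform transition-Jacobian estimate you flag as the main obstacle is exactly what the paper isolates as Lemma~\ref{lem:Hbuilder} (proved for integer orders and extended by interpolation and duality).
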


\begin{lemma}\label{lem:Xinterp-early}
Let $(A_0, A_1)$ be an interpolation pair and suppose $T$
is a linear map
\[
T: A_0 + A_1 \to X^{s_0,p_0}_{\delta_0}(M;E) + X^{s_1,p_1}_{\delta_1}(M;E)
\]
for some $1< p_0,p_1<\infty$, $s_0,s_1\in\Reals$,  and $\delta_0,\delta_1\in\Reals$. If $T$ restricts to continuous maps $T_k:A_k\to X^{s_k,p_k}_{\delta_k}(M;E)$, $k=0,1$, then for each
$0<\theta<1$ the map T also restricts to a continuous map
\[
T_\theta: [A_0, A_1]_\theta \to X^{s,p}_{\delta}(M;E),
\]
where $s$, $p$ and $\delta$ are as in equations \eqref{eq:spdelta_interp-early}.
Moreover,
\[
\|T_\theta\|\lesssim \|T_0\|^{1-\theta} \|T_1\|^\theta
\]
with implicit constant independent of $T_0$, $T_1$.
\end{lemma}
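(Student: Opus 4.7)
The plan is to reduce the statement to the known complex-interpolation identity for Bessel potential spaces on $\mathbb{R}^n$, exploiting the fact that an $\ell^\infty$ norm on the target interacts nicely with complex interpolation provided the ``coordinates'' arise from uniformly bounded linear functionals. I would not try to identify $[X^{s_0,p_0}_{\delta_0},X^{s_1,p_1}_{\delta_1}]_\theta$ itself (it is not equal to $X^{s,p}_\delta$ in general, which is why the lemma is one-sided); instead I would work index by index in the M\"obius cover.

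The first step is to fix a M\"obius parametrization index $i$ and consider the localization map $\tilde L_i(u)=\chi\,\Phi_i^* u$. By the defining norm \eqref{GS-int-norm},
\[
\tilde L_i\colon X^{s_k,p_k}_{\delta_k}(M;E)\to H^{s_k,p_k}(\mathbb{R}^n)
\]
is continuous with operator norm at most $\rho_i^{\delta_k}$ for $k=0,1$. Composing with the endpoint maps of $T$ yields $\tilde L_i\circ T_k\colon A_k\to H^{s_k,p_k}(\mathbb{R}^n)$ continuous with norm bounded by $\rho_i^{\delta_k}\|T_k\|$.

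The second step is to invoke the Euclidean complex interpolation identity (a special case of Theorem~\ref{thm:H-interp-early}),
\[
[H^{s_0,p_0}(\mathbb{R}^n),H^{s_1,p_1}(\mathbb{R}^n)]_\theta=H^{s,p}(\mathbb{R}^n),
\]
together with the functorial property of complex interpolation that a linear map bounded at both endpoints is bounded on the interpolants with norm at most the geometric mean of the endpoint norms. Applying this to $\tilde L_i\circ T$ produces a continuous map $[A_0,A_1]_\theta\to H^{s,p}(\mathbb{R}^n)$ satisfying
\[
\|\tilde L_i\circ T\|_{[A_0,A_1]_\theta\to H^{s,p}(\mathbb{R}^n)}\le (\rho_i^{\delta_0}\|T_0\|)^{1-\theta}(\rho_i^{\delta_1}\|T_1\|)^\theta=\rho_i^{\delta}\,\|T_0\|^{1-\theta}\|T_1\|^\theta,
\]
where $s$, $p$, $\delta$ are the interpolated parameters of \eqref{eq:spdelta_interp-early}. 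Note that the bound is uniform in $i$ after dividing by $\rho_i^\delta$, which is the crucial point.

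The final step is to take the supremum in $i$. For any $u\in [A_0,A_1]_\theta$, the previous estimate rearranges to $\rho_i^{-\delta}\|\chi\,\Phi_i^* Tu\|_{H^{s,p}(\mathbb{R}^n)}\le \|T_0\|^{1-\theta}\|T_1\|^\theta\,\|u\|_{[A_0,A_1]_\theta}$, and taking the supremum over $i$ together with \eqref{GS-int-norm} yields both the continuity of $T_\theta$ into $X^{s,p}_\delta(M;E)$ and the claimed operator-norm bound. I do not anticipate any genuine obstacle: the argument is essentially a weighted bilinear interpolation applied pointwise in the M\"obius cover, and the supremum at the end is precisely what allows us to bypass the failure of complex interpolation to preserve the $\ell^\infty$ structure of the Gicquaud--Sakovich spaces themselves. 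The only verification required beyond this is that the target sum space $X^{s_0,p_0}_{\delta_0}(M;E)+X^{s_1,p_1}_{\delta_1}(M;E)$ embeds into a Hausdorff space on which $\tilde L_i$ is well defined, which is immediate since each $\chi\,\Phi_i^*(\cdot)$ extends continuously to the sum of the local Bessel potential spaces over $\mathbb{R}^n$.
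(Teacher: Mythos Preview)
Your proof is correct and follows essentially the same route as the paper's: for each M\"obius index $i$ the paper considers $T_{(i)}=\chi\cdot(\Phi_i^*\circ T)$ as a map $A_k\to \rho_i^{-\delta_k}H^{s_k,p_k}(\mathbb{R}^n)$ with norm at most $\|T_k\|$, applies complex interpolation together with the scaled-norm identity $[cX,dY]_\theta=c^{1-\theta}d^{\theta}[X,Y]_\theta$, and then observes that the $X^{s,p}_\delta$ norm is the supremum over $i$ of the resulting bounds. The only cosmetic difference is that you track the $\rho_i^{\delta_k}$ factors by hand rather than absorbing them into scaled target norms; note also that the Euclidean identity $[H^{s_0,p_0}(\mathbb{R}^n),H^{s_1,p_1}(\mathbb{R}^n)]_\theta=H^{s,p}(\mathbb{R}^n)$ you invoke is standard (Triebel) rather than literally a case of Theorem~\ref{thm:H-interp-early}, which concerns the weighted spaces on $M$.
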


\subsection{Embedding spaces on \texorpdfstring{$\bar M$}{M̅} into weighted spaces on \texorpdfstring{$M$}{M}}
The construction of fortified  spaces in
\S\ref{sec:asymptotic-spaces} relies fundamentally on the observation that
function spaces on $\bar M$ embed continuously into weighted
spaces on $M$. Doing so requires the notion of the \Defn{weight}
of a tensor bundle, the number
of covariant indices minus the number of contravariant indices.
Thus, a metric has weight 2 and a tangent vector has weight -1.
Lemma 3.7(a) of \cite{Lee-FredholmOperators}, reproduced below,
is the template for analogous embeddings for Sobolev spaces.
\begin{lemma}[H\"older $\bar M$ inclusions]
\label{lem:CMbar-into-CMw}
Suppose $k\in\Nats_{\ge 0}$, $0\le \alpha\le 1$, and that
$E$ has weight $w$. Then
\[
C^{k,\alpha}(\bar M;E)\hookrightarrow C^{k,\alpha}_w(M;E).
\]
\end{lemma}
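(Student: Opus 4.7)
The plan is to show that the scaling factors arising from each preferred M\"obius parametrization $\Phi_i$ exactly compensate for the bundle weight $w$. For a parametrization centered at a point $p_i \in A_{r_0/8}$, the composition $\Theta_i \circ \Phi_i$ is the affine map $(x,y) \mapsto (\theta_i + \rho_i x, \rho_i y)$, whose Jacobian with respect to the boundary coordinates $\Theta_i$ is $\rho_i \Id$. Consequently, under pullback, a covariant index produces a factor of $\rho_i$ and a contravariant index a factor of $\rho_i^{-1}$. Since $E$ has weight $w$ (covariant rank minus contravariant rank), the coefficients of $\Phi_i^* u$ in the $(x,y)$ coordinates are obtained from the boundary-coordinate components of $u$ by multiplying by $\rho_i^w$ and evaluating at $\Phi_i(x,y)$. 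Each Euclidean derivative $\partial_{x^a}$ applied to a component introduces, by the chain rule, one additional factor of $\rho_i$.

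Next I would use the smoothness of $\hat g$ on $\bar M$, which bounds its Christoffel symbols in boundary coordinates, to rewrite coordinate derivatives in terms of $\hat g$-covariant derivatives plus lower-order terms. This gives, for $u \in C^{k,\alpha}(\bar M;E)$, that the boundary-coordinate components of $u$ together with their coordinate derivatives of order at most $k$ are $C^{0,\alpha}$ on the chart, with norm controlled by $\|u\|_{C^{k,\alpha}(\bar M;E)}$. Combining this with the scaling observation above yields, for $0 \le j \le k$,
\[
\|\nabla^j_{\mathbb E} \Phi_i^* u\|_{C^0(B_1^\Hyp)} \lesssim \rho_i^{w+j}\,\|u\|_{C^{k,\alpha}(\bar M;E)},
\]
while the image separation $|\Phi_i(x_1,y_1)-\Phi_i(x_2,y_2)|$ in boundary coordinates equals $\rho_i|(x_1,y_1)-(x_2,y_2)|$, contributing one extra factor of $\rho_i^\alpha$ to the top-order H\"older seminorm:
\[
[\nabla^k_{\mathbb E}\Phi_i^* u]_{C^{0,\alpha}(B_1^\Hyp)} \lesssim \rho_i^{w+k+\alpha}\,\|u\|_{C^{k,\alpha}(\bar M;E)}.
\]
Multiplying through by $\rho_i^{-w}$ then yields
\[
\rho_i^{-w}\,\|\Phi_i^* u\|_{C^{k,\alpha}(B_1^\Hyp)} \lesssim \bigl(1+\rho_i+\cdots+\rho_i^{k+\alpha}\bigr)\|u\|_{C^{k,\alpha}(\bar M;E)} \lesssim \|u\|_{C^{k,\alpha}(\bar M;E)},
\]
since $\rho$ is bounded above on the compact manifold $\bar M$.

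The finitely many extra M\"obius parametrizations covering $M \setminus A_{r_0/8}$ have images in a compact subset of $M$ on which $\rho$ is bounded away from both $0$ and $\infty$, so the corresponding estimate is immediate. Taking the supremum over all preferred parametrizations gives the inclusion with continuity. The main obstacle, such as it is, is just the bookkeeping of powers of $\rho_i$ contributed by the three sources (the weight $w$, the $j$ coordinate derivatives, and the H\"older exponent $\alpha$) and verifying that they combine to a non-negative power which remains bounded on $\bar M$; the argument is a short version of the template \cite[Lemma~3.7(a)]{Lee-FredholmOperators} adapted to our preferred parametrizations.
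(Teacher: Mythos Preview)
The paper does not prove this lemma itself; it simply cites \cite[Lemma~3.7(a)]{Lee-FredholmOperators} and restates the result. Your proposal correctly reconstructs the standard scaling argument behind that reference (and you acknowledge as much), so there is nothing to compare: your bookkeeping of the factors $\rho_i^{w}$, $\rho_i^{j}$, and $\rho_i^{\alpha}$ is accurate, and the conclusion that $\rho_i^{-w}\|\Phi_i^*u\|_{C^{k,\alpha}(B_1^\Hyp)}$ is uniformly bounded follows exactly as you describe.
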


As a first step to generalizing this result we establish the following norm equivalences.
\begin{lemma}\label{lem:normchar}
Suppose $k\in\Nats_{\ge 0}$, $1<p<\infty$, and that $E$ has weight $w$.
We have the following norm equivalences.
\begin{enumerate}
  \item\label{part:W-via-W0p}$\displaystyle \|u\|_{W^{k,p}_\delta(M;E)}  \sim \sum_{j=0}^k \|\hat \nabla^j  u\|_{L^p_\delta(M;T^{0,j}M\otimes E)}$.
  \item\label{part:X-via-X0p}$\displaystyle \|u\|_{X^{k,p}_\delta(M;E)}  \sim \sum_{j=0}^k \|\hat \nabla^j  u\|_{X^{0,p}_\delta(M;T^{0,j}M\otimes E)}$.
  \item\label{part:Wbar-via-W}$\displaystyle\|u\|_{W^{k,p}(\bar M;E)} \sim \sum_{j=0}^k \| \hat\nabla{}^j u\|_{L^p_{w+j - n/p}(M;T^{0,j}M\otimes E)}$.
\end{enumerate}
\end{lemma}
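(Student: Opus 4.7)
The plan is to handle each part in turn, relying almost entirely on tools already established.

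For part \eqref{part:W-via-W0p}, I would first apply Lemma \ref{lem:many-norms}\eqref{part:Hk-is-Wk} to replace $\|u\|_{W^{k,p}_\delta(M;E)}$ with the equivalent Bessel-potential norm $\|u\|_{H^{k,p}_\delta(M;E)}$. Then Lemma \ref{lem:alt-H-X-norms}\eqref{part:alt-H-norm} applied with $s=m=k$ gives
\[
\|u\|_{H^{k,p}_\delta(M;E)} \sim \|u\|_{L^p_\delta(M;E)} + \|\hat\nabla^k u\|_{L^p_\delta(M;T^{0,k}M\otimes E)},
\]
which is immediately bounded above by $\sum_{j=0}^k \|\hat\nabla^j u\|_{L^p_\delta}$, yielding the $\lesssim$ direction of the stated equivalence. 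For the $\gtrsim$ direction, iterate Lemma \ref{lem:nabla-HX-map} to see that $\|\hat\nabla^j u\|_{L^p_\delta} = \|\hat\nabla^j u\|_{H^{0,p}_\delta} \lesssim \|u\|_{H^{j,p}_\delta}$, and then use the basic inclusion $H^{k,p}_\delta \hookrightarrow H^{j,p}_\delta$ for $0\le j\le k$ from Lemma \ref{lem:basic-inclusions}, followed by Lemma \ref{lem:many-norms}\eqref{part:Hk-is-Wk} once more to return to $W^{k,p}_\delta$.

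Part \eqref{part:X-via-X0p} follows the identical template but using the Gicquaud--Sakovich version: Lemma \ref{lem:alt-H-X-norms} part (b) with $s=m=k$ directly gives the $\gtrsim$ inequality, while the $\lesssim$ direction uses the $X$-map continuity of $\hat\nabla$ from Lemma \ref{lem:nabla-HX-map} iterated, together with the basic $X^{k,p}_\delta \hookrightarrow X^{j,p}_\delta$ inclusion.

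Part \eqref{part:Wbar-via-W} is the substantive geometric content of the lemma. The key observation is the pointwise rescaling identity: if $F$ is a tensor bundle of weight $w'$, then for any section $T$ of $F$ the conformal rescaling $\hat g = \rho^2 \check g$ yields
\[
|T|_{\hat g} = \rho^{-w'} |T|_{\check g},
\qquad
dV_{\hat g} = \rho^n\, dV_{\check g},
\]
so that
\[
\int_{\bar M} |T|^p_{\hat g}\, dV_{\hat g}
= \int_M \rho^{n - w'p}\, |T|^p_{\check g}\, dV_{\check g}
= \|T\|^p_{L^p_{w'-n/p}(M;F)}.
\]
Now $\hat\nabla^j u$ is a section of $T^{0,j}M\otimes E$, which has weight $w+j$; applying the rescaling identity above with $w' = w+j$ gives $\|\hat\nabla^j u\|_{L^p(\bar M)} \sim \|\hat\nabla^j u\|_{L^p_{w+j-n/p}(M)}$. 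Summing over $j=0,\ldots,k$ and recognizing the left side as $\|u\|_{W^{k,p}(\bar M;E)}$ (up to equivalence, using the standard norm on $\bar M$ defined via $\hat g$) completes the proof.

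The main obstacle is bookkeeping rather than analysis: in part \eqref{part:Wbar-via-W} one must track carefully that the relevant bundle weight increases by one with each application of $\hat\nabla$, which explains the shift $w \mapsto w+j$ in the weight parameter. No new estimates beyond those already in hand are required.
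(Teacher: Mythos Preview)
Your proposal is correct. Parts \eqref{part:X-via-X0p} and \eqref{part:Wbar-via-W} match the paper's argument exactly: the paper also cites Lemmas \ref{lem:nabla-HX-map} and \ref{lem:alt-H-X-norms} for part \eqref{part:X-via-X0p}, and the rescaling identity you wrote down is precisely the paper's equation \eqref{Lp-barM-M}.

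For part \eqref{part:W-via-W0p} your route differs slightly from the paper's. The paper argues directly from the definition $\|u\|_{W^{k,p}_\delta}^p = \sum_j \int_M |\check\nabla^j(\rho^{-\delta}u)|^p_{\check g}\,dV_{\check g}$, using that the difference tensor $\check\nabla-\hat\nabla=\rho^{-1}\bar D$ with $\bar D\in C^\infty(\bar M)$ of weight $1$ lies in $C^\infty_0(M)$, together with $\rho^{-1}d\rho\in C^\infty_0(M)$, to interchange $\check\nabla$ and $\hat\nabla$ modulo lower-order terms with $C^\infty_0$ coefficients. You instead pass through $H^{k,p}_\delta$ via Lemma \ref{lem:many-norms}\eqref{part:Hk-is-Wk} and reuse Lemma \ref{lem:alt-H-X-norms}, treating parts \eqref{part:W-via-W0p} and \eqref{part:X-via-X0p} uniformly. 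Your approach is tidier given the machinery already in place; the paper's is more self-contained. One small correction: the inclusion $H^{k,p}_\delta\hookrightarrow H^{j,p}_\delta$ for $j\le k$ is not in Lemma \ref{lem:basic-inclusions} as you cite, but follows immediately from Proposition \ref{prop:SobolevEmbedding} (or trivially from the definition, since $H^{k,p}(\Reals^n)\hookrightarrow H^{j,p}(\Reals^n)$).
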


\begin{proof}
Part (\ref{part:X-via-X0p}) follows from Lemmas \ref{lem:nabla-HX-map}
and \ref{lem:alt-H-X-norms}.  The remaining equivalences follow
from an elementary computation using the following observations:
\begin{itemize}
\item  A tensor $u$ of weight $w$ satisfies
$|u|_{\check g} = \rho^w |u|_{\hat g}$.
\item The volume elements of $\hat g$ and $\check g$ are related by
$dV_{\check g} = \rho^{-n} dV_{\hat g}$.
\item Lemma \ref{lem:CMbar-into-CMw} implies $\rho^{-1}d\rho\in C^\infty_0(M;T^{0,1} M)$.
\item Lemma \ref{lem:CMbar-into-CMw} also implies that the
difference tensor between the Levi-Civita connections of $\check g$ and
$\hat g$ lies in $C^\infty_0(M;T^{0,2} M)$ since
\begin{equation*}
\check\nabla - \hat\nabla=\rho^{-1}\bar D,
\end{equation*}
where $\bar D\in C^\infty(\bar M)$ is a tensor of weight $1$.
\item The tensor product is continuous 
\begin{equation*}
C^\infty_0(M;E_1)\otimes L^{p}_\delta(M;E_2)\to L^p_\delta(M;E_1\otimes E_2).
\qedhere
\end{equation*}
\end{itemize}
Note that the first two observations above yield the useful identity
\begin{equation}
\label{Lp-barM-M}
\| u\|_{L^p(\bar M;E)} = \| u\|_{L^p_{w-n/p}(M;E)}.
\end{equation}
\end{proof}

In the following generalization of Lemma \ref{lem:CMbar-into-CMw} the
space $H^{s,p}(\bar M;E)$ is the usual Sobolev space when $s\in\Nats_{\ge 0}$
and is obtained by interpolation for $0<s<\infty$. See, e.g., 
the construction of \cite[Chapter 4]{taylor2010partial} for the case $p=2$, which generalizes
to arbitrary $1<p<\infty$.
\begin{proposition}[Sobolev $\bar M$ inclusions]\label{prop:Mbar-M-HX}
Let $w$ be the weight of $E$ and suppose $1<p<\infty$ and $s\in\Reals$.
The following embeddings are continuous under the listed restrictions.
\begin{align}
H^{s,p}(\bar M;E) &\hookrightarrow H^{s,p}_{w-n/p}(M;E), && s\ge 0\label{eq:HbartoH};\\
H^{s,p}(\bar M;E) &\hookrightarrow X^{s,p}_w(M;E),&& s\ge 1 \text{ and }s > n/p.\label{eq:HbartoX}
\end{align}
\end{proposition}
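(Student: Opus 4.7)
My plan is to establish each embedding first for integer exponents and then extend to real exponents by interpolation.

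For \eqref{eq:HbartoH} with integer $s=k\in\Nats_{\ge 0}$, I would invoke Lemma \ref{lem:normchar} to rewrite both norms as sums of $L^p$-type norms of $\hat\nabla^j u$. Part \eqref{part:Wbar-via-W} of that lemma gives $\|u\|_{W^{k,p}(\bar M;E)} \sim \sum_{j=0}^k\|\hat\nabla^j u\|_{L^p_{w+j-n/p}(M;T^{0,j}M\otimes E)}$, while part \eqref{part:W-via-W0p} gives $\|u\|_{W^{k,p}_{w-n/p}(M;E)} \sim \sum_{j=0}^k \|\hat\nabla^j u\|_{L^p_{w-n/p}(M;T^{0,j}M\otimes E)}$. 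Since $\rho$ is bounded above on $\bar M$ and $w+j-n/p \ge w-n/p$, the inclusion $L^p_{w+j-n/p}(M) \hookrightarrow L^p_{w-n/p}(M)$ is immediate, and the integer case follows. The non-integer case then follows from Theorem \ref{thm:H-interp-early} on the target side combined with the interpolation definition of $H^{s,p}(\bar M)$.

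For \eqref{eq:HbartoX} with integer $s = k \ge 1$ satisfying $k > n/p$, I would argue locally, aiming to show $\rho_i^{-w}\|\Phi_i^* u\|_{H^{k,p}(B_1^\Hyp)} \lesssim \|u\|_{H^{k,p}(\bar M)}$ uniformly in $i$. By Lemma \ref{lem:missing} it is enough to bound $\|\Phi_i^* u\|_{L^p(B_1^\Hyp)}$ and $\|\nabla_{\mathbb E}^k \Phi_i^* u\|_{L^p(B_1^\Hyp)}$ each by $\rho_i^w\|u\|_{H^{k,p}(\bar M)}$. The scaling of the M\"obius pullback implies that the Euclidean components of $\Phi_i^* u$ are $\rho_i^w$ times those of $u$ in boundary coordinates at $\Phi_i(x,y)$; combined with the Sobolev embedding $H^{k,p}(\bar M) \hookrightarrow C^0(\bar M)$ (valid since $k > n/p$), this controls the first term pointwise. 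For the derivative, Lemma \ref{lem:christoffel-bound} allows me to expand
\[
\nabla_{\mathbb E}^k \Phi_i^* u = \sum_{j=0}^k A_{k,j}\,\Phi_i^*(\hat\nabla^j u),
\]
where $A_{k,j}$ is a universal tensor of scaling order $\rho_i^{k-j}$ uniformly bounded on $B_2^\Hyp$; the change of variables $(x,y) \mapsto \Phi_i(x,y)$ (Jacobian $\rho_i^n$) then gives $\|\nabla_{\mathbb E}^k\Phi_i^* u\|_{L^p(B_1^\Hyp)} \lesssim \rho_i^{w+k-n/p}\|u\|_{W^{k,p}(\bar M)}$. Since $k > n/p$, the factor $\rho_i^{k-n/p}$ is uniformly bounded, leaving the desired $\rho_i^w$ in front.

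Extension to non-integer $s$ with $s \ge 1$ and $s > n/p$ uses Lemma \ref{lem:Xinterp-early} between a pair of integer exponents both satisfying the integer hypothesis and chosen so that $s$ is a convex combination; the source-side interpolation is by definition of $H^{s,p}(\bar M)$. The principal technical obstacle lies in the bookkeeping of $\rho_i$ powers in the second embedding. A direct change-of-variables estimate for $\|\Phi_i^* u\|_{L^p(B_1^\Hyp)}$ costs a factor $\rho_i^{w-n/p}$ that is unbounded as $\rho_i \to 0$; the pointwise bound supplied by Sobolev embedding $H^{s,p}(\bar M) \hookrightarrow C^0(\bar M)$ under $s > n/p$ is what absorbs this, and is why the hypothesis requires $s > n/p$ and not merely $s \ge 1$. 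In the derivative estimate, the powers of $\rho_i$ accumulated through Lemma \ref{lem:christoffel-bound} must exactly balance the change-of-variables Jacobian to leave a clean $\rho_i^w$, which is the content of the careful scaling encoded in that lemma.
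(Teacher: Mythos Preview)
Your treatment of \eqref{eq:HbartoH} is essentially the paper's proof.

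For \eqref{eq:HbartoX} with integer $s=k$, your approach differs from the paper's. The paper uses Lemma \ref{lem:normchar}\eqref{part:X-via-X0p} to reduce to showing $\hat\nabla^j u \in X^{0,p}_w(M)$ separately for each $0 \le j \le k$, and then performs a case analysis on $j$: when $k-j > n/p$ it embeds $H^{k-j,p}(\bar M)$ into a H\"older space and uses Lemma \ref{lem:CMbar-into-CMw}; when $k-j < n/p$ it embeds into $L^q(\bar M)$, identifies this with $L^q_{w+j-n/q}(M)$, and uses $j - n/q = k - n/p > 0$. Your route via Lemma \ref{lem:missing} is more economical, treating only the $j=0$ and $j=k$ endpoints and letting Gagliardo--Nirenberg supply the intermediate ones; your scaling computation for the top-order term is correct.

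There is, however, a gap in your interpolation step for non-integer $s$. You propose interpolating between two integer exponents at the \emph{same} Lebesgue parameter $p$, but when $\lfloor s \rfloor \le n/p < s$ (for instance $n=3$, $p=2$, $s=1.8$) the lower endpoint $k=\lfloor s\rfloor$ fails the hypothesis $k > n/p$, and no admissible integer pair exists at fixed $p$. The paper resolves this by varying the Lebesgue exponent as well: it takes endpoints $H^{k,q_0}(\bar M)$ and $H^{k+1,q_1}(\bar M)$ with $k < s < k+1$ and $q_0,q_1$ chosen so that $kq_0 = (k+1)q_1 = sp$, which forces $k > n/q_0$ and $k+1 > n/q_1$ since $sp > n$. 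Lemma \ref{lem:Xinterp-early} accommodates simultaneous interpolation in $s$ and $p$ on the target side, and complex interpolation of $H^{\cdot,\cdot}(\bar M)$ recovers $H^{s,p}(\bar M)$ on the source side. With this adjustment your argument goes through.
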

\begin{proof}
In the case $s\in \Nats_{\ge 0}$, inclusion
\eqref{eq:HbartoH} follows from parts \ref{part:W-via-W0p} and \ref{part:Wbar-via-W} of 
Lemma \ref{lem:normchar}; the general case follows by interpolation.

For embedding \eqref{eq:HbartoX}, first consider the case
$s=k\in \Nats_{\ge 0}$. From
Lemma \ref{lem:normchar}\eqref{part:X-via-X0p}
it suffices to show that 
$\hat\nabla^i u\in X^{0,p}_w(M)$ for $0\le i\le k$.
If $\frac{1}{p} - \frac{k-i}{n}<0$, then Sobolev embedding on $\bar M$,
Lemma \ref{lem:CMbar-into-CMw},
and the basic inclusions from Lemma \ref{lem:basic-inclusions}
imply there exists $\alpha>0$ such that
\begin{align*}
\hat\nabla{}^i u\in H^{k-i,p}(\bar M)
&\hookrightarrow C^{0,\alpha}(\bar M) \\
&\hookrightarrow C^{0,\alpha}_{w+i}(M)\\
&\hookrightarrow C^{0,\alpha}_{w}(M)\\
&\hookrightarrow X^{0,p}_{w}(M).
\end{align*}
On the other hand, if $\frac{1}{p} - \frac{k-i}{n} >0$ then with $\frac{1}{q} = \frac{1}{p} - \frac{k-i}{n}$ we obtain
\begin{align*}
\hat\nabla{}^i u\in H^{k-i,p}(\bar M)
&\hookrightarrow
 L^q(\bar M) \\
&= L^q_{w+i-n/q}(M) \\
&\hookrightarrow L^q_w(M) \\
&\hookrightarrow X^{0,q}_w(M)
\hookrightarrow X^{0,p}_w(M)
\end{align*}
since $i-\frac{n}{q} = k-\frac{n}{p}>0$.
Finally, the boundary case
$\frac{1}{p} - \frac{k-i}{n} = 0$
can be treated by first observing that this can only occur for $i>0$
since $k>n/p$. We can therefore proceed as in the case $\frac{1}{p} - \frac{k-i}{n} = 0$
above but now choosing any $q>p$ with $i-n/q>0$.

The case where $s>1$ is non-integral follows
from interpolation (Lemma \ref{lem:Xinterp-early}) noting that
we can use $H^{k,q_0}(\bar M)$ and $H^{k+1,q_1)}(\bar M)$ as endpoints
of interpolation where $k\in\Nats$ satsfies $k<s<k+1$ and where
$kq_0 = (k+1)q_1 = sp > n$.
\end{proof}
The restriction $s\ge 1$ in embedding \eqref{eq:HbartoX}
is presumably an artifact of the proof technique and $s>0$ would be more natural.
A traveling bump argument shows, however, that the requirement $s>n/p$ is close to optimal:
if $p<n$ we can apply a scaling argument
to find smooth functions $f_i$ each with support on $\Phi_i(B_2^\Hyp)$
such that $\|f_i\|_{H^{1,p}(\bar M;\Reals)}=1$ for each $i$,
and yet $\|f_i\|_{X^{1,p}_0(\bar M;\Reals)}\to\infty$.

\subsection{Geometric tensor bundles}\label{secsec:gtb-early}
Following the terminology of \cite{Lee-FredholmOperators},
a \Defn{geometric tensor bundle} over $\bar M$ is a
subbundle $E$ of some tensor bundle $T^{k,l}\bar M$
that is associated with an invariant subspace of the standard representation
of $O(n)$ (or $SO(n)$ if $\bar M$ is orientable) on $T^{k,l}\mathbb R^n$.
These include bundles associated with $GL(n;\Reals)$
representations such as the symmetric tensors in $T^{0,2}\bar M$,
but also metric-dependent objects such as the trace-free subbundle.

Section \ref{secsec:geometric-E} contains results specific to
function spaces of sections of geometric tensor bundles, and we summarize
the salient points here.
Let $h\in X^{s_0,p_0}_0(M;T^{0,2} M) $ be a metric on $M$, where $1<p_0<n$ and $s_0\ge 1$ satisfy $s_0>n/p_0$, such that
that $\bar h = \rho^2 h$ extends continuously to a metric on $\bar M$.
(The condition $s_0>n/p_0$ ensures $h\in C^{0,\alpha}(M;T^{0,2} M)$
for some $\alpha>0$, whereas $s_0\ge 1$ is a mild technical condition that is always satisfied in our applications here.)
Let $E_h$ be a geometric tensor bundle associated with $h$ and
let $\hat E$ be its ambient tensor bundle.
We say that a section $u$ of $\hat E$ is an element of $X^{s_0,p_0}_0(M;E_h)$ if it is a
section of $E_h$ in the classical $C^0$ sense, and if $u\in X^{s_0,p_0}_0(M;\hat E)$.
Sections with other regularity classes arise as linear combinations of sections in $X^{s_0,p_0}_0(M;E_h)$ with coefficients in either
$H^{s,p}_{\delta}(M;\Reals)$ or $X^{s,p}_{\delta}(M;\Reals)$
for some choice of $s$ or $p$.  In order to make sense of the multiplication
we imposes conditions on these parameters and we define $\mathcal S^{s_0,p_0}_0$
to be the set of pairs $(s,p)\in\Reals\times (1,\infty)$ such that
\[
\begin{gathered}
-s_0 \le s \le s_0\\
\frac{1}{p_0}-\frac{s_0}{n} \le \frac{1}{p}-\frac{s}{n} \le \frac{1}{p\dual_0}+\frac{s_0}{n}.
\end{gathered}
\]
The set $S^{s_0,p_0}_0$ is a special case of related sets $S^{s_0,p_0}_d$ defined in
\S\ref{sec:differential-ops} concerning $d^{\rm th}$ order differential operators.
Proposition \ref{prop:multiplication} ensures that if $u\in H^{s,p}_{\delta}(M;\Reals)$
for some $(s,p)\in \mathcal S^{s_0,p_0}_0$ and if $v\in X^{s_0,p_0}_0(M;\hat E)$ then
$uv\in H^{s,p}_\delta(M;\hat E)$, and a similar fact is true for
coefficients in $X^{s,p}_{\delta}(M;\Reals)$.  We say that $u$ is an element
of $H^{s,p}_\delta(M;E_h)$ for some $(s,p)\in \mathcal S^{s_0,p_0}_0$
if $u$ can be written as a finite sum
\begin{equation}\label{eq:gtb-decomp}
u = \sum_{j=1}^J u_j v_j
\end{equation}
where each $v_j\in X^{s_0,p_0}_0(M;E_h)$ and each $u_j\in H^{\sigma,q}_\delta(M;\Reals)$.
In fact, using the condition that $h$ admits a continuous compactification,
we show in \S\eqref{secsec:geometric-E} that one can find
a fixed collection $\{v_j\}_{j=1}^J$ of elements of $X^{s,p}_0(M;E_h)$
and continuous maps $u\mapsto u_j$ such that each $u\in H^{s,p}_\delta(M;E_h)$
has the form \eqref{eq:gtb-decomp}. As a consequence, $H^{\sigma,q}_{\delta}(M;E_h)$
is a closed subspace of $H^{\sigma,q}_\delta(M;\hat E)$, and
the norm for $H^{\sigma,q}_{\delta}(M;E_h)$ is simply inherited from its
ambient space. A similar analysis holds
in the Gicquaud-Sakovich case and
$X^{\sigma,q}_\delta(M;E_h)$ is a closed subspace of $X^{\sigma,q}_\delta(M;E)$.

With only a few exceptions all of the results discussed up until this
point carry over (either obviously or due to proofs in \S\ref{secsec:geometric-E})
replacing all the tensor bundles with geometric tensor bundles associated with the same
metric $h$ satisfying the regularity conditions listed above.  There is,
of course, the obvious caveat
that all of the Sobolev parameters occurring in the statements must additinally
lie in the set $\mathcal S^{s_0,p_0}_0$ associated with $h$.  The following
additional adjustments are also required.

\begin{enumerate}
  \item Proposition \ref{prop:density-early} concerning density
  needs adjusting as there may be no non-zero smooth sections in a general geometric tensor
  bundle. See Proposition \ref{prop:density-gtb} instead.
  \item Lemma \ref{lem:nabla-HX-map} requires an obvious modification with respect to the target space because  $E_h$ need not be parallel with respect to
  the covariant derivatives $\hat \nabla$ and $\check \nabla$.
  \item Modest additional formalism is required to generalize
  Proposition \ref{prop:Mbar-M-HX}, especially with respect to
  interpreting sections of $H^{s,p}(\bar M;E_{\bar h})$.  We do not use this extension,
  which is left as an exercise for the reader.
\end{enumerate}

\subsection{Duality}\label{secsec:duality-early}
Let $g_{\mathbb E}$ be the Euclidean metric on $\Reals^n$.  The
$L^2$ inner product
\[
\ip<u,v>_{(\Reals^n,g_{\mathbb E})} = \int_{\Reals^n} u v\; dV_{g_{\mathbb E}}
\]
for smooth compactly supported functions extends to a continuous
bilinear form on $H^{-s,p\dual}(\Reals^n)\times H^{s,p}(\Reals^n)$
for any $s\in\Reals$ and $p,p\dual\in(1,\infty)$ with
$\frac 1p+\frac{1}{p\dual} = 1$.  Moreover, the map
\[
u\mapsto \ip<u,\cdot>_{(\Reals^n,g_{\mathbb E})}
\]
yields an isomorphism between $H^{-s,p\dual}(\Reals^n)$
and the dual space
 $(H^{s,p}(\Reals^n))\dual$.  
 An analogous
statement for the weighted Bessel potential spaces
involves a choice of metric, and we can formulate
duality for metrics with the regularity discussed
at the start of \S\ref{secsec:gtb-early}.

\begin{proposition}(Duality)\label{prop:dualH-early}
Suppose metric $h\in X^{s_0,p_0}_0(M;T^{0,2}M)$ for some $1<p_0<\infty$, with
$s_0\ge 1$ and $s_0>n/p_0$, and suppose $\rho^2 h$
admits a continuous extension to $\bar M$.

Let $E_h$ be a geometric tensor bundle over $\bar M$ associated with $h$
and for compactly supported sections $u,v\in X^{s_0,p_0}_0(M;E_h)$ define
\[
\ip<u,v>_{(M,h)} = \int_{M} \ip<u,v>_h\, dV_h,
\]
where $\ip<\cdot,\cdot>_{h}$ is the inner product induced by $h$
on the fibers of $E_h$.

If $(s,p)\in \mathcal S^{s_0,p_0}_0$ then $(-s,p\dual)\in \mathcal S^{s_0,p_0}_0$
as well
and $\ip<\cdot,\cdot>_{(M,h)}$ extends to a continuous bilinear map
\[
H^{-s,p\dual}_{-\delta}(M;E_h)\times H^{s,p}_\delta(M;E_h) \to \Reals.
\]
for each $\delta\in\Reals$. Moreover, the map
\[
v\mapsto \ip<v,\cdot>_{(M,h)}
\]
is an isomorphism $H^{-s,p\dual}_{-\delta}(M;E_h) \to (H^{s,p}_{\delta}(M;E_h))\dual$,
and $H^{-s,p}_{\delta}(M;E_h)$ is
precisely the subset of $H^{-s,p}_{\rm loc}(M;E_h)$ such that
$\left<v,\cdot\right>_{M;E_h}$ extends to an element of
$(H^{s,p}_{\delta}(M;E_h))\dual$.
\end{proposition}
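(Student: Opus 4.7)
The plan is to reduce the duality claim to the classical Euclidean duality between $H^{s,p}(\mathbb R^n)$ and $H^{-s,p\dual}(\mathbb R^n)$ via the M\"obius parametrization framework, with careful bookkeeping of weights. First, I would verify by direct arithmetic that $(s,p)\in\mathcal S^{s_0,p_0}_0$ if and only if $(-s,p\dual)\in \mathcal S^{s_0,p_0}_0$; this uses only the defining inequalities and the identity $1/p_0+1/p_0\dual=1$, and ensures that the target space $H^{-s,p\dual}_{-\delta}(M;E_h)$ is well-defined as in \S\ref{secsec:gtb-early}. Throughout the proof I would reduce the geometric tensor bundle case to the ambient tensor bundle using the fixed decomposition $u=\sum_j u_j v_j$ from \eqref{eq:gtb-decomp}, with the $v_j$ uniformly bounded sections of $E_h$, so the fiberwise inner product $\ip<\cdot,\cdot>_h$ and the volume element $dV_h$ reduce to scalar-valued multipliers that are bounded (with bounded inverse for the volume element after compensating for the weight of $\dual E_h$), by virtue of $\rho^2 h$ extending continuously to $\bar M$.

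For continuity of the bilinear pairing, I would fix a partition of unity $\{\chi_i\}$ subordinate to the cover $\{\Phi_i(B_{1/2}^\Hyp)\}$ and, for $u,v$ compactly supported smooth, split
\[
\ip<u,v>_{(M,h)} = \sum_i \int_M \chi_i \ip<u,v>_h\, dV_h.
\]
Pulling each summand back to $B_1^\Hyp$ via $\Phi_i$, the metric factors transfer to uniformly controlled multipliers (with any $\rho_i$-weights absorbed by the bundle weights of $E_h$ and $\dual E_h$), so the local Euclidean duality yields
\[
\Bigl|\int_{B_1^\Hyp}\chi\, \langle \Phi_i^*u,\Phi_i^*v\rangle\Bigr|
\lesssim \|\chi\Phi_i^* u\|_{H^{-s,p\dual}(\Reals^n)}\|\chi\Phi_i^* v\|_{H^{s,p}(\Reals^n)}.
\]
Multiplying and dividing by $\rho_i^\delta$ and summing by H\"older's inequality with conjugate exponents $p,p\dual$ gives the estimate
$|\ip<u,v>_{(M,h)}|\lesssim \|u\|_{H^{-s,p\dual}_{-\delta}}\|v\|_{H^{s,p}_\delta}$,
which extends by density (Proposition \ref{prop:density-early}) to the full spaces.

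For the isomorphism claim, injectivity follows from the pairing's non-degeneracy on $C^\infty_{\rm cpct}$ together with density. For surjectivity, given $\ell\in (H^{s,p}_\delta(M;E_h))\dual$, I would localize via the M\"obius parametrizations: each map $w\mapsto \ell(\Phi_{i,*}(\chi w))$ defines a bounded functional on $H^{s,p}(B_1^\Hyp;\Phi_i^*E_h)$, which by Euclidean duality is represented by a local distribution $\tilde v_i\in H^{-s,p\dual}(B_1^\Hyp)$. The uniform local finiteness of the parametrization cover, together with the $\ell^p$ structure of the weighted Sobolev norm, implies that the weighted $\ell^{p\dual}$ sum of $\rho_i^{\delta}\|\tilde v_i\|$ is dominated by $\|\ell\|$; this is the dual counterpart to the H\"older summation used for continuity. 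Assembling the $\tilde v_i$ consistently into a global distribution $v$ and projecting onto $E_h$ via the decomposition of \S\ref{secsec:gtb-early} yields the required representative in $H^{-s,p\dual}_{-\delta}(M;E_h)$. The final characterization follows by running the same local analysis in reverse: if $v\in H^{-s,p\dual}_{\loc}$ pairs continuously against $H^{s,p}_\delta$, then testing against bumps concentrated in each $\Phi_i(B_{1/2}^\Hyp)$ forces the corresponding weighted $\ell^{p\dual}$ condition defining $H^{-s,p\dual}_{-\delta}(M;E_h)$.

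The main obstacle is the bookkeeping for weights in the $\ell^p$/$\ell^{p\dual}$ duality of the M\"obius parametrization sums, coupled with verifying that the metric-dependent quantities $\langle\cdot,\cdot\rangle_h$ and $dV_h$ behave as bounded multipliers on pulled-back tensor components despite $h$ only lying in $X^{s_0,p_0}_0$ rather than being smooth. This is handled by the continuous compactification hypothesis on $\rho^2 h$, which gives pointwise control of the fiber metrics and their inverses, and by the geometric tensor bundle decomposition of \S\ref{secsec:gtb-early}, which ensures the whole construction respects the bundle structure $E_h$.
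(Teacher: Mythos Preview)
Your approach is correct and essentially matches the paper's: localize via the M\"obius parametrizations, invoke Euclidean duality for $H^{s,p}(\Reals^n)$, and assemble using the $\ell^p/\ell^{p\dual}$ structure of the weighted norms, handling the geometric tensor bundle through the frame decomposition of \S\ref{secsec:gtb-early}. The paper organizes the argument in layers---first scalar functions with a smooth reference metric $\check g$ (Lemma~\ref{lem:dual-makes-sense} and Theorem~\ref{thm:Hdual}), then the rough metric $h$ via the multiplier $dV_h/dV_{\check g}\in X^{s_0,p_0}_0$ (Lemma~\ref{lem:relax-dual-scalar}), then bundles (Theorem~\ref{thm:dual-H-gtb})---and for the surjectivity step partitions the index set into finitely many families $N_a$ with pairwise disjoint supports, which is the device that lets one test against a single global function to extract the $\ell^{p\dual}$ bound you gloss over; you should be aware that this is where the work hides.
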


\subsection{Nonlinearities}

Nonlinear functions can be applied to functions in Sobolev spaces once the parameters
are sufficiently regular so as to imply H\"older continuity.  We first consider
the case where the nonlinear function vanishes at zero, in which case decay
rates at infinity are preserved.
\begin{proposition}\label{prop:nonlin}
Suppose $1<p<\infty$, $s\ge 1$, $s>n/p$, and $\delta\ge 0$.
Let $F\in C^{k,\alpha}(\Reals)$ for some $k\in\Nats$ and $\alpha\in [0,1)$ with $k+\alpha>s$
and suppose $F(0)=0$.
\begin{enumerate}
  \item 
 \label{part:nonlin-H} If $u\in H^{s,p}_{\delta}(M;\Reals)$, then $F(u)\in H^{s,p}_{\delta}(M;\Reals)$ and
  \[
  \|F(u)\|_{H^{s,p}_\delta(M;\Reals)} \lesssim
  \|u\|_{H^{s,p}_{\delta}(M;\Reals)} ( 1 + \|u\|_{L^\infty(M;\Reals)}^{k+\alpha}).
  \]
  \item\label{part:nonlin-X} If $u\in X^{s,p}_{\delta}(M;\Reals)$, then $F(u)\in X^{s,p}_{\delta}(M;\Reals)$ and
  \[
  \|F(u)\|_{X^{s,p}_\delta(M;\Reals)} \lesssim
  \|u\|_{X^{s,p}_{\delta}(M;\Reals)} ( 1 + \|u\|_{L^\infty(M;\Reals)}^{k+\alpha}).
  \]
\end{enumerate}
The implicit constants above depend on $F$ and on a choice of constant
$K$ such that $\|u\|_{L^\infty(M;\mathbb R)}\le K$ but are otherwise independent of $u$.
\end{proposition}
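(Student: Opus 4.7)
The plan is to reduce the proposition to a local nonlinear composition estimate on $H^{s,p}(\mathbb R^n)$ via the M\"obius parametrizations, then reassemble either in $\ell^p$ (for the $\mathscr H$-scale) or $\ell^\infty$ (for the $X$-scale). The hypothesis $s>n/p$ ensures $u$ is continuous, and $F(0)=0$ is what allows the weighted structure to survive localization.

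The main step is the following local composition estimate: if $F\in C^{k,\alpha}(\Reals)$ with $F(0)=0$, $k+\alpha>s$ and $s>n/p$, and if $v\in H^{s,p}(\Reals^n)$ has compact support in $B_2^{\Hyp}$, then $F(v)\in H^{s,p}(\Reals^n)$ with
\[
\|F(v)\|_{H^{s,p}(\Reals^n)} \lesssim \|v\|_{H^{s,p}(\Reals^n)}\bigl(1+\|v\|_{L^\infty}^{k+\alpha}\bigr),
\]
where the implicit constant depends on $F$ and $\|v\|_{L^\infty}$. This is a classical Moser/Runst--Sickel-type result: for integer $s$ it follows from the Leibniz rule together with Proposition \ref{prop:multiplication}; the non-integer range is handled by Bony paralinearization $F(v)=T_{F'(v)}v+R(F,v)$, where the paraproduct $T_{F'(v)}$ has operator norm controlled by $\|F'(v)\|_{L^\infty}\le\|F'\|_{L^\infty([-K,K])}$ and the remainder $R$ is of higher regularity than $s$ when $k+\alpha>s$. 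Since the estimate is on a fixed bounded domain, the constant is uniform across M\"obius patches. This is the technically most delicate step; all of the rest is bookkeeping.

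Given this local estimate, one proceeds as follows. Fix a slightly larger cutoff $\tilde\chi$ with $\tilde\chi\equiv 1$ on $\supp\chi$ and $\supp\tilde\chi\subset B_2^{\Hyp}$. For each preferred M\"obius parametrization $\Phi_i$, set $v_i=\tilde\chi\,\Phi_i^*u$, extended by zero to $\Reals^n$. Because $F(0)=0$ and $\tilde\chi\equiv 1$ on $\supp\chi$, we have $\chi F(\Phi_i^* u)=\chi F(v_i)$, and multiplication by $\chi$ is continuous on $H^{s,p}(\Reals^n)$, so
\[
\|\chi\,\Phi_i^*F(u)\|_{H^{s,p}(\Reals^n)}
\lesssim \|v_i\|_{H^{s,p}(\Reals^n)}\bigl(1+\|v_i\|_{L^\infty}^{k+\alpha}\bigr).
\]
By Lemma \ref{lem:many-norms}(\ref{part:noHcutoff}),(\ref{part:noXcutoff}) applied to the cutoff $\tilde\chi$, we have $\|v_i\|_{H^{s,p}(\Reals^n)}\lesssim \|\Phi_i^*u\|_{H^{s,p}(B_{r}^{\Hyp})}$ for some $r\in(1/2,2)$, uniformly in $i$, while $\|v_i\|_{L^\infty}\le\|u\|_{L^\infty(M)}$.

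For part (\ref{part:nonlin-H}), multiply by $\rho_i^{-\delta p}$, sum over $i$, and take the $p$-th root; Lemma \ref{lem:many-norms}(\ref{part:noHcutoff}) then identifies the resulting sum with $\|u\|_{H^{s,p}_\delta(M)}$, giving the stated bound. Part (\ref{part:nonlin-X}) is identical except that one takes the supremum over $i$ and uses Lemma \ref{lem:many-norms}(\ref{part:noXcutoff}) in place of (\ref{part:noHcutoff}); the pointwise estimate on each patch translates directly to the $\sup$-norm bound. The hypothesis $\delta\ge 0$ plays no role in the patchwise estimate itself but is essential for the global statement: if $\delta<0$ then $u$ is permitted to grow at $\partial M$, in which case a nonlinearity such as $F(t)=t^2$ would cause $F(u)$ to grow faster than $u$ and fail to lie in the same weighted space.
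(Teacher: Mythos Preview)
Your proposal is correct and follows essentially the same approach as the paper: localize via M\"obius parametrizations with nested cutoffs, invoke a local $H^{s,p}(\Reals^n)$ composition estimate (the paper cites \cite{WS-Composition-96} and \cite{taylor_pseudodifferential_1991} rather than sketching paralinearization), and reassemble in $\ell^p$ or $\ell^\infty$. One small note: the paper uses $\delta\ge 0$ explicitly to embed $H^{s,p}_\delta\hookrightarrow C^0_0$, which is what makes the patchwise $L^\infty$ bounds---and hence the local implicit constants---uniform in $i$; your final remark attributes its role slightly differently, but you do rely on $\|v_i\|_{L^\infty}\le\|u\|_{L^\infty(M)}$, so the logic is sound.
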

\begin{proof}
We prove the result for $H^{s,p}_\delta(M)$;
the Gicquaud-Sakovich case is proved analogously.

Let $\chi$ be a cutoff function that equals 1 on $B_{1/2}^\Hyp$ that is
supported on $B_1^\Hyp$ and let $\eta$ be a cutoff function that equals 1 on
$B_{1}^\Hyp$ that is supported in $B_2^\Hyp$.
Part \eqref{part:nonlin-X} is a consequence
of Theorem 1 of \cite{WS-Composition-96}
(see also \S3.1 of \cite{taylor_pseudodifferential_1991}), which
implies that for a M\"obius parametrization $\Phi_i$
\begin{align*}
\|\chi \Phi_i^* F(u)\|_{H^{s,p}(B_1^\Hyp)} 
&= \| \chi F(\eta \Phi_i^* u)\|_{H^{s,p}(\Reals^n)}\\
&\lesssim \|  F(\eta \Phi_i^* u)\|_{H^{s,p}(B_1^\Hyp)}\\
&\lesssim
\|\eta \Phi_i^* u\|_{H^{s,p}(\Reals^n)} ( 1 + \|\eta \Phi_i^* u\|_{L^\infty(\Reals^n)}^{k+\alpha}),
\end{align*}
where the implicit constants depend on $\chi$ and on a choice of interval $[-K,K]$ containing $u(M)$.
Since $\delta\ge 0$ and $s>n/p$, Sobolev embedding implies $H^{s,p}_\delta(M;E)\hookrightarrow
C^0_0(M;E)$ and hence the $L^\infty$ norms appearing in this expression are uniformly bounded
independent of $i$.  
Part \eqref{part:nonlin-H} now follows from multiplying by $\rho_i$ and summing $p^{\rm th}$
powers.
\end{proof}

If the domain of $F$ in Proposition \ref{prop:nonlin}
is only an interval containing $0$, it is easy to see that
the conclusion still holds for a function $u$
having image contained in a compact set in the domain of $F$. We will use this
variation of Proposition \ref{prop:nonlin} without comment.

Proposition \ref{prop:nonlin} requires the hypothesis $F(0)=0$ to
ensure $F(u)$ decays at infinity when $u$ does.  If we drop this
hypothesis we find instead
$F(u)$ lies in a Gicquaud-Sakovich space.
\begin{corollary}\label{cor:nonlin-into-X}
Suppose $1<p<\infty$, $s\ge 1$, $s>n/p$ and $\delta\ge 0$.
Let $F\in C^{k,\alpha}(\Reals)$ for some $k\in\Nats$ and $\alpha\in [0,1)$ with $k+\alpha>s$.
\begin{enumerate}
  \item If $u\in H^{s,p}_{\delta}(M;\Reals)$ then $F(u)\in X^{s,p}_{0}(M;\Reals)$.
  \item \label{part:X-to-X-zero}If $u\in X^{s,p}_{\delta}(M;\Reals)$ then $F(u)\in X^{s,p}_{0}(M;\Reals)$.
\end{enumerate}
\end{corollary}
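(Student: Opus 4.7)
The natural approach is to split $F$ into its value at zero and the part that vanishes there, then apply the preceding proposition to the latter. Concretely, set $G(t) = F(t) - F(0)$, so that $G \in C^{k,\alpha}(\Reals)$ satisfies all the hypotheses of Proposition \ref{prop:nonlin} with $G(0) = 0$, and write $F(u) = F(0) + G(u)$.

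For either hypothesis of the corollary, Sobolev embedding (Proposition \ref{prop:SobolevEmbedding}) together with $s > n/p$ and $\delta \geq 0$ implies that $u$ is bounded on $M$. Applying Proposition \ref{prop:nonlin} to $G$ then yields $G(u) \in H^{s,p}_\delta(M;\Reals)$ in case (a) and $G(u) \in X^{s,p}_\delta(M;\Reals)$ in case (b).

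Next I would verify the chain of embeddings $H^{s,p}_\delta(M;\Reals) \hookrightarrow X^{s,p}_\delta(M;\Reals) \hookrightarrow X^{s,p}_0(M;\Reals)$ for $\delta \geq 0$. The first embedding is given directly by Lemma \ref{lem:basic-inclusions}. For the second, since $\rho$ is bounded above on $\bar M$, the factor $\rho_i^{-\delta}$ appearing in the $X^{s,p}_\delta$ norm is bounded below by a positive constant independent of $i$, so the $X^{s,p}_0$ norm is controlled by the $X^{s,p}_\delta$ norm. Thus $G(u) \in X^{s,p}_0(M;\Reals)$ in both cases.

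Finally, the constant function $F(0)$ belongs to $X^{s,p}_0(M;\Reals)$: for each M\"obius parametrization $\Phi_i$, the pullback $\chi \Phi_i^*(F(0)) = F(0)\chi$ has a fixed $H^{s,p}(\Reals^n)$ norm independent of $i$, so the supremum defining the $X^{s,p}_0$ norm is finite. Summing $F(0)$ and $G(u)$ yields the corollary. There is no substantive obstacle here; the result is essentially an immediate consequence of Proposition \ref{prop:nonlin} together with elementary inclusions between the weighted spaces, the key observation being that $X^{s,p}_0$ accommodates nonzero constants whereas the spaces with positive decay weight do not.
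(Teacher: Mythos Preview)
Your proof is correct and essentially identical to the paper's: both split $F = F(0) + G$ with $G(0)=0$, apply Proposition~\ref{prop:nonlin} to $G$, and use the inclusions $H^{s,p}_\delta \hookrightarrow X^{s,p}_\delta \hookrightarrow X^{s,p}_0$ together with the fact that constants lie in $X^{s,p}_0$. The only cosmetic difference is that the paper first reduces part~(a) to part~(b) via the embedding $H^{s,p}_\delta \subset X^{s,p}_\delta$ before carrying out the splitting, whereas you treat the two cases in parallel.
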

\begin{proof}
From the continuous embedding $H^{s,p}_\delta(M)\subset X^{s,p}_\delta(M)$
from Lemma \ref{lem:basic-inclusions}, it suffices
to establish part \eqref{part:X-to-X-zero} and we assume $u\in X^{s,p}_{\delta}(M)$
Write $F = F(0) + \hat F$, so $\hat F(0)=0$.
Proposition \ref{prop:nonlin}\eqref{part:nonlin-X} shows
$\hat F(u)\in X^{s,p}_\delta(M)\subset X^{s,p}_0(M)$.
But then $F(u)=\hat F(u) + F(0)\in X^{s,p}_0(M)$ as well.
\end{proof}

\section{Fortified spaces on \texorpdfstring{$\bar M$}{M̅}}
\label{sec:asymptotic-spaces}

As discussed in the introduction, elliptic problems on asymptotically
hyperbolic manifolds
lead to the consideration of metrics on $M$ that have a
high level of regularity, but whose compactifications have 
a lesser amount of regularity up to the boundary of $\bar M$.  In this section, 
building on prior work in \cite{WAH} in the H\"older category,
we describe two new families of function spaces on $\bar M$ that
are suitable for describing such compactified metrics.  Each of these
spaces resides in some $H^{m,p}(\bar M)$, which can be thought
of as a baseline level of smoothness, and then possesses 
additional regularity measured using weighted spaces on $M$.

We begin by recalling the spaces constructed in \cite{WAH}.
For a tensor bundle $E$ over $\bar M$ of weight $w$,
Lemma \ref{lem:CMbar-into-CMw}
implies that $C^{k,\alpha}(\bar M;E)\subset C^{k,\alpha}_{w}(M;E)$.
Both of these spaces ensure their elements have
$k+\alpha$ interior derivatives
and can be thought of as two
endpoints with respect to a spectrum of boundary regularity.
Whereas functions in $C^{k,\alpha}(\bar M;E)$ have $k+\alpha$
derivatives up to the boundary, a traveling bump argument shows that functions in
$C^{k,\alpha}_{w}(M;E)$ need not admit even a continuous
extension to $\bar M$.  Building on
these observations, we have a family of 
of function spaces that are intermediate
between $C^{k,\alpha}_w(M;E)$ and $C^{k,\alpha}(\bar M;E)$:
 given $\alpha\in[0,1]$ and
$k,m\in \Nats_{\ge 0}$ with $m\le k$, the space
$\curC^{k,\alpha;m}(M;E)$ is defined by the norm
\begin{equation}\label{eq:curC}
\|u\|_{\curC^{k,\alpha;m}(M;E)} = \sum_{j=0}^m \|\hat \nabla^j u\|_{C^{k-j,\alpha}_{w+j}(M;T^{0,j}M\otimes E)}.
\end{equation}
The parameter $k$ controls the amount of interior regularity,
while $m$ allows for a limited amount of regularity
at the boundary.  When $m\ge 1$, functions in $\curC^{k,\alpha;m}(M;E)$ lie in
$C^{m-1,1}(\bar M;E)$ \cite[Lemma 2.3(c)]{WAH}
and hence nearly possess $m$ derivatives up to the boundary.

We introduce two related families of functions spaces motivated by the embeddings
from Proposition \ref{prop:Mbar-M-HX}:
\begin{align*}
H^{s,p}(\bar M;E) &\hookrightarrow H^{s,p}_{w-n/p}(M;E),\\
H^{s,p}(\bar M;E) &\hookrightarrow X^{s,p}_{w}(M;E).
\end{align*}
\begin{definition}
Suppose $1<p<\infty$, $s\in \mathbb R$, and $m\in \mathbb N_{\ge 0}$ with $s\ge m$.
Let $E$ be a tensor bundle over $M$ with weight $w$.
The \Defn{fortified  spaces} $\mathscr H^{s,p;m}(M;E)$ and $\mathscr X^{s,p;m}(M;E)$
consist of the distributional-valued sections of $E$ such that the following norms are finite:
\begin{align}
\label{define-scrH-int}
\| u \|_{\mathscr H^{s,p;m}(M;E)} &= \sum_{j=0}^m \| \hat\nabla{}^j u\|_{H^{s-j,p}_{w+j - n/p}(M;T^{0,j}M\otimes E)},
\\
\label{define-scrX-int}
\| u \|_{\mathscr X^{s,p;m}(M;E)} &=  \sum_{j=0}^m \| \hat\nabla{}^j u\|_{X^{s-j,p}_{w+j}(M;T^{0,j}M\otimes E)}.
\end{align}
\end{definition}
We obtain spaces of smooth functions in the usual way by intersecting
the corresponding spaces with finite regularity.  
Specifically,
\begin{align*} 
\mathscr H^{\infty,p;m}(M;E)&=\bigcap_{s\in\Reals} \mathscr H^{s,p;m}(M;E),\\
\mathscr C^{\infty;m}(M;E) &= \bigcap_{k\in\Nats \atop \alpha\in[0,1]} \mathscr C^{k,\alpha;m}(M;E).
\end{align*}
In the same way that the H\"older parameter $\alpha$ 
plays no essential role in this latter intersection,
an easy application of Sobolev embedding shows that the Lebesgue parameter $p$
is irrelevant in the Gicquaud-Sakovitch case; thus we define
$$\mathscr X^{\infty;m}(M;E)=\bigcap_{s\in\Reals} \mathscr X^{s,p;m}(M;E)$$ regardless
of the value of $p$.  In fact, $\mathscr X^{\infty;m}(M;E)=\mathscr C^{\infty;m}(M;E)$
but we use the notation $\mathscr X^{\infty;m}(M;E)$ in settings where it is helpful
to emphasize the space's origins.

Fortified spaces can be thought of as weighted interior spaces 
with enhanced regularity at the boundary.  The baseline
interior regularity follows from the inclusions 
$\mathscr H^{s,p;m}(M;E)\hookrightarrow H^{s,p}_{w-n/p}(M;E)$
and 
$\mathscr X^{s,p;m}(M;E)\hookrightarrow X^{s,p}_{w}(M;E)$
which are immediate from the definition.  
The following elementary relations, 
which are easy consequences of
Lemmas \ref{lem:basic-inclusions} and \ref{lem:alt-H-X-norms},
demonstrate one facet of the enhanced boundary regularity.
\begin{lemma}\label{lem:curly-basic}
Suppose $1<p<\infty$, $m\in\Nats_{\ge0}$, and $s\in\Reals$ with $s\ge m$.
For any tensor bundle $E$ over $\bar M$,
\begin{equation*}
\mathscr X^{s,p;m}(M; E)\hookrightarrow
\mathscr H^{s,p;m}( M; E)\hookrightarrow
\mathscr H^{m,p;m}( M; E) = H^{m,p}(\bar M;E).
\end{equation*}
Additionally, if $k\in \Nats_{\ge 0}$ with $k\ge m$,
then
\begin{equation*}
\mathscr C^{k,\alpha;m}(M;E)\hookrightarrow \mathscr X^{k,p;m}(M;E)
\end{equation*}
for all $0\le \alpha\le 1$.
\end{lemma}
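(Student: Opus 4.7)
The plan is to reduce each claim to a term-by-term comparison of the summands appearing in the norms \eqref{define-scrH-int}, \eqref{define-scrX-int}, and the H\"older-type norm \eqref{eq:curC}. All the substantive work is already packaged in Lemmas \ref{lem:basic-inclusions}, \ref{lem:many-norms}, and \ref{lem:normchar}; the role of this proof is really just bookkeeping.

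For the inclusion $\mathscr{X}^{s,p;m}(M;E) \hookrightarrow \mathscr{H}^{s,p;m}(M;E)$, at each order $j \in \{0,\dots,m\}$ I compare $\|\hat\nabla^j u\|_{X^{s-j,p}_{w+j}}$ with $\|\hat\nabla^j u\|_{H^{s-j,p}_{w+j-n/p}}$ on the bundle $T^{0,j}M\otimes E$. The weight drops by $n/p$, which strictly exceeds the threshold $(n-1)/p$ required by the embedding $X^{\sigma,p}_\delta \hookrightarrow H^{\sigma,p}_{\delta'}$ of Lemma \ref{lem:basic-inclusions}, so the inclusion holds term-by-term. For $\mathscr{H}^{s,p;m}(M;E) \hookrightarrow \mathscr{H}^{m,p;m}(M;E)$, the hypothesis $s\ge m$ gives $s-j \ge m-j \ge 0$ at each order, and each inclusion reduces to the elementary monotonicity of Bessel potential spaces under a decrease of the differentiability parameter at fixed weight and integrability. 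Finally, the embedding $\mathscr{C}^{k,\alpha;m}(M;E) \hookrightarrow \mathscr{X}^{k,p;m}(M;E)$ for $k\ge m$ is immediate from the basic inclusion $C^{k-j,\alpha}_{w+j}(M;\cdot) \hookrightarrow X^{k-j,p}_{w+j}(M;\cdot)$ of Lemma \ref{lem:basic-inclusions} applied termwise.

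The only step requiring a small computation is the identification $\mathscr{H}^{m,p;m}(M;E) = H^{m,p}(\bar M;E)$. First, Lemma \ref{lem:many-norms}\eqref{part:Hk-is-Wk} lets me rewrite each summand as $\|\hat\nabla^j u\|_{W^{m-j,p}_{w+j-n/p}(M;\cdot)}$. Then Lemma \ref{lem:normchar}\eqref{part:W-via-W0p} expands this as
\[
\sum_{i=0}^{m-j}\|\hat\nabla^{i+j} u\|_{L^p_{w+j-n/p}(M;T^{0,i+j}M\otimes E)}.
\]
Reindex by $\ell = i+j$ and unpack the weighted norm: since $\hat\nabla^\ell u$ is a section of a bundle of weight $w+\ell$, the identity $\|v\|_{L^p_\delta}^p = \int \rho^{(w_v-\delta)p - n}|v|_{\hat g}^p\,dV_{\hat g}$ gives an integrand proportional to $\rho^{(\ell-j)p}|\hat\nabla^\ell u|_{\hat g}^p$. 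Since $\rho$ is bounded on $\bar M$ and $\ell-j \ge 0$, the $j=\ell$ contribution dominates all the $j<\ell$ contributions and, by the identity \eqref{Lp-barM-M}, equals $\|\hat\nabla^\ell u\|_{L^p(\bar M;\cdot)}^p$. Summing over $\ell$ and invoking Lemma \ref{lem:normchar}\eqref{part:Wbar-via-W} recovers $\|u\|_{W^{m,p}(\bar M;E)} \sim \|u\|_{H^{m,p}(\bar M;E)}$, which completes the identification.

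The main obstacle, such as it is, lies in keeping the two weights straight in the identification step: the bundle weight $w+\ell$ and the function-space weight $w+j-n/p$ must be combined correctly so that the $j=\ell$ term produces an unweighted $L^p(\bar M)$ integrand. Once this is handled, the equivalence of norms is transparent and the remaining three embeddings in the lemma are essentially corollaries of the basic inclusions already established.
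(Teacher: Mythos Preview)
Your proof is correct and follows essentially the same strategy the paper indicates: reduce each embedding to a term-by-term comparison via Lemma~\ref{lem:basic-inclusions}. The only minor difference is in the identification $\mathscr H^{m,p;m}(M;E)=H^{m,p}(\bar M;E)$: the paper points to Lemma~\ref{lem:alt-H-X-norms} (the two-endpoint norm equivalence $\|v\|_{H^{k,p}_\delta}\sim\|v\|_{H^{0,p}_\delta}+\|\hat\nabla^k v\|_{H^{0,p}_\delta}$), whereas you use the full-sum version from Lemma~\ref{lem:normchar}\eqref{part:W-via-W0p} together with Lemma~\ref{lem:many-norms}\eqref{part:Hk-is-Wk}. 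Both routes land on Lemma~\ref{lem:normchar}\eqref{part:Wbar-via-W} for the comparison with $H^{m,p}(\bar M;E)$, and the weight bookkeeping you carry out is exactly what is needed in either case.
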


As a consequence, every fortified  space is a subspace of some $H^{m,p}(\bar M;E)$.
Thus we can interpret fortified  spaces as being a Sobolev space on $\bar M$ supplemented with additional smoothness in the interior.  
For $\mathscr H^{k,p;m}(M;E)$ with $k\in\Nats$ it is straightforward
to interpret this additional regularity.
Lemmas \ref{lemma:nabla-and-rho-on-curly} and \ref{lem:curly-H-improvement} below imply
\begin{equation}\label{eq:def-curlyH-barM}
\|u\|_{\mathscr H^{k,p;m}(M;E)}
\sim \sum_{j=0}^{k-m}\|(\rho \hat\nabla)^j u\|_{H^{m,p}(\bar M;T^{0,j}\bar M\otimes E )},
\end{equation}
and therefore an element of $\mathscr H^{k,p;m}(M;E)$ has $m$ derivatives up to the boundary,
and an additional
$k-m$ interior derivatives that are each allowed to blow up
by a factor of $\rho^{-1}$ beyond what $L^p(\bar M;E)$ allows.
The additional regularity afforded by $\mathscr X^{s,p;m}(M;E)$, beyond
what its inclusion in $\mathscr H^{s,p;m}(M;E)$ implies, is less
transparent but is remarkably strong.
For example, we show in Corollary \ref{cor:X-bdy-awesome}
that the boundary trace of $u\in \mathscr X^{1,p;1}(M;E)$ is
Lipschitz regardless of the value of $1<p<\infty$.

\subsection{Mapping properties of \texorpdfstring{$\hat \nabla$}{∇̂} and \texorpdfstring{$\rho$}{ρ}}
The following lemma justifies the following general principles:
multiplication by $\rho$ improves boundary regularity,
$\hat\nabla$ removes a degree of both interior
and boundary differentiability, and $\rho\hat\nabla$
only removes interior regularity.

\begin{lemma}
\label{lemma:nabla-and-rho-on-curly}
Suppose $1<p<\infty$, $s\in\Reals$, $m\in\Nats$ and $s\ge m$,
and let $E$ be a tensor bundle of weight $w$ over $M$.
The following maps are continuous between the spaces indicated.
\begin{enumerate}
\item $u\mapsto \rho u$:\label{part:curly-rho-map}
\begin{gather*}
\mathscr H^{s,p;m-1}(M;E)\to \mathscr H^{s,p;m}(M;E),\\
\mathscr X^{s,p;m-1}(M;E)\to \mathscr X^{s,p;m}(M;E).
\end{gather*}
\item $u\mapsto \hat\nabla u$:\label{part:curly-nabla-map}
\begin{gather*}
\mathscr H^{s,p;m}(M;E)\to \mathscr H^{s-1,p;m-1}(M; T^{0,1}M\otimes E ),\\
\mathscr X^{s,p;m}(M;E)\to \mathscr X^{s-1,p;m-1}(M;T^{0,1}M\otimes E  ).
\end{gather*}
\item $u\mapsto \rho \hat\nabla u$:\label{part:curly-rho-nabla-map}
\begin{gather*}
\mathscr H^{s,p;m-1}(M;E)\to \mathscr H^{s-1,p;m-1}( M;T^{0,1}M\otimes E  ),\\
\mathscr X^{s,p;m-1}(M;E)\to \mathscr X^{s-1,p;m-1}(M;T^{0,1}M\otimes E  ).
\end{gather*}
\end{enumerate}
\end{lemma}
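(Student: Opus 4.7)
My plan is to reduce all three parts to three basic inputs: the continuity of $\hat\nabla$ on weighted Bessel potential and Gicquaud-Sakovich spaces (Lemma \ref{lem:nabla-HX-map}), the weighted multiplication results (Proposition \ref{prop:multiplication}), and the crucial observation that $\hat\nabla^i\rho$, being a smooth section of $T^{0,i}\bar M$ (a bundle of weight $i$), belongs to $C^\infty_i(M;T^{0,i}M)$ by Lemma \ref{lem:CMbar-into-CMw}. The upshot of the last point is that pointwise multiplication by $\hat\nabla^i\rho$ shifts the decay weight of a weighted $H^{s,p}$ or $X^{s,p}$ space by $+i$ while preserving differentiability.

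I would begin with part (b), which is essentially an index relabeling. Since $T^{0,1}M\otimes E$ has weight $w+1$, for $u\in\mathscr H^{s,p;m}(M;E)$ one has
\[
\|\hat\nabla u\|_{\mathscr H^{s-1,p;m-1}} = \sum_{j=0}^{m-1}\|\hat\nabla^{j+1}u\|_{H^{s-1-j,p}_{w+1+j-n/p}} = \sum_{k=1}^{m}\|\hat\nabla^{k}u\|_{H^{s-k,p}_{w+k-n/p}}\le \|u\|_{\mathscr H^{s,p;m}},
\]
and identical bookkeeping handles the Gicquaud-Sakovich case.

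Part (a) is the core of the lemma, and I would attack it via Leibniz,
\[
\hat\nabla^j(\rho u) = \sum_{i=0}^j\binom{j}{i}\,\hat\nabla^i\rho\otimes\hat\nabla^{j-i}u\qquad(0\le j\le m),
\]
and estimate each summand in $H^{s-j,p}_{w+j-n/p}$. When $j-i\le m-1$ the $\mathscr H^{s,p;m-1}$ norm of $u$ already controls $\hat\nabla^{j-i}u$ in $H^{s-(j-i),p}_{w+(j-i)-n/p}$; tensoring with $\hat\nabla^i\rho\in C^\infty_i$ places the product in $H^{s-j+i,p}_{w+j-n/p}$, and the trivial inclusion $H^{s-j+i,p}\hookrightarrow H^{s-j,p}$ at fixed weight delivers the target. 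The single summand escaping this scheme is $j=m,\,i=0$, producing $\rho\,\hat\nabla^m u$; here I would apply Lemma \ref{lem:nabla-HX-map} to the available datum $\hat\nabla^{m-1}u\in H^{s-m+1,p}_{w+m-1-n/p}$ to obtain $\hat\nabla^m u\in H^{s-m,p}_{w+m-1-n/p}$, and then the factor $\rho\in C^\infty_1$ promotes the weight to $w+m-n/p$, exactly as required. The same argument, with $X^{\sigma,p}_\delta$ in place of $H^{\sigma,p}_\delta$ throughout, handles the $\mathscr X$ case.

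Finally, part (c) follows by composition: part (b) sends $\mathscr H^{s,p;m-1}$ into $\mathscr H^{s-1,p;m-2}$, and part (a) sends this into $\mathscr H^{s-1,p;m-1}$, with the caveat that for $m=1$ the intermediate index $m-2=-1$ is illegal. This edge case I would handle directly by noting that $\mathscr H^{s,p;0}=H^{s,p}_{w-n/p}$, applying Lemma \ref{lem:nabla-HX-map} to obtain $\hat\nabla u\in H^{s-1,p}_{w-n/p}$, and then multiplying by $\rho\in C^\infty_1$ to land in $H^{s-1,p}_{w+1-n/p}=\mathscr H^{s-1,p;0}$. The principal technical obstacle throughout is the bookkeeping in the Leibniz expansion for part (a); in particular, the endpoint case $j=m,\,i=0$ demands one derivative of $u$ beyond what the hypothesis directly supplies, and the crux is recognizing that this missing order of regularity is precisely balanced by the extra factor of $\rho$ available in that term.
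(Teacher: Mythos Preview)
Your proposal is correct and follows essentially the same route as the paper: part (b) is read off the definition, part (a) uses the Leibniz expansion of $\hat\nabla^j(\rho u)$ together with $\hat\nabla^a\rho\in C^\infty_a(M)$, and part (c) is obtained by composing (a) and (b) with a direct argument via Lemma~\ref{lem:nabla-HX-map} in the boundary case $m=1$. In fact your treatment of part (a) is more explicit than the paper's: you single out the endpoint term $j=m,\ i=0$ (i.e.\ $\rho\,\hat\nabla^m u$) and handle it by first applying Lemma~\ref{lem:nabla-HX-map} to $\hat\nabla^{m-1}u$ and then using $\rho\in C^\infty_1(M)$ to recover the missing unit of weight, whereas the paper's displayed chain ends with $\|u\|_{\mathscr H^{s,p;m}}$ (apparently a typo for $\mathscr H^{s,p;m-1}$) and does not isolate this case.
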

\begin{proof}
The proofs for the Sobolev and Gicquaud-Sakovich cases are similar and
we treat the Sobolev case.
For part \eqref{part:curly-rho-map} consider some $0\le j\le m$
and observe that
\[
\hat \nabla^j (\rho u) = \sum_{a+b=j}  \binom{j}{a}\, \hat\nabla{}^a \rho\, \hat\nabla{}^b u.
\]
Since $\hat\nabla{}^a\rho$ is a tensor of weight
$a$ and smooth on $\bar M$,
it lies in $C^\infty_{a}(M)$

Consequently, Lemma  \ref{lem:basic-inclusions} and
Proposition \ref{prop:multiplication} imply
\begin{equation*}
\|\hat\nabla^a \rho \hat\nabla{}^b u\|_{H^{s-j,p}_{w+j-n/p}(M)}
\lesssim
\|\hat\nabla{}^b u\|_{H^{s-j,p}_{w+b-n/p}(M)}
\lesssim
\|\hat\nabla{}^b u\|_{H^{s-b,p}_{w+b-n/p}(M)}
\lesssim
\|u\|_{\mathscr H^{s,p;m}(M)},
\end{equation*}
which proves the result.

Part \eqref{part:curly-nabla-map} is immediate from the definition.
Part \eqref{part:curly-rho-nabla-map} follows from parts
\eqref{part:curly-rho-map} and \eqref{part:curly-nabla-map}
except in the case $m=1$, which uses Lemma \ref{lem:nabla-HX-map}
in place of part \eqref{part:curly-nabla-map}.
\end{proof}

Note that $\mathscr H^{s,p;0}(M;E) = H^{s,p}_{w-n/p}(M;E)$ and that
$\mathscr X^{s,p;0}(M;E) = X^{s,p}_{w-n/p}(M;E)$,
 the least regular spaces in the spectrum measured by the parameter $m$.
Starting from
$\mathscr H^{s,p;0}(M;E)=H^{s,p}_{w-n/p}(M;E)$
and multiplying by $\rho^m$ it follows
from repeated applications of Lemma \ref{lemma:nabla-and-rho-on-curly}\eqref{part:curly-rho-map}
that $H^{s,p}_{w+m-n/p}(M;E)\subset \mathscr H^{s,p;m}(M;E)$,
so long as $s\ge m$.  This inclusion is strict, however,
and in \S\ref{secsec:bounday-asymptotics} we show that
$H^{s,p}_{w+1-n/p}(M;E)$
is exactly the subset of $\mathscr H^{s,p;m}(M;E)$ that
vanishes on the boundary.

The following variation of Lemma \ref{lem:alt-H-X-norms}
emphasizes the principle that interior regularity is measured by
applying $\rho\hat\nabla$.
\begin{lemma}\label{lem:curly-H-improvement}
Let $E$ be a tensor bundle over $M$.
Suppose $1<p<\infty$, $s\in\Reals$, $m\in\Nats_{\ge 0}$, and $s\ge m$.
\begin{enumerate}
\item\label{part:H-improvement}
Suppose $u\in \mathscr H^{s,p;m}(M;E)$ and
$\rho\hat\nabla u\in \mathscr H^{s,p;m}(M;T^{0,1}M\otimes E )$.  Then
$u\in \mathscr H^{s+1,p;m}(M;E)$ and
\[
\|u\|_{\mathscr H^{s+1,p;m}(M;E)} \lesssim
\|u\|_{\mathscr H^{s,p;m}(M;E)} +
\|\rho\hat\nabla u\|_{\mathscr H^{s,p;m}(M;T^{0,1}M\otimes E )}.
\]

\item \label{part:X-improvement}
Suppose $u\in \mathscr X^{s,p;m}(M;E)$ and $\rho\hat\nabla u \in \mathscr X^{s,p;m}(M;T^{0,1}M\otimes E )$.
Then $u\in \mathscr X^{s+1,p;m}(M;E)$ and
\begin{equation*}
\| u \|_{\mathscr X^{s+1,p;m}(M;E)}
\lesssim \|u\|_{\mathscr X^{s,p;m}(M;E)}
+ \|\rho\hat\nabla u\|_{\mathscr X^{s,p;m}(M;T^{0,1}M\otimes E )}.
\end{equation*}
\end{enumerate}
\end{lemma}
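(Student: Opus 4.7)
The plan for part \eqref{part:H-improvement} is to bound each summand of
\[
\|u\|_{\mathscr{H}^{s+1,p;m}} = \sum_{j=0}^{m} \|\hat\nabla^j u\|_{H^{s+1-j,p}_{w+j-n/p}}
\]
by $\|u\|_{\mathscr{H}^{s,p;m}} + \|\rho\hat\nabla u\|_{\mathscr{H}^{s,p;m}}$. First I would apply Lemma \ref{lem:alt-H-X-norms} (with its integer parameter set to $1$) to the tensor field $\hat\nabla^j u$, obtaining
\[
\|\hat\nabla^j u\|_{H^{s+1-j,p}_{w+j-n/p}} \lesssim \|\hat\nabla^j u\|_{H^{0,p}_{w+j-n/p}} + \|\hat\nabla^{j+1} u\|_{H^{s-j,p}_{w+j-n/p}}.
\]
The first summand is controlled directly by $\|u\|_{\mathscr{H}^{s,p;m}}$ via the basic inclusion $H^{s-j,p}_{w+j-n/p}\hookrightarrow H^{0,p}_{w+j-n/p}$ of Lemma \ref{lem:basic-inclusions}, using that $s-j\ge 0$.

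The work lies in controlling the second summand, for which the key tool will be the elementary norm equivalence $\|v\|_{H^{s,p}_\delta} \sim \|\rho v\|_{H^{s,p}_{\delta+1}}$. This follows immediately in a M\"obius parametrization from $\rho\circ\Phi_i(x,y)=\rho_i y$, with $y$ bounded away from $0$ and $\infty$ on $B^\Hyp_1$, so that multiplication by $\rho$ inside the weighted norm simply trades a factor of $\rho_i$. This gives
\[
\|\hat\nabla^{j+1}u\|_{H^{s-j,p}_{w+j-n/p}} \sim \|\rho\hat\nabla^{j+1}u\|_{H^{s-j,p}_{w+1+j-n/p}},
\]
and a Leibniz expansion then produces
\[
\rho\hat\nabla^{j+1}u = \hat\nabla^{j}(\rho\hat\nabla u) - \sum_{a=1}^{j}\binom{j}{a}\hat\nabla^a\rho\otimes\hat\nabla^{j+1-a}u.
\]
Measured in $H^{s-j,p}_{w+1+j-n/p}$, the first right-hand term is precisely the $j$-th summand of $\|\rho\hat\nabla u\|_{\mathscr{H}^{s,p;m}}$, since $T^{0,1}M\otimes E$ has weight $w+1$. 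For each remainder term, Lemma \ref{lem:CMbar-into-CMw} places $\hat\nabla^a\rho\in C^\infty_a(M)$, and Proposition \ref{prop:multiplication} yields a bound by $\|\hat\nabla^{j+1-a}u\|_{H^{s-j,p}_{w+(j+1-a)-n/p}}$; setting $k:=j+1-a\in\{1,\dots,j\}$, one further notes $s-j\le s-k$, so basic inclusion dominates this by the $k$-th summand of $\|u\|_{\mathscr{H}^{s,p;m}}$.

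Part \eqref{part:X-improvement} will follow by a verbatim argument in which $H^{s,p}_\delta$ is replaced by $X^{s,p}_\delta$ throughout, invoking the Gicquaud-Sakovich versions of Lemmas \ref{lem:alt-H-X-norms}, \ref{lem:basic-inclusions}, and Proposition \ref{prop:multiplication} in place of their Bessel-potential counterparts. The principal obstacle I anticipate is not any single estimate, but rather the index bookkeeping: at each step one must verify that the arithmetic of weights ($\delta$, $w$, $j$, $a$, $n/p$) and the differentiability parameters ($s-j$ versus $s-k$) aligns so that every intermediate quantity lies in a norm actually supplied by the hypotheses.
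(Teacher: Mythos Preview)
Your proposal is correct and follows essentially the same route as the paper's proof: apply Lemma~\ref{lem:alt-H-X-norms} to each $\hat\nabla^j u$ to split off a low-order term (controlled directly by $\|u\|_{\mathscr H^{s,p;m}}$) and a term $\|\hat\nabla^{j+1}u\|_{H^{s-j,p}_{w+j-n/p}}$, then handle the latter via the weight-shift $\|v\|_{H^{s,p}_\delta}\sim\|\rho v\|_{H^{s,p}_{\delta+1}}$ together with the Leibniz/commutator identity $\rho\hat\nabla^{j+1}u=\hat\nabla^j(\rho\hat\nabla u)-[\hat\nabla^j,\rho]\hat\nabla u$. The paper phrases the last step with commutator notation rather than an explicit Leibniz sum, and cites the inclusion $H^{s-j,p}_\delta\hookrightarrow H^{0,p}_\delta$ implicitly rather than via Lemma~\ref{lem:basic-inclusions} (strictly speaking that inclusion is Proposition~\ref{prop:SobolevEmbedding} or just immediate from the definition), but these are cosmetic differences.
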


\begin{proof}
Suppose $0\le k\le m$.  From Lemma \ref{lem:alt-H-X-norms} we have
\begin{align*}
\|\hat\nabla^k u\|_{H^{s-k+1-n/p,p}_{w+k-n/p}(M)} &\lesssim
\|\hat\nabla^k u\|_{H^{s-k,p}_{w+k-n/p}(M)} +
\|\hat\nabla^{k+1} u\|_{H^{s-k,p}_{w+k-n/p}(M)}\\
& \le
\|u\|_{\mathscr H^{s,p;m}(M)} + \|\hat\nabla^{k+1} u\|_{H^{s-k,p}_{w+k-n/p}(M)}.
\end{align*}
So to establish part \eqref{part:H-improvement} it
suffices to show
\begin{equation}\label{eq:k+1-curly}
\|\hat\nabla^{k+1} u\|_{H^{s-k,p}_{w+k-n/p}(M)}\lesssim\|u\|_{\mathscr H^{s,p;m}(M)} +
\|\rho\hat\nabla u\|_{\mathscr H^{s,p;m}(M)}.
\end{equation}

Since $[\hat\nabla^k,\rho]=\sum_{j=0}^{k-1} {k \choose j} \hat\nabla^{k-j}\rho \hat\nabla^{j}$
and since, as argued in Lemma \ref{lemma:nabla-and-rho-on-curly},
 each $\hat\nabla^{k-j} \rho \in C^{\infty}_{k-j}(M;T^{0,k-j} M)$
we find
\begin{equation*}
\begin{aligned}
\|[\hat\nabla^k,\rho]\, \hat\nabla u\|_{H^{s-k,p}_{w+k+1-n/p}(M)}
&\lesssim \sum_{j=0}^{k-1} \|\hat\nabla^{j+1} u\|_{H^{s-k,p}_{w+j+1-n/p}(M)}
\\
&\lesssim \sum_{j=0}^{k-1} \|\hat\nabla^{j+1} u\|_{H^{s-(j+1),p}_{w+j+1-n/p}(M)}
\\
&\lesssim \|u\|_{\mathscr H^{s,p;m}(M)}.
\end{aligned}
\end{equation*}
Hence
\begin{align*}
\|\hat\nabla^{k+1} u\|_{H^{s-k,p}_{w+k-n/p}(M)}
&=
\|\rho\hat\nabla^{k+1} u\|_{H^{s-k,p}_{w+k+1-n/p}(M)}
\\
&\lesssim \|\hat\nabla^k(\rho\hat\nabla u)\|_{H^{s-k,p}_{w+1+k-n/p}(M)} + \|u\|_{\mathscr H^{s,p;m}(M)}\\
&\lesssim
\|\rho \hat\nabla u\|_{\mathscr H^{s,p;m}(M)} + \|u\|_{\mathscr H^{s,p;m}(M)},
\end{align*}
which establishes the desired estimate \eqref{eq:k+1-curly}.
Part \eqref{part:X-improvement}is proved similarly.
\end{proof}

\subsection{Relationships with weighted spaces on \texorpdfstring{$M$}{M}}\label{secsec:rel-M}

The following elementary embeddings follow from
the
definition of the spaces
and the discussion following Lemma \ref{lemma:nabla-and-rho-on-curly}.
\begin{lemma}\label{lem:basic-roman-curly}
Suppose $1<p<\infty$, $s\in\Reals$, $m\in\Nats_{\ge 0}$,
and $s\ge m$; let $E$ be a tensor bundle of weight $w$ over $M$. Then
\begin{align*}
H^{s,p}_{w+m-n/p}(M;E) &\subset \mathscr H^{s,p;m}(M;E) \subset
H^{s,p}_{w-n/p}(M;E),\\
X^{s,p}_{w+m}(M;E) &\subset \mathscr X^{s,p;m}(M;E) \subset
X^{s,p}_{w}(M;E).
\end{align*}
\end{lemma}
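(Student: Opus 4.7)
The plan is as follows. I will establish the two pairs of inclusions separately, handling the right-hand (trivial) ones by direct unpacking of the definitions and the left-hand ones by iterating the derivative estimate of Lemma \ref{lem:nabla-HX-map} together with the boundedness of $\rho$.

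For the right-hand inclusions, I will simply observe that in each of the defining norms \eqref{define-scrH-int} and \eqref{define-scrX-int}, the $j=0$ summand is, by definition, the norm of $u$ in the target space:
\begin{equation*}
\|u\|_{H^{s,p}_{w-n/p}(M;E)} \leq \|u\|_{\mathscr H^{s,p;m}(M;E)}, \qquad \|u\|_{X^{s,p}_w(M;E)} \leq \|u\|_{\mathscr X^{s,p;m}(M;E)}.
\end{equation*}
This is immediate and requires no further work.

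For the left-hand inclusions, suppose $u\in H^{s,p}_{w+m-n/p}(M;E)$ (respectively $u\in X^{s,p}_{w+m}(M;E)$). Iterating Lemma \ref{lem:nabla-HX-map}, the operator $\hat\nabla^j$ is continuous from $H^{s,p}_{w+m-n/p}(M;E)$ to $H^{s-j,p}_{w+m-n/p}(M;T^{0,j}M\otimes E)$ for each $0\le j\le m$, and analogously for the Gicquaud-Sakovich spaces. This produces the correct interior regularity index $s-j$ appearing in each term of the fortified norm, but with weight $w+m-n/p$ instead of the required $w+j-n/p$.

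The final step is to pass from the larger weight $w+m-n/p$ to the smaller $w+j-n/p$, i.e., to move from a space with more decay to one with less. Since $\rho$ is a bounded defining function on $M$ and $m-j\ge 0$, we have the pointwise inequality $\rho_i^{-(w+j-n/p)}\le C^{m-j}\rho_i^{-(w+m-n/p)}$ with $C=\max_M\rho$ (a similar estimate holds for the weight $-(w+j)$). Plugging this into the definitions \eqref{eq:Hspdelta} and \eqref{GS-int-norm} and summing (respectively taking a supremum) over $i$ yields
\begin{equation*}
\|\hat\nabla^j u\|_{H^{s-j,p}_{w+j-n/p}(M)} \lesssim \|\hat\nabla^j u\|_{H^{s-j,p}_{w+m-n/p}(M)} \lesssim \|u\|_{H^{s,p}_{w+m-n/p}(M)},
\end{equation*}
and analogously in the $X^{s,p}$ setting. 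Summing these estimates over $0\le j\le m$ gives the desired control of $\|u\|_{\mathscr H^{s,p;m}(M;E)}$ and $\|u\|_{\mathscr X^{s,p;m}(M;E)}$ by $\|u\|_{H^{s,p}_{w+m-n/p}(M;E)}$ and $\|u\|_{X^{s,p}_{w+m}(M;E)}$, respectively.

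I do not anticipate any real obstacle here: the entire content of the lemma is essentially definitional, combining the fact that $\rho$ is bounded (so that increasing the weight parameter embeds continuously) with the fact that $\hat\nabla$ preserves the weight index (Lemma \ref{lem:nabla-HX-map}). The work is purely bookkeeping.
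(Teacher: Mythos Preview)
Your proposal is correct. The paper does not give a detailed proof but simply remarks that the inclusions ``follow from the definition of the spaces and the discussion following Lemma~\ref{lemma:nabla-and-rho-on-curly}.'' That discussion obtains the left-hand inclusion by writing $H^{s,p}_{w+m-n/p}(M;E)=\rho^m\,\mathscr H^{s,p;0}(M;E)$ and then applying Lemma~\ref{lemma:nabla-and-rho-on-curly}\eqref{part:curly-rho-map} (multiplication by $\rho$ raises the boundary-regularity index by one) $m$ times. Your route is slightly more direct: you verify each summand of the fortified norm separately by iterating Lemma~\ref{lem:nabla-HX-map} and then using the monotonicity of the weight parameter coming from boundedness of $\rho$. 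Both arguments are elementary and use the same underlying ingredients; the paper's version packages the bookkeeping into a single inductive application of an already-stated lemma, whereas yours unpacks the definition explicitly.
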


In our applications, embeddings into $X^{s,p}_w(M;E)$ are
especially important.
Recall that in \S\ref{secsec:gtb-early} we
developed function spaces of sections of a geometric tangent bundle
associated with a metric $g$ so long as the metric satisfied the following
regularity conditions:
\begin{enumerate}
  \item\label{part:g-in-X} $g\in X^{s,p}_0(M;T^{0,2}M)$ with $s>n/p$ and $s\ge 1$,
  \item $\bar g = \rho^2 g$ admits a continuous extension to $\bar M$.
\end{enumerate}

We now establish that asymptotically hyperbolic metrics of class \eqref{intro:H-class} or \eqref{intro:X-class} satisfy these conditions.
Continuity at the boundary is addressed in \S\ref{secsec:rels-with-barM}, thus
we focus here on condition \eqref{part:g-in-X}.
Lemma \ref{lem:basic-roman-curly} implies that if $\bar g\in\mathscr X^{s,p;m}(M;T^{0,2}M)$
then $g = \rho^{-2}\bar g \in X^{s,p}_0(M;T^{0,2}M)$.  Thus $g$ satisfies \eqref{part:g-in-X}
if $s>n/p$ and $s\ge m\ge 1$.

The situation for a metric with $\bar g\in \mathscr H^{s,p;m}(M;T^{0,2}M)$
is somewhat different.  In this case, Lemma \ref{lem:basic-roman-curly},
in combination with Lemma \ref{lem:basic-inclusions}, only yields
$g \in H^{s,p}_{-n/p}(M;T^{0,2}M)$, which embeds into $ X^{s,p}_{-n/p}(M;T^{0,2}M)$, the key issue being the decay parameter
$-n/p$. One would like to raise this value to $0$, not only for
the considerations above, but also to obtain good mapping properties
of associated differential operators.  For example,
a computation using Proposition \ref{prop:multiplication} shows that
if $s>n/p$ then the Laplacian of a metric in $X^{s,p}_{-n/p}(M;T^{0,2}M)$
takes $H^{s,p}_\delta(M)\to H^{s-2,p}_{\delta-n/p}(M)$
resulting in an undesired loss of decay.

The following result shows that the embedding of
$\mathscr H^{s,p;m}(M;E)$ into Gicquaud-Sakovich spaces
is better than a simple application of Lemma \ref{lem:basic-roman-curly} would
suggest, so long as $m$ is sufficiently large.  In particular,
if $\bar g\in \mathscr H^{s,p;m}(M;T^{0,2}M)$
with $m>n/p$ then $g\in X^{s,p}_0(M;T^{0,2}M)$ as desired.
\begin{proposition}\label{prop:curly-H-into-roman-X}
Suppose $1<p<\infty$, $s\in\Reals$, $m\in\Nats_{\ge 0}$, $s\ge m$.
Let $E$ be a tensor bundle of weight $w$ over $M$.
We have the continuous embedding
\begin{equation}\label{eq:H-curly-to-roman-X}
\mathscr H^{s,p;m}(M;E) \hookrightarrow X^{s,p}_{w+\beta}(M;E),
\end{equation}
where $\beta = \min(m-n/p,0)$ unless $m=n/p$, in which
case \eqref{eq:H-curly-to-roman-X} holds with any $\beta<0$.
\end{proposition}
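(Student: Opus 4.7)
The plan is to bound the $X^{s,p}_{w+\beta}(M;E)$ norm of $u \in \mathscr H^{s,p;m}(M;E)$ by splitting the task via the alternate $X$-norm of Lemma \ref{lem:alt-H-X-norms} and then controlling each piece separately. The degenerate case $m=0$ is immediate: $\mathscr H^{s,p;0}(M;E) = H^{s,p}_{w-n/p}(M;E)$ by Lemma \ref{lem:basic-roman-curly}, and $\beta = -n/p$, so the conclusion reduces to the basic inclusion $H^{s,p}_{w-n/p} \hookrightarrow X^{s,p}_{w-n/p}$ of Lemma \ref{lem:basic-inclusions}. For $m \geq 1$, Lemma \ref{lem:alt-H-X-norms}(b) yields the norm equivalence
\begin{equation*}
\|u\|_{X^{s,p}_{w+\beta}(M;E)} \sim \|u\|_{X^{0,p}_{w+\beta}(M;E)} + \|\hat\nabla^m u\|_{X^{s-m,p}_{w+\beta}(M;T^{0,m}M\otimes E)},
\end{equation*}
so the task reduces to controlling the two summands individually.

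The gradient piece is straightforward. From the definition of $\mathscr H^{s,p;m}$, $\hat\nabla^m u \in H^{s-m,p}_{w+m-n/p}(M;T^{0,m}M\otimes E)$, and basic inclusions give $H^{s-m,p}_{w+m-n/p} \hookrightarrow X^{s-m,p}_{w+m-n/p}$. When $m \leq n/p$ one has $w+\beta = w+m-n/p$ and this is exactly what is needed; when $m > n/p$, so $\beta = 0$, the strict inequality $w + m - n/p > w$ permits the further Sobolev-scale embedding $H^{s-m,p}_{w+m-n/p} \hookrightarrow H^{s-m,p}_w \hookrightarrow X^{s-m,p}_w$ from Lemma \ref{lem:basic-inclusions}. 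The borderline case $m=n/p$ is no harder, any weight $w+\beta$ with $\beta < 0$ being obtainable from this argument.

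The low-regularity piece is the crux and exploits the boundary regularity of fortified spaces: by Lemma \ref{lem:curly-basic}, $u \in H^{m,p}(\bar M;E)$. If $m > n/p$, Proposition \ref{prop:Mbar-M-HX} applies directly and gives $H^{m,p}(\bar M;E) \hookrightarrow X^{m,p}_w(M;E) \hookrightarrow X^{0,p}_w(M;E)$, which is the desired $X^{0,p}_{w+\beta}(M;E)$ since $\beta = 0$. If $m < n/p$, apply standard Sobolev embedding on the compact manifold with boundary to obtain $H^{m,p}(\bar M;E) \hookrightarrow L^q(\bar M;E)$ where $1/q = 1/p - m/n$; the identity \eqref{Lp-barM-M} then transfers this to $u \in L^q_{w-n/q}(M;E) = L^q_{w+m-n/p}(M;E) = L^q_{w+\beta}(M;E)$. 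The basic inclusions complete the chain via $L^q_{w+\beta} = H^{0,q}_{w+\beta} \hookrightarrow X^{0,q}_{w+\beta} \hookrightarrow X^{0,p}_{w+\beta}$, the final step using $q > p$. The borderline case $m = n/p$ is handled identically, but by taking any finite $q > p$, which produces an arbitrarily small negative $\beta$. The main technical subtlety is algebraic bookkeeping: one must verify that the weight produced by combining Sobolev embedding on $\bar M$ with the translation to weighted interior spaces agrees with $w+\beta$ in every subcase, and the key identity making this work is $w - n/q = w + m - n/p$ when $1/q = 1/p - m/n$.
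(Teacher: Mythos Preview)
Your proof is correct and follows essentially the same strategy as the paper: split via the alternate $X$-norm of Lemma~\ref{lem:alt-H-X-norms}, control the top-order term using $\hat\nabla^m u\in H^{s-m,p}_{w+m-n/p}$ together with basic inclusions, and handle the low-order term through $\mathscr H^{s,p;m}\hookrightarrow H^{m,p}(\bar M)$ followed by Sobolev embedding on $\bar M$. The only notable difference is that for the subcase $m>n/p$ the paper spells out the chain $H^{m,p}(\bar M)\hookrightarrow C^{0,\alpha}(\bar M)\hookrightarrow C^{0,\alpha}_w(M)\hookrightarrow X^{0,p}_w(M)$ directly, whereas you invoke Proposition~\ref{prop:Mbar-M-HX}, which packages the same argument; your explicit treatment of $m=0$ is a welcome clarification since Lemma~\ref{lem:alt-H-X-norms} requires $m\ge 1$.
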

\begin{proof}

Since $s\ge m$, Lemma \ref{lem:alt-H-X-norms} implies that
\[
\|u\|_{X^{s,p}_{w+\beta}(M)}
\lesssim \|u\|_{X^{0,p}_{w+\beta}(M)} +
\|\hat\nabla^m u\|_{X^{s-m,p}_{w+\beta}(M)}.
\]
But then, using the condition $\beta\le m-n/p$,
we obtain from Lemma \ref{lem:basic-inclusions}
\[
\|\hat\nabla^m u\|_{X^{s-m,p}_{w+\beta}(M)}
\lesssim \|\hat\nabla^m u\|_{X^{s-m,p}_{w+m-n/p}(M)}
\lesssim \|\hat\nabla^m u\|_{H^{s-m,p}_{w+m}(M)}
\lesssim \|u\|_{\mathscr H^{s,p;m}(M)}.
\]
Hence to show embedding \eqref{eq:H-curly-to-roman-X} holds
it suffices to establish the low-order estimate
\begin{equation*}
\|u\|_{X^{0,p}_{w+\beta}(M;E)} \lesssim \|u\|_{\mathscr H^{s,p;m}(M)}.
\end{equation*}
Moreover, $\mathscr H^{s,p;m}(M) \hookrightarrow \mathscr H^{m,p;m}(M) = H^{m,p}(\bar M)$
and we show instead that
\begin{equation}\label{eq:curly-H-to-X-low}
\|u\|_{X^{0,p}_{w+\beta}(M;E)} \lesssim \|u\|_{\mathscr H^{m,p}(\bar M)}.
\end{equation}

Suppose first $m<n/p$, so $\beta = m-n/p < 0$.  Sobolev embedding
on $\bar M$ and 
\eqref{Lp-barM-M} provide continuous inclusions
\begin{equation}\label{eq:curly-H-to-low-roman-H}
H^{m,p}(\bar M) \hookrightarrow L^q(\bar M;E)
= H^{0,q}_{w-n/q}(M)\hookrightarrow X^{0,p}_{w-n/q}(M),
\end{equation}
where
\begin{equation}\label{eq:Sobolev-p-to-q-s4}
\frac{1}{q} = \frac{1}{p} - \frac{m}{n},
\end{equation}
and where we have used $q\ge p$ in the final embedding.
Estimate \eqref{eq:curly-H-to-X-low}
now follows noting that $w-n/q = w+m-n/p = w+\beta$.

The proof of estimate
\eqref{eq:curly-H-to-X-low}
in the boundary case $m=n/p$ is
argued the same way as above,
except the choice of $q\ge p$ in \eqref{eq:Sobolev-p-to-q-s4}
is arbitrary.
So suppose $m>n/p$ and therefore $\beta=0$.
Sobolev embedding on $\bar M$ along with
Lemmas \ref{lem:CMbar-into-CMw} and \ref{lem:basic-inclusions}
implies there exists $\alpha\in (0,1)$ such that
\begin{equation}\label{eq:H-into-C-into_X}
H^{m,p}(\bar M) \hookrightarrow  C^{0,\alpha}(\bar M)
\hookrightarrow C^{0,\alpha}_{w}( M)
\hookrightarrow X^{0,p}_{w}( M)=X^{0,p}_{w+\beta}(M)
\end{equation}
as desired.
\end{proof}

\subsection{Relationships with spaces on \texorpdfstring{$\bar M$}{M̅}}\label{secsec:rels-with-barM}
We work with metrics admitting conformal compactifications in fortified 
spaces and the following embeddings into spaces on $\bar M$ play a
central role in determining which spaces are suitable for this purpose.

\begin{proposition}[Inclusion into spaces on $\bar M$]\label{prop:curly-to-M-bar}
Suppose $1<p,q<\infty$, $s\in\Reals$, $m,k\in\Nats_{\ge 0}$ with $s\ge m$, anbd $0<\alpha<1$.
Let $E$ be a tensor bundle over $\bar M$. 
We have the following embeddings:
\begin{align}
&\mathscr H^{s,p;m}(M;E)\hookrightarrow H^{m,p}(\bar M;E);
\label{eq:curlyH-to-H-Mbar}
\\
&\mathscr X^{s,p;m}(M;E)\hookrightarrow H^{m,q}(\bar M;E),
&& \frac{1}{p}-\frac{s}{n}\le \frac{1}{q}-\frac{m}{n};
\label{eq:curlyX-to-H-Mbar}
\\
&\mathscr H^{s,p;m}(M;E)\hookrightarrow C^{k,\alpha}(\bar M;E),
&& m\ge  k+\alpha + \frac{n}{p};
\label{eq:curlyH-to-C-Mbar}
\\
&\mathscr X^{s,p;m}(M;E)\hookrightarrow C^{k,\alpha}(\bar M;E),
&& s\ge k+\alpha + \frac{n}{p}\text{ and }m\geq k+1.
\label{eq:curlyX-to-C-Mbar}
\end{align}

\end{proposition}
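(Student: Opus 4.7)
The plan is to dispatch the four embeddings in a cascade, each reducing to tools already established in the excerpt. Part (a) is immediate from Lemma \ref{lem:curly-basic}, which chains
\[
\mathscr H^{s,p;m}(M;E) \hookrightarrow \mathscr H^{m,p;m}(M;E) = H^{m,p}(\bar M;E).
\]
Part (c) then follows by combining (a) with classical Sobolev embedding on the compact manifold-with-boundary $\bar M$: under the hypothesis $m \ge k + \alpha + n/p$ one has $H^{m,p}(\bar M;E)\hookrightarrow C^{k,\alpha}(\bar M;E)$, and the claim follows.

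For part (b) I would reduce the target norm to an interior one via Lemma \ref{lem:normchar}(c), which asserts
\[
\|u\|_{H^{m,q}(\bar M;E)} \sim \sum_{j=0}^{m} \|\hat\nabla^{j} u\|_{L^{q}_{w+j-n/q}(M; T^{0,j}M\otimes E)}.
\]
It then suffices to produce, for each $0 \le j \le m$, an estimate $\|\hat\nabla^{j} u\|_{L^{q}_{w+j-n/q}(M)} \lesssim \|\hat\nabla^{j} u\|_{X^{s-j,p}_{w+j}(M)}$, since the right-hand side is controlled by $\|u\|_{\mathscr X^{s,p;m}(M;E)}$. The Gicquaud-Sakovich Sobolev embedding (Proposition \ref{prop:SobolevEmbedding}) yields $X^{s-j,p}_{w+j}(M)\hookrightarrow X^{0,q}_{w+j}(M)$ provided $\frac{1}{p} - \frac{s-j}{n} \le \frac{1}{q}$; the critical case $j = m$ is exactly the standing hypothesis $\frac{1}{p} - \frac{s}{n} \le \frac{1}{q} - \frac{m}{n}$, and $j < m$ follows a fortiori. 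Chaining with the basic inclusion $X^{0,q}_{w+j}(M) \hookrightarrow H^{0,q}_{w+j-n/q}(M) = L^{q}_{w+j-n/q}(M)$ from Lemma \ref{lem:basic-inclusions} (which only requires $0 > -1/q$) closes the argument.

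For part (d) the plan is to bridge (b) and the classical Sobolev embedding on $\bar M$ through a carefully chosen intermediate Lebesgue exponent. Choose $q \in (1,\infty)$ satisfying
\[
\frac{1}{p} - \frac{s-m}{n} \;\le\; \frac{1}{q} \;\le\; \frac{m-k-\alpha}{n}.
\]
The lower inequality is the hypothesis needed to invoke (b), giving $\mathscr X^{s,p;m}(M;E) \hookrightarrow H^{m,q}(\bar M;E)$, while the upper inequality is precisely the Sobolev threshold for $H^{m,q}(\bar M;E) \hookrightarrow C^{k,\alpha}(\bar M;E)$. The interval is nonempty since $\frac{1}{p} - \frac{s-m}{n} \le \frac{m-k-\alpha}{n}$ rearranges exactly to the hypothesis $s \ge k + \alpha + n/p$; and the auxiliary hypothesis $m \ge k+1 > k+\alpha$ guarantees the upper endpoint $\frac{m-k-\alpha}{n}$ is strictly positive, so an admissible $q$ exists.

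The main obstacle is part (d): balancing the interior Sobolev parameter $s$ against the boundary parameter $m$ requires the auxiliary constraint $m \ge k+1$, which enters not through either individual embedding but through the requirement that the intermediate exponent $q$ lie in $(1,\infty)$. The other three parts are essentially diagrammatic, chaining previously established inclusions.
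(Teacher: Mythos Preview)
Your proof is correct and follows essentially the same architecture as the paper: parts (a) and (c) are handled identically, and (d) is derived from (b) via Sobolev embedding on $\bar M$ in both cases. There are two small packaging differences worth noting. For (b), the paper works directly in M\"obius charts, using local Sobolev embedding $H^{s-j,p}(B_1^\Hyp)\hookrightarrow L^q(B_1^\Hyp)$ and then summing against $\sum_i \rho_i^{n}<\infty$; your route via the global chain $X^{s-j,p}_{w+j}\hookrightarrow X^{0,q}_{w+j}\hookrightarrow L^q_{w+j-n/q}$ is cleaner and hides that summability inside Lemma~\ref{lem:basic-inclusions}. For (d), the paper first reduces to the case $k=0$, $m=1$ (adjusting $s,p$ so that $s=\alpha+n/p$) and then recovers general $k$ by differentiating via Lemma~\ref{lemma:nabla-and-rho-on-curly}\eqref{part:curly-nabla-map}, whereas you pick the intermediate exponent $q$ directly for arbitrary $k,m$; both arguments use the hypothesis $m\ge k+1$ in the same way, to ensure the admissible window for $1/q$ is nonempty.
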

\begin{proof}
Inclusion \eqref{eq:curlyH-to-H-Mbar} follows immediately from  equation \eqref{define-scrH-int}
defining the norm and Lemma \ref{lem:normchar}\eqref{part:Wbar-via-W}.
Sobolev embedding on $\bar M$ then implies under the hypotheses
of \eqref{eq:curlyH-to-C-Mbar}
\[
\mathscr H^{s,p;m}(M)\hookrightarrow
H^{m,p}(\bar M)
\hookrightarrow C^{k,\alpha}(\bar M)
\]
as desired.

Turning to \eqref{eq:curlyX-to-H-Mbar},
the identity \eqref{Lp-barM-M}
implies that for a tensor $u$ of weight $w$ and for $0\le j \le m$,
\begin{equation}
\|\hat\nabla^j u\|_{L^q(\bar M)}^q \lesssim
\|\hat \nabla^j u \|_{L^q_{w+j-n/q}(M)}^q
\lesssim \sum_i \rho_i^{-(w+j)q+n} \| \Phi_i^*(\hat \nabla^j u)\|_{L^q(B_1^\Hyp)}^q.
\end{equation}
Sobolev embedding on $B_1^\Hyp$ implies
\[
\|\Phi_i^*(\hat \nabla^ju)\|_{L^q(B_1^\Hyp)}
\lesssim
\|\Phi_i^*(\hat \nabla^ju)\|_{H^{s,p}(B_1^\Hyp)}
\lesssim \rho_i^{w+j} \|\hat\nabla^j u\|_{X^{s,p}_{w+j}(M)}
\le \rho_i^{w+j} \|u\|_{\mathscr X^{s,p;m}(M)}
\]
and we conclude
\[
\|\hat\nabla^j u\|_{L^q(\bar M;E)}^q \lesssim
\sum_i \rho_i^{n} \| u\|_{\mathscr X^{s,p;m}(M)}^q
\lesssim \|1\|_{L^q(\bar M);E}^q \|u\|_{\mathscr X^{s,p;m}(M)}^q;
\]
note that the estimate $\sum \rho_i^q\lesssim \|1\|_{L^q(\bar M)}$
is a consequence of Lemma \ref{lem:many-norms} and \eqref{Lp-barM-M}.
This establishes \eqref{eq:curlyX-to-H-Mbar}.

Inclusion \eqref{eq:curlyX-to-C-Mbar} is a consequence of
\eqref{eq:curlyX-to-H-Mbar} using Sobolev embedding on $\bar M$
as follows.  First consider the case $k=0$, in which case
we can assume $m=1$.  Lowering $s\ge 1$ (and additionally
lowering $p$ if $s=1$ is not sufficient)
we can assume $s=\alpha+n/p$ and we pick $q$ satisfying
\[
\frac{1}{q} - \frac{1}{n} = \frac{1}{p}-\frac{s}{n} = -\frac{\alpha}{n}.
\]
Since $s\ge 1$, and since $0<\alpha<1$, a computation shows
$1< p\le q<\infty$.  Thus Sobolev embedding on $\bar M$ yields
\[
\mathscr X^{s,p;1}(M;E)\hookrightarrow
H^{1,q}(\bar M)\hookrightarrow C^{0,\alpha}(\bar M).
\]
The embedding $\mathscr X^{s,p;m}(M)\subset C^{k,\alpha}(\bar M)$
when $s \ge k+\alpha-n/p$ and $m\ge k+1$
follows from the $k=0$ case using Lemma \ref{lemma:nabla-and-rho-on-curly}\eqref{part:curly-nabla-map}.
\end{proof}

Proposition \ref{prop:curly-to-M-bar} demonstrates that some of the additional
regularity possessed by elements of $\mathscr X^{s,p;m}(M;E)$, as compared to their
$\mathscr H^{s,p;m}(M;E)$ counterparts, continues up to the boundary.
For example, although both spaces embed in $H^{m;p}(M;E)$,
the inclusion \eqref{eq:curlyX-to-H-Mbar} shows that if $s>m$ then $X^{s,p;m}(M;E)$
embeds in a space $H^{m,q}(\bar M;E)$ with $q>p$.
Indeed the embedding is as good as if the initial space were
$H^{s,p}(\bar M;E)$ rather than $\mathscr X^{s,p;m}(M;E)$.

As discussed in \S\ref{secsec:rel-M}, we wish to work with
metrics $g$ admitting continuous compactifications $\bar g$.
If $\bar g\in \mathscr H^{s,p;m}(M;T^{0,2}M)$ then the inclusion \eqref{eq:curlyH-to-C-Mbar} of 
Proposition \ref{prop:curly-to-M-bar}
leads to the condition $m>n/p$.  But if $\bar g\in \mathscr X^{s,p;m}(M;T^{0,2}M)$, then 
this can be substantially weakened to $s>n/p$ and $m\ge 1$. In both cases,
the compactified metric is in fact H\"older continuous up to the boundary,
and this additional regularity is essential
in the parametrix construction of \S\ref{sec:fredholm}.

Sufficiently regular spaces on $\bar M$ are contained in
fortified  spaces, and the following
inclusions are an easy application of Proposition \ref{prop:Mbar-M-HX}.
\begin{proposition}[Containment of spaces on $\bar M$]\label{prop:M-bar-to-curly}
Let $E$ be a tensor bundle over $M$.
Suppose $s\in\Reals$, $m\in\Nats_{\ge 0}$, $1<p<\infty$ and $s\ge m$.
Then
\begin{align}
\label{eq:HbarM_into_curlyH}
H^{s,p}(\bar M;E) &\hookrightarrow \mathscr H^{s,p;m}(M;E); \\
\label{eq:HbarM_into_curlyX}
H^{s,p}(\bar M;E) &\hookrightarrow \mathscr X^{s,p;m}(M;E), &&
 s>\frac{n}{p}+m\text{ and } s\ge m+1.
\end{align}
\end{proposition}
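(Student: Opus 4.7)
The plan is to establish both embeddings by reducing them, term by term in the defining sums, to the embeddings of Proposition \ref{prop:Mbar-M-HX}, after first moving derivatives onto $u$ inside $\bar M$.

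First, I would observe that since $\hat g$ is smooth up to the boundary of $\bar M$, the covariant derivative $\hat\nabla$ is a continuous map $H^{\sigma,p}(\bar M; E') \to H^{\sigma-1,p}(\bar M; T^{0,1}\bar M \otimes E')$ for any tensor bundle $E'$ over $\bar M$ and any $\sigma\geq 1$ (by the standard product rule together with smoothness of the Christoffel symbols of $\hat\nabla$, extended to the non-integer case by interpolation of the Bessel spaces on $\bar M$). Iterating this $j$ times for $0\leq j\leq m$, and using the hypothesis $s\geq m$, yields the continuous map
\begin{equation*}
\hat\nabla^j \colon H^{s,p}(\bar M; E) \to H^{s-j,p}(\bar M; T^{0,j}\bar M\otimes E).
\end{equation*}

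The next step is to note that the bundle $T^{0,j}M\otimes E$ has weight $w+j$, since $T^{0,j}M$ is purely covariant of rank $j$. Therefore Proposition \ref{prop:Mbar-M-HX} applied to the bundle $T^{0,j}\bar M\otimes E$ gives
\begin{equation*}
H^{s-j,p}(\bar M;T^{0,j}\bar M\otimes E) \hookrightarrow H^{s-j,p}_{(w+j) - n/p}(M;T^{0,j}M\otimes E),
\end{equation*}
which requires only $s-j\geq 0$, i.e.\ $s\geq m$. Combining with the preceding continuous derivative map and summing over $0\leq j\leq m$ yields, term by term in \eqref{define-scrH-int}, exactly the norm estimate defining embedding \eqref{eq:HbarM_into_curlyH}.

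For embedding \eqref{eq:HbarM_into_curlyX}, I would proceed identically but invoke the second part of Proposition \ref{prop:Mbar-M-HX} instead, namely
\begin{equation*}
H^{s-j,p}(\bar M;T^{0,j}\bar M\otimes E) \hookrightarrow X^{s-j,p}_{w+j}(M;T^{0,j}M\otimes E),
\end{equation*}
which is valid provided $s-j\geq 1$ and $s-j>n/p$. The uniform version of these two conditions over $0\leq j\leq m$ is precisely $s\geq m+1$ and $s>n/p+m$, which are the hypotheses of \eqref{eq:HbarM_into_curlyX}. Summing the resulting estimates over $0\leq j\leq m$ produces the defining norm \eqref{define-scrX-int}.

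There is no serious obstacle here; the only bookkeeping issue, and the one place to be careful, is tracking the weight of the target bundle $T^{0,j}M\otimes E$ (the shift from $w$ to $w+j$) and verifying that the hypotheses on $s$ in each case propagate uniformly through the range $0\leq j\leq m$. Once those are observed, both embeddings are immediate consequences of Proposition \ref{prop:Mbar-M-HX}.
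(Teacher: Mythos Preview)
Your proposal is correct and matches the paper's approach exactly: the paper states only that these inclusions ``are an easy application of Proposition \ref{prop:Mbar-M-HX}'' without further detail, and your argument supplies precisely those details---push $\hat\nabla^j$ through on $\bar M$, track the resulting weight $w+j$, and apply Proposition \ref{prop:Mbar-M-HX} termwise in the defining sums \eqref{define-scrH-int} and \eqref{define-scrX-int}. Your observation that the hypotheses $s-j\ge 1$ and $s-j>n/p$ must hold uniformly over $0\le j\le m$, yielding exactly $s\ge m+1$ and $s>n/p+m$, is the correct bookkeeping.
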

Note that the hypothesis $s\ge m+1$
in embedding \eqref{eq:HbarM_into_curlyX} is
superfluous if $s\in\Nats$, and an improvement to
Proposition \ref{prop:Mbar-M-HX} would be needed to
remove it in the general case.  The requirement $s>n/p+m$
is fundamental, however, and can be thought of as arising
from additional boundary regularity possessed by
elements of $\mathscr X^{s,p;m}(M;E)$.  For example,
Corollary \ref{cor:X-bdy-awesome} below
shows that functions in $\mathscr X^{1,p;1}(M;E)$ have
Lipschitz boundary traces regardless of the value of $p$.
But $H^{s,p}(\bar M;E)$ guarantees Lipschitz boundary
traces only when $s>n/p+1$.

\subsection{Boundary asymptotics}\label{secsec:bounday-asymptotics}
Elements of fortified  spaces admit asymptotic expansions at the boundary, a
central feature that permits their use in geometric analysis.
See \S\ref{sec:ah-metrics} where these expansions are used
to precisely describe the sense in which asymptotically
hyperbolic metrics have curvatures that decay
to those of $\breve g$
when their compactifications lie in sufficiently regular fortified  spaces.
These curvature expansions
play a central role in solving the Yamabe problem, where an initial
asymptotic scalar curvature correction is necessary to first bring the
scalar curvature anomaly into the Fredholm range of the requisite PDE.
The somewhat involved
proof establishing these expansions is the subject of \S\ref{sec:asymptotics},
which culminates in the following.

\begin{reptheorem}{thm:curly-split}[Asymptotic Structure Theorem]
Suppose $1<p<\infty$, $s\in\Reals$, $m\in\Nats_{\ge 0}$, $s\ge m$
and $E$ is a tensor bundle over $M$ of weight $w$.
\begin{enumerate}
  \item If $u\in \mathscr H^{s,p;m}(M;E)$ then
 $
    u = \tau + r,
 $
  where $\tau\in \mathscr H^{\infty,p;m}(M;E)$ and $r\in H^{s,p}_{w+m-n/p}(M;E)$.
  
  Moreover, if $u\in H^{s,p}_{w+k+\alpha-n/p}(M;E)$ for some integer
  $0\le k\le m-1$ and some $\alpha\in[0,1)$, then
  $\tau=\rho^k \eta$ for some $\eta \in \mathscr H^{\infty,p;m-k}(M;E)\cap H^{\infty,p}_{w+\alpha-n/p}(M;E)$.  
  
  Finally, if $m\ge 1$ then $u|_{\partial M}=0$ if and only if $u\in H^{s,p}_{w+1-n/p}(M;E)$.

  \item If $u\in \mathscr X^{s,p;m}(M;E)$ then
  $
    u = \tau + r,
  $
  where $\tau\in \mathscr X^{\infty;m}(M;E)$ and $r\in X^{s,p}_{w+m}(M;E)$.
  
  Moreover, if $u\in X^{s,p}_{w+k}(M;E)$ for some
  integer $0\le k\le m$ and some $\alpha\in[0,1)$, then
  $\tau=\rho^k \eta$ for some $\eta \in \mathscr X^{\infty;m-k}(M;E)\cap X^{\infty}_{w+\alpha}(M;E)$. 
  
  Finally, if $m\ge 1$ then $u|_{\partial M}=0$ if and only if $u\in X^{s,p}_{w+1}(M;E)$.
\end{enumerate}
\end{reptheorem}
Theorem \ref{thm:curly-split} is a consequence of Theorem \ref{thm:Taylor},
which establishes a finer-grained Taylor expansion in a coordinate-dependent setting.
The decomposition $u=\tau + r$ should be thought of as a split into a component
$\tau$ that carries boundary information and which is
smooth in the interior,
along with a remainder $r$ that is less regular in the interior and that vanishes on the boundary
at a rate controlled by $u$'s limited boundary regularity.
Indeed, to see that $r|_{\partial M}=0$
observe
\[
H^{s,p}_{w+m-n/p}(M;E) \hookrightarrow  H^{1,p}_{w+1-n/p}(M;E)
\hookrightarrow \mathscr H^{1,p;1}(M;E) = H^{1,p}(M;E)
\]
and hence $r$ admits a trace.  Moreover, since the smooth compactly supported sections
are dense, the trace vanishes for all
elements of $H^{s,p}_{w+m-n/p}(M;E)$.  
The same conclusion holds if $r\in X^{s,p}_{w+m}(M;E)$, since
$X^{s,p}_{w+m}(M;E)\hookrightarrow H^{s,p}_{w+m-n/p}(M;E)$.

Our earlier claim that elements of $\mathscr X^{1,p;1}(M;E)$ have
Lipschitz traces follows from Theorem \ref{thm:curly-split} and
the regularity of $\mathscr X^{\infty;1}(M;E)$.
\begin{corollary}\label{cor:X-bdy-awesome}
Suppose $1<p<\infty$ and $E$ is a tensor bundle over $M$.
If $u\in \mathscr X^{1,p;1}(M;E)$ then $u|_{\partial M}$ is Lipschitz
continuous.
\end{corollary}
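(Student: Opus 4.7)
The plan is to reduce the assertion to the boundary regularity of $\mathscr X^{\infty;1}(M;E)$ by means of the Asymptotic Structure Theorem \ref{thm:curly-split}(b). Specifically, I would take $s=m=1$ in that theorem, obtaining a decomposition
\[
u = \tau + r, \qquad \tau \in \mathscr X^{\infty;1}(M;E), \quad r \in X^{1,p}_{w+1}(M;E).
\]
The final statement of Theorem \ref{thm:curly-split}(b) asserts that any element of $\mathscr X^{1,p;1}(M;E)$ which happens to lie in $X^{1,p}_{w+1}(M;E)$ has vanishing boundary trace; applied to $r$ itself this yields $r|_{\partial M}=0$. Consequently $u|_{\partial M}=\tau|_{\partial M}$, so it suffices to prove $\tau|_{\partial M}$ is Lipschitz.

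For the second step, I would use the identification $\mathscr X^{\infty;m}(M;E) = \mathscr C^{\infty;m}(M;E)$ recorded in the discussion following the definition of the fortified spaces, together with the H\"older-scale inclusion from \cite{WAH} recalled in the introduction,
\[
\mathscr C^{k,\alpha;m}(M;E) \subset C^{m-1,1}(\bar M;E) \cap C^{k,\alpha}_0(M;E).
\]
Specialized to $m=1$ this reads $\mathscr C^{k,\alpha;1}(M;E) \subset C^{0,1}(\bar M;E)$, so $\tau \in C^{0,1}(\bar M;E)$; its restriction to $\partial M$ is therefore Lipschitz, completing the argument.

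The main obstacle is not in this corollary itself, which is essentially a packaging result, but in the substantial work deferred to Theorem \ref{thm:curly-split}, whose delicate point is precisely that the remainder $r$ is delivered in the $X$-scale space $X^{1,p}_{w+1}(M;E)$ rather than merely in the weaker weighted Sobolev space $H^{1,p}_{w+1-n/p}(M;E)$. Without this sharper $X$-scale conclusion, the vanishing-trace statement cannot be invoked in the form needed here, and the trace of $r$ could a priori contribute a non-Lipschitz (fractional-regularity) component to $u|_{\partial M}$.
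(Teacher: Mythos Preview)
Your proposal is correct and follows essentially the same route as the paper's proof: decompose $u=\tau+r$ via Theorem \ref{thm:curly-split}, note that $r|_{\partial M}=0$ so that $u|_{\partial M}=\tau|_{\partial M}$, and then use $\mathscr X^{\infty;1}(M;E)=\mathscr C^{\infty;1}(M;E)\subset C^{0,1}(\bar M;E)$ via \cite[Lemma 2.3(c)]{WAH}. Your closing remark correctly identifies that the substance lies in Theorem \ref{thm:curly-split}.
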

\begin{proof}
Suppose $u\in \mathscr X^{1,p;1}(M)$.  Theorem \ref{thm:curly-split} implies
$u|_{\partial M} = \tau|_{\partial M}$ for some
$\tau\in \mathscr X^{\infty;m}(M)$.
But we observed at the start of the section that
$\mathscr X^{\infty;m}(M)=\mathscr C^{\infty;m}(M)$, and
\cite{WAH} Lemma 2.3(c) implies $\mathscr C^{\infty;m}(M)
\subset C^{0,1}(\bar M;E)$.
\end{proof}

\subsection{Relationships between fortified  spaces}

The following result enumerates relationships
between fortified  spaces. Some of these have been mentioned earlier
and are reiterated now for convenience.  The important new principle
is that embedding $\mathscr H^{s,p;m}(M;E)$ into $\mathscr X^{s,q;\ell}(M;E)$
entails a loss of boundary regularity and is possible only if $m>\ell$.
\begin{lemma}
Suppose $1<p,q<\infty$, $k,\ell,m\in\Nats_{\ge 0}$, $k\ge m$, $0\le \alpha\le 1$, $s\in\Reals$, and $s\ge m$. The following relationships hold, and all stated inclusions are continuous.
\begin{align}
  \label{eq:cX-to-cH-aa}
  \mathscr X^{s,p;m}(M;E)&\hookrightarrow \mathscr H^{s,p;m}(M;E);
  \\
  \label{eq:cH-to-cX-bb}
  \mathscr H^{s,p;m}(M;E)&\hookrightarrow \mathscr X^{s,p;\ell}(M;E),&\quad&
  m>\ell+\frac{n}{p};
  \\
  \label{eq:cC-to-cX-cc}
  \mathscr C^{k,\alpha;m}(M;E)&\hookrightarrow \mathscr X^{k,p;m}(M;E);
  \\
  \label{eq:cX-to-cC-dd}
   \mathscr X^{s,p;m}(M;E)&\hookrightarrow \mathscr C^{k,\alpha;m}(M;E),
   &\quad& \frac{1}{p}-\frac{s}{n} \le -\frac{k+\alpha}{n}, \quad \alpha>0;
   \\
  \label{eq:cXinf-to-cCinf-ee}
  \mathscr C^{\infty;m}(M;E)&=\mathscr X^{\infty;m}(M;E);
  \\
  \label{eq:cHinf-to-cCinf-ff}
  \mathscr H^{\infty;m}(M;E)&\hookrightarrow \mathscr C^{\infty;\ell}(M;E),
  &\quad& m>\ell+\frac{n}{p};
  \\
  \label{eq:cH-not-to-cX-gg}
  \mathscr H^{s,p;1}(M;E) &\not\subset \mathscr X^{1,q;1}(M;E),
  &\quad& s\ge 1;
  \\
  \label{eq:cHinf-not-to-cCinf-hh}
  \mathscr H^{\infty,p;1}(M;E)&\not\subset \mathscr C^{\infty;1}(M;E).
\end{align}
\end{lemma}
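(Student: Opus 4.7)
The plan is to dispatch the six positive inclusions (a)--(f) by term-by-term application of earlier results, and then to establish the two non-inclusions (g), (h) simultaneously via a single construction built on the Asymptotic Structure Theorem.

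For (a) and (c), the strategy is to reduce to Lemma \ref{lem:basic-inclusions} applied to each summand $\hat\nabla^j u$ of the fortified norm: inclusion (a) uses $X^{s-j,p}_{w+j}\hookrightarrow H^{s-j,p}_{w+j-n/p}$, valid because $n/p > (n-1)/p$, while (c) uses $C^{k-j,\alpha}_{w+j}\hookrightarrow X^{k-j,p}_{w+j}$. Inclusion (d) is similarly term-by-term, via the Sobolev embedding $X^{s-j,p}_{w+j}\hookrightarrow C^{k-j,\alpha}_{w+j}$ from Proposition \ref{prop:SobolevEmbedding}; the index condition is independent of $j$ and holds by hypothesis. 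For (b), apply Proposition \ref{prop:curly-H-into-roman-X} to $\hat\nabla^j u\in \mathscr H^{s-j,p;m-j}(M;T^{0,j}M\otimes E)$ (weight $w+j$) for $j=0,\dots,\ell$; each application gives $\hat\nabla^j u\in X^{s-j,p}_{w+j+\beta_j}$ with $\beta_j=\min(m-j-n/p,0)$, and the hypothesis $m>\ell+n/p$ forces $m-j>n/p$, hence $\beta_j=0$, for every $j\le\ell$.

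Inclusion (e) follows by intersecting (c) and (d) over the relevant parameters: given $u\in \mathscr C^{\infty;m}$, for any $s$ pick $k\ge s$ and deduce $u\in \mathscr C^{k,0;m}\hookrightarrow \mathscr X^{k,p;m}\subset \mathscr X^{s,p;m}$; conversely, given $u\in \mathscr X^{\infty;m}$, for any $(k,\alpha)$ pick $s$ large and use (d). Inclusion (f) follows by composing (b) with (d): given $(k,\alpha)$, choose $s$ sufficiently large that (d) applies, so that
\begin{equation*}
\mathscr H^{\infty,p;m}\subset \mathscr H^{s,p;m}\hookrightarrow \mathscr X^{s,p;\ell}\hookrightarrow \mathscr C^{k,\alpha;\ell},
\end{equation*}
and then intersect over $(k,\alpha)$.

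For (g) and (h), the crucial observation is that both $\mathscr X^{1,q;1}$ and $\mathscr C^{\infty;1}$ force Lipschitz boundary traces --- the former by Corollary \ref{cor:X-bdy-awesome}, the latter via Lemma 2.3(c) of \cite{WAH}. It therefore suffices to produce a single $\tau\in\mathscr H^{\infty,p;1}(M;E)$ whose trace on $\partial M$ is not Lipschitz, since the same $\tau$ witnesses both non-inclusions. To construct such a $\tau$, choose a non-Lipschitz function $\psi\in W^{1-1/p,p}(\partial M)$ --- for example $\psi(\theta)=\chi(\theta)|\theta|^\alpha$ in a boundary chart, with cutoff $\chi$ and any $\alpha\in(\max(0,1-n/p),1)$ --- and use surjectivity of the trace map $H^{1,p}(\bar M)\to W^{1-1/p,p}(\partial M)$ to lift $\psi$ to some $u_1\in H^{1,p}(\bar M)=\mathscr H^{1,p;1}(M;E)$ with $u_1|_{\partial M}=\psi$. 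Apply the Asymptotic Structure Theorem \ref{thm:curly-split} to $u_1$: we obtain $u_1=\tau+r$ with $\tau\in\mathscr H^{\infty,p;1}(M;E)$ and $r\in H^{1,p}_{1-n/p}(M;E)$. Since $m=1$, the theorem's final assertion gives $r|_{\partial M}=0$, whence $\tau|_{\partial M}=\psi$ is not Lipschitz, and $\tau\in\mathscr H^{\infty,p;1}\subset\mathscr H^{s,p;1}$ for every $s\ge 1$ simultaneously witnesses (g) and (h).

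The main obstacle is the appeal to the sharp form of the Asymptotic Structure Theorem, particularly its characterization of vanishing boundary traces through membership in $H^{s,p}_{w+1-n/p}$; once that is granted, the remaining arguments reduce to careful bookkeeping on the index calculus developed in Sections \ref{sec:elementary-spaces} and \ref{sec:asymptotic-spaces}.
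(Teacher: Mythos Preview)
Your treatment of \eqref{eq:cX-to-cH-aa}--\eqref{eq:cHinf-to-cCinf-ff} matches the paper's proof essentially line for line: term-by-term reduction to Lemma~\ref{lem:basic-inclusions} and Proposition~\ref{prop:SobolevEmbedding} for \eqref{eq:cX-to-cH-aa}, \eqref{eq:cC-to-cX-cc}, \eqref{eq:cX-to-cC-dd}, Proposition~\ref{prop:curly-H-into-roman-X} applied to $\hat\nabla^j u\in\mathscr H^{s-j,p;m-j}$ for \eqref{eq:cH-to-cX-bb}, and composition for \eqref{eq:cXinf-to-cCinf-ee} and \eqref{eq:cHinf-to-cCinf-ff}.

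For the non-inclusions \eqref{eq:cH-not-to-cX-gg}--\eqref{eq:cHinf-not-to-cCinf-hh} you take a genuinely different route. The paper writes down the explicit function $u=\rho^{1-1/p+\epsilon}$ with $0<\epsilon<1/p$, verifies directly that $u\in\mathscr H^{\infty,p;1}$, and then computes $\Phi_i^*\hat\nabla u$ to see that $\hat\nabla u\notin X^{0,q}_1$ for any $q$; the failure of \eqref{eq:cHinf-not-to-cCinf-hh} then comes from $u\notin C^{0,1}(\bar M)$ since the exponent is less than one. Your argument instead manufactures a counterexample abstractly: lift a non-Lipschitz $\psi\in W^{1-1/p,p}(\partial M)$ to $H^{1,p}(\bar M)$, extract the $\mathscr H^{\infty,p;1}$ part $\tau$ via Theorem~\ref{thm:curly-split}, and invoke Corollary~\ref{cor:X-bdy-awesome} to conclude $\tau\notin\mathscr X^{1,q;1}$. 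This is correct and there is no circularity, since neither Theorem~\ref{thm:curly-split} nor Corollary~\ref{cor:X-bdy-awesome} depends on the present lemma. The paper's route is more elementary and self-contained (no trace theory, no forward reference to \S\ref{sec:asymptotics}); yours is more conceptual in that it identifies the boundary-trace obstruction as the essential mechanism, and it yields a single witness for all $q$ at once without any computation in M\"obius charts. Note, incidentally, that the paper's example has \emph{vanishing} (hence trivially Lipschitz) boundary trace---the obstruction there lives entirely in the normal derivative---so the two constructions exhibit different failure modes of the same non-inclusion.
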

\begin{proof}
Embeddings \eqref{eq:cX-to-cH-aa}, \eqref{eq:cC-to-cX-cc}, and \eqref{eq:cX-to-cC-dd}
follow from Lemma \ref{lem:basic-inclusions} and  Proposition \ref{prop:SobolevEmbedding}.
To establish \eqref{eq:cH-to-cX-bb}, suppose that $u\in \mathscr H^{s,p;m}(M;E)$ and 
$m>\ell+n/p$.  For each $0\le j \le m$ we apply Proposition \ref{prop:curly-H-into-roman-X}
and the inequality $m>\ell-n/p\ge j-n/p$ to find
\[
\|\hat\nabla^j u\|_{X^{s-j,p}_{w+j}(M)} \lesssim \|\hat\nabla^j u\|_{\mathscr H^{s-j,p;m-j}(M)}
\lesssim \|u\|_{\mathscr H^{s,p;m}(M)}.
\]

Embedding \eqref{eq:cXinf-to-cCinf-ee} follows from embeddings \eqref{eq:cC-to-cX-cc} and \eqref{eq:cX-to-cC-dd},  whereas equality \eqref{eq:cHinf-to-cCinf-ff} stems from
embeddings \eqref{eq:cH-to-cX-bb} and \eqref{eq:cX-to-cC-dd}.

The non-containments \eqref{eq:cH-not-to-cX-gg} and \eqref{eq:cHinf-not-to-cCinf-hh}
come from a single example.  Consider the function $u=\rho^{1-1/p+\epsilon}$ for
some $0<\epsilon<1/p$. For any $s$ and $p$ we have
\[
1\in X^{s,p}_0(M) \hookrightarrow H^{s,p}_{1/p-n/p-\epsilon}(M),
\]
and a similar inclusion yields $\rho^{-1}d\rho \in H^{s,p}_{1/p-n/p-\epsilon}(M)$.
Hence $u\in H^{s,p}_{1-1/p}(M)$ and  $\hat\nabla u\in H^{s,p}_{1-n/p}(M)$,
implying $u\in \mathscr H^{s,p;m}(M)$.
Since $1-1/p<1$, and since each
\[
\Phi_i^* \hat \nabla u = \rho_i^{1-1/p+\epsilon} dy,
\]
we find $\hat\nabla u\not \in X^{1,q}_{1}(M)$ for any choice of $q$
and therefore $u\not\in \mathscr X^{1,q;1}(M)$.  This
establishes \eqref{eq:cH-not-to-cX-gg}, and the non-containment \eqref{eq:cHinf-not-to-cCinf-hh} follows from the observation in the proof
of Corollary \ref{cor:X-bdy-awesome} that elements of $\mathscr C^{\infty;1}(M)$
are Lipschitz on $\bar M$ whereas $u$ is not.
\end{proof}

\subsection{Multiplication and nonlinearities}

The following variation of Corollary \ref{cor:multiplication},
suffices for the limited circumstances where we multiply elements
of fortified  spaces.  Much of the
work in the proof stems from managing terms of the form $-n/p$ appearing in weights
in the Sobolev case; the inclusion provided by Proposition \ref{prop:curly-H-into-roman-X} and asymptotic structure of Theorem
\ref{thm:curly-split} play central roles.

\begin{proposition}\label{prop:curly-H-tensor}
Suppose $1<p<\infty$, $s\in\Reals$, $m\in\Nats$, $s\ge m$, and $s>n/p$. Moreover, suppose that for each $j=1,\ldots,\ell$ we have a tensor bundle $E_j$ of weight $w_j$
over $M$ and a nonnegative integer $k_j$ such that $k_1+\cdots + k_\ell = k\leq m$.

\begin{enumerate}
\item Assume in addition that $m>n/p$. If $u_j\in \mathscr H^{s,p;m}(M;E_j)$, then
$
\hat \nabla^{k_1} u_1\otimes\cdots\otimes\hat\nabla^{k_\ell} u_\ell \in \mathscr H^{s-k,p;m-k}(M;\otimes_{j} E_j).
$

\item If $u_j\in \mathscr X^{s,p;m}(M;E_j)$, then
$
\hat \nabla^{k_1} u_1\otimes\cdots\otimes\hat\nabla^{k_\ell} u_\ell \in \mathscr X^{s-k,p;m-k}(M;\otimes_{j} E_j).
$

\end{enumerate}
In both cases
$
(u_1,\ldots,u_\ell)\mapsto \hat \nabla^{k_1} u_1\otimes\cdots\otimes\hat\nabla^{k_\ell} u_\ell
$
is continuous between the stated spaces.
\end{proposition}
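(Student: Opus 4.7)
My plan is to reduce the claim to a general multiplicative estimate for fortified spaces and then exploit the algebra structure of $H^{m,p}(\bar M)$ (for the $\mathscr H$ case) together with the $X\times X\to X$ multiplication of Proposition \ref{prop:multiplication} (for the $\mathscr X$ case).

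\emph{The $\mathscr H$ case.} By Lemma \ref{lemma:nabla-and-rho-on-curly}\eqref{part:curly-nabla-map}, each $\hat\nabla^{k_i}u_i$ lies in $\mathscr H^{s-k_i,p;m-k_i}(M;T^{0,k_i}M\otimes E_i)$ with continuous bound, so it suffices to prove the general multiplicative estimate
$\|v_1\otimes\cdots\otimes v_\ell\|_{\mathscr H^{s-k,p;m-k}}\lesssim \prod_i\|v_i\|_{\mathscr H^{s-k_i,p;m-k_i}}$
for $v_i$ in the respective fortified spaces. For integer $s\ge m$, the characterization \eqref{eq:def-curlyH-barM} reduces this to estimating $\|(\rho\hat\nabla)^q(v_1\otimes\cdots\otimes v_\ell)\|_{H^{m-k,p}(\bar M)}$ for $0\le q\le s-m$. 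Since $\rho\hat\nabla$ is a derivation, an iterated Leibniz rule expands this into a finite sum of terms $(\rho\hat\nabla)^{q_1}v_1\otimes\cdots\otimes(\rho\hat\nabla)^{q_\ell}v_\ell$ with $\sum q_i=q$. By iterating Lemma \ref{lemma:nabla-and-rho-on-curly}\eqref{part:curly-rho-nabla-map} and invoking Lemma \ref{lem:curly-basic}, each factor lies in $\mathscr H^{s-k_i-q_i,p;m-k_i}\subset H^{m-k_i,p}(\bar M)$. Since $mp>n$, the Banach algebra structure of $H^{m,p}(\bar M)$ yields the multi-factor multiplication $H^{m-k_1,p}(\bar M)\times\cdots\times H^{m-k_\ell,p}(\bar M)\hookrightarrow H^{m-k,p}(\bar M)$ whenever $\sum k_i\le m$, which is the desired estimate. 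For non-integer $s\in\Reals$ with $s\ge m$, I will extend by complex interpolation (Theorem \ref{thm:H-interp-early}) applied termwise to the summands $H^{s-j,p}_{w+j-n/p}$ of the $\mathscr H$ norm, treating tensor product as an $\ell$-linear form across the interpolated $s$-scales.

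\emph{The $\mathscr X$ case.} I argue directly from the definition $\|u\|_{\mathscr X^{s,p;m}}=\sum_{j=0}^m\|\hat\nabla^j u\|_{X^{s-j,p}_{w+j}}$. Expanding $\hat\nabla^j(\hat\nabla^{k_1}u_1\otimes\cdots\otimes\hat\nabla^{k_\ell}u_\ell)$ via Leibniz into terms $\hat\nabla^{c_1}u_1\otimes\cdots\otimes\hat\nabla^{c_\ell}u_\ell$ with $\sum c_i=k+j$ and $c_i\ge k_i$, each factor satisfies $\hat\nabla^{c_i}u_i\in\mathscr X^{s-c_i,p;m-c_i}\subset X^{s-c_i,p}_{w_i+c_i}$ by Lemmas \ref{lemma:nabla-and-rho-on-curly} and \ref{lem:basic-roman-curly}. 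Iterated $X\times X\to X$ multiplication from Proposition \ref{prop:multiplication} then places the product in $X^{s-k-j,p}_{W+j}$ with $W=\sum w_i+k$, matching exactly the target summand of the $\mathscr X^{s-k,p;m-k}$ norm. The multi-factor Sobolev regularity conditions of Proposition \ref{prop:multiplication} follow from $s>n/p$ together with $\sum c_i=k+j\le m\le s$ applied at each stage of the iteration.

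\emph{Main obstacle.} The primary technical difficulty lies in the $\mathscr H$ case: careful bookkeeping of the iterated $\rho\hat\nabla$ Leibniz expansion (where commutators produce smooth-on-$\bar M$ terms involving $\hat\nabla^j\rho$, controlled by Lemma \ref{lem:CMbar-into-CMw}, which must be tracked through the expansion) together with the interpolation step for non-integer $s$, which requires verifying that tensor product as an $\ell$-linear form respects the interpolation structure of the $\mathscr H$ scale. The $\mathscr X$ case is more straightforward, with the chief effort being verification of the iterated Sobolev conditions at each multiplication step.
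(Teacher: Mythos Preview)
Your $\mathscr X$ argument is correct and essentially coincides with the paper's simplified treatment of that case: expand by Leibniz, use $\mathscr X^{s-c_i,p;m-c_i}\hookrightarrow X^{s-c_i,p}_{w_i+c_i}$, and apply iterated $X\times X\to X$ multiplication. The paper phrases this via the decomposition $u_j=\tau_j+r_j$, but as it notes, in the $\mathscr X$ setting both pieces land in the same $X$ space and the decomposition is not essential.

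Your $\mathscr H$ argument has a genuine gap at the interpolation step. The characterization \eqref{eq:def-curlyH-barM} is stated only for integer $s$, and the paper nowhere establishes that $[\mathscr H^{s_0,p;m},\mathscr H^{s_1,p;m}]_\theta=\mathscr H^{s_\theta,p;m}$. One inclusion is easy (apply $\hat\nabla^j$ and use Theorem~\ref{thm:H-interp-early} on each summand), but the reverse inclusion would require a retraction from $\prod_j H^{s-j,p}_{w+j-n/p}$ back onto $\mathscr H^{s,p;m}$, and there is no obvious candidate: projection onto the $j=0$ component lands in $H^{s,p}_{w-n/p}$, not in the fortified space. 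Your proposal to interpolate ``termwise'' across a multilinear form does not circumvent this, since you still need the domain spaces to sit inside the interpolated scale. Note also that a direct Leibniz argument using the original definition of the $\mathscr H$ norm fails because the weights $-n/p$ accumulate: $H^{\cdot,p}_{\delta_1-n/p}\times H^{\cdot,p}_{\delta_2-n/p}\to H^{\cdot,p}_{\delta_1+\delta_2-2n/p}$, which is off by $(\ell-1)n/p$ from the target. Your route through $H^{m,p}(\bar M)$ via \eqref{eq:def-curlyH-barM} is precisely what sidesteps this accumulation, but only for integer $s$.

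The paper avoids interpolation entirely by invoking Theorem~\ref{thm:curly-split} to write $u_j=\tau_j+r_j$ with $\tau_j\in\mathscr H^{\infty,p;m}$ and $r_j\in H^{s,p}_{w_j+m-n/p}$. The all-$\tau$ product is handled by multiplication in $H^{m,p}(\bar M)$ followed by a $\rho\hat\nabla$ bootstrap via Lemma~\ref{lem:curly-H-improvement}, while mixed terms with at least one $r_j$ factor are controlled using Proposition~\ref{prop:curly-H-into-roman-X} to place the $\tau$-factors in $X^{\infty}_{w_j}$ (with no $-n/p$ offset) and then applying $X\times H\to H$ multiplication. This is where the hypothesis $m>n/p$ enters, and the argument works uniformly for all real $s\ge m$. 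Continuity is obtained a posteriori by the Closed Graph Theorem rather than from explicit estimates.
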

\begin{proof}
We prove the first claim, which is harder, and outline the changes needed for the
second claim at the end of the proof.
Let $w_1,\dots, w_\ell$ be the weights of $u_1,\dots, u_\ell$; set $w = w_1+\dots + w_\ell$.
From Theorem \ref{thm:curly-split} we have
\begin{equation}\label{eq:uj_dcomp}
u_j = \tau_j + r_j,
\end{equation}
where $\tau_j\in \mathscr H^{\infty,p;m}(M)$ and
$r_j\in H^{s,p}_{w_j + m-n/p}(M)$.
Write the product $\nabla^{k_1} u_1\otimes\cdots\otimes\hat\nabla^{k_\ell} u_\ell$ as the sum
of
\begin{equation}
\label{eq:mult-smooth-part}
\tau = \hat\nabla^{k_1}\tau_1 \otimes \cdots  \otimes \hat\nabla^{k_\ell}\tau_\ell
\end{equation}
and $r$, which is the sum of products containing at least one factor $\hat\nabla^{k_j} r_j$; both $\tau$ and $r$ are tensors of weight $w+k$.

First we analyze $\tau$.
For each $j$, Proposition \ref{prop:curly-to-M-bar} implies
\[
\hat\nabla^{k_j}\tau_j \in \mathscr H^{\infty, p;m-k_j}(M) \subset H^{m-k_j, p}(\bar M).
\]
Because $s>n/p$, standard multiplication properties of Sobolev spaces on $\bar M$ analogous to
those of Proposition \ref{prop:multiplication}
together with Proposition \ref{prop:M-bar-to-curly}
imply
\[
\tau \in H^{m-k,p}(\bar M) \subset \mathscr H^{m-k, p; m-k}(M).
\]
Applying $\rho\hat\nabla$ to $\tau$, and using that $\rho\hat\nabla \hat\nabla^{k_j}\tau_j \in \mathscr H^{\infty, p;m-k_j}(M)$ for each $j$, we find $\rho\hat\nabla \tau \in \mathscr H^{m-k, p; m-k}(M)$ as well.
Thus Lemma \ref{lem:curly-H-improvement} implies $\tau \in \mathscr H^{m-k+1, p; m-k}(M)$.
Iterating this argument, we find $\tau \in \mathscr H^{\infty, p; m-k}(M)$.

We show that $r\in \mathscr H^{s-k, p;m-k}(M)$ by establishing $\hat\nabla^i r\in H^{s-k-i, p}_{w+k+i-n/p}(M)$ for each $i\leq m-k$.
Consider first those terms of $\hat\nabla^i r$ of the form
\begin{equation}
\label{curlyHtensor-first-mixed-term}
\hat\nabla^{a_1}r_1\otimes \hat\nabla^{a_2}\tau_2\otimes\dots\otimes \hat\nabla^{a_\ell}\tau_\ell,
\end{equation}
where $a_1+\dots+a_\ell = i+k\leq m$.
Recall $r_1\in H^{s,p}_{w_1 + m - n/p}(M)$ and
therefore $\hat\nabla^{a_1}r_1\in H^{s-a_1, p}_{w_1 + m - n/p}(M)$.
For $j=2,\dots, \ell$, we use Proposition \ref{prop:curly-H-into-roman-X} and the hypothesis $m>n/p$
to obtain $\tau_j \in X^{\infty}_{w_j}(M)$ and thus $\hat\nabla^{a_j}\tau_j \in X^{\infty}_{w_j}(M)$.
The expression \eqref{curlyHtensor-first-mixed-term} is thus an element of
\begin{equation}
\label{curlyHtensor-mixed-space}
H^{s-a_1, p}_{w_1 + m - n/p}(M)
\times X^{\infty}_{w_2}(M)
\times\dots\times X^{\infty}_{w_\ell}(M),
\end{equation}
which by
Proposition \ref{prop:multiplication} is contained in $H^{s-k-i, p}_{w + m-n/p}(M) \subset H^{s-k-i,p}_{w+k_i-n/p}(M)$ as desired.
Terms of $\hat\nabla^i r$ containing exactly one $r_j$ factor are similarly in $H^{s-k-i,p}_{w+k_i-n/p}(M)$.

The remaining terms of $\hat\nabla^i r$ arise from taking terms analogous to \eqref{curlyHtensor-first-mixed-term} and replacing some of the $\hat\nabla^{a_j}\tau_j$ with $\hat\nabla^{a_j}r_j$.
This results in the replacement of $X^{\infty}_{w_j}(M)$ by $H^{s-a_j,p}_{w_j + m-n/p}(M)$
in \eqref{curlyHtensor-mixed-space}.
As $m-n/p>0$, one may use Proposition \ref{prop:multiplication} to show these products are also included in $H^{s-k-i, p}_{w + m-n/p}(M) \subset H^{s-k-i,p}_{w+k_i-n/p}(M)$.
Thus $r$, and hence $\tau + r$, is in $\mathscr H^{s-k,p;m-k}(M)$.

It remains to establish continuity, and we employ
the Closed Graph Theorem.  Suppose $v_a$ is a sequence in $\mathscr H^{s,p;m}(M)$ converging
to $u_1$, and that
\[
\hat\nabla^{k_1} v_a\otimes \hat\nabla^{k_2}u_2\otimes\cdots\otimes
\hat\nabla^{k_\ell}u_\ell \to w \in \mathscr H^{s-k,p;m-k}(M).
\]
Reducing to subsequences, since $m-k\ge 0$, we can
assume that the above convergence, as well as
$\hat\nabla^{k_1} v_a \to \hat\nabla^{k_1} u$
occurs pointwise almost everywhere.
But then $w = \hat\nabla^{k_1} u_1\otimes\cdots\otimes
\hat\nabla^{k_\ell} u_\ell$, which proves the continuity
of the map
\[
v\mapsto \hat\nabla^{k_1}v\otimes\cdots\otimes \hat\nabla^{k_\ell}u_\ell.
\]
The same argument works for each factor, and continuity of the multilinear map then
follows from the Uniform Boundedness Principle.

In the Gicquaud-Sakovich case the argument above can be simplified.  One can
show that $\tau$ in equation \eqref{eq:mult-smooth-part}
belongs to $\mathscr X^{\infty;m-k}(M)$ as above or
directly from the definition of $\mathscr X^{\infty;m-k}(M)$
and Proposition \ref{prop:multiplication} using only the hypothesis $s>n/p$.
In mixed expressions
such as \eqref{curlyHtensor-first-mixed-term}, terms of the form
$\hat\nabla^{a_i}\tau_i$ lie in $X^{s-a_i,p}_{w_i+a_i}(M)$
and each $\hat\nabla^{a_i}r_i$ lies in
$X^{s-a_i,p}_{w_i+m}(M;*)\subset X^{s-a_i,p}_{w_i+a_i}(M)$, again by virtue
of the definition in equation \eqref{define-scrX-int}.  The product
then belongs to $X^{s-k,p}_{w+k}(M)$ by application of Proposition
\ref{prop:multiplication}.  There is no use of Proposition \ref{prop:curly-H-into-roman-X}
and hence the additional hypothesis $m>n/p$ is not needed.
\end{proof}

There are two important
consequences of Proposition \ref{prop:curly-H-tensor}.
First, specializing to two factors and taking no derivatives
we have the following Banach algebra property.
\begin{proposition} \label{prop:curly-h-banach-algebra}
Suppose $1<p<\infty$, $s\in\Reals$, $m\in\Nats$, $s\ge m$, and
$s>n/p$.
Let $E_1$ and $E_2$ be tensor bundles over $M$.
Then the tensor product yields continuous
bilinear maps
\begin{gather*}
\mathscr H^{s,p;m}(M;E_1)\times \mathscr H^{s,p;m}(M;E_2)
\to \mathscr H^{s,p;m}(M;E_1\otimes E_2),
\\
\mathscr X^{s,p;m}(M;E_1)\times \mathscr X^{s,p;m}(M;E_2)
\to \mathscr X^{s,p;m}(M;E_1\otimes E_2),
\end{gather*}
where the first claim requires additionally $m>n/p$.
\end{proposition}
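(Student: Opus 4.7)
The proof is an immediate specialization of Proposition \ref{prop:curly-H-tensor}. The plan is to apply that proposition with $\ell = 2$ factors and take no derivatives, so that $k_1 = k_2 = 0$ and consequently $k = k_1 + k_2 = 0 \leq m$. Under this choice the output space $\mathscr H^{s-k,p;m-k}(M;E_1\otimes E_2)$ collapses to $\mathscr H^{s,p;m}(M;E_1\otimes E_2)$, which is precisely the target in the Sobolev case; the analogous collapse yields the target $\mathscr X^{s,p;m}(M;E_1\otimes E_2)$ in the Gicquaud--Sakovich case.

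The hypotheses match without adjustment: Proposition \ref{prop:curly-H-tensor} requires $1 < p < \infty$, $m \in \Nats$, $s \geq m$, and $s > n/p$ for both conclusions, and further imposes $m > n/p$ in the $\mathscr H$ case. These are exactly the assumptions in the present statement. Continuity of the bilinear maps is part of the conclusion of Proposition \ref{prop:curly-H-tensor}, established there via a Closed Graph Theorem argument together with the Uniform Boundedness Principle; no further work is needed.

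There is essentially no obstacle to overcome, as the entire content of the Banach algebra property is already encoded in Proposition \ref{prop:curly-H-tensor}. The only thing to note is that the requirement $m > n/p$ in the Sobolev case is genuinely needed and cannot be dropped: it is used in the ambient result to put the smooth part $\tau$ into a pointwise bounded space via Proposition \ref{prop:curly-H-into-roman-X}, which in turn controls multiplication against the remainder terms carrying the weight $-n/p$. In the Gicquaud--Sakovich case this step is unnecessary, since sections there already carry $\ell^\infty$-type control, which explains the asymmetry of hypotheses.
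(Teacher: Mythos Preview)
Your proposal is correct and matches the paper's approach exactly: the paper introduces this proposition by stating ``specializing to two factors and taking no derivatives we have the following Banach algebra property,'' which is precisely your argument of applying Proposition~\ref{prop:curly-H-tensor} with $\ell=2$ and $k_1=k_2=0$. Your additional commentary on why $m>n/p$ is needed only in the $\mathscr H$ case is accurate and reflects the discussion at the end of the proof of Proposition~\ref{prop:curly-H-tensor}.
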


The second important consequence of Proposition
\eqref{prop:curly-H-tensor} is that smooth nonlinear functions
determine well-defined maps on $\mathscr H^{s,p;m}(M;\Reals)$
whenever the Banach algebra condition
$s\ge m>n/p$ holds and $s\ge 1$.

\begin{proposition}\label{prop:nonlin-curly}
Suppose $1<p<\infty$,
$m\in\Nats$ and $s> n/p$, and $s\ge m$.
Let $F\in C^\infty(\Reals)$.
\begin{enumerate}
\item
Assume additionally $m>n/p$.  If $u\in \mathscr H^{s,p;m}(M;\Reals)$
then
$
F(u) \in \mathscr H^{s,p;m}(M;\Reals).
$
\item
If $u\in \mathscr X^{s,p;m}(M;\Reals)$
then
$
F(u) \in \mathscr X^{s,p;m}(M;\Reals).
$
\end{enumerate}
\end{proposition}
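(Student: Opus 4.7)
The strategy is to reduce to the vanishing-at-zero case and then to apply the Fa\`a di Bruno chain rule term by term, using the multiplication and composition results already developed for fortified spaces. Write $F = F(0) + \hat F$, where $\hat F(0) = 0$. Since constants lie in $H^{\infty,p}(\bar M;\Reals)$ and hence in both fortified spaces by Proposition \ref{prop:M-bar-to-curly}, it suffices to show $\hat F(u)$ lies in the appropriate fortified space.

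The key preliminary observation is that $u$ and every nonlinear image $F^{(k)}(u)$ admits a bound in $X^{s,p}_0(M;\Reals)$. In case (b) the embedding $\mathscr X^{s,p;m}(M;\Reals)\hookrightarrow X^{s,p}_0(M;\Reals)$ is immediate from Lemma \ref{lem:basic-roman-curly}, while in case (a) the stronger hypothesis $m>n/p$ allows us to invoke Proposition \ref{prop:curly-H-into-roman-X} to obtain the same embedding. With $u\in X^{s,p}_0(M;\Reals)$ in hand, Proposition \ref{prop:nonlin}\eqref{part:nonlin-X} applied to $\hat F$ and to each $\hat F^{(k)}=F^{(k)}-F^{(k)}(0)$ gives
\begin{equation*}
\hat F(u),\ \hat F^{(k)}(u)\in X^{s,p}_0(M;\Reals)\qquad\text{for all }k\geq 1.
\end{equation*}

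To finish the proof, for each $0\leq j\leq m$ we bound $\hat\nabla^j F(u)$ in the summand appearing in the norm \eqref{define-scrH-int} or \eqref{define-scrX-int}. The case $j=0$ follows from the previous step together with the inclusion $X^{s,p}_0(M;\Reals)\hookrightarrow H^{s,p}_{-n/p}(M;\Reals)$ of Lemma \ref{lem:basic-inclusions} in case (a). For $j\geq 1$, the Fa\`a di Bruno formula expresses $\hat\nabla^j F(u)$ as a finite sum of terms of the form
\begin{equation*}
F^{(k)}(u)\cdot \hat\nabla^{j_1} u\otimes\cdots\otimes\hat\nabla^{j_k} u,\qquad j_1+\cdots+j_k=j,\ k\geq 1.
\end{equation*}
The tensor factor belongs to $\mathscr H^{s-j,p;m-j}$ or $\mathscr X^{s-j,p;m-j}$ by Proposition \ref{prop:curly-H-tensor}. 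Splitting $F^{(k)}(u)=F^{(k)}(0)+\hat F^{(k)}(u)$, the constant piece poses no obstruction, while the $\hat F^{(k)}(u)\in X^{s,p}_0(M;\Reals)$ piece is multiplied against the tensor factor via Proposition \ref{prop:multiplication} in case (a)---where the weight arithmetic $0+(j-n/p)=j-n/p$ matches exactly---or via the Banach algebra property of Proposition \ref{prop:curly-h-banach-algebra} in case (b).

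The main obstacle is the negative weight $-n/p$ in case (a): a direct invocation of Proposition \ref{prop:nonlin} on $u\in H^{s,p}_{-n/p}(M;\Reals)$ is inadmissible since that result requires $\delta\geq 0$. The assumption $m>n/p$ is therefore essential: it is precisely what allows Proposition \ref{prop:curly-H-into-roman-X} to bootstrap $u$ into $X^{s,p}_0(M;\Reals)$, after which the composition bounds become available. Verifying that the multiplication hypotheses of Proposition \ref{prop:multiplication} are met for each term appearing in the Fa\`a di Bruno expansion (noting that $s>n/p$ and $s\geq j$ control the scale ratios $1/r_i = 1/p - s_i/n$) is the remaining technical point, but amounts to bookkeeping once the upgrade is in hand.
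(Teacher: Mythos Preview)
Your approach is essentially identical to the paper's: expand $\hat\nabla^j F(u)$ via Fa\`a di Bruno, place the scalar prefactor $F^{(k)}(u)$ in $X^{s,p}_0(M;\Reals)$, place the tensor factor in the appropriate fortified space via Proposition~\ref{prop:curly-H-tensor}, and multiply. The paper streamlines your splitting $F=F(0)+\hat F$ by invoking Corollary~\ref{cor:nonlin-into-X} directly (which absorbs the constant internally), and it applies that corollary with $u\in X^{s,p}_0(M;\Reals)$---the same embedding you obtain from Proposition~\ref{prop:curly-H-into-roman-X} under $m>n/p$.

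One small correction: in case~(b) you cite Proposition~\ref{prop:curly-h-banach-algebra} to handle the product $\hat F^{(k)}(u)\cdot(\hat\nabla^{j_1}u\otimes\cdots)$, but that proposition concerns products within $\mathscr X^{s,p;m}$, and $\hat F^{(k)}(u)$ is only known to lie in $X^{s,p}_0(M;\Reals)$, not in a fortified space. The correct citation is Proposition~\ref{prop:multiplication} in both cases, multiplying $X^{s,p}_0\times X^{s-j,p}_j\to X^{s-j,p}_j$ (since $\mathscr X^{s-j,p;m-j}\subset X^{s-j,p}_j$ by Lemma~\ref{lem:basic-roman-curly} with weight $j$). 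This is exactly what the paper does, and with that adjustment your argument is complete.
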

\begin{proof}
The two results are proved similarly and we assume that $u\in \mathscr H^{s,p;m}(M)$.
We need to show that
\[
\hat\nabla^k f(u) \in H^{s,p}_{k-n/p}(M)
\]
for each $0\le k\le m$.
Now $\hat\nabla^k f(u)$ is a sum of terms of the form
\[
f^{(a)}(u) \hat\nabla^{k_1} u\otimes\cdots \otimes \hat\nabla^{k_\ell} u,
\]
where $k_1+\cdots+k_\ell =k$ and where $a\le k$.  Corollary \ref{cor:nonlin-into-X}
implies $f^{(a)}(u)\in X^{s,p}_0(M)$
whereas Proposition \ref{prop:curly-H-tensor} implies
$\hat\nabla^{k_1} u\otimes\cdots \otimes \hat\nabla^{k_\ell} u
\in \mathscr H^{s-k,p;m-k}(M;*)\subset H^{s-k,p}_{k-n/p}(M)$;
we have used the fact that $\hat\nabla^{k_1} u\otimes\cdots \otimes \hat\nabla^{k_\ell} u$ has weight $k$.  An application of Proposition \ref{prop:multiplication}
finishes the proof.
\end{proof}

\section{Asymptotically hyperbolic metrics}
\label{sec:ah-metrics}
In the introduction, we motivated the definition of asymptotically hyperbolic metrics with the identity
\begin{equation}
\label{relate-curvature-operators}
\Riem[g] = -|d\rho|^2_{\bar g} \Id
+ 2\rho\,\delta \KN \Hess_{\bar g}(\rho)^\sharp
+ \rho^2 \Riem[\bar g]
\end{equation}
relating the curvature operator $\Riem[g]$ of metric $g$ with that of $\bar g = \rho^2 g$.
(Recall that we view $\Riem[g]$ as a $(2,2)$ tensor, that $\Id$ and $\delta$ are the identity operators on $\Lambda^2(TM)$ and $\Lambda^1(TM)$, and that $\KN$ is the Kulkarni-Nomizu product; see \cite{WAH}.)
In particular, the identity \eqref{relate-curvature-operators} indicates that if $\bar g$ is sufficiently regular, then curvatures of $g$ are asymptotically determined by the value of $|d\rho|_{\bar g}$ along $\partial M$.
The main result of this section, 
Proposition \ref{prop:scalar-curvature-H} below, shows that the regularity assumptions of the introduction are sufficient to conclude that if $|d\rho|_{\bar g}=1$ along $\partial M$, then the curvatures of $g$ approach those of hyperbolic space as $\rho\to 0$.

To this end, recall from the introduction our assumption that $g$ satisfies either
\begin{itemize}
\item condition \eqref{intro:H-class}, in which case 
$\bar g \in \mathscr H^{s,p;m}(M;T^{0,2}M)$ with $s\in \mathbb R$ and $m\in \mathbb N$ satisfying $s\geq m > n/p$ and $1<p<\infty$;
 or
\item condition \eqref{intro:X-class}, in which case $\bar g \in \mathscr X^{s,p;m}(M;T^{0,2}M)$ with $s\in \mathbb R$ and $m\in \mathbb N$ satisfying $s\geq m \geq 1$, $s<n/p$, and $1<p<\infty$.
\end{itemize}
In either case, Proposition \ref{prop:curly-to-M-bar} implies that $\bar g$ extends continuously to $\bar M$.
We say that $g$ is \Defn{asymptotically hyperbolic of class $\mathscr H^{s,p;m}$ or $\mathscr X^{s,p;m}$}, respectively, if the relevant regularity assumption above is satisfied, if $g$ is conformally compact (so that $\bar g$ extends to a Riemannian metric on $\bar M$), and if $|d\rho|_{\bar g} =1$ along $\partial M$.

\begin{proposition}\label{prop:scalar-curvature-H}
\strut
\begin{enumerate}
\item Suppose $g$ is an asymptotically hyperbolic metric of class $\mathscr H^{s,p;m}$ satisfying \eqref{intro:H-class}.
Then the curvature operator of $g$ satisfies
$$\Riem[g] =-\Id +  \rho \tau_\textup{Riem} + r_\textup{Riem},$$
where $\tau_\textup{Riem}\in \mathscr H^{\infty, p;m-1}(M; T^{2,2}M)$ and $r_\textup{Riem} \in H^{s-2,p}_{m-n/p}(M; T^{2,2}M)$.

Consequently, the scalar curvature of $g$ satisfies
$$\R[g]  = -n(n-1) + \rho \tau_\textup{R} + r_\textup{R},$$ where $\tau_\textup{R}\in \mathscr H^{\infty, p;m-1}(M;\mathbb R)$ and where $r_\textup{R} \in H^{s-2,p}_{m-n/p}(M;\mathbb R)$. 

Moreover
\begin{equation}\label{eq:scalar-curly-H-to-X}
\R[g]+n(n-1) \in X^{s-2,p}_{\beta}(M;\mathbb R),
\end{equation}
where $\beta=\min(m-n/p,1)$ unless $m=n/p+1$, in which case
inclusion \eqref{eq:scalar-curly-H-to-X} holds for any $\beta<1$.

\item Suppose $g$ is an asymptotically hyperbolic metric of class $\mathscr X^{s,p;m}$ satisfying \eqref{intro:X-class}.
Then the curvature operator of $g$ satisfies
$$\Riem[g] =-\Id +  \rho \tau_\textup{Riem} + r_\textup{Riem},$$
where $\tau_\textup{Riem}\in \mathscr X^{\infty;m-1}(M;T^{2,2}M)$ and $r_\textup{Riem} \in X^{s-2,p}_{m}(M;T^{2,2}M)$.

Consequently, the scalar curvature of $g$ satisfies
$$\R[g]  = -n(n-1) + \rho \tau_\textup{R} + r_\textup{R},$$ where $\tau_\textup{R}\in \mathscr X^{\infty;m-1}(M;\mathbb R)$ and where $r_\textup{R} \in X^{s-2,p}_{m}(M;\mathbb R)$. 

Moreover
\[
\R[g] + n(n-1) \in X^{s-2,p}_{1}(M;\mathbb R).
\]
\end{enumerate}
\end{proposition}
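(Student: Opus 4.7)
The plan is to work directly from identity \eqref{relate-curvature-operators}, decomposing each of its three right-hand summands via the Asymptotic Structure Theorem (Theorem \ref{thm:curly-split}) after cataloguing the fortified-space regularity of the geometric ingredients. The algebraic condition $|d\rho|_{\bar g}=1$ along $\partial M$ is what ultimately produces the leading $-\Id$.

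First, the regularity of the inputs. Because $\bar g$ extends continuously to a positive-definite metric on $\bar M$ (Proposition \ref{prop:curly-to-M-bar}), matrix inversion is smooth near its range and Proposition \ref{prop:nonlin-curly} (applied componentwise via the cofactor formula and $t\mapsto t^{-1}$ on $(0,\infty)$) places $\bar g^{-1}$ in the same fortified space as $\bar g$. Repeated application of the product estimate Proposition \ref{prop:curly-H-tensor}, together with smoothness of $d\rho$, $\hat\nabla^2\rho$, and the reference connection $\hat\nabla$, then puts the difference tensor $\bar\nabla-\hat\nabla$, the Hessian $\Hess_{\bar g}(\rho)$, and its raised version $\Hess_{\bar g}(\rho)^\sharp$ into $\mathscr H^{s-1,p;m-1}$ (resp.\ $\mathscr X^{s-1,p;m-1}$); the same manoeuvres give $|d\rho|_{\bar g}^2=\bar g^{-1}(d\rho,d\rho)\in\mathscr H^{s,p;m}$ (resp.\ $\mathscr X^{s,p;m}$). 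For $\Riem[\bar g]$, a polynomial in $\bar g^{-1}$ and up to two $\hat\nabla$-derivatives of $\bar g$, Proposition \ref{prop:curly-H-tensor} yields $\Riem[\bar g]\in\mathscr H^{s-2,p;m-2}$ (resp.\ $\mathscr X^{s-2,p;m-2}$) whenever $m\ge 2$; in the $\mathscr H$ edge case $m=1$ (which forces $p>n$), I would instead analyze $\rho^2\Riem[\bar g]$ directly by redistributing each $\hat\nabla$ as $\rho^{-1}(\rho\hat\nabla)$ and using the $m$-preserving action of $\rho\hat\nabla$ in Lemma \ref{lemma:nabla-and-rho-on-curly}\eqref{part:curly-rho-nabla-map}.

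Because $|d\rho|_{\bar g}^2-1$ has vanishing boundary trace, Theorem \ref{thm:curly-split} supplies the split $|d\rho|_{\bar g}^2-1=\rho\eta_0+r_0$ with $\eta_0\in\mathscr H^{\infty,p;m-1}$ (resp.\ $\mathscr X^{\infty;m-1}$) and $r_0\in H^{s,p}_{m-n/p}$ (resp.\ $X^{s,p}_m$); applied directly to $2\,\delta\KN\Hess_{\bar g}(\rho)^\sharp$ it produces an analogous split $\tau_1+r_1$. Substituting into \eqref{relate-curvature-operators}, and absorbing the two $\rho$-factors of $\rho^2\Riem[\bar g]$ into the $m$-index via Lemma \ref{lemma:nabla-and-rho-on-curly}\eqref{part:curly-rho-map}, assembles
\begin{equation*}
\Riem[g] = -\Id + \rho\,(-\eta_0\Id+\tau_1) + \bigl(-r_0\Id+\rho r_1+\rho^2\Riem[\bar g]\bigr),
\end{equation*}
with the bracketed remainder residing in $H^{s-2,p}_{m-n/p}$ (resp.\ $X^{s-2,p}_m$) after the routine inclusions $H^{s,p}_{m-n/p},H^{s-1,p}_{m-n/p}\hookrightarrow H^{s-2,p}_{m-n/p}$. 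The scalar-curvature decomposition then follows either by tracing twice with $g$, or more transparently by processing the conformal-transformation identity $\R[g]=\rho^2\R[\bar g]+2(n-1)\rho\Delta_{\bar g}\rho-n(n-1)|d\rho|_{\bar g}^2$ with the same three-step decomposition.

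The improved inclusion \eqref{eq:scalar-curly-H-to-X} follows piecewise from the resulting split $\R[g]+n(n-1)=\rho\tau_\textup{R}+r_\textup{R}$. Proposition \ref{prop:curly-H-into-roman-X} embeds $\tau_\textup{R}\in\mathscr H^{\infty,p;m-1}$ into $X^{\infty}_{\min(m-1-n/p,\,0)}$, so $\rho\tau_\textup{R}\in X^{\infty}_{\min(m-n/p,\,1)}=X^{\infty}_\beta$, with the exceptional case $m-1=n/p$ of that proposition producing precisely the exceptional case $m=n/p+1$ of \eqref{eq:scalar-curly-H-to-X}; meanwhile $r_\textup{R}\in H^{s-2,p}_{m-n/p}\hookrightarrow X^{s-2,p}_{m-n/p}\hookrightarrow X^{s-2,p}_\beta$ by Lemma \ref{lem:basic-inclusions}. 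In the $\mathscr X$ setting both summands lie directly in $X^{s-2,p}_1$. The principal technical obstacle will be tracking the interplay of the $-n/p$ shifts and weight indices in the $\mathscr H$ setting cleanly, together with the $m=1$ edge case where $\Riem[\bar g]$ alone is not in any fortified space and must be controlled only after multiplication by $\rho^2$.
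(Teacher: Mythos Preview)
Your approach is essentially the paper's: work from identity \eqref{relate-curvature-operators}, establish fortified regularity of each ingredient via Propositions \ref{prop:curly-H-tensor} and \ref{prop:nonlin-curly}, split via Theorem \ref{thm:curly-split}, and read off the $X$-inclusion from Proposition \ref{prop:curly-H-into-roman-X}. The paper packages the three terms into Lemmas \ref{lem:drho-H}--\ref{lem:R-bar-H}, but the content is the same, including your handling of the exceptional case $m=n/p+1$.

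There is, however, one genuine gap in your assembly. You place the entire term $\rho^2\Riem[\bar g]$ into the bracketed remainder and claim it lies in $H^{s-2,p}_{m-n/p}$ (resp.\ $X^{s-2,p}_m$). This fails for $m\ge 3$. The bundle $T^{2,2}M$ has weight $0$, so from $\Riem[\bar g]\in\mathscr H^{s-2,p;m-2}$ Lemma \ref{lem:basic-roman-curly} yields only $\Riem[\bar g]\in H^{s-2,p}_{-n/p}$, and hence $\rho^2\Riem[\bar g]\in H^{s-2,p}_{2-n/p}$; for $m>2$ this is strictly larger than $H^{s-2,p}_{m-n/p}$. ``Absorbing the $\rho$-factors into the $m$-index'' via Lemma \ref{lemma:nabla-and-rho-on-curly}\eqref{part:curly-rho-map} gives $\rho^2\Riem[\bar g]\in\mathscr H^{s-2,p;m}$, but that space also only embeds in $H^{s-2,p}_{-n/p}$, not $H^{s-2,p}_{m-n/p}$. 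The same obstruction occurs in the $\mathscr X$ case.

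The fix is to treat $\Riem[\bar g]$ on the same footing as the other two pieces: apply Theorem \ref{thm:curly-split} to $\Riem[\bar g]\in\mathscr H^{s-2,p;m-2}$ (resp.\ $\mathscr X^{s-2,p;m-2}$) to obtain $\Riem[\bar g]=\tau_2+r_2$ with $\tau_2\in\mathscr H^{\infty,p;m-2}$ and $r_2\in H^{s-2,p}_{m-2-n/p}$. Then $\rho^2 r_2\in H^{s-2,p}_{m-n/p}$ is the legitimate remainder contribution, while $\rho^2\tau_2=\rho(\rho\tau_2)$ with $\rho\tau_2\in\mathscr H^{\infty,p;m-1}$ feeds into $\tau_{\mathrm{Riem}}$. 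This is exactly what the paper's Lemma \ref{lem:R-bar-H} does, and with this correction your argument goes through.
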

\begin{proof}
The result follows immediately from Lemmas \ref{lem:drho-H} - \ref{lem:R-bar-H} below,
the identity \eqref{relate-curvature-operators}, and its consequence
\[
\R[g] = -n(n-1)|d\rho|_{\bar g}^2 + 2(n-1)\rho \Delta_{\bar g}\rho +\rho^2\R[\bar g].\qedhere
\]
\end{proof}

The following lemmas repeatedly use that $\hat\nabla{}^k \rho$ is an element of both $\mathscr H^{s,p;m}(M;T^{0,k}M)$ and $\mathscr X^{s,p;m}(M;T^{0,k}M)$ for all $k$; this follows from Proposition \ref{prop:M-bar-to-curly}.
We furthermore use that if $\bar g\in \mathscr H^{s,p;m}(M;T^{0,2}M)$, then $\bar g^{-1}\in  \mathscr H^{s,p;m}(M;T^{2,0}M)$ and, similarly, that if  $\bar g\in \mathscr X^{s,p;m}(M;T^{0,2}M)$, then $\bar g{}^{-1}\in \mathscr X^{s,p;m}(M;T^{2,0}M)$.
These latter facts follow from $\bar g$ being continuous on $\bar M$, the determinant being a polynomial, and Proposition \ref{prop:curly-H-tensor}.

\begin{lemma}\label{lem:drho-H}
\strut
\begin{enumerate}
\item 
Suppose $g = \rho^{-2}$ is an asymptotically hyperbolic metric of class $\mathscr H^{s,p;m}$ satisfying \eqref{intro:H-class}.
Then
\[
|d\rho|_{\bar g}^2 = 1 + \rho \tau + r,
\]
where $\tau\in \mathscr H^{\infty,p;m-1}(M;\mathbb R)$ and $r\in H^{s,p}_{m-n/p}(M;\mathbb R)$. 

Moreover
\begin{equation}\label{eq:drho-in-X}
|d\rho|_{\bar g}^2 - 1 \in X^{s,p}_\beta(M;\mathbb R),
\end{equation}
where $\beta=\min(m-n/p,1)$ unless $m=n/p+1$, in which case \eqref{eq:drho-in-X}
holds for any $\beta<1$.

\item Suppose $g = \rho^{-2}\bar g$ is an asymptotically hyperbolic metric of class $\mathscr X^{s,p;m}$ satisfying \eqref{intro:X-class}. 
Then
\[
|d\rho|_{\bar g}^2 = 1 + \rho \tau + r,
\]
where $\tau\in \mathscr X^{\infty;m-1}(M;\mathbb R)$ and $r\in X^{s,p}_{m}(M;\mathbb R)$.  

Moreover
\[
|d\rho|_{\bar g}^2 - 1 \in X^{s,p}_{1}(M;\mathbb R).
\]
\end{enumerate}
\end{lemma}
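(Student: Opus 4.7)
The plan is to rewrite $|d\rho|_{\bar g}^2 = \bar g^{-1}(d\rho, d\rho)$ and to treat the right-hand side as a product of elements of fortified spaces. The form $d\rho$ is smooth on $\bar M$ and hence lies in every space $\mathscr H^{\infty,p;m}$ and $\mathscr X^{\infty;m}$ by Proposition \ref{prop:M-bar-to-curly}, while the paragraph preceding the lemma records that $\bar g^{-1}$ belongs to the same fortified space as $\bar g$. The Banach algebra property (Proposition \ref{prop:curly-h-banach-algebra}), whose hypotheses $s > n/p$ (and additionally $m > n/p$ in case (a)) are exactly those of the asymptotically hyperbolic assumption, then yields $|d\rho|_{\bar g}^2 \in \mathscr H^{s,p;m}(M;\mathbb R)$ and $|d\rho|_{\bar g}^2 \in \mathscr X^{s,p;m}(M;\mathbb R)$ in cases (a) and (b), respectively. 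Setting $v := |d\rho|_{\bar g}^2 - 1$, the asymptotically hyperbolic condition $|d\rho|_{\bar g} = 1$ on $\partial M$ translates to $v|_{\partial M} = 0$, so the final clause of the Asymptotic Structure Theorem will apply to $v$.

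In case (a), Theorem \ref{thm:curly-split}(a) applied to $v$ first gives $v \in H^{s,p}_{1-n/p}(M;\mathbb R)$ via its ``Finally'' clause with $w = 0$, and then its ``Moreover'' clause with $k = 1$, $\alpha = 0$ produces a decomposition $v = \rho\eta + r$ with $\eta \in \mathscr H^{\infty,p;m-1}(M;\mathbb R)$ and $r \in H^{s,p}_{m-n/p}(M;\mathbb R)$, which (after renaming $\eta$ as $\tau$) is the first assertion. For the strengthened inclusion \eqref{eq:drho-in-X}, I would apply Proposition \ref{prop:curly-H-into-roman-X} to $\eta$ with its parameter $m$ replaced by $m-1$, obtaining $\eta \in X^{\infty}_{\beta'}$ for $\beta' = \min(m-1-n/p, 0)$, or for any $\beta' < 0$ when $m - 1 = n/p$. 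A short calculation in M\"obius coordinates using $\Phi_i^*\rho = \rho_i y$ shows that multiplication by $\rho$ shifts the $X^{s,p}_\delta$ weight by one, so $\rho\eta \in X^{\infty}_{\beta' + 1} = X^{\infty}_{\min(m-n/p, 1)}$, with the parallel edge case $\beta < 1$ when $m = n/p + 1$. Meanwhile $r \in H^{s,p}_{m-n/p} \hookrightarrow X^{s,p}_{m-n/p} \subset X^{s,p}_\beta$ via Lemma \ref{lem:basic-inclusions} together with the elementary monotonicity $X^{s,p}_\delta \subset X^{s,p}_{\delta'}$ for $\delta \geq \delta'$ (immediate from the definition since the $\rho_i$ are bounded), and summing produces $v \in X^{s,p}_\beta$.

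Case (b) follows the same template using Theorem \ref{thm:curly-split}(b): its ``Finally'' clause directly gives $v \in X^{s,p}_1(M;\mathbb R)$, which is the second assertion, and its ``Moreover'' clause with $k = 1$, $\alpha = 0$ supplies the decomposition $v = \rho\eta + r$ with $\eta \in \mathscr X^{\infty;m-1}(M;\mathbb R)$ and $r \in X^{s,p}_m(M;\mathbb R)$; no further bootstrapping is needed. The only step requiring genuine care is the bookkeeping around the edge regime $m = n/p + 1$ in (a), which is inherited directly from the analogous edge regime $m - 1 = n/p$ of Proposition \ref{prop:curly-H-into-roman-X} applied to $\eta$; everything else is a mechanical combination of the algebra property with the Asymptotic Structure Theorem.
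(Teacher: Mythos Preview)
Your proof is correct and follows essentially the same route as the paper's: both show $|d\rho|_{\bar g}^2-1$ lies in the fortified space via the multiplication result (you cite the Banach algebra corollary, the paper cites the parent Proposition~\ref{prop:curly-H-tensor}), then invoke the ``Finally'' and ``Moreover'' clauses of Theorem~\ref{thm:curly-split} to extract the decomposition, and finish case (a) by applying Proposition~\ref{prop:curly-H-into-roman-X} to $\tau$ at level $m-1$ and shifting the weight by one via multiplication by $\rho$. Your treatment of the remainder term $r$ in the $X^{s,p}_\beta$ inclusion and of the edge case $m=n/p+1$ is slightly more explicit than the paper's, but the argument is the same.
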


\begin{proof}
Suppose first that $\bar g\in \mathscr H^{s,p;m}(M)$ and consider $f = |d\rho|^2_{\bar g} - 1$.
Since $|d\rho|^2_{\bar g}$ is a contraction of $\bar g{}^{-1} \otimes \hat\nabla\rho\otimes\hat\nabla\rho$, Proposition \ref{prop:curly-H-tensor} implies $f\in \mathscr H^{s,p;m}(M)$.
By Proposition \ref{prop:curly-to-M-bar} $f\in C^{0,\alpha}(\bar M)$, $\alpha = m-n/p$, and by assumption $f =0$ along $\partial M$.
Thus Theorem \ref{thm:curly-split} implies that $f\in H^{s,p}_{1-n/p}(M)$ and implies further that $f = \rho\tau + r$ with $r = H^{s,p}_{m-n/p}(M)$, and with $\tau \in \mathscr H^{\infty,p;m-1}(M)$.
Applying Proposition \ref{prop:curly-H-into-roman-X} to $\tau$, and subsequently multiplying by $\rho$, yields $\rho\tau \in X^{s,p}_\beta(M)$, where $\beta = \min(m-n/p, 1)$, unless $m = n/p+1$, in which case any $\beta<1$ suffices.

If instead $\bar g\in \mathscr X^{s,p;m}(M)$, then Proposition \ref{prop:curly-H-tensor}
implies $f\in \mathscr X^{s,p;m}(M)$ and Proposition \ref{prop:curly-to-M-bar} again shows $f\in C^{0,\alpha}(\bar M)$ with zero trace along $\partial M$.
Hence Theorem \ref{thm:curly-split} implies $f\in X^{s,p}_1(M)$
and moreover that $f = \rho\tau + r$ with $r\in X^{s,p}_m(M)$ and $\tau \in \mathscr X^{\infty;m-1}(M)$.
This immediately yields the remaining claims of this case.
\end{proof}

\begin{lemma}\label{lem:Delta-rho-H}
\strut
\begin{enumerate}
\item Suppose $g = \rho^{-2}\bar g$ is an asymptotically hyperbolic metric of class $\mathscr H^{s,p;m}$ satisfying \eqref{intro:H-class}. 
Then
\begin{enumerate}
\item $
\Hess_{\bar g}(\rho)^\sharp = \tau_H + r_H
$
with 
\begin{align*}
\tau_H&\in \mathscr H^{\infty,p;m-1}(M;T^{1,1}M),\\
r_H&\in H^{s-1,p}_{m-1-n/p}(M;T^{1,1}M).
\end{align*}

\item
$
\rho \Delta_{\bar g} \rho = \rho \tau_L + r_L
$
with 
$
\tau_L \in \mathscr H^{\infty,p;m-1}(M;\mathbb R)
$ and 
$
r_L\in H^{s-1,p}_{m-n/p}(M;\mathbb R).
$
In particular,
\begin{equation}\label{eq:rhoLap-in-X}
\rho\Delta_{\bar g}\rho \in X^{s-1,p}_\beta(M;\mathbb R),
\end{equation}
where $\beta=\min(m-n/p,1)$ unless $m=n/p+1$, in which case
\eqref{eq:rhoLap-in-X} holds with any $\beta<1$.

\end{enumerate}

\item Suppose $g =\rho^{-2}\bar g$ is an asymptotically hyperbolic metric of class $\mathscr X^{s,p;m}$ satisfying \eqref{intro:X-class}.
Then
\begin{enumerate}
\item $
\Hess_{\bar g}(\rho)^\sharp = \tau_H + r_H
$
with 
\begin{align*}
\tau_H&\in \mathscr X^{\infty;m-1}(M;T^{1,1}M),\\
r_H& \in X^{s-1,p}_{m-1}(M;T^{1,1}M).
\end{align*}

\item
$
\rho \Delta_{\bar g} \rho = \rho_L \tau + r_L
$
where $\tau_L \in \mathscr X^{\infty;m-1}(M;\mathbb R)$ and $r_L\in X^{s-1,p}_{m}(M;\mathbb R)$.
In particular
\begin{equation}\label{eq:rhoLap-X-X}
\rho\Delta_{\bar g}\rho \in X^{s-1,p}_1(M;\mathbb R).
\end{equation}
\end{enumerate}
\end{enumerate}
\end{lemma}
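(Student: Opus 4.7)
The strategy is to express $\Hess_{\bar g}(\rho)^\sharp$ as a polynomial combination of $\bar g^{-1}$, $\hat\nabla\bar g$, and derivatives of $\rho$, deduce its regularity using the multiplication and nonlinearity results for fortified spaces, and then apply Theorem \ref{thm:curly-split} to split off a smooth leading-order piece.

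Writing the difference tensor $A=\bar\nabla-\hat\nabla$ in the usual Christoffel-type form
\[
A^k_{ij} = \tfrac12 \bar g^{kl}\bigl(\hat\nabla_i\bar g_{jl}+\hat\nabla_j\bar g_{il}-\hat\nabla_l\bar g_{ij}\bigr),
\]
we have $\Hess_{\bar g}(\rho) = \hat\nabla d\rho - A\cdot d\rho$, and $\Hess_{\bar g}(\rho)^\sharp = \bar g^{-1}\cdot\Hess_{\bar g}(\rho)$. Since $\rho$ is smooth on $\bar M$, both $d\rho$ and $\hat\nabla d\rho$ lie in every fortified space. Under the assumption $s\ge m>n/p$ (respectively $s>n/p$, $s\ge m\ge 1$) the Banach-algebra and nonlinearity results of Propositions \ref{prop:curly-h-banach-algebra} and \ref{prop:nonlin-curly} apply to the standard inversion formula $\bar g^{-1}=(\det\bar g)^{-1}\mathrm{adj}(\bar g)$, so $\bar g^{-1}$ inherits the regularity of $\bar g$. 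Proposition \ref{prop:curly-H-tensor}, counting the single derivative falling on $\bar g$, then yields $\Hess_{\bar g}(\rho)^\sharp\in\mathscr H^{s-1,p;m-1}$ in the $\mathscr H$ case and $\mathscr X^{s-1,p;m-1}$ in the $\mathscr X$ case. Applying Theorem \ref{thm:curly-split} produces the decomposition (a) directly.

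For (b), take the trace of (a) and multiply by $\rho$, setting $\tau_L=\tr\tau_H$ and $r_L=\rho\tr r_H$. Trace is a continuous contraction on the relevant spaces, and multiplication by $\rho$ (computed in the coordinates of any M\"obius parametrization via $\Phi_i^*\rho = \rho_i y$) takes $H^{s-1,p}_{m-1-n/p}$ into $H^{s-1,p}_{m-n/p}$ and $X^{s-1,p}_{m-1}$ into $X^{s-1,p}_m$, giving the first displayed statements of (b).

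For the final $X$-space inclusions, in the $\mathscr H$ case Proposition \ref{prop:curly-H-into-roman-X} gives $\tau_L\in X^{\infty}_{\min(m-1-n/p,0)}$ (with the strict-inequality variant when $m-1=n/p$), so $\rho\tau_L\in X^{\infty}_\beta$ with $\beta=\min(m-n/p,1)$; meanwhile $r_L\in H^{s-1,p}_{m-n/p}\hookrightarrow X^{s-1,p}_{m-n/p}\hookrightarrow X^{s-1,p}_\beta$ by Lemma \ref{lem:basic-inclusions} together with the trivial estimate $\rho\lesssim 1$. In the $\mathscr X$ case, Lemma \ref{lem:basic-roman-curly} gives $\tau_L\in X^{\infty}_0$, so $\rho\tau_L\in X^{\infty}_1$, and $r_L\in X^{s-1,p}_m\subset X^{s-1,p}_1$ since $m\ge 1$. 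The only delicate point is the borderline case $m=n/p+1$ in \eqref{eq:rhoLap-in-X}, which is inherited directly from the borderline behavior of Proposition \ref{prop:curly-H-into-roman-X}; everything else is routine bookkeeping within the fortified-space calculus already established.
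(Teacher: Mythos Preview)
Your argument is correct and follows essentially the same approach as the paper's proof: express $\Hess_{\bar g}(\rho)^\sharp$ in terms of $\bar g^{-1}$, $\hat\nabla\bar g$, and $\hat\nabla\rho$, apply Proposition~\ref{prop:curly-H-tensor} to place it in $\mathscr H^{s-1,p;m-1}$ (respectively $\mathscr X^{s-1,p;m-1}$), invoke Theorem~\ref{thm:curly-split}, then trace and multiply by $\rho$. The only cosmetic difference is that for the final $X$-space inclusion the paper applies Proposition~\ref{prop:curly-H-into-roman-X} directly to $\Delta_{\bar g}\rho\in\mathscr H^{s-1,p;m-1}$ and then multiplies by $\rho$, rather than treating the $\rho\tau_L$ and $r_L$ pieces separately as you do; both routes are equivalent.
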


\begin{proof}
First consider the case when $\bar g \in \mathscr H^{s,p;m}(M)$ and write $\Hess_{\bar g}(\rho)^\sharp $ as a linear combination of contractions of $\bar g{}^{-1}\otimes \hat\nabla^2 \rho$ and
$\bar g{}^{-1}\otimes \bar g^{-1} \otimes \hat\nabla \bar g \otimes \hat\nabla \rho.$
At most one derivative of $\bar g$ appears in these expressions; thus Proposition \ref{prop:curly-H-tensor}
and the smoothness of $\rho$ implies
$\Hess_{\bar g}(\rho)^\sharp \in \mathscr H^{s-1,p;m-1}(M)$.
Theorem \ref{thm:curly-split} then implies the first claim in this case.
Contracting and multiplying by $\rho$ yields the second claim from the first.
The embedding \eqref{eq:rhoLap-in-X}
follows from applying Proposition \ref{prop:curly-H-into-roman-X} to $\Delta_{\bar g}\rho\in \mathscr H^{s-1,p;m-1}(M)$ and then multiplying by $\rho$.

The argument when $\bar g\in \mathscr X^{s,p;m}(M)$ follows from analogous logic,
except that Proposition \ref{prop:curly-H-into-roman-X} is not needed
to establish \eqref{eq:rhoLap-X-X}, which follows immediately from
$\Delta_{\bar g}\rho\in \mathscr X^{s-1,p;m-1}(M)$.
\end{proof}

\begin{lemma}\label{lem:R-bar-H}
\strut
\begin{enumerate}
\item Suppose that $g =\rho^{-2}\bar g$ is an asymptotically hyperbolic metric of class
$\mathscr H^{s,p;m}$ satisfying \eqref{intro:H-class}.
If $m\ge 2$ then
\begin{enumerate}
\item \label{part:Riem-barg-H-decomp}
$\Riem[\bar g] = \tau_\textup{Riem} + r_\textup{Riem}$, where
\begin{align*}
\tau_\textup{Riem} &\in \mathscr H^{\infty,p;m-2}(M;T^{2,2}M),\\
r_\textup{Riem}&\in H^{s-2,p}_{m-2-n/p}(M;T^{2,2}M).
\end{align*}

\item \label{part:R-barg-H-decomp}
$\rho^2\R[\bar g] = \rho^2\tau_\textup{R} + r_\textup{R}$, where $\tau_\textup{R}\in \mathscr H^{\infty,p,m-2}(M;\mathbb R)$ and $r_\textup{R}\in H^{s-2,p}_{m-n/p}(M;\mathbb R)$.
\end{enumerate}
If $m=1$, in which case $\mathscr H^{\infty,p;m-2}(M)$ is not defined,
then (i) and (ii) remain true setting $\tau_\textup{Riem}=0$ and $\tau_\textup{R}=0$.
In all cases
\begin{equation}\label{eq:R-barg-H-X}
\rho^2 \R[\bar g]\in X^{s-2,p}_\beta(M;\mathbb R),
\end{equation}
where $\beta=\min(m-n/p,2)$ unless $m=n/p+2$ in which case \eqref{eq:R-barg-H-X} holds
with any $\beta<2$.

\item Suppose $g =\rho^{-2}\bar g$ is an asymptotically hyperbolic metric of class
$\mathscr X^{s,p;m}$ satisfying \eqref{intro:X-class}. 
If $m\ge 2$ then
\begin{enumerate}
\item $\Riem[\bar g] = \tau_\textup{Riem} + r_\textup{Riem}$, where
\begin{align*}
\tau_\textup{Riem} &\in \mathscr X^{\infty;m-2}(M;T^{2,2}M),\\
r_\textup{Riem}&\in X^{s-2,p}_{m-2}(M;T^{2,2}M).
\end{align*}

\item $\rho^2\R[\bar g] = \rho^2\tau_\textup{R} + r_\textup{R}$ where $\tau_\textup{R}\in \mathscr X^{\infty;m-2}(M;\mathbb R)$ and $r_\textup{R}\in X^{s-2,p}_{m}(M;\mathbb R)$.
\end{enumerate}
If instead $m=1$, then (i) and (ii) remain true setting 
$\tau_\textup{Riem}=0$ and $\tau_\textup{R}=0$.
In all cases
\begin{equation}\label{eq:R-barg-X-X}
\rho^2 \R[\bar g]\in X^{s-2,p}_{\min(m,2)}(M;\mathbb R).
\end{equation}
\end{enumerate}
\end{lemma}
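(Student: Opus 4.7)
The plan is to realize the Riemann curvature $\Riem[\bar g]$ as a universal polynomial in $\bar g$, $\bar g^{-1}$, $\hat\nabla \bar g$, and $\hat\nabla^2 \bar g$, with coefficients built from tensors smooth on $\bar M$. Specifically, writing $\bar\nabla = \hat\nabla + D$ with the connection-difference tensor $D \sim \bar g^{-1}\otimes \hat\nabla \bar g$ (contractions suppressed), one has $\Riem[\bar g] = \hat\Riem + \hat\nabla D + D\otimes D$, so every term carries at most two derivatives of $\bar g$ distributed across at most two factors. Since $\bar g$ extends continuously to $\bar M$ with positive-definite boundary values, the smooth matrix-inversion map together with Proposition \ref{prop:nonlin-curly} (applied componentwise) places $\bar g^{-1}$ in the same fortified space as $\bar g$.

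In the case $m\geq 2$, Proposition \ref{prop:curly-H-tensor} applies directly with total derivative count $k=2\leq m$, yielding $\Riem[\bar g]\in \mathscr H^{s-2,p;m-2}(M;T^{2,2}M)$ in the $\mathscr H$ class and $\Riem[\bar g]\in \mathscr X^{s-2,p;m-2}(M;T^{2,2}M)$ in the $\mathscr X$ class. Because $T^{2,2}M$ has weight $w=0$, the Asymptotic Structure Theorem \ref{thm:curly-split} yields the claimed decomposition $\Riem[\bar g]=\tau_{\Riem}+r_{\Riem}$. Contraction gives $\R[\bar g]\in\mathscr H^{s-2,p;m-2}(M;\mathbb R)$ (resp.\ $\mathscr X^{s-2,p;m-2}$); applying Theorem \ref{thm:curly-split} once more, and then using Lemma \ref{lemma:nabla-and-rho-on-curly} to absorb the factor $\rho^2$ (raising both the regularity index $m$ of the smooth piece by $2$ and the decay of the remainder by $\rho^2$), produces the desired decomposition of $\rho^2\R[\bar g]$.

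The case $m=1$ is the main technical obstacle, as Proposition \ref{prop:curly-H-tensor} requires $k\leq m$. The strategy is to place $\bar g$ and $\bar g^{-1}$ directly into Gicquaud--Sakovich spaces $X^{s,p}_{\pm 2}$ via Proposition \ref{prop:curly-H-into-roman-X} (which requires $m>n/p$, a standing hypothesis in the $\mathscr H$ class) or via Lemma \ref{lem:basic-roman-curly} (automatic in the $\mathscr X$ class). The derivatives $\hat\nabla \bar g$ and $\hat\nabla^2 \bar g$ lie in $H^{s-1,p}_{3-n/p}$ and $H^{s-2,p}_{3-n/p}$ respectively (resp.\ $X^{s-1,p}_3$ and $X^{s-2,p}_3$) by the definition of the fortified space and by Lemma \ref{lem:nabla-HX-map}. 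Chained applications of Proposition \ref{prop:multiplication} of the form $X\otimes X\otimes H\to H$ (resp.\ $X\otimes X\otimes X\to X$) then place the $\hat\nabla D$ terms of $\Riem[\bar g]$ into $H^{s-2,p}_{-1-n/p}$ (resp.\ $X^{s-2,p}_{-1}$). The $D\otimes D$ terms are handled by pairing $H\otimes H\to H$ on the two $\hat\nabla\bar g$ factors with $X\otimes H\to H$ against the three $\bar g^{-1}$ factors; the resulting Sobolev weight is $-2n/p$, which embeds into the target $-1-n/p$ because $m>n/p$.

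Finally, the Gicquaud--Sakovich inclusion of $\rho^2\R[\bar g]$ in the $\mathscr H$ setting follows by applying Proposition \ref{prop:curly-H-into-roman-X} to $\R[\bar g]\in \mathscr H^{s-2,p;m-2}$ (or directly to the $m=1$ remainder), which yields $\R[\bar g]\in X^{s-2,p}_{\min(m-2-n/p,0)}$ with the usual borderline caveat at $m=n/p+2$, and then shifting the weight by $\rho^2$ to obtain the exponent $\beta=\min(m-n/p,2)$. In the $\mathscr X$ setting the containment $\mathscr X^{s-2,p;m-2}\subset X^{s-2,p}_0$ from Lemma \ref{lem:basic-roman-curly} and the same weight shift give the claim.
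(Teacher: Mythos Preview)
Your proof is correct and follows essentially the same approach as the paper: expressing $\Riem[\bar g]$ schematically in terms of $\bar g^{-1}$, $\hat\nabla\bar g$, and $\hat\nabla^2\bar g$; applying Proposition~\ref{prop:curly-H-tensor} and Theorem~\ref{thm:curly-split} for $m\ge 2$; and for $m=1$ placing $\bar g^{-1}$ in $X^{s,p}_{-2}$ via Proposition~\ref{prop:curly-H-into-roman-X} and using Proposition~\ref{prop:multiplication} directly. You are in fact more careful than the paper in one place: you note that the quadratic $D\otimes D$ term lands in weight $-2n/p$ and then invoke $m>n/p$ (i.e.\ $p>n$ when $m=1$) to embed into $H^{s-2,p}_{-1-n/p}$, whereas the paper simply asserts the target space; your explicit inclusion of the $\hat\Riem$ term (smooth on $\bar M$, hence trivially in the $\tau$ piece) is also more accurate than the paper's schematic formula, which omits it.
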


\begin{proof}
The proof relies on writing the curvature operator $\Riem[\bar g]$ as a linear combination
of contractions of
\begin{equation}
\label{eq:R-bar-g-pieces}
\bar g^{-1} \otimes \bar g^{-1} \otimes \hat \nabla^2 \bar g
\quad\text{ and }\quad
\bar g^{-1} \otimes \bar g^{-1} \otimes \hat \nabla \bar g
\otimes \bar g^{-1} \otimes \hat \nabla \bar g.
\end{equation}

If $\bar g\in \mathscr H^{s,p;m}(M)$ with $m\ge 2$ then we can apply Proposition \ref{prop:curly-H-tensor}
to conclude each of these terms lies in $\mathscr H^{s-2,p;m-2}(M)$,
and hence so does $\Riem[\bar g]$.
Theorem \ref{thm:curly-split}
then implies the first claim of this case;
the second claim follows from the first via contraction and multiplication by $\rho^2$.
The final claim of the lemma follows from applying Proposition \ref{prop:curly-H-into-roman-X} to $\R[\bar g]\in \mathscr H^{s-2,p;m-2}(M)$ and then multiplying by $\rho^2$.

If $\bar g\in \mathscr H^{s,p;1}(M)$ so that $m=1> n/p$, then Proposition \ref{prop:curly-H-into-roman-X} implies $\bar g{}^{-1}\in X^{s,p}_{-2}(M)$, while by definition we have $\hat\nabla \bar g\in H^{s-1,p}_{3-n/p}(M)$ and thus $\hat\nabla{}^2\bar g\in H^{s-2,p}_{3-n/p}(M)$.
We then obtain $\Riem[\bar g]\in H^{2-s,p}_{-1-n/p}(M)$, which is the first claim in this case; the second claim follows from contraction and multiplying by $\rho^2$.
The final claim follows from Lemma \ref{lem:basic-inclusions}.

The cases where $\bar g\in \mathscr X^{s,p;m}(M)$ follow from analogous, though simpler, logic.
\end{proof}

\section{The Yamabe problem}
\label{sec:yamabe}

The Yamabe problem in the asymptotically hyperbolic setting
amounts to the following: starting from an asymptotically hyperbolic
metric $g$ we wish to find
a conformally related metric
that is also asymptotically hyperbolic and that has constant scalar curvature equal to that of hyperbolic
space:
$
\Rhn:= -n(n-1).
$
Concretely, we seek a conformal
factor $\Theta>0$ such that $\Theta^{q_n-2} g$ is asymptotically hyperbolic
and such that
\begin{equation}\label{eq:Yamabe-problem}
\R[\Theta^{q_n-2} g] = \Rhn;
\end{equation}
here $q_n=2n/(n-2)$ is the critical exponent
associated with the Sobolev embedding of $W^{1,2}$ into $L^q$ and its use
simplifies some later expressions.
The standard formula for the conformal change of scalar curvature
\begin{equation}
\label{eq:change-scalar-curvature}
\R[\Theta^{q_n-2}] = \Big(-(q_n+2)\Delta_g\Theta + \R[g]\Theta\Big)\,\Theta^{1-q_n}
\end{equation}
allows us to rewrite equation \eqref{eq:Yamabe-problem} as a
semilinear PDE for $\Theta$.

One readily verifies that the requirement that $\Theta^{q_n-2} g$ be asymptotically hyperbolic
imposes the boundary condition $\Theta|_{\partial M}=1$.
Writing $\Theta=1+u$ with
$u$ vanishing  on $\partial M$,
we rewrite equation \eqref{eq:Yamabe-problem} as a
semilinear PDE
\begin{equation}\label{eq:Yamabe-eq}
-a_n \Delta_{g} u + \R[g] u =
\Rhn\left[ (1+u)^{q_n-1} - 1\right] - (\R[g]-\Rhn),
\end{equation}
where $a_n = q_n+2$, that we wish to solve for $u$.

An important consideration is that the regularity class of the metric should not
be changed under the conformal transformation.  In particular, the solution metric
should be as acceptable a
starting point for the construction as the metric we began with.  In this section we obtain a unique solution, within the regularity class, of the Yamabe problem for asymptotically hyperbolic metrics of the following classes
\begin{align}
\mathscr H^{s,p;m} &\text{ with $s\ge m> n/p$,}\label{eq:curly-H-class}\\
\mathscr X^{s,p;m} &\text{ with $s\ge m\ge 1$ and $s>n/p$}.\label{eq:curly-X-class}
\end{align}
In both cases the restrictions on the parameters ensure the important condition
that the compactified metric $\bar g = \rho^2 g$ be H\"older continuous up to the boundary
on $\bar M$.

To begin, we observe that these classes of metric behave well under conformal
changes.
\begin{lemma}\label{lem:conf-change-curly}
Let $1<p<\infty$, $s\in\Reals$ and $m\in\Nats$.
\begin{enumerate}
  \item Suppose $g$ is an asymptotically hyperbolic metric of class $\mathscr H^{s,p;m}$
  with $s\ge m> n/p$.  If $\Theta\in \mathscr H^{s,p;m}(M;\mathbb R)$ is a conformal factor such that
  $\Theta|_{\partial M}=1$, then $\Theta^{q_n-2} g$ is an asymptotically hyperbolic metric
  of class $\mathscr H^{s,p;m}$.  In particular, this holds for $\Theta=1+u$ with
  $u\in H^{s,p}_{m-n/p}(M;\mathbb R)$.
  
  \item Suppose $g$ is an asymptotically hyperbolic metric of class $\mathscr X^{s,p;m}$
  with $s>n/p$ and $s\geq m\ge 1$.
  If $\Theta\in \mathscr X^{s,p;m}(M;\mathbb R)$ is a conformal factor such that
  $\Theta|_{\partial M}=1$, then $\Theta^{q_n-2} g$ is an asymptotically hyperbolic metric
  of class $\mathscr X^{s,p;m}$.  In particular, this holds for $\Theta=1+u$ with
  $u\in X^{s,p}_{m}(M;\mathbb R)$.
\end{enumerate}
\end{lemma}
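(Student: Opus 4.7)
The plan is to verify the three defining features of an asymptotically hyperbolic metric for $\tilde g = \Theta^{q_n-2} g$: namely that (i) $\bar{\tilde g} := \rho^2 \tilde g = \Theta^{q_n-2}\bar g$ lies in the requested fortified space, (ii) $\bar{\tilde g}$ extends to a continuous non-degenerate metric on $\bar M$, and (iii) $|d\rho|_{\bar{\tilde g}}^2 = 1$ along $\partial M$. In both cases (a) and (b), the hypotheses on $(s,p,m)$ are exactly those that make the Banach-algebra property (Proposition \ref{prop:curly-h-banach-algebra}) and the nonlinear composition result (Proposition \ref{prop:nonlin-curly}) available, so the proof reduces to combining those tools with the boundary-trace information supplied by Theorem \ref{thm:curly-split}.

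The core step is (i). Since $\Theta$ lies in a fortified space and $\Theta|_{\partial M}=1$, Proposition \ref{prop:curly-to-M-bar} provides a continuous extension of $\Theta$ to $\bar M$; combined with the assumed positivity of $\Theta$ on $M$, we obtain a uniform bound $c \le \Theta \le C$ on $\bar M$ for some $c>0$. I would then choose $\tilde F \in C^\infty(\mathbb R)$ agreeing with $x\mapsto x^{q_n-2}$ on an open interval containing $[c,C]$ and apply Proposition \ref{prop:nonlin-curly} to conclude $\Theta^{q_n-2} = \tilde F(\Theta)$ belongs to $\mathscr H^{s,p;m}(M;\mathbb R)$ in case (a) and to $\mathscr X^{s,p;m}(M;\mathbb R)$ in case (b). The Banach-algebra property (Proposition \ref{prop:curly-h-banach-algebra}) then gives $\bar{\tilde g} = \Theta^{q_n-2}\bar g$ in the same fortified space as $\bar g$. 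Item (ii) follows because $\Theta^{q_n-2}$ is continuous and uniformly positive on $\bar M$. For (iii), on $\partial M$ we have $\Theta=1$, so $\bar{\tilde g} = \bar g$ pointwise along $\partial M$, and therefore $|d\rho|_{\bar{\tilde g}}^2 = |d\rho|_{\bar g}^2 = 1$ there.

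For the ``in particular'' clauses, I need to verify that $\Theta = 1+u$ lies in the relevant fortified space and satisfies $\Theta|_{\partial M}=1$. The constant $1$ has weight $0$ and vanishing covariant derivatives, so an immediate computation using Lemma \ref{lem:many-norms} shows $1 \in \mathscr H^{s,p;m}(M;\mathbb R) \cap \mathscr X^{s,p;m}(M;\mathbb R)$. For $u$, Lemma \ref{lem:basic-roman-curly} yields $H^{s,p}_{m-n/p}(M;\mathbb R)\subset \mathscr H^{s,p;m}(M;\mathbb R)$ and $X^{s,p}_m(M;\mathbb R) \subset \mathscr X^{s,p;m}(M;\mathbb R)$. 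Finally, Theorem \ref{thm:curly-split} characterizes $H^{s,p}_{m-n/p}(M;\mathbb R) \subset H^{s,p}_{1-n/p}(M;\mathbb R)$ and $X^{s,p}_m(M;\mathbb R) \subset X^{s,p}_1(M;\mathbb R)$ as subspaces of the respective fortified space consisting of tensors with vanishing boundary trace, so $u|_{\partial M}=0$ and $(1+u)|_{\partial M}=1$, as required. The only delicate point in the argument is ensuring that the smooth truncation of $x^{q_n-2}$ can legitimately be inserted into the nonlinear composition result; this in turn depends on uniform positivity of $\Theta$, which relies on the continuous embedding $\Theta \in C^0(\bar M)$ supplied by Proposition \ref{prop:curly-to-M-bar} together with the assumption that $\Theta$ is a conformal factor.
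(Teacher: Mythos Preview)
Your proposal is correct and follows essentially the same approach as the paper: invoke Proposition~\ref{prop:nonlin-curly} to place $\Theta^{q_n-2}$ in the fortified space, then Proposition~\ref{prop:curly-h-banach-algebra} for the product with $\bar g$, use $\Theta|_{\partial M}=1$ to preserve the boundary condition on $|d\rho|$, and handle the ``in particular'' clauses via Lemma~\ref{lem:basic-roman-curly} and Theorem~\ref{thm:curly-split}. Your added details---the uniform positivity of $\Theta$ via Proposition~\ref{prop:curly-to-M-bar} and the smooth truncation $\tilde F$ of $x\mapsto x^{q_n-2}$ to justify applying Proposition~\ref{prop:nonlin-curly}---are appropriate elaborations that the paper leaves implicit (cf.\ the remark following Proposition~\ref{prop:nonlin} about nonlinearities defined only on an interval).
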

\begin{proof}
The invariance of the regularity classes under the given conformal changes
is an easy consequence of Propositions \ref{prop:curly-h-banach-algebra} and
\ref{prop:nonlin-curly}; the resulting metric is asympotically hyperbolic
because $\Theta|_{\partial M}=1$ and hence the norm of $d\rho$ with respect to
to the new compactified metric is unchanged on $\partial M$.
If $u\in H^{s,p}_{m-n/p}(M)$,
Lemma \ref{lem:basic-roman-curly} implies $u\in \mathscr H^{s,p;m}(M)$
and Theorem \ref{thm:curly-split} implies $u|_{\partial M} = 0$.  Hence $\Theta=1+u\in \mathscr H^{s,p;m}(M)$ and $\Theta|_{\partial M}$=1.  An analogous argument works in
the $\mathscr X^{s,p;m}$ category if $u\in X^{s,p}_{m}(M)$.
\end{proof}

When $g$ is an asymptotically hyperbolic metric of one of the regularity
classes \eqref{eq:curly-H-class}--\eqref{eq:curly-X-class}, the Laplacian $\Delta_g$ has excellent mapping properties between
weighted Sobolev spaces.
The limited regularity of the metric restricts the spaces
on which the Laplacian can naturally operate, however, and we have the following
consequence of Proposition
\ref{prop:L-mapping-S} and Lemma \ref{lem:geometric-op-mapping} below.
\begin{lemma}\label{lem:Lap-mapping}
Suppose $1<p<\infty$, $m\in\Nats$, $s\in\Reals$ and that $g$ is an asymptotically
hyperbolic metric in one of the two regularity classes \eqref{eq:curly-H-class} and \eqref{eq:curly-X-class}.  Then the Laplacian $\Delta_g$ determines continuous maps
\begin{equation}
\label{eq:lap-mapping}
\begin{aligned}
H^{\sigma,q}_\delta(M;\mathbb R) &\to H^{\sigma-2,q}_\delta(M;\mathbb R),
\\
X^{\sigma,q}_\delta(M;\mathbb R) &\to X^{\sigma-2,q}_\delta(M;\mathbb R)
\end{aligned}
\end{equation}
for any $\delta\in\Reals$ so long as $(\sigma,q)\in\Reals\times (1,\infty)$ satisfy
\begin{equation}\label{eq:lap-conds}
\begin{gathered}
2-s\leq \sigma \leq s,t,
\\
\frac{1}{p} - \frac{s}{n}
\leq \frac{1}{q} - \frac{\sigma}{n}
\leq \frac{1}{p\dual}-\frac{2-s}{n},
\end{gathered}
\end{equation}
where $p\dual$ is the Lebesgue exponent conjugate to $p$, i.e. $1/p + 1/p\dual=1$.
\end{lemma}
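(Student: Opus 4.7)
The plan is to recognize Lemma \ref{lem:Lap-mapping} as a direct specialization of the general machinery for geometric differential operators arising from Sobolev-class asymptotically hyperbolic metrics, which is developed in \S\ref{sec:differential-ops}. Concretely, I would exhibit the Laplacian $\Delta_g$ as a second-order operator satisfying Assumption \ref{Assume-P}, and then identify the conditions \eqref{eq:lap-conds} with the defining inequalities of the set of compatible Sobolev indices $\mathcal S^{s,p}_2$. Lemma \ref{lem:geometric-op-mapping} and Proposition \ref{prop:L-mapping-S} then deliver the continuous maps \eqref{eq:lap-mapping} directly.

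The first step is to verify Assumption \ref{Assume-P} for $\mathcal P[g] = \Delta_g$ with $d = 2$. The local expression
\begin{equation*}
\Delta_g u = g^{ij}\partial_i \partial_j u - g^{ij}\Gamma^k_{ij}\,\partial_k u
\end{equation*}
makes the geometric and scale-natural clauses immediate, since $\Gamma^k_{ij}$ is a universal polynomial in $g_{ij}$, $\det(g_{ij})^{-1}$, and a single derivative of the metric. Ellipticity is standard and well-defined via \eqref{intro:alpha-exists}; formal self-adjointness with respect to $dV_g$ is also standard (and interpreted via duality when $s<2$, as per \S\ref{secsec:duality-early}). For the weak $L^2$ conditions \eqref{intro:weak-L2}, the inequality $s\geq 1$ is guaranteed by both \eqref{eq:curly-H-class} and \eqref{eq:curly-X-class}, while $\frac{1}{p}-\frac{s}{n} \leq \frac{1}{2}-\frac{1}{n}$ follows from $s>n/p$ together with $n\geq 2$.

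The one point that warrants care is confirming that the metric itself satisfies the baseline regularity hypothesis $g\in X^{s,p}_0(M;T^{0,2}M)$ required to apply the machinery for geometric tensor bundles set up in \S\ref{secsec:gtb-early}. In the $\mathscr X^{s,p;m}$ case this is immediate from Lemma \ref{lem:basic-roman-curly} applied to $\bar g = \rho^2 g\in \mathscr X^{s,p;m}(M;T^{0,2}M)$. In the $\mathscr H^{s,p;m}$ case the hypothesis $m>n/p$ from \eqref{eq:curly-H-class} is precisely what enables Proposition \ref{prop:curly-H-into-roman-X} to deliver $g\in X^{s,p}_0(M;T^{0,2}M)$, as discussed at the end of \S\ref{secsec:rel-M}.

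With Assumption \ref{Assume-P} in force, comparing \eqref{eq:lap-conds} against the definition of $\mathcal S^{s,p}_d$ for $d=2$ identifies the two lines as, respectively, the range of interior differentiability compatible with a second-order operator and the two-sided Sobolev multiplication constraint (with the upper bound $\tfrac{1}{p\dual}-\tfrac{2-s}{n}$ arising from duality). Lemma \ref{lem:geometric-op-mapping} then handles the coordinate-level mapping statement for scale-natural operators, and Proposition \ref{prop:L-mapping-S} assembles this into the claimed continuity on both the Sobolev and Gicquaud-Sakovich weighted scales simultaneously. I do not anticipate any genuine obstacle here: all technical content resides in \S\ref{sec:differential-ops}, and the role of this lemma is merely to package that content in the form convenient for the Yamabe application that follows.
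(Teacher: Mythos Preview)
Your proposal is correct and matches the paper's own approach: the paper states this lemma as a direct consequence of Proposition~\ref{prop:L-mapping-S} and Lemma~\ref{lem:geometric-op-mapping}, and your argument verifies the hypotheses of Assumption~\ref{Assume-P} (including the metric regularity $g\in X^{s,p}_0$ via Proposition~\ref{prop:curly-H-into-roman-X} in the $\mathscr H$ case) so that those results apply with $d=2$. The only minor remark is that self-adjointness and ellipticity are not actually used for the bare mapping property---only the geometric and scale-natural clauses together with the weak $L^2$ condition enter the proofs of Lemma~\ref{lem:geometric-op-mapping} and Proposition~\ref{prop:L-mapping-S}---but since those results are stated under Assumption~\ref{Assume-P} in full, your verification is appropriate.
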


The restrictions \eqref{eq:lap-conds} arise as
a consequence of Proposition \ref{prop:multiplication} concerning
multiplication in Sobolev spaces
\footnote{In fact, because the Laplacian is missing a low-order term compared
to a more general geometric operator such as the conformal Laplacian,
conditions \eqref{eq:lap-conds} appearing in Lemma \ref{eq:lap-mapping}
could be weakened; see Appendix \ref{app:Lap-extra}.}.
Following the notation of \S\ref{sec:differential-ops}, we define
the \Defn{compatible Sobolev indices}
$\mathcal S^{s,p}_{2}$ of $\Delta_g$ to be the
pairs $(\sigma,q)\in\Reals\times(1,\infty)$
satisfying conditions \eqref{eq:lap-conds}; the subscript $2$ in
the notation is associated with the fact that the Laplacian is a second-order
operator.

The regularity
classes \eqref{eq:curly-H-class} and \eqref{eq:curly-X-class}
both require $s\ge 1$ and $s>n/p$.  A simple computation
shows that under these conditions $\mathcal S^{s,p}_2$ is non-empty and
indeed always contains $(\sigma,q)=(1,2)$
corresponding to $H^{1,2}_\delta$.  When $s=1$
and $p>n$, $(\sigma,q)\in\mathcal S^{s,p}_2$
if and only if $\sigma=1$ and $p\dual\le q\le p$.
This minimal level of regularity forms
the baseline of our construction.  Using the conditions $s\ge 1$ and $s>n/p$ it is easy to see that
$2-s\le 1$ and $1/p\dual-(2-s)/n>0$.
Hence we find that
if $(\sigma,q)$ satisfies
\begin{equation}
\label{eq:reduced-lap-conds}
\begin{gathered}
1\le\sigma\le s,\\
\frac{1}{p}-\frac{s}{n}\le \frac{1}{q}-\frac{\sigma}{n}<0,
\end{gathered}
\end{equation}
then $(\sigma,q)\in\mathcal S^{s,p}_2$.  This restricted sufficient condition suffices for all
of our purposes in this section
and has an easy interpretation: if $\sigma\ge 1$, then $H^{\sigma,q}_{\rm loc}(M;\mathbb R)$ consists of
H\"older continuous functions, and $H^{\sigma,q}_{\rm loc}(M;\mathbb R)$ contains $H^{s,p}_{\rm loc}(M;\mathbb R)$.

Our construction employs operators of the form $-\Delta_g + \Lambda$, where $\Lambda\in\Reals$;
these evidently also define continuous actions \eqref{eq:lap-mapping} for
$(\sigma,q)\in\mathcal S^{s,p}_{2}$.  On hyperbolic space these are Fredholm maps
only for a limited range of weights $\delta$ depending on $\Lambda$.
Identical restrictions on $\delta$ apply
for asymptotically hyperbolic metrics
of one of the regularity classes \eqref{eq:curly-H-class} and \eqref{eq:curly-X-class}.
Thus we  have following consequence of our main results of \S\ref{sec:fredholm}.
\begin{proposition}\label{prop:Lap-Fredholm-range}
Suppose $1<p<\infty$, $m\in\Nats$, $s\in\Reals$ and that $g$ is an asymptotically
hyperbolic metric in one of the regularity classes \eqref{eq:curly-H-class}, \eqref{eq:curly-X-class}.
Let $\Lambda\ge 0$ and define
\begin{equation}\label{eq:ind-radius-lap}
R = \sqrt{\left(\frac{n-1}{2}\right)^2 + \Lambda}.
\end{equation}
Then
$-\Delta_{g}+\Lambda$ is an isomorphism
\begin{equation}\label{eq:lap-H-map}
H^{\sigma,q}_\delta(M;\mathbb R)\to H^{\sigma-2,q}_\delta(M;\mathbb R)\\
\end{equation}
so long as $(\sigma,q)\in S^{s,p}_{2}$ and
\begin{equation}\label{eq:Fred-delta-H}
\left|\delta+\frac{n-1}{q} -\frac{n-1}2\right| < R.
\end{equation}
Similarly,  $-\Delta_{g}+\Lambda$ is an isomorphism
\begin{equation}\label{eq:lap-X-map}
X^{\sigma,q}_\delta(M)\to X^{\sigma-2,q}_\delta(M)\\
\end{equation}
if $(\sigma,q)\in S^{s,p}_{2}$ and
\begin{equation}\label{eq:Fred-delta-X}
\left|\delta -\frac{n-1}2\right| < R.
\end{equation}
\end{proposition}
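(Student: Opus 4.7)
The strategy is to verify that $\mathcal P := -\Delta_g + \Lambda$ satisfies the hypotheses of Theorem \ref{thm:main-fredholm}, compute its indicial radius, and upgrade the resulting Fredholm property to an isomorphism by showing the kernel is trivial.

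First, I would confirm Assumption \ref{Assume-P}. The Laplacian is a second-order scalar (weight-$0$) geometric differential operator: the coefficient of $\partial^2$ is $g^{ij}$, involving no derivatives of $g$, while the coefficient of $\partial$ is built from Christoffel symbols and involves exactly one derivative of $g$, so scale-naturality is immediate; $\Lambda$ is a constant zeroth-order term. Ellipticity and formal self-adjointness are standard. The weak $L^2$ conditions $s \geq 1$ and $\tfrac{1}{p} - \tfrac{s}{n} \leq \tfrac{1}{2} - \tfrac{1}{n}$ follow from the regularity hypotheses $s \geq m \geq 1$ and $s > n/p$: the latter gives $\tfrac{1}{p} - \tfrac{s}{n} < 0 \leq \tfrac{1}{2} - \tfrac{1}{n}$ in dimensions $n \geq 2$.

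Next, I would check Assumption \ref{Assume-I} and compute the indicial radius for $\breve{\mathcal P} := -\Delta_{\breve g} + \Lambda$. Classical spectral theory gives that $-\Delta_{\breve g}$ on $L^2(\Hyp)$ has spectrum $\bigl[\bigl(\tfrac{n-1}{2}\bigr)^2, \infty\bigr)$, so $\breve{\mathcal P}$ is bounded below by $\bigl(\tfrac{n-1}{2}\bigr)^2 + \Lambda > 0$ and is therefore an isomorphism $H^{2,2}_0(\Hyp) \to H^{0,2}_0(\Hyp)$. In the upper half-space model a direct computation gives $\Delta_{\breve g}(y^z) = \bigl(z^2 - (n-1)z\bigr) y^z$, so the characteristic exponents of $\breve{\mathcal P}$ are the roots of $z^2 - (n-1)z - \Lambda = 0$, namely $z = \tfrac{n-1}{2} \pm R$ with $R$ as in \eqref{eq:ind-radius-lap}; the indicial radius is therefore exactly $R$.

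With these ingredients, Theorem \ref{thm:main-fredholm} provides that the maps \eqref{eq:lap-H-map} and \eqref{eq:lap-X-map} are Fredholm of index zero under the stated conditions on $(\sigma,q)$ and $\delta$, and identifies the kernel with that of $\mathcal P : H^{1,2}_0(M;\Reals) \to H^{-1,2}_0(M;\Reals)$. To upgrade Fredholm to isomorphism it suffices to show this kernel is trivial. For $u \in H^{1,2}_0(M;\Reals)$ with $\mathcal P u = 0$, formal self-adjointness, applied via density of $C^\infty_{\mathrm{cpct}}(M;\Reals)$, yields
\[
0 = \ip<u, \mathcal P u>_{(M,g)} = \int_M \bigl(|\nabla u|_g^2 + \Lambda u^2\bigr)\, dV_g.
\]
If $\Lambda > 0$, this forces $u \equiv 0$. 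If $\Lambda = 0$, it forces $\nabla u \equiv 0$, hence $u$ is constant on the connected manifold $M$; since $dV_g = \rho^{-n}\, dV_{\bar g}$ is non-integrable near $\partial M$, the $g$-volume of $M$ is infinite, and an $L^2$ constant on an infinite-volume space must vanish. The only substantive inputs here are spectral positivity of the hyperbolic Laplacian and the elementary indicial computation; the main obstacle is purely bookkeeping in verifying Assumption \ref{Assume-P} for metrics of the given regularity, which is already largely packaged by Lemma \ref{lem:Lap-mapping}.
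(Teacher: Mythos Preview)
Your proposal is correct and follows essentially the same route as the paper: verify Assumptions~\ref{Assume-P} and~\ref{Assume-I}, invoke the Fredholm theorem (the paper cites Theorems~\ref{thm:H-fredholm} and~\ref{thm:X-fredholm} directly rather than the combined Theorem~\ref{thm:main-fredholm}), and then kill the kernel by the same integration-by-parts identity on $H^{1,2}_0(M;\Reals)$. The only differences are cosmetic: the paper outsources the indicial radius computation and Assumption~\ref{Assume-I} to \cite[Lemma~7.1]{Lee-FredholmOperators}, whereas you compute them directly; and the paper compresses your $\Lambda>0$ versus $\Lambda=0$ case split into the single line ``$u$ is a constant element of $H^{1,2}_0(M)$ and therefore $0$.''
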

\begin{proof}
Lemma 7.1 of \cite{Lee-FredholmOperators} implies that $R$ defined by
equation \eqref{eq:ind-radius-lap} is the
indicial radius of $-\Delta_{\breve g} + \Lambda$ acting on hyperbolic space.
So we can apply Theorems \ref{thm:H-fredholm} and \ref{thm:X-fredholm}
to find that  $-\Delta_g + \Lambda$ is a Fredholm map with index zero
when acting as in \eqref{eq:lap-H-map} or \eqref{eq:lap-X-map},
so long as
$(\sigma,q)$ and $\delta$ satisfy the stated restrictions.

To show these maps are isomorphisms it is enough to show
they have trivial kernel.
Suppose $u$ is in the kernel of $-\Delta_g + \Lambda$.
Then $u\in H^{1,2}_0(M)$ by Proposition \ref{prop:P-regularity}.
A density argument shows integration by parts
applies on $H^{1,2}_0(M)$ and therefore
\[
\int_{M} \left[|du|_g^2 +\Lambda u^2\right]\;dV_g = 0.
\]
Since $\Lambda\ge0$ we conclude that  $u$ is a constant element of $H^{1,2}_0(M)$ and therefore 0.
\end{proof}

As the constant $R$ defined in \eqref{eq:ind-radius-lap} is the indicial radius for the operator $\mathcal P[g] = -\Delta_g + \Lambda$, the inequalities \eqref{eq:Fred-delta-H}and \eqref{eq:Fred-delta-X} define the
\Defn{Fredholm ranges} of the associated operators, and correspond to \eqref{intro:define-DqR} from the introduction.  
Note that $R$ is increasing in $\Lambda$, and
hence the Fredholm range of the operators grows as $\Lambda$ increases.  Two
choices of $\Lambda$ play central roles in our construction. First, when $\Lambda=0$
inequalities \eqref{eq:Fred-delta-H}and \eqref{eq:Fred-delta-X} become
\begin{gather}
0 < \delta +\frac{n-1}{q} < n-1\label{eq:Fred-Lap-H},\\
0 < \delta  < n-1,\label{eq:Fred-Lap-X}
\end{gather}
respectively.  Second, when $\Lambda=n$ a computation shows
inequalities \eqref{eq:Fred-delta-H}and \eqref{eq:Fred-delta-X} become
\begin{gather}
-1 < \delta +\frac{n-1}{q} < n\label{eq:Fred-Lap-n-H},\\
-1 < \delta  < n.\label{eq:Fred-Lap-n-X}
\end{gather}
The Fredholm ranges for $-\Delta+n$ play an especially important role in the construction
because equation \eqref{eq:Yamabe-problem} can be rewritten
\begin{equation}\label{eq:Yamabe-rewrite}
(-\Delta_g + n)u = \frac{\Rhn}{a_n}\left[(1+u)^{q_n-1}-1-(q_n-1)u\right] + \frac{\Rhn-\R[g]}{a_n}(1+u).
\end{equation}
If $u$ has some decay at infinity, the first expression on the right-hand side of equation
\eqref{eq:Yamabe-rewrite} vanishes $O(u^2)$ at infinity and hence the known decay of $u$ can
be improved up to that of $\R[g]-\Rhn$, but only within the limits imposed by the
Fredholm range of $-\Delta_g +n$.

\subsection{Solution in the \texorpdfstring{$\mathscr H^{s,p;m}$}{Hsp;m} category}

In this section we present the solution of the Yamabe problem for
asymptotically hyperbolic metrics of class $\mathscr H^{s,p;m}$.  This
case is mildly more technical than the corresponding $\mathscr X^{s,p;m}$
construction, and we summarize the changes needed in the  $\mathscr X^{s,p;m}$
in the following section.

The construction the consists of three steps:
\begin{itemize}
  \item \textbf{Conditioning at infinity.}  We perform an initial conformal transformation
  so that $\R[g]-\Rhn$ has `rapid' decay: $\R[g]-\Rhn\in H^{s-2,p}_{m-n/p}(M;\mathbb R)$.
  The notion of rapidity is limited by the regularity of $g$ at infinity.
  Indeed the decomposition in Proposition \ref{prop:scalar-curvature-H}, based
  on Theorem \ref{thm:curly-split}, implies that
  \begin{equation}\label{eq:R-expand-H}
\R[g] = \Rhn + \rho \tau + r,
  \end{equation}
  where $\tau\in \mathscr H^{\infty,p;m-1}(M;\mathbb R)$, $r\in H^{s-2,p}_{m-n/p}(M;\mathbb R)$,
  and $\rho \tau \in H^{\infty,p}_{1-n/p}(M;\mathbb R)$.
  In effect, this
  stage is an asymptotic
  correction to the scalar curvature to eliminate $\rho \tau$
  in this expansion, and no PDEs are solved.  Nevertheless the Fredholm
  range from inequality \eqref{eq:Fred-Lap-n-H} manifests itself and we are able
  to accomplish the desired correction only if $m\le n$.  Note that this first step is needed
  only if $m>1$;
  when $m=1$ then $\rho\tau+r$ from equation \eqref{eq:R-expand-H}
  already lies in $H^{s-2,p}_{1-n/p}(M;\mathbb R)$.
\item \textbf{Candidate solution at low regularity.}
Having made the conformal change to rapid scalar curvature decay,
the comments following equation \eqref{eq:Yamabe-rewrite} suggest that
it is reasonable to seek a solution $u$ of equation \eqref{eq:Yamabe-rewrite}
in $H^{s,p}_{m-n/p}(M;\mathbb R)$. If we can find such a solution, the rapid decay of $u$ implies
$u\in \mathscr H^{s,p;m}(M;\mathbb R)$ and therefore $1+u$ is a suitable conformal
factor; see Lemma \ref{lem:conf-change-curly}.  At this stage
we seek a somewhat less regular solution.  Sobolev embedding
implies that asympototically hyperbolic metrics of class $\mathscr H^{s,p;m}$ are
also asymptotically hyperbolic of class $\mathscr H^{1,q;1}$ for some $q>n$ (see details below).
Moreover, if there exists a
solution of equation \eqref{eq:Yamabe-rewrite} in $H^{s,p}_{m-n/p}(M;\mathbb R)$,
then it also lies in $X^{1,q}_{1-n/q}(M;\mathbb R)$. Using a barrier method,
we construct a solution at this limited level of regularity.
As an initial step, we perform a conformal change to continuous scalar
curvature satisfying $\R[g] \le \Rhn$, which facilitates finding suitable barriers.
\item \textbf{Bootstrap.}  Having found a solution in $X^{1,q}_{1-n/q}(M;\mathbb R)$,
a bootstrap is used to improve the regularity up to $X^{s,p}_{1-n/q}(M;\mathbb R)$,
and then improve the decay to $X^{s,p}_{m-n/p}(M;\mathbb R)$ and finally to
upgrade to $H^{s,p}_{m-n/p}(M;\mathbb R)$.  This step uses the structural form
of the right-hand side of equation \eqref{eq:Yamabe-rewrite} to improve the
weights and again encounters the restriction $m\le n$ arising from the Fredholm range.
\end{itemize}
We remark that the obstruction $m\le n$ is real.  In three dimensions,
for smooth metrics $g$ a generic solution of \eqref{eq:Yamabe-eq}
will be polyhomogeous and contains a term $\rho^3\log\rho$,
which lies in $H^{3,p}(\bar M;\mathbb R)$ for all $1<p<\infty$ but
does not lie in $H^{4,p}(\bar M;\mathbb R)$ for any $p>1$; see \cite{ACF1992}.

\subsubsection{Conditioning at infinity}
The following proposition implements the first step of the construction outlined
and increases the decay of $\R[g]-\Rhn$ up to a threshold determined jointly by
the regularity of the metric at infinity and the Fredholm range of the
shifted Laplacian $-\Delta_g+n$.

\begin{proposition}\label{prop:solve-at-infinity}
Suppose $1<p<\infty$, $m\in\Nats$,
$s\in\Reals$, with $s\ge m>n/p$ and $m\le n$.
Given an asymptotically hyperbolic metric $g$ on $M$
of class $\mathscr H^{s,p;m}$
there exists a positive
conformal factor $\Theta \in \mathscr H^{s,p;m}(M)$
such that $\Theta|_{\partial M}=1$ and such that
\[
\R[\Theta^{-2}g] -\Rhn \in H^{s-2,p}_{m-n/p}(M;\mathbb R).
\]
\end{proposition}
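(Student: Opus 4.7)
The plan is to construct the conformal factor as $\Theta = 1 + u$, where $u$ is a finite asymptotic expansion whose terms successively eliminate the leading boundary coefficients of $\R[\Theta^{-2}g] - \Rhn$. To begin, I would write the scalar curvature transformation law
\begin{equation*}
\R[\Theta^{-2}g] = \Theta^2\R[g] + 2(n-1)\Theta\Delta_g\Theta - n(n-1)|d\Theta|_g^2,
\end{equation*}
and linearize around $\Theta = 1$ to obtain
\begin{equation*}
\R[\Theta^{-2}g] - \Rhn = (\R[g] - \Rhn) - 2(n-1)(-\Delta_g + n)u + Q(u,\R[g]-\Rhn),
\end{equation*}
where $Q$ collects the nonlinear contributions. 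By Proposition \ref{prop:scalar-curvature-H}, $\R[g] - \Rhn = \rho\tau + r$ with $\tau \in \mathscr H^{\infty,p;m-1}(M;\mathbb R)$ and $r \in H^{s-2,p}_{m-n/p}(M;\mathbb R)$, so the task reduces to cancelling the boundary asymptotics of $\rho\tau$ through order $\rho^{m-1}$.

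Iteratively applying Theorem \ref{thm:curly-split} to $\tau$ (at each stage peeling off the boundary trace via a smooth extension and then factoring $\rho$ out of the vanishing remainder) yields an expansion $\rho\tau = \sum_{j=1}^{m-1}\rho^j \phi_j + r'$ with each $\phi_j$ smooth in the interior and $r' \in H^{s-2,p}_{m-n/p}(M;\mathbb R)$. The relevant indicial polynomial of $-\Delta_g + n$, obtained from $(-\Delta_g + n)(\rho^\sigma \phi) = p(\sigma)\rho^\sigma \phi + O(\rho^{\sigma+1})$ for smooth $\phi$, is $p(\sigma) = \sigma(n-1-\sigma) + n$, with roots $\sigma = -1, n$; thus $p(j) \neq 0$ for every integer $j \in \{1,\dots,n-1\}$, and in particular for $j = 1,\dots,m-1$ precisely under the hypothesis $m \leq n$. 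I would build $u = \sum_{j=1}^{m-1}\rho^j \eta_j$ inductively: at stage $j$, choose $\eta_j$ to be a smooth extension to $\bar M$ of $\phi_j|_{\partial M}/[2(n-1)p(j)]$, so that the indicial action of $-2(n-1)(-\Delta_g + n)$ on $\rho^j \eta_j$ cancels the $\rho^j \phi_j$ boundary term and pushes the leading error to order $\rho^{j+1}$.

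With this $\Theta = 1 + u$, positivity holds after shrinking the collar neighborhood used for the construction to make $u$ small near $\partial M$ (interior positivity follows from continuity and shrinking if necessary), $\Theta|_{\partial M} = 1$ by construction, and $\Theta \in \mathscr H^{s,p;m}(M;\mathbb R)$ because each $\rho^j \eta_j$ lies in $H^{\infty,p}_{j-n/p}(M;\mathbb R) \subset \mathscr H^{s,p;m}(M;\mathbb R)$ by Lemma \ref{lem:basic-roman-curly}. By construction the final error lies in $H^{s-2,p}_{m-n/p}(M;\mathbb R)$.

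The main obstacle I anticipate is the regularity bookkeeping across the iteration: one must verify that each intermediate error genuinely takes the form $\rho^{j+1}(\text{sufficiently regular tensor}) + (\text{rapid remainder})$ so the next peeling step is legal, and that the nonlinear piece $Q(u,\R[g]-\Rhn)$ together with the $\rho^{j+1}$-order corrections produced by higher derivatives of $\rho^j \eta_j$ all fit inside the final $H^{s-2,p}_{m-n/p}$ budget. The multiplication result Proposition \ref{prop:curly-H-tensor}, the nonlinear composition Proposition \ref{prop:nonlin-curly}, and the weight-gaining mapping properties of Lemma \ref{lemma:nabla-and-rho-on-curly} should supply the necessary algebraic control, with the key observation being that each $\rho^j \eta_j$ carries a built-in $\rho^j$ decay that combines multiplicatively with the $\rho$-decay of $\R[g] - \Rhn$ to stay within budget.
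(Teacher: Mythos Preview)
Your overall strategy—iterative asymptotic correction of the scalar curvature via conformal changes, driven by an indicial analysis—is the same as the paper's. But two aspects of your execution do not go through as written, and the paper's proof is organized precisely to avoid the second of these.

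First, a minor but genuine error: you propose taking $\eta_j$ to be a \emph{smooth extension to $\bar M$} of $\phi_j|_{\partial M}$. This is not available. The coefficients $\phi_j$ you extract via Theorem \ref{thm:curly-split} lie in $\mathscr H^{\infty,p;m-j}(M;\mathbb R)$, so their boundary traces have only $H^{m-j,p}$-type regularity on $\partial M$, not $C^\infty$. You must keep $\eta_j\in\mathscr H^{\infty,p;m-j}(M;\mathbb R)$ (or simply take $\eta_j$ proportional to $\phi_j$ itself, as the paper does).

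Second, and more seriously: your inductive step asserts that correcting the $\rho^j$ term pushes the error to order $\rho^{j+1}$. This is exactly where the $\mathscr H$ case is harder than the $\mathscr X$ case, and the paper does \emph{not} claim a full-integer gain. The obstruction is that the metric's deviation from hyperbolic—quantities like $|d\rho|^2_{\bar g}-1$, $\rho\Delta_{\bar g}\rho$, $\rho^2\R[\bar g]$—only lie in $X^{s-*,p}_{\beta}$ with $\beta=\min(m-n/p,1)$ (Lemmas \ref{lem:drho-H}--\ref{lem:R-bar-H}). When $n/p<m<n/p+1$, one has $\beta<1$, and the cross-terms between your correction $\rho^j\eta_j$ and these deviations land only in $H^{s-2,p}_{j+\beta-n/p}$, not $H^{s-2,p}_{j+1-n/p}$. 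Your indicial computation $(-\Delta_g+n)(\rho^j\eta_j)=p(j)\rho^j\eta_j+O(\rho^{j+1})$ tacitly assumes the metric is hyperbolic to one full order; for a rough $\mathscr H^{s,p;m}$ metric that is not justified. The paper sidesteps this by working with the $\bar g$-based formula \eqref{Theta-R-conformal-change} and accepting an improvement of only $\alpha=\min(1/2,\,m-n/p)$ per step, iterating $\sim m/\alpha$ times rather than $m-1$ times. Your more ambitious one-integer-per-step scheme could in principle be rescued by tracking the full $\rho\tau+r$ structure of every metric deviation at every stage (as the paper does in the easier $\mathscr X$ case, Proposition \ref{prop:solve-at-infinity-X}), but that bookkeeping is substantially more delicate than what you have sketched, and you have not supplied it.
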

\begin{proof}
First, if $m=1$ then Proposition \ref{prop:scalar-curvature-H}
implies $\R[g]\in H^{s-2,p}_{m-n/p}(M)$
without any additional work.  So we can assume $m\ge 2$.

Let $\alpha = \min(1/2,m-n/p)$; by hypothesis $\alpha >0$.
We claim that if
$\R[g]-\Rhn\in H^{s-2,p}_{k+\sigma-n/p}(M)$
for some integer $0\le k\le m-1$ and some $0\le \sigma<1$,
then there exists
a positive conformal factor $\Theta\in \mathscr H^{\infty,p;m}$
such that $\Theta =1$ along $\partial M$ and such that
\begin{equation*}
\R[\Theta^{-2}g]-\Rhn\in H^{s-2,p}_{k-n/p+\min(1,\sigma+\alpha)}(M).
\end{equation*}
That is, the decay of $\R[\Theta^{-2}g]$ can be improved by the amount
$\alpha$, up to the next integer above $k$.
Assuming the claim, the result now follows by an inductive construction.
Proposition \ref{prop:scalar-curvature-H}
implies $\R[g]-\Rhn\in H^{s-2,p}_{1-n/p}(M)$, so we
can initially take $k=1$ and $\sigma=0$; note that $k\le m-1$.
Applying the the claim iteratively we can find a sequence
of finitely many conformal factors, each
in $\mathscr H^{s,p;m}(M)$ and equal to 1 on the boundary,
that improve the scalar curvature decay rate to the next
integer, and then repeat
the process $m-2$ more times until
the final scalar curvature lies in $H^{s-2,p}_{m-n/p}(M)$.
At each step Lemma \ref{lem:conf-change-curly} and Proposition
\ref{prop:nonlin-curly} ensure that the
new metric is asymptotically hyperbolic of class $\mathscr H^{s,p;m}$.

To prove the claim,
suppose that for some $1\le k \le m-1$ and $\sigma\in [0,1)$
we have
$\R[g]-\Rhn\in H^{s-2,p}_{k+\sigma-n/p}(M)$.
From Proposition \ref{prop:scalar-curvature-H} we have $\R[g] = \Rhn + \rho\tau_0 + r_0$, where $r_0\in H^{s-2,p}_{m-n/p}(M)$ and $\tau_0 \in \mathscr H^{\infty, p;m-1}(M)$.
Hence
\[
\rho\tau_0 = \R[g] - \Rhn - r_0 \in H^{s-2,p}_{k+\sigma - n/p}(M).
\]
Lemma \ref{lemma:nabla-and-rho-on-curly} implies $\rho\tilde\tau_0 \in \mathscr H^{\infty, p;m}(M)$
as well and therefore
Theorem \ref{thm:curly-split} implies $\rho\tau_0 = \rho^k\tau + r_1$, where $\tau \in \mathscr H^{\infty, p, m-k}(M)\cap H^{\infty, p}_{\sigma - n/p}(M)$ and $r_1\in H^{s-2,p}_{m-n/p}(M)$.
We conclude finally that
\begin{equation}\label{eq:R-decomp}
\R[g] = \Rhn + \rho^{k}\tau + r
\end{equation}
with $\tau\in \mathscr H^{\infty,p;m-k}(M)\cap H^{\infty,p}_{\sigma-n/p}(M)$ and $r\in H^{s-2,p}_{m-n/p}(M)$, and we seek a conformal change that increases
the weight $\sigma-n/p$ appearing in this expression.

For a sufficiently smooth conformal factor $\omega$, possibly vanishing on $\partial M$,
a computation shows
\begin{equation}\label{eq:scalar-curvature-pieces}
\R[\omega^{-2}\bar g] - \Rhn
= \Rhn(|d\omega|_{\bar g}^2-1) + 2(n-1)\omega\Delta_{\bar g}\omega +\omega^2 \R[\bar g].
\end{equation}
As a consequence, if $\Theta\in \mathscr H^{s,p;m}(M)$ then
\begin{equation}
\label{Theta-R-conformal-change}
\begin{aligned}
\R[\Theta^{-2}g] - \Rhn
&=
R[(\Theta\rho)^{-2}\bar g] - \Rhn
\\
&=
\Rhn(|d(\Theta\rho)|_{\bar g}^2 - 1 )
+2(n-1)(\Theta\rho)\Delta_{\bar g}\Theta\rho +(\Theta\rho)^2\R[\bar g].
\end{aligned}
\end{equation}
We suppose that $\Theta$ is of the form $1+\rho^k u$, with
$u\in \mathscr H^{\infty,p;m-k}(M)\cap H^{\infty,p}_{\sigma-n/p}(M)$
to be chosen later, and gather a number of results to use when analyzing the terms of \eqref{Theta-R-conformal-change}.
From Lemma \ref{lemma:nabla-and-rho-on-curly} and Proposition \ref{prop:curly-H-into-roman-X}
we find
\begin{align*}
\rho^k u &\in \mathscr H^{\infty,p;m}(M) \cap H^{\infty,p}_{k+\sigma-n/p}(M) \cap X^{\infty}_{\alpha}(M),\\
\rho^{k+1} \hat\nabla u &\in
\mathscr H^{\infty,p;m}(M) \cap H^{\infty,p}_{k+2-n/p}(M) \cap X^{\infty}_{1}(M),
\end{align*}
where in the second line we used also that $k< m$;
note that the condition $\alpha\leq 1/2$ handles the boundary case in Proposition \ref{prop:curly-H-into-roman-X}.
In particular, $\hat\nabla u\in H^{\infty,p}_{1-n/p}(M)$ and therefore
$\hat\nabla^2 u \in H^{\infty,p}_{1-n/p}(M)$ as well.
Proposition \ref{prop:curly-H-into-roman-X} also implies that
$\bar g {}^{-1}\in X^{s,p}_{-2}(M)$ and $\hat\nabla\bar g \in X^{s-1,p}_{-1}(M).$
We use these latter facts to analyze $\Delta_{\bar g}(\Theta\rho)$, which is comprised of terms that are contractions of $\bar g{}^{-1}\otimes\hat\nabla{}^2(\Theta\rho)$ and $\bar g{}^{-1}\otimes\bar g{}^{-1}\otimes \hat\nabla\bar g \otimes\hat\nabla(\Theta\rho)$.
Finally, from Lemmas \ref{lem:drho-H} - \ref{lem:R-bar-H} we have
\begin{equation*}
|d\rho|_{\bar g}^2 -1 \in  X^{s,p}_{\alpha}(M),\qquad
\rho\Delta_{\bar g}\rho \in X^{s-1,p}_{\alpha}(M),\qquad
\rho^2 \R[\bar g] \in X^{s-2,p}_{\alpha}(M).
\end{equation*}

A lengthy computation using these facts and
Proposition \ref{prop:multiplication} results in
\begin{align*}
|d(\Theta\rho)|^2_{\bar g} &= |d\rho|^2 +2(k+1)\rho^k u + H^{s,p}_{k-n/p+\min(1,\sigma+\alpha)}(M),
\\
(\Theta\rho)\Delta_{\bar g}(\Theta\rho) &= \rho\Delta_{\bar g}\rho +
k(k+1)\rho^k u + H^{s-1,p}_{k-n/p+\min(1,\sigma+\alpha)}(M),
\\
(\Theta\rho)^2 \R[\bar g] &= \rho^2 \R[\bar g] + H^{s-2,p}_{k-n/p+\sigma+\alpha}(M).
\end{align*}
Hence, from  \eqref{eq:scalar-curvature-pieces} and \eqref{eq:R-decomp} we find
\begin{equation*}
\begin{aligned}
\R[\Theta^{-2}g]-\Rhn
&= \R[g]-\Rhn
+ 2(k+1)(n-1)(k-n)\rho^k u  + H^{s-2,p}_{k-n/p+\min(1,\sigma+\alpha)}(M),
\\
&=\rho^k\left(\tau +  2(k+1)(n-1)(k-n) u\right) + H^{s-2,p}_{k-n/p+\min(1,\sigma+\alpha)}(M).
\end{aligned}
\end{equation*}
Since $k<m\le n$, the coefficient on $u$ above
does not vanish, and we can take
\[
u = \frac{\tau}{2(k+1)(n-1)(n-k)}
\]
in a neighborhood of $\partial M$, small enough to ensure $\Theta >0$, and obtain $$\R[\Theta^{-2}g]-\Rhn\in H^{s-2,p}_{k-n/p+\min(1,\sigma+\alpha)}(M)$$ as claimed.
\end{proof}

\subsubsection{Candidate solution at low regularity}

Suppose $g$ is a metric of class $\mathscr H^{s,p;m}$ with $s\ge m>n/p$.
Hence $\rho^2 g \in \mathscr H^{s,p;m}(M;T^{0,2}M) \subset \mathscr H^{m,p;m}(M;T^{0,2}M) = H^{m,p}(\bar M;T^{0,2}M)$.
Choose $q \ge p$ such that
\[
\frac{1}{p} -\frac{m}{n} \le \frac{1}{q} - \frac{1}{n} < 0.
\]
Sobolev embedding on $\bar M$ implies that $H^{m,p}(\bar M;T^{0,2}M)\subset H^{1,q}(\bar M;T^{0,2}M)$
and therefore $g$ is asymptotically hyperbolic of class $\mathscr H^{1,q;1}$.
Throughout this stage we work with metrics at this level of regularity with
the aim of finding a low-regularity solution of equation \eqref{eq:Yamabe-rewrite}.
In the process we make extensive use of the following maximum principle,
which can be proved with straightforward
modifications of the techniques of Lemmas 5.2 and 5.3 of \cite{Maxwell-RoughAE}.
\begin{lemma}\label{lem:max} Let $g$ be asymptotically hyperbolic of class
$\mathscr H^{1,q;1}$ or of class $\mathscr X^{1,q;1}$ with $q>n$.
\begin{enumerate}
\item\label{part:weak-max}
Suppose $V\in X^{-1,q}_0(M;\mathbb R)$ with $V\ge 0$.  If $u\in X^{1,q}_\alpha(M;\mathbb R)$ for some $\alpha>0$
and
\[
-\Delta_g u + Vu \ge 0
\]
then $u\ge 0$.
\item\label{part:harnack}
Suppose $V\in H^{-1,q}_{\rm loc}(M;\mathbb R)$ and that $u\in H^{1,q}_{\rm loc}(M;\mathbb R)$, is nonnegative,
and satisfies
\[
-\Delta_g u + Vu \ge 0.
\]
If $u=0$ at some point then $u\equiv 0$.
\end{enumerate}
\end{lemma}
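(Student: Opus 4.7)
The plan is to adapt the classical weak maximum principle arguments to the asymptotically hyperbolic Sobolev setting, with the decay of $u$ in part (a) and standard local elliptic theory in part (b) accommodating the rough geometry. The proof will closely follow Lemmas 5.2 and 5.3 of \cite{Maxwell-RoughAE}, with the function spaces adjusted to the $X^{s,p}_\delta$ / $H^{s,p}_\delta$ framework developed in this paper.

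\textbf{Part (a).} I would test the differential inequality against the truncated negative part $\phi_\epsilon := (u_- - \epsilon)_+$, where $u_- = \max(-u,0)$ and $\epsilon > 0$. The hypothesis $u \in X^{1,q}_\alpha(M;\mathbb R)$ with $q > n$ and $\alpha > 0$, combined with Sobolev embedding (Proposition \ref{prop:SobolevEmbedding}) and the uniform control of M\"obius ball norms, implies that $u$ is H\"older continuous on $\bar M$ with pointwise decay $|u(p)| \lesssim \rho(p)^\alpha$.  Consequently, for each $\epsilon > 0$ the set $\{u < -\epsilon\}$ is relatively compact in $M$, so $\phi_\epsilon$ has compact support; by Stampacchia's truncation lemma $\phi_\epsilon \in H^{1,q}_{\loc}(M;\mathbb R)$ and is an admissible test function.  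Substituting into the weak inequality and using the chain rule identities $du_- = -du$ a.e.\ on $\{u<0\}$ and $u\phi_\epsilon = -u_-\phi_\epsilon$ produces
\begin{equation*}
\int_{\{u_- > \epsilon\}} |du_-|_g^2\,dV_g + \langle V, u_-\phi_\epsilon\rangle \leq 0.
\end{equation*}
Since $V \geq 0$ as a distribution and $u_-\phi_\epsilon \geq 0$, the pairing term is nonnegative, forcing $du_- \equiv 0$ on $\{u_- > \epsilon\}$. Letting $\epsilon \to 0$ gives $du_- \equiv 0$ on $\{u_- > 0\}$, and combined with $u_- \to 0$ along $\partial M$ a connectedness argument yields $u_- \equiv 0$.

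\textbf{Part (b).} This is a purely local assertion on the interior of $M$, and the plan is to reduce it to the standard strong maximum principle for nonnegative weak supersolutions of second-order linear elliptic equations with rough coefficients. In any local chart on $M$ the inequality takes the form
\begin{equation*}
-a^{ij}\partial_i\partial_j u + b^i\partial_i u + Vu \geq 0
\end{equation*}
in the weak sense, with $a^{ij}$ uniformly elliptic and H\"older continuous (the metric $g$ lies in $H^{1,q}_{\loc}$ with $q > n$), $b^i \in L^q_{\loc}$, and $V \in H^{-1,q}_{\loc}$. The strong maximum principle in the form due to Trudinger and Serrin---available in a version accommodating $H^{-1,q}$ zeroth-order terms when $q > n$---then applies: a nonnegative weak supersolution vanishing at an interior point must vanish on a neighborhood. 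A standard connectedness argument on $M$ finishes the proof.

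\textbf{Main obstacle.} The principal technical point, beyond the standing literature on rough-coefficient elliptic theory, is justifying the duality pairing $\langle V, u_-\phi_\epsilon\rangle$ in part (a) and verifying that distributional nonnegativity of $V$ produces the expected sign when paired with nonnegative compactly-supported test functions of $H^{1,q}$-type. This will follow from the multiplication properties of the Gicquaud-Sakovich and weighted Sobolev spaces (Proposition \ref{prop:multiplication}) applied to the product $u_-\phi_\epsilon$, combined with the standard duality between $H^{-1,q}$ and $H^{1,q^*}$ on compact subsets of $M$. Once this is in place, the rest of part (a) is a compactly-supported $H^{1,q}$ test-function calculation and part (b) is standard rough-coefficient elliptic theory.
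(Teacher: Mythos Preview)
Your proposal is correct and matches the paper's approach exactly: the paper does not give a proof but simply states that the lemma ``can be proved with straightforward modifications of the techniques of Lemmas 5.2 and 5.3 of \cite{Maxwell-RoughAE},'' which is precisely the route you outline. Your identification of the main technical point---justifying the pairing $\langle V, u_-\phi_\epsilon\rangle$ and its sign via density and the multiplication properties of the weighted spaces---is the correct place to focus the adaptation.
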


The scalar curvature of a metric of class $\mathscr H^{1,q;1}$ only lies
in $H^{-1,q}_{\rm{loc}}(M;\mathbb R)$.  The following lemma shows that we can
conformally transform to a metric with \textit{continuous} scalar curvature
that also satisfies a bound from above needed for our subsequent barrier construction.

\begin{lemma}\label{lem:smooth-scalar}
Suppose $g$ is asymptotically hyperbolic of class $\mathscr H^{1,q;1}$
for some $q>n$.  There exists $u\in H^{1,q}_{1-n/q}(M;\mathbb R)$ such that $\Theta=1+u>0$
and such that $\Theta^{q_n-2} g$ is asymptotically hyperbolic
of class $\mathscr H^{1,q;1}$ with
\begin{align*}
\R[\Theta^{q_n-2} g] - \Rhn &\in H^{1,q}_{1-n/q}(M;\mathbb R),\\
\R[\Theta^{q_n-2} g]  \le \Rhn.
\end{align*}
\end{lemma}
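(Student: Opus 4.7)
The plan is to solve the prescribed scalar curvature equation
\[
\R[(1+u)^{q_n-2}g] = \Rhn - \chi
\]
for a fixed nonnegative bump function $\chi\in C^\infty_{\mathrm{cpct}}(M)$, seeking $u\in H^{1,q}_{1-n/q}(M;\mathbb R)$ with $1+u>0$.  Any such solution automatically delivers both desired conclusions, since $-\chi\in C^\infty_{\mathrm{cpct}}(M)\subset H^{1,q}_{1-n/q}(M)$ and $\chi\ge 0$ immediately give $\R[\Theta^{q_n-2}g]\le \Rhn$ with the difference in $H^{1,q}_{1-n/q}(M)$.

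Using \eqref{eq:change-scalar-curvature} and writing $\Theta=1+u$, this is equivalent to the semilinear PDE
\[
a_n(-\Delta_g+n)u + \R_0(1+u) - \Rhn F(u) + \chi(1+u)^{q_n-1} = 0,
\]
where $\R_0:=\R[g]-\Rhn\in H^{-1,q}_{1-n/q}(M)$ by Proposition~\ref{prop:scalar-curvature-H} applied with $m=1$, and $F(u)=(1+u)^{q_n-1}-1-(q_n-1)u$ is the quadratic nonlinearity appearing in \eqref{eq:Yamabe-rewrite}.  The key tools are: the isomorphism property of $-\Delta_g+\Lambda: H^{1,q}_{1-n/q}(M)\to H^{-1,q}_{1-n/q}(M)$ for $\Lambda\ge 0$ in a suitable range provided by Proposition~\ref{prop:Lap-Fredholm-range}; the product estimate Proposition~\ref{prop:multiplication}, which (using $q>n$ so $H^{1,q}_{1-n/q}(M)\hookrightarrow C^{0,\alpha}(\bar M)$) shows $u\mapsto \R_0 u$ is continuous from $H^{1,q}_{1-n/q}(M)$ into $H^{-1,q}_{2-2n/q}(M)\hookrightarrow H^{-1,q}_{1-n/q}(M)$; the nonlinearity estimate Proposition~\ref{prop:nonlin} applied to $F(u)$ and $(1+u)^{q_n-1}-1$; and the maximum principle for $\mathscr H^{1,q;1}$ metrics recorded in Lemma~\ref{lem:max}.

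I would then proceed by a sub/super-solution argument combined with monotone iteration.  Take the super-solution $u_+=C$ for a sufficiently large positive constant, using that the pointwise contribution $-\Rhn F(C)=|\Rhn|F(C)$ grows like $C^{q_n-1}$ and eventually dominates the linear-in-$C$ distributional perturbation $(1+C)\R_0$ when tested against nonnegative elements of $C^\infty_{\mathrm{cpct}}(M)$.  Take $u_-$ a small negative constant with $1+u_->0$, for which the negative linear term $a_n n u_-<0$ provides the reverse inequality against bounded nonlinear contributions.  Choose $\Lambda>0$ sufficiently large and define the iteration
\[
(-a_n\Delta_g+\Lambda)u_{k+1} = \Lambda u_k - \R_0(1+u_k) + \Rhn F(u_k) - \chi(1+u_k)^{q_n-1}, \qquad u_0=u_+,
\]
which is well-defined in $H^{1,q}_{1-n/q}(M)$ by Proposition~\ref{prop:Lap-Fredholm-range}.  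For large $\Lambda$ the right-hand side is monotone in $u_k$ on $[u_-,u_+]$, so Lemma~\ref{lem:max} forces $\{u_k\}$ to decrease and remain within $[u_-,u_+]$, giving uniform $L^\infty$ bounds and, through the isomorphism, uniform $H^{1,q}_{1-n/q}(M)$ bounds.  Extracting a weak limit via Proposition~\ref{prop:rellich-lemma} and passing to the limit in the nonlinear terms by Propositions~\ref{prop:multiplication} and~\ref{prop:nonlin} produces the desired $u$.

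The principal obstacle is the distributional character of $\R_0$: the sub/super-solution inequalities and the monotonicity of the iteration must be phrased weakly.  In particular, verifying that the constant $u_+=C$ gives a distributional super-solution demands controlling $\langle(1+C)\R_0,\phi\rangle$ against the positive pointwise contributions uniformly over nonnegative test functions $\phi$, using duality between $H^{-1,q}_{1-n/q}(M)$ and its predual.  Similarly, the monotone comparison $u_{k+1}\le u_k$ and the lower barrier $u_k\ge u_-$ must be deduced from Lemma~\ref{lem:max} applied to differences that satisfy inequalities only in the weak sense; the regularity $u_k\in H^{1,q}_{1-n/q}(M)\hookrightarrow C^{0,\alpha}(\bar M)$ is essential to ensure the relevant products and pairings with $\R_0$ are well-defined.
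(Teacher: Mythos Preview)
Your approach has a genuine gap precisely at the obstacle you flag at the end: verifying that a constant $u_+=C$ is a distributional super-solution cannot be carried out.  The weak inequality you need is
\[
a_n n C\,\langle 1,\phi\rangle + |\Rhn|F(C)\,\langle 1,\phi\rangle + (1+C)^{q_n-1}\langle\chi,\phi\rangle \;\ge\; -(1+C)\,\langle \R_0,\phi\rangle
\]
for every nonnegative $\phi\in C^\infty_{\mathrm{cpct}}(M)$.  The left side is controlled by $\|\phi\|_{L^1}$, but the right side only by $\|\R_0\|_{H^{-1,q}_{1-n/q}}\|\phi\|_{H^{1,q\dual}_{n/q-1}}$, and no choice of $C$ makes the former dominate the latter uniformly in $\phi$: concentrating $\phi$ on shrinking balls drives the $H^{1,q\dual}$ norm to infinity relative to the $L^1$ norm.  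The same failure obstructs the sub-solution $u_-$.  A separate problem is that the iteration cannot even be initialized: with $u_0=C$ the right-hand side contains the constants $\Lambda C$ and $\Rhn F(C)$, which do not lie in $H^{-1,q}_{1-n/q}(M)$ since the weight $1-n/q>0$ forces decay, so Proposition~\ref{prop:Lap-Fredholm-range} does not apply to produce $u_1$.

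The paper avoids the distributional scalar curvature entirely by working in two \emph{linear} steps.  First it writes $\R_0=W+V_1$ with $W\in H^{1,q}_{1-n/q}(M)$ chosen (by density) so that $V_1$ is small enough in $H^{-1,q}_{1-n/q}(M)$ that $-a_n\Delta_g+V_1$ remains an isomorphism; solving $-a_n\Delta_g u+V_1(1+u)=0$ (with a homotopy argument and Lemma~\ref{lem:max}\eqref{part:harnack} for positivity) yields $g_1$ with $\R[g_1]-\Rhn\in H^{1,q}_{1-n/q}(M)$, i.e.\ \emph{continuous} scalar curvature.  Only then do pointwise comparisons make sense: the second step builds an explicit nonnegative $V_2\ge \R[g_1]-\Rhn$ in $H^{1,q}_{1-n/q}(M)$ and solves another linear equation, with Lemma~\ref{lem:max}\eqref{part:weak-max} forcing $\Theta_2\le 1$ and hence $\R[g_2]\le\Rhn$.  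The idea you are missing is that the irregular part of $\R_0$ must be absorbed into the operator as a small perturbation of the isomorphism, not treated as a source term subject to pointwise barriers.
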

\begin{proof}
The proof proceeds by two conformal transformations. The first
attains the regularity criterion, and the second lowers the scalar curvature.

For ease of notation, set $\alpha = 1-n/q$.
Since
$\alpha$ lies in the Fredholm range defined by inequality \eqref{eq:Fred-Lap-H},
 Proposition \ref{prop:Lap-Fredholm-range} implies
$\Delta_g:H^{1,q}_{\alpha}(M)\to H^{-1,q}_{\alpha}(M)$ is
an isomorphism.
Proposition \ref{prop:scalar-curvature-H} and Lemma \ref{lemma:nabla-and-rho-on-curly}(\ref{part:curly-rho-map}) imply $\R[g]-\Rhn\in H^{-1,q}_{\alpha}(M)$.

Since $\alpha>0$, Proposition \ref{prop:multiplication} implies that multiplication
$H^{-1,q}_{\alpha}(M)\times H^{1,q}_{\alpha}(M)\to H^{-1,q}_{\alpha}(M)$
is bilinear and continuous.
Thus, using Proposition \ref{prop:density},
 we may choose $W\in H^{1,q}_\alpha(M)$
such that $V_1=\R[g]-\Rhn-W$ is sufficiently small
in $H^{-1,q}_\alpha(M)$ so that
\[
-a_n \Delta_g + V_1: H^{1,q}_{\alpha}(M) \to H^{-1,q}_{\alpha}(M)
\]
remains an isomorphism.

For each $\eta\in[0,1]$ let $u_\eta\in H^{1,q}_{\alpha}(M)$
be the unique solution of
\[
-a_n\Delta_g u_\eta + \eta V_1 u_\eta = -\eta V_1.
\]
We claim that each $u_\eta$ satisfies $1+u_\eta>0$.  Indeed, the map
$\eta\mapsto u_\eta$ is continuous into $H^{1,q}_{\alpha}(M) \subset C^{0,\alpha}_{\alpha}(M)$.  Since $u_0\equiv 0$, if the claim were false there would be some first
value of $\eta$ such that $1+u_\eta\ge 0$ and $1+u_\eta=0$ at some point.  Since
$-a_n\Delta(1+u_\eta) + V_1(1+u_\eta) = 0$, Lemma \ref{lem:max}(\ref{part:harnack}) implies
$1+u_\eta\equiv 0$.  But this is impossible since $\alpha>0$ and therefore
since $1+u_\eta\to 1$ at $\partial M$.

Let $\Theta_1=1+u_1$.  Lemma \ref{lem:conf-change-curly}
implies $g_1 = \Theta^{q_n-2}_1 g$ is asympotically hyperbolic of class
$\mathscr H^{1,q;1}$.  From equation \eqref{eq:change-scalar-curvature} it follows that
\[
\R[g_1] = \Theta^{2-q_n}_1(\R[g]-V_1) =  \Theta^{2-q_n}_1(\Rhn+W).
\]
Thus Proposition \ref{prop:nonlin}, and its following comments,
applied to $F(x)=(1+x)^{2-q_n}-1$ imply
\begin{equation}
\label{eq:first-conformal-R-improvement}
\R[g_1]-\Rhn \in H^{1,q}_{\alpha}(M).
\end{equation}
This completes the first conformal transformation.

We now construct $V_2\in H^{1,q}_{\alpha}(M)$ such that $V_2\ge 0$ and $V_2\ge \R[g_1]-\Rhn$.
To accomplish this, let $\chi$  be a cutoff function that equals $1$ on $B_1^\Hyp$ and supported
in $B_2^\Hyp$.
For each $i$ let $k_i = \max_{B_1^\Hyp} |\Phi_i^*(\R[g_1]-\Rhn)|$.
Since $q>n$, by Sobolev embedding we have $k_i\lesssim \|\Phi_i^*(\R[g_1]-\Rhn)\|_{H^{1,q}(B_2^\Hyp)}$.
We define $V_2=\sum_i k_i\left((\Phi_i)_* \chi\right)$, noting that a local uniform finiteness
argument implies
$$
\|V_2\|_{H^{1,p}_{\alpha}(M)} \lesssim \|\R[g_1]-\Rhn\|_{H^{1,p}_{\alpha}(M)}.
$$

Multiplication by $V_2$ is continuous from $H^{1,q}_{\alpha}(M)$ to $H^{1,q}_{2\alpha}(M)$
and therefore a compact operator $H^{1,q}_{\alpha}(M)\to H^{0,q}_{\alpha}(M)$; see Proposition \ref{prop:rellich-lemma}.
So $-a_n \Delta_{g_1} + V_2$ is a compact perturbation of $-a_n\Delta_{g_1}$ and hence
a Fredholm map with index 0 from $H^{1,q}_{\alpha}(M)$ to $H^{-1,q}_{\alpha}(M)$.  Moreover,
since $V_2\ge 0$, an integration by parts argument using the density 
(see Proposition \ref{prop:density}) of smooth compactly supported functions in $H^{1,q}_{\alpha}(M)$
shows that the map has trivial kernel and hence $-a_n\Delta_{g_1}+V_2$ is an isomorphism.
Let $u_2\in H^{1,q}_{\alpha}(M)$ be the solution of
\[
-a_n \Delta_{g_1} u_2 + V_2u_2 = - V_2.
\]
A homotopy argument as in the first conformal transformation shows
$\Theta_2 = 1+u_2> 0$.
Since $V_2\ge 0$, and since $u_2$ and $ V_2$ are in $H^{1,q}_\alpha(M)\subset X^{1,q}_\alpha(M)$, the maximum principle Lemma \ref{lem:max}\eqref{part:weak-max} implies $u_2\le 0$
and hence $\Theta_2< 1$.

Corollary \ref{cor:nonlin-into-X} implies
$\Theta_2^{2-q_n}\in X^{1,q}_0(M)$.
Thus \eqref{eq:first-conformal-R-improvement} and Proposition \ref{prop:nonlin}, applied to $F(x)=(1+x)^{2-q_n}-1$ as above, in combination with \eqref{eq:change-scalar-curvature}, imply that
\begin{equation*}\Rhn
\R[g_2]- \Rhn
= \Theta_2^{2-q_n}\left( \R[g_1]-\Rhn - V_2\right)
+ \left(\Theta_2^{2-q_n} -1\right)\Rhn
\in H^{1,q}_\alpha(M).
\end{equation*}
Furthermore, since $\R[g_1]-\Rhn - V_2<0$ and $\Theta_2< 1$, we have $\R[g_2]\leq \Rhn$.

The result now follows from noting
that Proposition \ref{prop:multiplication} implies $\Theta_1\Theta_2-1\in H^{1,q}_\alpha(M)$.
\end{proof}

We now construct the low regularity candidate solution.

\begin{proposition}\label{prop:Yamabe-low-solve}  Suppose $g$ is an asymptotically hyperbolic metric
of class $\mathscr H^{1,q;1}$ for some $q>n$.  There exists
$u\in X^{1,q}_{1-n/q}(M;\mathbb R)$ solving equation \eqref{eq:Yamabe-eq}.
\end{proposition}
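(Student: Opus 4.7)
The plan is to run a sub/supersolution scheme for the semilinear equation \eqref{eq:Yamabe-eq}, exploiting the preparatory work of Lemma \ref{lem:smooth-scalar} and the Fredholm isomorphism properties of the shifted Laplacian from Proposition \ref{prop:Lap-Fredholm-range}.

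First I would apply Lemma \ref{lem:smooth-scalar} to replace $g$ by a conformally related metric $g_0 = \Theta_0^{q_n-2} g$ of the same class $\mathscr H^{1,q;1}$ satisfying both $\R[g_0]\le \Rhn$ and $\R[g_0]-\Rhn\in H^{1,q}_{1-n/q}(M;\mathbb R)$. A conformal factor $1+u_0$ for $g_0$ gives a conformal factor $\Theta_0(1+u_0) = 1 + (\Theta_0 - 1 + \Theta_0 u_0)$ for $g$, and because $H^{1,q}_{1-n/q}(M;\mathbb R)\hookrightarrow X^{1,q}_{1-n/q}(M;\mathbb R)$ (Lemma \ref{lem:basic-inclusions}), a solution $u_0$ in the Gicquaud-Sakovich space pulls back to a solution of the original problem in the same space. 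Thus it suffices to solve \eqref{eq:Yamabe-eq} under the additional hypotheses produced by Lemma \ref{lem:smooth-scalar}; I work with the reduced metric and continue to denote it by $g$.

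With the reduction in place, $u_-\equiv 0$ is immediately a subsolution of \eqref{eq:Yamabe-eq}: the left-hand side vanishes while the right-hand side equals $\Rhn-\R[g]\ge 0$. The heart of the proof is the construction of a nonnegative supersolution $u_+\in H^{1,q}_{1-n/q}(M;\mathbb R)$ satisfying $u_+|_{\partial M}=0$. I would obtain it by solving a linear elliptic problem of the form
\[
\bigl(-a_n\Delta_g + \Lambda\bigr) v = M f, \qquad f := \Rhn - \R[g] \ge 0,
\]
for suitable constants $\Lambda,M\ge 0$ and a weight $\delta = 1-n/q$, which lies in the Fredholm range $(0,n-1)$ for the Laplacian (and in the Fredholm range of any nonnegative shift) by Proposition \ref{prop:Lap-Fredholm-range} since $q>n$. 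This produces $v\in H^{1,q}_{1-n/q}(M;\mathbb R)$; Lemma \ref{lem:max}\eqref{part:weak-max} gives $v\ge 0$. One then checks, using Sobolev embedding $H^{1,q}_{1-n/q}(M;\mathbb R)\hookrightarrow C^{0,1-n/q}_{1-n/q}(M;\mathbb R)$ for the $L^\infty$ control needed to handle the $\Rhn[(1+u)^{q_n-1}-1]$ term, that for $\Lambda$ large and $M$ large enough relative to $\|v\|_\infty$ the function $u_+:= v$ satisfies the pointwise supersolution inequality derived from \eqref{eq:Yamabe-eq} everywhere.

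Given the barriers $0=u_-\le u_+$, I would carry out a monotone iteration with a sufficiently large shift $K$, writing the equation as
\[
\bigl(-a_n\Delta_g + (\R[g]+K)\bigr)u_{k+1} = \Rhn\bigl[(1+u_k)^{q_n-1}-1\bigr] - (\R[g]-\Rhn) + K u_k,
\]
starting from $u_0 := u_- = 0$. Choosing $K$ large forces (i) invertibility of the shifted operator on $H^{1,q}_{1-n/q}\to H^{-1,q}_{1-n/q}$, viewing the operator as a small compact perturbation of $-a_n\Delta_g + \Lambda'$ for a constant $\Lambda'$ close to $K$ (apply Proposition \ref{prop:Lap-Fredholm-range} plus Rellich, Proposition \ref{prop:rellich-lemma}, to rule out kernel for large $K$), and (ii) monotonicity of the right-hand-side map on the interval $[0,\sup u_+]$. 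The maximum principle Lemma \ref{lem:max} then shows $0 = u_0\le u_1\le\cdots\le u_+$ pointwise, and the sequence converges in $H^{1,q}_{1-n/q}(M;\mathbb R)$ to a solution $u$, which embeds into $X^{1,q}_{1-n/q}(M;\mathbb R)$ as required.

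The main obstacle is the supersolution step: since $\Theta|_{\partial M} = 1$ is forced by the asymptotically hyperbolic requirement, constant supersolutions are unavailable, and a naive $\Theta_+ = 1+v$ with $v$ solving $-a_n\Delta_g v = \Rhn-\R[g]$ fails the pointwise inequality near $\partial M$ whenever $\R[g]$ is substantially more negative than $\Rhn$. Matching the quadratic growth of the nonlinearity against the linear decay of the candidate barrier, uniformly down to the boundary, is precisely where the reduction $\R[g]\le\Rhn$ from Lemma \ref{lem:smooth-scalar} and the freedom to enlarge $M$ and $\Lambda$ are essential.
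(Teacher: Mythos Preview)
Your overall strategy---reduce via Lemma~\ref{lem:smooth-scalar}, take $u_-=0$, build a supersolution, run a monotone iteration---matches the paper's. The gap is in the supersolution step, which you correctly flag as the main obstacle but do not resolve.

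Writing the equation as $-a_n\Delta_g u = F(u)$ with $F(u)=\Rhn(1+u)^{q_n-1}-\R[g](1+u)$, the supersolution inequality $-a_n\Delta_g u_+ \ge F(u_+)$ is implied by $F(u_+)\le 0$, i.e.\ by
\[
(1+u_+)^{q_n-2}\ge \R[g]/\Rhn = 1 + f/|\Rhn|,
\]
where $f=\Rhn-\R[g]\ge 0$. Since $f\in H^{1,q}_{\alpha}\hookrightarrow C^{0,\alpha}_\alpha$ with $\alpha=1-n/q$, one has $f=O(\rho^\alpha)$, so near $\partial M$ the inequality requires $u_+\gtrsim \rho^\alpha$ from \emph{below}. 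Your candidate $v$ solving $(-a_n\Delta_g+\Lambda)v=Mf$ lies in $H^{1,q}_\alpha$, giving only the upper bound $v=O(\rho^\alpha)$; there is no reason for $v\ge c\rho^\alpha$, since $f$ may vanish (or decay rapidly) on open sets near $\partial M$. The $L^\infty$ control you invoke goes the wrong way, and the prescription ``$M$ large relative to $\|v\|_\infty$'' is circular because $v$ scales linearly with $M$. Taking $\Lambda$ large does not help either: for small $v$ the dominant terms give $-\Lambda v$ on the left versus $\Rhn(q_n-2)v$ on the right, so large $\Lambda$ works against the inequality.

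The paper secures the lower bound by solving a different linear problem, $-\Delta_g v = \alpha(n-1-\alpha)\rho^\alpha$, whose right-hand side is chosen so that $v=\rho^\alpha+\xi$ with $\xi\in X^{1,q}_{2\alpha}$; this forces $v\ge c\rho^\alpha$ and makes the verification of \eqref{eq:supersol} routine. A secondary point: your final claim that the iterates converge in $H^{1,q}_{1-n/q}$ is not automatic from monotonicity and boundedness; the paper instead passes to a weak limit in a reflexive $H^{1,q}_\delta$ space, identifies it with the pointwise limit, and then bootstraps back to $X^{1,q}_\alpha$.
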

\begin{proof}
For simplicity of notation, let $\alpha =1-n/q$; note $0<\alpha<1$.
Due to Lemma \ref{lem:smooth-scalar} we can assume, without loss of generality,
that $\R[g]-\Rhn\in H^{1,q}_{\alpha}(M)$ and that
$\R[g]\le \Rhn$.
Indeed, if this is not the case, we first use the conformal factor $\Theta$
from Lemma \ref{lem:smooth-scalar} to transform to a metric satisfying these conditions.
Then if $u\in X^{1,q}_{\alpha}(M)$ is a solution of equation
\eqref{eq:Yamabe-rewrite} for the new metric, we find $\Theta(1+u)-1\in X^{1,q}_{\alpha}(M)$
is a solution for the original metric.

We write \eqref{eq:Yamabe-eq} as
\begin{equation}\label{eq:Yamabe-basic}
-a_n \Delta u = F(u),
\end{equation}
where
$$
F(u) = \Rhn(1+u)^{q_n-1}-\R[g](1+u).
$$
Since $\R[g]\le \Rhn$ it follows that $u_- = 0$
satisfies $-a_n \Delta u_- = 0 \le \Rhn - \R[g]=F(u_-)$ and provides a lower barrier.

To find an upper barrier, we first let $v \in X^{1,q}_{\alpha}(M)$ be the solution of
\[
-\Delta_g v = \alpha(n-1-\alpha)\rho^\alpha,
\]
which exists since $\alpha\in(0,1)$ lies
in the Fredholm range defined by inequality \eqref{eq:Fred-Lap-X} and
since $\rho^\alpha\in X^{1,q}_{\alpha}(M)$.
The maximum principle in Lemma \ref{lem:max}\eqref{part:weak-max} implies that $v\geq 0$.

A direct computation using $g = \rho^{-2}\bar g$ shows
\[
-\Delta_g \rho^\alpha = -\alpha\rho^{\alpha+1}\Delta_{\bar g} \rho + \alpha(n-1-\alpha)\rho^\alpha |d\rho|^2_{\bar g}
\]
and therefore Lemmas \ref{lem:drho-H} and \ref{lem:Delta-rho-H} imply
\[
-\Delta_g (v-\rho^\alpha)
= \alpha \rho^\alpha\left[
(n-1-\alpha)\left(1-|d\rho|^2_{\bar g}\right)
+
\rho\Delta_{\bar g}\rho
\right]
\in X^{0,q}_{2\alpha}(M)\subset X^{-1,q}_{2\alpha}(M).
\]
Since $0<2\alpha<2\le n-1$, we see that the weight $2\alpha$ satisfies inequality \eqref{eq:Fred-Lap-X} and thus
Proposition \ref{prop:Lap-Fredholm-range}
implies $-\Delta_g$ is an isomorphism $X^{1,q}_{2\alpha}(M)\to X^{-1,q}_{2\alpha}(M)$.
Hence $v = \rho^\alpha + \xi$ for some $\xi\in X^{1,q}_{2\alpha}(M) \subset  C^{0,\alpha}_{2\alpha}(M)$.
Thus, since $v\geq 0$, we conclude that there exists $c>0$
such that $v\ge c \rho^\alpha$.

We claim that $u_+=K v$ forms an upper barrier for
some $K$ sufficiently large, i.e. that $-a_n\Delta u_+\ge F(u_+)$.
Since $-a_n \Delta K v > 0$ it suffices to show that we may choose $K$ so that $F(Kv)\le 0$.
To see this, first note that since $x\mapsto (1+x)^{q_n-2}$ is monotone increasing for $x\geq 0$,
\begin{equation*}
F(kv) \leq (1+Kv)\left(\Rhn(1+Kcv)^{q_n-2} - \R[g]\right).
\end{equation*}
Thus it suffices to show that
\begin{equation}\label{eq:supersol}
(1+Kc\rho^\alpha)^{q_n-2} \ge \frac{\R[g]}{\Rhn}.
\end{equation}
Since
\[
\R[g] - \Rhn \in H^{1,q}_\alpha(M)\subset \mathscr H^{1,q;1}(M)\subset C^{0,\alpha}(\bar M)
\]
we have $\R[g] = \Rhn + O(\rho^\alpha)$.
The existence of $K$ satisfying \eqref{eq:supersol} now follows from a direct computation, using the compactness of $\bar M$.

With barriers $u_+>u_-$ in hand we now
pick $\Lambda >0$ such that $F_\Lambda(z) = F(z)+\Lambda z$ is monotone increasing
in $z$ for $0\le z \le \max u_+$.  Starting with $u_0=u_+\in X^{1,q}_\alpha(M)$ we iteratively construct a sequence
$\{u_k\}$ in $X^{1,q}_{\alpha}(M)$ as follows.
Assuming $u_k\in X^{1,q}_\alpha(M)$ has been found, observe that
\begin{equation}\label{eq:F-Lambda-rewrite}
F_\Lambda(u_k)
= \Rhn\left[(1+u_k)^{q_n-1}-1 \right]
+ (\Rhn-\R[g])(1+u_k)
+ (\Lambda -\Rhn) u_k.
\end{equation}
Since $\Rhn-\R[g]\in X^{1,q}_{\alpha}(M)$ and since $u_k\in X^{1,q}_{\alpha}(M)$, Proposition \ref{prop:multiplication} and Proposition \ref{prop:nonlin} imply
$F_\Lambda(u_k)\in X^{1,q}_\alpha(M)\subset X^{-1,q}_\alpha(M)$.
Note that $\alpha = 1-n/q$ lies the Fredholm range
of
\begin{equation*}
-a_n\Delta_g + \Lambda\colon X^{1,q}_\alpha(M) \to X^{-1,q}_\alpha(M);
\end{equation*}
see \eqref{eq:ind-radius-lap} and \eqref{eq:Fred-delta-X}.
Hence Proposition \ref{prop:Lap-Fredholm-range} implies there
exists $u_{k+1}\in X^{1,q}_\alpha(M)$ solving
\begin{equation*}
-a_n \Delta_g u_{k+1} +\Lambda u_{k+1} = F_\Lambda(u_k).
\end{equation*}

Note that by construction we have
\begin{equation*}
(-a_n\Delta_g + \Lambda)(u_1 - u_0) = F(u_+) + a_n\Delta_g u_+ \leq 0.
\end{equation*}
Thus the maximum principle Lemma \ref{lem:max}(\ref{part:weak-max}) implies $u_1\le u_0=u_+$.
Furthermore, the monotonicity of $F_\Lambda$ implies
\begin{equation*}
-a_n\Delta_g u_1 + \Lambda u_1
= F_\Lambda(u_0) \geq F_\Lambda(0)
= \Rhn - \R[g] \geq 0,
\end{equation*}
and thus $u_1\geq 0 = u_-$.
Proceeding inductively, we find that $u_- \leq u_{k+1} \leq u_k \leq u_+$ for all $k$.

From the monotonicity of $F_\Lambda$ and the fact that $u_-\le u_k\le u_+$ for each $k$ it follows
that $\{F_\Lambda(u_k)\}$
is bounded in $X^{0,q}_\alpha(M)$ and thus in $X^{-1,q}_\alpha(M)$.
Proposition \ref{prop:Lap-Fredholm-range} then implies
$\{u_k\}$ is bounded in $X^{1,q}_\alpha(M)$. We conclude from
Proposition \ref{lem:basic-inclusions} that the sequence is
bounded in $H^{1,q}_{\delta}(M)$ for any $\delta < \alpha - (n-1)/q$.
Since $H^{1,q}_{\delta}(M)$ is reflexive,
a subsequence $\{u_{k_j}\}$
converges weakly in $H^{1,q}_{\delta}(M)$
to a limit $u$ which, by monotonicity, is the pointwise limit of the full sequence
and hence also lies in $X^{0,q}_{\alpha}(M)$. Now
\[
-a_n \Delta_g u_{k_j} + \Lambda u_{k_j} = F_\Lambda(u_{ (k_j)-1})
\]
Since $F_\Lambda(u_{(k_j)-1})\to F_\Lambda(u)$ pointwise,
the Dominated Convergence Theorem and the weak convergence of $\{u_{k_j}\}$
imply that $u$ is a distributional solution of $-a_n\Delta_gu + \Lambda u=F_\Lambda(u)$ and therefore
also solves
\[
-a_n \Delta_g u = F(u).
\]
Since $u\in X^{0,q}_\alpha(M)$, and since we can rewrite $F(u)$  in
a form analogous to \eqref{eq:F-Lambda-rewrite},
it follows that $F(u)\in X^{0,q}_\alpha(M)$. Proposition
\ref{prop:Lap-Fredholm-range} and the inclusion $X^{1,q}_{\alpha}(M)\subset H^{1,q}_{\delta}(M)$ then imply  $u\in X^{1,q}_\alpha(M)$ as well.
\end{proof}

\subsubsection{Bootstrap}
The following theorem implements the
boostraps needed to assert full regularity from the candidate
solution found in Proposition \ref{prop:Yamabe-low-solve}.

\begin{theorem}\label{thm:yamabe-H}
Suppose $1<p<\infty$, $m\in\Nats$, $s\in \Reals$ with
$s\ge m > n/p$ and $m\le n$.  Given an asymptotically hyperbolic metric $g$ of
class $\mathscr H^{s,p;m}$ there exists a unique conformal factor
$\Theta\in \mathscr H^{s,p;m}(M;\mathbb R)$ such that $\Theta|_{\partial M}=1$ and such that
\[
\R[\Theta^{q_n-2}g] = \Rhn.
\]
\end{theorem}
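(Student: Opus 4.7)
Following the three-step outline preceding Proposition \ref{prop:solve-at-infinity}, I plan to begin with the conditioning step: Proposition \ref{prop:solve-at-infinity} produces a positive $\Theta_0 \in \mathscr H^{s,p;m}(M;\mathbb R)$ with $\Theta_0|_{\partial M} = 1$ so that $g_1 := \Theta_0^{-2} g$ is asymptotically hyperbolic of class $\mathscr H^{s,p;m}$ (by Lemma \ref{lem:conf-change-curly}) and satisfies the rapid-decay condition $\R[g_1] - \Rhn \in H^{s-2,p}_{m-n/p}(M;\mathbb R)$. The hypothesis $m \le n$ enters here, as the iterative boundary correction exhausts the Fredholm range of $-\Delta+n$. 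Fixing $q > n$ with $\tfrac{1}{p} - \tfrac{m}{n} \le \tfrac{1}{q} - \tfrac{1}{n}$, Sobolev embedding on $\bar M$ places $\rho^2 g_1 \in H^{1,q}(\bar M; T^{0,2}M)$, so $g_1$ is also asymptotically hyperbolic of class $\mathscr H^{1,q;1}$, and Proposition \ref{prop:Yamabe-low-solve} supplies $u_1 \in X^{1,q}_{1-n/q}(M;\mathbb R)$ solving \eqref{eq:Yamabe-eq} for $g_1$, with $1+u_1 > 0$ guaranteed by the barrier construction.

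The crux of the argument is the bootstrap of $u_1$ up to $H^{s,p}_{m-n/p}(M;\mathbb R)$. Rewriting the equation as
\[
(-\Delta_{g_1} + n)u_1 = \frac{\Rhn}{a_n}\bigl[(1+u_1)^{q_n-1} - 1 - (q_n-1)u_1\bigr] + \frac{\Rhn - \R[g_1]}{a_n}(1+u_1),
\]
the bracket vanishes to order $u_1^2$, so the multiplication estimates of Proposition \ref{prop:multiplication} and the nonlinear composition estimate of Proposition \ref{prop:nonlin} place the right-hand side in strictly better weighted spaces than $u_1$ itself. Using the isomorphism property of $-\Delta_{g_1} + n$ from Proposition \ref{prop:Lap-Fredholm-range} on weights in the Fredholm range \eqref{eq:Fred-Lap-n-H}, I would execute the bootstrap in three stages: first lift interior regularity from $X^{1,q}_{1-n/q}(M)$ to $X^{s,p}_{1-n/q}(M)$ by standard elliptic regularity within the compatible indices $\mathcal S^{s,p}_2$; next iterate the inversion to improve the weight stepwise from $1-n/q$ up to $m - n/p$, each step gaining a fixed amount by combining the doubled decay of the quadratic bracket with the $H^{s-2,p}_{m-n/p}$ source; finally pass to the Bessel potential category by one more Fredholm inversion in $H$-spaces, using that at the target weight the quadratic term and the source term both live in $H^{s-2,p}_{m-n/p}(M)$. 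The restriction $m \le n$ reappears throughout as the condition that the target weight $m-n/p$ remains in the Fredholm range of $-\Delta_{g_1} + n$.

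Once $u_1 \in H^{s,p}_{m-n/p}(M;\mathbb R)$, Lemma \ref{lem:basic-roman-curly} and Theorem \ref{thm:curly-split} yield $\tilde\Theta_1 := 1 + u_1 \in \mathscr H^{s,p;m}(M;\mathbb R)$ with $\tilde\Theta_1|_{\partial M} = 1$; Proposition \ref{prop:curly-h-banach-algebra} together with Proposition \ref{prop:nonlin-curly} then ensures that an appropriate composite of $\Theta_0$ and $\tilde\Theta_1$ produces the desired $\Theta \in \mathscr H^{s,p;m}(M;\mathbb R)$ with $\Theta|_{\partial M} = 1$ and $\R[\Theta^{q_n-2} g] = \Rhn$. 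For uniqueness, if $\Theta$ and $\Theta'$ are two such solutions, the ratio $v := \Theta/\Theta'$ approaches $1$ at $\partial M$ and satisfies
\[
-a_n \Delta_{\Theta'^{q_n-2}g}\, v = \Rhn\bigl(v^{q_n-1} - v\bigr);
\]
since $\Rhn < 0$ and $q_n - 1 > 1$, applying the weak maximum principle of Lemma \ref{lem:max} at interior extrema of $v$ forces $v \equiv 1$.

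The principal obstacle I foresee is the bootstrap: carefully orchestrating each inversion of $-\Delta_{g_1} + n$ so that the two improvement mechanisms (the doubled-weight contribution of the quadratic nonlinearity and the rapid-decay contribution from $\R[g_1] - \Rhn$) cooperate, verifying at every stage that the intermediate target weight lies in the Fredholm range, and managing the transition between Gicquaud-Sakovich and Bessel potential spaces without losing decay, is where the hypothesis $m \le n$ is genuinely exploited and where the bulk of the technical work must be done.
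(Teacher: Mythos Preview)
Your proposal is correct and follows essentially the same route as the paper: conditioning via Proposition \ref{prop:solve-at-infinity}, the low-regularity solution from Proposition \ref{prop:Yamabe-low-solve}, and a multi-stage bootstrap culminating in $u\in H^{s,p}_{m-n/p}(M;\mathbb R)$. Two small points of divergence worth flagging: first, the final passage from $X^{s,p}_{m-n/p}$ to $H^{s,p}_{m-n/p}$ is not a single inversion but itself an iterative bootstrap, since the embedding $X^{s,p}_{m-n/p}\hookrightarrow H^{s,p}_{\delta}$ from Lemma \ref{lem:basic-inclusions} lands you at a weight $\delta<m-n/p-(n-1)/p$ that must then be walked back up; second, your uniqueness argument invokes Lemma \ref{lem:max} ``at interior extrema,'' but that lemma is a weak comparison principle rather than a pointwise Hopf-type statement, so the cleanest execution is the paper's: rewrite the equation for $u=v-1$ as $(-\Delta+n)u=F(u)$ with $F\le 0$, apply Lemma \ref{lem:max}\eqref{part:weak-max} to conclude $u\le 0$, and then use symmetry in $\Theta,\Theta'$.
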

\begin{proof}
In light of Lemma \ref{lem:conf-change-curly} and Propositions \ref{prop:curly-H-tensor}
and \ref{prop:nonlin-curly},
using the the construction of Proposition \ref{prop:solve-at-infinity} we
may assume without loss of generality that $\R[g]-\Rhn\in H^{s-2,p}_{m-n/p}(M)$.
As discussed prior to Lemma \ref{lem:smooth-scalar} we can choose
$q \ge p$ such that
\[
\frac{1}{p} -\frac{m}{n} \le \frac{1}{q} - \frac{1}{n} < 0.
\]
and such that $g$ is asymptotically hyperbolic of class $\mathscr H^{1,q;1}$.
We set $\alpha = 1-n/q$ and observe that $0<\alpha<1$
and that $m-n/p\ge \alpha$.

Proposition \ref{prop:Yamabe-low-solve} implies there exists $u\in X^{1,q}_{\alpha}(M)$
that solves
\begin{equation}\label{eq:Yamabe-basic-2}
-a_n \Delta_g u +\R[g](1+u) = \Rhn(1+u)^{q_n-1}.
\end{equation}
We claim that in fact $u\in H^{s,p}_{m-n/p}(M)$.
If this were true, then $u\in \mathscr H^{s,p;m}(M)$ by Lemma \ref{lem:basic-roman-curly} and $u=0$ at $\rho=0$ by Theorem \ref{thm:curly-split}.
Consequently, $\Theta=1+u\in \mathscr H^{s,p;m}(M)$ is the conformal factor we seek.

The proof of the claim follows from four bootstraps.
In all cases we use the
fact that equation \eqref{eq:Yamabe-basic-2} can be rewritten
\begin{equation}\label{eq:Yamabe-remap-2}
-\Delta_g u + n u = \frac{\Rhn}{a_n} \left[(1+u)^{q_n-1}-1-(q_n-1)u\right]
+\frac{\R[g]-\Rhn}{a_n} (1+u).
\end{equation}
Let $h$ be the smooth function on $(-1,\infty)$, vanishing at $0$, such that
\begin{equation*}
(1-x)^{q_n-1} - 1 - (q_n-1)x = h(x)x.
\end{equation*}
Then \eqref{eq:Yamabe-remap-2} can be expressed
\begin{equation}
\label{eq:Yamabe-h-version}
-\Delta_g u + nu = \frac{\Rhn}{a_n} h(u) u + \frac{\R[g]-\Rhn}{a_n} (1+u).
\end{equation}

It is helpful to have sufficient conditions for
\begin{equation}
\label{eq:bootstrap-isomorphism}
-\Delta_g +n :X^{\tau,r}_\delta(M)\to X^{\tau-2,r}_\delta(M)
\end{equation}
to be an isomorphism.
Recall the discussion of compatible Sobolev indicates $\mathcal S^{s,p}_2$ that follows Lemma \ref{lem:Lap-mapping}: if $(\tau,r)$ satisfy \eqref{eq:reduced-lap-conds}, namely
\begin{equation*}
1\leq \tau\leq s
\qquad\text{ and }\qquad
\frac{1}{p}-\frac{s}{n} \le \frac{1}{r}-\frac{\tau}{n} < 0,
\end{equation*}
then $(\tau,r)\in \mathcal S^{s,p}_2$ and thus the map \eqref{eq:bootstrap-isomorphism} is continuous.
If we furthermore have
\begin{equation}\label{eq:lap-shifted-n-delta}
-1 < \delta  < n
\end{equation}
then Proposition \ref{prop:Lap-Fredholm-range} implies
\eqref{eq:bootstrap-isomorphism} is an
isomorphism.
Note that $\alpha$ satisfies \eqref{eq:lap-shifted-n-delta}.

\textbf{First Bootstrap.}
Initially we know that $u\in X^{1,q}_{\alpha}(M)$
with $q\ge p$ and $1/p-s/n\le 1/q-1/n<0$.
The goal at this step is to increase the number of derivatives to the maximum extent possible
while leaving $q$ fixed.
 Suppose we have determined that $u\in X^{\tau,q}_{\alpha}(M)$ for some $1\le \tau \le s$,
and that $1/p-s/n < 1/q-\tau/n<0$.  Define $\hat\tau=\tau+2$ unless $1/q-(\tau+2)/n<1/p-s/n$,
in which case we pick $\hat \tau\in (\tau, \tau+2]$ such that $1/q-\hat \tau/n = 1/p-s/n$.  Then $1/p-s/n\le 1/q-\hat\tau/n<0$.  Moreover, since $q>p$, we find $1\le\hat\tau\le s$ and we conclude
$(\hat\tau,q)\in \mathcal S^{s,p}_2$.
Thus, since $\alpha$ satisfies \eqref{eq:lap-shifted-n-delta},
$$-\Delta + n : X^{\hat\tau,q}_{\alpha}(M)
\to X^{\hat\tau-2,q}_{\alpha}(M)$$ is an isomorphism.

We now analyze the right side of \eqref{eq:Yamabe-h-version}.
Since $u\in X^{\tau,q}_\alpha(M)$, Proposition \ref{prop:nonlin} implies $h(u)\in X^{\tau,q}_\alpha(M)$
and, since $1/q-\tau/n<0$,
Proposition \ref{prop:multiplication} implies $h(u)u\in X^{\tau,q}_{2\alpha}(M)$.
Because $\R[g]-\Rhn\in H^{s-2,p}_{m-n/p}(M)\subset X^{s-2,p}_{m-n/p}(M)$
we conclude that the right-hand side of equation \eqref{eq:Yamabe-h-version}
has the schematic form
\[
X^{\tau,q}_{2\alpha}(M) + X^{s-2,p}_{m-n/p}(M) +X^{s-2,p}_{m-n/p}(M) \cdot X^{\tau,q}_{\alpha}(M).
\]
Each of these terms lies in $X^{\hat\tau-2,q}_{\alpha}(M)$: the first one is obvious;
the second follows from Sobolev embedding along with the observation
$\alpha\le m-n/p$; and the third via Proposition \ref{prop:multiplication}
using the conditions $s>n/p$, $s\ge 1$, and the definition of $\hat\tau$.
From Proposition \ref{prop:Lap-Fredholm-range}, and the fact that $X^{\hat\tau,q}_\alpha(M)$ embeds in
$X^{\tau,q}_\alpha(M)$, we conclude
that $u\in X^{\hat\tau,q}_{\alpha}(M)$.
Repeating this argument we can improve the number of derivatives to conclude that $u\in X^{\tau,q}_{\alpha}(M)$ for some $\tau$
with $1/q-\tau/n=1/p-s/n$.

\textbf{Second Bootstrap.}
Suppose we have determined that $u\in X^{\tau,r}_{\alpha}(M)$ for some $r> p$ and
$1\le \tau \le s$, and that $1/p-s/n = 1/r-\tau/n$. Let $\hat\tau = \min(\tau+2,s)$
and define $\hat r$ by
\[
\frac{1}{\hat r} -\frac{\hat \tau}{n} = \frac{1}{p}-\frac{s}{n}.
\]
Arguing as in the first bootstrap, the right-hand side of equation \eqref{eq:Yamabe-h-version}
has the schematic form
\[
X^{\tau,r}_{2\alpha}(M) + X^{s-2,p}_{m-n/p}(M) +X^{s-2,p}_{m-n/p}(M) \cdot X^{\tau,r}_{\alpha}(M).
\]
Each of these terms lies in $X^{\hat\tau-2,\hat r}_{\alpha}(M)$.  This follows for the
first two terms by Sobolev embedding whereas justifying this claim for third term
requires a computation
with Proposition \ref{prop:multiplication} using the facts that $s>n/p$, $s\ge 1$,
and $\hat \tau>n/\hat r$ along with the definition of $\hat\tau$ and $\hat r$.
Proposition \ref{prop:Lap-Fredholm-range} and the fact that $X^{\hat\tau,\hat r}_{\alpha}(M)$
embeds in $X^{\tau,r}_{\alpha}(M)$ implies $u\in X^{\hat\tau,\hat r}_{\alpha}(M)$.
This bootstrap terminates in finitely many steps when we conclude $u\in X^{s,p}_\alpha(M)$.

\textbf{Third Bootstrap.}
We have found $u\in X^{s,p}_\alpha(M)$ for some $\alpha>0$.  Arguing as in the first
two bootstraps we find that the right-hand side of equation \eqref{eq:Yamabe-remap-2}
has the form
\[
X^{s,p}_{2\alpha}(M) + X^{s-2,p}_{m-n/p}(M) +X^{s-2,p}_{m-n/p}(M) \cdot X^{s,p}_{\alpha}(M)
\]
which lies in $X^{s-2,p}_{\min(2\alpha,m-n/p)}(M)$.  Since $\delta=\min(2\alpha,m-n/p)$
satisfies inequality \eqref{eq:lap-shifted-n-delta}  we find that $u\in X^{s,p}_{\delta}(M)$.
Repeating this procedure finitely many times we conclude $u\in X^{s,p}_{m-n/p}(M)$.

\textbf{Final Bootstrap.}  At this stage we know that $u\in X^{s,p}_{m-n/p}(M)$
and we wish to show $u\in H^{s,p}_{m-n/p}(M)$. Choose an initial $\delta$ such
that $\delta < m-n/p -(n+1)/p$, in which case
Lemma \ref{lem:basic-inclusions} implies $u\in H^{s,p}_\delta(M)$.
Since $0<m-n/p<n$ we can nevertheless take $\delta$ sufficiently large so
that
\[
-1 < \delta + \frac{n-1}{p} < n
\]
and therefore $\delta$ lies in the Fredholm range from inequality \eqref{eq:Fred-Lap-n-H}
for $-\Delta_g+n$ mapping $H^{s,p}_\delta(M)\to H^{s-2,p}_\delta(M)$.
As in the first bootstrap
\[
(1+u)^{q_n-1}-1-(q_n-1)u = h(u)u
\]
where Proposition \ref{prop:nonlin} now implies $h(u)\in X^{s,p}_{m-n/p}(M)$
and we conclude from Proposition \ref{prop:multiplication} that
$h(u)u\in X^{s,p}_{m-n/p}(M) \cdot H^{s,p}_{\delta}(M)\subset
H^{s,p}_{\delta+m-n/p}(M)$. Hence
the right-hand side of equation \eqref{eq:Yamabe-remap-2} has the schematic form
\[
H^{s,p}_{\delta+m-n/p}(M) + H^{s-2,p}_{m-n/p}(M) \subset H^{s-2,p}_{\delta'}(M)
\]
where $\delta'=\min(m-n/p+\delta,m-n/p)$.
We can then conclude $u\in H^{s,p}_{\delta'}(M)$ so long as $\delta'$
lies in the Fredholm range from inequality \eqref{eq:Fred-Lap-n-H}.  But
\[
-1<\delta+\frac{n-1}{p}\le  \delta'+\frac{n-1}{p}
\]
and
\[
\delta'+\frac{n-1}{p}\le m-\frac{n}{p}
+ \frac{n-1}{p} = m-\frac{1}{p} < n
\]
as required.  Since $m-n/p>0$, repeating this argument finitely many times the
bootstrap terminates when we conclude $u\in H^{s,p}_{m-n/p}(M)$.

\textbf{Uniqueness.} To investigate uniqueness, we are welcome to start
the construction at one of the constant scalar curvature metrics in
the conformal class, in which case equation \eqref{eq:Yamabe-remap-2}
becomes
\begin{equation}\label{eq:Y-unique}
-\Delta_g u + n u = F(u)
\end{equation}
with $F(u)=\frac{\Rhn}{a_n}\left[(1+u)^{q_n-1}-1-(q_n-1)u\right].$
Since $q_n>2$, $F''(x)<0$ for $x>-1$ and we find $F$ has a maximum on $(-1,\infty)$
at $x=0$ where $F(0)=0$. A solution of equation \eqref{eq:Y-unique} in
$H^{s,p}_{m-n/p}(M)$ then satisfies $-\Delta_g u+n u \le 0$ and we can lower
reguality so that the
maximum principle Lemma \ref{lem:max} implies $u\le 0$. It follows that any conformal
factor $\Theta$ taking some constant scalar curvature metric to another satisfies $\Theta\le1$.
But this is only possible if there is a single such metric.
\end{proof}

\subsection{Solution in the \texorpdfstring{$\mathscr X^{s,p;m}$}{Xsp;m} category}

The strategy is parallel to that for $\mathscr H^{s,p;m}$
but is somewhat less technical at every stage.  We state
the results obtained and briefly sketch the changes needed.

\begin{proposition}\label{prop:solve-at-infinity-X}
Suppose $1<p<\infty$, $m\in\Nats$,
$s\in\Reals$, with $s>n/p$, $s\ge m$ and $m\le n$.
Given an asymptotically hyperbolic metric $g$ on $M$
of class $\mathscr X^{s,p;m}$
there exists a positive
conformal factor $\Theta \in \mathscr X^{s,p;m}(M;\mathbb R)$
such that $\Theta|_{\partial M}=1$ and such that
\[
\R[\Theta^{-2}g] -\Rhn \in X^{s-2,p}_{m}(M;\mathbb R).
\]
\end{proposition}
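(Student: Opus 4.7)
The plan is to imitate the inductive strategy of Proposition \ref{prop:solve-at-infinity}, noting that the $\mathscr X^{s,p;m}$ setting is in fact technically cleaner because the asymptotic weights are integers (no $-n/p$ shift) and we may increment decay by whole integer units at each step. The base case $m=1$ is trivial: Proposition \ref{prop:scalar-curvature-H}(2) gives $\R[g]-\Rhn\in X^{s-2,p}_1(M;\mathbb R) = X^{s-2,p}_m(M;\mathbb R)$, so we may take $\Theta\equiv 1$.

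For $m\geq 2$, the key iterative claim is: if $g$ is asymptotically hyperbolic of class $\mathscr X^{s,p;m}$ and $\R[g]-\Rhn\in X^{s-2,p}_k(M;\mathbb R)$ for some integer $1\leq k\leq m-1$, then there exists a positive conformal factor $\Theta_k\in \mathscr X^{\infty;m}(M;\mathbb R)$ with $\Theta_k|_{\partial M}=1$ such that $\R[\Theta_k^{-2}g]-\Rhn\in X^{s-2,p}_{k+1}(M;\mathbb R)$. Granting this claim, starting from $k=1$ (guaranteed by Proposition \ref{prop:scalar-curvature-H}(2)) and iterating $m-1$ times produces the desired $\Theta$ as a finite product of factors, each lying in $\mathscr X^{s,p;m}(M;\mathbb R)$, and whose product remains in $\mathscr X^{s,p;m}(M;\mathbb R)$ by the Banach algebra property (Proposition \ref{prop:curly-h-banach-algebra}) together with Proposition \ref{prop:nonlin-curly}. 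Lemma \ref{lem:conf-change-curly} ensures asymptotic hyperbolicity is preserved after each step.

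To prove the claim, I combine $\R[g]-\Rhn\in X^{s-2,p}_k(M;\mathbb R)$ with the decomposition $\R[g]=\Rhn+\rho\tau_\textup{R}+r_\textup{R}$ from Proposition \ref{prop:scalar-curvature-H}(2) and the refined factorization afforded by Theorem \ref{thm:curly-split} applied to $\rho\tau_\textup{R}\in\mathscr X^{\infty;m}(M)\cap X^{s-2,p}_k(M;\mathbb R)$, obtaining $\R[g]=\Rhn+\rho^k\tau+r$ where $\tau\in \mathscr X^{\infty;m-k}(M)\cap X^{\infty}_0(M;\mathbb R)$ and $r\in X^{s-2,p}_m(M;\mathbb R)\subset X^{s-2,p}_{k+1}(M;\mathbb R)$. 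I then take the ansatz $\Theta=1+\rho^k u$ with $u\in \mathscr X^{\infty;m-k}(M)\cap X^{\infty}_0(M;\mathbb R)$ to be chosen, and expand
\begin{equation*}
\R[\Theta^{-2}g]-\Rhn = \Rhn(|d(\Theta\rho)|_{\bar g}^2-1) + 2(n-1)(\Theta\rho)\Delta_{\bar g}(\Theta\rho) + (\Theta\rho)^2\R[\bar g]
\end{equation*}
using Lemmas \ref{lem:drho-H}, \ref{lem:Delta-rho-H}, and \ref{lem:R-bar-H} for the background-metric pieces. Mimicking the computation in Proposition \ref{prop:solve-at-infinity}, but with Proposition \ref{prop:multiplication} applied in the Gicquaud-Sakovich scale (and without the $-n/p$ bookkeeping that complicates the $\mathscr H$ case), I expect to arrive at
\begin{equation*}
\R[\Theta^{-2}g]-\Rhn = \rho^k\bigl(\tau + 2(k+1)(n-1)(k-n)u\bigr) + X^{s-2,p}_{k+1}(M;\mathbb R).
\end{equation*}
Since $1\leq k\leq m-1\leq n-1$, the coefficient $2(k+1)(n-1)(n-k)$ is nonzero, so setting $u = \tau/[2(k+1)(n-1)(n-k)]$ near $\partial M$ and tapering by a smooth cutoff (also ensuring $\Theta>0$ by taking $\rho$ small enough) cancels the leading obstruction and closes the induction.

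The main obstacle is not conceptual—the argument is a direct transcription of the $\mathscr H$ case—but bookkeeping: one must carefully verify that each error product lands in $X^{s-2,p}_{k+1}(M;\mathbb R)$, using that $\bar g^{-1}\in \mathscr X^{s,p;m}(M)$ (by continuity of $\bar g$ and Proposition \ref{prop:curly-H-tensor}) and that factors of the form $\rho^k u\cdot(|d\rho|^2_{\bar g}-1)$, $\rho^k u\cdot \rho\Delta_{\bar g}\rho$, and $\rho^{2k}u^2$ each carry sufficient powers of $\rho$. The cleanliness of the $\mathscr X$ scale (integer-weighted decay, no Sobolev conjugate to juggle) makes this verification a simpler variant of what is already done in the $\mathscr H$ case and justifies the paper's stated intention to sketch only the changes.
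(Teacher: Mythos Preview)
Your proposal is correct and follows essentially the same approach as the paper: both handle $m=1$ trivially via Proposition \ref{prop:scalar-curvature-H}, and for $m\ge 2$ both iterate by one integer weight at a time, using Theorem \ref{thm:curly-split} to extract the $\rho^k\tau$ obstruction and the ansatz $\Theta=1+\rho^k u$ with $u$ chosen to cancel it, noting that $k<n$ ensures the coefficient $2(k+1)(n-1)(n-k)$ is nonzero. The paper's sketch records exactly the membership facts you flag as bookkeeping (e.g.\ $|d\rho|_{\bar g}^2-1\in X^{s,p}_1$, $\rho\Delta_{\bar g}\rho\in X^{s-1,p}_1$, $\rho^2\R[\bar g]\in X^{s-2,p}_1$) and likewise observes that the integer-weight structure lets you advance by a full power of $\rho$ per step.
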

\begin{proof}
When $m=1$ Proposition \ref{prop:scalar-curvature-H} implies
immediately that $\R[g]-\Rhn\in X^{s-2,p}_1(M)$.  Otherwise
we proceed as in Proposition \ref{prop:solve-at-infinity}
through a sequence of conformal transformations with conformal
factors $\Theta\in \mathscr X^{s,p;m}(M)$ so that
if $\R[g]-\Rhn\in X^{s-2,p}_{k}(M)$ for some integer $1\le k \le n-1$,
then $\R[\Theta^2 g]-\Rhn\in X^{s-2,p}_{k+1}(M)$.  The argument
is somewhat simplified because we are able to improve
the decay by a full power of $\rho$ at each step.
When $\R[g]-\Rhn\in X^{s-2,p}_{k}(M)$, Proposition \ref{prop:scalar-curvature-H}
and Theorem \ref{thm:curly-split} imply
\[
\R[g] = \Rhn + \rho^k \tau + r
\]
with $\tau\in \mathscr X^{\infty;m-k}(M)$ and $r\in X^{s-2,p}_m(M)$.
We take the conformal factor of the form
$\Theta = 1+\rho^k u$ with $u\in \mathscr X^{\infty;m-k}(M)$,
in which case
\begin{equation*}
\rho^k u \in X^{\infty}_k(M),\qquad \rho^{k+1} \hat\nabla u \in X^{\infty}_{k+2}(M),\qquad
\hat\nabla^2 u \in X^{\infty}_{k+1}(M).
\end{equation*}
Lemmas \ref{lem:drho-H}-\ref{lem:R-bar-H} imply
\begin{equation*}
|d\rho|_{\bar g}^2 -1 \in  X^{s,p}_{1}(M),\qquad
\rho\Delta_{\bar g}\rho \in X^{s-1,p}_{1}(M),\qquad
\rho^2 R[\bar g] \in X^{s-2,p}_{1}(M).
\end{equation*}
A computation using these facts leads to
\[
\R[\Theta^2 g] -\Rhn = \rho^k \left( \tau + 2(k+1)(n-1)(n-k)u\right) + X^{s-2,p}_{k+1}(M)
\]
and since $k<n$ we can take $u=-\tau/(2(k+1)(n-1)(n-k))$ near $\partial M$.
\end{proof}

Consider an asymptotically hyperbolic metric $g$ of class
$\mathscr X^{s,p;m}$ with $s>n/p$, $s\ge m$ and $m\ge 1$.
Choose $q>1$ so that
\[
\frac{1}{p}-\frac{s}{n} \le \frac{1}{q}-\frac{1}{n} < 0
\]
which is possible since $s\ge 1$ and $s>n/p$.
Lemma \ref{lem:curly-basic} and Proposition \ref{prop:curly-to-M-bar} imply
$\bar g = \rho^2 g \in H^{1,q}(\bar M) = \mathscr H^{1,q;1}(M)$
and hence $g$ is also asymptotically hyperbolic of class $\mathscr H^{1,q;1}$.
Thus, the second step of the $\mathscr H^{s,p;m}$ construction applies
without change and we need only establish the desired regularity of
the candidate solution.

\begin{theorem}\label{thm:Yamabe-X}
  Suppose $1<p<\infty$, $m\in\Nats$, $s\in \Reals$ with
$s>n/p$, $s\ge m$ and $m<n$.  Given an asymptotically hyperbolic metric $g$ of
class $\mathscr X^{s,p;m}$ there exists a unique conformal factor
$\Theta\in \mathscr X^{s,p;m}(M;\mathbb R)$ such that $\Theta|_{\partial M}=1$  and such that
\[
\R[\Theta^{q_n-2}g] = \Rhn.
\]
\end{theorem}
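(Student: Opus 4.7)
The plan is to follow the three-step strategy used in the proof of Theorem \ref{thm:yamabe-H}, with the bootstrap simplified since we never need to transfer from Gicquaud-Sakovich spaces into Sobolev spaces at the end. First, I invoke Proposition \ref{prop:solve-at-infinity-X} and apply Lemma \ref{lem:conf-change-curly} together with Propositions \ref{prop:curly-h-banach-algebra} and \ref{prop:nonlin-curly} to reduce to the case in which $\R[g]-\Rhn \in X^{s-2,p}_m(M;\Reals)$. Next, since $s>n/p$ and $s\ge 1$, Lemma \ref{lem:curly-basic} combined with Proposition \ref{prop:curly-to-M-bar} embeds $\bar g \in H^{1,q}(\bar M;T^{0,2}M)$ for some $q>n$ satisfying $\tfrac1p-\tfrac{s}{n} \leq \tfrac1q-\tfrac1n<0$; in particular $g$ is asymptotically hyperbolic of class $\mathscr H^{1,q;1}$. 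Proposition \ref{prop:Yamabe-low-solve} then supplies $u \in X^{1,q}_\alpha(M;\Reals)$ solving \eqref{eq:Yamabe-eq}, with $\alpha = 1 - n/q \in (0,1)$.

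The main work is to bootstrap $u$ from $X^{1,q}_\alpha(M;\Reals)$ up to $X^{s,p}_m(M;\Reals)$; once this is achieved, Lemma \ref{lem:basic-roman-curly} places $u$ in $\mathscr X^{s,p;m}(M;\Reals)$, and Theorem \ref{thm:curly-split} gives $u|_{\partial M}=0$, so $\Theta = 1+u$ is the desired conformal factor by Lemma \ref{lem:conf-change-curly}. The bootstrap uses the equation in the form
\begin{equation*}
-\Delta_g u + n u \;=\; \frac{\Rhn}{a_n}\,h(u)\,u \;+\; \frac{\R[g]-\Rhn}{a_n}\,(1+u),
\end{equation*}
where $h$ is the smooth function on $(-1,\infty)$ with $h(0)=0$ determined by $(1+x)^{q_n-1}-1-(q_n-1)x = h(x)x$. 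At each step I use Proposition \ref{prop:Lap-Fredholm-range} to invert $-\Delta_g + n$ on $X^{\tau,r}_\delta(M;\Reals)$: the relevant Fredholm range is $-1<\delta<n$, which because $m<n$ accommodates all weights $\delta\in[\alpha,m]$ encountered below. Proposition \ref{prop:multiplication} controls products, Proposition \ref{prop:nonlin}(b) controls $h(u)$, and Sobolev embedding (Proposition \ref{prop:SobolevEmbedding}) governs the admissible pairs $(\tau,r)$.

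Concretely, I would carry out three bootstrap stages paralleling those in Theorem \ref{thm:yamabe-H}: (i) with $(\alpha,q)$ fixed, raise the order of differentiation until either $\tau = s$ or $\tfrac1q - \tfrac{\tau}{n} = \tfrac1p - \tfrac{s}{n}$; (ii) move along the Sobolev line $\tfrac1r - \tfrac{\tau}{n} = \tfrac1p-\tfrac{s}{n}$ until $(\tau,r)=(s,p)$, giving $u\in X^{s,p}_\alpha(M;\Reals)$; (iii) improve the decay weight: at the $k$-th step $h(u)u \in X^{s,p}_{2\alpha_k}(M;\Reals)$ and $(\R[g]-\Rhn)(1+u) \in X^{s-2,p}_m(M;\Reals) + X^{s-2,p}_{m}(M;\Reals)\cdot X^{s,p}_{\alpha_k}(M;\Reals) \subset X^{s-2,p}_{m}(M;\Reals)$ (by Proposition \ref{prop:multiplication} using $s>n/p$), so the right-hand side lies in $X^{s-2,p}_{\min(2\alpha_k,m)}(M;\Reals)$ and we gain a factor $2$ in decay (up to the cap $m$) at each pass, terminating in $u\in X^{s,p}_m(M;\Reals)$ after finitely many steps. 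The hypothesis $m<n$ is used here exactly to keep the weight inside the Fredholm range $(-1,n)$ at every iteration.

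Uniqueness is obtained exactly as in Theorem \ref{thm:yamabe-H}: starting the construction from any constant-scalar-curvature representative reduces the equation to $-\Delta_g u + n u = F(u)$ with $F(u) = \tfrac{\Rhn}{a_n}[(1+u)^{q_n-1}-1-(q_n-1)u]$, and $F$ attains its maximum on $(-1,\infty)$ at $u=0$ with $F(0)=0$. Thus a solution $u\in X^{s,p}_m(M;\Reals)$ satisfies $-\Delta_g u + n u \leq 0$, so Lemma \ref{lem:max}\eqref{part:weak-max} (after lowering regularity so that $u$ lies in $X^{1,q}_\alpha$ for some $\alpha>0$, which holds by Lemma \ref{lem:basic-inclusions}) forces $u\leq 0$. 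Applied to the conformal factor between any two constant-scalar-curvature metrics in the class, this forces a single such metric. The main obstacle I anticipate is the careful bookkeeping in stage (iii) of the bootstrap to verify that each intermediate weight stays strictly below $n$ and that the product estimates in Proposition \ref{prop:multiplication} are satisfied with the required strict inequality when one of the terms is in $L^\infty$-like scale.
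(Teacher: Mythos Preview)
Your proposal is correct and follows essentially the same approach as the paper's proof: reduce via Proposition \ref{prop:solve-at-infinity-X}, drop to the $\mathscr H^{1,q;1}$ setting to obtain the low-regularity solution from Proposition \ref{prop:Yamabe-low-solve}, then run the first two bootstraps of Theorem \ref{thm:yamabe-H} (with $X^{s-2,p}_m$ in place of $X^{s-2,p}_{m-n/p}$) followed by the decay bootstrap, using $m<n$ to keep the weight in the Fredholm range of $-\Delta_g+n$. The paper's uniqueness argument is phrased slightly more tersely---it simply appeals to uniqueness in the $\mathscr H^{1,q;1}$ category---but your explicit maximum-principle version is the same content.
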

\begin{proof}
The proof is a truncated version of the proof of Theorem \ref{thm:yamabe-H}.
Using Proposition \ref{prop:solve-at-infinity-X} we can assume
$\R[g]-\Rhn\in X^{s-2,p}_{m}(M)$, and from Proposition
\ref{prop:Yamabe-low-solve} and the comments above
we know that there exists a solution
$u\in X^{1,q}_{1-n/q}(M)$ of equation \eqref{eq:Yamabe-rewrite}
for some $q$ with $1/p-s/n\le 1/q-1/n<0$.  The first two bootstraps
to improve the interior regularity of $u$ are essentially identical
to those of Theorem \ref{thm:yamabe-H},
replacing some instances of $X^{s-2,p}_{m-n/p}(M)$ with the even
more regular space
$X^{s-2,p}_{m}(M)$ arising from scalar curvature terms, and we
arrive at the conclusion $u\in X^{s,p}_{1-n/q}(M)$.  In the course of
the third bootstrap we find that if $u\in X^{s,p}_{\alpha}(M)$
for some $\alpha>0$ then
\[
-\Delta_g u + n u \in X^{s,p}_{2\alpha}(M) + X^{s-2,p}_{m}(M).
\]
Since $\alpha>0$ and $m<n$, $\delta = \min(2\alpha,m)$ lies in the Fredholm range
from inequality \eqref{eq:Fred-Lap-n-X} and we conclude
$u\in X^{s,p}_{\delta}(M)$.  Since $\alpha>0$, this bootstrap
terminates after finitely many steps
when we conclude $u\in X^{s,p}_{m}(M)\subset \mathscr X^{s,p;m}(M)$ and, by Theorem \ref{thm:curly-split}, vanishes at $\rho=0$.
The fourth bootstrap of Theorem \ref{thm:yamabe-H}
is not needed and uniqueness follows from uniqueness in the $\mathscr H^{1,q;1}$ category.
\end{proof}
Note that if $m=n$, the proof of Theorem \ref{thm:Yamabe-X} almost
succeeds.  The only failure occurs at the very end of the third bootstrap
because $\delta=n$ just fails to lie in the Fredholm range
from inequality \eqref{eq:Fred-Lap-n-X}, and
we can only conclude that $u\in X^{s,p}_\delta(M)$ for any $\delta<n$.
This is not enough to conclude that $u\in \mathscr X^{s,p;n}(M;\mathbb R)$, however,
as is needed to preserve the original metric's regularity class.
As mentioned previously, for a smooth background metric when $n=3$
a generic solution will contain a term of the form $\rho^3\log(\rho)$.
Such a term does not lie in $\mathscr X^{3,p;3}(M;\mathbb R)$, for otherwise
after taking three $\partial_\rho$ derivatives we would conclude
$\log(\rho)\in X^{0,p}_0(M;\mathbb R)$, which is false.

\section{Boundary asymptotics in fortified  spaces}\label{sec:asymptotics}

The aim of this section is to prove the following result from \S\ref{sec:asymptotic-spaces}, which shows that a
function in a fortified  space can be decomposed into
two pieces, one of which is smooth in the interior of $M$ and
captures the asymptotic behavior, whereas the second piece is a non-smooth
remainder that decays as rapidly as the limited boundary regularity allows.

\begin{theorem}[Asymptotic Structure Theorem]\label{thm:curly-split}
Suppose $1<p<\infty$, $s\in\Reals$, $m\in\Nats$, $s\ge m$
and $E$ is a tensor bundle over $M$ of weight $w$.
\begin{enumerate}
  \item\label{part:curly-split-H} If $u\in \mathscr H^{s,p;m}(M;E)$ then
  \[
    u = \tau + r
  \]
  where $\tau\in \mathscr H^{\infty,p;m}(M;E)$ and $r\in H^{s,p}_{w+m-n/p}(M;E)$.
  Moreover, if $u\in H^{s,p}_{w+k+\alpha-n/p}(M;E)$ for some integer
  $0\le k\le m-1$ and some $\alpha\in[0,1)$, then
  $\tau=\rho^k \eta$ for some $\eta \in \mathscr H^{\infty,p;m-k}(M;E)\cap H^{\infty,p}_{w+\alpha-n/p}(M;E)$.

  Finally, if $m\ge 1$ then $u|_{\partial M}=0$ if and only if $u\in H^{s,p}_{w+1-n/p}(M;E)$.
  \item\label{part:curly-split-X} If $u\in \mathscr X^{s,p;m}(M;E)$ then
  \[
    u = \tau + r
  \]
  where $\tau\in \mathscr X^{\infty;m}(M;E)$ and $r\in X^{s,p}_{w+m}$.
  Moreover, if $u\in X^{s,p}_{w+k}$ for some
  integer $0\le k\le m$ and some $\alpha\in[0,1)$, then
  $\tau=\rho^k \eta$ for some $\eta \in \mathscr X^{\infty;m-k}(M;E)\cap X^{\infty}_{w+\alpha}(M;E)$. 

  Finally, if $m\ge 1$ then $u|_{\partial M}=0$ if and only if $u\in X^{s,p}_{w+1}(M;E)$.
\end{enumerate}
\end{theorem}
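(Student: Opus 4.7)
The plan is to use the coordinate-dependent Taylor expansion of Theorem \ref{thm:Taylor} locally near the boundary and then patch the resulting decompositions via a partition of unity. Away from the boundary there is nothing to prove: a smooth cutoff $\psi\in C^\infty(\bar M)$ that is identically $1$ in a neighborhood of $\partial M$ and compactly supported in the collar $A_{r_0}$ splits $u = \psi u + (1-\psi)u$, and $(1-\psi) u$ lies in $C^\infty_{\rm cpct}(M;E)$, which sits inside every $H^{s,p}_\delta(M;E)$ and $\mathscr H^{\infty,p;m}(M;E)$ (similarly for the Gicquaud--Sakovich scale). So the task reduces to analyzing $\psi u$ in the collar neighborhood, where we have boundary coordinates $(x,\rho)\in \partial M\times[0,r_0)$ and a smooth frame for $E$ obtained by parallel transport along the $\rho$--curves from a frame on $\partial M$.

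Cover $\partial M$ by finitely many boundary charts $(\Omega_\alpha,\theta_\alpha)$ and pick a subordinate smooth partition of unity $\{\chi_\alpha\}$. In each chart, write $\psi u$ in components with respect to the adapted frame and apply Theorem~\ref{thm:Taylor} to each component. That theorem will produce, for any integer $0\le k<m$, a Taylor-type expansion
\begin{equation*}
\chi_\alpha\, \psi u = \sum_{j=0}^{k} \rho^{j} \, T_j^\alpha(x) + R_k^\alpha(x,\rho),
\end{equation*}
where the coefficient tensors $T_j^\alpha$ come from traces of $\partial_\rho^{j}$--derivatives on $\partial M$ (well--defined because the ambient inclusion $\mathscr H^{s,p;m}\hookrightarrow H^{m,p}(\bar M;E)$ of Lemma~\ref{lem:curly-basic} gives traces of derivatives up to order $m-1$), and where $R_k^\alpha$ inherits the interior regularity of $u$ together with the optimal decay $\rho^{k+1}$ permitted by the boundary regularity. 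I would take $k=m-1$: each $T_j^\alpha$ is smooth in $x$ to the same extent that the boundary regularity permits, and after extending $T_j^\alpha$ arbitrarily (but smoothly in $\rho$) into the interior and multiplying by $\rho^j$ one obtains a piece lying in $\mathscr H^{\infty,p;m}(M;E)$; the remainder piece lies in $H^{s,p}_{w+m-n/p}(M;E)$ because multiplication by $\rho^m$ corresponds exactly to the weight shift $w+m-n/p$ (and the components pick up their weight $w$ automatically from the adapted frame). Summing the $\chi_\alpha$ pieces and adding $(1-\psi)u$ to $\tau$ yields the global decomposition $u=\tau+r$. The Gicquaud--Sakovich case runs identically, replacing the $H^{s,p}_\delta$ norms with their $X^{s,p}_\delta$ counterparts throughout; no essentially new argument is required because Theorem~\ref{thm:Taylor} is stated uniformly for both scales.

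For the refined statement, suppose further that $u\in H^{s,p}_{w+k+\alpha-n/p}(M;E)$ for some $0\le k\le m-1$ and $\alpha\in[0,1)$. The additional decay forces the first $k$ coefficients $T_0^\alpha,\ldots,T_{k-1}^\alpha$ of the Taylor expansion to vanish; indeed, if $T_j^\alpha\not\equiv 0$ for some $j<k$, then after subtracting the strictly more decaying terms the remaining leading part $\rho^j T_j^\alpha$ cannot lie in $H^{s,p}_{w+k+\alpha-n/p}$, because in the M\"obius parametrization $\Phi_i$ centered near a point where $T_j^\alpha\ne 0$ one computes the local norm of $\rho^j T_j^\alpha$ to scale like $\rho_i^{j}$ while membership in the target weighted space requires scaling like $\rho_i^{k+\alpha}$. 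Factoring $\rho^k$ out of $\tau$ yields $\tau=\rho^k\eta$ with $\eta\in\mathscr H^{\infty,p;m-k}(M;E)\cap H^{\infty,p}_{w+\alpha-n/p}(M;E)$, the second containment using that after pulling off $\rho^k$ the expansion starts at the coefficient $T_k^\alpha$ and picks up weight $\alpha-n/p+w$ rather than $k+\alpha-n/p+w$.

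The trace characterization is then a direct corollary. One direction uses density: since $C^\infty_{\rm cpct}(M;E)\subset H^{s,p}_{w+1-n/p}(M;E)$ and the inclusion $H^{s,p}_{w+1-n/p}(M;E)\hookrightarrow H^{1,p}(\bar M;E)$ holds (apply Lemma~\ref{lem:basic-roman-curly} at level $m=1$ after embedding into $\mathscr H^{s,p;1}$, then Lemma~\ref{lem:curly-basic}), the boundary trace on $H^{s,p}_{w+1-n/p}$ is the continuous extension of the zero map, hence vanishes. Conversely, if $u|_{\partial M}=0$, then the leading Taylor coefficient $T_0^\alpha$ vanishes in every chart, so by the refined statement $\tau=\rho\eta$ with $\eta\in\mathscr H^{\infty,p;m-1}(M;E)$; in particular $\tau\in H^{s,p}_{w+1-n/p}$, and combined with $r\in H^{s,p}_{w+m-n/p}\subset H^{s,p}_{w+1-n/p}$ (since $m\ge 1$) gives $u\in H^{s,p}_{w+1-n/p}(M;E)$. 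The analogous Gicquaud--Sakovich statement is proved identically. The main obstacle in carrying this out is ensuring that the extension of the boundary coefficients $T_j^\alpha$ into the interior respects the full $\mathscr H^{\infty,p;m-j}$ regularity rather than merely residing in the weaker $H^{m-j,p}(\bar M)$; this is precisely what the interior smoothing built into the statement of Theorem~\ref{thm:Taylor} is designed to provide, but tracking the weights simultaneously in the interior ($H^{s-j,p}_{w+j-n/p}$) and at the boundary ($H^{m-j,p}(\bar M)$) scales requires the careful bookkeeping that Theorem~\ref{thm:Taylor} bundles together.
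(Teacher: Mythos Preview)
Your overall strategy---localize to a collar via a partition of unity, pass to components in boundary coordinates, and invoke Theorem~\ref{thm:Taylor}---is exactly the paper's approach. However, there are two concrete slips and one conceptual mischaracterization worth flagging.

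First, the claim that $(1-\psi)u\in C^\infty_{\rm cpct}(M;E)$ is false: $u$ has only $H^{s,p}$ interior regularity, so $(1-\psi)u$ is merely a compactly supported element of $H^{s,p}_{\rm loc}(M;E)$. It therefore belongs to every weighted space $H^{s,p}_\delta(M;E)$ and should be absorbed into the remainder $r$, not into $\tau$; otherwise $\tau$ fails to lie in $\mathscr H^{\infty,p;m}(M;E)$.

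Second, you describe the output of Theorem~\ref{thm:Taylor} as a classical Taylor expansion with coefficients $T_j^\alpha(x)$ depending only on the boundary variable, which are then ``extended arbitrarily (but smoothly in $\rho$) into the interior.'' This is not what Theorem~\ref{thm:Taylor} provides, and indeed a naive extension of a boundary trace (which generically has only $H^{m-j-1/p,p}(\partial M)$ regularity) cannot land in $\mathscr H^{\infty,p;m-j}$. The coefficients $u_j$ in Theorem~\ref{thm:Taylor} are already functions on $\Hyp$ lying in $\mathscr H^{\infty,p;m-j}(\Hyp\pipesep y)$, constructed by $\Hyp$-mollifying $\partial_y^j u$ and then applying the operator $S_{m-j}$ of Lemma~\ref{lem:almost-const-coeff}. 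You acknowledge this difficulty at the end, but your write-up of the expansion should reflect the actual structure rather than a boundary-trace-plus-extension picture.

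Third, for the refined statement your scaling argument (``$\rho^j T_j^\alpha$ scales like $\rho_i^j$'') is heuristic at best, since the coefficients are not constants and need not be continuous. The paper instead uses Proposition~\ref{prop:fast-decay-vs-vanishing-derivatives}, which gives the precise equivalence between membership in $H^{s,p}_{k+\sigma-n/p}(\Hyp\pipesep y)$ and vanishing of the boundary traces $\partial_y^j u|_{y=0}$ for $0\le j\le k-1$; together with property (c) of Theorem~\ref{thm:Taylor} (namely $u_j=0$ whenever $\partial_y^j u|_{y=0}=0$) this immediately kills the low-order coefficients, and property (d)(iii) delivers the weight $\alpha$ on $\eta$. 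Likewise, the trace characterization $u|_{\partial M}=0\iff u\in H^{s,p}_{w+1-n/p}(M;E)$ is a direct consequence of Proposition~\ref{prop:fast-decay-vs-vanishing-derivatives} rather than of the decomposition itself.
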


The proof of Theorem \ref{thm:curly-split} relies on two technical tools.
\begin{enumerate}[label=(\arabic*)]
  \item For functions in fortified  spaces, a vanishing
  boundary trace manifests itself as an additional rate of decay at the boundary. Moreover,
  higher-order $\partial_\rho$ derivatives vanishing at the boundary are quantitatively
  reflected in terms of higher-order decay.
  Section \ref{secsec:boundary-trace} contains precise statements of these facts, which generalize corresponding results in \cite{WAH} for H\"older continuous functions.

  \item $\mathbb H$-mollification, introduced previously in \cite{WAH} as group mollification, extends
  to the current setting and can be used to improve a function's interior regularity while leaving
  its boundary trace unchanged.  Section \ref{secsec:group-mollification} contains the details.
\end{enumerate}
Section \ref{secsec:asymptotic-structure} uses these tools to establish Theorem \ref{thm:Taylor}, a variation of Taylor's Theorem for expansions at the conformal boundary,
and Theorem \ref{thm:curly-split} follows as a straightforward consequence.

$\mathbb H$-mollification is based on the half-space model of hyperbolic space and
is better adapted to a subtly different class of function spaces than those used elsewhere in
this paper; the key difference is that the boundary defining function $y$ on $\Hyp$
is singular at a point
on the boundary of the ball model.
For functions on $\Hyp$ supported in some fixed domain
avoiding a neighborhood of the singularity the norms in the two classes of function
spaces are equivalent, and this observation allows us to transfer facts from the half-space
model when proving Theorem \ref{thm:curly-split}.  Nevertheless, for the remainder of
this section we work within this new framework and we begin by defining
its associated function spaces and outlining their basic properties.

Let $\mathbb R^n_+$ be the Euclidean half space with coordinates $(x, y)$, where $x \in \mathbb R^{n-1}$ and $y\geq 0$, and metric
\begin{equation*}
g_{\mathbb E} = (dx^1)^2 + \dots +(dx^{n-1})^2 + dy^2.
\end{equation*}
The Euclidean connection is $\nabla_{\mathbb E}$.
As introduced in \S\ref{sec:coords},
$\mathbb H$ denotes the $n$-dimensional space half space model of hyperbolic space, corresponding to the interior of $\mathbb R^n_+$, and equipped with the metric
$$\breve g = y^{-2}g_\mathbb{E} = y^{-2}\left((dx^1)^2 + \dots +(dx^{n-1})^2 + dy^2\right).$$

Let $\Phi_i\colon B^\mathbb{H}_2\to \mathbb H$, given by $\Phi_i\colon (\xi,\eta)\mapsto (x_i + y_i \xi, y_i \eta)$, be a countable collection of M\"obius parametrizations with uniformly locally finite
images such that the sets $\Phi_i(B^\Hyp_{1/2})$ cover $\mathbb H$.
We define the half-space versions of the Sobolev and Gicquaud-Sakovich spaces of
sections of a tensor bundle $E$ over $\Hyp$ of
scalar-valued functions via
\begin{equation}
\label{roman-half-space-norms}
\begin{gathered}
\|u \|_{H^{s,p}_\delta(\mathbb H;E\pipesep y) }
= \left( \sum_i y_i^{-\delta p} \|\Phi_i^*u\|_{H^{s,p}(B^\mathbb{H}_1)}^p\right)^{1/p},
\\
\|u \|_{X^{s,p}_\delta(\mathbb H; E\pipesep y)}
= \sup_i y_i^{-\delta p} \|\Phi_i^*u\|_{H^{s,p}(B^\mathbb{H}_1)};
\end{gathered}
\end{equation}
here $H^{s,p}(B^\mathbb{H}_1)$ is computed as in \S\ref{sec:elementary-spaces}.
Uniformly equivalent norms are obtained if $B^\mathbb{H}_1$ is replaced by $B^\mathbb{H}_r$ for $1/2\leq r \leq 2$ in \eqref{roman-half-space-norms}.
Throughout this section the bundle $E$ is easily inferred from context and
we almost universally only consider scalar valued functions.  Hence 
we usually drop the bundle from 
the notation and write $H^{s,p}_{\delta}(\Hyp\pipesep y)$ instead,
emphasizing the role of the singular boundary defining function $y$.

For fortified  spaces in this setting we only work with scalar-valued functions and define
\begin{equation}
\label{curly-half-space-norms}
\begin{gathered}
\|u\|_{\mathscr H^{s,p;m}(\mathbb H\pipesep y)} 
= \sum_{j=0}^m \|\nabla_\mathbb{E}^ju\|_{H^{s-j, p}_{j-n/p}(\mathbb H \pipesep y)},
\\
\|u\|_{\mathscr X^{s,p;m}(\mathbb H\pipesep y)} = \sum_{j=0}^m \|\nabla_\mathbb{E}^ju\|_{X^{s-j, p}_{j}(\mathbb  H\pipesep y)},
\end{gathered}
\end{equation}
which should be compared with the definitions \eqref{define-scrH-int} and \eqref{define-scrX-int}.

We make use of equivalent norms obtained from coordinate derivatives as follows.
For a multiindex $\alpha = (\alpha_1,\dots, \alpha_n)\in \mathbb Z_{\geq 0}^n$ we define
\begin{equation*}
\partial^\alpha = \partial_{x^1}^{\alpha_1}\cdots \partial_{x^{n-1}}^{\alpha_{n-1}}\partial_y^{\alpha_n},
\quad\text{ and }\quad
(y\partial)^\alpha = (y\partial_{\theta^1})^{\alpha_1}\cdots (y\partial_{\theta^{n-1}})^{\alpha_{n-1}}(y\partial_y)^{\alpha_n}
\end{equation*}
and set $|\alpha| = \alpha_1+\dots+\alpha_n$.
For $1<p<\infty$ we have the norm equivalences for functions on $\mathbb H$:
\begin{equation}
\label{half-space-norm-equivalences}
\begin{gathered}
\|u\|_{\mathscr H^{s,p;m}(\mathbb H\pipesep y)}
\sim
\sum_{ |\beta|\leq m}
\|\partial^\beta u\|_{H^{s-|\beta|,p}_{-n/p}(\mathbb H\pipesep y)},
\\
\|u\|_{\mathscr X^{s,p;m}(\mathbb H\pipesep y)}
\sim
\sum_{|\beta|\leq m}
\|\partial^\beta u\|_{X^{s-|\beta|,p}_0(\mathbb H\pipesep y)}.
\end{gathered}
\end{equation}
Furthermore, since $y$ is uniformly bounded above and below on $B^\mathbb{H}_2$, we have the local norm equivalences
\begin{equation}
\label{local-V0-norm-equivalence}
\| u\|_{H^{k,p}(B^\mathbb{H}_r)}
\sim
\sum_{|\alpha|\leq k} \| (y\partial)^\alpha u\|_{L^p(B^\mathbb{H}_r)}
\end{equation}
for all $k\in \Nats_{\ge 0}$ and $1/2\leq r \leq 2$.

While many results for the weighted spaces and fortified  spaces
proved in Sections \ref{sec:elementary-spaces} and \ref{sec:asymptotic-spaces} transfer
to the spaces here, not all facts do, and some care is needed because of the
non-compactness of $\Reals^{n}_+$.
For example, it is not true that $X^{s,p}_{0}(\Hyp\pipesep y)$
embeds in $H^{s,p}_{-n/p}(\Hyp\pipesep y)$.  The following observations can be proved
using straightforward modifications of earlier results.
\begin{itemize}
  \item The spaces are complete.
  \item For scalar-valued functions, the $H^{0,p}_{-n/p}(\Hyp\pipesep y)$ norm is equivalent to
  the usual Lebesgue norm $L^p(\Reals^n_+)$ and
  $\mathscr H^{1,p;1}(\Hyp\pipesep y)$ is the usual Sobolev space $H^{1,p}(\Reals^n_+)$.
  \item $y^\beta H^{s,p}_\delta(\Hyp\pipesep y) = H^{s,p}_{\delta+\beta}(\Hyp\pipesep y)$ for any $\beta\in\Reals$, and similarly for Gicquaud-Sakovich spaces.
  \item $\partial^\alpha : H^{s,p}_{\delta}(\Hyp\pipesep y)\to H^{s-|\alpha|,p}_{\delta-|\alpha|}(\Hyp\pipesep y)$ if $|\alpha|\le s$, and similarly for the spaces $X^{s,p}_{\delta}(\Hyp\pipesep y)$
  \item $\partial^\alpha : \mathscr H^{s,p;m}(\Hyp\pipesep y)\to \mathscr H^{s-|\alpha|,p;m-|\alpha|}(\Hyp\pipesep y)$ if $|\alpha|\le m$, and similarly for the spaces $\mathscr X^{s,p;m}(\Hyp\pipesep y)$
  \item If $s\ge m+1$ then $y \mathscr H^{s,p;m}(\Hyp\pipesep y) \subset H^{s,p;m+1}(\Hyp\pipesep y)$ and similarly for the spaces $\mathscr X^{s,p;m}(\Hyp\pipesep y)$.
  \item The direct analog of Lemma \ref{lem:alt-H-X-norms}, i.e.
  $\|u\|_{H^{s,p}_\delta(\Hyp\pipesep y)} \sim \|u\|_{H^{0,p}_{\delta}(\Hyp\pipesep y)} +
  \|\nabla_{\mathbb E}^k u\|_{H^{s-k,p}_{\delta}}$ and similarly for the spaces $X^{s,p}_{\delta}(\Hyp\pipesep y)$.
\end{itemize}

\subsection{Boundary traces and asymptotic decay}\label{secsec:boundary-trace}
As a consequence of the elementary embeddings
\[
\mathscr X^{s,p;m}(\Hyp\pipesep y)
\hookrightarrow \mathscr H^{s,p;m}(\Hyp\pipesep y)
\hookrightarrow \mathscr H^{m,p;m}(\Hyp\pipesep y) = H^{m,p}(\Reals^n_+)
\]
it follows that elements of fortified  spaces admit boundary traces when $m\ge 1$.
In this section we establish the connection between the rate of decay
of a function $u$ in a fortified  function space
and the vanishing trace of its derivatives $\partial_y^k u$ at the boundary.  As
a first step, the next two lemmas show that if $u$ has a vanishing
trace, the decay of $u$ at the boundary is one order
better than the decay of $\partial_y u$.  The first lemma
treats Gicquaud-Sakovich spaces, and the second
weighted Sobolev spaces.

\begin{lemma}\label{lem:first-deriv-zero-X}

Suppose for some $1<p<\infty$
that $u$ is a real-valued distribution such
that $u\in \mathscr X^{1,p;1}(\Hyp\pipesep y)$,
$u|_{y=0}=0$, and
$\partial_y u \in X^{0,p}_\delta(\mathbb H\pipesep y)$
for some real $\delta\ge 0$.
Then $u\in X^{0,p}_{\delta+1}(\Hyp\pipesep y)$ and
\begin{equation*}
\|u\|_{X^{0,p}_{\delta+1}(\Hyp\pipesep y)} \lesssim
\|\partial_y u\|_{X^{0,p}_{\delta}(\Hyp\pipesep y)}.
\end{equation*}
\end{lemma}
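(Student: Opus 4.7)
The plan is to use the fundamental theorem of calculus to express $u$ as an integral of $\partial_y u$ along the vertical direction, then combine H\"older's inequality with a dyadic covering of the resulting strip by M\"obius parametrizations of varying scale.

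First, since $u\in \mathscr{H}^{1,p;1}(\Hyp\pipesep y) = H^{1,p}(\Reals^n_+)$, it admits a trace at $y=0$ which is assumed to vanish. So for a.e.\ $x\in\Reals^{n-1}$ I would write
\[
u(x,y) = \int_0^y \partial_y u(x,t)\,dt,
\]
and apply H\"older's inequality in $t$ to obtain the pointwise bound $|u(x,y)|^p \lesssim y^{p-1}\int_0^y |\partial_y u(x,t)|^p\,dt$.

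Next, fix a M\"obius parametrization $\Phi_i$ centered at $(x_i,y_i)$. From \eqref{eq:ball-containment} we have $\Phi_i(B^\Hyp_1)\subset \{|x-x_i|<y_i\sinh 1,\; y_i/e < y < e y_i\}$, so replacing $\int_0^y$ by $\int_0^{ey_i}$, Fubini gives
\[
\int_{\Phi_i(B^\Hyp_1)} |u(x,y)|^p\,dx\,dy \lesssim y_i^{\,p}\int_{S_i} |\partial_y u(x,t)|^p\,dx\,dt,
\]
where $S_i$ is the strip $\{|x-x_i|<y_i\sinh 1,\;0<t<ey_i\}$. Changing variables in the definition of the $X^{0,p}_\delta$-norm, the given hypothesis translates to
\[
\int_{\Phi_j(B^\Hyp_1)} |\partial_y u|^p\,dx\,dt \lesssim y_j^{\,\delta p+n}\,M^p,\qquad M:=\|\partial_y u\|_{X^{0,p}_\delta(\Hyp\pipesep y)}.
\]

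The main technical step will be controlling $\int_{S_i}|\partial_y u|^p$. I would decompose $S_i$ dyadically in the vertical variable, setting $S_{i,k}=\{(x,t)\in S_i:2^{-k-1}ey_i\le t\le 2^{-k}ey_i\}$ for $k\ge 0$. Since M\"obius balls at height $\sim 2^{-k}y_i$ have $x$-diameter $\sim 2^{-k}y_i$, the uniform local finiteness of the preferred M\"obius parametrizations lets me cover $S_{i,k}$ by $\lesssim 2^{(n-1)k}$ of the balls $\Phi_j(B^\Hyp_1)$, each with $y_j\sim 2^{-k}y_i$. Summing the per-ball bound gives
\[
\int_{S_{i,k}}|\partial_y u|^p \lesssim 2^{(n-1)k}(2^{-k}y_i)^{\delta p+n}M^p = 2^{-(\delta p+1)k}\,y_i^{\,\delta p+n}\,M^p,
\]
and the series in $k\ge 0$ converges because $\delta p+1>0$ (using $\delta\ge 0$), producing $\int_{S_i}|\partial_y u|^p\lesssim y_i^{\,\delta p+n}M^p$. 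Chaining the estimates yields $\int_{\Phi_i(B^\Hyp_1)}|u|^p \lesssim y_i^{\,(\delta+1)p+n}M^p$, and converting back via $\|\Phi_i^* u\|_{L^p(B^\Hyp_1)} \sim y_i^{-n/p}\|u\|_{L^p(\Phi_i(B^\Hyp_1))}$ gives $y_i^{-(\delta+1)}\|\Phi_i^* u\|_{L^p(B^\Hyp_1)}\lesssim M$. Taking the supremum over $i$ delivers the claimed estimate.

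The hardest part will be the dyadic covering bookkeeping, specifically verifying that the number of M\"obius parametrizations at scale $2^{-k}y_i$ needed to cover $S_{i,k}$ grows only as $2^{(n-1)k}$; this is where the uniformly locally finite property of the preferred collection is essential. Once the convergence condition $\delta p+1>0$ is noted, the rest is bookkeeping.
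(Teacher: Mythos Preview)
Your proposal is correct and follows the same overall strategy as the paper: use the fundamental theorem of calculus plus H\"older to bound $\int|u|^p$ over a region of scale $y_i$ by $y_i^p$ times $\int|\partial_y u|^p$ over a strip reaching down to $y=0$, then cover that strip by preferred M\"obius balls and invoke the $X^{0,p}_\delta$ hypothesis ball-by-ball.

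The only genuine difference is in how the strip integral is summed. You organize the covering balls dyadically in height, count $\sim 2^{(n-1)k}$ balls at level $k$ each contributing $\sim(2^{-k}y_i)^{\delta p+n}M^p$, and sum the resulting geometric series, which converges because $\delta p+1>0$. The paper instead uses the cruder bound $y_j^{\delta p}\le (Cy_i)^{\delta p}$ uniformly over all balls meeting the strip (valid since $\delta\ge 0$) and then observes that $\sum_{j\in J} y_j^n$ is controlled by the Euclidean volume of the strip, which is $\sim y_i^n$; uniform local finiteness is what makes this volume comparison work. The paper's bookkeeping is shorter and avoids the dyadic layer entirely, while your approach makes the convergence mechanism more transparent and in fact shows the estimate holds for the slightly larger range $\delta>-1/p$. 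Either route is fine.
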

\begin{proof}
Let $\Phi_i$ be a M\"obius parametrization.  Let $Q_i=\Phi_i(Y_5)$
and observe that since $B_1^\Hyp\subset Y_5$, $\Phi_i(B_1)^\Hyp\subset Q_i$.
Since $\Phi_i^*dV_{g_{\mathbb E}}=y_i^n dV_{g_{\mathbb E}}$, and recalling from 
\S\ref{secsec:fspace-defs} 
that we compute norms on $B^\mathbb{H}_1$ using the Euclidean metric $g_\mathbb{E}$,
we find
\begin{equation}\label{eq:1st-deriv-A}
\|\Phi_i^* u\|_{L^p(B_1^\Hyp)}^p
=  y_i^{-n} \|u\|_{L^p(\Phi_i(B_1^\Hyp))}^p
\le y_i^{-n} \|u\|_{L^p(Q_i)}^p.
\end{equation}

Assume for the moment that $u$ is smooth and vanishes on $\partial \Hyp$.
For $(x,y)\in Q_i$ we have
\[
|u(x,y)| = \left|\int_0^{y}\partial_y u(x,\eta)\;d\eta\right|
\le y^{\frac1{p\dual}}
\left[\int_0^{y}|(\partial_y u)(x,\eta)|^p\;d\eta\right]^{\frac1p}.
\]
Integrating $p^{\rm th}$ powers and using the relation $p/p\dual = p-1$
\begin{multline*}
\int_{0}^{5y_i} |u(x,y)|^p\;dy
\le \left(\int_0^{5y_i} y^{\frac p{p\dual}}\;dy\right)
\int_0^{5y_i}|(\partial_y u)(x,\eta)|^p\;d\eta 
\\
= 
\frac{1}{p}(5y_i)^p
\int_0^{5y_i}|(\partial_y u)(x,\eta)|^p\;d\eta. 
\end{multline*}
Now integrating in $x$ we conclude
\begin{equation}\label{eq:1st-deriv-A2}
\int_{Q_i} |u|^p\; dV_{g_\mathbb{E}}
\lesssim y_i^p \int_{Q_i} |\partial_y u|^p\;dV_{g_\mathbb{E}}
\end{equation}
and a 
density argument shows that this same inequality holds for all $u$ in the usual Sobolev space
$W^{1,p}(Q_i)$
that vanish at $y=0$. Moreover, since the volume of $Q_i$ is finite,
a computation from the definition analogous to that of
Lemma \ref{lem:normchar}\eqref{part:Wbar-via-W}
shows that $\mathscr X^{1,p;1}(\Hyp\pipesep y)$ restricts continuously into $W^{1,p}(Q^*_z)$ and hence inequality
\eqref{eq:1st-deriv-A2} applies to these functions as well.

A computation shows that if $B_1^\Hyp(p)\cap Y_5\neq\emptyset$ then
$B_1^\Hyp(p)\subset Y_{100}$.  Let $Q^*_i=\Phi_i(Y_{100})$ and
let $J=\{j:\Phi_j(B^\Hyp_1)\cap Q_i\neq \emptyset\}$.
Observe that if $j\in J$ then $y_j\le 100 y_i$.
Setting $A=\cup_{j\in J} \Phi_j(B^\Hyp_1)\subset Q_i^*$ we compute
\begin{equation}\label{eq:1st-deriv-B}
\begin{aligned}
\int_{Q_i} |\partial_y u|^p\;dV_{g_\mathbb{E}}
&\le \int_{A} |\partial_y u|^p\;dV_{g_\mathbb{E}}
\\
&\le \sum_{j\in J} \int_{\Phi_j(B^\Hyp_1)} |\partial_y u|^p\; dV_{g_\mathbb{E}}\\
&= \sum_{j\in J} y_j^n \|\Phi_j^*\partial_y u\|^p_{L^p(B^\Hyp_1)} \\
&\le \sum_{j\in J} y_j^n y_j^{\delta p}
\|\partial_y u\|^p_{X^{0,p}_\delta(\Hyp\pipesep y)}\\
&\le 100^{\delta p} y_i^{\delta p} \|\partial_y u\|^p_{X^{0,p}_\delta(\Hyp\pipesep y)}
\sum_{j\in J} y_j^n\\
&\lesssim y_i^{\delta p} \|\partial_y u\|^p_{X^{0,p}_\delta(\Hyp\pipesep y)} \int_A\;dV_{g_\mathbb{E}}
\end{aligned}
\end{equation}
where uniform local finiteness was used in the final step.
Moreover, since $A\subset Q_i^*$,
\begin{equation}\label{eq:1st-deriv-C}
\int_A\;dV_{g_{\mathbb E}} \le
\Vol_{g_{\mathbb E}}(Q_i^*)  = y_i^n\Vol_{g_{\mathbb E}}(Y_{100}).
\end{equation}
Combining inequalities \eqref{eq:1st-deriv-A},
\eqref{eq:1st-deriv-A2}, \eqref{eq:1st-deriv-B}, \eqref{eq:1st-deriv-C} we find
\[
y_i^{-\delta-1}\|\Phi_i^* u\|_{L^p(B^\Hyp_1)}
 \lesssim \|\partial_y u\|_{X^{0,p}_{\delta}(\mathbb H\pipesep y)};
\]
taking a supremum over $i$ completes the proof.
\end{proof}

\begin{lemma}\label{lem:first-deriv-zero-W}
Suppose  $u\in\mathscr H^{1,p;1}(\Hyp\pipesep y)$ for some
$1<p<\infty$
such that $u|_{y=0}=0$
and such that
$\partial_\rho u\in H^{0,p}_{\delta-n/p}(\mathbb H\pipesep y)$
for some  $\delta\ge 0$.
Then $u\in H^{0,p}_{\delta+1-n/p}(\Hyp\pipesep y)$ and
\begin{equation*}
\|u\|_{H^{0,p}_{\delta+1-n/p}(\Hyp\pipesep y)} \lesssim
\|\partial_y u\|_{H^{0,p}_{\delta-n/p}(\Hyp\pipesep y)}.
\end{equation*}
\end{lemma}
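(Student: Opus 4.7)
The plan is to reduce the claim to a weighted $L^p$ Hardy inequality on $\mathbb{R}^n_+$.

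First I would establish the norm reformulation
\[
\|v\|^p_{H^{0,p}_\eta(\mathbb H\pipesep y)} \sim \int_\mathbb{H}|v|^p\, y^{-\eta p - n}\,dV_{g_\mathbb{E}}
\]
for any scalar-valued $v$. This is essentially immediate from the definition \eqref{roman-half-space-norms}: changing variables under each $\Phi_i$ produces a factor $y_i^{-n}$, on $\Phi_i(B_1^\mathbb{H})$ one has $y \sim y_i$ so $y_i^{-\eta p - n} \sim y^{-\eta p - n}$, and uniform local finiteness together with the covering property of the $\Phi_i(B_{1/2}^\mathbb H)$ turns the sum into a single integral. Applied to $u$ with $\eta = \delta + 1 - n/p$ and to $\partial_y u$ with $\eta = \delta - n/p$, the lemma reduces to
\[
\int_\mathbb H |u|^p\, y^{-\delta p - p}\,dV_{g_\mathbb E} \lesssim \int_\mathbb H |\partial_y u|^p\, y^{-\delta p}\,dV_{g_\mathbb E}.
\]

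Next I would invoke the identification $\mathscr H^{1,p;1}(\mathbb H\pipesep y) = H^{1,p}(\mathbb R^n_+)$ recorded in the bullet list following \eqref{half-space-norm-equivalences}. The hypothesis $u|_{y=0}=0$ then places $u$ in $H^{1,p}_0(\mathbb R^n_+)$. By the standard ACL characterization of Sobolev functions together with Fubini, there is a representative of $u$ such that for almost every $x \in \mathbb R^{n-1}$ the slice $y \mapsto u(x,y)$ is absolutely continuous on $[0,\infty)$, satisfies $u(x,0)=0$, and has classical derivative agreeing with $\partial_y u$ on that slice.

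I would then apply the one-dimensional weighted Hardy inequality slice by slice: for every $\beta < p-1$ and every absolutely continuous $f$ on $[0,\infty)$ with $f(0)=0$,
\[
\int_0^\infty |f(y)|^p\, y^{\beta - p}\,dy \le \left(\tfrac{p}{p - 1 - \beta}\right)^p \int_0^\infty |f'(y)|^p\, y^{\beta}\,dy.
\]
Choosing $\beta = -\delta p$ is legal since $\delta \ge 0$ and $p>1$ give $\beta \le 0 < p-1$. Integrating the resulting slice-wise estimate in $x$ and using Fubini yields the desired integral inequality, and the norm reformulation above then gives the stated estimate.

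The main obstacle is the first step: verifying that the $\mathscr H^{1,p;1}(\mathbb H\pipesep y)$ hypothesis plus the vanishing of the boundary trace genuinely produces ACL slices vanishing at $y=0$, which is where the full strength of the equality with $H^{1,p}(\mathbb R^n_+)$ and of the classical trace/density theory is invoked. Once that is in hand, the weighted Hardy step is routine, and the proof is considerably shorter than the patch-by-patch argument in Lemma \ref{lem:first-deriv-zero-X}. Alternatively, if one prefers to stay parallel to that earlier proof, one could instead establish a local bound $\int_{Q_i}|u|^p \lesssim y_i^p\int_{Q_i^*}|\partial_y u|^p$ on $Q_i = \Phi_i(Y_5)$ from the pointwise estimate $|u(x,y)| \le y^{1/p\dual}(\int_0^y|\partial_y u|^p\,d\eta)^{1/p}$, multiply by $y_i^{-(\delta+1-n/p)p - n}$, and sum using uniform local finiteness together with the comparability $y_j \sim y_i$ whenever $\Phi_j(B_1^\mathbb H)\cap Q_i^* \ne \emptyset$; this second route gives the same conclusion but with more bookkeeping.
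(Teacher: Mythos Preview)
Your approach is correct and lands on the same core inequality as the paper: both reduce the claim to the weighted Hardy estimate
\[
\int_{\mathbb R^n_+} |u|^p\, y^{-(\delta+1)p}\,dV_{g_{\mathbb E}} \le \left(\frac{p}{(\delta+1)p-1}\right)^p \int_{\mathbb R^n_+} |\partial_y u|^p\, y^{-\delta p}\,dV_{g_{\mathbb E}},
\]
and your constant $p/(p-1-\beta)$ with $\beta=-\delta p$ is exactly the paper's.

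The execution differs in organization. You invoke the one-dimensional Hardy inequality and the ACL characterization as known facts and apply Hardy slice by slice. The paper is more self-contained: it first proves the $n$-dimensional weighted Hardy inequality directly by integration by parts for smooth, compactly supported, nonnegative $u$, then removes those hypotheses in two stages---first passing to $W^{1,p}$ functions vanishing for $y\le\epsilon$ by density, and then introducing explicit cutoffs $\kappa_m(y)=1-\chi(my)$ and $\chi_m(x)=\chi(|x|^2/m^2)$ and showing the boundary error term $m\,\|y^{-\delta}u\|_{L^p(\{1/m<y<2/m\})}$ tends to zero via a H\"older estimate. In effect the paper's cutoff argument is a concrete reproof of the ACL/trace justification you cite; your route is shorter but leans on external results, while the paper's is longer but fully internal. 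Your proposed alternative (the patch-by-patch $Q_i$ argument mirroring the $X$-space lemma) is \emph{not} what the paper does here---the paper works globally on $\mathbb R^n_+$ throughout.
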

\begin{proof}
The hypotheses on $u$ can be reformulated
as $u\in W^{1,p}(\Reals^n_+)$, $u|_{y=0}=0$, and
$y^{-\delta}\partial_y u \in L^p(\mathbb R^n_+)$.
We wish to show that $y^{-(\delta+1)} u\in L^p(\mathbb R^n_+\pipesep y)$
and indeed we will show
\begin{equation}\label{eq:u-decays-fast-1}
\|y^{-(\delta+1)}u\|_{L^p(\Reals^n_+)} \le
\frac{p}{(\delta+1)p-1} \|y^{-\delta}\partial_y u\|_{L^p(\Reals^n_+)}.
\end{equation}
Note that since $p>1$ and $\delta\ge 0$ the coefficient
on the right-hand side of equation \eqref{eq:u-decays-fast-1}
is finite.

First, suppose that $u$ is smooth, compactly supported on the interior of $\Reals^n_+$
and is non-negative.  Then
\begin{align*}
\int_{\Reals^n_+} y^{-(\delta+1)p} u^p\;dV_{\mathbb E}
&= \frac{1}{1-(\delta+1)p}
\int_{\Reals^n_+} \partial_y (y^{1-(\delta+1)p})u^p\; dV_{g_{\mathbb E}}\\
&= \frac{p}{(\delta+1)p-1}
\int_{\Reals^n_+} ( y^{-(\delta+1)} u)^{p-1} y^{-\delta}\partial_y u\; dV_{g_{\mathbb E}}\\
&\le \frac{p}{(\delta+1)p-1}
\| y^{-(\delta+1)} u\|_{L^p(\Reals^n_+)}^{{p}/{p\dual}}
\|y^{-\delta}\partial_y u\|_{L^p(\mathbb R^n_+)}.
\end{align*}
Noting that that $p/p\dual=p-1$, we arrive at
\eqref{eq:u-decays-fast-1}.
In fact, inequality \eqref{eq:u-decays-fast-1}
holds for all functions in $W^{1,p}(\Reals^n_+)$
that vanish for $y\le\epsilon$ for some $\epsilon >0$.
The smoothness hypothesis can be dropped by considering a sequence
of smooth non-negative compactly supported functions that vanish for $y\le\epsilon/2$
and converge to $u$ in $W^{1,p}(\Reals^n_+)$.
This argument requires the observation that the integrals appearing in
\eqref{eq:u-decays-fast-1} are continuous on the subspace of
of functions in $W^{1,p}(\Reals^n_+)$ that vanish for
$y\le\epsilon/2$.
The non-negativity can be relaxed since $u\mapsto |u|$
is continuous on $W^{1,p}(\Reals^n_+)$ and since
$|\partial_y u|= |\partial_y |u| |$.

Now consider an arbitrary function $u\in W^{1,p}(\Reals^n_+)$
such that $u|_{y=0}=0$ and such that
$y^{-\delta}\partial_y u \in L^p(\Reals^n_+)$.
Let $\chi:[0,\infty)\to [0,1]$ be a monotone smooth function that equals 1 on $[0,1]$
and vanishes on $[2,\infty)$.
For $m\in\Nats$
define $\kappa_m(y)=1-\chi(my)$ and $\chi_m(x)=\chi(|x|^2/m^2)$
so $\kappa_m$ vanishes for $y<1/m$ and $\chi_m$ vanishes for $|x|^2>2m^2$.
Inequality
\eqref{eq:u-decays-fast-1}
applied to $\kappa_m\chi_m u$ yields
\begin{multline}\label{u-decays-fast-2}
\|\chi_m \kappa_m y^{-(\delta+1)}u\|_{L^p(\Reals^n_+)} \le\\\qquad
\frac{p}{(\delta+1)p-1}\left[
\|\chi_m\kappa_m y^{-\delta}\partial_y u\|_{L^p(\Reals^n_+)}
+ K m\left(\int_{1/m}^{2/m}  \int_{\Reals^{n-1}} \chi_m |y^{-\delta}u|^p\; dx\;dy\right)^{\frac{1}{p}}\right],
\end{multline}
where $K$ is a constant depending on the choice of $\chi$ but is independent of $m$, and where we write $dV_{g_{\mathbb E}} = dx\,dy$.
The result then follows from an application of the Monotone Convergence Theorem so long as we can show that the second term
in the brackets
on the right-hand side of \eqref{u-decays-fast-2} vanishes
in the limit $m\to\infty$.

For smooth functions that vanish at $y=0$
\begin{equation}\label{eq:u-decays-fast-3}
\int_{\Reals^{n-1}} \chi_m |u(x,y)|^p\;dx
= \int_{\Reals^{n-1}} \chi_m \left| \int_0^{y}(\partial_y u)(x,\eta)\;d\eta\right|^p dx
\end{equation}
for any $y>0$. A density argument shows
this same equality holds for all functions in $W^{1,p}(\Reals^n_+)$
with zero boundary trace
because both sides of the equality are continuous on
$W^{1,p}(\Reals^n_+)$; this last claim uses the compact support of $\chi_m$.

Consider the map $y\mapsto (\partial_y u)(x,y)$, which
is an $L^p$ function of $y$ on $[0,y]$ for almost every $x\in\Reals^{n-1}$.
H\"older's inequality implies
\begin{equation}\label{eq:u-decays-fast-4}
\begin{aligned}
\left| \int_0^{y}(\partial_y u)(x,\eta)\;d\eta\right|^p
&\le \left[\int_0^{y} \eta^{\delta p\dual}\;d\eta\right]^{\frac{p}{p\dual}} \int_0^{y} \eta^{-\delta p}|(\partial_y u)(x,\eta)|^p\;d\eta
\\
& = \left(\frac{1}{\delta p\dual+1}\right)^{\frac{p}{p\dual}}
y^{\delta p+p-1} \int_0^{y} \eta^{-\delta p}|(\partial_y u)(x,\eta)|^p\;d\eta.
\end{aligned}
\end{equation}
Multiplying \eqref{eq:u-decays-fast-3} by $m^py^{-\delta p}$,
integrating in $y$, and applying estimate \eqref{eq:u-decays-fast-4}
we conclude
\begin{align*}
m^p\int_0^{2/m} &\int_{\Reals^{n-1}} \chi_m y^{-\delta p}|u|^p\;dx\;dy
\\
&\lesssim m^p
\int_0^{2/m} y^{p-1}
\int_{\Reals^{n-1}}\chi_m \int_0^{y} \eta^{-\delta p}|(\partial_y u)(x,\eta)|^p\;d\eta\;dx\;dy\\
&\le m^p \left(\int_0^{2/m} y^{p-1}\;dy\right)
\int_{\Reals^{n-1}} \int_0^{2/m} y^{-\delta p}|\partial_y u|^p\;dy\;dx\\
&= \frac{2^p}{p} \int_{\Reals^{n-1}} \int_0^{2/m} y^{-\delta p}|\partial_y u|^p\;dy\;dx.
\end{align*}
This final integral converges to $0$ as $m\to\infty$,
which establishes the desired limit.
\end{proof}

We are now ready to quantitatively connect rates of decay at the boundary to
vanishing boundary traces.

\begin{proposition}\label{prop:fast-decay-vs-vanishing-derivatives}
Suppose $1<p<\infty$, $s\in\Reals$, $m\in\Nats$, and $s\ge m$.
\begin{enumerate}
\item \label{part:fast-decay-implies-zero-derivatives-H}
Suppose $u\in \mathscr H^{s,p;m}(\mathbb H\pipesep y)$. Then $u\in H^{s,p}_{m-n/p}(\mathbb H\pipesep y)$
if and only if then $\partial_y^k u|_{y=0} = 0$
for $0\le k \le m-1$.
\item \label{part:fast-decay-implies-zero-derivatives-X}
Suppose $u\in \mathscr X^{s,p;m}(\mathbb H\pipesep y)$. Then  $u\in X^{s,p}_{m}(\mathbb H\pipesep y)$
if and only if $\partial_y^k u|_{y=0} = 0$
for $0\le k \le m-1$.
\end{enumerate}
\end{proposition}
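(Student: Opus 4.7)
The plan has two directions, and part (b) will follow by replacing Lemma~\ref{lem:first-deriv-zero-W} and $H^{s,p}_{\delta - n/p}(\Hyp\pipesep y)$ throughout by Lemma~\ref{lem:first-deriv-zero-X} and $X^{s,p}_{\delta}(\Hyp\pipesep y)$.

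\textbf{Forward direction (rapid decay $\Rightarrow$ vanishing derivatives).} Given $u \in \mathscr{H}^{s,p;m}(\Hyp\pipesep y)$ with $u \in H^{s,p}_{m-n/p}(\Hyp\pipesep y)$, I would invoke the bullet-point identity $y^m H^{s,p}_{-n/p}(\Hyp\pipesep y) = H^{s,p}_{m-n/p}(\Hyp\pipesep y)$ to write $u = y^m w$ with $w = y^{-m} u \in H^{s,p}_{-n/p}(\Hyp\pipesep y)$. Since $u$ also lies in $\mathscr{H}^{s,p;m} \hookrightarrow H^{m,p}(\Reals^n_+)$, the traces $\partial_y^k u|_{y=0}$ are well-defined for $0 \le k \le m-1$. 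A Leibniz expansion
\[
\partial_y^k u = \sum_{j=0}^k \binom{k}{j}\frac{m!}{(m-j)!}\, y^{m-j}\,\partial_y^{k-j} w
\]
shows every summand contains a factor $y^{m-j}$ with $m-j \ge 1$ (since $j \le k \le m-1 < m$), so its boundary trace vanishes; hence $\partial_y^k u|_{y=0} = 0$.

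\textbf{Reverse direction (vanishing derivatives $\Rightarrow$ rapid decay).} This is the main content. The preliminary observation is that, via the M\"obius pullbacks, the tensorial fortified condition $\nabla^j u \in H^{s-j,p}_{j - n/p}(\Hyp\pipesep y)$ is equivalent to the scalar condition $\partial_y^j u \in H^{s-j,p}_{-n/p}(\Hyp\pipesep y)$ on components: the Jacobian factor $y_i^j$ picked up in $\|\Phi_i^*(\nabla^j u)\|$ exactly absorbs the shift from the weight $j-n/p$ to $-n/p$. In particular the fortified hypothesis supplies $\partial_y^m u \in H^{0,p}_{-n/p}(\Hyp\pipesep y)$.

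I would then iterate Lemma~\ref{lem:first-deriv-zero-W} as follows. For $k = 1, 2, \ldots, m$, apply the lemma to $v_k := \partial_y^{m-k} u$, which lies in $\mathscr{H}^{s-m+k,p;k} \hookrightarrow \mathscr{H}^{1,p;1}$ and has vanishing trace (since $m-k \le m-1$); the preceding step supplies $\partial_y v_k = \partial_y^{m-k+1} u \in H^{0,p}_{(k-1)-n/p}$, so the choice $\delta = k-1$ yields $v_k \in H^{0,p}_{k-n/p}$. After $m$ iterations I obtain $u = v_m \in H^{0,p}_{m-n/p}$. Finally, combining this with $\nabla^m u \in H^{s-m,p}_{m-n/p}$ from the fortified definition (same weight $m-n/p$) via the half-space analog of Lemma~\ref{lem:alt-H-X-norms} (listed among the bullet-point properties of the half-space spaces) produces $u \in H^{s,p}_{m-n/p}$.

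The hardest part will be carefully tracking the scalar-versus-tensor interpretation of derivatives so that the fortified tensorial norms supply, after absorbing the Jacobian factors from the M\"obius pullbacks, precisely the scalar hypotheses of Lemma~\ref{lem:first-deriv-zero-W} (or Lemma~\ref{lem:first-deriv-zero-X} in case (b)) required at each stage of the iteration. Once this weight bookkeeping is in place, the iteration chains through without loss and the Gagliardo-Nirenberg-style combination closes out the proof.
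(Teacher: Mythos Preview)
Your reverse direction is correct and essentially coincides with the paper's argument: iterate Lemma~\ref{lem:first-deriv-zero-W} on $\partial_y^{m-k}u$ to land in $H^{0,p}_{m-n/p}$, then combine with $\nabla_{\mathbb E}^m u\in H^{s-m,p}_{m-n/p}$ via the half-space analogue of Lemma~\ref{lem:alt-H-X-norms}. The scalar-versus-tensor bookkeeping you flag as the hard part is in fact routine, being handled directly by the norm equivalence~\eqref{half-space-norm-equivalences}.

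The forward direction, however, has a genuine gap. Writing $u=y^m w$ with $w\in H^{s,p}_{-n/p}(\Hyp\pipesep y)$ and expanding $\partial_y^k u$ by Leibniz, you assert that each summand $y^{m-j}\partial_y^{k-j}w$ has vanishing trace because of the prefactor $y^{m-j}$. But $\partial_y^{k-j}w$ lies only in $H^{s-(k-j),p}_{-(k-j)-n/p}(\Hyp\pipesep y)$ and may well be unbounded near $y=0$; a bare factor of $y$ does not kill such a thing at the boundary (consider $y\cdot y^{-1}=1$). The individual summands need not even lie in $W^{1,p}(\Reals^n_+)$, so they need not possess traces at all---only their sum $\partial_y^k u$ does. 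At bottom you are implicitly using the $m=1$ case of the very proposition you are proving (namely that $v\in W^{1,p}(\Reals^n_+)$ with $y^{-1}v\in L^p$ forces $v|_{y=0}=0$), and this requires an argument.

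The paper handles the forward direction differently. It first reduces to $m=1$ by observing that $\partial_y$ maps $\mathscr H^{s,p;m}\cap H^{s,p}_{m-n/p}$ into $\mathscr H^{s-1,p;m-1}\cap H^{s-1,p}_{m-1-n/p}$. Then for $m=1$ it applies Lemma~\ref{lem:first-deriv-zero-W} to $u-\chi(y)u_0$ (where $u_0=u|_{y=0}$ and $\chi$ is a cutoff in $y$) to deduce $u-\chi u_0\in H^{0,p}_{1-n/p}$; combined with the hypothesis $u\in H^{0,p}_{1-n/p}$ this forces $\chi u_0\in H^{0,p}_{1-n/p}$, i.e.\ $y^{-1}\chi u_0\in L^p(\Reals^n_+)$, which since $\chi=1$ near $y=0$ is impossible unless $u_0=0$. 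For part~(b) the forward direction is reduced to part~(a) by a localization argument, not by a parallel Leibniz expansion.
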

\begin{proof}
We begin with part \eqref{part:fast-decay-implies-zero-derivatives-H}.
For the forward direction, suppose $u\in H^{s,p}_{m-n/p}(\mathbb H\pipesep y)$;
we wish to show $\partial_y^k u|_{y=0}=0$ for $k \le m-1$.  Since
$$\partial_y: \mathscr H^{s,p;m}(\mathbb H\pipesep y)\cap H^{s,p}_{m-n/p}(\mathbb H\pipesep y)\to \mathscr H^{s-1,p;m-1}(\mathbb H\pipesep y)\cap H^{s-1,p}_{m-1-n/p}(\mathbb H\pipesep y)$$
an iterative argument shows
it suffices to prove the result assuming $m=1$, and there is
no loss of generality assuming $s=1$ as well.

Let $\chi:[0,\infty)\to\Reals$
be a smooth function that equals $1$ on $[0,1]$ and equals $0$ on
$[2,\infty)$.  For a smooth function in $u\in\mathscr H^{1,p;1}(\Hyp\pipesep y)$ an easy computation using Lemma \ref{lem:first-deriv-zero-W} implies
\[
\|u-\chi(y) u_0\|_{H^{0,p}_{1-n/p}(\mathbb H\pipesep y)} \lesssim \|\partial_y u\|_{H^{0,p}_{-n/p}(\mathbb H\pipesep y)} + \|u_0\|_{L^p(\Reals^{n-1})},
\]
where $u_0=u|_{y=0}$.
Since the smooth functions in $\mathscr H^{1,p;1}(\Hyp\pipesep y)=W^{1,p}(\mathbb R^n_+)$
are dense, and since the right-hand side of the previous inequality
is continuous on $\mathscr H^{1,p;1}(\Hyp\pipesep y)$, the inequality
holds for arbitrary elements of $\mathscr H^{1,p;1}(\Hyp\pipesep y)$.

Now suppose $u\in \mathscr H^{1,p;1}(\Hyp;\Reals)\cap H^{0,p}_{1-n/p}(\Hyp\pipesep y)$ and let
$u_0=u|_{y=0}$.  We have just seen that
$u-\chi u_0\in H^{0,p}_{1-n/p}(\Hyp\pipesep y)$ and therefore
$\chi u_0\in H^{0,p}_{1-n/p}(\Hyp\pipesep y)$ as well.
That is, $y^{-1}\chi u_0 \in L^p(\mathbb R^n_+)$.
But $\chi=1$ in a neighborhood of $y=0$, so this
is only possible if $\|u_0\|_{L^p(\Reals^{n-1})}=0$.
This completes the proof of the forward direction of part \eqref{part:fast-decay-implies-zero-derivatives-H}.

For the converse we begin by showing
that
$u\in H^{0,p}_{m-n/p}(\Hyp\pipesep y)$.  The proof is by induction
on $m$ and the case $m=1$ follows from Lemma \ref{lem:first-deriv-zero-W}
and the observation that if $u\in\mathscr H^{s,p;1}(\Hyp\pipesep y)$ then
$\partial_y u\in H^{0,p}_{-n/p}(\Hyp\pipesep y)$.  Suppose the result
holds for some $m\leq s-1$ and consider some $u\in \mathscr H^{s,p;m+1}(\Hyp\pipesep y)$
such that $\partial_y^k u|_{y=0}=0$ for $0\le k\le m$.
Applying the inductive hypothesis to $\partial_y u\in \mathscr H^{s-1,p;m}(\mathbb H\pipesep y)$
satisfying $\partial_y^j (\partial_y u)|_{y=0}=0$ for $0\le j\le m-1$
we conclude $\partial_y u \in H^{0,p}_{m-n/p}(\Hyp\pipesep y)$. Lemma \ref{lem:first-deriv-zero-W}
then implies $u\in  H^{0,p}_{m+1-n/p}(\mathbb H\pipesep y)$ as required.

To address the higher derivatives we note that since $u\in \mathscr H^{s,p;m}(\mathbb H\pipesep y)$,
\[
\nabla^m_{\mathbb E} u \in H^{s-m,p}_{m-n/p}(\Hyp\pipesep y).
\]
Since $u\in H^{0,p}_{m-n/p}(\mathbb H\pipesep y)$ as well,
the analog of Lemma \ref{lem:alt-H-X-norms} discussed at the beginning of this
section
implies that $u\in H^{s,p}_{m-n/p}(\Hyp\pipesep y)$.  This completes the proof of
part \eqref{part:fast-decay-implies-zero-derivatives-H}.

To establish part \eqref{part:fast-decay-implies-zero-derivatives-X}
we first observe that although
$X^{s,p}_{m}(\Hyp\pipesep y)$ does not embed in $H^{s,p}_{m-n/p}(\Hyp\pipesep y)$
as is the case for standard Gicquaud-Sakovich spaces,
it is easy to see that a function in $X^{s,p}_{m}(\Hyp\pipesep y)$
with bounded support in $\Reals^n_+$ does indeed lie in
$H^{s,p}_{m-n/p}(\Hyp\pipesep y)$.  Consequently a function in
$\mathscr X^{s,p;m}(\Hyp\pipesep y) \cap X^{s,p}_m(\Hyp\pipesep y)$
with bounded support in $\Reals^n_+$ belongs to
$\mathscr H^{s,p;m}(\Hyp\pipesep y) \cap H^{s,p}_{m-n/p}(\Hyp\pipesep y)$.
The forward direction of part
\eqref{part:fast-decay-implies-zero-derivatives-X} therefore reduces
to part \eqref{part:fast-decay-implies-zero-derivatives-H} via a localization
argument, noting that vanishing trace is a local property.
The converse direction is proved identically
to part \eqref{part:fast-decay-implies-zero-derivatives-H}
using Lemma \ref{lem:first-deriv-zero-X} in place of
Lemma \ref{lem:first-deriv-zero-W}.
\end{proof}

\subsection{\texorpdfstring{$\Hyp$}{H}-mollification}\label{secsec:group-mollification}
Our asymptotic analysis uses a mollification procedure based on the group structure on $\mathbb H$, described in \cite{WAH}, given by the multiplication
$(\xi,\eta)\cdot(x,y) = (\xi + \eta x, \eta y)$.
The identity element is $(0,1)$ and $(x,y)^{-1} = (-y^{-1}x, y^{-1})$.
Note that the inverse map $w\mapsto w^{-1}$ is a diffeomorphism $B^\mathbb{H}_r \to B^\mathbb{H}_r$ for each $r>0$.

For $z\in \mathbb H$, we denote the left multiplication map by $\Phi_z:w\mapsto z\cdot w$.
Thus the M\"obius parametrization $\Phi_i$ centered at $(x_i,y_i)$ is the restriction of $\Phi_{(x_i,y_i)}$ to $B^\mathbb{H}_2$.
Note that $\breve g$ and the vector fields $y\partial_x,y\partial_y$ are left-invariant with respect to this group structure: for all $z\in \mathbb H$ we have $\Phi_z^*\breve g = \breve g$, $\Phi_z^*(y\partial_x) = y\partial_x$, and $\Phi_z^*(y\partial_y) = y\partial_y$.

Following \cite{WAH}, we define \textbf{$\mathbb H$\nobreakdash-convolution} of $u\in C^\infty(\mathbb H)$ with $\psi$ by
\begin{align}
(u\ast \psi)(z)
&= \int_\mathbb{H} u(w)\,\psi(w^{-1}\cdot z)\,dV_{\breve g}(w)
\label{def-convolution}
\\
&= \int_\mathbb{H} u(z\cdot w)\,\psi(w^{-1})\,dV_{\breve g}(w).
\label{alt-def-convolution}
\end{align}
For notational convenience we define $\psi^\dagger(w)=\psi(w^{-1})$
in which case these formulas can be written
\begin{equation}\label{eq:conv-via-apply}
(u\ast \psi)(z) = \ip<\Phi_z^* u,\psi^{\dagger} >_{(M,g_{\mathbb H})} = \ip<u,\Phi_{z^{-1}}^*\psi^\dagger>_{(M,g_{\mathbb H})}.
\end{equation}

\begin{lemma}
\label{lem:convolution-properties}
Suppose $\psi\in C^\infty_{\rm cpct}(M)$ and $u\in C^\infty(M)$.
\begin{enumerate}
\item \label{part:conv-support}
$\supp u\ast \psi \subset \supp u \cdot \supp \psi = \{a\cdot b:a\in \supp u, b\in \supp v\}$.

\item 
\label{pt:pullback-convolution} $\Phi_z^*(u\ast \psi) = (\Phi_z^*u)\ast\psi$ for all $z\in \mathbb H$.

\item
\label{pt:convolve-with-V0}
$(y\partial)^\alpha (u\ast \phi) = u\ast ((y\partial)^\alpha \phi)$ for all multiindices $\alpha$.

\item\label{pt:convolve-apply}
$\ip< u\ast\psi, \phi>_{(\mathbb H, \breve g)}
=
\ip <u, \phi\ast\psi^\dagger>_{(\mathbb H, \breve g)}$ for all $\phi\in C^\infty_{\rm cpct}(M)$.

\end{enumerate}
\end{lemma}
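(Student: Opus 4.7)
All four assertions are standard identities for left-convolution with respect to a left-invariant Haar measure on the group $\mathbb H$, and the plan is to dispatch them directly from the two integral representations \eqref{def-convolution} and \eqref{alt-def-convolution}. The essential inputs, already recorded in the text preceding the lemma, are the left-invariance of $dV_{\breve g}$ (equivalently $\Phi_z^* \breve g = \breve g$) and the left-invariance of the vector fields $y\partial_{x^i}$ and $y\partial_y$; no genuinely new analytic ingredient is required.

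For \textbf{(a)} I would argue by contrapositive using \eqref{alt-def-convolution}: the integrand is nonzero at $w$ only when $z\cdot w \in \supp u$ and $w^{-1}\in \supp\psi$, which forces $z = (z\cdot w)\cdot(w^{-1}) \in (\supp u)\cdot(\supp\psi)$. For \textbf{(b)} I would apply associativity inside \eqref{alt-def-convolution}:
\[
\Phi_z^*(u\ast\psi)(w) = \int_{\mathbb H} u\bigl((z\cdot w)\cdot v\bigr)\,\psi(v^{-1})\,dV_{\breve g}(v) = \int_{\mathbb H} (\Phi_z^* u)(w\cdot v)\,\psi(v^{-1})\,dV_{\breve g}(v),
\]
which is $((\Phi_z^* u)\ast\psi)(w)$.

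For \textbf{(c)} the plan is to differentiate under the integral sign in \eqref{def-convolution}, where the $z$-dependence sits entirely in $\psi(w^{-1}\cdot z) = (\Phi_{w^{-1}}^*\psi)(z)$. Left-invariance of a vector field $V \in \{y\partial_{x^i}, y\partial_y\}$ gives the pointwise identity $V_z[\psi(w^{-1}\cdot z)] = (V\psi)(w^{-1}\cdot z)$, whence $V(u\ast\psi) = u\ast(V\psi)$; iterating yields the claim for every multiindex $\alpha$. For \textbf{(d)} my approach is Fubini: unfold the pairing as an iterated integral in $w$ and $z$, then, for each fixed $w$, substitute $z = w\cdot z'$ in the inner integral. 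Left-invariance of $dV_{\breve g}$ preserves the measure, and $\psi(w^{-1}\cdot z) = \psi(z')$ together with $\phi(w\cdot z')$ reassemble into $(\phi\ast\psi^\dagger)(w)$ after the elementary identification $\psi^\dagger(z'{}^{-1}) = \psi(z')$.

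There is no serious obstacle. The smoothness and compact support of $\psi$ (and, in (d), of $\phi$) justify both differentiation under the integral sign and the Fubini exchange; the only place where a minor slip is possible is tracking inverses when the $\dagger$ operation is introduced in part (d), which is why I would carry out that substitution explicitly rather than appeal to a symmetry argument.
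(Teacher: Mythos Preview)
Your proposal is correct. The paper actually states this lemma without proof, treating the four identities as routine consequences of the group structure and the left-invariance of $\breve g$ and the vector fields $y\partial_{x^i},\,y\partial_y$; your arguments for each part are exactly the standard ones and would serve perfectly well as the omitted proof.
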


Now suppose that $\psi\in C^\infty_{\rm cpct}(M)$ and that $u$ is
a distribution on $M$. Motivated by equation \eqref{eq:conv-via-apply} we define
\[
(u\ast \psi)(z) = \ip<u,\Phi_{z^{-1}}^* \psi^{\dagger}>.
\]
Note that this is well defined since $\Phi_{z^{-1}}^* \psi^{\dagger}$ is smooth and compactly
supported.  Moreover, the continuity of $z\mapsto \Phi_{z^{-1}}^* \psi^{\dagger}$
in any $C^k(M)$ implies that $u\ast\psi$ is a continuous function.  

In order to investigate the local effects of $\Hyp$-convolution, we 
suppose that $\psi\in C^{\infty}_{\rm cpct}(B_{r}^\Hyp)$ for some $r>0$.
Given a distribution $u\in H^{s,p}(B_{R+r}^\Hyp)$ we define
$u\ast \psi$ on $B_R^\Hyp$ by $(u\ast\psi)(z) = (\tilde u\ast \psi)(z)$ 
where $\tilde u$ is any compactly supported extension of $u$ in $H^{s,p}(\Hyp)$.
One readily verifies that Lemma \eqref{lem:convolution-properties}\eqref{part:conv-support}
also holds for distributions and hence $u\ast\psi$ is well defined on $B_R^\Hyp$.

The following lemma shows that $\Hyp$-convolution locally improves regularity.

\begin{lemma}
\label{lem:local-convolution}
Suppose $\psi\in C^\infty_{\rm cpct}(B_r^\Hyp)$
and that $u\in H^{s_0, p_0}(B^\mathbb{H}_{R+r})$ for some $r,R>0$, 
$s_0\in \mathbb R$ and $1<p_0<\infty$.
Then $u\ast\psi\in H^{s,p}(B^\mathbb{H}_R)$ for all $s\in \mathbb R$ and $1<p<\infty$ with
\begin{equation*}
\| u\ast\psi\|_{H^{s,p}(B^\mathbb{H}_R)}
\lesssim
\|u\|_{H^{s_0, p_0}(B^\mathbb{H}_{R+r})}.
\end{equation*}
The implicit constant is independent of $u$ but depends on all other data.
\end{lemma}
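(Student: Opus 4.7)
The plan is to reduce the estimate to a pointwise $L^\infty$ bound on $u\ast\phi$ for $\phi\in C^\infty_{\rm cpct}(B^\Hyp_r)$, using the local norm equivalence \eqref{local-V0-norm-equivalence} together with the commutation of $\Hyp$-convolution with left-invariant derivatives, and then closing the pointwise bound by duality. First I would reduce to the case $s=k\in\Nats_{\ge 0}$: for arbitrary $s\in\Reals$ pick an integer $k\geq s$ and use $H^{k,p}(B^\Hyp_R)\hookrightarrow H^{s,p}(B^\Hyp_R)$, so it suffices to control $\|u\ast\psi\|_{H^{k,p}(B^\Hyp_R)}$. The local norm equivalence then gives
\[
\|u\ast\psi\|_{H^{k,p}(B^\Hyp_R)}\sim \sum_{|\alpha|\le k}\|(y\partial)^\alpha(u\ast\psi)\|_{L^p(B^\Hyp_R)}.
\]
Because the vector fields $y\partial_j$ are left-invariant, the distributional extension of Lemma \ref{lem:convolution-properties}(\ref{pt:convolve-with-V0}) yields $(y\partial)^\alpha(u\ast\psi) = u\ast((y\partial)^\alpha\psi)$, with $(y\partial)^\alpha\psi\in C^\infty_{\rm cpct}(B^\Hyp_r)$. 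Since $B^\Hyp_R$ has finite Lebesgue measure, it then suffices to prove
\[
\|u\ast\phi\|_{L^\infty(B^\Hyp_R)}\lesssim \|u\|_{H^{s_0,p_0}(B^\Hyp_{R+r})}
\]
for every $\phi\in C^\infty_{\rm cpct}(B^\Hyp_r)$.

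For this pointwise estimate I would fix $z\in B^\Hyp_R$ and consider $K_z = \Phi_{z^{-1}}^*\phi^\dagger\in C^\infty(\Hyp)$, so the distributional definition gives $(u\ast\phi)(z) = \langle u, K_z\rangle$. Since $K_z(w) = \phi(w^{-1}\cdot z)$ is nonzero only where $d_{\breve g}(z,w) < r$, its support is contained in $B^\Hyp_{R+r}$ by the triangle inequality; moreover the family $\{K_z\}_{z\in B^\Hyp_R}$ is contained in $C^\infty_{\rm cpct}(K)$ for a fixed compact $K\subset\Hyp$, with uniformly bounded $C^N$ seminorms in every order $N$. By the standard duality for Bessel potential spaces on $\Reals^n$,
\[
|\langle u, K_z\rangle|\le \|u\|_{H^{s_0,p_0}(B^\Hyp_{R+r})}\,\|K_z\|_{H^{-s_0,p_0\dual}(\Reals^n)};
\]
this bound is independent of the chosen compactly supported extension of $u$, since $K_z$ is supported in $B^\Hyp_{R+r}$, and the infimum over extensions recovers the quotient norm. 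A routine estimate controls the Bessel norm of a smooth function with support in a fixed compact by a sufficiently high $C^N$ norm, which is uniformly bounded for the family $\{K_z\}$. Taking the supremum over $z\in B^\Hyp_R$ then yields the desired pointwise bound.

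The main obstacle is the distributional version of the commutation relation in Lemma \ref{lem:convolution-properties}(\ref{pt:convolve-with-V0}), which underlies the reduction from derivatives of $u\ast\psi$ to convolutions with derivatives of $\psi$. For smooth $u$ this is immediate from the left-invariance of $y\partial_j$ under the group action on $\Hyp$. For a distribution $u$ one differentiates $(u\ast\psi)(z) = \langle u,\Phi_{z^{-1}}^*\psi^\dagger\rangle$ in the $z$ variable along $y\partial_j$ and uses left-invariance to identify the $z$-derivative of the test function $\Phi_{z^{-1}}^*\psi^\dagger$ as $\Phi_{z^{-1}}^*((y\partial_j)\psi^\dagger)$; the interchange of derivative and distribution pairing follows from a difference-quotient argument exploiting continuity of $u$ on $C^\infty_{\rm cpct}$. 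Iterating $|\alpha|$ times yields the commutation relation for arbitrary multiindex, closing the argument.
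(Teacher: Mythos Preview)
Your proposal is correct and follows essentially the same route as the paper: both arguments obtain a pointwise $L^\infty$ bound on $u\ast\phi$ via the duality pairing $\langle u,\Phi_{z^{-1}}^*\phi^\dagger\rangle$ and uniform $C^k$ control of the test-function family, then use the commutation $(y\partial)^\alpha(u\ast\psi)=u\ast((y\partial)^\alpha\psi)$ together with the local norm equivalence \eqref{local-V0-norm-equivalence} to upgrade to the full $H^{s,p}$ bound. The only difference is that the paper first carries this out for smooth $u$ (so that Lemma~\ref{lem:convolution-properties}\eqref{pt:convolve-with-V0} applies directly) and then closes by density, whereas you work with distributional $u$ throughout and supply the commutation relation by differentiating the pairing and invoking left-invariance; both approaches are valid and yield the same estimate.
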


\begin{proof}
Suppose $u$ is smooth on $B^\mathbb{H}_{R+r}$.
Direct computation shows that for $k\in \mathbb N$ we have
\begin{equation*}
\|\Phi_w^*\psi\|_{C^k(B^\mathbb{H}_{R+r})}
\lesssim \| \psi\|_{C^k(B^\mathbb{H}_r)}
\lesssim 1.
\end{equation*}
where the implicit constants are independent of $w\in B^\mathbb{H}_r$.
Since $z\in B^\mathbb{H}_r$ implies $z^{-1}\in B^\mathbb{H}_r$ we have, for $k \geq |s_0|$, that
\begin{equation*}
|(u\ast\psi)(z)|
= |\ip< u, \Phi_{z^{-1}}^*\psi^\dagger>|
\lesssim \|u\|_{H^{s_0, p_0}(B^\mathbb{H}_{R+r})}
\| \psi\|_{C^k(B^\mathbb{H}_r)}
\lesssim \|u\|_{H^{s_0, p_0}(B^\mathbb{H}_{R+r})}.
\end{equation*}

For each multiindex $\alpha$ we have
\begin{align*}
\|(\rho\partial_\Theta)^\alpha (u\ast \psi)\|_{L^\infty(B^\mathbb{H}_R)}
&= \|u\ast((\rho\partial_\Theta)^\alpha \psi)\|_{L^\infty(B^\mathbb{H}_R)}
\\
&\lesssim \| u\|_{H^{s_0, p_0}(B^\mathbb{H}_{R+r})}
\| \psi \|_{C^{k+|\alpha|}(B^\mathbb{H}_r)}
\\
&\lesssim \| u\|_{H^{s_0, p_0}(B^\mathbb{H}_{R+r})}.
\end{align*}
The lemma now follows for smooth functions from 
the norm equivalence \eqref{local-V0-norm-equivalence} and holds generally
by a density argument.
\end{proof}

Extending the previous result to the global setting, $\Hyp$-convolution 
with smooth compactly supported functions
improves regularity while leaving decay rates at the boundary unchanged.  
For convenience, we restrict $\psi$ to be supported in a small ball
so that its local effects can be observed within the domains of individual
M\"obius parametrization.
\begin{proposition}
\label{prop:prop-conv}
Let $\psi\in C^\infty_{\rm cpct}(B_{1/2}^\Hyp)$
and suppose $s_0\in \Reals$, $1< p_0< \infty$
and $\delta\in\Reals.$
\begin{enumerate}

\item\label{pt:conv-addl-regularity-H}
If $u\in H^{s_0,p_0}_\delta(\mathbb H\pipesep y)$
then $u*\psi\in  H^{s,p}_\delta(\mathbb H\pipesep y)$ for all $s\in\Reals$ and $p\ge p_0$,
with
$$
\|u*\psi\|_{H^{s,p}_\delta(\mathbb H\pipesep y)}
\lesssim \|u\|_{H^{s_0,p_0}_\delta(\mathbb H\pipesep y)}.
$$

\item\label{pt:conv-addl-regularity-X}
If $u\in X^{s_0,p_0}_\delta(\mathbb H\pipesep y)$
then $u*\psi\in  X^{s,p}_\delta(\mathbb H\pipesep y)$ for all $s\in\Reals$ and $1<p<\infty$,
with
$$
\|u*\psi\|_{X^{s,p}_\delta(\mathbb H\pipesep y)}
\lesssim \|u\|_{X^{s_0,p_0}_\delta(\mathbb H\pipesep y)}.
$$

\item\label{pt:conv-addl-script-regularity-H}
If $u\in \mathscr H^{m,p_0;m}(\mathbb H\pipesep y)$ for some $m\in\Nats$,
then $u*\psi\in  \mathscr H^{s,p_0;m}(\mathbb H\pipesep y)$ for all $s\geq m$,
with
$$
\|u*\psi\|_{\mathscr H^{s,p_0;m}(\mathbb H\pipesep y)}
\lesssim \|u\|_{\mathscr H^{m,p_0;m}(\mathbb H\pipesep y)}.
$$

\item\label{pt:conv-addl-script-regularity-X}
If $u\in \mathscr X^{m,p_0;m}(\mathbb H\pipesep y)$ for some $m\in\Nats$,
then $u*\psi\in  \mathscr X^{s,p;m}(\mathbb H\pipesep y)$ for all $s\geq m$ and $1<p<\infty$,
with
$$
\|u*\psi\|_{\mathscr X^{s,p;m}(\mathbb H\pipesep y)}
\lesssim \|u\|_{\mathscr X^{m,p_0;m}(\mathbb H\pipesep y)}.
$$
\end{enumerate}
The implicit constants above are independent of $u$ and $\delta$, but depend on $\psi$, $s_0$, $p_0$, $s$, $p$, and $m$.
\end{proposition}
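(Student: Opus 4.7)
The plan is to exploit two structural facts in tandem: convolution's compatibility with the group left-translation, namely $\Phi_z^*(u*\psi) = (\Phi_z^*u)*\psi$ from Lemma \ref{lem:convolution-properties}(ii), and the fact that every M\"obius parametrization in \S\ref{sec:coords} is precisely (the restriction to $B_2^\Hyp$ of) such a left translation. Together these convert the global estimates in each of (i)--(iv) into a collection of local estimates on the parametrizations, where Lemma \ref{lem:local-convolution} upgrades regularity on balls at the origin.

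First I would establish once a uniform local estimate that feeds all four parts. Since $\supp\psi\subset B_{1/2}^\Hyp$, Lemma \ref{lem:convolution-properties}(i)--(ii) give, on $B_1^\Hyp$, the identity
\[
\Phi_i^*(u*\psi) = (\Phi_i^*u)*\psi,
\]
where the right side depends only on $\Phi_i^*u$ restricted to $B_{3/2}^\Hyp$ (the identity is stated for smooth $u$ but extends to distributions via the definition of $u*\psi$ given just before the proposition). Applying Lemma \ref{lem:local-convolution} with $R=1$, $r=1/2$ then yields, for every $s\in\Reals$ and $1<p<\infty$,
\[
\|\Phi_i^*(u*\psi)\|_{H^{s,p}(B_1^\Hyp)} \lesssim \|\Phi_i^*u\|_{H^{s_0,p_0}(B_{3/2}^\Hyp)},
\]
with constant independent of $i$.

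Parts (i) and (ii) follow by multiplying this estimate by $y_i^{-\delta}$ and respectively summing $p^{\text{th}}$ powers in $i$ or taking a supremum, then invoking the norm equivalence that allows $B_{3/2}^\Hyp$ to replace $B_1^\Hyp$ in the definitions \eqref{roman-half-space-norms}. Part (ii) is immediate. For (i), the local bound is $\ell^{p_0}$-summable on the right while we need an $\ell^p$-sum on the left; since $p\geq p_0$, the elementary embedding $\ell^{p_0}\hookrightarrow \ell^p$ (with operator norm $\leq 1$) closes the estimate.

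For parts (iii) and (iv), I would expand the fortified norm using the definition \eqref{curly-half-space-norms} and handle each $0\leq j\leq m$ separately. Since $\Phi_i$ is affine with constant Jacobian $y_i\,\Id$, the Euclidean connection pulls back as $\Phi_i^*(\nabla_\mathbb{E}^j v) = y_i^j\,\nabla_\mathbb{E}^j(\Phi_i^*v)$, so combining with the local estimate above gives
\[
\|\Phi_i^*(\nabla_\mathbb{E}^j(u*\psi))\|_{H^{s-j,p}(B_1^\Hyp)} \lesssim y_i^j\, \|\Phi_i^*u\|_{H^{m,p_0}(B_{3/2}^\Hyp)}.
\]
In (iii), the weight $-(j-n/p_0)$ built into $H^{s-j,p_0}_{j-n/p_0}$ combines with the Jacobian factor $y_i^j$ to produce the exponent $y_i^{n/p_0}$ on each term, and summing in $\ell^{p_0}$ reproduces exactly the baseline norm $\|u\|_{H^{m,p_0}_{-n/p_0}(\Hyp\pipesep y)}$, which is the $j=0$ summand in $\|u\|_{\mathscr H^{m,p_0;m}(\Hyp\pipesep y)}$. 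Summing in $j$ closes the estimate. Part (iv) is the same computation with the $j$-indexed supremum, producing $\|u\|_{X^{m,p_0}_{0}(\Hyp\pipesep y)}$ on the right; the freedom to switch from $p_0$ to a general $p$ on the left is absorbed into Lemma \ref{lem:local-convolution}.

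The only non-mechanical step is the weight bookkeeping in (iii)--(iv), where one must confirm that the Jacobian factors from the pullback exactly cancel the boundary weight shift $j$ built into the definition of the fortified norm; once this alignment is verified, everything else reduces to Lemma \ref{lem:local-convolution} and the $\ell^p$ manipulations above.
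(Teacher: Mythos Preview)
Your treatment of parts (a) and (b) is correct and matches the paper's proof essentially verbatim.

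For parts (c) and (d), however, there is a genuine error. Your claimed identity
\[
\Phi_i^*(\nabla_\mathbb{E}^j v) = y_i^j\,\nabla_\mathbb{E}^j(\Phi_i^*v)
\]
is false: for an affine map, pullback commutes \emph{exactly} with the Euclidean connection, so $\Phi_i^*(\nabla_\mathbb{E}^j v) = \nabla_\mathbb{E}^j(\Phi_i^*v)$ with no Jacobian factor. Using the correct identity, your local estimate becomes
\[
\|\Phi_i^*(\nabla_\mathbb{E}^j(u*\psi))\|_{H^{s-j,p}(B_1^\Hyp)}
=\|\nabla_\mathbb{E}^j((\Phi_i^*u)*\psi)\|_{H^{s-j,p}(B_1^\Hyp)}
\lesssim \|\Phi_i^*u\|_{H^{m,p_0}(B_{3/2}^\Hyp)},
\]
and after multiplying by $y_i^{-(j-n/p_0)}$ and summing $p_0$-th powers you obtain $\|u\|_{H^{m,p_0}_{j-n/p_0}(\Hyp\pipesep y)}$ on the right. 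For $j\ge 1$ this is \emph{not} controlled by $\|u\|_{\mathscr H^{m,p_0;m}(\Hyp\pipesep y)}$: the weight $j-n/p_0$ demands more decay as $y\to 0$ than the $j=0$ term $\|u\|_{H^{m,p_0}_{-n/p_0}}$ of the fortified norm provides, and the two norms are incomparable on $\Hyp$. So the bookkeeping does not close.

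The paper avoids this by first passing the derivative through the convolution. Differentiating the formula \eqref{alt-def-convolution} in $z$ yields
\[
\partial^\alpha(u*\psi) = \sum_{|\beta|=|\alpha|} (\partial^\beta u)*\psi^{\alpha,\beta}
\]
for certain $\psi^{\alpha,\beta}\in C^\infty_{\rm cpct}(B_{1/2}^\Hyp)$ (cf.\ \cite[Lemma 2.7]{WAH}). One then applies part (a) directly to each $\partial^\beta u$ at the fixed weight $-n/p_0$ and invokes the equivalent norm \eqref{half-space-norm-equivalences}, which expresses $\|u\|_{\mathscr H^{m,p_0;m}(\Hyp\pipesep y)}$ precisely as $\sum_{|\beta|\le m}\|\partial^\beta u\|_{H^{m-|\beta|,p_0}_{-n/p_0}(\Hyp\pipesep y)}$. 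The missing ingredient in your argument is this derivative--convolution identity; without it there is no way to match the weight $j-n/p_0$ on the left to something controlled on the right.
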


\begin{proof}
For each M\"obius parametrization $\Phi_i$, Lemmas \ref{lem:convolution-properties}\eqref{pt:pullback-convolution}
and \ref{lem:local-convolution} imply
\begin{equation*}
\|\Phi_i^*(u\ast\psi)\|_{H^{s,p}(B^\mathbb{H}_{1})}
=
\|(\Phi_i^*u)\ast\psi\|_{H^{s,p}(B^\mathbb{H}_{1})}
\lesssim
\|\Phi_i^*u\|_{H^{s_0,p_0}(B^\mathbb{H}_{3/2})}.
\end{equation*}
Recalling \eqref{roman-half-space-norms},
part \eqref{pt:conv-addl-regularity-H} follows from the inclusion
 $\ell^{p_0} \hookrightarrow \ell^{p}$, while part \eqref{pt:conv-addl-regularity-X} is immediate.

To establish part \eqref{pt:conv-addl-script-regularity-H}, we first apply coordinate derivatives $\partial_x$ and $\partial_y$ to \eqref{alt-def-convolution} in order to see that for each multiindex $\alpha$ there exist functions $\psi^{\alpha,\beta}\in C^\infty_{\rm cpct}(B^\mathbb{H}_{1/2})$ such that
\begin{equation*}
\partial^\alpha(u\ast \psi) = \sum_{|\beta|=|\alpha|} (\partial^\beta u)\ast \psi^{\alpha,\beta};
\end{equation*}
see also \cite[Lemma 2.7]{WAH}.
Thus the norm equivalences \eqref{half-space-norm-equivalences} imply 
\begin{align*}
\|u\ast\psi\|_{\mathscr H^{s,p;m}(\mathbb H\pipesep y)}
& \lesssim
\sum_{|\alpha|\leq m} \|\partial^\alpha (u\ast \psi)\|_{H^{s-|\alpha|,p}_{-n/p}(\mathbb H\pipesep y)}
\\
&\leq  \sum_{|\alpha|\leq m}\sum_{|\beta|=|\alpha|} \|(\partial^\beta u)\ast \psi^{\alpha,\beta}\|_{H^{s-|\alpha|,p}_{-n/p}(\mathbb H\pipesep y)}.
\end{align*}
Applying part \eqref{pt:conv-addl-regularity-H} yields the desired result; part \eqref{pt:conv-addl-script-regularity-X} follows similarly.
\end{proof}

We highlight the subtle changes in the hypotheses concerning the Lebesgue parameter
$p$ in the various cases of Proposition \ref{prop:prop-conv}.  In 
part \eqref{pt:conv-addl-regularity-H} the assumption $p\ge p_0$ arises simply
because $\ell^{p_0}\not\subset \ell^{p}$ for $p<p_0$, and it is easy to verify
that it is necessary. There is no analogous restriction in part \eqref{pt:conv-addl-regularity-X}
because $\ell^\infty$ replaces the role of $\ell^p$ regardless of the value of $p$.
In part \eqref{pt:conv-addl-script-regularity-H} the
Lebesgue parameter is fixed at $p=p_0$ and this is also necessary.  
It cannot be lowered because of the obstacle in part \eqref{pt:conv-addl-regularity-H},
and considerations at the boundary imply that it cannot be raised. 
Indeed, first recall that 
$\mathscr H^{m,p_0;m}(\mathbb H\pipesep y) = W^{m,p_0}(\mathbb R^n_+)$.
If $\Hyp$\nobreakdash-mollification were able to improve $p_0$ to
some $p>p_0$, this would imply an increase of boundary regularity to that of
$W^{m,p}(\mathbb R^n_+)$ functions.  But it follows from Proposition \ref{prop:fast-decay-vs-vanishing-derivatives}  
along with Lemma \ref{lem:first-order-approx} proved below that $\Hyp$-convolution
can modify boundary traces at most by multiplication by a constant. Hence
no improvement in $p$ is possible. 
By contrast, the 
fact that $\Hyp$\nobreakdash-mollification takes $\mathscr X^{1,p_0,1}(\mathbb H\pipesep y)$ to $\mathscr X^{\infty;1}(\mathbb H\pipesep y)$ regardless of the value of $p_0$ indicates that  
of elements of $\mathscr X^{1, p_0;1}(\mathbb H\pipesep y)$ 
intrinsically 
have a large amount of boundary regularity.  See, e.g., Corollary \ref{cor:X-bdy-awesome}.

To investigate the effect of $\Hyp$-convolution at the boundary it is helpful
to restrict our attention to functions
$\psi\in C^\infty_{\rm cpct}(B_{1/2}^\Hyp)$ that additionally satisfy
\[
\int_{B_{1/2}^\Hyp}\psi^\dagger\,dV_{\breve g}=1,
\]
in which case we refer to $u\ast\psi$ as the \Defn{$\Hyp$\nobreakdash-mollification} of $u$.
Proposition \ref{prop:prop-conv} implies that $\Hyp$\nobreakdash-mollification
is regularity improving inside $\Hyp$, and we now wish to show that 
it simultaneously leaves boundary traces unaffected. As an initial
step, we have the following analogue of a standard fact about ordinary mollification.
\begin{lemma}\label{lem:group-translate-diff}
Suppose $\psi\in C^\infty_{\rm cpct}(B_{1/2}^\Hyp)$ and that
$\int_{B_{1/2}^\Hyp} \psi^\dagger dV_{\breve g}=1$.
For all $1<p<\infty$, if $u \in H^{1,p}(B_{3/2}^\Hyp)$ then
\[
\|u*\psi-u\|_{L^p(B_1^\Hyp)}
\lesssim \|du\|_{L^p(B_{3/2}^\Hyp)}.
\]
\end{lemma}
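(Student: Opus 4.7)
\medskip

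\noindent\textbf{Proof proposal.} The plan is to mimic the standard mollification argument, using the group structure on $\Hyp$ in place of Euclidean translation and exploiting that $\breve g$ (hence its associated volume form) is left-invariant under $\Phi_z$. First I would assume $u \in C^\infty(\bar B_{3/2}^\Hyp)$, since density of smooth functions in $H^{1,p}(B_{3/2}^\Hyp)$ together with continuity of both sides will deliver the general case. Using the normalization $\int_{B_{1/2}^\Hyp}\psi^\dagger\,dV_{\breve g}=1$ and the form \eqref{alt-def-convolution} of $\Hyp$-convolution, I write
\[
(u*\psi)(z) - u(z) = \int_{B_{1/2}^\Hyp} \bigl[u(z\cdot w) - u(z)\bigr]\,\psi^\dagger(w)\,dV_{\breve g}(w).
\]

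Next, for each $w \in B_{1/2}^\Hyp$ fix a geodesic $\gamma_w\colon[0,1]\to B_{1/2}^\Hyp$ from the identity $e=(0,1)$ to $w$ with $\breve g$-length $\le 1/2$; for each fixed $z$, the Fundamental Theorem of Calculus applied to $t\mapsto u(z\cdot\gamma_w(t))$ gives
\[
|u(z\cdot w) - u(z)| \;\le\; \int_0^1 |du|_{\breve g}(z\cdot \gamma_w(t))\,\bigl|\tfrac{d}{dt}\gamma_w(t)\bigr|_{\breve g}\,dt \;\lesssim\; \int_0^1 |du|_{\breve g}(z\cdot\gamma_w(t))\,dt.
\]
Here the uniform bound on the speed of $\gamma_w$ comes from $w\in B_{1/2}^\Hyp$. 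Substituting into the displayed identity above and taking $L^p$ norms in $z\in B_1^\Hyp$, Minkowski's integral inequality reduces the claim to bounding, uniformly in $w\in \supp\psi^\dagger$ and $t\in[0,1]$, the quantity
\[
\bigl\|\,|du|_{\breve g}(z\cdot\gamma_w(t))\,\bigr\|_{L^p_z(B_1^\Hyp)}.
\]

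The final key step is the change of variables $z' = z\cdot\gamma_w(t)$, which is left multiplication by the fixed element $\gamma_w(t)\in\bar B^\Hyp_{1/2}$ and hence an isometry of $(\Hyp,\breve g)$. By the triangle inequality for the hyperbolic distance and the bound $d_{\breve g}(e,\gamma_w(t))\le 1/2$, the image of $B_1^\Hyp$ under this map lies inside $B_{3/2}^\Hyp$. Combined with invariance of $dV_{\breve g}$ under the change of variables, this gives
\[
\bigl\|\,|du|_{\breve g}(z\cdot\gamma_w(t))\,\bigr\|_{L^p_z(B_1^\Hyp)} \;\le\; \|du\|_{L^p(B_{3/2}^\Hyp)},
\]
which finishes the bound after integrating over $w$ against $|\psi^\dagger|$ and $t\in[0,1]$. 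The main technical point to verify carefully is the containment $\gamma_w(t)\cdot B_1^\Hyp \subset B_{3/2}^\Hyp$, together with the fact that the geodesics $\gamma_w$ can be chosen measurably in $w$; both are straightforward since $B_{1/2}^\Hyp$ is a geodesically convex neighborhood of $e$ and left multiplication is an isometry.
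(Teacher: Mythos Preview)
Your overall strategy matches the paper's: both arguments integrate $du$ along a path from the identity $e$ to $w$ (you use geodesics, the paper uses the explicit curve $t\mapsto(tx,y^t)$) and then change variables. However, there is a genuine error in your change-of-variables step. The map $z\mapsto z\cdot\gamma_w(t)$ is \emph{right} multiplication $R_{\gamma_w(t)}$, not left multiplication, and right multiplication is \emph{not} an isometry of $(\Hyp,\breve g)$: only left multiplication is. In fact the group $\Hyp$ (the affine group of $\Reals^{n-1}$) is not unimodular; a direct computation gives $R_a^*\,dV_{\breve g}=\eta^{1-n}\,dV_{\breve g}$ for $a=(\xi,\eta)$, so your claimed ``invariance of $dV_{\breve g}$ under the change of variables'' is false.

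The repair is easy and is precisely what the paper does: since $\gamma_w(t)$ ranges over the compact set $\bar B_{1/2}^\Hyp$, a compactness argument gives $dV_{\breve g}\lesssim R_a^*\,dV_{\breve g}$ and $R_a^*\breve g\sim\breve g$ uniformly in $a\in B_{1/2}^\Hyp$ (e.g.\ the factor $\eta^{1-n}$ is bounded for $\eta\in[e^{-1/2},e^{1/2}]$), which is all the change of variables needs. Your containment $R_a(B_1^\Hyp)\subset B_{3/2}^\Hyp$ is correct, but note that the step $d(z,z\cdot a)=d(e,a)$ underlying your triangle-inequality argument is a consequence of \emph{left}-invariance of $\breve g$, not of the (false) isometry property you attributed to the change of variables itself.
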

\begin{proof}
Let $R_w$ denote right multiplication by $w$.
We claim that
\begin{equation}\label{eq:translate-est}
\|R_w^* u-u\|_{L^p(B_1^\Hyp)} \lesssim \|du\|_{L^p(B_{3/2}^\Hyp)}
\end{equation}
with the implicit constant independent of $w\in B_1^\Hyp$.
Assuming \eqref{eq:translate-est} for the moment, and using that $\int \psi^\dagger\,dV_{\breve g} =1$, we find
\begin{align*}
\int_{B^\Hyp_1} |u*\psi - u|^p dV_{\breve g}
&=
\int_{B^\Hyp_1} \left|
	\int_{B^\Hyp_{1/2}}
	[u(y\cdot w)- u(y)] \psi(w^{-1})
	dV_{\ghyp}(w)\right|^p dV_{\ghyp}(y)\\
&\le
\int_{B^\Hyp_{1/2}}
	\int_{B^\Hyp_1}
	\left| [u(y\cdot w)- u(y)] \psi(w^{-1})\right|^p
	dV_{\ghyp}(y) dV_{\ghyp}(w)\\
&\lesssim
\sup_{w\in B_{1/2}^\Hyp} \|R_w^*u-u\|_{L^p(B^{\mathbb H}_1)}^p\\
&\lesssim
\|du\|_{L^p(B^{\mathbb H}_{3/2})}^p.
\end{align*}
So it only remains to establish inequality \eqref{eq:translate-est}.

Suppose that $u$ is smooth.
Fix $w=(x,y)\in B_{1/2}^\Hyp$ and let $\gamma:[0,1]\to B^\Hyp_1$
be the curve $\gamma(t)=(tx,y^t)$.
Direct computation shows that
\begin{equation*}
|R_w^* u(z) - u(z)|^p
=\left| \int_0^1 \frac{d}{dt} u(z\cdot \gamma(t))\;dt\right|^p
\lesssim
\int_0^1 \left| du_{z\cdot\gamma(t)}\right|_{\ghyp}^p\; dt
\end{equation*}
with implicit constant independent of $u\in W^{1,p}(B_{3/2}^\Hyp)$, $w\in B^{\Hyp}_{3/2}$ and
$z\in B^\Hyp_1$.
Although $\ghyp$ is not invariant with respect to
right multiplication, 
a compactness argument shows 
that $dV_{\breve g} \lesssim R_w^* dV_{\breve g}$
and $R_w^* \breve g \lesssim \breve g \lesssim R_w^* \breve g$
with implicit constants independent of $w\in B_{1/2}^\Hyp$.
Therefore
\begin{align*}
\int_{B^\Hyp_1} |R_w^*u-u|^p\;dV_{\ghyp}
&\lesssim
\int_{B^\Hyp_1}
		\int_0^1 |du_{z\cdot\gamma(t)}|_{\ghyp}^p\;dt
	\;dV_{\ghyp}(z)\\
&\lesssim \int_0^1 \int_{R_{\gamma(t)}(B_1^\Hyp)}
|du_z|_{\ghyp}^p\;  dV_{\ghyp}(z) \;dt\\
&\lesssim
\int_{B_{3/2}^\Hyp}  |du|_{\ghyp}^p \; dV_{\ghyp}.
\end{align*}
Hence inequality \eqref{eq:translate-est} holds for smooth functions and holds
generally by a density argument.
\end{proof}

The following result shows that $\Hyp$\nobreakdash-mollification preserves
boundary values in the sense of an improved decay
rate; in combination with Proposition \ref{prop:fast-decay-vs-vanishing-derivatives}
we conclude that $\Hyp$\nobreakdash-mollification preserves boundary traces.

\begin{lemma}\label{lem:first-order-approx}
Suppose $\psi\in C^\infty_{\rm cpct}(M)$ and that $\int_{B_{1/2}^\Hyp} \psi^\dagger\;dV_{\breve g}=1$
.
\begin{enumerate}
\item\label{part:first-order-H}
If $u\in \mathscr H^{s,p;1}(\Hyp\pipesep y)$
 for some $s\ge 1$ and $1<p<\infty$, then
\[
u*\psi - u \in H^{s,p}_{1-n/p}(\Hyp\pipesep y)
\]
and
\[
\|u*\psi-u\|_{H^{s,p}_{1-n/p}(\Hyp\pipesep y)} \lesssim \|u\|_{\mathscr H^{s,p;1}(\Hyp\pipesep y)}.
\]
\item\label{part:first-order-X}
 If $u\in \mathscr X^{s,p;1}(\Hyp\pipesep y)$
for some $s\ge 1$ and $1<p<\infty$ then
\[
u*\psi - u \in X^{s,p}_{w+1}(\Hyp\pipesep y)
\]
and
\[
\|u*\psi-u\|_{X^{s,p}_{1}(\Hyp\pipesep y)} \lesssim \|u\|_{\mathscr X^{s,p;1}(\Hyp\pipesep y)}.
\]
\end{enumerate}
\end{lemma}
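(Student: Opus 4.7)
\emph{Strategy.} The two parts of the lemma are proved in parallel, differing only in whether one takes an $\ell^p$ sum or an $\ell^\infty$ supremum over M\"obius parametrizations. The plan is to first obtain a base $L^p$-type estimate as a direct consequence of Lemma \ref{lem:group-translate-diff}, and then promote it to full $H^{s,p}$ (resp.\ $X^{s,p}$) control using the differentiation-based norm equivalence listed among the basic properties at the start of this section, together with the regularity-improvement of Proposition \ref{prop:prop-conv}.

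\emph{Base $L^p$ estimate.} Set $f_i = \Phi_i^* u$. Lemma \ref{lem:convolution-properties}\eqref{pt:pullback-convolution} yields $\Phi_i^*(u*\psi) = f_i * \psi$, and Lemma \ref{lem:group-translate-diff} applied to $f_i$ then gives
\[
\|f_i*\psi - f_i\|_{L^p(B_1^\Hyp)} \lesssim \|df_i\|_{L^p(B_{3/2}^\Hyp)}.
\]
Since pullback commutes with $d$, the right-hand side is precisely the Euclidean $L^p(B_{3/2}^\Hyp)$ norm of the pulled-back $(0,1)$ tensor $\Phi_i^*(\nabla_{\mathbb E} u)$. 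Weighting by $y_i^{-(1-n/p)}$ and either raising to the $p$th power and summing or taking a supremum over $i$, the uniform local finiteness of $\{\Phi_i\}$ absorbs the discrepancy between the $B_1^\Hyp$ and $B_{3/2}^\Hyp$ norms and yields
\[
\|u*\psi - u\|_{L^p_{1-n/p}(\Hyp\pipesep y)} \lesssim \|\nabla_{\mathbb E} u\|_{L^p_{1-n/p}(\Hyp\pipesep y)} \le \|u\|_{\mathscr H^{1,p;1}(\Hyp\pipesep y)},
\]
and analogously $\|u*\psi - u\|_{X^{0,p}_{1}(\Hyp\pipesep y)} \lesssim \|u\|_{\mathscr X^{1,p;1}(\Hyp\pipesep y)}$ in the Gicquaud--Sakovich case, where the sup is now weighted by $y_i^{-1}$.

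\emph{Upgrade to $H^{s,p}$ (resp.\ $X^{s,p}$).} Apply the half-space analog of Lemma \ref{lem:alt-H-X-norms} with $k=1$ to obtain, for $s \ge 1$,
\[
\|v\|_{H^{s,p}_\delta(\Hyp\pipesep y)} \sim \|v\|_{L^p_\delta(\Hyp\pipesep y)} + \|\nabla_{\mathbb E} v\|_{H^{s-1,p}_\delta(\Hyp\pipesep y)},
\]
and the analogous equivalence for $X^{s,p}_\delta(\Hyp\pipesep y)$. Applied to $v = u*\psi - u$ with $\delta = 1-n/p$ (resp.\ $\delta = 1$), the first term is precisely the base estimate. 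For the second, the triangle inequality reduces matters to controlling $\|\nabla_{\mathbb E} u\|_{H^{s-1,p}_{1-n/p}}$ and $\|\nabla_{\mathbb E}(u*\psi)\|_{H^{s-1,p}_{1-n/p}}$ (resp.\ with the $X$ norms). The first of these is dominated by $\|u\|_{\mathscr H^{s,p;1}}$ directly from the definition \eqref{curly-half-space-norms}, and the second by $\|u*\psi\|_{\mathscr H^{s,p;1}}$, which Proposition \ref{prop:prop-conv}\eqref{pt:conv-addl-script-regularity-H} in turn bounds by $\|u\|_{\mathscr H^{1,p;1}} \le \|u\|_{\mathscr H^{s,p;1}}$. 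The $\mathscr X^{s,p;1}$ case proceeds identically using Proposition \ref{prop:prop-conv}\eqref{pt:conv-addl-script-regularity-X}.

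\emph{Main obstacle.} All real work is in the base estimate, where the delicate point is the bookkeeping of the scaling factors $y_i$ that arise in the M\"obius parametrization $\Phi_i \colon (\xi,\eta) \mapsto (x_i + y_i\xi, y_i\eta)$: each Euclidean $\xi$-derivative of $f_i = u \circ \Phi_i$ carries a factor $y_i$ relative to the corresponding $x$-derivative of $u$, and as tensors one has $|\Phi_i^*(\nabla_{\mathbb E} u)|_{g_{\mathbb E}} = y_i\, (|\nabla_{\mathbb E} u|_{g_{\mathbb E}}\circ\Phi_i)$. This factor is precisely what converts the base control of $\nabla_{\mathbb E} u$ at weight $-n/p$ (resp.\ $0$) into $L^p$ control of $u*\psi - u$ at weight $1-n/p$ (resp.\ $1$), exactly matching the one-unit gain in asymptotic decay asserted by the lemma.
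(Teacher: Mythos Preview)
Your proof is correct and follows essentially the same route as the paper: a base $L^p_{1-n/p}$ estimate via Lemma~\ref{lem:group-translate-diff} applied in each M\"obius chart, followed by the norm equivalence (the half-space analog of Lemma~\ref{lem:alt-H-X-norms}) together with Proposition~\ref{prop:prop-conv} to control the derivative term. The only cosmetic difference is that the paper invokes Proposition~\ref{prop:prop-conv}\eqref{pt:conv-addl-regularity-H} directly on $du$ to place $d(u*\psi)$ in $H^{s-1,p}_{1-n/p}$, whereas you pass through $u*\psi\in\mathscr H^{s,p;1}$ via part~\eqref{pt:conv-addl-script-regularity-H}; these are equivalent.
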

\begin{proof}
We prove \eqref{part:first-order-H}; the proof of
\eqref{part:first-order-X} is similar and a bit simpler.

Suppose $u\in\mathscr H^{s,p;1}(\mathbb H\pipesep y)$, so
$u\in H^{s,p}_{-n/p}(\mathbb H\pipesep y)$ and $du\in H^{s-1,p}_{1-n/p}(\mathbb H\pipesep y)$.
Then the properties of convolution and \ref{lem:group-translate-diff} imply
\begin{align*}
\|u*\psi-u\|_{L^p_{1-n/p}(\mathbb H\pipesep y)}^p
&\lesssim \sum_i y^{-p+n}_i\|\Phi_i^*(u*\psi) - \Phi_i^*u\|_{L^p(B^{\mathbb H}_1)}^p\\
&\lesssim \sum_i y^{-p+n}_i\|(\Phi_i^*u)*\psi - \Phi_i^*u\|_{L^p(B^{\mathbb H}_1)}^p\\
&\lesssim \sum_i  y^{-p+n} \|d(\Phi_i^* u)\|_{L^p(B^{\mathbb H}_{3/2})}^p\\
&= \sum_i  y^{-p+n} \|\Phi_i^* du\|_{L^p(B^{\mathbb H}_{3/2})}^p\\
&\lesssim \|du\|_{L^p_{1-n/p}(\mathbb H\pipesep y)}^p
\\
&\lesssim \|u\|_{\mathscr H^{s,p;1}(\mathbb H\pipesep y)}^p.
\end{align*}
On the other hand, since $u\in\mathscr H^{s,p;1}(\mathbb H\pipesep y)$,
Proposition
\ref{prop:prop-conv}\eqref{pt:conv-addl-regularity-H}
implies
$
d(u*\psi) \in H^{s-1,p}_{1-n/p}(\mathbb H\pipesep y)
$
with norm controlled by the
$H^{s-1,p}_{1-n/p}$ norm of $u$.  That is,
\begin{align*}
u*\psi-u &\in H^{0,p}_{1-n/p}(\mathbb H\pipesep y)\\
d(u*\psi-u) & \in H^{s-1,p}_{1-n/p}(\mathbb H\pipesep y)
\end{align*}
with both norms controlled by the $\mathscr H^{s,p;1}(\Hyp\pipesep y)$ norm
of $u$.  The result now follows from the analog
of Lemma \ref{lem:alt-H-X-norms} discussed at the beginning
of this section.
\end{proof}

\subsection{Asymptotic structure}\label{secsec:asymptotic-structure}
We now turn to the construction of asymptotic expansions
and begin with a technical lemma used
to ensure $\partial_y$
derivatives of coefficient functions appearing in the expansions
can be arranged to vanish to high order.

\begin{lemma} \label{lem:almost-const-coeff}
Suppose $1<p<\infty$ and $k\in\Nats$.
For $u\in C^\infty(\mathbb H)$ define
\[
S_k(u) = \sum_{j=0}^{k-1} \frac{(-1)^jy^j}{j!}\partial_y^j u.
\]
\begin{enumerate}
\item \label{pt:H-coef-regularity}
If $u\in \mathscr H^{\infty,p;k}(\Hyp\pipesep y)$ then $S_k(u)\in \mathscr H^{\infty,p;k}(\Hyp\pipesep y)$.

\item \label{pt:X-coef-regularity}
If $u\in \mathscr X^{\infty;k}(\Hyp\pipesep y)$ then $S_k(u)\in \mathscr X^{\infty;k}(\Hyp\pipesep y)$.

\item \label{pt:coef-machine} In either case \eqref{pt:H-coef-regularity} or case \eqref{pt:X-coef-regularity} we have
\[
\left.\partial^j_y \left[\frac{y^\ell}{\ell!} S_k(u)\right]\right|_{y=0} =
\begin{cases} 0 & 0\le j \le k+\ell-1,\quad j\neq \ell\\
u|_{y=0}& j=\ell
\end{cases}
\]
for each $\ell\in \mathbb Z_{\geq0}$.
\end{enumerate}
\end{lemma}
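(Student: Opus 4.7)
The plan is to treat parts (i), (ii), and (iii) separately. Parts (i) and (ii) are direct mapping statements built from basic properties of the half-space fortified spaces, whereas part (iii) is a Leibniz computation that reduces to a standard combinatorial identity.

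For parts (i) and (ii) I would argue that each summand $\tfrac{(-1)^j y^j}{j!}\partial_y^j u$ already lies in the ambient fortified space. The listed mapping property $\partial_y\colon \mathscr H^{s,p;m}(\Hyp\pipesep y)\to \mathscr H^{s-1,p;m-1}(\Hyp\pipesep y)$ gives $\partial_y^j u\in \mathscr H^{\infty,p;k-j}(\Hyp\pipesep y)$, and iterating the multiplication property $y\cdot \mathscr H^{s,p;m}(\Hyp\pipesep y)\subset \mathscr H^{s,p;m+1}(\Hyp\pipesep y)$ (valid as $s=\infty$ throughout, so the side condition $s\ge m+1$ is automatic) promotes the boundary order by one for each factor of $y$. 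Hence $y^j\partial_y^j u\in \mathscr H^{\infty,p;k}(\Hyp\pipesep y)$, and summing yields $S_k(u)\in \mathscr H^{\infty,p;k}(\Hyp\pipesep y)$. The argument for $\mathscr X^{\infty;k}(\Hyp\pipesep y)$ is identical, using the corresponding properties recorded at the start of the section.

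For part (iii) I would write
\begin{equation*}
\frac{y^\ell}{\ell!} S_k(u) = \sum_{i=0}^{k-1} \frac{(-1)^i y^{\ell+i}}{\ell!\,i!}\,\partial_y^i u
\end{equation*}
and apply Leibniz to each summand. Since $\partial_y^r(y^{\ell+i})$ vanishes at $y=0$ unless $r=\ell+i$, only a single term per summand survives evaluation, and the surviving derivative order on $u$ is $j-(\ell+i)+i = j-\ell$, which is independent of $i$. Setting $m=j-\ell$, a short calculation gives
\begin{equation*}
\partial_y^j\!\left[\frac{y^\ell}{\ell!}S_k(u)\right]\bigg|_{y=0}
= \frac{j!}{\ell!\,m!}\,\partial_y^m u|_{y=0}\,\sum_{i=0}^{\min(m,k-1)}(-1)^i\binom{m}{i}
\end{equation*}
valid when $j\ge \ell$, and zero otherwise. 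When $j=\ell$ (so $m=0$) the inner sum is $1$, producing $u|_{y=0}$. When $\ell<j\le k+\ell-1$ we have $1\le m\le k-1$, so the inner sum runs over the full range $0\le i\le m$ and collapses to $(1-1)^m=0$. When $j<\ell$ the claim is immediate.

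There is no substantial technical obstacle; the one point demanding care is the bookkeeping after Leibniz, namely observing that the $i$-dependence in the residual derivative $\partial_y^{j-r+i}u$ disappears precisely when $r=\ell+i$. That cancellation is the structural reason $S_k$ is defined with the alternating-sign coefficients $(-1)^i/i!$: the coefficients are exactly those needed to annihilate all intermediate Taylor terms up to order $k+\ell-1$ via the identity $\sum_{i=0}^{m}(-1)^i\binom{m}{i}=0$.
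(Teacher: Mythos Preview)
Your argument for parts (a) and (b) is exactly what the paper does: each $y^j\partial_y^j u$ lands back in the fortified space by the listed mapping properties of $\partial_y$ and multiplication by $y$.

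For part (c) you take a genuinely different route. The paper does \emph{not} expand via Leibniz. Instead it first treats $\ell=0$ by exploiting the telescoping identity
\[
\partial_y S_k(u) \;=\; \frac{(-1)^{k-1}}{(k-1)!}\,y^{k-1}\partial_y^k u,
\]
which places $\partial_y S_k(u)$ in $H^{\infty,p}_{k-1-n/p}(\Hyp\pipesep y)$; Proposition~\ref{prop:fast-decay-vs-vanishing-derivatives} then forces $\partial_y^j S_k(u)|_{y=0}=0$ for $1\le j\le k-1$, while $S_k(u)|_{y=0}=u|_{y=0}$ comes from the same proposition applied to each $y^j\partial_y^j u$. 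The case $\ell>0$ is handled by a short induction. Your direct Leibniz computation with the identity $\sum_{i=0}^{m}(-1)^i\binom{m}{i}=0$ is a valid alternative and is arguably more transparent combinatorially; the paper's route, by contrast, exposes the structural reason $S_k$ works (its $y$-derivative is a single high-decay term) and ties directly into the decay/trace correspondence that drives the rest of \S\ref{sec:asymptotics}.

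One point in your write-up deserves a sentence of justification. When you say ``only a single term per summand survives evaluation,'' you are implicitly evaluating products like $y^{\ell+i-r}\partial_y^{j-r+i}u$ at $y=0$ for possibly large $j-r+i$, where $\partial_y^{j-r+i}u$ need not itself admit a boundary trace. This is not a pointwise statement: you must check that each such term lies in $\mathscr H^{\infty,p;1}(\Hyp\pipesep y)\cap H^{\infty,p}_{1-n/p}(\Hyp\pipesep y)$ so that Proposition~\ref{prop:fast-decay-vs-vanishing-derivatives} gives a vanishing trace. The verification is routine (the positive power of $y$ supplies both the fortified regularity and the decay), but it is the same mechanism the paper invokes, so your approach is not entirely independent of that proposition.
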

\begin{proof}
Parts \eqref{pt:H-coef-regularity} and \eqref{pt:X-coef-regularity} follow from the properties of $\mathscr H^{s,p;k}(\mathbb H\pipesep y)$ and $\mathscr X^{s,p;k}(\mathbb H\pipesep y)$ discussed at the beginning of \S\ref{sec:asymptotics}.
We prove part \eqref{pt:coef-machine} in the case that $u\in \mathscr H^{\infty,p;k}(\Hyp\pipesep y)$; the proof of the other case follows from analogous logic.

First consider the case $\ell=0$.
If $1\le j \le k-1$ then
\[
y^j\partial_y^j u\in \mathscr H^{\infty,p;k}(\Hyp\pipesep y)\cap H^{\infty,p}_{j-n/p}(\Hyp\pipesep y)\]
and Proposition \ref{prop:fast-decay-vs-vanishing-derivatives} implies
$y^j\partial_y^j u|_{y=0}=0$.
Hence $S_k(u)|_{y=0}=u|_{y=0}$.
Moreover, direct computation shows that, since
$\partial_y^{k} u \in \mathscr H^{\infty,p;0}(\Hyp\pipesep y)=H^{\infty,p}_{-n/p}(\Hyp\pipesep y)$, we have
\[
\partial_y S_k(u) = \frac{(-1)^{k-1}}{(k-1)!}y^{k-1}  \partial_y^{k} u\in H^{\infty,p}_{k-1-n/p}(\Hyp\pipesep y).
\]
So Proposition \ref{prop:fast-decay-vs-vanishing-derivatives}
implies $\partial_y^{k} S_k(u)|_{y=0} = 0$ for $1\le j\le k-1$.
This proves the result for $\ell=0$, and the result for $\ell>0$ is
a straightforward inductive argument with the $\ell=0$ case as the
base case.
\end{proof}

\begin{theorem}\label{thm:Taylor}
Suppose $1<p<\infty$, $m\in\Nats$, $s\in\Reals$ with
$s\ge m$.
\begin{enumerate}
\item\label{part:Taylor-H}
 Given $u\in \mathscr H^{s,p;m}(\Hyp\pipesep y)$
there exist $u_0,\ldots,u_{m-1}$ such that
for each $0\le k\le m-1$
\begin{enumerate}
	\item $u_k\in \mathscr H^{\infty,p;m-k}(\Hyp\pipesep y)$,
	\item $u_k|_{y=0}=\partial_y^k u|_{y=0}$,
	\item $u_k=0$ if $\partial_y^k u|_{y=0}=0$,
	\item The function $P_k[u]=\sum_{j=0}^k (y^j/j!)u_j\in \mathscr H^{\infty,p;m}(\mathbb H\pipesep y)$ and
	\begin{enumerate}
		\item $u-P_k[u]\in H^{s,p}_{k+1-n/p}(\Hyp\pipesep y)$,
		\item $\partial_y^j P_k[u]|_{y=0}=0$, $k+1\le j\le m-1$.
	\item If $u\in H^{s,p}_{\sigma-n/p}(\Hyp\pipesep y)$ for some $\sigma\in \Reals$
	 then then $P_k[u]\in H^{\infty,p}_{\sigma-n/p}(\Hyp\pipesep y)$.
	\end{enumerate}
\end{enumerate}
In particular, $P_{m-1}[u] \in \mathscr H^{\infty,p;m}(\Hyp\pipesep y)$
and $u = P_{m-1}[u] + H^{s,p}_{m-n/p}(\Hyp\pipesep y)$.

\item\label{part:Taylor-X}
 Given $u\in \mathscr X^{s,p;m}(\Hyp\pipesep y)$
there exist $u_0,\ldots,u_{m-1}$ such that
for each $0\le k\le m-1$
\begin{enumerate}
	\item $u_k\in \mathscr X^{\infty;m-k}(\Hyp\pipesep y)$,
	\item $u_k|_{y=0}=\partial_y^k u|_{y=0}$,
	\item $u_k=0$ if $\partial_y^k u|_{y=0}=0$,
	\item The function $P_k[u]=\sum_{j=0}^k (y^j/j!)u_j\in \mathscr X^{\infty;m}(\mathbb H\pipesep y)$ and
	\begin{enumerate}
		\item $u-P_k[u]\in X^{s,p}_{k+1}(\Hyp\pipesep y)$,
		\item $\partial_y^j P_k[u]|_{y=0}=0$, $k+1\le j\le m-1$.
	\item If $u\in X^{s,p}_{\sigma}(\Hyp\pipesep y)$ for some $\sigma\in \Reals$
	 then then $P_k[u]\in X^{\infty}_{\sigma}(\Hyp\pipesep y)$.
	\end{enumerate}
\end{enumerate}
In particular, $P_{m-1}[u] \in \mathscr X^{\infty;m}(\Hyp\pipesep y)$
and $u = P_{m-1}[u] + X^{s,p}_{m}(\Hyp\pipesep y)$.
\end{enumerate}
\end{theorem}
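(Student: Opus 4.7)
The plan is to build the coefficients $u_0,\ldots,u_{m-1}$ by induction on $k$, using two local tools at each stage. The first is $\Hyp$-mollification from Proposition \ref{prop:prop-conv}, which upgrades a fortified-space element to an interior-smooth one while preserving both the fortified regularity and, by Lemma \ref{lem:first-order-approx}, the boundary trace. The second is the averaging operator $S_{m-k}$ of Lemma \ref{lem:almost-const-coeff}, which modifies a smooth function so that all but one prescribed $\partial_y$-trace on $\{y=0\}$ vanishes. Proposition \ref{prop:fast-decay-vs-vanishing-derivatives} will then convert the vanishing of successive boundary derivatives of $u-P_k[u]$ into the improved boundary decay required by (d)(i).

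Fix a mollifier $\psi\in C^\infty_{\rm cpct}(B_{1/2}^\Hyp)$ with $\int\psi^\dagger\,dV_{\breve g}=1$. For the base case $k=0$ I set $u_0:=S_m(u*\psi)$, taking $u_0:=0$ when $u|_{y=0}=0$. Proposition \ref{prop:prop-conv} places $u*\psi$ in $\mathscr H^{\infty,p;m}(\Hyp\pipesep y)$, Lemma \ref{lem:almost-const-coeff} keeps $u_0$ there, and Lemma \ref{lem:first-order-approx} combined with the trace identities of Lemma \ref{lem:almost-const-coeff} gives both $u_0|_{y=0}=u|_{y=0}$ and $\partial_y^j u_0|_{y=0}=0$ for $1\le j\le m-1$. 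For the inductive step, suppose $u_0,\ldots,u_{k-1}$ have been constructed with all the listed properties. Then $w_k:=u-P_{k-1}[u]$ lies in $\mathscr H^{s,p;m}\cap H^{s,p}_{k-n/p}$, so Proposition \ref{prop:fast-decay-vs-vanishing-derivatives} gives $\partial_y^j w_k|_{y=0}=0$ for $0\le j\le k-1$, while a short Leibniz computation using $\partial_y^i u_j|_{y=0}=0$ for $i\ne j$ shows $\partial_y^k w_k|_{y=0}=\partial_y^k u|_{y=0}$. Since $\partial_y^k w_k\in\mathscr H^{s-k,p;m-k}$, applying the same recipe to it yields $u_k:=S_{m-k}((\partial_y^k w_k)*\psi)\in \mathscr H^{\infty,p;m-k}$ (or $u_k:=0$ when the trace vanishes), with $u_k|_{y=0}=\partial_y^k u|_{y=0}$ and $\partial_y^j u_k|_{y=0}=0$ for $1\le j\le m-k-1$. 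The updated polynomial $P_k[u]=P_{k-1}[u]+(y^k/k!)u_k$ then matches the boundary $\partial_y$-traces of $u$ through order $k$ and introduces no higher spurious ones, so $u-P_k[u]\in H^{s,p}_{k+1-n/p}$ by a final application of Proposition \ref{prop:fast-decay-vs-vanishing-derivatives}.

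The hard part will be the weight-inheritance clause (d)(iii): if $u\in H^{s,p}_{\sigma-n/p}$ for some real $\sigma$, then $P_k[u]\in H^{\infty,p}_{\sigma-n/p}$. I propagate this through the induction by observing that if $w_k\in H^{s,p}_{\max(k,\sigma)-n/p}$, then $\partial_y^k w_k\in H^{s-k,p}_{\max(0,\sigma-k)-n/p}$; this weight is preserved under $\Hyp$-mollification by Proposition \ref{prop:prop-conv} and only improved by the monomial prefactors $y^j$ appearing in $S_{m-k}$. Consequently $u_k\in H^{\infty,p}_{\max(0,\sigma-k)-n/p}$ and $y^k u_k\in H^{\infty,p}_{\sigma-n/p}$, closing the induction. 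The Gicquaud--Sakovich case is handled in exact parallel using the $X$-scale counterparts of these four tools; the bookkeeping is slightly simpler there because the $-n/p$ shifts disappear from every weight.
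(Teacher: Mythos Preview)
Your proof is correct and uses the same four tools as the paper: $\Hyp$-mollification, the operators $S_k$, the trace-preservation Lemma \ref{lem:first-order-approx}, and the decay/vanishing-trace correspondence of Proposition \ref{prop:fast-decay-vs-vanishing-derivatives}. The only difference is organizational: the paper defines each coefficient directly by $u_k = S_{m-k}\bigl((\partial_y^k u)\ast\psi\bigr)$ rather than inductively from the remainders $w_k = u - P_{k-1}[u]$; since $\partial_y^k w_k$ and $\partial_y^k u$ have the same trace at $y=0$, the two constructions have identical boundary behavior and the verification of (a)--(d) proceeds the same way. The direct definition makes (d)(iii) marginally shorter (for $\sigma>0$ one simply observes that the $u_i$ with $i<\lfloor\sigma\rfloor$ vanish outright and estimates the remaining term by term), but your inductive weight-tracking is equally valid.
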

\begin{proof}
The proofs of parts \eqref{part:Taylor-H} and
\eqref{part:Taylor-X} are essentially the same,
and we focus on part \eqref{part:Taylor-H}.
For $0\le k \le m-1$ define
\[
u_k=\begin{cases} 0 & \partial_y^k u|_{\partial H} \equiv 0\\
S_{m-k}(\partial_y^k u * \psi) &\text{otherwise}.
\end{cases}
\]
Here $S_{m-k}$ is the map defined in Lemma \ref{lem:almost-const-coeff}.
Note that $\partial_y^k u\in\mathscr H^{s-k,p;m-k}(\Hyp\pipesep y)$,
so
Proposition \ref{prop:prop-conv}\eqref{pt:conv-addl-script-regularity-H} and Lemma \ref{lem:almost-const-coeff} ensure
$u_k\in \mathscr H^{\infty,p;m-k}(\Hyp\pipesep y)$.
Since $m-k\ge 1$,
Lemma \ref{lem:first-order-approx}, Proposition \ref{prop:fast-decay-vs-vanishing-derivatives}
and Lemma \ref{lem:almost-const-coeff} imply
$u_k|_{y=0}=\partial_y^k u|_{y=0}$.

The properties of $\mathscr H^{s,p;m}(\mathbb H\pipesep y)$ at the beginning of this section imply that $P_k[u] = \sum_{j=0}^k (y^j/j!)u_j\in
\mathscr H^{\infty,p;m}(\mathbb H\pipesep y)$ for $0\le k \le m-1$.
Thus using
Lemma \ref{lem:almost-const-coeff}
we compute
\[
\left.\partial_y^j P_k[u]\right|_{y=0} = \begin{cases}
\left.\partial_y^j u\right|_{y=0} & 0\le j\le k\\
0 & k+1\le j \le m-1.
\end{cases}
\]
Note that the above computation remains valid even
for those values of $j$ where $\partial^j_y u|_{y=0}\equiv 0$.
As a consequence we find
$\partial_y^j (u-P_k[u])|_{y=0}=0$ for $0\le j \le k$,
and Proposition \ref{prop:fast-decay-vs-vanishing-derivatives} implies $u-P_k[u]\in H^{s,p}_{k+1-n/p}(\Hyp\pipesep y)$.

Concerning the decay rate of $P_k[u]$, suppose $u\in H^{s,p}_{\sigma-n/p}(\mathbb H\pipesep y)$.  If $\sigma\le 0$ then $P_k[u]\in H^{\infty,p}_{\sigma-n/p}(\mathbb H\pipesep y)$
trivially and if $\sigma\ge m$ then
Proposition \ref{prop:fast-decay-vs-vanishing-derivatives}
implies $\partial_j u|_{\partial \Hyp}=0$, $0\le j\le m-1$
and therefore $P_k[u]= 0\in H^{\infty,p}_{\sigma-n/p}(\mathbb H\pipesep y)$.

If $0<\sigma<m$, let $j=\lfloor \sigma\rfloor$ and observe that $0\le j \le m-1$.
Proposition \ref{prop:fast-decay-vs-vanishing-derivatives} implies
$u_i\equiv 0$ if $i<j$, whereas if $j<i\le m-1$  then
$y^i u_i\in H^{\infty,p}_{i-n/p}(\mathbb H\pipesep y)\subset H^{\infty,p}_{\sigma-n/p}(\mathbb H\pipesep y)$.
So it suffices to show that $y^j u_j\in H^{\infty,p}_{\sigma-n/p}(\mathbb H\pipesep y)$.

Recall $u_j = S_{m-j}((\partial_j u)*\psi)$.
Since $\partial_j u\in H^{s-j,p}_{\sigma-j-n/p}(\mathbb H\pipesep y)$
Proposition \ref{prop:prop-conv}\eqref{pt:conv-addl-regularity-H}
implies $(\partial_j u)*\psi\in H^{\infty,p}_{\sigma-j-n/p}(\mathbb H\pipesep y)$.
The proof of Lemma \ref{lem:almost-const-coeff} shows
\[
S_{m-j}\left((\partial_j u)*\psi\right) =
(\partial_j u)*\psi + H^{\infty,p}_{1-n/p}(\mathbb H\pipesep y)
\]
and since $\sigma-j\le 1$ we conclude
$u_j\in H^{\infty,p}_{\sigma-j-n/p}(\mathbb H\pipesep y)$ and $y^j u_j \in H^{\infty,p}_{\sigma-n/p}(\mathbb H\pipesep y)$ as required.
\end{proof}

The fine details of the asymptotic expansions of
Theorem \ref{thm:Taylor} do not translate to to general
manifolds without assuming additional structure near $\partial M$
to make sense of higher order derivatives.  Nevertheless,
we obtain Theorem \ref{thm:curly-split}.

\begin{proof}[Proof of Theorem \ref{thm:curly-split}]
We prove part \eqref{part:curly-split-H} concerning $\mathscr H^{s,p;m}(M;E)$;
the Gicquaud-Sakovich case is proved similarly.
Using a partition of unity
it suffices to show that the result holds
for tensors with bounded support in a single coordinate chart
at infinity.  When $m=0$, we can take $\tau=0$ and $r=u$ and
the result is simply a consequence
of the definition of the space $\mathscr H^{s,p;0}(M;E)$.  Otherwise,
the technique of Lemma \ref{lem:X-other-charts} implies
each of the components $u_A^B$ of $u$
relative to the coordinates at infinity lie in
$\mathscr H^{s,p;m}(\Hyp\pipesep y)$ with $m\ge 1$
and Theorem \ref{thm:Taylor} implies we can write
\[
u_A^B = \tau_A^B + r_A^B
\]
with $\tau_A^B = P_m[u_A^B]\in \mathscr H^{\infty,p;m}(\Hyp\pipesep y)$ and $r_A^B\in H^{s,p}_{m-n/p}(\Hyp\pipesep y)$.

Suppose now that $u$ has additional decay, specifically that
each $u_A^B\in H^{s,p}_{k+\alpha-n/p}(\Hyp\pipesep y)$ for some integer
$0\le k \le m-1$ and some $\alpha\in [0,1)$. We wish to show
that
$\tau_A^B\in y^k (\mathscr H^{s,p;m-k}(\Hyp\pipesep y)\cap H^{\infty,p}_{\alpha}(\Hyp\pipesep y)$
Theorem \ref{thm:Taylor} implies
$\tau_A^B =P_m[u_A^B] \in H^{\infty,p}_{k+\alpha}(\Hyp\pipesep y)=y^k H^{\infty,p}_{\alpha}(\Hyp\pipesep y)$ which finishes
the proof if $k=0$.  If $k>0$ then
Proposition \ref{prop:fast-decay-vs-vanishing-derivatives}
implies that the traces $\partial_y^j u_A^B|_{y=0}$ vanish for $0\le j \le k-1$.
But then $\tau_A^B$ has the form $\sum_{j=k}^m (y^j/j!) a_j$ where
each $a_j\in \mathscr H^{\infty,p;m-j}(\Hyp\pipesep y)$. Observe
that for each $j\ge k$
\[
y^{-k} y^{j} a_j \in \mathscr H^{\infty,p;m-j+(j-k)}(\Hyp\pipesep y)=
\mathscr H^{\infty,p;m-k}(\Hyp\pipesep y)
\]
and hence $\tau_A^B \in y^k \mathscr H^{\infty,p;m-k}(\Hyp\pipesep y)$ as required.

Finally, the fact that for $m\ge 1$ that 
$u|_{\partial M}=0$ if and only if $u\in H^{s,p}_{1-n/p}(\Hyp\pipesep y)$
follows from Proposition \ref{prop:fast-decay-vs-vanishing-derivatives}.
\end{proof}

\section{Elliptic differential operators}\label{sec:differential-ops}

In this section, we establish a priori estimates for operators $\mathcal P[g]$ satisfying Assumptions \ref{Assume-P} and \ref{Assume-I} of the introduction, where $g$ is an asymptotically hyperbolic metric of either class \eqref{intro:H-class} or class \eqref{intro:X-class}.
The main result is Proposition \ref{prop:semi-fred-estimate}, which implies that
these operators are semi-Fredholm and which is
used in \S\ref{sec:fredholm} to obtain a Fredholm theory for such operators.
In the course of obtaining Proposition \ref{prop:semi-fred-estimate}
we also develop some of the machinery needed to build the parametrices
used to obtain the full Fredholm theory; see especially \S\ref{secsec:interp-P}.

Many of the results leading to Proposition \ref{prop:semi-fred-estimate} do not require every point in Assumptions \ref{Assume-P}.
For example, the self-adjoint property is not required by any of the intermediate results.
However, for simplicity of exposition, we frequently invoke Assumption \ref{Assume-P} in its entirety.

\subsection{Linear differential operators}\label{secsec:op-mapping}
We begin with a discussion of the mapping properties of a rather general class of differential operators having coefficients in Gicquaud-Sakovich spaces.
Recall from \S\ref{sec:elementary-spaces} the reference connection $\hat\nabla$ on $\bar M$.
We say that a linear differential operator $L$, taking sections
of a tensor bundle $E$ to sections of tensor bundle $F$ over $M$,
is of \Defn{class $\mathcal L^{s,p}_d$}
for some $s\in\Reals$ and $1<p<\infty$ if it has the form
\begin{equation}\label{eq:curly-L}
L = \sum_{0\le k \le d} a_k\cdot \hat\nabla^k,
\end{equation}
where the dot indicates tensor contraction, and where each
$a_k \in X^{s-d+k,p}_0(M;T^{*,*}M)$ as a section of its $k$-dependent bundle.
These operators determine maps from $C^\infty(M;E)$ to
distributions $\mathcal D'(M;F)$ and we would like to
expand the domain to various Sobolev spaces.
When $s>n/p$, and hence $X^{s,p}_0(M;\Reals)$ is an algebra,
repeated applications of Proposition \ref{prop:multiplication}
yield the following.
\begin{proposition}\label{prop:L-mapping-S}
Suppose $1< p,q <\infty$, $\delta\in\Reals$, $s>n/p$, $\sigma\in\Reals$ and
$d\in\Nats_{\ge 0}$.  An operator
$L=\sum_{k=0}^d a_k \cdot \hat\nabla^k$ of class $\mathcal L^{s,p}_{d}$
extends to continuous linear maps
\[
\begin{aligned}
&H^{\sigma,q}_{\delta}(M;E)\to H^{\sigma-d,q}_{\delta}(M;F),\\
&X^{\sigma,q}_{\delta}(M;E)\to X^{\sigma-d,q}_{\delta}(M;F)\\
\end{aligned}
\]
so long as
\begin{equation}\label{eq:S-conds}
\begin{gathered}
d-s\leq \sigma\leq s,
\\
\frac{1}{p} - \frac{s}{n}\leq
\frac{1}{q} - \frac{\sigma}{n}
\leq \frac{1}{p\dual}-\frac{d-s}{n}.
\end{gathered}
\end{equation}
Moreover, viewed as a map between these spaces, $L$ depends continuously
on its coefficients $a_k \in X^{s-d+k}_0(M;T^{*,*}M)$.
\end{proposition}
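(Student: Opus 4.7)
The plan is to decompose $L = \sum_{k=0}^d a_k \cdot \hat\nabla^k$ and establish continuity of each summand $u \mapsto a_k \cdot \hat\nabla^k u$ between the indicated spaces; summing over $k$ then yields the full result. For each $k$, I would first apply Lemma \ref{lem:nabla-HX-map} iteratively to see that
\begin{equation*}
\hat\nabla^k \colon H^{\sigma,q}_\delta(M;E) \to H^{\sigma-k,q}_\delta(M;T^{0,k}M\otimes E),
\end{equation*}
and similarly for the Gicquaud--Sakovich spaces, leaving the weight untouched and dropping $k$ derivatives. The remaining work is to multiply the result against $a_k \in X^{s-d+k,p}_0(M;T^{*,*}M)$ using Proposition \ref{prop:multiplication} to land in $H^{\sigma-d,q}_\delta(M;F)$ (respectively $X^{\sigma-d,q}_\delta(M;F)$), appealing to the $X\times H\to H$ case in the Sobolev setting and the $X\times X\to X$ case in the Gicquaud--Sakovich setting.

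The bulk of the argument is then the bookkeeping needed to check that the hypotheses of Proposition \ref{prop:multiplication} hold under the assumptions \eqref{eq:S-conds}. With $s_1 = s-d+k$, $s_2 = \sigma-k$, target order $\sigma-d$, and Lebesgue exponents $p_1 = p$, $p_2 = q_\mathrm{target} = q$, the conditions $s_1 + s_2 \ge 0$ and $\sigma - d \le \min(s_1, s_2)$ reduce respectively to $\sigma \ge d-s$ (the lower bound in \eqref{eq:S-conds}) and to $\sigma \le s$ together with $0 \le k \le d$. Defining $1/r_i$ and $1/r$ as in the multiplication theorem, a direct computation shows that $1/r_1 \le 1/r$ and $1/r_2 \le 1/r$ are automatic, and that $1/r_1 + 1/r_2 \le 1/r$ reduces to $1/p \le s/n$, which is ensured by $s > n/p$. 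The key identity to verify is that the upper bound in \eqref{eq:S-conds} translates exactly to $1/r_1 + 1/r_2 \le 1$, since
\begin{equation*}
\tfrac{1}{r_1} + \tfrac{1}{r_2} = \tfrac{1}{p} + \tfrac{1}{q} - \tfrac{s+\sigma-d}{n} \quad\text{and}\quad \tfrac{1}{p^*} - \tfrac{d-s}{n} = 1 - \tfrac{1}{p} + \tfrac{s-d}{n}.
\end{equation*}

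The main obstacle is the boundary case of Proposition \ref{prop:multiplication}, where $\min(1/r_1, 1/r_2) = 0$ forces the inequality $1/r_1 + 1/r_2 \le 1$ to be strict. This can fail only when $s-d+k = n/p$ for some $k$, in which case the strict inequality $s > n/p$ from the defining algebra condition, together with slackness in one of the other bounds for nearby $k$, has to be used; in practice one handles the borderline coefficient by a slight reduction of $s$ followed by interpolation, using the basic inclusions of Lemma \ref{lem:basic-inclusions} to absorb the loss in the weighted $L^q$ scale.

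Finally, continuity in the coefficients $a_k$ is automatic: the estimate produced by Proposition \ref{prop:multiplication} is bilinear, giving
\begin{equation*}
\|a_k \cdot \hat\nabla^k u\|_{H^{\sigma-d,q}_\delta} \lesssim \|a_k\|_{X^{s-d+k,p}_0}\,\|u\|_{H^{\sigma,q}_\delta},
\end{equation*}
and similarly in the $X$ scale, so the dependence of $L$ on the tuple $(a_0,\dots,a_d)$ is continuous in the operator norm topology on $\mathcal{L}(H^{\sigma,q}_\delta, H^{\sigma-d,q}_\delta)$ and on $\mathcal{L}(X^{\sigma,q}_\delta, X^{\sigma-d,q}_\delta)$.
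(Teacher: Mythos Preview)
Your approach is exactly the paper's: the paper's entire proof is the single sentence preceding the proposition, ``repeated applications of Proposition~\ref{prop:multiplication} yield the following.'' You have supplied the term-by-term verification that the paper omits, and your bookkeeping (identifying the lower bound in \eqref{eq:S-conds} with $s_1+s_2\ge 0$, the upper bound with $1/r_1+1/r_2\le 1$, and the strict inequality $s>n/p$ with $1/r_1+1/r_2<1/r$) is correct.

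One caveat on your treatment of the borderline case $\min(1/r_1,1/r_2)=0$: the fix you sketch does not work as written. Lowering $s$ to $s'<s$ \emph{tightens} the upper bound in \eqref{eq:S-conds} (since $1/p^*-(d-s')/n<1/p^*-(d-s)/n$), so if you are already at equality there the reduced condition fails outright; interpolation cannot recover this. You also omit the symmetric possibility $1/r_2=0$, i.e.\ $\sigma-k=n/q$. That said, this edge case only arises when one is simultaneously at equality in the upper bound of \eqref{eq:S-conds} and in the regime $1/r>1$, and the paper does not address it either---so your argument is at least as complete as the original.
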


\begin{figure}
\centering
\includegraphics[width=0.7\textwidth]{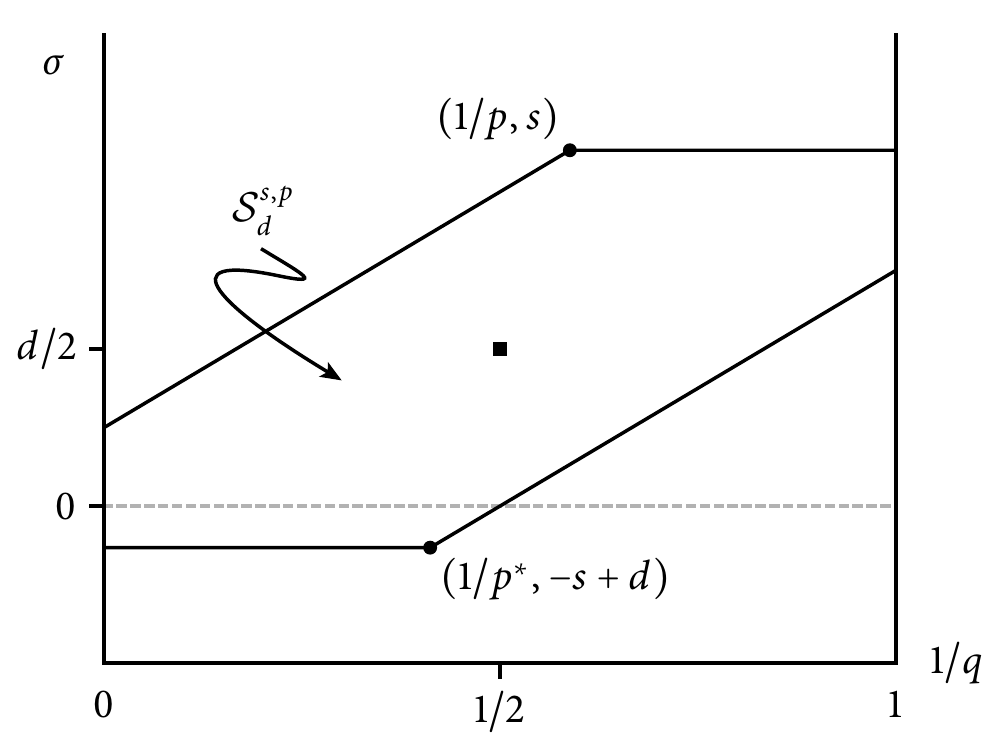}
\caption{The region $\mathcal S^{s,p}_d$ for $n=3$, $d_0=0$, $d=2$, $p=1.7$
and $s=\frac n p + \frac 12$.}\label{fig:Sspd}
\end{figure}

It is convenient to have notation for the Sobolev indices
satisfying conditions \eqref{eq:S-conds}.
Given $1<p<\infty$, $s\in\Reals$ and $d\in\Nats_{\ge 0}$,
the \textbf{compatible Sobolev indices $\mathcal S^{s,p}_d$}
for an operator of class $\mathcal L^{s,p}_d$ is the set
of tuples $(\sigma,q)\in \Reals\times (1,\infty)$  satisfying \eqref{eq:S-conds}.

For a general choice of $s$, $p$, $d$, the set
$\mathcal S^{s,p}_{d}$ may be empty. The following
lemma provides conditions that guarantee
that $\mathcal S^{s,p}_d$ is nontrivial and is
proved identically to its counterpart in \cite{HMT2020},
using Lemma \ref{lem:basic-inclusions} and Proposition \ref{prop:SobolevEmbedding}
to establish the embeddings.
\begin{lemma}\label{lem:S-members}
Suppose $1<p<\infty$, $s\in\Reals$ and $d\in\Nats_{\ge 0}$.
Then $\mathcal S^{s,p}_d$
is nonempty if and only if
\begin{equation}\label{eq:S-p-2}
s\geq \frac{d}{2}
\quad\text{ and }\quad
\frac{1}{p} - \frac{s}{n} \le \frac{1}{2}-\frac{d/2}{n}.
\end{equation}
Consequently:
\begin{enumerate}
\item If $n\ge 2$, $s\ge 1$, and $s>n/p$, then $\mathcal S^{s,p}_2$ is non-empty.

\item If $\mathcal S^{s,p}_d$ is non-empty, then it contains $(s,p)$, $(d-s,p\dual)$, and $(d/2,2)$.

\item If $(\sigma,q)\in \mathcal S^{s,p}_{d}$,
then for any $\delta\in\Reals$ and $\epsilon>0$
we have the continuous inclusions for any tensor bundle $E$ over $M$
\begin{equation}\label{eq:S-include}
\begin{gathered}
H^{s,p}_{\delta+\frac{n-1}{p}}(M;E) \hookrightarrow
H^{\sigma,q}_{\delta+\frac{n-1}{q}+\epsilon}(M;E) \hookrightarrow
H^{d-s,p\dual}_{\delta+\frac{n-1}{p\dual}+2\epsilon}(M;E),\\
X^{s,p}_{\delta}(M;E) \hookrightarrow
X^{\sigma,q}_{\delta}(M;E) \hookrightarrow
X^{d-s,p\dual}_{\delta}(M;E).
\end{gathered}
\end{equation}
\end{enumerate}
\end{lemma}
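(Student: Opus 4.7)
My plan parallels the argument in the Euclidean setting of \cite{HMT2020} and proceeds in four phases: the characterization of nonemptiness, the two direct consequences (i) and (ii), and the embeddings (iii), which contain the main obstacle.

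\textbf{Characterization.} I will extract the necessity of $s \geq d/2$ immediately from $d - s \leq \sigma \leq s$. For the second necessary condition, I will chain the defining inequalities
\[
\frac{1}{p} - \frac{s}{n} \leq \frac{1}{q} - \frac{\sigma}{n} \leq \frac{1}{p\dual} - \frac{d-s}{n}
\]
and exploit the symmetry that the two outer quantities sum to $1 - d/n$, so that their common midpoint is exactly $\frac{1}{2} - \frac{d/2}{n}$. The chain then forces $\frac{1}{p} - \frac{s}{n}$ to lie at or below that midpoint. Conversely, when both conditions hold, the symmetric pair $(\sigma, q) = (d/2, 2)$ witnesses nonemptiness: $d - s \leq d/2 \leq s$ follows from $s \geq d/2$, and the second defining inequality holds by hypothesis together with its reflection through the midpoint.

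\textbf{Consequences (i) and (ii).} For (i), specializing $d = 2$ makes the first condition of \eqref{eq:S-p-2} become $s \geq 1$, which is given; for the second, $s > n/p$ makes the left side negative while $n \geq 2$ makes the right side nonnegative. For (ii), I will verify memberships by direct substitution: $(s, p)$ and $(d - s, p\dual)$ make the central inequalities in the defining chain tight on the left and right respectively, reducing the remaining conditions to the left-to-right endpoint relation already exhibited in the characterization; the midpoint $(d/2, 2)$ was handled above.

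\textbf{Embeddings (iii).} The Gicquaud-Sakovich case is immediate: Proposition \ref{prop:SobolevEmbedding} for $X$-spaces has no restriction on the ordering of $p$ and $q$ and preserves the weight $\delta$, so the two monotonicity inequalities in the definition of $\mathcal S^{s,p}_d$ deliver both inclusions directly. The Sobolev case will be the main obstacle because Proposition \ref{prop:SobolevEmbedding} for $H$-spaces requires $p \leq q$ while Lemma \ref{lem:basic-inclusions} trades $p$ downward only in exchange for a strictly increased effective weight $\delta + \frac{n-1}{p}$; the slack $\epsilon > 0$ is precisely what promotes the non-strict defining inequalities of $\mathcal S^{s,p}_d$ to the strict inequalities required by the basic inclusion. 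Following \cite{HMT2020}, I will split into cases according to whether $p \leq q$ or $p > q$ and compose Proposition \ref{prop:SobolevEmbedding} with Lemma \ref{lem:basic-inclusions} in the appropriate order (possibly through an intermediate Sobolev exponent), verifying that the effective-weight bookkeeping closes by virtue of the inequalities satisfied by $(\sigma, q) \in \mathcal S^{s,p}_d$ together with the $\epsilon$ slack. The second embedding follows by chaining the same construction from $(\sigma, q)$ to $(d - s, p\dual)$, which accounts for the total slack of $2\epsilon$ on the right.
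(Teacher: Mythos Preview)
Your proposal is correct and follows essentially the same route the paper indicates: the paper does not write out a proof but states that the lemma ``is proved identically to its counterpart in \cite{HMT2020}, using Lemma \ref{lem:basic-inclusions} and Proposition \ref{prop:SobolevEmbedding} to establish the embeddings,'' and your four-phase plan does exactly this. Your midpoint argument for the characterization, the direct substitutions for (i) and (ii), the one-step Sobolev embedding for the $X$-chain, and the case split on $p\lessgtr q$ combining Sobolev embedding with the basic inclusion for the $H$-chain are all on target. One small imprecision: when you write that ``the slack $\epsilon>0$ is precisely what promotes the non-strict defining inequalities of $\mathcal S^{s,p}_d$ to the strict inequalities required by the basic inclusion,'' note that the defining inequalities of $\mathcal S^{s,p}_d$ concern the regularity quantities $1/p-s/n$, whereas the strict inequality in Lemma \ref{lem:basic-inclusions} is on the effective weight $\delta+(n-1)/p$; the $\epsilon$ is weight slack, not regularity slack, and is needed simply because the basic inclusion is strict in the weight.
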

The main conclusion of Lemma \ref{lem:S-members} is striking: $\mathcal S^{s,p}_d$
is nonempty if and only if it contains $(d/2,2)$, which corresponds to a weak $L^2$
theory (e.g., $W^{1,2}$ for a second-order operator).
We therefore call \eqref{eq:S-p-2} the \Defn{weak $L^2$ condition} and reiterate that it always
holds for a second-order operator when $n\geq 2$, $s\ge 1$, and $s>n/p$.

If $s>n/p$ then the highest-order coefficients of an operator
$L\in \mathcal L^{s,p}_d$ are continuous.
We say
that $L$ is \Defn{elliptic} on open set $U$ if at each point the principle operator $a_d\cdot\hat\nabla^d$ is elliptic
in the usual sense.
If $U$ is an open set in $M$, we let $H^{s,p}(U;E)$ be the quotient space of
$H^{s,p}_0(M;E)$, where two sections are identified if they agree as distributions on 
$U$; this construction is analogous to the definition of Bessel potential spaces on 
open subsets of $\Reals^n$. The local elliptic regularity theorem of \cite{HMT2020}
then readily implies the following interior estimate.
\begin{proposition}\label{prop:interior-reg}
Suppose $1<p<\infty$, $s\in\Reals$ with $s>n/p$, and $d\in \Nats_{\ge 0}$.
Suppose $L$ is a differential operator of class $\mathcal L^{s,p}_d$, acting sections of tensor bundle $E$ over $M$,
that is elliptic on some precompact
open set $U$.  If $u\in H^{d-s,p\dual}(U;E)$
and $L u\in H^{\sigma-d,q}(U;E)$
for some $(\sigma,q)\in \mathcal S^{s,p}_d$,
then for any open set $V$ with $\overline V\subset U$, we have $u\in H^{\sigma,q}(V;E)$ and
\begin{equation}\label{eq:int-reg}
\|u\|_{H^{\sigma,q}(V;E)} \lesssim \|Lu\|_{H^{\sigma-d,q}(U;E)} +
\|u\|_{H^{d-s-1,p\dual}(U;E)},
\end{equation}
where the implicit constant depends on $U$ and $V$.
\end{proposition}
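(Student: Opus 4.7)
The plan is to localize the problem to a finite collection of coordinate charts and invoke the local elliptic regularity theorem of \cite{HMT2020} on each chart. The essential observation is that on precompact subsets of $M$, the weighted Gicquaud--Sakovich spaces reduce to standard Bessel potential spaces, so the operator class $\mathcal{L}^{s,p}_d$ is locally indistinguishable from the setting treated in that reference.

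First I would cover $\overline{V}$ by finitely many open sets $V_j$, each paired with a slightly enlarged set $U_j$ such that $\overline{V_j}\subset U_j$ and $\overline{U_j}\subset U$, with each $U_j$ contained in the domain of a coordinate chart that simultaneously trivializes the bundle $E$. Since each $U_j$ is precompact in $M$, the weight function $\rho$ is bounded above and below there; combining this with Lemma \ref{lem:many-norms}\eqref{part:Xk-is-Xk}, each coefficient $a_k\in X^{s-d+k,p}_0(M;T^{*,*}M)$ restricts to an element of $H^{s-d+k,p}$ on the chart image $\Omega_j\subset\mathbb{R}^n$. The pushed-forward operator $\tilde{L}_j$ is therefore of the type admitted by the local elliptic regularity theorem of \cite{HMT2020}; it is uniformly elliptic near $\overline{\tilde{V}_j}$ because $a_d$ is continuous under the hypothesis $s>n/p$ by Sobolev embedding.

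With these preparations in place, in each $\Omega_j$ the pushed-forward solution $\tilde{u}_j$ satisfies $\tilde{L}_j \tilde{u}_j = \tilde{f}_j$ with $\tilde{u}_j\in H^{d-s,p\dual}(\Omega_j)$ and $\tilde{f}_j\in H^{\sigma-d,q}(\Omega_j)$, and the conditions \eqref{eq:S-conds} defining $\mathcal{S}^{s,p}_d$ are precisely the compatibility conditions for the local theorem of \cite{HMT2020}. Invoking that theorem yields $\tilde{u}_j\in H^{\sigma,q}(\tilde V_j)$ along with an estimate of the form \eqref{eq:int-reg} on the smaller set. Pulling back and summing over the finite cover through a subordinate partition of unity produces the global interior estimate on $V$.

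The main obstacle is largely administrative: one must verify that the operator class $\mathcal{L}^{s,p}_d$ matches the hypotheses used in \cite{HMT2020} and that the compatible indices $\mathcal{S}^{s,p}_d$ coincide with the admissible range there. The slight weakening of the lower-order term from $H^{d-s,p\dual}$ to $H^{d-s-1,p\dual}$ on the right-hand side of \eqref{eq:int-reg} is the standard consequence of running the local estimate on a slightly larger intermediate set and applying Rellich's lemma (Proposition \ref{prop:rellich-lemma}, localized) to absorb an intermediate derivative into the top-order term, together with the density of smooth compactly supported sections in each local space. Finally, care is needed to ensure that the implicit constant depends only on $U$, $V$, and the norms of the coefficients, via the finite cover and the continuous dependence of $L$ on its coefficients asserted in Proposition \ref{prop:L-mapping-S}.
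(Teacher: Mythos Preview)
Your approach is correct and matches the paper's: the paper does not give a detailed proof but simply states that the result ``readily implies'' from the local elliptic regularity theorem of \cite{HMT2020}, and your localization-and-patching argument is precisely the standard way to unpack that claim. One small remark: the appearance of $H^{d-s-1,p^\ast}$ rather than $H^{d-s,p^\ast}$ on the right-hand side is already built into the local estimate of \cite{HMT2020} (it is the usual ``any weaker norm suffices'' feature of interior elliptic estimates), so no separate Rellich absorption step is needed.
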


We have the following commutator result; note
the factor of $\rho^{-1}$, which is essential
in our applications.

\begin{lemma}\label{lem:commutator}
Suppose $L$ is a differential operator
of class $\mathcal L^{s,p}_d$ on sections of tensor bundle
$E$ over $M$.
If $\phi$ is a smooth function on $\overline M$ then
\[
\rho^{-1} [L,\phi] \in \mathcal L^{s,p}_{d-1}.
\]
between the same bundles.
In particular, if $(\sigma,q)\in\mathcal S^{s,p}_d$,
it determines continuous linear maps
$H^{\sigma,q}_{\delta}(M;E) \to H^{\sigma-d+1,q}_\delta(M;E)$
and
$X^{\sigma,q}_{\delta}(M;E) \to X^{\sigma-d+1,q}_\delta(M;E)$.
\end{lemma}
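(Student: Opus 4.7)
The plan is to expand the commutator algebraically and then verify that each resulting coefficient has precisely the regularity prescribed for an $\mathcal L^{s,p}_{d-1}$ operator. Writing $L = \sum_{k=0}^d a_k \cdot \hat\nabla^k$ and applying the tensorial Leibniz rule
\[
\hat\nabla^k(\phi u) = \sum_{j=0}^k \binom{k}{j}\, \hat\nabla^{k-j}\phi \otimes \hat\nabla^j u,
\]
the top term ($j=k$) cancels against $\phi \hat\nabla^k u$ in the commutator, and after regrouping by the order of derivative on $u$ I would obtain
\[
[L,\phi] = \sum_{j=0}^{d-1} b_j \cdot \hat\nabla^j, \qquad
b_j = \sum_{k=j+1}^d \binom{k}{j}\, a_k \cdot \hat\nabla^{k-j}\phi.
\]
The central feature is that every term in $b_j$ carries at least one derivative of $\phi$, which by smoothness of $\phi$ on $\bar M$ forces a tangential vanishing at $\partial M$ that exactly absorbs the factor $\rho^{-1}$.

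Next I would analyze $\rho^{-1} b_j$ summand by summand. Since $\phi \in C^\infty(\bar M)$, the tensor $\hat\nabla^{k-j}\phi$ is smooth on $\bar M$ and, being a covariant tensor of rank $k-j \ge 1$, has bundle weight $k-j$. Lemma \ref{lem:CMbar-into-CMw} therefore places $\hat\nabla^{k-j}\phi \in C^\infty_{k-j}(M;T^{0,k-j}M)$, and hence
\[
\rho^{-1}\hat\nabla^{k-j}\phi \in C^\infty_{k-j-1}(M) \subset C^\infty_0(M) \subset X^\infty_0(M),
\]
where the containment in $C^\infty_0$ uses $k-j-1 \ge 0$ together with the monotonicity of weighted H\"older spaces from Lemma \ref{lem:basic-inclusions}. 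Combined with $a_k \in X^{s-d+k,p}_0(M)$ and the multiplication property (Proposition \ref{prop:multiplication}) for a smooth multiplier, this gives $a_k\cdot\rho^{-1}\hat\nabla^{k-j}\phi \in X^{s-d+k,p}_0(M)$. Using $k \ge j+1$ and Lemma \ref{lem:basic-inclusions} once more, each such summand lies in $X^{s-(d-1)+j,p}_0(M)$, which is exactly the regularity demanded of the $j^{\rm th}$-order coefficient of a class $\mathcal L^{s,p}_{d-1}$ operator.

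Having established $\rho^{-1}[L,\phi]\in \mathcal L^{s,p}_{d-1}$, the mapping statements reduce to an application of Proposition \ref{prop:L-mapping-S} at order $d-1$. I would check directly from \eqref{eq:S-conds} that lowering $d$ only enlarges the set of admissible Sobolev indices, so $\mathcal S^{s,p}_d \subset \mathcal S^{s,p}_{d-1}$; indeed the lower bound $d-s \le \sigma$ weakens to $d-1-s \le \sigma$, and the upper bound $1/q - \sigma/n \le 1/p^\dagger - (d-s)/n$ weakens to $1/q - \sigma/n \le 1/p^\dagger - (d-1-s)/n$. None of these steps is genuinely hard; the one subtlety worth flagging is the weight bookkeeping in the second paragraph, where the single $\rho^{-1}$ is paid for by the single derivative of $\phi$ that every term in $b_j$ necessarily contains.
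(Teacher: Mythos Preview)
Your proof is correct and follows essentially the same approach as the paper's own argument: both hinge on the observation that every coefficient of $[L,\phi]$ contains at least one derivative of $\phi$, so that Lemma \ref{lem:CMbar-into-CMw} gives $\rho^{-1}\hat\nabla^{k-j}\phi \in C^\infty_{k-j-1}(M)\subset C^\infty_0(M)$, and then conclude via $\mathcal S^{s,p}_{d-1}\supset \mathcal S^{s,p}_d$. The paper's proof is a brief sketch of exactly this reasoning; you have simply written out the Leibniz expansion and the index-checking explicitly.
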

\begin{proof}
Since $\phi\in C^\infty(\bar M)$,
and since $\hat\nabla^k \phi$ is a tensor of weight $k$,
Lemma \ref{lem:CMbar-into-CMw} implies
$\rho^{-1}\hat\nabla^k \phi \in C^\infty_{k-1} (M)
\subset C^\infty_0(M)$ for any $k\ge 1$.  Noting that
all the coefficients of $[L,\phi]$ involve at least
one derivative of $\phi$, the sum decomposition \eqref{eq:curly-L}
of $[L,\phi]$ is then a straightforward computation.
Verification of the mapping property
follows from the fact that $\mathcal S^{s,p}_{d-1}\supset
\mathcal S^{s,p}_d$.
\end{proof}

\subsection{Geometric operators and Assumption \ref{Assume-P}}\label{secsec:geometric-P}

We now discuss the contents of Assumption \ref{Assume-P} in more detail, and show that results of \S\ref{secsec:op-mapping} apply to such operators.
We assume that $g$ is an asymptotically hyperbolic metric satisfying either
\begin{itemize}
\item assumption \eqref{intro:H-class}, in which $\bar g\in \mathscr H^{s,p;m}(M;T^{0,2}M)$ with $s\geq m > n/p$  and $1<p<\infty$; or
\item assumption \eqref{intro:X-class}, in which $\bar g\in \mathscr X^{s,p;m}(M;T^{0,2}M)$ with $s\geq m \geq 1$, $s > n/p$, and $1<p<\infty$.
\end{itemize}
For such a metric, we consider $d^\text{th}$-order differential operator $\mathcal P[g]$ satisfying a list of assumptions that we now explain.

The first part of Assumption \ref{Assume-P} is that $\mathcal P[g]$  acts on sections of a geometric tensor bundle $E$ over $M$, yielding sections of that same bundle.
It is important that, as noted in \eqref{intro:alpha-exists}, under either of the regularity conditions above imply we have $\bar g\in C^{0,\alpha}(\bar M;T^{0,2}M)$ for some $0<\alpha\leq 1$, and thus the discussion of geometric tensor bundles in \S\ref{secsec:gtb-early} applies.
That $\bar g$ is H\"older continuous on $\bar M$ for positive $\alpha$, and not merely continuous, is essential for estimates of \S\ref{secsec:boundary-decay}, and subsequent Fredholm theory of \S\ref{sec:fredholm}.

The second part of Assumption \ref{Assume-P} is that $\mathcal P[g]$ is {geometric}, so that in any coordinate system $\mathcal P[g]u$ is a linear function of the components of
 $u$ and their derivatives up to order $d$, and such that the coefficients are universal
 polynomials in the components of $g$, their derivatives, and $\det(g_{ij})^{-1/2}$.
It is further required that the coefficient
 of any $k^\text{th}$ derivative of $u$ involves at most the the first $d-k$  derivatives of $g$.
As is discussed \S\ref{secsec:boundary-decay} below, this assumption allows us to compare $\mathcal P[g]$ with the corresponding operator defined on hyperbolic space.

Due to restrictions imposed by multiplication in Sobolev
 spaces (see Proposition \ref{prop:multiplication}) we are obliged to consider a smaller class of operators, in order to ensure that $\mathcal P[g]$ be of class $\mathcal L^{s,p}_d$.
This motivates the third part of Assumption \ref{Assume-P}, that $\mathcal P[g]$ be {scale natural}.
Thus each term in the polynomial comprising the coefficient of any $k^{\rm th}$ derivative of $u$ has a total of at most
 $d-k$ metric derivatives distributed over all the factors comprising the
 coefficient. 
 As noted in the introduction, although $\Delta_g+\R[g]^2$ is a geometric second-order operator,
 it is not scale natural because the low-order coefficient contains terms
 with a total of four metric derivatives ($\partial^2 g \partial^2 g$, etc.).
 On the other hand, $\Delta_g + \R[g]$ is scale natural.

Using these assumptions, along with the fifth point of Assumption \ref{Assume-P}, which is that the weak $L^2$ condition is satisfied, we see $\mathcal P[g]$ is indeed an operator of class $\mathcal L^{s,p}_d$.

\begin{lemma}\label{lem:geometric-op-mapping}
Let $g$ be an asymptotically hyperbolic metric of either class \eqref{intro:H-class} or class \eqref{intro:X-class}, and suppose that $\mathcal P[g]$ satisfies Assumption \ref{Assume-P}.
Then $\mathcal P[g]$ is of class $\mathcal L^{s,p}_d$.

Furthermore, writing
\begin{equation}
\label{geometric-op-form}
\mathcal P[g] = \sum_{k=0}^d a_k[g]\cdot \hat\nabla^k,
\end{equation}
each map $g\mapsto a_k[g]$ is Lipschitz continuous at $g$
as a map taking a neighborhood of $g$ in $X^{s,p}_0(M;T^{0,2}M)$ to $X^{s-d+k,p}_0(M;T^{*,*}M)$.
\end{lemma}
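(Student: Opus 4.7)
The plan is to verify the two assertions—membership in $\mathcal L^{s,p}_d$ and Lipschitz dependence of the coefficients on $g$—by working componentwise and exploiting the scale natural structure, together with the multiplication results from \S\ref{sec:elementary-spaces}--\ref{sec:asymptotic-spaces}.

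First, I would recast the operator in the form \eqref{geometric-op-form}. In local coordinates $\mathcal P[g]u$ is a linear combination of coordinate partials $\partial^\beta u$ with $|\beta|\le d$; converting these to iterated $\hat\nabla$-derivatives introduces only smooth tensors on $\bar M$ (differences of Christoffel-type quantities for two smooth reference structures), which, by Lemma \ref{lem:CMbar-into-CMw}, sit in $C^\infty_0(M;T^{*,*}M)$ and are harmless multipliers in any Gicquaud--Sakovich space. This reduces the problem to showing that each coefficient $a_k[g]$, expressed as the universal polynomial guaranteed by the geometric and scale natural hypotheses in components of $g$, $g^{-1}$, $\det(g_{ij})^{-1/2}$, and $\hat\nabla^{j}\bar g$ for $1\le j\le d-k$ (with total derivative count at most $d-k$), belongs to $X^{s-d+k,p}_0(M;T^{*,*}M)$.

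Next, I would establish the baseline regularity of the elementary ingredients. In the $\mathscr X^{s,p;m}$ case, Lemma \ref{lem:basic-roman-curly} gives $\bar g\in X^{s,p}_2(M;T^{0,2}M)$ directly; in the $\mathscr H^{s,p;m}$ case, the hypothesis $m>n/p$ allows me to invoke Proposition \ref{prop:curly-H-into-roman-X} to obtain the same inclusion. From $g=\rho^{-2}\bar g$ and the continuous positive definite extension of $\bar g$ to $\bar M$ (continuous on $\bar M$, with $\bar g\in C^{0,\alpha}(\bar M;T^{0,2}M)$ by \eqref{intro:alpha-exists}), I deduce $g\in X^{s,p}_0(M;T^{0,2}M)$ and, via Proposition \ref{prop:nonlin-curly} (or its Gicquaud--Sakovich analogue) applied to the smooth function $A\mapsto \det(A)^{-1/2}$ restricted to the bounded range of $\bar g$ away from singularity of the determinant, that $\det(g_{ij})^{-1/2}$ and $g^{-1}$ lie in $X^{s,p}_0$ of the appropriate bundle. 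Each factor $\hat\nabla^{j}\bar g$ with $1\le j\le d-k$ lies in $\mathscr X^{s-j,p;m-j}(M;T^{0,j+2}M)$ (or its $\mathscr H$ counterpart), hence in $X^{s-j,p}_{j+2}$ by Lemma \ref{lem:basic-roman-curly}/Proposition \ref{prop:curly-H-into-roman-X}; after the $(d-k)$-fold $\rho^{-2}$ rescaling hidden in switching from $\bar g$ to $g$, the weight collapses to $0$ and the differentiability drops by exactly $j$.

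Now I would assemble each monomial. The scale natural assumption says the total number of $\hat\nabla$-derivatives of $g$ appearing in a single monomial of $a_k[g]$ is at most $d-k$; Proposition \ref{prop:multiplication} (applied iteratively as in Corollary \ref{cor:multiplication}, using $s>n/p$ throughout) multiplies out finitely many factors in $X^{s,p}_0$ together with differentiated factors in $X^{s-j_i,p}_0$ (total $\sum j_i\le d-k$) to land in $X^{s-(d-k),p}_0=X^{s-d+k,p}_0$. The inequality on Sobolev indices needed in Proposition \ref{prop:multiplication} reduces in each case to $s>n/p$ and the obvious orderings; this is where the $L^2$ weak condition of Assumption \ref{Assume-P}(e) ensures that intermediate spaces make sense. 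Summing the monomials gives the stated membership $a_k[g]\in X^{s-d+k,p}_0(M;T^{*,*}M)$, and hence $\mathcal P[g]\in\mathcal L^{s,p}_d$.

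For the Lipschitz assertion, I would use the telescoping identity
\begin{equation*}
f_1\cdots f_N-\tilde f_1\cdots\tilde f_N=\sum_{i=1}^{N}\tilde f_1\cdots\tilde f_{i-1}\,(f_i-\tilde f_i)\,f_{i+1}\cdots f_N
\end{equation*}
applied to each monomial of $a_k[g]-a_k[\tilde g]$; each difference $f_i-\tilde f_i$ is one of $g-\tilde g$, $g^{-1}-\tilde g^{-1}$, $\det(g)^{-1/2}-\det(\tilde g)^{-1/2}$, or $\hat\nabla^j(g-\tilde g)$, which by the nonlinear estimates (Proposition \ref{prop:nonlin-curly} and its $X$-variant) and the continuity in Lemma \ref{lem:normchar}/Proposition \ref{prop:curly-H-into-roman-X} are each bounded in the relevant space by a multiple of $\|g-\tilde g\|_{X^{s,p}_0}$ on a small neighborhood of $g$. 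Combining with Proposition \ref{prop:multiplication} on the remaining factors yields the claimed Lipschitz estimate.

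The main obstacle I anticipate is the bookkeeping of weights and Sobolev indices across factors when verifying that Proposition \ref{prop:multiplication} actually applies for every possible monomial, in particular handling the $\det^{-1/2}$ factor (which requires the nonlinear composition to preserve both the $X^{s,p}_0$ regularity and Lipschitz dependence on $g$) and the boundary case $m=n/p$ in the $\mathscr H$ setting, which Proposition \ref{prop:curly-H-into-roman-X} explicitly excludes—this is precisely why Assumption \eqref{intro:H-class} requires the strict inequality $m>n/p$.
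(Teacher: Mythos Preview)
Your overall strategy---control each coefficient monomial via the multiplication rules of Corollary \ref{cor:multiplication} (enabled by the scale-natural hypothesis and the weak $L^2$ condition), then obtain Lipschitz dependence by telescoping---is the same as the paper's. The execution differs, and your global version has a concrete gap.

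The paper works entirely in M\"obius charts: it writes $\Phi_i^*\mathcal P[g]=\sum_k b_k[\Phi_i^*g]\cdot\nabla_{\mathbb E}^k$, absorbs the difference tensor $\Phi_i^*\hat\nabla-\nabla_{\mathbb E}$ via Lemma \ref{lem:christoffel-bound}, and then estimates each $b_k[\Phi_i^*g]$ in $H^{s-d+k,p}(B_1^\Hyp)$ uniformly in $i$. The quantity $\det(g_{ij})^{-1/2}$ is handled by Lemma \ref{lem:power} using a uniform lower bound on $\det(\Phi_i^*g)$ that comes from the continuity of $\bar g$ on $\bar M$. This local approach sidesteps the fact that $\det(g_{ij})^{-1/2}$ is not a globally defined scalar, so your appeal to Proposition \ref{prop:nonlin-curly} for it does not literally apply.

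More seriously, your route through $\bar g$ and the fortified structure breaks when $d>m$. You assert that $\hat\nabla^{j}\bar g\in\mathscr X^{s-j,p;m-j}$ (hence in $X^{s-j,p}_{j+2}$) for $1\le j\le d-k$, but $\mathscr X^{s-j,p;m-j}$ is only defined for $j\le m$; for a fourth-order operator with a metric of class $\mathscr X^{s,p;1}$ you need $j$ up to $4$. The subsequent ``$(d-k)$-fold $\rho^{-2}$ rescaling'' is also not right: each factor of $g$ contributes one $\rho^{-2}$, and the number of such factors in a monomial is unrelated to $d-k$. The clean fix is to bypass $\bar g$ entirely: once you have $g\in X^{s,p}_0(M;T^{0,2}M)$ (which is what Lemma \ref{lem:basic-roman-curly} and Proposition \ref{prop:curly-H-into-roman-X} actually give), Lemma \ref{lem:nabla-HX-map} yields $\hat\nabla^{j}g\in X^{s-j,p}_0$ for every $j$, with no restriction from $m$. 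Then Corollary \ref{cor:multiplication} applies directly to each scale-natural monomial. This is precisely what the chartwise argument accomplishes, and once you make that correction your proof becomes essentially the paper's.
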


\begin{proof}
Consider a M\"obius parametrization $\Phi_i$.
From the definition
of a geometric operator
\begin{equation*}
\Phi_i^*\mathcal P[g]
= \sum_{k=0}^d \Phi_i^*(a_k[\Phi_i^*g]) \cdot (\Phi_i^* \hat \nabla)^k
= \sum_{k=0}^d b_k[\Phi_i^*g]\cdot \nabla_\mathbb{E}^k
= \sum_{k=0}^d b_k[\Phi_i^*g]\cdot (\Phi_i^* \hat \nabla - D)^k,
\end{equation*}
where $D$ is the difference tensor $\Phi_i^* \hat\nabla-\nabla_{\mathbb E}$
and where each component of $b_k[g_i]$ is a polynomial in
components of $g_i=\Phi_i^*g$, their derivatives, and $\det(g_i)^{-1/2}$.
The uniform estimates for $D$ from Lemma \ref{lem:christoffel-bound}
allows us to ignore these terms and the result follows if we can
establish uniform bounds for the coefficients
\begin{equation}\label{eq:P-coeff-bound}
\|b_k[g_i]\|_{H^{s-d+k,p}(B_1^\Hyp)} \lesssim 1
\end{equation}
along with a Lipschitz estimate
\begin{equation}\label{eq:P-coeff-lip}
\|b_k[g_i+\delta g_i]-b_k[g_i\|_{H^{s-d+k,p}(B_1^\Hyp)} \lesssim \|\delta g_i\|_{H^{s,p}(B_1^\Hyp)},
\end{equation}
where the implicit constants are independent of $i$, and where
inequality \eqref{eq:P-coeff-lip} holds when
$\|\delta g_i\|_{H^{s,p}(B_1^\Hyp)} \le c$ for some $c$ independent of $i$.

Let $c[g_i] (\det g_i)^{-\ell/2}$ be a term of the polynomial defining
a component of $b_k[g_i]$, where $\ell\in\Nats_{\ge 0}$ and where
$c[g_i]$ consists of a product of components of $g_i$ and their derivatives.
Since $s>n/p$, since no more than $d-k$ derivatives appear among
the factors of $c[g_i]$, and since the weak $L^2$ condition holds,
the local coordinate equivalent of Corollary \ref{cor:multiplication}
shows that $c[g_i]\in H^{s-d+k,p}(B_1^\Hyp)$ with a norm that is
uniformly estimated in terms of $\|g\|_{X^{s,p}_0(M;T^{0,2}M)}$, and that the map
$g_i\to c[g_i]$ is Lipschitz at $g_i$
with a constant estimated in terms of an upper bound for $\|g_i\|_{H^{s,p}(B_1^\Hyp)}$
so long as
the perturbation $\delta g_i$ satisfies $\|\delta g_i\|_{H^{s,p}(B_1^\Hyp)}\le K$
for any fixed choice of $K$.

Since $s>n/p$, (and since the weak $L^2$ condition is satisfied and since $k\le d$),
a multiplication argument shows that the proof is complete if we can show that
$\delta g \mapsto \det(g_i+\delta g)^{-1/2}$
is Lipschitz continuous into $H^{s,p}(B_1^\Hyp)$ on a ball containing $0$ of
size independent of $i$.  To do so,
first observe that $\det(g_i)\ge \epsilon>0$
for some $\epsilon$ independent of $i$.
Indeed, writing
$\bar g = \bar g_{ab} d\Theta^a d\Theta^b$ in the coordinates near infinity corresponding
to $\Phi_i$, $\det(g_i) = y^{-n}\det(\bar g_{ab}\circ \Phi_i)$ which along with the
continuity of $\bar g$
establishes the
desired lower bound on $B_1^\Hyp$.  Since $H^{s,p}(B_1^\Hyp)$ embeds into the continuous
functions on $B_1^\Hyp$,
if $\|\delta g\|_{H^{s,p}(B_1^\Hyp)}\le c$
for some $c$ sufficiently small (depending on $\epsilon$ but independent of $i$),
we can ensure that $\det (g_i+\delta g)> \epsilon/2$. This lower bound, along with the obvious
control of $\det(g_i+\delta g)$ in $H^{s,p}(B_1^\Hyp)$, allows us to apply
Lemma \ref{lem:power}
to conclude $\delta g\mapsto \det(g_i+\delta g)^{-1/2}$ is  Lipschitz continuous in $\delta g$
if $\|\delta g\|_{H^{s,p}(B_1^\Hyp)}\le c$ as required.
\end{proof}

Assumption \ref{Assume-P} concludes with one final hypothesis, that $\mathcal P[g]$ is formally self-adjoint in the sense that 
for all sections $u,v\in H^{s,p}_0(M;E)$ we have
\[
\int_M \langle u,\mathcal P[g]v\rangle_g\,dV_g = \int_M \langle \mathcal P[g] u, v\rangle_g\,dV_g.
\]
Note that if $s<d$ the integrals above should be interpreted in the sense of the duality
pairing from \S\ref{secsec:duality-early}, and a
computation using the weak $L^2$ condition
shows that the pairing is well defined for $u,v\in H^{s,p}_0(M;E)$.
We emphasize that the self-adjoint assumption is not, in fact, used until \S\ref{secsec:semi-fred}.
However, for simplicity of exposition we invoke Assumption \ref{Assume-P} in its entirety in the intervening results.

\subsection{Boundary decay estimates}\label{secsec:boundary-decay}

Our technique for obtaining semi-Fredholm estimates for $\mathcal P[g]$
involves localizing  the operator at points at infinity where
elliptic estimates for $\mathcal P[\breve g]$ are used instead.
In order to accomplish this, we require that $g$ can be approximated
by a metric isometric to $\breve g$ at points of $M$ sufficiently close
to $\partial M$.  The following lemma is the key technical tool
needed to ensure the requisite decay, and in the $\mathscr H^{s,p;m}$
case can be thought of as a refinement of Proposition \ref{prop:curly-H-into-roman-X}.

\begin{lemma}[cf.~Lemma 6.1 in \cite{Lee-FredholmOperators} ]\label{lem:r_rescale}
Let $1<p<\infty$, $s\in\Reals$, $m\in\Nats$, $s\ge m>n/p$.
Suppose
$u \in \mathscr H^{s,p;m}(M;E)$ is a tensor of weight $w$ and thus by Proposition \ref{prop:curly-to-M-bar} $u\in C^{0,\alpha}(\bar M;E)$ for some $\alpha\in (0,1]$.
If $\Phi_i$ is a M\"obius parametrization
with image $\Phi_i(B_1^\Hyp)$ intersecting $Z_r(\hat p)$ for some $0<r<r_0$ then
\[
\rho_i^{-w}\| \Phi_i^* u \|_{H^{s,p}(B_1^\Hyp)} \lesssim r^\alpha \|u\|_{\mathscr H^{s,p;m}(M;E)},
\]
with implicit constant independent of $i$, $\hat p$, $r$ and $u$.

An analogous statement holds if instead $s>n/p$ and $s\ge m\ge 1$ and $u\in \mathscr X^{s,p;m}(M;E)$.
\end{lemma}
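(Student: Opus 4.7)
The plan is to mirror the strategy of Lemma 6.1 in \cite{Lee-FredholmOperators}: extract geometric constraints from the intersection hypothesis, then use H\"older continuity up to the conformal boundary to obtain an oscillation bound that propagates to the full Sobolev norm through the M\"obius scaling.

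First I would extract the geometric implications of $\Phi_i(B_1^\Hyp)\cap Z_r(\hat p)\neq\emptyset$. Using the containment \eqref{eq:ball-containment-gen}, the parametrization $\Phi_i$ maps $B_1^\Hyp$ into a set of boundary-coordinate diameter $O(\rho_i)$ centered near $\theta(p_i)$, with $\rho$-values in $[\rho_i/e,\rho_i e]$. Intersection with $Z_r(\hat p)=\{|\theta-\theta_{\hat p}|<r,\,0<\rho<r\}$ then forces $\rho_i\lesssim r$ and $\Phi_i(B_1^\Hyp)\subset Z_{Kr}(\hat p)$ for some universal constant $K$ depending only on the fixed choice of boundary chart at $\hat p$.

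Next I would invoke Proposition \ref{prop:curly-to-M-bar}\eqref{eq:curlyH-to-C-Mbar}, which under $s\ge m>n/p$ yields $u\in C^{0,\alpha}(\bar M;E)$ for some $\alpha\in(0,m-n/p]$ together with $\|u\|_{C^{0,\alpha}(\bar M;E)}\lesssim\|u\|_{\mathscr H^{s,p;m}(M;E)}$. The operative consequence is that on $Z_{Kr}(\hat p)$ the tensor components of $u$ (relative to the coordinate trivialization $\Theta_{\hat p}$) exhibit H\"older oscillation of size $O(r^\alpha\|u\|_{\mathscr H^{s,p;m}})$.

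The central computation is then to translate this pointwise oscillation into a bound on $\|\Phi_i^*u\|_{H^{s,p}(B_1^\Hyp)}$. Using Lemma \ref{lem:alt-H-X-norms} I would split the norm into an $L^p$ part and a high-order piece controlled by $\nabla_\mathbb{E}^m(\Phi_i^*u)$. For the $L^p$ piece, the M\"obius pullback of components of a weight-$w$ tensor scales by $\rho_i^w$ (a direct computation in coordinates near infinity using $d\theta^a=\rho_i\,dx^a$ and $d\rho=\rho_i\,dy$), and this scaling combined with the H\"older oscillation bound produces a contribution of order $\rho_i^w r^\alpha\|u\|_{\mathscr H^{s,p;m}}$. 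For the high-order piece, I would replace $\nabla_\mathbb{E}$ by $\hat\nabla$ via the uniform bounds on the difference tensor in Lemma \ref{lem:christoffel-bound}, reducing the estimate to controlling each $\Phi_i^*(\hat\nabla^j u)$ for $1\le j\le m$ by its weighted norm $\|\hat\nabla^j u\|_{H^{s-j,p}_{w+j-n/p}(M;\cdot)}$, which appears as a summand in $\|u\|_{\mathscr H^{s,p;m}}$. Each such derivative term, being a tensor of weight $w+j$, introduces the compensating factor $\rho_i^{w+j}$, so after dividing by $\rho_i^w$ on the left these terms gain an extra factor $\rho_i^j\lesssim r^j\le r^\alpha$ (since $\alpha\le 1\le j$).

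The hard part will be the careful bookkeeping of powers of $\rho_i$ and $r$: one must verify that after all the M\"obius scalings and the change from $\nabla_\mathbb{E}$ to $\hat\nabla$, the factor $\rho_i^{-w}$ on the left exactly absorbs the weight scaling of every derivative term, and that the leading-order (i.e., $L^p$) contribution—where no extra $\rho_i$ is available—is the term for which the $r^\alpha$ H\"older decay is precisely what is needed. The Gicquaud--Sakovich case proceeds identically after replacing the embedding by Proposition \ref{prop:curly-to-M-bar}\eqref{eq:curlyX-to-C-Mbar}, which gives H\"older continuity of $u$ on $\bar M$ under the weaker hypotheses $s>n/p$ and $m\ge 1$ appropriate to the $\mathscr X$-category, and using the supremum structure of Gicquaud--Sakovich norms in place of the $\ell^p$ sum.
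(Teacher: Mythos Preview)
Your overall strategy matches the paper's: split the local $H^{s,p}$ norm via the Gagliardo--Nirenberg inequality (this is Lemma~\ref{lem:missing}, not the global norm equivalence Lemma~\ref{lem:alt-H-X-norms}) into an $L^p$ piece controlled by H\"older continuity at the boundary and a top-derivative piece controlled by the fortified norm.

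The derivative bookkeeping has a genuine gap. In the $\mathscr H$ case the weight parameter in the fortified norm is $w+j-n/p$, not the tensor weight $w+j$; membership of $\hat\nabla^j u$ in $H^{s-j,p}_{w+j-n/p}(M)$ therefore only yields $\|\Phi_i^*(\hat\nabla^j u)\|_{H^{s-j,p}(B_1^\Hyp)}\lesssim \rho_i^{w+j-n/p}\|u\|_{\mathscr H^{s,p;m}}$. Separately, the ``uniform bounds'' of Lemma~\ref{lem:christoffel-bound} are for $\rho_i^{-1}D$, so each occurrence of the difference tensor is $O(\rho_i)$: when you convert $\nabla_{\mathbb E}^m$ into a combination of $\Phi_i^*(\hat\nabla^j u)$'s, the coefficient of the $j$th term carries an extra factor $\rho_i^{m-j}$. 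After dividing by $\rho_i^w$ the combined exponent is $m-n/p$, independent of $j$, and since $\alpha\le m-n/p$ this delivers $r^\alpha$. The paper organizes this as an induction on $k$ establishing $\|\nabla_{\mathbb E}^k\Phi_i^*u\|_{H^{s-k,p}(B_1^\Hyp)}\lesssim\rho_i^{w+k-n/p}\|u\|_{\mathscr H^{s,p;m}}$ for each $0\le k\le m$. Your accounting---factor $\rho_i^{w+j}$ and no Christoffel gain---happens to produce something that decays, but if you fix the weight to $w+j-n/p$ without also invoking the $\rho_i^{m-j}$ from $D$, the $j=1$ contribution is $\rho_i^{1-n/p}$, which need not decay (take $n=3$, $p=2$).

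For the $L^p$ piece, ``H\"older oscillation'' does not by itself bound $\|\Phi_i^*u\|_{L^p}$; one needs $|u|_{\hat g}\lesssim r^\alpha$ pointwise on $\Phi_i(B_1^\Hyp)$, which requires $u(\hat p)=0$. The paper's proof uses exactly this, and in every application of the lemma the tensor is constructed to vanish at $\hat p$, so the hypothesis is tacitly present though omitted from the statement.
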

\begin{proof}
Lemma \ref{lem:missing}, which follows from the Gagliardi-Nirenberg inequality,
implies
\begin{equation}\label{eq:Phi_i_u_split_est}
\|\Phi^*_i u\|_{H^{s,p}(B_1^\Hyp)} \lesssim \|\Phi^*_i u\|_{H^{0,p}(B_1^\Hyp)} +
\|\nabla_\mathbb{E}^m\Phi^*_i u\|_{H^{s-m,p}(B_1^\Hyp)}.
\end{equation}
To estimate the first term on the right-hand side of equation \eqref{eq:Phi_i_u_split_est}
we rely on the embedding $\mathscr H^{s,p;m}(M)\hookrightarrow C^{0,\alpha}(\bar M)$.

Since the geodesic distance from $\hat p$ to a point
in $Z_r(\hat p)$ is controlled uniformly by $r$, and since
$\diam_{\hat g} \Phi_i(B_1^\Hyp) \lesssim r$,
\[
|u|_{\hat g} \lesssim r^\alpha \| u \|_{\mathscr H^{s,p;m}(M)}
\]
on $\Phi_i(B_1^\Hyp)$. Using the fact that $\Phi_i^*\check g$ is uniformly close to the Euclidean metric on $B_1^\Hyp$ and the identity
$|u|_{\check g} = \rho^{w}|u|_{\hat g}\sim \rho_i^{w}|u|_{\hat g}$
on $\Phi_i(B_1^\Hyp)$ we find
\[
\|\Phi_i^* u\|_{H^{0,p}(B_1^\Hyp)}
\lesssim
\|\Phi_i^*\ |u|_{\check g} \|_{H^{0,p}(B_1^\Hyp)}\lesssim
\rho_i^w
\|\Phi_i^* |u|_{\hat g} \|_{H^{0,p}(B_1^\Hyp)}
\lesssim \rho_i^w r^\alpha \| u \|_{\mathscr H^{s,p;m}(M)}
\]
as required.

To estimate the second term on the right-hand side of equation, we claim that
for each $0\le k\le m$ that
$\|\nabla_{\mathbb E}^k\Phi_i^* u\|_{H^{s-k,p}(B_1^\Hyp)} \lesssim \rho_i^{w+k-n/p}
\|u\|_{\mathscr{H}^{s,p;m}(M)}$. Assuming the claim is true, then
\[
\rho_i^{-w}\|\nabla_{\mathbb E}^m\Phi_i^* u\|_{H^{s-k,p}(B_1^\Hyp)} \lesssim
r^{m-n/p} \|u\|_{\mathscr{H}^{s,p;m}(M)} \lesssim r^{\alpha} \|u\|_{\mathscr{H}^{s,p;m}(M)}
\]
since $m-n/p\ge \alpha>0$ (Proposition \ref{prop:curly-to-M-bar}) and since $\rho_i\lesssim r$, which completes the desired estimate.

The claim is proved inductively with the base case being obvious.
Suppose for some $k<m$ that
the claim holds for each  $0\le j \le k$.
Then
\[
\nabla_{\mathbb E}^{k+1}\Phi_i^* u =  \Phi_i^*\hat\nabla^{k+1} u + Au,
\]
where $Au$ is a sum of contractions of tensors of the form $A_j\nabla_{\mathbb E}^ju$
with $j\le k$, and where each $A_j$ is a sum of contractions of tensors of the
form $\nabla_{\mathbb E}^{a_1}D\otimes\cdots \otimes \nabla_{\mathbb E}^{a_\ell}D$
with $a_1+\cdots+a_\ell+\ell = k+1-j$ where $D=\Phi_i^*\hat\nabla-\nabla_{\mathbb E}$.
The claim then follows from the induction hypothesis, the uniform estimates on
derivatives of $D$ from  Lemma \ref{lem:christoffel-bound}, and the fact that
$\|\Phi_i^* \hat\nabla^{k+1} u\|_{H^{s-k-1,p}(B_1^{\Hyp})}\lesssim \rho_i^{w+k+1 - n/p} \|u\|_{\mathscr H^{s,p;m}(M)}$ .

The proof in the $\mathscr X^{s,p;m}$ category is similar. With the new hypotheses
on the parameters and with the new definition of $\alpha$ it follows that
$\mathscr X^{s,p;m}(M)$ embeds in $C^{0,\alpha}(\bar M)$
and the estimate of the first term on the right-hand side of equation \eqref{eq:Phi_i_u_split_est}
proceeds identically.  An analogous argument for the second term
shows $\|\Phi_i^*\nabla^m_{\mathbb E} u\|_{H^{s,p}(B_1^\Hyp)}\lesssim \rho_i^{w+m}\|u\|_{\mathscr X^{s,p;m}(M)}$.
But then
\[
\rho_i^{-w}\|\Phi_i^*\nabla^m_{\mathbb E} u\|_{H^{s,p}(B_1^\Hyp)}\lesssim r \|u\|_{\mathscr X^{s,p;m}(M)}
\]
since $m\ge 1$ and since $\rho_i\lesssim r$.
\end{proof}

\subsection{Localized approximate operators at infinity}\label{secsec:localized-ops}

Let $g = \rho^{-2}\bar g$ be an asymptotically hyperbolic metric of class \eqref{intro:H-class} or class \eqref{intro:X-class}, and
let $\mathcal P[g]$ satisfy Assumption \ref{Assume-P} acting on sections of geometric tensor bundle $E$.
Our goal is to approximate, close to points $\hat p\in \partial M$, the operator $\mathcal P[g]$ by an operator constructed from $\mathcal P[\breve g]$.

To accomplish this, we first construct coordinates at $\hat p\in \partial M$ adapted to $\bar g$, following the construction in Chapter 6 of \cite{Lee-FredholmOperators}.
Recall from \S\ref{sec:coords} the coordinates near infinity $\Theta_{\hat p}\colon Z_{r_0}(\hat p) \to Y_{r_0}$ defined where $\rho\leq r_0$.
Let $K_{\hat p}\in T^{1,1}\bar M$ be the tensor at $\hat p$ obtained by the Gram-Schmidt
algorithm such that the forms
$\beta^i = (K_{\hat p})^i_j d\Theta^j$ form a $\bar g$-orthonormal co-frame at $\hat p$ and
such that $\beta^n = |d\rho|_{\bar g}^{-1}d \rho$.
We define the \Defn{adapted coordinates near infinity $\tilde\Theta_{\hat p}$} by $\tilde\Theta_{\hat p}^i =  (K_{\hat p})^i_j\Theta^j_{\hat p}$.
Observe that $\tilde\Theta_{\hat p}^n = \Theta_{\hat p}^n = \rho$,
and also that in adapted coordinates we have
$\bar g(\hat p) = \delta_{ij}d\tilde\Theta_{\hat p}^i\tilde\Theta_{\hat p}^j$.
The continuity of $\bar g$
on the compact boundary ensures that the values $(K_{\hat p})^i_j$ and $(K_{\hat p}^{-1})^j_i$
are bounded independent of $\hat p$ and these estimates ensure that
the Jacobians  $D(\tilde \Theta\circ \Theta^{-1})$ and their inverses admit
uniform $C^k$ estimates for any boundary coordinates $\Theta$.

We now show that the metrics $\tilde\Theta_{\hat p}^*\breve g$ and $g$
are uniformly close in a neighborhood of $\hat p$ and, consequently, that
geometric differential operators built from each are comparable.
\begin{lemma}
\label{lem:local-compare-operators}
Suppose $g$ is an asymptotically hyperbolic metric of class \eqref{intro:H-class} or \eqref{intro:X-class}, and let $\alpha$ be as in \eqref{intro:alpha-exists}.  
Let $\mathcal P[g]$ satisfy Assumption \ref{Assume-P}.
There exists $r_0'\le r_0$
depending on $g$ and $\mathcal P$ such that
for all $\hat p\in \partial M$,
if $\Phi_i$ is a M\"obius parametrization
with $\Phi_i(B_1^\Hyp)$ intersects $Z_{r}(\hat p)$ for some $0<r<r_0'$, then
\begin{enumerate}
\item\label{part:B-in-Z}
$\Phi_i(B_1^\Hyp)\subset Z_{r_0}(\hat p)$ and hence $\Phi_i^*\tilde\Theta_{\hat p}^* \breve g$ is well defined,
\item\label{part:g-is-close}
 $\|\Phi_i^*((\tilde\Theta_{\hat p}^* \breve g)-g)\|_{H^{s,p}(B_1^\mathbb H)} \lesssim r^\alpha$,
\item\label{part:P-is-close}
if $(\sigma, q)\in \mathcal S^{s,p}_d$ and $u\in H^{\sigma,q}(B_1^\mathbb H)$ we have
\begin{gather}
\label{eq:P-is-bounded}
\|\Phi^*_i(\mathcal P[\tilde\Theta_{\hat p}^*\breve g)] u\|_{H^{\sigma-d,q}(B_1^\mathbb H)} \lesssim \|u\|_{H^{\sigma,q}(B_1^\mathbb H)},
\\
\label{eq:P-is-close}
\|\Phi^*_i(\mathcal P[\tilde\Theta_{\hat p}^*\breve g] - \mathcal P[g])u)\|_{H^{\sigma-d,q}(B_2^\mathbb H)}
\lesssim r^\alpha \|u\|_{H^{\sigma,q}(B_1^\mathbb H)}.\
\end{gather}
\end{enumerate}
The implicit constants are independent of $i$, $\hat p$, $r$ and $u$.
\end{lemma}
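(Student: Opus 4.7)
The proof splits into three parts; the heart of the matter is part~(b), after which part~(c) reduces to the Lipschitz estimate contained in Lemma~\ref{lem:geometric-op-mapping}.

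For part (a), I would argue purely geometrically. If the M\"obius parametrization $\Phi_i$ is built from coordinates $\Theta_{\hat q}$ near some $\hat q\in\partial M$, then $\Theta_{\hat q}\circ\Phi_i(x,y)=(\theta_0+\rho_i x,\rho_i y)$. The containment \eqref{eq:ball-containment}, together with the assumption that $\Phi_i(B_1^\Hyp)$ meets $Z_r(\hat p)$, forces $\rho_i\lesssim r$ and confines the $\theta$-coordinates of the image to a ball of radius $\lesssim r$ around the $\Theta_{\hat q}$-image of $\hat p$. Composing with the (uniformly $C^k$) transition from $\Theta_{\hat q}$ to $\tilde\Theta_{\hat p}$ coordinates, and using compactness of $\partial M$, yields a uniform $r_0'\le r_0$ such that $r<r_0'$ implies $\Phi_i(B_1^\Hyp)\subset Z_{r_0}(\hat p)$, so $\tilde\Theta_{\hat p}^*\breve g$ is well defined there.

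For part (b), the decisive observation is that $\tilde\Theta_{\hat p}^*\breve g=\rho^{-2}\tilde\Theta_{\hat p}^* g_{\mathbb E}$ (since $\rho=\tilde\Theta_{\hat p}^n$), so
\[
\tilde\Theta_{\hat p}^*\breve g-g=\rho^{-2}\bar h, \qquad \bar h:=\tilde\Theta_{\hat p}^* g_{\mathbb E}-\bar g.
\]
By construction of adapted coordinates, $\bar h$ is a weight-$2$ symmetric $2$-tensor that \emph{vanishes at $\hat p$}. In adapted coordinates $\tilde\Theta_{\hat p}^* g_{\mathbb E}$ has constant components, hence belongs to $\mathscr H^{\infty,p;m}$ (respectively $\mathscr X^{\infty;m}$) on $Z_{r_0}(\hat p)$, so $\bar h$ inherits the regularity of $\bar g$. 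I would then apply Lemma~\ref{lem:r_rescale} to $\bar h$ (after a harmless extension by a cutoff against a fixed globalization of $\tilde\Theta_{\hat p}^* g_{\mathbb E}$, or by noting that only local information on $\Phi_i(B_1^\Hyp)$ enters the proof of that lemma) to obtain
\[
\rho_i^{-2}\|\Phi_i^*\bar h\|_{H^{s,p}(B_1^\Hyp)}\lesssim r^\alpha.
\]
Since $\Phi_i^*\rho=\rho_i y$ with $y$ bounded above and below on $B_1^\Hyp$, multiplication by $y^{-2}$ is bounded on $H^{s,p}(B_1^\Hyp)$, and we conclude $\|\Phi_i^*(\tilde\Theta_{\hat p}^*\breve g-g)\|_{H^{s,p}(B_1^\Hyp)}\lesssim r^\alpha$.

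For part (c), I would work locally in the M\"obius chart and reuse the proof of Lemma~\ref{lem:geometric-op-mapping}. That proof shows that for a geometric, scale-natural operator, each local coefficient of $\Phi_i^*\mathcal P[h]$ depends Lipschitz-continuously on $\Phi_i^* h$ in $H^{s,p}(B_1^\Hyp)$, uniformly in $i$ and uniformly for small perturbations. By part (b), $\Phi_i^*(\tilde\Theta_{\hat p}^*\breve g)$ lies within $\lesssim r^\alpha$ of $\Phi_i^* g$ in $H^{s,p}(B_1^\Hyp)$, so for $r$ sufficiently small (shrink $r_0'$ if necessary) this perturbation is in the Lipschitz neighborhood. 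This furnishes two conclusions: uniform $H^{s-d+k,p}(B_1^\Hyp)$ bounds on the coefficients $\Phi_i^* a_k[\tilde\Theta_{\hat p}^*\breve g]$, yielding \eqref{eq:P-is-bounded} through the local version of Proposition~\ref{prop:L-mapping-S}; and $r^\alpha$-bounds on the coefficient differences $\Phi_i^*(a_k[\tilde\Theta_{\hat p}^*\breve g]-a_k[g])$, which after multiplying by $\hat\nabla^k u$ and summing gives \eqref{eq:P-is-close}.

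The principal obstacle is uniformity: all implicit constants must be independent of $i$, $\hat p$, $r$, and $u$. This is where the H\"older hypothesis $\bar g\in C^{0,\alpha}(\bar M)$ from \eqref{intro:alpha-exists}, together with the vanishing of $\bar h$ at $\hat p$, is essential—it is precisely what drives the $r^\alpha$ decay in Lemma~\ref{lem:r_rescale} and hence the smallness in (b) and (c). A subsidiary technical point is that $\tilde\Theta_{\hat p}^*\breve g$ is only defined on $Z_{r_0}(\hat p)$, so I would either invoke a localized variant of Lemma~\ref{lem:geometric-op-mapping} (noting its proof is purely local on $B_1^\Hyp$) or truncate against a smooth cutoff supported in $Z_{r_0}(\hat p)$; the former is cleaner since it avoids introducing auxiliary cutoff constants and directly exploits that every quantity appearing in the desired inequalities is evaluated inside $\Phi_i(B_1^\Hyp)\subset Z_{r_0}(\hat p)$.
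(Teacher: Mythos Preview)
Your proposal is correct and follows essentially the same strategy as the paper: part~(a) via a compactness/geometric containment argument, part~(b) by applying Lemma~\ref{lem:r_rescale} to the difference $\bar g-\tilde\Theta_{\hat p}^*g_{\mathbb E}$ (which vanishes at $\hat p$ by construction of adapted coordinates) and absorbing the factor $(\rho_i y)^{-2}$, and part~(c) by invoking the uniform Lipschitz estimate~\eqref{eq:P-coeff-lip} from the proof of Lemma~\ref{lem:geometric-op-mapping} together with the local multiplication rules. The paper handles the locality issue you flag by extending $\tilde\Theta_{\hat p}^*g_{\mathbb E}$ to a smooth tensor $\tilde g$ on all of $\bar M$ with $C^k$ bounds independent of $\hat p$ (using uniform transition Jacobians), which is exactly the ``globalization'' option you mention.
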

\begin{proof}
For part \eqref{part:B-in-Z},
a compactness argument shows that we can take $r_0'$ sufficiently small independent of $\hat p$
so that if $\Phi_i(B_1^\Hyp)$ intersects $Z_{r_0'}(\hat p)$ then $\Phi_i(B_1^\Hyp)\subset Z_{r_0}(\hat p)$; see also Lemma \ref{lem:kappa} where a stronger fact is proved.

Let $\tilde g$ be a smooth tensor on $\bar M$
that agrees with $\tilde \Theta^* g_{\mathbb E}$ on $Z_{r_0}$
and such that $\tilde g$ is bounded in any $C^k(\bar M)$ independent of $\hat p$;
uniform bounds for transition Jacobians permit this.
Part \eqref{part:g-is-close} follows by applying Lemma \ref{lem:r_rescale}
 to $\bar g - \tilde g$ and noting that since $\tilde\Theta_{\hat p}^n = \rho$,
 \[
\Phi_i^*(g-\tilde\Theta^*_{\hat p}\breve g) =
(\rho_i y)^{-2}
\Phi_i^*(\bar g-\tilde\Theta^*_{\hat p}g_{\mathbb E}).
\]
Note that the implicit constant of part \eqref{part:g-is-close}
arises from Lemma \ref{lem:r_rescale} and is independent
of $i$ and $\hat p$ in addition to $r$.

In order to obtain estimate \eqref{eq:P-is-close}
 of part \eqref{part:P-is-close}, write
$\Phi_i^*\mathcal P[g] = \sum_k b_k[\Phi_i^* g]\cdot \nabla_{\mathbb E}^k$
as in the proof of Lemma \ref{lem:geometric-op-mapping}.  The Lipschitz estimate
\eqref{eq:P-coeff-lip} from that proof implies that
if  $\Phi_i^*\tilde\Theta_{\hat p}^*\breve g$
is close enough to $\Phi_i^*g$ in $H^{s,p}(B_1^\Hyp)$,
with a closeness bound independent of $i$, then
\begin{equation}\label{eq:P-local-est}
\| b_k[\Phi_i^*\tilde\Theta_{\hat p}^*\breve g] - b_k[\Phi_i^*g] \|_{H^{s-d+k,p}(B_1^\mathbb{H})}
\lesssim
\|\Phi_i^*\tilde\Theta_{\hat p}^*\breve g-\Phi_i^*g\|_{H^{s,p}(B_1^\mathbb{H})}
\end{equation}
with an implicit constant that does depend on $g$ but
can be arranged as in Lemma \ref{lem:geometric-op-mapping} to be independent
of $i$; note that $\hat p$ plays no role.
The closeness bound can be satisfied part \eqref{part:g-is-close} and
shrinking $r_0'$ if needed by a quantity independent of $i$ and $\hat p$.
Inequality \eqref{eq:P-is-close}
then follows from estimate \eqref{eq:P-local-est}, part \eqref{part:g-is-close} and the local version of the product estimates in Proposition \ref{prop:multiplication}, and
inequality \eqref{eq:P-is-close} and the uniform estimates \eqref{eq:P-coeff-bound}
from Lemma \ref{lem:geometric-op-mapping}.
\end{proof}

We seek to compare the operator $\mathcal P[g]$, which acts on sections of bundle $E$, to the operator $\mathcal P[\breve g]$.
However, the geometric tensor bundle $\breve E\to \mathbb H$ on which $\mathcal P[\breve g]$ acts might not pull back under $\tilde\Theta_{\hat p}$ to the bundle $E\to M$ on which $\mathcal P[g]$ acts; this occurs, for example, if the definition of $E$ is contingent upon the metric $g$, in which case $\breve E$ would be defined using $\breve g$ instead; see \S\ref{secsec:gtb-early} for a more detailed discussion.
For such bundles, the comparison in Lemma \ref{lem:local-compare-operators} is not sufficient for the Fredholm theory of \S\ref{sec:fredholm}, where we use the fact that $\mathcal P[\breve g]$ is invertible on $\breve E$.

In order to address this situation,
we construct a locally-defined tensor $A[g]\in T^{1,1}\bar M\big|_{Z_{r_0}(\hat p)}$,
again applying the Gram-Schmidt algorithm but now starting with the
$\tilde\Theta^*\breve g$-orthonormal co-frame
$\rho^{-1}d\tilde\Theta^j$ and resulting in a $g$-orthonormal co-frame
$\omega^i=(A[g])^i_j \rho^{-1}d\tilde\Theta^j$.
Let $T^{k_1,k_2} M$ be the ambient tensor bundle of $E$.
Then $A_{\hat p}[g]$ gives rise to a bundle automorphism tensor $\Pi_{\hat p}[g]\in T^{k_1+k_2,k_1+k_2}M\big|_{Z_{r_0}(\hat p)}$,
formed from tensor products of $A_{\hat p}[g]$ and $A_{\hat p}^{-1}[g]$,
such that contraction $u\mapsto \Pi_{\hat p}[g]\cdot u$ takes $g$ orthonormal frames to $\tilde\Theta_{\hat p}^*\breve g$ orthonormal frames. As a consequence,
if $u$ is a section of $E$, then the contraction $\Pi_{\hat p}[\bar g]\cdot u$ is a section of $\tilde\Theta_{\hat p}^*\breve E$,
,where $\breve E$ is the bundle over $\mathbb H$ corresponding to $E$. Moreover,
from the invertibility of $A_{\hat p}[g]$, $\Pi_{\hat p}[g]$
admits an inverse $\Pi_{\hat p}^{-1}[g]$ that reverses this process.
The bundle automorphisms $\Pi_{\hat p}[g]$ satisfy estimates analogous
to those of Lemma \ref{lem:local-compare-operators}.
\begin{lemma}
\label{lem:Pi-properties}
Suppose $g$ is an asymptotically hyperbolic metric of class \eqref{intro:H-class} or \eqref{intro:X-class}, and let $\alpha$ be as in \eqref{intro:alpha-exists}.
Suppose $E$ is a geometric tensor bundle over $M$.
For the same constant $r_0'$ as in Lemma \ref{lem:local-compare-operators},
for all $\hat p\in\partial M$, and for all $0<r\le r_0'$,
if $\Phi_i$ is a M\"obius parametrization with $\Phi_i(B_1^\Hyp)$ intersecting
$Z_r(\hat p)$, then the pullbacks $\Phi_i^*\Pi_{\hat p}[g]$ and $\Phi_i^*\Pi_{\hat p}[g]^{-1}$
are uniformly bounded in $H^{s,p}(B_1^\Hyp)$ independent of $i$.
Furthermore we have
\begin{align*}
\|\Phi_i^*\Pi_{\hat p}[g]-\Id\|_{H^{s,p}(B_2^\Hyp)}&\lesssim r^\alpha,
\\
\|\Phi_i^*\Pi_{\hat p}^{-1}[g]-\Id\|_{H^{s,p}(B_2^\Hyp)}&\lesssim r^\alpha.
\end{align*}
The implicit constants are independent of $i$, $\hat p$, and $r$.
\end{lemma}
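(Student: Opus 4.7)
The plan is to reduce the claim to Lemma \ref{lem:local-compare-operators}(\ref{part:g-is-close}) via the observation that the Gram--Schmidt map that produces $A_{\hat p}[g]$ is a smooth nonlinear function of the metric coefficients, together with the key identity $A_{\hat p}[\tilde\Theta_{\hat p}^*\breve g] = \Id$. Indeed, in adapted coordinates near $\hat p$ the metric $\tilde\Theta_{\hat p}^*\breve g$ has coefficients $\rho^{-2}\delta_{ij}$, and so the co-frame $\{\rho^{-1}d\tilde\Theta^j\}$ is already $\tilde\Theta_{\hat p}^*\breve g$-orthonormal; thus Gram--Schmidt applied to this frame with respect to $\tilde\Theta_{\hat p}^*\breve g$ returns the identity transformation, i.e.\ $A_{\hat p}[\tilde\Theta_{\hat p}^*\breve g]=\Id$. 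Since $\Pi_{\hat p}[g]$ is built by tensoring finitely many copies of $A_{\hat p}[g]$ and $A_{\hat p}^{-1}[g]$, the same is true for the bundle automorphism: $\Pi_{\hat p}[\tilde\Theta_{\hat p}^*\breve g]=\Id$.

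First I would fix a M\"obius parametrization $\Phi_i$ with $\Phi_i(B_1^\Hyp)$ meeting $Z_r(\hat p)$, pull the construction back to $B_2^\Hyp$, and write down the explicit Gram--Schmidt formulas. These express the entries of $A_{\hat p}[g]$ as polynomial expressions in the components of $\rho^2 g = \bar g$, divided by square roots of certain positive polynomial combinations of those components (the squared lengths produced at each Gram--Schmidt step, computed against $\rho^2 g$). Writing $g_{\mathrm{loc}} = \Phi_i^* g$ and $\breve g_{\mathrm{loc}} = \Phi_i^*\tilde\Theta_{\hat p}^*\breve g$, Lemma \ref{lem:local-compare-operators}(\ref{part:g-is-close}) gives
\begin{equation*}
\|g_{\mathrm{loc}} - \breve g_{\mathrm{loc}}\|_{H^{s,p}(B_1^\Hyp)} \lesssim r^\alpha
\end{equation*}
with constant independent of $i$ and $\hat p$, and since $\breve g_{\mathrm{loc}}$ has bounded smooth coefficients (uniformly in $\hat p$, by uniform control of the Jacobians $D(\tilde\Theta_{\hat p}\circ\Theta_{\hat p}^{-1})$), an enlarged ball argument or working directly on $B_2^\Hyp$ with the shrinkage of $r_0'$ gives the same estimate on $B_2^\Hyp$.

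Next I would exploit that $s>n/p$, so $H^{s,p}(B_2^\Hyp)$ is a Banach algebra and the local multiplication and nonlinear composition estimates of Proposition \ref{prop:multiplication}, Proposition \ref{prop:nonlin}, and Lemma \ref{lem:power} apply. The radicands arising in Gram--Schmidt are uniformly bounded away from $0$ on $B_2^\Hyp$ once $r_0'$ is shrunk (using $H^{s,p}\hookrightarrow C^0$ and the closeness estimate above), so the square-root and reciprocal operations are smooth on a neighborhood of $\breve g_{\mathrm{loc}}$ in $H^{s,p}(B_2^\Hyp)$ with Lipschitz constant bounded independently of $i$ and $\hat p$. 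This yields
\begin{equation*}
\|\Phi_i^* A_{\hat p}[g] - \Id\|_{H^{s,p}(B_2^\Hyp)} \lesssim \|g_{\mathrm{loc}} - \breve g_{\mathrm{loc}}\|_{H^{s,p}(B_2^\Hyp)} \lesssim r^\alpha,
\end{equation*}
and the same for $\Phi_i^* A_{\hat p}^{-1}[g]-\Id$ (using that $A_{\hat p}[g]$ is close to the identity, hence invertible with controlled inverse via a Neumann series in the algebra). Uniform $H^{s,p}$-boundedness of $\Phi_i^*\Pi_{\hat p}[g]$ and its inverse follows by combining these estimates with the algebra property applied to the tensor-product expression defining $\Pi_{\hat p}[g]$, and passing from $A$ to $\Pi$ preserves the $r^\alpha$ closeness to $\Id$.

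The main obstacle I anticipate is not conceptual but bookkeeping: verifying that the implicit constants in the Gram--Schmidt estimates are genuinely independent of both $i$ and $\hat p$. This reduces to uniform two-sided bounds on the Gram--Schmidt radicands, which require (i) uniform control of the ambient smooth frame $\tilde\Theta_{\hat p}$ across $\hat p\in\partial M$ (already built into the construction via compactness of $\partial M$ and the bounded Jacobians mentioned above), and (ii) shrinking $r_0'$ once more, by an amount depending only on $g$, to absorb the closeness estimate of Lemma \ref{lem:local-compare-operators}(\ref{part:g-is-close}) into a uniform lower bound on the radicands. Since this same uniform-in-$\hat p$ Lipschitz argument was already carried out in Lemma \ref{lem:geometric-op-mapping} for the coefficients $b_k[g]$, it adapts to the present setting with only formal changes.
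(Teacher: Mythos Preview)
Your proposal is correct and follows essentially the same route as the paper: reduce to estimates on $A_{\hat p}[g]$ and its inverse, use the key identity $A_{\hat p}[\tilde\Theta_{\hat p}^*\breve g]=\Id$, invoke Lipschitz continuity of Gram--Schmidt in $H^{s,p}$, and feed in the closeness estimate from Lemma~\ref{lem:local-compare-operators}\eqref{part:g-is-close}. The only cosmetic difference is that the paper packages the Lipschitz continuity of Gram--Schmidt (and of its inverse) into a standalone Lemma~\ref{lem:GS}, whereas you rederive it inline from Lemma~\ref{lem:power} and the algebra property; citing Lemma~\ref{lem:GS} directly would streamline your argument and also give the $A^{-1}$ estimate without the Neumann series.
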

\begin{proof}
From the construction of $\Pi_{\hat p}[g]$ it suffices to show that
the pullbacks $\Phi_i^* A_{\hat p}[g]$ and $\Phi_i^* A_{\hat p}^{-1}[g]$
are uniformly bounded (independent of $i$, $\hat p$ and $r$) in $H^{s,p}(B_2^\mathbb H)$ and that
\begin{equation}
\begin{aligned}\label{eq:A-is-close}
\| \Phi_i^* A_{\hat p}[g] - \Id\|_{H^{s,p}(B_2)} \lesssim r^\alpha\\
\| \Phi_i^* A^{-1}_{\hat p}[g] - \Id\|_{H^{s,p}(B_2)} \lesssim r^\alpha.
\end{aligned}
\end{equation}
To obtain the estimates \eqref{eq:A-is-close} let $\Theta$ be the boundary coordinates
associated with $\Phi_i$ and for $1\le k\le n$
let $\omega^k=\omega^k_{j} dx^j = \Phi_i^*(\rho^{-1}d\tilde\Theta^k) = y^{-1}d(\tilde\Theta^k\circ \Theta^{-1})$. Given a metric $h$ on $B_2^\Hyp$ let $\{(C_{(i)}[h])_j^k dx^j\}$ be the
orthonormal coframe obtained from the Gram-Schmidt algorithm
applied to $h$ and the coframe $\{\omega^k\}$.

Using uniform $C^k$ bounds for the transition Jacobians $D(\tilde \Theta\circ\Theta^{-1})$ and
and their inverses to control the frame $\{\omega^k\}$ independent of $i$,
Lemma \ref{lem:GS} implies
$h\mapsto C_{(i)}[h]$ is Lipshitz $X^{s,p}(B_2^\Hyp)\to X^{s,p}(B_2^\Hyp)$ with
a Lipschitz constant independent of $i$ so long as $h$ is constrained to lie in a
bounded region of $X^{s,p}(B_2^\Hyp)$ with determinant uniformly bounded below.
Arguing as in Lemma \ref{lem:geometric-op-mapping} we can find such bounds
for $\Phi_i^* g$ and the analogous bounds for $\Phi_i^*\tilde \Theta^*\breve g$
follow easily from estimates for transition Jacobians.
Inequalities \eqref{eq:A-is-close} now follow from Lemma \ref{lem:local-operator-compare}\eqref{part:g-is-close} and the observation that
\begin{gather*}
C_{(k)}[\Phi_i^* g] = \Phi_k^*A[g], \\
C_{(k)}[\Phi_k^*\tilde\Theta^*\breve g]= \Id.
\qedhere
\end{gather*}
\end{proof}

We are now ready to compare operators in a neighborhood of a point along the boundary,
and we use cutoff functions from the following lemma to perform the localization.
\begin{lemma}
\label{lem:bdy-cutoff}  Let $\hat p\in \partial M$, let $0<r\le r_0/2$.
There exists a nonnegative function $\eta_{\hat p,r}\in C^\infty(\bar M;\mathbb R)$ that equals 1 on $Z_{r}(\hat p)$
and has support in $M$ contained in $Z_{2r}(\hat p)$.
Moreover, $\eta_{\hat p,r}$ can be chosen such that for each $\ell\in\Nats_{\ge 0}$ there is $K>0$ independent of $\hat p$ and $r$, but depending on the parameter $a$,
such that  $\|\eta_{\hat p,r}\|_{C^{\ell}_0(M;\mathbb R)}\le K$.
\end{lemma}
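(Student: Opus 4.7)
\emph{Plan.} I would construct $\eta_{\hat p,r}$ as a product of two one-dimensional bump functions in the boundary-adapted coordinates $\Theta_{\hat p} = (\tilde x, \tilde y)$ on $Z_{r_0}(\hat p)$. Fix once and for all a smooth $\chi\colon \mathbb R\to [0,1]$ with $\chi\equiv 1$ on $[-1,1]$ and $\chi\equiv 0$ off $(-2,2)$. For $r\le r_0/2$ define
\[
\eta_{\hat p,r}(q) = \chi(|\tilde x(q)|^2/r^2)\,\chi(\tilde y(q)/r), \qquad q\in Z_{r_0}(\hat p),
\]
and extend by zero elsewhere; the square in the first factor keeps the function smooth across $\tilde x=0$, and $\chi\equiv 1$ near the origin makes the zero extension smooth up to $\partial M$ outside the support. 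By construction $\eta_{\hat p,r}\equiv 1$ on $Z_r(\hat p)$ and $\supp\eta_{\hat p,r}\cap M\subset Z_{2r}(\hat p)$, which gives the qualitative assertions of the lemma.

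\emph{Uniform $C^\ell_0$ bound.} The norm to control is $\|\eta_{\hat p,r}\|_{C^\ell_0(M;\mathbb R)} = \sup_i \|\Phi_i^*\eta_{\hat p,r}\|_{C^\ell(B_1^\Hyp)}$. If $\Phi_i(B_1^\Hyp)\cap Z_{2r}(\hat p)=\emptyset$ the pullback vanishes; otherwise, since $\Phi_i^*\rho = \rho_i y$ and $y\in[e^{-1},e]$ on $B_1^\Hyp$, there is a point in $\Phi_i(B_1^\Hyp)$ with $\rho\le 2r$, forcing $\rho_i\lesssim r$. Provided $r$ is below a threshold $r_0''$ depending only on the smooth structure near $\partial M$, this $\rho_i$-smallness implies that $\Phi_i(B_1^\Hyp)$ lies entirely inside $Z_{r_0}(\hat p)$, so that $\Phi_i^*\Theta_{\hat p}$ is defined on $B_1^\Hyp$. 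Writing $\Theta_{\hat p}\circ\Phi_i(x,y) = (\psi(\rho_i x + x_0,\rho_i y),\rho_i y)$ where $\psi$ is the smooth transition between the boundary chart associated with $\Phi_i$ and $\theta_{\hat p}$, the pullback becomes
\[
\Phi_i^*\eta_{\hat p,r}(x,y) = \chi\bigl(|\psi(\rho_i x+x_0,\rho_i y)|^2/r^2\bigr)\,\chi(\rho_i y/r).
\]
Each application of $\partial_x$ or $\partial_y$ extracts, via the chain rule, a factor of $\rho_i/r$ times a bounded derivative of $\chi$ or $\psi$; since $\rho_i/r$ is bounded and $\psi$ varies over a finite collection of boundary-chart transitions (so its $C^\ell$ norms are uniformly controlled), the full $C^\ell(B_1^\Hyp)$ norm is bounded independently of $i$, $\hat p$, and $r$.

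\emph{Main obstacle.} The only nontrivial point is securing the containment $\Phi_i(B_1^\Hyp)\subset Z_{r_0}(\hat p)$ whenever the image meets $Z_{2r}(\hat p)$, which necessitates restricting to $r\le r_0''$ for some universal $r_0''>0$. This uses compactness of $\partial M$ and the scaling properties of M\"obius parametrizations; a stronger quantitative version appears as Lemma \ref{lem:kappa} later in the paper, and a direct argument suffices here. For $r>r_0''$, one simply multiplies a fixed cutoff at scale $r_0''$ by a partition-of-unity adjustment, which cannot increase $C^\ell_0$ norms beyond a universal constant. Once these geometric issues are settled, everything else reduces to bounded chain-rule computations.
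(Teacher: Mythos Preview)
Your proposal is correct and takes essentially the same approach as the paper: both construct $\eta_{\hat p,r}$ by rescaling a fixed bump function in the boundary-adapted coordinates (the paper sets $\eta_{\hat p,r}(\theta,\rho)=\psi(\theta/(2r),\rho/(2r))$ for a fixed $\psi$ supported in $\bar Y_1$), and both obtain the uniform $C^\ell_0$ bound from the fact that pulling back by a M\"obius parametrization rescales by $\rho_i\lesssim r$, so the chain-rule factors of $1/r$ are cancelled. Your write-up is in fact more detailed than the paper's, which dismisses the estimate as ``straightforward'' and does not explicitly address the containment issue you flag; your remedy for the large-$r$ range is phrased a bit loosely, but the point is minor since for $r$ bounded away from zero the $C^\ell(\bar M)$ norm of $\eta_{\hat p,r}$ is uniformly bounded and $C^\ell(\bar M)\hookrightarrow C^\ell_0(M)$.
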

\begin{proof}
Let $\psi:\bar \Hyp\to [0,1]$ be a smooth function that vanishes outside $\bar {Y_{1}}$
and equals 1 on $ Y_{1/2}$ and define
cutoff function $\eta_{\hat p,r}$ on $M$,
supported in $Z_{r}(\hat p)$, by $\eta_{\hat p, r}(\theta,\rho) = \psi(\theta/(2r), \rho/(2r))$.
Since the transition functions between background coordinate charts are uniformly bounded, it is straightforward to show that for any M\"obius parametrization $\Phi_i$ we have $\|\Phi_i^*\eta_{\hat p, r}\|_{C^{l}(B_2^\mathbb H)}$ is bounded independent of $r$ and $\hat p$, which yields the desired bound.
\end{proof}

Suppose $r\le r_0'$ from Lemma \ref{lem:local-operator-compare} and additionally that
$r\le r_0/2$ so that the cutoff functions $\eta_{\hat p, r}$ from Lemma \ref{lem:bdy-cutoff}
are well-defined. We define localized operators $\mathcal P_{\hat p,r}$ and $\breve P_{\hat p, r}$
of scale $r$ at the point $\hat p$ by
\begin{equation*}
\mathcal P_{\hat p, r}u = \mathcal P[g](\eta_{\hat p, r} u)
\end{equation*}
and
\begin{equation}
\label{define-breve-P-loc}
\begin{aligned}
\breve{\mathcal P}_{\hat p, r}u
&= \Pi_{\hat p}^{-1}[g]\cdot\mathcal P[\tilde\Theta_{\hat p}^*\breve g] \left(\Pi_{\hat p}[ g]\cdot(\eta_{\hat p, r}u)\right).
\end{aligned}
\end{equation}
The bundle automorphism tensor $\Pi_{\hat p}[g]$ gives rise to the map
\begin{equation}\label{eq:Xi-def}
u\mapsto \Xi_{\hat p} u := (\tilde\Theta_{\hat p})_* \Pi_{\hat p}[g] u,
\end{equation}
which is a local bundle isomorphism $E\big|_{Z_{\hat r}(\hat p)} \to \breve E\big|_{Y_{\hat r}}$, and we can alternatively write
\begin{equation}
\label{alt-breve-P-loc}
\breve{\mathcal P}_{\hat p, r}u
= \Xi_{\hat p}^{-1}\, \mathcal P[\breve g]\, \Xi_{\hat p} (\eta_{\hat p, r}u).
\end{equation}

The two localized operators are comparable in the following sense.
\begin{lemma}
\label{lem:local-operator-compare}
Suppose $g$ is an asymptotically hyperbolic metric of class \eqref{intro:H-class} or \eqref{intro:X-class}, and let $\alpha$ be as in \eqref{intro:alpha-exists}.
Suppose $\mathcal P[g]$ acts on sections of bundle $E$ and satisfies Assumption \ref{Assume-P}.
For any $\hat p\in \partial M$, $(\sigma, q)\in \mathcal S^{s,p}_d$, and $\delta\in \mathbb R$ the operator norms of
\begin{align}
\label{local-H-mapping}
\breve{\mathcal P}_{\hat p,r} - \mathcal P_{\hat p, r}
& \colon H^{\sigma, q}_\delta(M;E) \to H^{\sigma-d,q}_\delta(M;E),
\\
\label{local-X-mapping}
\breve{\mathcal P}_{\hat p,r} - \mathcal P_{\hat p, r}
& \colon X^{\sigma, q}_\delta(M;E) \to X^{\sigma-d,q}_\delta(M;E)
\end{align}
satisfy
\begin{equation*}
\| \breve{\mathcal P}_{\hat p,r} - \mathcal P_{\hat p, r} \|
\lesssim r^\alpha.
\end{equation*}
The implicit constants are independent of $\hat p$ and $r\le\min(r_0',r_0/2)$,
where $r_0'$ is the constant from Lemma \ref{lem:local-compare-operators}.
\end{lemma}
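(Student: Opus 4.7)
The plan is to algebraically decompose the difference $\breve{\mathcal P}_{\hat p,r}u - \mathcal P_{\hat p,r}u$ into three pieces, each of which is manifestly $O(r^\alpha)$ thanks to Lemmas \ref{lem:local-compare-operators} and \ref{lem:Pi-properties}. Abbreviating $\Pi := \Pi_{\hat p}[g]$ and $v := \eta_{\hat p,r}u$, I will insert and subtract the intermediate expressions $\mathcal P[g]\Pi v$ and $\Pi^{-1}\mathcal P[g]\Pi v$ to obtain
\begin{align*}
\breve{\mathcal P}_{\hat p,r}u - \mathcal P_{\hat p,r}u
& = \Pi^{-1}\bigl(\mathcal P[\tilde\Theta_{\hat p}^*\breve g] - \mathcal P[g]\bigr)\Pi v \\
& \quad + (\Pi^{-1} - \Id)\,\mathcal P[g]\,\Pi v \\
& \quad + \mathcal P[g]\bigl((\Pi - \Id) v\bigr).
\end{align*}
Note that $v$ is supported in $Z_{2r}(\hat p)$ and that differential operators do not enlarge support, so each of the three pieces is supported in $Z_{2r}(\hat p)$. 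In particular, only M\"obius parametrizations $\Phi_i$ with $\Phi_i(B_1^\Hyp) \cap Z_{2r}(\hat p) \neq \emptyset$ contribute to the $H^{\sigma-d,q}_\delta$ or $X^{\sigma-d,q}_\delta$ norm of the output. Shrinking $r$ so that $r \le \min(r_0', r_0/2)$ ensures (by Lemma \ref{lem:local-compare-operators}(a)) that any such $\Phi_i(B_1^\Hyp)$ lies in $Z_{r_0}(\hat p)$, the domain on which $\Pi$ is defined, so all compositions above are well defined.

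The first piece is handled directly by the localized estimate \eqref{eq:P-is-close} of Lemma \ref{lem:local-compare-operators} pulled back by $\Phi_i$, combined with the uniform local bounds on $\Pi$ and $\Pi^{-1}$ in $H^{s,p}(B_1^\Hyp)$ from Lemma \ref{lem:Pi-properties} and the continuity of multiplication in Sobolev spaces (Proposition \ref{prop:multiplication}) applied on each M\"obius ball; this produces a factor $r^\alpha$ uniformly in $i$ and $\hat p$. For the second and third pieces, the key input is the estimate
\begin{equation*}
\|\Phi_i^*(\Pi - \Id)\|_{H^{s,p}(B_2^\Hyp)} + \|\Phi_i^*(\Pi^{-1} - \Id)\|_{H^{s,p}(B_2^\Hyp)} \lesssim r^\alpha
\end{equation*}
from Lemma \ref{lem:Pi-properties}. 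Combined with the local multiplication estimate (valid because $(\sigma,q) \in \mathcal S^{s,p}_d \subset \mathcal S^{s,p}_0$ so that $H^{s,p}\cdot H^{\sigma,q} \to H^{\sigma,q}$ continuously on a ball) and the continuity of $\mathcal P[g]$ on $H^{\sigma,q}_\delta \to H^{\sigma-d,q}_\delta$ from Lemma \ref{lem:geometric-op-mapping} (applied to $\Pi v$ which is uniformly bounded relative to $v$ by Lemma \ref{lem:Pi-properties}), we obtain the required $r^\alpha$ bound on each M\"obius ball. Summing $p^{\text{th}}$ powers (for the Sobolev case) or taking the supremum (for the Gicquaud-Sakovich case) over the relevant indices completes the estimate.

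The main obstacle I anticipate is bookkeeping: ensuring that each ``coefficient'' factor $\Pi - \Id$, $\Pi^{-1} - \Id$ or $\mathcal P[\tilde\Theta^*\breve g] - \mathcal P[g]$ is paired with the correct local product estimate so that the constants are independent of $i$, $\hat p$, and $r$. This requires choosing the right compatible Sobolev indices when applying Proposition \ref{prop:multiplication} on $B_1^\Hyp$, and verifying that the pullbacks of $\Pi$, $\Pi^{-1}$ and the coefficients of $\mathcal P[g]$ retain their uniform local bounds (from Lemmas \ref{lem:geometric-op-mapping} and \ref{lem:Pi-properties}) across \emph{all} M\"obius balls intersecting $Z_{2r}(\hat p)$, not merely a fixed finite collection. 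The fact that the cutoff $\eta_{\hat p,r}$ and its derivatives have $C^\ell_0$ norms bounded independently of $r$ and $\hat p$ (Lemma \ref{lem:bdy-cutoff}) then yields uniform continuity of multiplication by $\eta_{\hat p,r}$ on $H^{\sigma,q}_\delta$ and $X^{\sigma,q}_\delta$, completing the uniform control.
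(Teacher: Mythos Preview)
Your proposal is correct and follows essentially the same strategy as the paper: decompose the difference into three telescoping pieces, each carrying an $O(r^\alpha)$ factor from either Lemma~\ref{lem:local-compare-operators} or Lemma~\ref{lem:Pi-properties}, then estimate locally on M\"obius balls and sum (or take a supremum). The only cosmetic difference is the choice of intermediate operator in the telescoping: you route through $\mathcal P[g]$, inserting $\Pi^{-1}\mathcal P[g]\Pi v$ and $\mathcal P[g]\Pi v$, whereas the paper routes through $\mathcal P[\tilde\Theta_{\hat p}^*\breve g]$, inserting $\mathcal P[\tilde\Theta_{\hat p}^*\breve g]\Pi v$ and $\mathcal P[\tilde\Theta_{\hat p}^*\breve g]v$; both are valid since the required local bounds (uniform boundedness of the operator coefficients and of $\Pi^{\pm1}$, and the fact that $(\sigma,q)$ and $(\sigma-d,q)$ both lie in $\mathcal S^{s,p}_0$) are available for either operator.
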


\begin{proof}
Consider the case \eqref{local-H-mapping} and write
\begin{multline}
\label{expand-local-difference}
\breve{\mathcal P}_{\hat p,r}u - \mathcal P_{\hat p, r}u
=
\left( \Pi_{\hat p}[ g]^{-1}-\Id\right)
\cdot\mathcal P[\tilde\Theta_{\hat p}^*\breve g] \left(\Pi_{\hat p}[ g]\cdot(\eta_{\hat p, r}u)\right)
\\
+
\mathcal P[\tilde\Theta_{\hat p}^*\breve g]\left((\Pi_{\hat p}[ g]-\Id)\cdot(\eta_{\hat p, r}u)\right)
+
\left(\mathcal P[\tilde\Theta_{\hat p}^*\breve g]-\mathcal P[g]\right)(\eta_{\hat p, r}u),
\end{multline}
for $u\in H^{\sigma, q}_\delta(M)$.

Suppose $\Phi_i$ is a M\"obius parametrization such that $\Phi_i(B_1^\Hyp)$
intersects the support of $\eta_{\hat p,r}$.  The estimates of
Lemmas \ref{lem:local-compare-operators} and \ref{lem:Pi-properties}
together with the
local versions of the product estimates in Proposition \ref{prop:multiplication}
then imply
\begin{align*}
\|\Phi_i^*(\breve{\mathcal P}_{\hat p,r}u - \mathcal P_{\hat p, r}u)\|_{H^{s,p}(B_1^\Hyp)}
&\lesssim r^\alpha \|\Phi_i^* (\eta_{\hat p,r} u)\|_{H^{\sigma,q}(B_1^\Hyp)}
\\
&\lesssim r^\alpha \|\eta_{\hat p,r}\|_{X^{s,p}(M;\Reals)}\|\Phi_i^*  u\|_{H^{\sigma,q}(B_1^\Hyp)}
\end{align*}
with implicit constant independent of $i$, $\hat p$, and $r\le \min(r_0',r_0/2)$.
The proof is completed by summing $q^{\rm th}$ powers, noting that
$\|\eta_{\hat p,r}\|_{X^{s,p}(M)}$ is bounded independent of $r$ and $\hat p$.
The case \eqref{local-X-mapping} is proved similarly replacing the sum with a supremum.
\end{proof}

\subsection{Interpolations of \texorpdfstring{$\mathcal P$}{𝒫} at infinity}\label{secsec:interp-P}
The semi-Fredholm estimates of \S\ref{secsec:semi-fred} and the
parametrix construction of \S\ref{sec:fredholm} rely on
approximating $\mathcal P$ on a collar neighborhood $A_r$ of $\partial M$
with a collection of
`hyperbolic space' operators $\breve{\mathcal P}_{\hat p_j,r}$
localized at points $\hat p_j\in\partial M$, which are then
stitched together with a suitable partition of unity.
We call this approximating
collection an interpolation of $\mathcal P$ at infinity; see
Definition \ref{def:interp-of-P} below.

As $r\to 0$, the number of points $p_j$ associated with the interpolation
increases without bound. For the applications of \S\ref{sec:fredholm}, we
need to mitigate against this and ensure that every M\"obius
parametrization $\Phi_i$ has an image that intersects a bounded number of
regions on which some $\breve{\mathcal P}_{\hat p_j,r}$ acts non-trivialy,
with a fixed bound as $r\to 0$.  The following two
technical results are the tools needed to obtain the bound and 
address a gap in the parametrix construction of \cite{Lee-FredholmOperators}.

The first step is to show that if the image of a M\"obius
parametrization intersects some $Z_{r}(\hat p)$, then it is
contained in a larger neighborhood of infinity
with a size that scales naturally in $r$.
\begin{lemma}\label{lem:kappa} There exists $\kappa\in (0,1)$ such that for all
$r\le r_0$, all $\hat p\in\partial M$, and all
M\"obius parametrizations $\Phi_i$,
if $\Phi_i(B_1^\Hyp)\cap Z_{\kappa r}(\hat p)\neq \emptyset$ then
$\Phi_i(B_1^\Hyp)\subset Z_{r}(\hat p)$.
\end{lemma}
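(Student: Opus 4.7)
The proof proceeds by a scaling argument exploiting the definition \eqref{define-Mobius} of M\"obius parametrizations near infinity, combined with compactness of $\partial M$ to produce uniform constants. Recall first that the finitely many auxiliary M\"obius parametrizations used to cover $M\setminus A_{r_0/8}$ can be arranged (by choice) so that their images $\Phi(B_1^\Hyp)$ are bounded away from $\partial M$, say contained in $M\setminus A_{r_0/16}$; requiring $\kappa\le 1/16$ renders the hypothesis $\Phi(B_1^\Hyp)\cap Z_{\kappa r}(\hat p)\neq\emptyset$ impossible for such $\Phi$ and all $r\le r_0$, so the claim is vacuous in that case. This reduces the lemma to the M\"obius parametrizations $\Phi_i$ constructed via \eqref{define-Mobius} and centered at points $p_i\in A_{r_0/8}$.

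For such $\Phi_i$, write $\hat p_i$ for the associated boundary point and $\Theta_{\hat p_i}(p_i)=(\theta_i,\rho_i)$. The containment \eqref{eq:ball-containment} combined with \eqref{define-Mobius} immediately yields that $\rho$ takes values in $(e^{-1}\rho_i,e\rho_i)$ on $\Phi_i(B_1^\Hyp)$, and that $\Pi_\partial(\Phi_i(B_1^\Hyp))$ has diameter at most $2\rho_i\sinh(1)$ in the $\theta_{\hat p_i}$ coordinates. If $q\in\Phi_i(B_1^\Hyp)\cap Z_{\kappa r}(\hat p)$, then $\rho(q)<\kappa r$ combined with $\rho(q)>e^{-1}\rho_i$ gives $\rho_i<e\kappa r$, whence every point of $\Phi_i(B_1^\Hyp)$ has $\rho$-value less than $e^2\kappa r$, which is at most $r$ provided $\kappa\le e^{-2}$. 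For the tangential coordinate, finiteness of the boundary chart cover together with smoothness of the transition maps yields a uniform Lipschitz constant $L$ such that $|\theta_{\hat p}(\cdot)-\theta_{\hat p}(\cdot)|\le L\,|\theta_{\hat p_i}(\cdot)-\theta_{\hat p_i}(\cdot)|$ on their common chart domains, uniformly over boundary points $\hat p,\hat p_i$; this may require shrinking $r_0$ at the outset so that all the relevant overlaps $U(\hat p)\cap U(\hat p_i)$ sit in a compact subset of each chart.

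For any $q'\in\Phi_i(B_1^\Hyp)$, the triangle inequality then produces
\[
|\theta_{\hat p}(\Pi_\partial(q'))-\theta_{\hat p}(\hat p)|\le L\,|\theta_{\hat p_i}(\Pi_\partial(q'))-\theta_{\hat p_i}(\Pi_\partial(q))| + |\theta_{\hat p}(\Pi_\partial(q))-\theta_{\hat p}(\hat p)| < \bigl(2Le\sinh(1)+1\bigr)\kappa r.
\]
Choosing $\kappa=\min\bigl(e^{-2},\,\tfrac{1}{16},\,(2Le\sinh(1)+1)^{-1}\bigr)$ makes this bound at most $r$, and combined with the $\rho$-bound above it places every $q'\in\Phi_i(B_1^\Hyp)$ inside $Z_r(\hat p)$. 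The main technical point is securing the uniform Lipschitz constant $L$ and verifying that, whenever the hypothesis holds, the point $\Pi_\partial(q)$ lies in $U(\hat p)\cap U(\hat p_i)$ and the tangential displacements encountered stay within a common chart domain for all pairs $\hat p,\hat p_i$; both are handled by compactness of $\partial M$ together with finiteness of the boundary chart collection.
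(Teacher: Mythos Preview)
Your argument and the paper's share the same core ingredients: the $\rho$-containment from \eqref{eq:ball-containment} and a uniform Lipschitz constant for transitions between boundary charts (the paper calls it $C$, you call it $L$). The difference is in how the tangential estimate is organized. You estimate $|\theta_{\hat p}(\Pi_\partial(q'))-\theta_{\hat p}(\hat p)|$ directly for every $q'\in\Phi_i(B_1^\Hyp)$, which requires knowing a priori that $\Pi_\partial(q')$ lies in the domain of $\theta_{\hat p}$; you acknowledge this and defer it to compactness and a possible further shrinking of $r_0$, but that last step is left implicit. The paper sidesteps this circularity with a connectedness argument: rather than show $\Phi_i(B_1^\Hyp)\subset Z_r(\hat p)$ directly, it shows $\Phi_i(B_1^\Hyp)\cap Z_r(\hat p)\subset Z_{r/2}(\hat p)$, which makes the intersection both open and closed in the connected set $\Phi_i(B_1^\Hyp)$. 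The advantage is that for any $b$ already assumed to lie in $Z_r(\hat p)$, one knows $\Pi_\partial(b)\in U(\hat p)$ and so $\theta_{\hat p}(\Pi_\partial(b))$ is automatically defined---no separate domain verification is needed. Your approach is fixable (e.g.\ by invoking the extended charts on $\Omega_{\hat p}\supset U(\hat p)$ and a uniform margin from compactness), but the paper's open--closed trick is cleaner and avoids touching $r_0$.
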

\begin{proof}
A compactness argument shows that there
exists a constant $C\ge 1$ such that if $\theta$ and $\tilde \theta$ two boundary
charts with $\hat a$ and $\hat b$ in the domains of both, then
$|\theta(\hat a)-\theta(\hat b)|_{\Reals^{n-1}}\le C|\tilde\theta(\hat a)-\tilde \theta(\hat b)|_{\Reals^{n-1}}$.  Let $\kappa=(1+100C)^{-1}/2$.

Suppose for some $r\le r_0$ and M\"obius parametrization
$\Phi_i$ that $\Phi_i(B_1^\mathbb{H})\cap Z_{\kappa r}(\hat p)\neq \emptyset$.
We will show that $\Phi_i(B_1^\mathbb{H})\cap Z_{r}(\hat p)$ is both open
and closed in $\Phi_i(B_1^\mathbb{H})$, which proves the desired containment.  The
intersection is clearly open, and to show that it is closed it
suffices to show $\Phi_i(B_1^\mathbb{H})\cap Z_{r}(\hat p)\subset Z_{r/2}(\hat p)$.
Let $a\in \Phi_i(B_1^\mathbb{H})\cap Z_{\kappa r}(\hat p)$ and let $b\in \Phi_i(B_1^\mathbb{H})\cap Z_{r}(\hat p)$.

Since $a,b\in \Phi_i(B_1^\mathbb{H})$ and since $a\in Z_{\kappa r}(\hat p)$, we conclude
$\rho(b)< \exp(2) \rho(a) < 9 \kappa r < r/2$.  So
to show $b\in Z_{r/2}(\hat p)$ we need only demonstrate
$|\theta(\hat p)-\theta\circ \Pi_\partial(b)|_{\Reals^{n-1}}<r/2$
where $\theta$ is the boundary chart associated with $\hat p$.  Let $\tilde
\theta$ be the boundary chart associated with $\Phi_i$.  Then
\begin{align*}
|\theta(\hat p)-\theta\circ \Pi_\partial(b)|_{\Reals^{n-1}}
&\le |\theta(\hat p)-\theta\circ \Pi_\partial(a)|_{\Reals^{n-1}}+
|\theta\circ \Pi_\partial(a)-\theta\circ \Pi_\partial(b)|_{\Reals^{n-1}}\\
&< \kappa r + C|\tilde \theta\circ \Pi_\partial(a)-\tilde \theta\circ \Pi_\partial(b)|_{\Reals^{n-1}}\\
&<\kappa r +  2C\sinh(1) \rho(p_i)\\
& < \kappa r + 10 C \kappa r < r/2.
\end{align*}
where we have used the fact that $\rho(a)<\kappa r$ implies $\rho(p_i)< 3\kappa r$.
\end{proof}

The following variation of Lemma 2.2 of \cite{Lee-FredholmOperators} ensures that
for any choice of $r\le r_0$ we can find a uniform (in $r$) locally finite family
of neighborhoods at infinity at scale $r$ that cover a collar neighborhood
at the same scale. We will subsequently see that a suitable
choice of the parameter $a$ appearing in it will, in combination with Lemma \ref{lem:kappa},
ensure that uniform local finiteness
for the neighborhoods at infinity extends to the images of M\"obius parametrizations
that intersect the collar neighborhood.

\begin{lemma}\label{lem:boundary-loc-finite}
Suppose $0<a<1$.
For any $0<r\le r_0$ there exist finitely many
points $\hat p_j\in \partial M$
such that:
\begin{enumerate}
\item the sets $\{Z_{ar}(\hat p_j)\}$ cover $A_{ar}$, and
\item the neighborhoods $\{Z_{r}(\hat p_j)\}$
are uniformly locally finite independent of $r$.
\end{enumerate}
\end{lemma}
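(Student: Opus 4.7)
The plan is to carry out a scale-invariant covering argument in local boundary coordinates, patched together using the compactness of $\partial M$.

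First, I would fix a finite collection of boundary charts $\{(\Omega_\beta,\theta_\beta)\}_{\beta=1}^N$ covering $\partial M$, chosen so that each $\theta_\beta$ extends smoothly to a neighborhood of $\bar\Omega_\beta$ and so that compactness yields a uniform bilipschitz constant $C\ge 1$ between any two overlapping charts. Choose a Lebesgue number $\eta>0$ for the cover such that for every $\hat p\in\partial M$ there is a single chart $(\Omega_{\beta(\hat p)},\theta_{\beta(\hat p)})$ whose image contains the Euclidean ball of radius $\eta$ about $\theta_{\beta(\hat p)}(\hat p)$. When $r$ is so small that $Cr<\eta$, I can work essentially within a single chart when analyzing any $Z_r(\hat p)$.

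Next, for each $\beta$, I would place a lattice in $\theta_\beta(\Omega_\beta)\subset \Reals^{n-1}$ of spacing $ar/(2\sqrt{n-1})$, take the subset of lattice points whose $\theta_\beta$-preimages lie in $\partial M$, and then discard points already within Euclidean distance $ar/2$ of a previously chosen point so as to form a maximal $ar/2$-separated subcollection $\{\hat p_j^{(\beta)}\}$. Amalgamating over $\beta$ yields the desired finite list $\{\hat p_j\}$. The covering property (a) then follows from maximality: any point $\hat p\in \partial M$ lies within Euclidean distance $ar$ of some $\hat p_j$ in a common chart (using $Cr<\eta$ to ensure we stay inside a chart), and so its $\rho$-preimage projection sits inside $Z_{ar}(\hat p_j)$; for larger $r$, the claim is trivial since finitely many fixed points suffice to cover the finitely-many-diameter collar.

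For the uniform local finiteness (b), I would fix any $p\in M$ and estimate the number of indices $j$ for which $p\in Z_r(\hat p_j)$. If $p\in Z_r(\hat p_j)$ then $\rho(p)<r$ and $|\theta_{\beta(\hat p_j)}(\hat p_j)-(\theta_{\beta(\hat p_j)}\circ\Pi_\partial)(p)|<r$, so $\Pi_\partial(p)$ has Euclidean distance at most $r$ from $\hat p_j$ in any chart containing both. The uniform bilipschitz constants force all such $\hat p_j$ to lie in a single Euclidean ball of radius $Cr$ about $\theta_{\beta_0}(\Pi_\partial(p))$ (for a suitable $\beta_0$), and since the $\hat p_j$'s are $ar/(2C)$-separated in any common chart, a Euclidean volume-packing bound gives a uniform upper bound on their number in terms of $C$ and $a$ alone; the scale factor $r$ cancels.

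The only real subtlety is step (b): ensuring the local-finiteness constant is genuinely independent of $r$, which requires that the packing estimate be scale-invariant. Because both $ar$ (separation) and $r$ (radius of the containing ball) scale linearly with $r$, the dimensionless ratio $a$ is what controls the count, so the bound is indeed uniform in $r$. Once this is in place, conclusions (a) and (b) follow, and finiteness of the collection (for each fixed $r$) is automatic from the compactness of $\partial M$ together with the separation condition.
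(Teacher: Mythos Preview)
Your approach is correct and rests on the same core idea as the paper's proof: select a maximal separated net of boundary points at scale comparable to $ar$ and use a scale-invariant packing argument to bound overlaps of the larger sets $Z_r(\hat p_j)$. The paper implements this more cleanly by working with the intrinsic metric $\gamma = \hat g|_{\partial M}$ on the compact boundary rather than in coordinate charts: it takes a maximal disjoint collection of $\gamma$-geodesic balls $B_{mar/2}^\gamma(\hat p_j)$, uses the uniform comparability $B_{mr}^\gamma(\hat p) \subset C_r(\hat p) \subset B_{Mr}^\gamma(\hat p)$ between geodesic and coordinate balls, and then invokes volume comparison ($C_1 r^{n-1} \le \Vol B_{mr}^\gamma$ and $\Vol B_{2Mr}^\gamma \le C_2 r^{n-1}$) to obtain the packing bound, citing \cite{Lee-FredholmOperators} Lemma~2.2 for the final step. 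Working with a global boundary metric sidesteps your chart-patching bookkeeping, the bilipschitz constants between overlapping charts, and the case split on whether $Cr<\eta$; the argument is uniform in $r$ from the outset. Your lattice-then-discard construction is also more elaborate than needed --- a direct maximal $ar/2$-separated set in any fixed metric on $\partial M$ already suffices --- though it causes no real harm. What your version buys is that it avoids appealing to Riemannian volume comparison, relying only on Euclidean packing in charts.
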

\begin{proof}
Recall the smooth reference metric $\hat g$ on $\bar M$ and let $\gamma =\hat g|_{\partial M}$.
Given $\hat p\in \partial M$,
let $\theta$ be the boundary coordinates associated with
$Z_{r_0}(\hat p)$; see \S\ref{sec:coords}.
Let $C_r(\hat p)$
be the $\theta$ coordinate ball of radius $r\le r_0$ centered at $\hat p$.
Similarly, let $B_r^\gamma(\hat p)$ be the $\gamma$-geodesic ball in
centered at $\hat p$ of radius $r$.

Since the geometry of $\gamma$ is uniformly close to Euclidean in any
of the finitely many boundary coordinate systems
there exist constants $m$ and $M$ such that for any $r\leq r_0$ we have
\begin{equation}
\label{vol-Z-compare}
B_{mr}^\gamma(\hat p) \subset C_r(\hat p) \subset B_{Mr}^\gamma(\hat p).
\end{equation}
Since $\partial M$ is compact, standard volume
comparison theorems imply
there exists constants $C_1, C_2$, independent of
$\hat p\in \partial M$ and  $r\le r_0$, such that
\begin{equation}
\begin{gathered}
\label{vol-B-compare}
C_1 r^{n-1} \leq \Vol B_{mr}(\hat p),\\
\Vol B_{2Mr} \le C_2 r^{n-1}.
\end{gathered}
\end{equation}

Fixing $r\leq r_0$, choose a maximal disjoint collection $\{B_{m ar/2}(\hat p_j)\}_{j=1}^J$; a volume comparison argument shows the collection is finite.
The maximality, by means of the triangle inequality, implies that $\{B_{mar}(\hat p_j)\}_{j=1}^J$ covers $\partial M$.
The inclusion \eqref{vol-Z-compare}, together with the
assumed product structure of coordinates near infinity
shows that $\{Z_{ar}(\hat p_j)\}_{j=1}^J$
covers $A_{ar}$.  The uniform (in $r$)
estimate of the number of sets $Z_{r}(\hat p_j)$ that can intersect
follows from the relations \eqref{vol-B-compare}, their common scaling
in $r$, and the argument of \cite{Lee-FredholmOperators} Lemma 2.2.
\end{proof}

With the two previous technical results in hand, we can now construct the
requisite partitions of unity used to stitch together localized operators
at infinity. The key point is item \eqref{part:why-we-did-the-labor}.
\begin{lemma}\label{lem:boundary-pou}
There exists $r_0''\le r_0$ such that
for each $r\le r_0''$ there exist boundary points $\{\hat p_j\}_{j=1}^J$
and smooth functions
$\varphi_0,\varphi_1,\ldots, \varphi_J$ on $\bar M$ such that:
\begin{enumerate}
\item The support of $\varphi_0$ in $M$ is contained in $M\setminus A_{r/4}$.
\item For $1\le j\le J$, the support in $M$ of $\varphi_j$ is contained in $Z_{r}(\hat p_j)$.
\item The functions $\psi_j=\varphi_j^2$ form a partition of unity on $\bar M$.
\item For any choice of $\ell\in\Nats_{\ge 0}$,  the functions
$\varphi_j$
are uniformly bounded in $C^{\ell,0}_0(M;\mathbb R)$ independent of $r$.
\item There is a bound $K$, independent of $r$, such that for each $j\ge 1$
there are at most $K$ indices $k\ge 1$ such that the support of $\varphi_k$
overlaps that of $\varphi_j$.
\item\label{part:why-we-did-the-labor} Given a M\"obius parametrization $\Phi_i$,
there are at most $K$ indices $j\ge 1$ such that $\Phi_i(B_2^\mathbb{H})$
intersects $Z_{r}(\hat p_j)$. Consequently, there are at most $K+1$
functions $\varphi_j$, $j\ge 0$, with support intersecting the image of $\Phi_i$.
\end{enumerate}
\end{lemma}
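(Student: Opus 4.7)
The plan is to build $\{\varphi_j\}$ from a boundary-locally-finite family of neighborhoods at infinity together with the cutoff functions supplied by Lemma \ref{lem:bdy-cutoff} and a single interior cutoff, and then to obtain the squared partition of unity by normalizing. The decisive property is the last one, (\ref{part:why-we-did-the-labor}), which controls the number of $Z_r(\hat p_j)$ that the image of a single M\"obius parametrization $\Phi_i(B_2^\Hyp)$ can meet; ensuring this requires Lemma \ref{lem:kappa} and dictates the scales at which Lemma \ref{lem:boundary-loc-finite} must be applied.

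First I would repeat the proof of Lemma \ref{lem:kappa} with the ball $B_2^\Hyp$ in place of $B_1^\Hyp$ to obtain a (possibly smaller) constant $\kappa'\in(0,1)$ such that, for $s\le r_0$ and for all $\hat p\in\partial M$, if $\Phi_i(B_2^\Hyp)\cap Z_{\kappa' s}(\hat p)\neq \emptyset$ then $\Phi_i(B_2^\Hyp)\subset Z_s(\hat p)$. I would then set $r_0'' = \kappa' r_0/2$, and given $r\le r_0''$, apply Lemma \ref{lem:boundary-loc-finite} with parameter $a=\kappa'/4$ and with the lemma's radius chosen to be $R=2r/\kappa'\le r_0$. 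This produces boundary points $\hat p_1,\ldots,\hat p_J$ for which $\{Z_{r/2}(\hat p_j)\}$ covers $A_{r/2}$ while the enlarged family $\{Z_{2r/\kappa'}(\hat p_j)\}$ is uniformly locally finite independent of $r$.

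Next, take $\eta_j = \eta_{\hat p_j,r/2}$ from Lemma \ref{lem:bdy-cutoff} for $j\ge 1$ and set $\eta_0 = \zeta(\rho/r)$ for a fixed smooth $\zeta\colon[0,\infty)\to[0,1]$ that vanishes on $[0,1/4]$ and equals $1$ on $[1/2,\infty)$. Since $\rho\circ\Phi_i(x,y) = \rho_i y$ in each M\"obius chart, a routine computation confirms that $\eta_0$ has $C^\ell_0(M;\mathbb R)$ norm bounded independent of $r$. Because $\{Z_{r/2}(\hat p_j)\}$ covers $A_{r/2}$ while $\eta_0\equiv 1$ off $A_{r/2}$, the sum $\Sigma=\sum_{j\ge 0}\eta_j^2$ is bounded below by a positive constant on $\bar M$, via a Cauchy-Schwarz estimate that uses the uniform local finiteness bound on the number of nonzero summands at any point. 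The same finiteness also gives uniform $C^\ell_0$ control of $\Sigma$ from above, so $\varphi_j = \eta_j\,\Sigma^{-1/2}$ is smooth, uniformly bounded in $C^\ell_0$, and satisfies $\sum_j\varphi_j^2=1$; the support conditions on $\varphi_0$ and the $\varphi_j$ are inherited directly from those of $\eta_0$ and the $\eta_j$.

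The main obstacle --- and the reason for working at the enlarged scale $2r/\kappa'$ --- is property (\ref{part:why-we-did-the-labor}), whose analogue for individual points is exactly what Lemma \ref{lem:boundary-loc-finite} delivers but which does not a priori imply the needed statement about images of M\"obius parametrizations. Here Lemma \ref{lem:kappa} (in the $B_2^\Hyp$ variant) provides the upgrade: if $\Phi_i(B_2^\Hyp)\cap Z_r(\hat p_j)\neq\emptyset$, then applying it at scale $s=r/\kappa'$ yields $\Phi_i(B_2^\Hyp)\subset Z_{r/\kappa'}(\hat p_j)\subset Z_{2r/\kappa'}(\hat p_j)$, so every such $j$ places a fixed point of $\Phi_i(B_2^\Hyp)$ inside its enlarged neighborhood, and uniform local finiteness of $\{Z_{2r/\kappa'}(\hat p_j)\}$ supplies the bound $K$. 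The same trick handles the $\varphi_j$ support-overlap count: any point of $Z_r(\hat p_j)\cap Z_r(\hat p_k)$ automatically lies in both $Z_{2r/\kappa'}(\hat p_j)$ and $Z_{2r/\kappa'}(\hat p_k)$, so uniform local finiteness of the enlarged collection again yields a uniform bound.
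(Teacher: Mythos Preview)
Your argument is correct and follows essentially the same route as the paper: apply Lemma~\ref{lem:boundary-loc-finite} at a scale dilated by a factor involving the constant from Lemma~\ref{lem:kappa}, build cutoffs from Lemma~\ref{lem:bdy-cutoff} together with an interior cutoff $\zeta(\rho/r)$, and normalize by the square root of the sum of squares. Your handling of property~(\ref{part:why-we-did-the-labor}) via a $B_2^\Hyp$ variant of Lemma~\ref{lem:kappa} is arguably cleaner than the paper's, which invokes Lemma~\ref{lem:kappa} directly (stated only for $B_1^\Hyp$) while the lemma asserts the bound for $\Phi_i(B_2^\Hyp)$. One small remark: the lower bound $\Sigma\ge 1$ needs no Cauchy--Schwarz; at every point either $\eta_0=1$ or some $\eta_j=1$ for $j\ge 1$, so a single summand already equals $1$.
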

\begin{proof}
Let $r_0''=\kappa r_0$ where $\kappa$ is the parameter from Lemma \ref{lem:kappa}
and suppose $r\le r_0''$.
Applying Lemma \ref{lem:boundary-loc-finite} with its parameter $a=\kappa/2$
implies we can find boundary points $\hat p_j$
such that the sets $Z_{r/2}(\hat p_j)$ cover $A_{r/2}$ and such that
the sets $Z_{\kappa^{-1}r}(\hat p_j)$ are uniformly locally finite:
there exists $K>0$ such that
each of these sets intersects no more than $K$
sets in the collection, with $K$
independent of $r$.

From Lemma \ref{lem:bdy-cutoff}, for each $j$ we can choose a
smooth cutoff function $\chi_j$ on $\bar M$ that equals 1 on $Z_{r/2}(\hat p_j)$
and which has a restriction to $M$ with support contained in
$Z_{r}(\hat p_j)$. Moreover,
we can ensure that for any choice of $\ell$ the $C^{\ell}_0$
norms of the functions $\chi_j$ are bounded independent of
$r$ and the choice of points $\hat p_j$.
Choose a smooth
function $\chi:[0,\infty)\to [0,\infty)$
such that $\chi=0$ on $[0,1/4]$ and $\chi=1$ on $[1/2,\infty)$
and define $\chi_{0}=\chi\circ(\rho/r)$, which is smooth on $\bar M$.
A scaling argument shows that $\chi_{0}$ is uniformly
bounded in $C^{\ell}_0(M)$ for each $\ell\in \Nats_{\ge 0}$
independent of $r\le r_0$.

By construction, for $j\ge 1$ the function $\chi_j$ is supported in $M$
on $Z_{r}(\hat p_j)\subset Z_{\kappa^{-1}r}(\hat p_j)$ and hence its support
can overlap with that of at most $K$ functions $\chi_k$ with $k\ge 1$.
Taking $\chi_0$ into account, its support can therefore overlap
at most $K+1$ functions $\chi_k$.
Moreover, if $\Phi_i$ is a M\"obius parametrization,
and if for some $j\ge 1$ the function $\chi_j$ has support intersecting $\Phi_i(B_1^\mathbb{H})$,
Lemma \ref{lem:kappa} implies $\Phi_i(B_1^\mathbb{H})\subset Z_{\kappa^{-1}r}(\hat p_j)$ and
it follows from the uniform local finiteness of the sets
$Z_{\kappa^{-1}r}(\hat p_i)$ that $\Phi_i(B_1^\mathbb{H})$ can intersect at most $K$ total of these.

The function $\sum_{j=0}^J \chi_{j}^2\ge 1$ and
uniform local finiteness
implies it is bounded in $C^{\ell}_0(M)$ for each $\ell\in \Nats_{\ge 0}$
independent of $r$ and the choice of points $\hat p_j$.
Hence, defining
\[
\varphi_{i} = \frac{\chi_{i}} {\sqrt{\sum_{j=0}^J \chi_{j}^2}},
\]
the functions $\psi_i=\varphi_i^2$
form a partition of unity of $\bar M$ that satisfy the
desired $C^{\ell}_0$ bounds.
\end{proof}

\begin{definition}\label{def:interp-of-P}
Let $g$ be an asymptotically hyperbolic metric of class \eqref{intro:H-class} or \eqref{intro:X-class} and let $\mathcal P[g]$ be an operator satisfying Assumption \ref{Assume-P}.
Recall the constants $r_0'$ and $r_0''$ from Lemmas \eqref{lem:local-compare-operators}
and \ref{lem:boundary-pou}, respectively.
For positive $r\le \min(r_0/2,r_0',r_0'')$
apply Lemma \ref{lem:boundary-pou} to construct a partition of unity
$\psi_j = \varphi_j^2$, with $0\le j\le J$, for this choice of $r$
and associated boundary points $\hat p_j$.

An \Defn{interpolation of $\mathcal P[g]$ at infinity with scale $r$} is comprised of the corresponding collection of bundle automorphism tensors $\Xi_j = \Xi_{\hat p_j}$, defined for $1\le j\le J$ by \eqref{eq:Xi-def}; 
localized approximation operators 
$\breve{ \mathcal P}_j:=\breve{\mathcal P}_{\hat p_j, r}$, defined for $1\le j\le J$ by \eqref{alt-breve-P-loc};
and the partition of unity $\{\varphi_j\}_{j=0}^J$.
\end{definition}

\subsection{A priori estimates}\label{secsec:semi-fred}
We are now able to prove a-priori estimates for elliptic geometric operators arising from asymptotically hyperbolic metrics of class \eqref{intro:H-class} and \eqref{intro:X-class}.
Our estimates rely on the isomorphism property of $\mathcal P[g]$, recorded in Assumption \ref{Assume-I} of the introduction.
This assumption allows us to invoke Theorems \ref{thm:ball-H-iso} and \ref{thm:ball-X-iso}, which establish isomorphism properties of $\mathcal P[g_\mathbb{B}]$ on the ball model $(\mathbb B, g_\mathbb{B})$ of hyperbolic space.
In our application here, and subsequently in \S\ref{sec:fredholm}, we use the fact that the isomorphism results on the ball model give rise to corresponding isomorphism theorems for $\mathcal P[\breve g]$ on the half space model $(\mathbb H, \breve g)$.
To do so, we choose an isometry $\mathcal I:\Hyp\to\mathbb B$ and simply define
norms on $\Hyp$ in terms of the norms on $\mathbb B$ defined in 
Appendix \ref{app:hyp-elliptic}:
\[
\|u\|_{H^{\sigma,q}_{\delta }(\Hyp;\breve E)} := \|\mathcal I_* u\|
_{H^{\sigma,q}_{\delta}(\mathbb B;\,\mathcal{I}_*\breve E)
}
\]
for any $1<q<\infty$ and $s,\delta\in\Reals$.  It is easy to see that the norms defined this way
are equivalent for any choice of $I$.  The following lemma allows us to transfer between sections
of geometric bundles on $M$ and bundles on $\Hyp$.

\begin{lemma}
\label{lem:xi-norm-equiv}
Suppose $g$ is an asymptotically hyperbolic metric of class \eqref{intro:H-class} or \eqref{intro:X-class}, and suppose $\hat p\in \partial M$.
Let $E$ is a geometric tensor bundle over $M$ and let $\breve E$ be the corresponding bundle over $\mathbb H$.
Let $(\sigma, q)\in \mathcal S^{s,p}_0$ and $\delta\in \mathbb R$.
Recalling the cutoff functions $\eta_{\hat p,r}$ from Lemma \ref{lem:bdy-cutoff},
for each $0< r\leq \bar r$ from Lemma \ref{lem:local-operator-compare}
we have the norm equivalence
\begin{equation*}
\| \Xi_{\hat p} (\eta_{\hat p, r}u) \|_{H^{\sigma, q}_\delta(\mathbb H;\breve E)}
\sim
\| \eta_{\hat p, r} u \|_{H^{\sigma, q}_\delta(M; E)}
\end{equation*}
for all $u\in H^{\sigma, q}_\delta(M;E)$, with the implicit constants
independent of $\hat p$ and $r$.
\end{lemma}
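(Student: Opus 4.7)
The plan is to work locally through M\"obius parametrizations and to exploit a natural correspondence between M\"obius parametrizations of $M$ near $\hat p$ and M\"obius parametrizations of $\mathbb H$. First I would note that $\eta_{\hat p,r} u$ is supported in $Z_{2r}(\hat p)$, so by construction of $\Xi_{\hat p}$ in \eqref{eq:Xi-def} the section $\Xi_{\hat p}(\eta_{\hat p, r} u)$ extends by zero to a section of $\breve E$ over $\mathbb H$ supported in $Y_{2r}$. Hence only M\"obius parametrizations $\Phi_i$ with $\Phi_i(B_1^\Hyp)$ intersecting $Z_{2r}(\hat p)$ are relevant on the $M$ side, and by Lemma \ref{lem:kappa} we may take $r$ small enough that $\Phi_i(B_2^\Hyp)\subset Z_{r_0}(\hat p)$ for every such $\Phi_i$.

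The key geometric observation is that for any such $\Phi_i$, centered at some $p_0$ with $\Theta_{\hat p}(p_0)=(\theta_0,\rho_0)$, the composition $\tilde\Theta_{\hat p}\circ \Phi_i$ factors as $\tilde\Phi_i\circ L_{\hat p}$, where $\tilde\Phi_i(\xi,\eta)=(\tilde\theta_0+\rho_0\xi,\rho_0\eta)$ is a genuine M\"obius parametrization of $\mathbb H$ and $L_{\hat p}:\Hyp\to\Hyp$ is a linear diffeomorphism (preserving the $y$-coordinate) built from the block structure of $K_{\hat p}$. Since the entries of $K_{\hat p}$ and $K_{\hat p}^{-1}$ are uniformly bounded in $\hat p$, the maps $L_{\hat p}$ and $L_{\hat p}^{-1}$ admit $C^\infty$ bounds independent of $\hat p$ on $B_2^\Hyp$, so pullback by $L_{\hat p}$ is a bounded operator on $H^{\sigma,q}(B_r^\Hyp)$ with norm independent of $\hat p$.

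Next I would establish the pointwise pullback identity
\[
\tilde\Phi_i^*\bigl[\Xi_{\hat p}(\eta_{\hat p,r}u)\bigr]
= (L_{\hat p}^{-1})^{*}\Bigl[(\Phi_i^*\Pi_{\hat p}[g])\cdot \Phi_i^*(\eta_{\hat p,r}u)\Bigr]
\]
in a fixed trivialization (this follows because $\Xi_{\hat p}=(\tilde\Theta_{\hat p})_*\Pi_{\hat p}[g]$ and $\tilde\Theta_{\hat p}\circ\Phi_i=\tilde\Phi_i\circ L_{\hat p}$). By Lemma \ref{lem:Pi-properties}, $\Phi_i^*\Pi_{\hat p}[g]$ and its inverse are uniformly bounded in $H^{s,p}(B_1^\Hyp)$, and since $(\sigma,q)\in\mathcal S^{s,p}_0$, the local form of Proposition \ref{prop:multiplication} yields
\[
\|\tilde\Phi_i^*[\Xi_{\hat p}(\eta_{\hat p,r}u)]\|_{H^{\sigma,q}(B_1^\Hyp)}
\sim \|\Phi_i^*(\eta_{\hat p,r}u)\|_{H^{\sigma,q}(B_{r'}^\Hyp)}
\]
uniformly in $i,\hat p,r$, where the change from $B_1^\Hyp$ to a uniformly comparable ball $B_{r'}^\Hyp$ absorbs the $L_{\hat p}$-distortion.

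Finally I would assemble the global estimate. The right-hand side norm is, by Lemma \ref{lem:many-norms}\eqref{part:noHcutoff}, equivalent to the $\ell^q$-sum $\sum_i \rho_i^{-\delta q}\|\Phi_i^*(\eta_{\hat p,r}u)\|_{H^{\sigma,q}(B_1^\Hyp)}^q$, restricted to the contributing indices above. The collection $\{\tilde\Phi_i\}$ is a uniformly locally finite family of M\"obius parametrizations of $\mathbb H$ whose restricted images cover the support $Y_{2r}$ of $\Xi_{\hat p}(\eta_{\hat p,r}u)$, and the base points satisfy $\rho(\tilde\Phi_i(0,1))=\rho_0=\rho_i$; the standard partition-of-unity argument used to prove Lemma \ref{lem:many-norms} then shows that the left-hand side norm is equivalent to the analogous $\ell^q$-sum over this family. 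Combining these with the uniform local estimate above delivers the claimed norm equivalence.

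The hard part will be the bookkeeping: verifying rigorously that for our specific family of M\"obius parametrizations on $\mathbb H$ (obtained via $\tilde\Theta_{\hat p}$ from those of $M$) the norm $\|\cdot\|_{H^{\sigma,q}_\delta(\Hyp;\breve E)}$ is equivalent to the discretized $\ell^q$ sum of local norms with constants independent of $\hat p$, and carefully tracking bundle identifications so that the multiplier $\Phi_i^*\Pi_{\hat p}[g]$ is indeed acting as a matrix-valued function with uniformly controlled $H^{s,p}$ coefficients on the pulled-back component functions of $\eta_{\hat p,r}u$.
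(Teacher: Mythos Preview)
Your approach is sound and parallels the paper's, but two points deserve attention. First, the factorization $\tilde\Theta_{\hat p}\circ\Phi_i=\tilde\Phi_i\circ L_{\hat p}$ with $L_{\hat p}$ \emph{linear} only holds if $\Phi_i$ is a M\"obius parametrization built from the chart $\Theta_{\hat p}$ itself; the preferred $\Phi_i$ may use a different boundary chart, so a nonlinear transition $\Theta_{\hat p}\circ\Theta^{-1}$ intervenes. This is harmless---its Jacobian still admits uniform $C^k$ bounds---but the statement needs adjustment. Second, recall that in this paper the norm $\|\cdot\|_{H^{\sigma,q}_\delta(\Hyp;\breve E)}$ is \emph{defined} by transport through a fixed isometry $\mathcal I:\Hyp\to\mathbb B$, not via M\"obius parametrizations of $\Hyp$; so the ``standard partition-of-unity argument'' you invoke for the left-hand side actually requires comparing your $\tilde\Phi_i$-based sum to the $\mathbb B$-norm, which entails checking that $(\mathcal I^*\rho_{\mathbb B})/y$ is bounded above and below on the bounded region $Y_{\bar r}$ and that the Jacobians of $\Theta^{\mathbb B}\circ\mathcal I$ are uniformly controlled there. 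The paper handles both issues by first stripping off $\Pi_{\hat p}[g]$ via Lemma~\ref{lem:Pi-properties} (reducing to the ambient tensor bundle) and then deferring the transition-Jacobian bookkeeping to the template of Lemma~\ref{lem:X-other-charts} together with Lemma~\ref{lem:alt-B-norms}; your route is more explicit but arrives at the same place once these two corrections are made.
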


\begin{proof}
In view of the estimates of Lemma \ref{lem:Pi-properties}, it suffices to show that
\begin{equation*}
\| \tilde\Theta_{\hat p}^* (\eta_{\hat p, r}u) \|_{H^{\sigma, q}_\delta(\mathbb H;\breve F)}
\sim
\| \eta_{\hat p, r} u \|_{H^{\sigma, q}_\delta(M; F)}
\end{equation*}
for any tensor bundle $F$. The proof of this
is a straightforward variation of the proof of Lemma \ref{lem:X-other-charts},
using the following facts:
\begin{itemize}
  \item We can pick $\bar r$ sufficiently large so that 
  $\tilde\Theta_{\hat p}(Z_{\hat p}(r_0)\subset Y_{\bar r}$
  for any $\hat p\in \partial M$.
  \item  Writing $\rho_{\mathbb B}$ for the defining function on $\mathbb B$,
  $(\mathcal I^*\rho_{\mathbb B})/y$ is bounded above and below
  on $Y_{\bar r}$
  \item On $Y_{\bar r}$, the Jacobians of $\Theta^{\mathbb B}\circ \mathcal I$
  and its inverse admit uniform $C^k$ bounds for any boundary chart $\Theta^{\mathbb B}$
  on $\mathbb B$.
  \item For any boundary chart $\Theta$ on $M$,
  the Jacobians of $\tilde \Theta_{\hat p}\circ \Theta^{-1}$ 
  and its inverse admit uniform $C^k$ bounds independent of $\hat p$.
\end{itemize}
See also the proof of Lemma \ref{lem:alt-B-norms}.
\end{proof}

We now establish the a priori estimates that imply $\mathcal P[g]$ is semi-Fredholm.
Recall that Assumption \ref{Assume-I} implies the positive of the indicial radius $R$ of $\mathcal P[\breve g]$; see \cite{Lee-FredholmOperators}.
Recall also, from \eqref{intro:define-DqR}, the notation 
\begin{equation*}
D_q(R) = \left\{ \delta\in \mathbb R\colon \left| \delta - \frac{n-1}{q} + \frac{n-1}{2}\right|< R \right\}.
\end{equation*}

\begin{proposition}\label{prop:semi-fred-estimate}
Suppose $g$ be an asymptotically hyperbolic metric of class \eqref{intro:H-class} or \eqref{intro:X-class}.
Let $\mathcal P[g]$ be an operator on sections of geometric tensor bundle $E$ that satisfies Assumptions \ref{Assume-P} and \ref{Assume-I}.
Let $R>0$ be
the indicial radius of the corresponding operator $\breve{\mathcal P}$
on hyperbolic space.
\begin{enumerate}
\item If $(\sigma,q)\in\mathcal S^{s,p}_d$ and if
$\delta \in D_q(R)$
then for all $u\in H^{\sigma,q}_{\delta}(M;E)$
\begin{equation}\label{eq:semifredholm-H}
\|u\|_{H^{\sigma,q}_{\delta}(M;E)} \lesssim
\|\mathcal P u\|_{H^{\sigma-d,q}_{\delta}(M;E)} + \|u\|_{H^{\sigma-1,q}_{\delta-1}(M;E)}.
\end{equation}
\item If $(\sigma,q)\in\mathcal S^{s,p}_d$ and if
$\delta\in D_\infty(R)$
then for all $u\in X^{\sigma,q}_{\delta}(M;E)$
\begin{equation}\label{eq:semifredholm-X}
\|u\|_{X^{\sigma,q}_{\delta}(M;E)} \lesssim
\|\mathcal P u\|_{X^{\sigma-d,q}_{\delta}(M;E)} + \|u\|_{X^{\sigma-1,q}_{\delta-1}(M;E)}.
\end{equation}
\end{enumerate}
Consequently, $\mathcal P$ thus has finite-dimensional kernel and closed range (i.e.~is semi-Fredholm)
as a map $H^{\sigma,p}_\delta(M;E)\to H^{\sigma-d,p}_\delta(M;E)$ or
$X^{\sigma,p}_\delta(M;E)\to X^{\sigma-d,p}_\delta(M;E)$ under the stated restrictions on the parameters $\sigma$, $p$ and $\delta$.
\end{proposition}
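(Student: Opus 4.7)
The plan is to localize $\mathcal{P}$ near the conformal boundary via an interpolation of $\mathcal{P}$ at infinity (Definition \ref{def:interp-of-P}), transfer the isomorphism property of $\mathcal P[\breve g]$ on hyperbolic space (Theorems \ref{thm:ball-H-iso}, \ref{thm:ball-X-iso}) to each localized piece, and use interior regularity (Proposition \ref{prop:interior-reg}) on the compact part, with the $r^\alpha$ approximation error controlled by choosing the interpolation scale $r$ small. I will describe the Sobolev case \eqref{eq:semifredholm-H}; the Gicquaud-Sakovich case \eqref{eq:semifredholm-X} is obtained by replacing every sum with a supremum. Once the a priori estimate \eqref{eq:semifredholm-H} is established, the semi-Fredholm conclusion is a standard functional-analytic consequence of the compactness of the inclusion $H^{\sigma,q}_\delta \hookrightarrow H^{\sigma-1,q}_{\delta-1}$ from Proposition \ref{prop:rellich-lemma}.

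Fix $(\sigma,q)\in \mathcal S^{s,p}_d$ and $\delta\in D_q(R)$. Choose an interpolation at scale $r$, to be fixed, producing boundary points $\{\hat p_j\}_{j=1}^J$, cutoffs $\{\varphi_j\}_{j=0}^J$ with $\sum_j \varphi_j^2 = 1$, and conjugated localized operators $\breve{\mathcal P}_j = \Xi_{\hat p_j}^{-1} \mathcal P[\breve g] \Xi_{\hat p_j} \circ \eta_{\hat p_j,r}$. Since $\delta \in D_q(R)$, conjugating via the isometry $\mathcal I$ produces the isomorphism estimate
\[
\|v\|_{H^{\sigma,q}_\delta(\Hyp;\breve E)} \lesssim \|\mathcal P[\breve g]v\|_{H^{\sigma-d,q}_\delta(\Hyp;\breve E)},
\]
and applying this to $v = \Xi_{\hat p_j}(\varphi_j u)$, noting that $\eta_{\hat p_j,r}\equiv 1$ on $\supp \varphi_j$, and invoking Lemma \ref{lem:xi-norm-equiv} transfers this to
\[
\|\varphi_j u\|_{H^{\sigma,q}_\delta(M;E)} \lesssim \|\breve{\mathcal P}_j(\varphi_j u)\|_{H^{\sigma-d,q}_\delta(M;E)}
\]
with implicit constants independent of $j$ and $r$. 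Decomposing
\[
\breve{\mathcal P}_j(\varphi_j u) = \varphi_j\, \mathcal P u + [\mathcal P,\varphi_j]u + (\breve{\mathcal P}_j - \mathcal P)(\varphi_j u),
\]
the third term is bounded by $C r^\alpha \|\varphi_j u\|_{H^{\sigma,q}_\delta}$ via Lemma \ref{lem:local-operator-compare}, while the commutator term is controlled using Lemma \ref{lem:commutator}: since $\rho^{-1}[\mathcal P,\varphi_j]\in \mathcal L^{s,p}_{d-1}$ with operator norm uniformly bounded in $j$ by the uniform $C^\ell_0$ estimates of Lemma \ref{lem:boundary-pou},
\[
\|[\mathcal P,\varphi_j]u\|_{H^{\sigma-d,q}_\delta} \lesssim \|\rho^{-1}[\mathcal P,\varphi_j]u\|_{H^{\sigma-d,q}_{\delta-1}} \lesssim \|u\|_{H^{\sigma-1,q}_{\delta-1}}.
\]
Choosing $r$ small enough that $C r^\alpha \le 1/2$, the $r^\alpha$-error is absorbed into the left, yielding the localized estimate
\[
\|\varphi_j u\|_{H^{\sigma,q}_\delta} \lesssim \|\varphi_j \mathcal P u\|_{H^{\sigma-d,q}_\delta} + \|u\|_{H^{\sigma-1,q}_{\delta-1}}
\]
with implicit constant independent of $j$.

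To obtain the global estimate, I sum $q$-th powers. The uniform local finiteness of the supports of $\{\varphi_j\}$ with respect to the M\"obius parametrizations (Lemma \ref{lem:boundary-pou}(\ref{part:why-we-did-the-labor})) ensures
\[
\|u\|^q_{H^{\sigma,q}_\delta} \lesssim \sum_{j\ge 0} \|\varphi_j u\|^q_{H^{\sigma,q}_\delta}
\quad\text{and}\quad
\sum_{j\ge 0} \|\varphi_j v\|^q_{H^{\sigma-d,q}_\delta} \lesssim \|v\|^q_{H^{\sigma-d,q}_\delta}
\]
for any $v$. For the interior piece $j=0$, whose support is contained in the compact set $M\setminus A_{r/4}$, Proposition \ref{prop:interior-reg} yields a corresponding estimate in which the remainder term $\|u\|_{H^{d-s-1,p\dual}}$ on a precompact interior set is dominated by $\|u\|_{H^{\sigma-1,q}_{\delta-1}}$ via the basic embeddings of Lemma \ref{lem:basic-inclusions}. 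Combining these ingredients gives \eqref{eq:semifredholm-H}. The final semi-Fredholm conclusion follows from the compactness of the inclusion $H^{\sigma,q}_\delta \hookrightarrow H^{\sigma-1,q}_{\delta-1}$ (Proposition \ref{prop:rellich-lemma}, using both $\sigma > \sigma-1$ and $\delta > \delta-1$) and the standard Peetre-type argument. The main technical obstacle is ensuring that all implicit constants appearing in the localized estimates (isomorphism bound, comparison estimate, commutator bound, and partition-of-unity overlap) are uniform in $j$ and the parameter $r$, so that the $r^\alpha$ error can be made arbitrarily small and absorbed before summation over $j$; this is precisely what the uniform estimates developed in Sections \ref{secsec:localized-ops}--\ref{secsec:interp-P} were designed to provide.
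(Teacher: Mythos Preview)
Your proposal is correct and follows essentially the same route as the paper's proof: localize via an interpolation of $\mathcal P$ at infinity, apply the hyperbolic isomorphism estimate (Theorem \ref{thm:ball-H-iso}) through Lemma \ref{lem:xi-norm-equiv} on each boundary piece, absorb the $r^\alpha$ comparison error from Lemma \ref{lem:local-operator-compare}, control the commutator via Lemma \ref{lem:commutator}, handle $\varphi_0$ with interior regularity, and sum. The only cosmetic difference is that the paper works with $\psi_j=\varphi_j^2$ rather than $\varphi_j$, and that your justification of the interior remainder uses Sobolev embedding (from $(\sigma,q)\in\mathcal S^{s,p}_d$) rather than merely the basic inclusions of Lemma \ref{lem:basic-inclusions}; otherwise the arguments coincide.
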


\begin{proof}
We first address the Sobolev case.
Let $\{\breve{\mathcal P}_j\}$ be an interpolation of $\mathcal P$
at infinity with scale $r$ to be chosen sufficiently small below
and let $\{\psi_j\}$ be the associated partition of unity.

Consider some $j\ge 1$; let $\tilde\Theta_j= \tilde\Theta_{\hat p_j}$ and $\Xi_j = \Xi_{\hat p_j}$ be the corresponding adapted coordinates and local bundle isomorphism.
Since $\delta\in \mathcal D_q(R)$
and since the cutoff function $\eta_{\hat p_j,r}$ appearing
in equation \eqref{define-breve-P-loc} satisfies
$\eta_{\hat p,j}\psi_j=\psi_j$
we can apply Lemma \ref{lem:xi-norm-equiv} and Theorem \ref{thm:ball-H-iso} to obtain
\begin{equation}
\label{ap-hyp-P-iso}
\begin{aligned}
\| \psi_j u\|_{H^{\sigma, q}_\delta(M)}
&\lesssim \| \Xi_j(\psi_j u) \|_{H^{\sigma, q}_\delta(\mathbb H)}
\\
&\lesssim \| \mathcal P[\breve g]\Xi_j(\psi_j u) \|_{H^{\sigma-d, q}_\delta(\mathbb H)}
\\
&\lesssim \| \Xi_j^{-1}\mathcal P[\breve g]\Xi_j(\psi_j u) \|_{H^{\sigma-d, q}_\delta(M)}
\\
&= \| \breve{\mathcal P}_j (\psi_ju) \|_{H^{\sigma-d, q}_\delta(M)}.
\end{aligned}
\end{equation}
Again using the identity $\eta_{\hat p_j,r}\psi_j = \psi_j$, we have
$\mathcal P_j u = \mathcal P[g](\psi_ju)$ and hence
the triangle inequality implies that the right side of \eqref{ap-hyp-P-iso} is bounded by
\begin{equation*}
\| \psi_j \mathcal P[g]u\|_{H^{\sigma-d, q}_\delta(M)}
+
\| [\mathcal P[g], \psi_j]u\|_{H^{\sigma-d, q}_\delta(M)}
+
\| (\breve{\mathcal P}_j - \mathcal P_j)(\psi_j u)\|_{H^{\sigma-d, q}_\delta(M)}.
\end{equation*}
Moreover, Lemma \ref{lem:local-operator-compare} implies
\begin{equation*}
\| (\breve{\mathcal P}_j - \mathcal P_j)(\psi_j u)\|_{H^{\sigma-d, q}_\delta(M)}
\lesssim r^\alpha \| \psi_j u\|_{H^{\sigma, q}_\delta(M)}.
\end{equation*}
All of the implicit constants thus far are independent of $\hat p_j$ and $r$,
so at this point we can select $r$ sufficiently small so that for all $j$
this term can be absorbed into the left side of \eqref{ap-hyp-P-iso}.
Lemma \ref{lem:commutator} implies that
\begin{align*}
\| [\mathcal P[g], \psi_j]u\|_{H^{\sigma-d, q}_\delta(M)}
&\lesssim
\|\rho^{-1} [\mathcal P[g], \psi_j]u\|_{H^{\sigma-d, q}_{\delta-1}(M)}
\\
&\lesssim
\|\rho^{-1} [\mathcal P[g], \psi_j]u\|_{H^{\sigma-d, q}_{\delta-1}(M)}
\\
&\lesssim 
\| u\|_{H^{\sigma-1, q}_{\delta-1}(M)}
\end{align*}
and combining the inequalities thus far we find that for each $j\geq 1$ that
\begin{equation*}
\| \psi_j u\|_{H^{\sigma, q}_\delta(M)}
\lesssim
\|\mathcal P[g]u\|_{H^{\sigma-d, q}_\delta(M)}
+
\| u\|_{H^{\sigma-1, q}_{\delta-1}(M)}.
\end{equation*}

In order to estimate $\psi_0 u$ we pick precompact 
open sets $V$ and $U$ with smooth boundaries such that $\supp \psi_0\subset V\subset\bar V\subset U$.  
Since $(\sigma,q)\in \mathcal S^{s,p}_{d}$, 
Sobolev embedding on $U$ implies $H^{\sigma-1-d, q}(U;E)\hookrightarrow H^{d-s-1,p\dual}(U;E)$.
Hence Proposition \ref{prop:interior-reg} and straightforward arguments
relating weighted spaces on $M$ and their restrictions to domains imply
\begin{equation*}
\begin{aligned}
\| \psi_0u \|_{H^{\sigma, q}_\delta(M)}
\lesssim \|u\|_{H^{\sigma,q}(V)}
&\lesssim \| \mathcal P[g] u\|_{H^{\sigma-d, q}(U)}
+ \| u\|_{H^{d-s-1, p\dual}(U)}
\\
&\lesssim \| \mathcal P[g] u\|_{H^{\sigma-d, q}_\delta(M)}
+ \| u\|_{H^{\sigma -1, q}_{\delta-1}(M)}.
\end{aligned}
\end{equation*}
Estimate \eqref{eq:semifredholm-H} now follows by summing over the partition of unity.
The proof of \eqref{eq:semifredholm-X} follows from analogous logic, using Theorem \ref{thm:ball-X-iso} in place of Theorem
\ref{thm:ball-H-iso}.

The fact that $\mathcal P$ has finite dimensional kernel
follows from inequalities \eqref{eq:semifredholm-H}
and \eqref{eq:semifredholm-X} and the fact that the maps
$H^{\sigma,q}_{\delta}(M)\hookrightarrow H^{\sigma-1,q}_{\delta-1}(M)$
and
$X^{\sigma,q}_{\delta}(M)\hookrightarrow X^{\sigma-1,q}_{\delta-1}(M)$
are compact by Proposition \ref{prop:rellich-lemma}.
The closedness of the image follows from this; see, for example, \cite[Lemma 4.10]{Lee-FredholmOperators}.
\end{proof}

\section{Fredholm theory}\label{sec:fredholm}
In this section we establish the Fredholm results of Theorem \ref{thm:main-fredholm} for elliptic operators associated to asymptotically hyperbolic metrics.
To this end, let $g$ be a asymptotically hyperbolic metric of class \eqref{intro:H-class} or \eqref{intro:X-class}
Let $\mathcal P = \mathcal P[g]$ be a differential operator on sections of geometric tensor bundle $E$ that satisfies Assumption \ref{Assume-P}.
Recall that Lemma \ref{lem:S-members}, together with the assumptions on $g$, imply that the set of compatible Sobolev indices $\mathcal S^{s,p}_d$ is nonempty.

Let $R$ be the indicial radius of $\breve{\mathcal P} = \mathcal P[\breve g]$, the associated operator on hyperbolic space, which acts on sections of corresponding bundle $\breve E$.
Theorem \ref{thm:ball-H-iso} implies that $\breve{\mathcal P}\colon H^{\sigma,q}_{\delta}(\Hyp;\breve E)\to H^{\sigma-d,q}_{\delta}(\Hyp;\breve E)$ is an isomorphism for $\delta \in \mathcal D_q(R)$, while Theorem \ref{thm:ball-X-iso} indicates that $\breve{\mathcal P}\colon X^{\sigma,q}_{\delta}(\Hyp;\breve E)\to X^{\sigma-d,q}_{\delta}(\Hyp;\breve E)$
is an isomorphism if $\delta\in\deltrange_\infty(R)$.
Thus our primary goal is to show that $\mathcal P$ is Fredholm of index zero for those exact weight parameters $\delta$ for which $\breve{\mathcal P}$ is an isomorphism.

\subsection{Parametrix construction}
Adapting the strategy of \cite{Lee-FredholmOperators}, we establish the Fredholm properties of $\mathcal P$
using a parametrix, the operator $\widetilde Q_r$ defined below, based on
the inverse $\breve {\mathcal P}^{-1}$  applied to sections supported near $\partial M$.
To that end, let $\{\breve{\mathcal P}_j\}_{j=1}^J$ be an interpolation of $\mathcal P$ satisfying Definition \ref{def:interp-of-P} with associated bundle automorphism tensors $\{\Xi_j\}_{j=1}^J$ and cutoff functions $\{\varphi_j\}_{j=1}^J$.
For $1\leq j \leq J$, define operators $\mathcal P_j$ by $\mathcal P_ju = \mathcal P(\varphi_j u)$.
Finally, define the maps
\begin{equation*}
\begin{aligned}
Q_r u &= \sum_{j\geq 1} \varphi_j \Xi_j^{-1}\breve{\mathcal  P} ^{-1} \Xi_j(\varphi_j u),
\\
S_r u &= \sum_{j\geq 1} \varphi_j \Xi_j^{-1}\breve{\mathcal  P} ^{-1} \Xi_j((\mathcal P_j - \breve{\mathcal  P}_j) \varphi_j u),
\\
T_r u &= \sum_{j\geq 1} \varphi_j \Xi_j^{-1}\breve{\mathcal  P} ^{-1} \Xi_j([\varphi_j, \mathcal P] u)
\end{aligned}
\end{equation*}
for sufficiently regular, compactly supported sections $u$ of $E$.

\begin{proposition}\strut
\label{prop:basic-Q-S-R}
\begin{enumerate}
\item If $(\sigma,q)\in \mathcal S^{s,p}_{d}$ and
$\delta\in\deltrange_q(R)$ then $Q_r, S_r, T_r$ extend to bounded maps as follows:
\begin{equation*}
\begin{aligned}
& Q_r \colon H^{\sigma-d,q}_\delta (M;E) \to H^{\sigma,q}_\delta(M;E),
\\
& S_r \colon H^{\sigma,q}_\delta (M;E) \to H^{\sigma,q}_\delta(M;E),
\\
& T_r \colon H^{\sigma,q}_{\delta-1} (M;E) \to H^{\sigma+1,q}_{\delta}(M;E).
\end{aligned}
\end{equation*}
If $u$ is supported in $A_{r/4}$ then
\begin{equation}
\label{eq:Q_r-identity}
Q_r \mathcal Pu = u + S_ru + T_ru.
\end{equation}
Moreover
\begin{equation}
\label{eq:Sr-estimate}
\| S_r u\|_{H^{\sigma,q}_\delta(M)}
\lesssim r^\alpha  \| u \|_{H^{\sigma,q}_\delta(M)},
\end{equation}
where $\alpha\in(0,1]$ is the constant from \eqref{intro:alpha-exists}.

\item The same statement is true replacing
weighted Sobolev
spaces $H$ with Gicquaud-Sakovich spaces
$X$ having the same parameters, and replacing
the weight range $\mathcal D_q(R)$ with
$\mathcal D_\infty(R)$.

\end{enumerate}

\end{proposition}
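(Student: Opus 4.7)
The plan is to establish the three conclusions in turn by transporting the problem, via the local bundle isomorphisms $\Xi_j$, to the half-space model of hyperbolic space where Theorem \ref{thm:ball-H-iso} (or Theorem \ref{thm:ball-X-iso} in the Gicquaud-Sakovich case) provides boundedness of $\breve{\mathcal P}^{-1}$, and then assembling local estimates using the uniform local finiteness recorded in Lemma \ref{lem:boundary-pou}(\ref{part:why-we-did-the-labor}). For each summand of $Q_r$, Lemma \ref{lem:xi-norm-equiv} gives the norm equivalence $\|\Xi_j(\varphi_j u)\|_{H^{\sigma-d,q}_\delta(\Hyp;\breve E)} \sim \|\varphi_j u\|_{H^{\sigma-d,q}_\delta(M;E)}$, with implicit constant independent of $j$ and $r$; combined with the assumption $\delta\in \mathcal D_q(R)$ and Theorem \ref{thm:ball-H-iso}, this yields $\|\varphi_j \Xi_j^{-1}\breve{\mathcal P}^{-1}\Xi_j(\varphi_j u)\|_{H^{\sigma,q}_\delta(M)} \lesssim \|\varphi_j u\|_{H^{\sigma-d,q}_\delta(M)}$. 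The uniform local finiteness then allows me to sum in $\ell^q$ to obtain the stated mapping $Q_r\colon H^{\sigma-d,q}_\delta(M;E) \to H^{\sigma,q}_\delta(M;E)$. The same reasoning, together with the $d$-derivative loss in $\mathcal P_j - \breve{\mathcal P}_j$ and the $(d-1)$-derivative loss with $\rho$-gain in $\rho^{-1}[\mathcal P,\varphi_j]$ furnished by Lemma \ref{lem:commutator}, gives the continuity of $S_r$ and $T_r$ between the listed spaces.

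For the identity $Q_r \mathcal P u = u + S_r u + T_r u$ on sections supported in $A_{r/4}$, I would start from $Q_r\mathcal Pu = \sum_{j\geq 1}\varphi_j\Xi_j^{-1}\breve{\mathcal P}^{-1}\Xi_j(\varphi_j \mathcal P u)$ and decompose $\varphi_j\mathcal Pu = \mathcal P(\varphi_j u) + [\varphi_j,\mathcal P]u$. Using $\eta_{\hat p_j,r}\varphi_j = \varphi_j$, the first term further splits as
\[
\mathcal P(\varphi_j u) = \Xi_j^{-1}\breve{\mathcal P}\Xi_j(\varphi_j u) + (\mathcal P_j - \breve{\mathcal P}_j)(\varphi_j u).
\]
The $\Xi_j^{-1}\breve{\mathcal P}\Xi_j(\varphi_j u)$ contribution passes through $\Xi_j^{-1}\breve{\mathcal P}^{-1}\Xi_j$ as the identity on $\varphi_ju$, producing a summand $\varphi_j^2 u$; summed over $j\geq 1$ and using that $\varphi_0 u\equiv 0$ while $\sum_j\varphi_j^2 = 1$, this gives exactly $u$. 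The remaining $(\mathcal P_j - \breve{\mathcal P}_j)(\varphi_j u)$ and $[\varphi_j,\mathcal P]u$ pieces, when wrapped in the outer $\varphi_j\Xi_j^{-1}\breve{\mathcal P}^{-1}\Xi_j$ factor and summed, assemble into $S_r u$ and $T_r u$ respectively.

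The refined estimate \eqref{eq:Sr-estimate} for $S_r$ is immediate once I combine the per-summand bound already established with the uniform comparison from Lemma \ref{lem:local-operator-compare}, which gives $\|(\mathcal P_j - \breve{\mathcal P}_j)(\varphi_j u)\|_{H^{\sigma-d,q}_\delta(M)} \lesssim r^\alpha \|\varphi_j u\|_{H^{\sigma,q}_\delta(M)}$ with implicit constant independent of $j$ and $r$; summing via uniform local finiteness then yields $\|S_r u\|_{H^{\sigma,q}_\delta(M)}\lesssim r^\alpha\|u\|_{H^{\sigma,q}_\delta(M)}$. The Gicquaud-Sakovich statement proceeds identically, replacing Theorem \ref{thm:ball-H-iso} by Theorem \ref{thm:ball-X-iso} and the $\ell^q$ summation arguments by $\ell^\infty$ supremum arguments.

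The main obstacle throughout is ensuring that every implicit constant is independent of both $j$ and $r$, since uniformity in $j$ is needed for the partition-of-unity summation while uniformity in $r$ is what makes the $r^\alpha$ estimate on $S_r$ meaningful. This uniformity is precisely what is provided by Lemma \ref{lem:xi-norm-equiv}, Lemma \ref{lem:local-operator-compare}, Lemma \ref{lem:commutator}, and the uniform $C^\ell$ bounds on the cutoffs in Lemma \ref{lem:boundary-pou}; marshaling these preparatory results is what reduces the proof of the proposition to an essentially mechanical verification.
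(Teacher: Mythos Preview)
Your proposal is correct and follows essentially the same approach as the paper: transport each summand to hyperbolic space via $\Xi_j$, invoke the isomorphism Theorem \ref{thm:ball-H-iso} for $\breve{\mathcal P}^{-1}$, appeal to Lemma \ref{lem:local-operator-compare} for the $r^\alpha$ smallness of $\mathcal P_j-\breve{\mathcal P}_j$ and Lemma \ref{lem:commutator} for $T_r$, and use the uniform local finiteness from Lemma \ref{lem:boundary-pou}(\ref{part:why-we-did-the-labor}) to reassemble. Your derivation of the identity \eqref{eq:Q_r-identity} is in fact more explicit than the paper's, which simply records ``the identity then follows by direct computation, noting that $\sum_{j\ge 1}\varphi_j^2=1$ on $A_{r/4}$''; the only place the paper is more detailed is in the $S_r$ estimate, where it makes the two-sided $\ell^q$ summation explicit (bounding $\|S_r u\|^q$ by $\sum_j\|S_{j,r}(\varphi_j u)\|^q$ via the index sets $\mathcal N(i)$, and then bounding $\sum_j\|\varphi_j u\|^q$ by $\|u\|^q$ via the dual index sets $\mathcal N^*(j)$), whereas you encapsulate both steps in the phrase ``summing via uniform local finiteness.''
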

\begin{proof}
We demonstrate the proof for weighted Sobolev spaces; the proof for Gicquad-Sakovich spaces is similar but easier.

The mapping property of $Q_r$  follows from the fact that
$\delta\in\deltrange_q(R)$ and therefore by Theorem \ref{thm:ball-H-iso} 
 $\breve{\mathcal P}\colon H^{\sigma,q}_\delta(\mathbb H) \to H^{\sigma-d,q}_\delta(\mathbb H)$ is an isomorphism.
This fact also implies the mapping property for $S_r$.
To verify the mapping property for $T_r$, note that Lemma \ref{lem:commutator} implies $[\varphi_j, \mathcal P]\colon H^{\sigma,q}_{\delta-1}(M) \to H^{\sigma-d+1,q}_{\delta}(M)$.

To verify the identity \eqref{eq:Q_r-identity} first note that, analogous to the proof of Proposition \ref{prop:semi-fred-estimate}, the cutoff functions $\eta_{\hat p_j, r}$ appearing in \eqref{alt-breve-P-loc} satisfy $\eta_{\hat p_j, r}\varphi_j= \varphi_j$.
The identity then follows by direct computation, noting that $\sum_{j\ge 1} \varphi^2_j=1$ on $A_{r/4}$.

It remains to establish the estimate \eqref{eq:Sr-estimate}.  
For each M\"obius parametrization
$\Phi_i$,  let $\mathcal N(i)$ be the set of indices $j$ such that the support
of $\varphi_j$ intersects the image of $\Phi_i$.
Using the uniform bound to the size of $\mathcal N(i)$
from  Lemma \ref{lem:boundary-pou}
we find
\[
\|\Phi_i^* S_r u\|_{H^{\sigma,q}(B_2^\Hyp)}^q \lesssim \sum_{j\in \mathcal N(i)}
\|\Phi_i^* S_{j,r} (\varphi_j u)\|_{H^{\sigma,q}(B_2^\Hyp)}^q, 
\]
where $S_{j,r}=
\varphi_j \Xi_j^{-1}\breve{\mathcal  P} ^{-1} \Xi_j (\mathcal P_j - \breve{\mathcal  P}_j)$.
After multiplying by $\rho_i^{-\delta q}$ and summing over $i$ we obtain
\[
\|S_r u\|_{H^{\sigma,q}_\delta(M)}^q \lesssim
\sum_{i} \sum_{j\in \mathcal N(i)}
\rho_i^{-\delta q} \|\Phi_i^* S_{j,r} (\varphi_j u)\|_{H^{\sigma,q}(B_2^\Hyp)}^q
\lesssim
\sum_{j} \|S_{j,r}(\varphi_j u)\|_{H^{\sigma,q}_{\delta}(M)}^q.
\]
Lemmas \ref{lem:xi-norm-equiv} and \ref{lem:local-operator-compare} imply
\begin{equation}\label{eq:Sr-1}
\|S_{j,r}(\varphi_j u)\|_{H^{\sigma,q}_\delta(M)}^q \lesssim r^{\alpha q}
\sum_{j} \|\varphi_j u\|_{H^{\sigma,q}_{\delta}(M)}^q.
\end{equation}
Let $\mathcal N^*(j)$ be the set of indices
$i$ such that the support of $\varphi_j$ intersects the image of $\Phi_i$.
Then
\begin{align*}
\sum_j \|\varphi_j u\|_{H^{\sigma,q}_{\delta}(M)}^q
&= \sum_{j}\sum_{i\in \mathcal N^*(j)}
\rho_i^{-\delta q} \|\Phi_i^* (\varphi_j u)\|_{H^{\sigma,q}(B_2^\Hyp)}^q 
\\
&= \sum_i \sum_{j\in\mathcal N(i)} \rho_i^{-\delta q}
\|\Phi_i^* (\varphi_j u)\|_{H^{\sigma,q}(B_2^\Hyp)}^q.
\end{align*}
From the uniform bound on the size of each $\mathcal N(i)$ and uniform control on
the smooth functions $\varphi_j$ we conclude
\begin{equation} \label{eq:Sr-2}
\sum_j \|\varphi_j u\|_{H^{\sigma,q}_{\delta}(M)}^q
\lesssim \|u\|_{H^{\sigma,q}_\delta(M)}^q,
\end{equation}
where, critically, the implicit constant is independent of the fineness $r$ and the associated
number of terms on the left-hand side of this inequality.
The desired estimate \eqref{eq:Sr-estimate} now follows
from inequalities \eqref{eq:Sr-1} and \eqref{eq:Sr-2}.
\end{proof}

Estimate \eqref{eq:Sr-estimate} implies that for a fixed $(\sigma,q)\in S^{s,p}_d$
the map
$\Id + S_r$ is invertible when $r$ is sufficiently small
and we define\begin{equation}
\widetilde Q_r = (\Id + S_r)^{-1}\circ Q_r
\quad\text{ and }\quad
\widetilde T_r = (\Id + S_r)^{-1}\circ T_r.
\end{equation}
Proposition \ref{prop:basic-Q-S-R} immediately implies the following.
\begin{corollary}\label{cor:parametrix}
\strut
\begin{enumerate}
\item Suppose $(\sigma,q)\in\mathcal S^{s,p}_{d}$
and $\delta,\delta'\in\deltrange_q(R)$ with $\delta'\le \delta+1$
and that $r$ is sufficiently small that $(\Id+S_r)^{-1}$
is well defined on $H^{\sigma,q}_\delta(M;E)$ and on $H^{\sigma,q}_{\delta'}(M;E)$.
The operators
\begin{equation}
\label{eq:parametrix-mapping}
\begin{aligned}
& \widetilde Q_r \colon H^{\sigma-d,q}_\delta(M;E) \to H^{\sigma,q}_\delta(M;E),
\\
& \widetilde T_r \colon H^{\sigma,q}_{\delta}(M;E) \to H^{\sigma+1,q}_{\delta'}(M;E)
\end{aligned}
\end{equation}
are bounded.  Moreover,
for all $u\in H^{\sigma,q}_\delta(M;E)$ having support in $A_{r/4}$
\begin{equation}
\label{eq:tildeQ-identity}
\widetilde Q_r  \mathcal P u = u + \widetilde T_r u.
\end{equation}

\item The same statement is true replacing
weighted Sobolev
spaces $H$ with Gicquaud-Sakovich spaces
$X$ having the same parameters, and replacing
the weight range $\mathcal D_q(R)$ with
$\mathcal D_\infty(R)$.

\end{enumerate}

\end{corollary}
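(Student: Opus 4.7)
The corollary is essentially a bookkeeping consequence of Proposition \ref{prop:basic-Q-S-R}, and my plan would proceed in three steps. First, I would invoke the decay estimate \eqref{eq:Sr-estimate}: since $\|S_r\|_{H^{\sigma,q}_\delta(M;E)\to H^{\sigma,q}_\delta(M;E)}\lesssim r^\alpha$, choosing $r$ small enough forces this operator norm below $1$, so the Neumann series $\sum_{k\ge 0}(-S_r)^k$ converges absolutely in the operator norm and produces a bounded inverse $(\Id+S_r)^{-1}$ on $H^{\sigma,q}_\delta(M;E)$. The same argument gives $(\Id+S_r)^{-1}$ on $H^{\sigma,q}_{\delta'}(M;E)$ (and, in the Gicquaud--Sakovich part, on the corresponding $X$-spaces) provided the admissibility hypotheses hold; note that only finitely many spaces enter the final statement, so a single threshold for $r$ suffices.

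Second, for the mapping properties I would compose the building blocks from Proposition \ref{prop:basic-Q-S-R}. The statement for $\widetilde Q_r$ is the direct composition $H^{\sigma-d,q}_\delta(M;E)\xrightarrow{Q_r}H^{\sigma,q}_\delta(M;E)\xrightarrow{(\Id+S_r)^{-1}}H^{\sigma,q}_\delta(M;E)$. For $\widetilde T_r$ I would use the mapping property of $T_r$ with its weight index shifted by one, obtaining $T_r\colon H^{\sigma,q}_\delta(M;E)\to H^{\sigma+1,q}_{\delta+1}(M;E)$; the hypothesis $\delta'\le \delta+1$ together with the basic continuity of multiplication by the smooth bounded factor $\rho^{\delta+1-\delta'}$ on $\bar M$ embeds $H^{\sigma+1,q}_{\delta+1}(M;E)$ into $H^{\sigma+1,q}_{\delta'}(M;E)$, on which $(\Id+S_r)^{-1}$ is available by the first step.

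Third, the identity \eqref{eq:tildeQ-identity} is a rearrangement of \eqref{eq:Q_r-identity}. For $u$ supported in $A_{r/4}$, Proposition \ref{prop:basic-Q-S-R} yields
\begin{equation*}
Q_r\mathcal P u \;=\; u + S_r u + T_r u \;=\; (\Id+S_r)u + T_r u,
\end{equation*}
so $(\Id+S_r)u = Q_r\mathcal P u - T_r u$. Applying $(\Id+S_r)^{-1}$ to both sides, with the domain/codomain juggling justified by step two, produces $u = \widetilde Q_r\mathcal P u - \widetilde T_r u$, which is \eqref{eq:tildeQ-identity}.

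The Gicquaud--Sakovich case requires no new ideas: the second part of Proposition \ref{prop:basic-Q-S-R} supplies identical statements with $H$ replaced by $X$ and $\deltrange_q(R)$ replaced by $\deltrange_\infty(R)$, and the three steps go through verbatim. I do not anticipate any genuine obstacle; the only point that requires a little care is making a single choice of $r$ that handles inversion on all the spaces appearing in the corollary's statement, which the uniform-in-parameters contraction bound of \eqref{eq:Sr-estimate} allows without difficulty.
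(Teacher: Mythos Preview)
Your approach matches the paper's, which simply notes that the corollary follows immediately from Proposition~\ref{prop:basic-Q-S-R}. Your three-step unpacking (Neumann series, composition, rearrangement of \eqref{eq:Q_r-identity}) is exactly the intended reading.

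There is one small gap in step two. You shift the weight in Proposition~\ref{prop:basic-Q-S-R} by one to obtain $T_r\colon H^{\sigma,q}_\delta\to H^{\sigma+1,q}_{\delta+1}$ and then embed down to weight $\delta'$. But Proposition~\ref{prop:basic-Q-S-R} requires the \emph{target} weight to lie in $\mathcal D_q(R)$, and $\delta+1\in\mathcal D_q(R)$ is not assumed; only $\delta,\delta'\in\mathcal D_q(R)$ are. The fix is to embed on the domain side instead: apply Proposition~\ref{prop:basic-Q-S-R} with target weight $\delta'\in\mathcal D_q(R)$, giving $T_r\colon H^{\sigma,q}_{\delta'-1}\to H^{\sigma+1,q}_{\delta'}$, and then use $\delta\ge\delta'-1$ to embed $H^{\sigma,q}_\delta\hookrightarrow H^{\sigma,q}_{\delta'-1}$. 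This yields $T_r\colon H^{\sigma,q}_\delta\to H^{\sigma+1,q}_{\delta'}$ directly, without passing through the possibly inadmissible weight $\delta+1$.

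A second, more cosmetic point: to land $\widetilde T_r$ in $H^{\sigma+1,q}_{\delta'}$ you need $(\Id+S_r)^{-1}$ acting on $H^{\sigma+1,q}_{\delta'}$, whereas the corollary literally only hypothesizes invertibility on $H^{\sigma,q}_{\delta'}$. This is a looseness in the paper's own statement rather than in your argument; in every application (e.g.\ Lemma~\ref{lem:T-smoothing}) one simply takes $r$ small enough to handle the finitely many spaces that arise, as you correctly anticipate.
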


\subsection{Fredholm theorem for weighted Sobolev spaces}

The argument that $\mathcal P$ is Fredholm on general spaces
relies on two key facts. First, $\mathcal P$ is Fredholm when acting on the $L^2$-based space $H^{d/2,2}_0(M;E)$, and second,
that its kernel is independent of the space that it is acting on.
This second fact relies on elliptic regularity up to the boundary
at infinity accounting for the weight $\delta$.  
As a first step toward the Fredholm theorem, we establish smoothing properties of $\widetilde T_r$.

\begin{lemma}\label{lem:T-smoothing}
Suppose
$u\in H^{\sigma_0,q_0}_{\delta_0}(M;E)$
	for some
	$(\sigma_0,q_0)\in\mathcal S^{s,p}_{d}$
and	$\delta_0\in \deltrange_{q_0}(R)$.
Given $(\sigma,q)$ in $\mathcal S^{s,p}_{d}$
and $\delta\in \deltrange_{q}(R)$,
	there exists $\ell\in\Nats$ such that
	$\widetilde T_r^\ell(u)\in H^{\sigma,q}_{\delta}(M;E)$
so long as $r$ is sufficiently small.  

The same statement is true,
for the same values of $r$, replacing
$\widetilde T_r$ with $\eta\widetilde T_r$ for any $\eta\in C^\infty(\bar M;\mathbb R)$.
\end{lemma}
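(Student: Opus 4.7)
The plan is to iteratively apply $\widetilde T_r$ to gain both derivatives and weight, interleaving with Sobolev embeddings and basic inclusions to navigate within the admissible parameter region. The core input is Corollary \ref{cor:parametrix}: for $(\sigma',q')\in\mathcal{S}^{s,p}_d$ and $\delta_1,\delta_2\in\mathcal{D}_{q'}(R)$ with $\delta_2\le \delta_1+1$, the map $\widetilde T_r\colon H^{\sigma',q'}_{\delta_1}(M;E)\to H^{\sigma'+1,q'}_{\delta_2}(M;E)$ is bounded. So a single application gains one derivative and may gain up to one unit of weight, while keeping $q'$ fixed. Between applications I will use Proposition \ref{prop:SobolevEmbedding} to convert a gained derivative into an increase of the integrability exponent (which is necessary whenever iteration would otherwise push $\sigma'$ past $s$), and Lemma \ref{lem:basic-inclusions} to decrease $q'$ or decrease $\delta$ as needed. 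The trivial embedding $H^{\sigma',q'}_{\delta_1}\hookrightarrow H^{\sigma',q'}_{\delta_2}$ for $\delta_1\ge\delta_2$ will be used whenever the target weight is below the current one.

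The concrete algorithm: first, given the initial datum $u\in H^{\sigma_0,q_0}_{\delta_0}(M;E)$, I move to the target integrability exponent. If $q>q_0$, I alternate applications of $\widetilde T_r$ (each gaining a derivative) with Sobolev embeddings (each converting that derivative into an increment in $q'$) until reaching some $H^{\sigma',q}_{\delta'}$ with $(\sigma',q)\in\mathcal{S}^{s,p}_d$; if $q<q_0$, I instead apply the basic inclusion once, paying weight $(n-1)(1/q-1/q_0)$. Second, once the integrability is $q$, I apply $\widetilde T_r$ as many additional times as needed to raise the regularity to $\sigma$ and the weight to $\delta$, choosing each intermediate $\delta_j$ so the sequence monotonically increases by at most $1$ and always lies in $\mathcal{D}_q(R)$. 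Since $\mathcal{D}_q(R)$ is an interval of length $2R$ and each step moves the weight by at most $1$, this reaches any target weight after finitely many additional iterations; once the regularity is at least $\sigma$ one uses the inclusion $H^{\sigma'',q}_{\delta}\subset H^{\sigma,q}_\delta$ for $\sigma''\ge \sigma$ to land exactly at the target.

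The main obstacle will be the bookkeeping of ensuring that every intermediate pair $(\sigma',q')$ remains in $\mathcal{S}^{s,p}_d$ and every intermediate weight in $\mathcal{D}_{q'}(R)$. Both sets are bounded (in the relevant sense), which is what makes the argument terminate, but the two constraints are coupled through $q'$. A convenient way to sidestep this coupling is to route all transitions through the baseline point $(d/2,2)\in\mathcal{S}^{s,p}_d$ guaranteed by Lemma \ref{lem:S-members}: once in $L^2$-based spaces I can freely traverse weights in $\mathcal{D}_2(R)$ by iterating $\widetilde T_r$, then depart along a sequence of Sobolev embeddings. With the total number of iterations $\ell$ bounded in terms of $(\sigma,q,\delta,\sigma_0,q_0,\delta_0,n,R)$ but independent of $u$, I choose $r$ small enough that $(\mathrm{Id}+S_r)^{-1}$ exists simultaneously on every one of the finitely many intermediate spaces, so Corollary \ref{cor:parametrix} applies at each step.

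For the variant with $\eta\widetilde T_r$, the only modification needed is to observe that multiplication by $\eta\in C^\infty(\bar M;\Reals)$ is continuous on every weighted space appearing in the argument: as a smooth tensor on $\bar M$ of weight $0$, Lemma \ref{lem:CMbar-into-CMw} gives $\eta\in C^\infty_0(M;\Reals)\subset X^\infty_0(M;\Reals)$, so by Proposition \ref{prop:multiplication} it multiplies $H^{\sigma',q'}_{\delta'}(M;E)$ and $X^{\sigma',q'}_{\delta'}(M;E)$ boundedly into themselves. Hence replacing $\widetilde T_r$ by $\eta\widetilde T_r$ simply composes each step with a bounded multiplication operator, and the same inductive argument produces the required $\ell$ for the same admissible values of $r$.
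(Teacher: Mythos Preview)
Your overall strategy---iterate $\widetilde T_r$ to gain a derivative and up to one unit of weight, interleave with Sobolev embeddings or basic inclusions to move the Lebesgue exponent, and choose $r$ small enough for the finitely many intermediate spaces---is the same as the paper's, and the treatment of the $\eta$ variant is correct. However, two pieces of bookkeeping that you flag as ``the main obstacle'' are in fact not handled, and the suggested fallback does not fix them.

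\emph{Gap 1 (case $q<q_0$).} You propose to ``apply the basic inclusion once, paying weight $(n-1)(1/q-1/q_0)$.'' This can push the Sobolev parameters outside $\mathcal S^{s,p}_d$: after including $H^{\sigma_0,q_0}_{\delta_0}\hookrightarrow H^{\sigma_0,q}_{\delta'}$, the upper constraint $1/q-\sigma_0/n\le 1/p^\ast-(d-s)/n$ need not hold when $1/q-1/q_0$ is large. The paper instead first applies $\widetilde T_r$ repeatedly to reach $H^{\sigma_{q_0}+1,q_0}_{\delta'}$ (one derivative beyond the maximum allowed at $q_0$), and only then uses the basic inclusion with step size $1/t-1/q_0\le 1/n$; this bound guarantees $\sigma_t\le\sigma_{q_0}+1$ so that the landing point $(\sigma_t,t)$ lies in $\mathcal S^{s,p}_d$.

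\emph{Gap 2 (case $q>q_0$).} You do not address why the weight stays admissible as $q'$ increases. Since $\mathcal D_{q'}(R)$ is the interval $|\delta+(n-1)/q'-(n-1)/2|<R$, it shifts \emph{upward} as $q'$ grows, so a weight $\delta'\in\mathcal D_{q_0}(R)$ near the lower edge can fall out after a Sobolev embedding (which leaves $\delta'$ unchanged). The paper handles this by first using $\widetilde T_r$ to move the weight to the center $\delta_{q_0}-R=(n-1)/2-(n-1)/q_0$ of $\mathcal D_{q_0}(R)$, and then limiting the step in $1/q'$ by $\tfrac{R/2}{n-1}$ so that the center of the old interval remains inside the new one.

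Finally, the proposed shortcut of ``routing through $(d/2,2)$'' does not sidestep either issue: reaching $(d/2,2)$ from an arbitrary $(\sigma_0,q_0)\in\mathcal S^{s,p}_d$ (and then leaving toward $(\sigma,q)$) requires exactly the same controlled stepping in $1/q$, since neither Sobolev embedding nor the basic inclusion alone can traverse $\mathcal S^{s,p}_d$ in one move while keeping both the $\mathcal S^{s,p}_d$ and $\mathcal D_{q'}(R)$ constraints. The fix is not a different route but the explicit step-size bounds the paper provides.
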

\begin{proof}
We prove the result assuming
$\eta\equiv 1$; because multiplying by $\eta$ does not alter the mapping properties of
$\widetilde T_r$, there is no change to the proof in the general case.
The proof is accomplished by applying $\widetilde T_r$ to a finite sequence of spaces
and we need to ensure that $r$ is small enough that $\widetilde T_r$ is well defined on all of them.
After an initial reading, one verifies that the sequence of spaces is independent
of $r$, and we are therefore free to assume $r$ is small enough that
$\widetilde T_r$ can be applied to all of them.

The idea of the proof is that the operator $\tilde T_r$ improves both the Sobolev indices and the weight, but that these improvements are constrained both by the condition that the Sobolev indices be in $\mathcal S^{s,p}_d$ and by the condition that the weight be in $\mathcal D_q(R)$, which changes depending on the Lebesgue parameter $q$.
We navigate these restrictions by combining the use of $\tilde T_r$ with the basic inclusion of Lemma \ref{lem:basic-inclusions}, which allows one to `trade' weight for Lebesgue parameter, and with using the Sobolev embedding of Proposition \ref{prop:SobolevEmbedding}, which allows one to `trade' differentiability for Lebesgue parameter.
To facilitate our argument we introduce the following notation:
for any $t\in(1,\infty)$, let $\sigma_t = \sup\{\sigma \colon (\sigma,t)\in \mathcal S^{s,p}_{d}\}$ and let
\[
\delta_t=\sup \deltrange_t(R) = R + \frac{n-1}{2}-\frac{n-1}{t}.
\]

We first address the case where $q\ge q_0$.  Noting that $\delta_{q_0}-R\in\deltrange_{q_0}(R)$,
the mapping property
\eqref{eq:parametrix-mapping} implies we can
apply $\widetilde T_r$
a finite number of times to $u$ to arrive at
some
$\hat u\in H^{\sigma_{q_0}+1,q_0}_{\delta_{q_0}-R}(M;E)$.
Now fix $t$ to satisfy
\[
\frac{1}{t} = \max\left(\frac{1}{q},\frac{1}{q_0}-\frac{1}{n},\frac{1}{q_0}-\frac{R/2}{n-1}\right).
\]
Sobolev embedding implies $\hat u\in H^{\sigma_q,t}_{\delta_{q_0}-R/2}(M)$.
Moreover, since $t\ge q$, one readily verifies that $\sigma_q \ge \sigma_t$, see Figure \ref{fig:Sspd},
and therefore $\hat u\in H^{\sigma_t,t}_{\delta_{q_0}-R/2}(M)$
as well.

We claim that $\delta_{q_0}-R\in \deltrange_t(R)$.
Indeed, by construction
\[
-\frac{R}{2}\le \frac{n-1}{t} -\frac{n-1}{q_0} \le 0
\]
and hence
\[
\delta_{q_0}-R +\frac{n-1}{t} -\frac{n-1}{2} = \frac{n-1}{t}-\frac{n-1}{q}
\in [-R/2,0].
\]
Thus $\delta_{q_0} - R\in \mathcal D_t(R)$ as claimed.

We have shown that we can apply $\widetilde T_r$ to
$u$ finitely many times to obtain an element in
$H^{\sigma_t,t}_{\delta^*}(M)$ for
some $\delta^*\in \deltrange_t(R)$ where either $t=q$ or
$1/t$ is a decrease of $1/q_0$ by  $\min(\frac{1}n,\frac{R/2}{n-1})$.  
Repeating this argument finitely many times
we obtain an element of $H^{s_q,q}_{\delta^*}(M)$,
and therefore also  of $H^{\sigma,q}_{\delta^*}(M)$,
for some $\delta^*\in\deltrange_q(R)$. 
Finally, if needed, we can apply
$\widetilde T_r$ finitely many more times to improve
the weight and obtain an element in $H^{\sigma,q}_\delta(M)$ as required.

Now consider the case $q<q_0$ and let $\delta^\prime=\frac{n-1}2-\frac{n-1}{q_0}\in \deltrange_{q_0}(R)$.
We can apply $\widetilde T_r$ finitely many times
to $u$ to obtain some $\hat u\in H^{s_{q_0}+1,q_0}_{\delta'}(M)$.
Now fix $t$ to satisfy
\[
\frac{1}{t} = \min\left( \frac{1}{q},\frac{1}{q_0} + \frac{1}{n}\right).
\]
Since $q_0>t$ and since $\delta'\in\deltrange_{q_0}(R)$
there exists $\delta^*\in\deltrange_{t}(R)$ such
that $\delta^* < \delta'$.  Moreover,
since $1/t \le 1/q_0 +1/n$, a computation shows $s_{t}\le s_{q_0}+1$
and Lemma \ref{lem:basic-inclusions}
implies $\hat u\in H^{\sigma_{t},t}_{\delta^*}(M)$.  That is, we have shown
we can apply $\widetilde T_r$ to $u$ finitely many times
to arrive in $H^{\sigma_{t},t}_{\delta^*}(M)$ where $\delta^*\in\deltrange_t(R)$
and where either $t=q$ or $1/t$ is an increase of $1/q_0$ by $1/n$.
Repeating this argument finitely many times we can
arrive in $H^{\sigma_{q},q}_{\delta^*}(M)$ for some $\delta^*\in \deltrange_q(R)$
and the argument is now finished as in the case when $q_0\leq q$.
\end{proof}

Lemma \ref{lem:T-smoothing} implies the desired regularity
up to the boundary at infinity, at least for functions supported
sufficiently near it.

\begin{corollary}\label{cor:H-boundary-bootstrap}
Suppose
$u\in H^{\sigma_0,q_0}_{\delta_0}(M;E)$ for some
$(\sigma_0,q_0)\in\mathcal S^{s,p}_{d}$
and
$\delta_0\in\deltrange_{q_0}(R)$.  If
$\mathcal P u \in H^{\sigma-d,q}_{\delta}(M;E)$
for some
$(\sigma,q)\in\mathcal S^{s,p}_{d}$ and
$\delta \in \deltrange_q(R)$, and if
$u$ is supported in some $A_r$ with $r$ sufficiently small,
then $u\in H^{\sigma,q}_{\delta}(M;E)$.
\end{corollary}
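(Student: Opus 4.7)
The plan is to exploit the parametrix identity of Corollary~\ref{cor:parametrix} and iterate it $\ell$ times, absorbing the regularity deficit through the smoothing properties of $\widetilde T_r$ established in Lemma~\ref{lem:T-smoothing}.

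First I would fix an interpolation of $\mathcal P$ at infinity with scale $r$ chosen simultaneously to satisfy three conditions: (i) the support hypothesis of the parametrix identity, i.e.\ $\supp u \subset A_{r/4}$, which is possible because the hypothesis of the corollary permits the support-containing set $A_r$ to be taken arbitrarily small and the parametrix scale can in turn be enlarged to dominate it; (ii) $(\mathrm{Id}+S_r)^{-1}$ well-defined on each of the spaces appearing below, so that Corollary~\ref{cor:parametrix} applies for the two index triples $(\sigma_0,q_0,\delta_0)$ and $(\sigma,q,\delta)$ and for the finite chain of intermediate spaces arising in the proof of Lemma~\ref{lem:T-smoothing}; (iii) $r$ small enough that Lemma~\ref{lem:T-smoothing} furnishes some $\ell \in \Nats$ for which $\widetilde T_r^\ell u \in H^{\sigma,q}_\delta(M;E)$. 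Only finitely many threshold conditions on $r$ appear, so this is possible.

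Second, Corollary~\ref{cor:parametrix} gives $\widetilde Q_r \mathcal P u = u + \widetilde T_r u$, which rearranges to $u = \widetilde Q_r\mathcal P u - \widetilde T_r u$. Substituting this expression into the $\widetilde T_r u$ on the right and repeating yields the algebraic identity
\begin{equation*}
u \;=\; \sum_{k=0}^{\ell-1} (-1)^k \widetilde T_r^{\,k}\,\widetilde Q_r\,\mathcal P u \;+\; (-1)^\ell \widetilde T_r^{\,\ell} u,
\end{equation*}
valid for any $\ell \ge 1$; no further appeal to the support hypothesis is needed after the first rearrangement.

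Third, I would check that every term on the right lies in $H^{\sigma,q}_\delta(M;E)$. Since $\mathcal P u \in H^{\sigma-d,q}_\delta(M;E)$, Corollary~\ref{cor:parametrix} gives $\widetilde Q_r \mathcal P u \in H^{\sigma,q}_\delta(M;E)$. Taking $\delta'=\delta$ in the mapping property $\widetilde T_r : H^{\sigma,q}_\delta \to H^{\sigma+1,q}_{\delta'}$ and composing with the continuous inclusion $H^{\sigma+1,q}_\delta \hookrightarrow H^{\sigma,q}_\delta$ from Lemma~\ref{lem:basic-inclusions}, each iterate $\widetilde T_r^{\,k}\widetilde Q_r\mathcal P u$ stays in $H^{\sigma,q}_\delta(M;E)$, hence so does the full sum. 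The tail term $\widetilde T_r^{\,\ell} u$ is in $H^{\sigma,q}_\delta(M;E)$ by the defining choice of $\ell$ in Step~1. Combining, $u \in H^{\sigma,q}_\delta(M;E)$.

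The only technical nuisance is the coordinated choice of $r$ meeting the hypotheses of Corollary~\ref{cor:parametrix} and Lemma~\ref{lem:T-smoothing} together with the support condition; once this is arranged, the proof is a purely algebraic rearrangement followed by elementary mapping-property bookkeeping.
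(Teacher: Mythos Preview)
Your proof is correct and follows essentially the same approach as the paper: both iterate the parametrix identity of Corollary~\ref{cor:parametrix} and invoke the smoothing of $\widetilde T_r^\ell$ from Lemma~\ref{lem:T-smoothing} to place every term of the resulting expansion in $H^{\sigma,q}_\delta$. The only cosmetic difference is that the paper inserts a cutoff $\eta$ (equal to $1$ on $A_{r/8}$, supported in $A_{r/4}$) and iterates the operator identity $(-\eta\widetilde T_r)^k u=(\eta(\Id-\widetilde Q_r\mathcal P))^k u$, using $\eta$ to restore the support condition at each step, whereas your telescoping substitution of $u=\widetilde Q_r\mathcal P u-\widetilde T_r u$ into itself needs the support hypothesis only once and so dispenses with $\eta$ entirely---a mild simplification.
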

\begin{proof}
Take $r$ small enough so that the conclusions of Lemma \ref{lem:T-smoothing}
and Corollary \ref{cor:parametrix} hold for the given choice
of parameters $\sigma_0$, $q_0$, etc.
Let $\eta$ be a smooth function on $\bar M$ that equals 1 on $A_{r/8}$
and vanishes outside $A_{r/4}$.

We assume that $u$ is supported in $A_{r/8}$ and that $\mathcal P u\in H^{\sigma,q}_\delta(M)$.
Since $\eta$ is supported in $A_{r/4}$ equation \eqref{eq:tildeQ-identity} implies
\begin{equation}\label{eq:QP-expand}
(-\eta\widetilde T_r)^k u =(\eta (\Id-\widetilde Q_r \mathcal P))^k u
\end{equation}
for any $k\in\Nats$, and in particular for the value $k=\ell$ from Lemma \ref{lem:T-smoothing}.
Note that both $\tilde Q_r\mathcal P$ and multiplication by $\eta$ map $H^{\sigma, q}_\delta(M) \to H^{\sigma,q}_\delta(M)$.
Thus expanding \eqref{eq:QP-expand}, and using $\eta u = u$, it follows from  $(-\eta\widetilde T_r)^\ell u\in H^{\sigma,q}_{\delta}(M)$ and 
$\widetilde Q_r \mathcal P u \in H^{\sigma,q}_\delta(M)$ that
$u \in H^{\sigma,q}_{\delta}(M)$.
\end{proof}

Corollary \ref{cor:H-boundary-bootstrap}, and interior elliptic regularity estimates, imply the following regularity result for $\mathcal P$.

\begin{proposition}\label{prop:P-regularity}
Suppose
$u\in H^{\sigma_0,q_0}_{\delta_0}(M;E)$
for some
$(\sigma_0,q_0)\in\mathcal S^{s,p}_{d}$ and
$\delta_0\in \deltrange_{q_0}(R)$.
If $\mathcal P u\in H^{\sigma-d,q}_{\delta}(M;E)$
for some $\delta\in\deltrange_q(R)$ then
$u\in H^{\sigma,q}_{\delta}(M;E)$.
\end{proposition}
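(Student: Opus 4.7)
The plan is to decompose $u$ into a piece supported near the conformal boundary and a piece supported in the interior of $M$, and to handle these two pieces with Corollary \ref{cor:H-boundary-bootstrap} and Proposition \ref{prop:interior-reg} respectively. Choose $r>0$ small enough that Corollary \ref{cor:H-boundary-bootstrap} applies at our given parameters $(\sigma_0,q_0,\delta_0)$ and $(\sigma,q,\delta)$, and let $\eta\in C^\infty(\bar M;\mathbb R)$ equal $1$ on $\bar A_{r/16}$ and be supported in $\bar A_{r/8}$, so that $u = \eta u + (1-\eta)u$.

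The first step is to upgrade the interior regularity of $u$. Choose a nested sequence of precompact open sets $V_0 \Subset V_1 \Subset V_2 \subset M$ such that $\supp(1-\eta)\subset V_0$ and $\supp(d\eta)\subset V_1$. By Sobolev embedding on the bounded set $V_2$, $u\in H^{\sigma_0,q_0}_{\delta_0}(M;E)$ implies $u\in H^{d-s-1,p\dual}(V_2;E)$. Iteratively applying the interior regularity estimate of Proposition \ref{prop:interior-reg}, using intermediate pairs in $\mathcal S^{s,p}_d$ to chain between $H^{\sigma_0,q_0}$ and $H^{\sigma,q}$ (increasing differentiability and/or decreasing the Lebesgue exponent at each step, and shrinking the domain from $V_2$ down to $V_1$ to $V_0$), gives $u\in H^{\sigma,q}(V_0;E)$. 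Since $1-\eta$ is smooth, compactly supported in $V_0$, and bounded, it follows that $(1-\eta)u\in H^{\sigma,q}_\delta(M;E)$ for any weight $\delta$.

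The second step handles $\eta u$ via Corollary \ref{cor:H-boundary-bootstrap}. Clearly $\eta u\in H^{\sigma_0,q_0}_{\delta_0}(M;E)$ because $\eta$ is smooth and bounded, so the hypothesis on the starting regularity is met, and $\eta u$ is supported in $A_{r/8}$ as required. It remains to check that $\mathcal P(\eta u)\in H^{\sigma-d,q}_\delta(M;E)$. Write
\begin{equation*}
\mathcal P(\eta u) = \eta\,\mathcal P u + [\mathcal P,\eta]\, u.
\end{equation*}
The first term lies in $H^{\sigma-d,q}_\delta(M;E)$ by hypothesis and the smoothness of $\eta$. For the commutator, Lemma \ref{lem:commutator} expresses $[\mathcal P,\eta]$ as a $(d-1)^\text{st}$-order operator of class $\mathcal L^{s,p}_{d-1}$; its coefficients are supported in $\supp(d\eta)\subset V_1$, where $u\in H^{\sigma,q}$ by the first step. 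Therefore $[\mathcal P,\eta]u$ is a compactly supported distribution of class $H^{\sigma-d+1,q}(V_1;E)$, which extends to an element of $H^{\sigma-d+1,q}_\delta(M;E)\subset H^{\sigma-d,q}_\delta(M;E)$ for any weight. Thus the hypotheses of Corollary \ref{cor:H-boundary-bootstrap} are satisfied for $\eta u$, yielding $\eta u\in H^{\sigma,q}_\delta(M;E)$, and combining with the first step gives $u\in H^{\sigma,q}_\delta(M;E)$.

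The main subtlety is the interior bootstrap in the first step: Proposition \ref{prop:interior-reg} requires that $u$ already be in $H^{d-s-1,p\dual}$ locally, and a single application may produce a Sobolev pair that does not match $(\sigma,q)$, so the chain through intermediate spaces in $\mathcal S^{s,p}_d$ (together with Sobolev embedding on bounded domains to navigate between Lebesgue exponents) must be arranged to terminate at $(\sigma,q)$ while shrinking the open set at each step. This is technically analogous to the navigation through $\mathcal S^{s,p}_d$ already carried out in Lemma \ref{lem:T-smoothing}, and poses no obstacle beyond care with the Sobolev parameters.
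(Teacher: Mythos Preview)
Your approach is correct and essentially the same as the paper's: split $u$ into a boundary piece and an interior piece via a cutoff $\eta$, handle the boundary piece with Corollary \ref{cor:H-boundary-bootstrap} after checking $\mathcal P(\eta u)\in H^{\sigma-d,q}_\delta$ via the commutator, and handle the interior piece with Proposition \ref{prop:interior-reg}.

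The one point worth noting is that your interior bootstrap is over-engineered. Proposition \ref{prop:interior-reg} already does the full jump in a single application: starting from $u\in H^{d-s,p\dual}(U;E)$ and $\mathcal Pu\in H^{\sigma-d,q}(U;E)$ it delivers $u\in H^{\sigma,q}(V;E)$ directly, with no need to chain through intermediate Sobolev pairs or nest more than one pair of domains. The paper simply observes (using Lemma \ref{lem:basic-inclusions} when $q_0\ge p^*$ and Sobolev embedding when $q_0<p^*$) that $(\sigma_0,q_0)\in\mathcal S^{s,p}_d$ forces $u\in H^{d-s,p^*}_{\mathrm{loc}}(M;E)$, and then invokes the proposition once. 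Your concern that ``a single application may produce a Sobolev pair that does not match $(\sigma,q)$'' is unfounded given the statement of Proposition \ref{prop:interior-reg}.
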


\begin{proof}
Using Lemma \ref{lem:basic-inclusions} if $q_0 \geq p\dual$, or Proposition \ref{prop:SobolevEmbedding} if $q_0< p\dual$, we see that $u\in H^{d-s, p\dual}_\text{loc}(M)$.
Thus Proposition \ref{prop:interior-reg} implies $u\in H^{\sigma,q}_\text{loc}(M)$.

Select $r$ sufficiently small so that the conclusion of Corollary \ref{cor:H-boundary-bootstrap}
holds and let $\eta$ be a smooth function on $\bar M$ that equals 1 in a neighborhood
of $\partial M$ and that vanishes outside $A_r$.  
Since $[\mathcal P, \eta]u$ is supported in the complement of the region where $\eta =1$, Lemma \ref{lem:commutator} implies that $[\mathcal P, \eta]u \in H^{\sigma-d+1,q}_\delta(M)$.
By hypothesis, $\eta\mathcal P u \in H^{\sigma-d, q}_\delta(M)$ and thus $\mathcal P(\eta u)\in H^{\sigma-d, q}_\delta(M)$.
Corollary \ref{cor:H-boundary-bootstrap} thus implies $\eta u \in H^{\sigma, q}_\delta(M)$.

The interior regularity $u\in H^{\sigma,q}_\text{loc}(M)$ implies $(1-\eta)u \in H^{\sigma, q}_\delta(M)$, from which the desired result follows.
\end{proof}

Having established the needed elliptic estimates of Proposition \ref{prop:P-regularity},
we proceed to show that $\mathcal P$ is Fredholm using $L^2$ argument.
Recall that Assumption \ref{Assume-P} implies that $(\sigma,q)=(d/2,2)$ lies in
$\mathcal S^{s,p}_{d}$ and implies that $0\in \deltrange_2(R)$,
since $R>0$.  Let 
$Z$ be the
kernel of $\mathcal P:H^{d/2,2}_0(M;E) \to H^{-d/2, 2}_0(M;E)$.
Proposition \ref{prop:P-regularity}
implies $Z$ is the common kernel of $\mathcal P:H^{\sigma,q}_{\delta}(M;E)\to H^{\sigma-d,q}_{\delta}(M;E)$
for all $(\sigma,q)\in\mathcal S^{s,p}_d$ and $\delta\in\deltrange_q(R)$.
Furthermore, Proposition \ref{prop:semi-fred-estimate}
implies that $Z$ is finite dimensional, and that the image of $\mathcal P$ in each $H^{\sigma-d,q}_{\delta}(M;E)$ is closed.

For a section $v\in Z$ and for a smooth compactly supported section $w$ of $E$
define
\begin{equation}
\label{define-Lv}
L_v(w) = \ip<v,w>_{(M,g)} = \int_M \ip<v, w>_{g}\; dV_g.
\end{equation}
\begin{lemma}\label{lem:Z-perp-continuous}
Suppose $(\sigma,q)\in\mathcal S^{s,p}_{d}$.
For each $v\in Z$, $L_v$ extends to a
 continuous linear functional on $H^{\sigma-d,q}_{\delta}(M;E)$
for each $\delta\in\deltrange_{q}(R)$ and on
$X^{\sigma-d,q}_{\delta}(M;E)$ for each $\delta\in \deltrange_\infty(R)$.
\end{lemma}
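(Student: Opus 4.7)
The natural strategy is to identify $L_v$ with a duality pairing and exploit the fact that elements of $Z$ are regular enough to lie in the dual space of the target. Concretely, Proposition~\ref{prop:dualH-early} identifies $H^{-(\sigma-d),q\dual}_{-\delta}(M;E)=H^{d-\sigma,q\dual}_{-\delta}(M;E)$ with $(H^{\sigma-d,q}_{\delta}(M;E))\dual$ via the pairing $\ip<\cdot,\cdot>_{(M,g)}$. Thus for the Sobolev case it suffices to show $v\in H^{d-\sigma,q\dual}_{-\delta}(M;E)$. Since $v\in Z\subset H^{d/2,2}_{0}(M;E)$, the pairing $(d/2,2)$ lies in $\mathcal S^{s,p}_d$, and $0\in \deltrange_2(R)$ (using $R>0$), Proposition~\ref{prop:P-regularity} promotes $v$ to $H^{\sigma',q'}_{\delta'}(M;E)$ for \emph{every} $(\sigma',q')\in\mathcal S^{s,p}_d$ and every $\delta'\in\deltrange_{q'}(R)$.

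It then remains to verify the two symmetries:
\begin{enumerate}
\item[(i)] $(d-\sigma,q\dual)\in \mathcal S^{s,p}_d$ whenever $(\sigma,q)\in \mathcal S^{s,p}_d$. This follows by direct inspection of the defining inequalities in \eqref{eq:S-conds}: the condition $d-s\le \sigma\le s$ is manifestly symmetric under $\sigma\mapsto d-\sigma$, and the Sobolev-type bounds $\frac{1}{p}-\frac{s}{n}\le \frac{1}{q}-\frac{\sigma}{n}\le \frac{1}{p\dual}-\frac{d-s}{n}$ transform into themselves under $(\sigma,q)\mapsto(d-\sigma,q\dual)$ after using $\frac{1}{q\dual}=1-\frac{1}{q}$ and $\frac{1}{p\dual}=1-\frac{1}{p}$.
\item[(ii)] $-\delta\in\deltrange_{q\dual}(R)$ whenever $\delta\in\deltrange_{q}(R)$. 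From \eqref{intro:define-DqR}, the condition $|-\delta+\frac{n-1}{q\dual}-\frac{n-1}{2}|<R$ simplifies (using $\frac{n-1}{q\dual}=n-1-\frac{n-1}{q}$) to $|\delta+\frac{n-1}{q}-\frac{n-1}{2}|<R$, which is precisely $\delta\in\deltrange_{q}(R)$.
\end{enumerate}
Combining (i), (ii), and Proposition~\ref{prop:P-regularity}, we obtain $v\in H^{d-\sigma,q\dual}_{-\delta}(M;E)$, which via Proposition~\ref{prop:dualH-early} extends $L_v$ to a continuous linear functional on $H^{\sigma-d,q}_{\delta}(M;E)$.

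For the Gicquaud--Sakovich case we reduce to the Sobolev case by embedding. Given $\delta\in\deltrange_{\infty}(R)$, i.e.\ $|\delta-\tfrac{n-1}{2}|<R$, choose $\epsilon>0$ small enough that $\delta' := \delta-\tfrac{n-1}{q}-\epsilon$ still satisfies $|\delta'+\tfrac{n-1}{q}-\tfrac{n-1}{2}|=|\delta-\epsilon-\tfrac{n-1}{2}|<R$; such an $\epsilon$ exists because $\deltrange_{\infty}(R)$ is open. Then $\delta'\in\deltrange_{q}(R)$ and Lemma~\ref{lem:basic-inclusions} supplies the continuous embedding $X^{\sigma-d,q}_{\delta}(M;E)\hookrightarrow H^{\sigma-d,q}_{\delta'}(M;E)$. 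The $L_v$ constructed above is continuous on $H^{\sigma-d,q}_{\delta'}(M;E)$, and composition with this embedding gives the desired continuity on $X^{\sigma-d,q}_{\delta}(M;E)$.

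I expect the only mild obstacle to be the algebraic bookkeeping to see that both $\mathcal S^{s,p}_d$ and $\deltrange_q(R)$ transform correctly under the duality involution; everything else is a direct appeal to the previously established duality (Proposition~\ref{prop:dualH-early}), regularity (Proposition~\ref{prop:P-regularity}), and embedding (Lemma~\ref{lem:basic-inclusions}) results.
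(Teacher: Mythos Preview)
Your proof is correct and follows essentially the same approach as the paper: establish $v\in H^{d-\sigma,q\dual}_{-\delta}(M;E)$ via the duality symmetries of $\mathcal S^{s,p}_d$ and $\deltrange_q(R)$ together with the regularity of $Z$ (Proposition~\ref{prop:P-regularity}), then apply Proposition~\ref{prop:dualH-early}; for the Gicquaud--Sakovich case embed into a weighted Sobolev space via Lemma~\ref{lem:basic-inclusions} after shifting $\delta$ by $-\tfrac{n-1}{q}-\epsilon$. Your version simply supplies more detail for the symmetry checks that the paper leaves as ``a computation.''
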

\begin{proof}
Let $v\in Z$.
A computation shows that $\mathcal S^{s,p}_{d}$
is invariant under the transformation $(\sigma,q)\mapsto (d-\sigma,q\dual)$,
and that $\delta\in\deltrange_q(R)$ if and only if $-\delta\in \deltrange_{q\dual}(R)$.  Thus $v\in Z$ is an element of  $H^{d-\sigma,q\dual}_{-\delta}(M)$.
Continuity of $L_v$ now follows from Proposition \ref{prop:dualH-early}.

Note that if
$\delta\in \deltrange_\infty(R)$ then $\delta-\frac{n-1}{q}-\epsilon\in \deltrange_q(R)$
for $\epsilon$ sufficiently small.
Thus the continuity of $L_v$ on $X^{\sigma-d,q}_{\delta}(M)$, where $\delta\in \deltrange_\infty(R)$,
follows from the embedding $X^{\sigma-d,q}_{\delta}(M)\hookrightarrow
H^{\sigma-d,q}_{\delta-(n-1)/q-\epsilon}(M)$ provided by Lemma \ref{lem:basic-inclusions}. 
\end{proof}

\begin{theorem}\label{thm:H-fredholm}
Suppose $g$ is an asymptotically hyperbolic metric of either class \eqref{intro:H-class} or class \eqref{intro:X-class}.
Let $\mathcal P = \mathcal P[g]$ be a $d^\text{th}$-order elliptic operator satisfying Assumptions \ref{Assume-P} and \ref{Assume-I}.
Set $Z = \ker\left( \mathcal P\colon H^{d/2,2}_0(M;E) \to H^{-d/2,2}_0(M;E)\right)$.

Let $(\sigma,q)\in \mathcal S^{s,p}_{d}$
and $\delta\in \deltrange_{q}(R)$.
Given $f\in H^{\sigma-d,q}_{\delta}(M;E)$, there
exists $u\in H^{\sigma,q}_{\delta}(M;E)$ solving
$\mathcal P u =f$
if and only if $L_v(f)=0$ for all $v\in Z$.
In particular,
\[
\mathcal P: H^{\sigma,q}_{\delta}(M;E) \to H^{\sigma-d,q}_{\delta}(M;E)
\]
is Fredholm with index zero.
\end{theorem}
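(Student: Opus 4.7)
The plan is to leverage the semi-Fredholm estimate of Proposition \ref{prop:semi-fred-estimate}, the regularity result of Proposition \ref{prop:P-regularity}, the duality of Proposition \ref{prop:dualH-early}, and the formal self-adjointness from Assumption \ref{Assume-P}(vi) to identify both the kernel and cokernel of $\mathcal P$ with $Z$. The first step is to observe that the kernel of $\mathcal P\colon H^{\sigma,q}_\delta(M;E)\to H^{\sigma-d,q}_\delta(M;E)$ equals $Z$. If $u$ is in this kernel, then $\mathcal Pu = 0\in H^{-d/2,2}_0(M;E)$; since $(d/2,2)\in\mathcal S^{s,p}_d$ and $0\in\mathcal D_2(R)$ (using $R>0$ from Assumption \ref{Assume-I}), Proposition \ref{prop:P-regularity} forces $u\in H^{d/2,2}_0(M;E)$, hence $u\in Z$. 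Conversely, the same regularity result places every element of $Z$ in $H^{\sigma,q}_\delta(M;E)$. Proposition \ref{prop:semi-fred-estimate} then gives finite-dimensional kernel and closed range, so it remains to characterize the image and compute the index.

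Necessity of the orthogonality condition is a direct consequence of formal self-adjointness: if $f=\mathcal Pu$ with $u\in H^{\sigma,q}_\delta(M;E)$, then for $v\in Z$ we have $L_v(f) = \langle v,\mathcal Pu\rangle_{(M,g)} = \langle \mathcal Pv,u\rangle_{(M,g)} = 0$, where the pairings are interpreted via Proposition \ref{prop:dualH-early} using the symmetry $(d-\sigma,q^\ast)\in\mathcal S^{s,p}_d$, $-\delta\in\mathcal D_{q^\ast}(R)$ noted in the proof of Lemma \ref{lem:Z-perp-continuous}. The self-adjoint identity of Assumption \ref{Assume-P}(vi) is stated for $H^{s,p}_0(M;E)$, so the argument requires approximating both $u$ and $v$ by smooth compactly supported sections via Proposition \ref{prop:density-early} and passing to the limit using continuity of the relevant duality pairings.

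For sufficiency, since the image of $\mathcal P$ is closed in $H^{\sigma-d,q}_\delta(M;E)$, the Hahn--Banach theorem reduces the problem to showing that every continuous linear functional on $H^{\sigma-d,q}_\delta(M;E)$ that annihilates the image has the form $L_v$ for some $v\in Z$. Proposition \ref{prop:dualH-early}, together with the above symmetry, represents any such functional as $w\mapsto \langle v,w\rangle_{(M,g)}$ for a unique $v\in H^{d-\sigma,q^\ast}_{-\delta}(M;E)$. Annihilation of the image means $\langle v,\mathcal Pu\rangle_{(M,g)}=0$ for all $u\in H^{\sigma,q}_\delta(M;E)$; testing against $u\in C^\infty_{\rm cpct}(M;E)$ and invoking formal self-adjointness yields $\langle \mathcal Pv,u\rangle_{(M,g)}=0$ for every such $u$, so that $\mathcal Pv=0$ as a distribution, and hence as an element of the appropriate weighted space. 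Proposition \ref{prop:P-regularity} then places $v$ in $Z$, completing the characterization of the image.

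The index calculation is immediate from the previous two steps: the map $Z\to (\operatorname{coker}\mathcal P)^\ast$ sending $v\mapsto L_v$ is injective (if $L_v$ vanishes on all of $H^{\sigma-d,q}_\delta(M;E)$, then in particular on $C^\infty_{\rm cpct}(M;E)$, forcing $v=0$ by non-degeneracy of the duality pairing), and is surjective by the argument of the preceding paragraph. Hence $\dim\operatorname{coker}\mathcal P=\dim Z=\dim\ker\mathcal P$, and $\mathcal P$ is Fredholm of index zero. The main technical hurdle will be the careful handling of formal self-adjointness when $\sigma<d$: the identity of Assumption \ref{Assume-P}(vi) must be combined with the appropriate density and continuity statements so that it remains valid in the duality-pairing sense against functions drawn from the various $H^{\sigma,q}_\delta$ and $H^{d-\sigma,q^\ast}_{-\delta}$ spaces that arise.
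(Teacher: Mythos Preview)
Your proposal is correct and follows essentially the same route as the paper: both arguments use the closed range from Proposition \ref{prop:semi-fred-estimate}, identify the annihilator of the image via duality (Proposition \ref{prop:dualH-early}) and formal self-adjointness, and then invoke the $(\sigma,q)\mapsto(d-\sigma,q^\ast)$, $\delta\mapsto-\delta$ symmetry of $\mathcal S^{s,p}_d$ and $\mathcal D_q(R)$ together with Proposition \ref{prop:P-regularity} to conclude that any such annihilating functional comes from an element of $Z$. Your explicit treatment of the kernel identification and the final dimension count are spelled out in slightly more detail than in the paper, but the substance is the same.
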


\begin{proof}
Let $Y\subset H^{\sigma-d,q}_{\delta}(M;E)$ be the image of $\mathcal P$ acting on $H^{\sigma,q}_{\delta}(M;E)$.  
Since $Y$ is closed by Proposition \ref{prop:semi-fred-estimate}, it is the intersection of the kernels of all continuous linear functionals on $H^{\sigma-d, q}_\delta(M)$ vanishing on $Y$. 
We establish the theorem by showing that such linear functionals aisely those the form $L_v$ for some $v\in Z$.

First, a density argument and the self-adjointedness
of $\mathcal P$ implies that if $v\in Z$, and if $f = \mathcal P u$ for some $u\in H^{\sigma, q}_\delta(M)$, then
\begin{equation*}
L_v(f) = \ip<v,\mathcal Pu>_{(M,g)} =
\ip<\mathcal P v,u>_{(M,g)} = 0.
\end{equation*}
Thus the functionals $L_v$ indeed all vanish on $Y$.

Conversely, suppose $F\in (H^{\sigma-d,q}_{\delta}(M;E))\dual$
vanishes on $Y$.  Proposition \ref{prop:dualH-early}
implies there exists $v\in H^{d-\sigma,q\dual}_{-\delta}(M;E)$
such that
\[
F(\cdot) = \ip<v,\cdot>_{(M, g)}.
\]
Since $F$ vanishes on $Y$ it follows that
$\ip<v,\mathcal P u>_{g} = 0$ for all sufficiently smooth,
compactly supported $u$ and we conclude, again from self-adjointedness,
that $\mathcal Pv=0$ in the sense of distributions.  As
$\mathcal S^{s,p}_d$ is invariant under $\sigma\to d-\sigma$
and $q\to q\dual$, and since $\delta\in\deltrange_q(R)$
if and only if $-\delta\in \deltrange_{q\dual}(R)$,
we conclude $(d-\sigma,q\dual)\in \mathcal S^{s,p}_d$
and $-\delta\in \deltrange_{q\dual}(R)$.
Thus $v$ is in the kernel of $\mathcal P$ acting on $H^{d-\sigma,q\dual}_{-\delta}(M)$.
Hence $v\in Z$ and $F=L_v$.
\end{proof}

\subsection{Fredholm theorem for Gicquaud-Sakovich spaces}

Our approach to establishing Fredholm theorems for operators acting in Gicquaud-Sakovich spaces is to initially solve in some weighted
Sobolev space $H^{\sigma,q}_{\delta'}(M;E)$ and then show
via a bootstrap that the solution in fact
lies in the desired Gicquaud-Sakovich space $X^{\sigma,q}_{\delta}(M;E)$.
The bootstrap relies on the
fact some of the weighted Sobolev spaces are, in fact, also
Gicquaud-Sakovich spaces on which $\mathcal P$ has good mapping properties.
For a low regularity metric, however,
the parameters $(\sigma,q)\in \mathcal S^{s,p}_d$ and $\delta\in \mathcal D_{q}(R)$
may be so limited that this overlap does not occur. Presumably this difficulty
could be overcome by introducing a scale of spaces that transition continuously between
$H^{s,p}_\delta(M;E)$ and $X^{s,p}_\delta(M;E)$.
We overcome this obstacle instead by first
assuming additional regularity from the metric, namely that $s\ge d$, and then using an
index-theoretic argument.

\begin{lemma}\label{lem:T-smoothing-X}
Assume that $s\ge d$.

If $u\in H^{\sigma_0,q_0}_{\delta_0}(M;E)$
for some $(\sigma_0,q_0)\in \mathcal S^{s,p}_{d}$
and $\delta_0\in\deltrange_{q_0}(R)$, then for each
$(\sigma,q)\in \mathcal S^{s,p}_{d}$ and
$\delta\in\deltrange_\infty(R)$ there exists
$\ell\in\Nats$ such that
$\widetilde T^\ell_r(u)\in X^{\sigma,q}_{\delta}(M;E)$
so long as $r$ is sufficiently small.
The same statement
for the same values of $r$ is true replacing
$\widetilde T_r$ with $\eta\widetilde T_r$ for any $\eta\in C^\infty(\bar M;\mathbb R)$.
\end{lemma}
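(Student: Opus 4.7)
The plan is to adapt the argument of Lemma \ref{lem:T-smoothing} by bridging through a Sobolev space of large Lebesgue parameter, which is the role of the extra hypothesis $s\ge d$. Specifically, I would first invoke Lemma \ref{lem:T-smoothing} to drive $u$ into a weighted Sobolev space whose weight already lies in $\deltrange_\infty(R)$, embed into a Gicquaud-Sakovich space, and then bootstrap entirely within the $X^{\sigma,q}_\delta$ scale using the $X$-version of Corollary \ref{cor:parametrix} together with Sobolev embedding.

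The first step is to choose bridging parameters $(\sigma_*,q_*)\in\mathcal S^{s,p}_d$ and $\delta_*\in\deltrange_{q_*}(R)\cap\deltrange_\infty(R)$. Since $\deltrange_q(R)$ is the interval of radius $R$ centered at $(n-1)/2-(n-1)/q$, the intersection $\deltrange_{q_*}(R)\cap\deltrange_\infty(R)$ is nonempty once $(n-1)/q_*<2R$, so we may take $q_*$ as large as we wish. The only question is whether $\mathcal S^{s,p}_d$ contains pairs $(\sigma_*,q_*)$ with $q_*$ arbitrarily large; this is exactly where $s\ge d$ enters, because that hypothesis together with $s>n/p$ gives $2s\ge d+s>d+n/p$, so the constraint $d-s\le \sigma_*\le s-n/p+n/q_*$ (which governs existence in $\mathcal S^{s,p}_d$ for $q_*$ near $\infty$) can be met by, e.g., $\sigma_*=\max(0,d-s)$. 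Applying Lemma \ref{lem:T-smoothing} then yields an $\ell_1$ with $\widetilde T_r^{\ell_1}u\in H^{\sigma_*,q_*}_{\delta_*}(M;E)$.

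Next, the basic inclusion from Lemma \ref{lem:basic-inclusions} embeds $H^{\sigma_*,q_*}_{\delta_*}(M;E)$ into $X^{\sigma_*,q_*}_{\delta_*}(M;E)$, so we are now in a Gicquaud-Sakovich space whose weight lies in $\deltrange_\infty(R)$. From here, the bootstrap mimics the argument of Lemma \ref{lem:T-smoothing} but with two simplifications: the weight class $\deltrange_\infty(R)$ is independent of $q$, and $\widetilde T_r$ acting on $X$-spaces improves both differentiability (by one) and weight (by up to one) in a single step, subject only to the constraints that the new parameters remain in $\mathcal S^{s,p}_d$ and $\deltrange_\infty(R)$. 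Alternating $\widetilde T_r$ with the Sobolev embedding $X^{\sigma,q}_\delta\hookrightarrow X^{r,q'}_\delta$ from Proposition \ref{prop:SobolevEmbedding} (which is weight-preserving), we can navigate the diagram $\mathcal S^{s,p}_d$ to reach any target $(\sigma,q)$, and we can iterate $\widetilde T_r$ to drive the weight up to and including the target $\delta\in\deltrange_\infty(R)$.

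The main obstacle is coordinating the bootstrap within $\mathcal S^{s,p}_d$: unlike in Lemma \ref{lem:T-smoothing}, we cannot exploit Lebesgue embedding to convert weight into integrability, so careful scheduling of $\widetilde T_r$ versus Sobolev embedding steps is required, much as in the analogous proof for the $H$-case. Finally, the statement for $\eta\widetilde T_r$ follows because multiplication by $\eta\in C^\infty(\bar M;\Reals)$ is bounded on every space appearing above, hence the composition $(\eta\widetilde T_r)^\ell$ factors through the same chain of spaces, with no loss.
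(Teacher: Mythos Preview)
Your proposal is correct and follows essentially the same approach as the paper: use the hypothesis $s\ge d$ to find a pair $(\sigma_*,q_*)\in\mathcal S^{s,p}_d$ with $q_*$ large enough that $\deltrange_{q_*}(R)$ meets $\deltrange_\infty(R)$, apply Lemma \ref{lem:T-smoothing} to land in $H^{\sigma_*,q_*}_{\delta_*}$, embed into $X^{\sigma_*,q_*}_{\delta_*}$, and then bootstrap within the $X$-scale via the Gicquaud--Sakovich version of Corollary \ref{cor:parametrix}. One minor refinement in the paper's argument: rather than choosing an intermediate $\delta_*$ and adjusting the weight later, the paper selects $\tilde q$ large enough that the \emph{target} weight $\delta$ already lies in $\deltrange_{\tilde q}(R)$, so the $X$-bootstrap only needs to adjust $(\sigma,q)$ and is thereby simpler than its $H$-space counterpart---this is worth doing, since your stated ``main obstacle'' of coordinating the bootstrap then disappears.
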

\begin{proof}
As in the proof of Lemma \ref{lem:T-smoothing} we can assume $\eta\equiv 1$
and that $r$ is small enough that $\widetilde T_r$ is well-defined on the
finitely many spaces visited in the bootstrap.

Since $\delta\in \deltrange_{\infty}(R)$, we can find
$\tilde q\in(1,\infty)$ sufficiently large
such that $\delta\in \deltrange_{\tilde q}(R)$.  Increasing
$\tilde q$ if needed we can assume that $\tilde q> p$.
Now pick $\tilde \sigma\in\Reals$
according to
\begin{equation}\label{eq:tilde-sigma}
\frac{1}{\tilde q}-\frac{\tilde \sigma}{n} = \frac{1}{p} -\frac{s}{n}.
\end{equation}
We claim that $(\tilde \sigma,\tilde q)\in \mathcal S^{s,p}_{d}$,
which involves verifying conditions \eqref{eq:S-conds}.
First, since $\tilde q>p$ and since $1/p - s/n<0$,
we conclude from equation \eqref{eq:tilde-sigma} that
$s> \tilde \sigma > 0$. Since $s\ge d$ we conclude $\tilde\sigma \in [d-s,s]$
which is the first of the conditions \eqref{eq:S-conds}.
The second condition follows from equation \eqref{eq:tilde-sigma}
and the fact that the weak $L^2$ condition ensures $1/p-s/n<1/p\dual-(d-s)/n$.

Since $(\tilde \sigma, \tilde q)\in\mathcal S^{s,p}_{d}$
and since $\delta\in \deltrange_{\tilde q}(R)$, Lemma
\ref{lem:T-smoothing} implies there exists $\ell\in\Nats$
such that
\[
\widetilde T^\ell_r u \in H^{\tilde\sigma,\tilde q}_{\delta}(M;E)
\subset X^{\tilde \sigma,\tilde q}_{\delta}(M;E).
\]
The bootstrap from here to $X^{\sigma,q}_{\delta}(M;E)$
now follows the procedure
of Lemma \ref{lem:T-smoothing}, using the
Giquaud-Sakovich variation of Corollary \ref{cor:parametrix}
for the smoothing properties of $\widetilde T_r$.
The argument is indeed somewhat simpler because
there is no need to further
adjust $\delta$ now that it is set to its target value.
\end{proof}

The following variation
of Proposition \ref{prop:P-regularity} now follows from Lemma \ref{lem:T-smoothing-X}
using identical arguments now based on the Gicquaud-Sakovich variation of Corollary
\ref{cor:parametrix}.  Note that we still assume additional regularity of the metric.

\begin{proposition}\label{prop:P-regularity-X}
Assume that $s\ge d$.

Suppose $u\in H^{\hat\sigma,\hat q}_{\hat \delta}(M;E)$
for some $(\hat \sigma,\hat q)\in \mathcal S^{s,p}_{d}$
and some $\hat \delta\in\deltrange_{\hat q}(R)$.
If $\mathcal P u\in X^{\sigma-d,q}_{\delta}(M;E)$ for some $(\sigma,q)\in \mathcal S^{s,p}_{d}$ and
$\delta\in \deltrange_\infty(R)$, then $u\in X^{\sigma,q}_{\delta}(M;E)$.
\end{proposition}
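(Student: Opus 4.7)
My plan is to follow the strategy of Proposition \ref{prop:P-regularity} essentially verbatim, substituting Lemma \ref{lem:T-smoothing-X} for Lemma \ref{lem:T-smoothing} and working with the Gicquaud-Sakovich mapping properties of the parametrix from Corollary \ref{cor:parametrix} throughout. The starting hypothesis $u\in H^{\hat\sigma,\hat q}_{\hat\delta}(M;E)$ with $(\hat\sigma,\hat q)\in\mathcal S^{s,p}_d$ and $\hat\delta\in \mathcal D_{\hat q}(R)$ is designed to be exactly the hypothesis of Lemma \ref{lem:T-smoothing-X}, and the additional regularity $s\ge d$ is what allows that lemma to bridge from a weighted-Sobolev starting space to a Gicquaud-Sakovich target space.

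The first step is to prove a Gicquaud-Sakovich analogue of Corollary \ref{cor:H-boundary-bootstrap}: under the above hypotheses on $u$, if additionally $u$ is supported in $A_r$ for $r$ sufficiently small and $\mathcal P u\in X^{\sigma-d,q}_\delta(M;E)$ with $(\sigma,q)\in\mathcal S^{s,p}_d$ and $\delta\in\mathcal D_\infty(R)$, then $u\in X^{\sigma,q}_\delta(M;E)$. I would choose $r$ small enough that both Lemma \ref{lem:T-smoothing-X} and the Gicquaud-Sakovich half of Corollary \ref{cor:parametrix} apply, pick $\eta\in C^\infty(\bar M;\mathbb R)$ equal to $1$ on $A_{r/8}$ and supported in $A_{r/4}$, assume (without loss of generality) that $u$ is supported where $\eta=1$, and use the parametrix identity to write
\[
(-\eta\widetilde T_r)^\ell u = \bigl(\eta(\Id - \widetilde Q_r\mathcal P)\bigr)^\ell u.
\]
I would then expand the right-hand side as a sum of words of length $\ell$ in the operators ``multiplication by $\eta$'' and ``$\eta\widetilde Q_r\mathcal P$''. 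The pure-$\eta$ word returns $u$. Every other word contains at least one occurrence of $\widetilde Q_r\mathcal P$; the innermost such factor acts on a power $\eta^k u = u$ and produces $\widetilde Q_r\mathcal P u$, which lies in $X^{\sigma,q}_\delta(M;E)$ because $\mathcal P u\in X^{\sigma-d,q}_\delta(M;E)$ by hypothesis and $\widetilde Q_r$ maps $X^{\sigma-d,q}_\delta(M;E)\to X^{\sigma,q}_\delta(M;E)$. Each subsequent application of $\eta$ or $\eta\widetilde Q_r\mathcal P$ preserves $X^{\sigma,q}_\delta(M;E)$, because Lemma \ref{lem:geometric-op-mapping} shows $\mathcal P\colon X^{\sigma,q}_\delta(M;E)\to X^{\sigma-d,q}_\delta(M;E)$ is continuous and multiplication by $\eta$ is trivially bounded. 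Lemma \ref{lem:T-smoothing-X} places the left-hand side in $X^{\sigma,q}_\delta(M;E)$, so the same is true of $u$.

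Second, to prove the full proposition I would mimic the remainder of the proof of Proposition \ref{prop:P-regularity}. Using Lemma \ref{lem:basic-inclusions} when $\hat q\ge p\dual$ or Proposition \ref{prop:SobolevEmbedding} otherwise I obtain $u\in H^{d-s,p\dual}_{\loc}(M;E)$; the interior elliptic regularity of Proposition \ref{prop:interior-reg}, applied with $\mathcal P u\in X^{\sigma-d,q}_\delta(M;E)\subset H^{\sigma-d,q}_{\loc}(M;E)$, then improves this to $u\in H^{\sigma,q}_{\loc}(M;E)$. Now choose $\eta\in C^\infty(\bar M;\mathbb R)$ equal to $1$ in a neighborhood of $\partial M$ and supported in $A_r$ for $r$ as above. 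The commutator $[\mathcal P,\eta]u$ is supported in a compact subset of $M$ on which $u\in H^{\sigma,q}$, so Lemma \ref{lem:commutator} combined with compact support yields $[\mathcal P,\eta]u\in X^{\sigma-d,q}_\delta(M;E)$. Hence $\mathcal P(\eta u)=\eta\mathcal P u + [\mathcal P,\eta]u\in X^{\sigma-d,q}_\delta(M;E)$, and the first step applied to $\eta u$ gives $\eta u\in X^{\sigma,q}_\delta(M;E)$. Finally $(1-\eta)u$ is compactly supported with local $H^{\sigma,q}$ regularity and so lies in $X^{\sigma,q}_\delta(M;E)$; summing gives the desired conclusion.

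The main obstacle is verifying that the inductive expansion in the first step closes within the Gicquaud-Sakovich scale at the prescribed weight $\delta\in\mathcal D_\infty(R)$, rather than the narrower Sobolev range $\mathcal D_q(R)$. This is exactly where the hypothesis $s\ge d$ is used: via Lemma \ref{lem:T-smoothing-X}, it upgrades a weighted-Sobolev starting point to the target Gicquaud-Sakovich space in finitely many applications of $\widetilde T_r$. Once this is in place, every other step is either a direct translation of the corresponding Sobolev argument or a routine check of mapping properties already catalogued in Lemma \ref{lem:geometric-op-mapping} and Corollary \ref{cor:parametrix}.
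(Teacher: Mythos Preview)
Your proposal is correct and follows essentially the same approach as the paper, which simply states that the result follows from Lemma \ref{lem:T-smoothing-X} using arguments identical to those of Proposition \ref{prop:P-regularity} (via Corollary \ref{cor:H-boundary-bootstrap}) but with the Gicquaud--Sakovich variation of Corollary \ref{cor:parametrix}. Your detailed expansion of the iterate $(\eta(\Id-\widetilde Q_r\mathcal P))^\ell u$ and the subsequent interior/boundary decomposition are exactly the steps the paper leaves implicit.
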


We now establish the main Fredholm theorem
for operators acting on Gicquaud-Sakovich spaces.  
For regular metrics, meaning those for which $s\geq d$, we
use Theorem \ref{thm:H-fredholm},
along with Proposition \ref{prop:P-regularity-X}, to
refine the mapping properties of $\mathcal P$.
For metrics with regularity involving $s<d$, the proof below relies on the we use an index theory argument as follows.
Given an asymptotically hyperbolic metric $g$ of class $\mathscr H^{s,p;m}$, where $s<d$, we use Corollary \ref{cor:metric-approx} to approximate $g$ by smooth, asymptotically hyperbolic metrics that are close to $g$ in the $X^{s,p}_0$ topology.
Theorem \ref{thm:H-fredholm} implies the corresponding operators are Fredholm of index zero.
Since, due to Proposition \ref{prop:L-mapping-S} and Lemma \ref{lem:geometric-op-mapping}, geometric operators are continuous with respect to this norm, approximation in $X^{s,p}_0$ suffices to use index-theoretic arguments in order to conclude that the operator corresponding to $g$ is also Fredholm of index zero.

Recall the definition of $L_v$ from \eqref{define-Lv}.

\begin{theorem} \label{thm:X-fredholm}
Suppose $g$ is an asymptotically hyperbolic metric, of either class \eqref{intro:H-class} or class \eqref{intro:X-class}.
Let $\mathcal P = \mathcal P[g]$ be a $d^\text{th}$-order elliptic operator satisfying Assumptions \ref{Assume-P} and \ref{Assume-I}.
Set $Z = \ker\left(\mathcal P\colon H^{d/2,2}_0(M;E)\to H^{-d/2,2}_0(M;E)\right)$.

Let $(\sigma,q)\in \mathcal S^{s,p}_{d}$ and $\delta\in \deltrange_{\infty}(R)$. 
Given $f\in X^{\sigma-d,q}_\delta(M;E)$ there exists $u\in X^{\sigma,q}_\delta(M;E)$ solving $\mathcal Pu = f$ if and only if $L_v(f) =0$ for all $v\in Z$.
In particular,
\[
\mathcal P: X^{\sigma,q}_{\delta}(M;E) \to X^{\sigma-d,q}_{\delta}(M;E)
\]
is Fredholm with index 0 and kernel $Z$.
\end{theorem}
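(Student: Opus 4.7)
The plan is to reduce this Gicquaud–Sakovich Fredholm statement to the already-proved Sobolev Fredholm theorem, Theorem \ref{thm:H-fredholm}, via the one-directional embedding $X^{\sigma,q}_{\delta}(M;E) \hookrightarrow H^{\sigma,q}_{\delta'}(M;E)$ from Lemma \ref{lem:basic-inclusions}, and to close the loop by bootstrapping back up using Proposition \ref{prop:P-regularity-X}. Because that bootstrap requires the additional hypothesis $s\ge d$, the argument naturally splits into two cases.

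\textit{Case 1 ($s\ge d$).} For the image characterization, start with $f \in X^{\sigma-d,q}_{\delta}(M;E)$ with $L_v(f)=0$ for all $v\in Z$. Choose $\delta' = \delta - (n-1)/q - \epsilon$ for small $\epsilon>0$; the assumption $\delta\in\mathcal D_\infty(R)$ forces $\delta'\in\mathcal D_q(R)$ once $\epsilon$ is small enough. Lemma \ref{lem:basic-inclusions} places $f$ in $H^{\sigma-d,q}_{\delta'}(M;E)$, and the pairing $L_v$ extends consistently (Lemma \ref{lem:Z-perp-continuous}), so the orthogonality to $Z$ survives. Theorem \ref{thm:H-fredholm} then produces $u \in H^{\sigma,q}_{\delta'}(M;E)$ with $\mathcal Pu = f$, and Proposition \ref{prop:P-regularity-X} promotes $u$ to $X^{\sigma,q}_{\delta}(M;E)$ using $f \in X^{\sigma-d,q}_{\delta}(M;E)$. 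For the kernel, any $u\in X^{\sigma,q}_{\delta}(M;E)$ with $\mathcal P u = 0$ lies in $H^{\sigma,q}_{\delta'}(M;E)$ by the same embedding, so Theorem \ref{thm:H-fredholm} places it in $Z$; conversely, Proposition \ref{prop:P-regularity-X} applied to $\mathcal P v = 0 \in X^{\sigma-d,q}_{\delta}(M;E)$ shows each $v \in Z$ lives in $X^{\sigma,q}_{\delta}(M;E)$. Closedness of the range comes from Proposition \ref{prop:semi-fred-estimate}; combined with the linear independence of $\{L_v\}_{v\in Z}$ as functionals on $X^{\sigma-d,q}_{\delta}(M;E)$ (immediate from density of compactly supported smooth sections), this gives $\dim\coker = \dim Z$ and hence index zero.

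\textit{Case 2 ($s < d$).} Here the direct regularity bootstrap of Proposition \ref{prop:P-regularity-X} is unavailable, so I appeal to an index-theoretic perturbation argument. By Corollary \ref{cor:metric-approx}, approximate $g$ by a sequence of smooth asymptotically hyperbolic metrics $g_k \to g$ in $X^{s,p}_{0}(M;T^{0,2}M)$. Each $\mathcal P[g_k]$ satisfies Assumptions \ref{Assume-P} and \ref{Assume-I} with the \emph{same} indicial radius $R$ (it depends only on $\breve g$), acts on the same bundle $E$, and trivially meets $s_k=\infty\ge d$, so Case 1 applies: every $\mathcal P[g_k] \colon X^{\sigma,q}_{\delta}(M;E) \to X^{\sigma-d,q}_{\delta}(M;E)$ is Fredholm of index zero. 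Lemma \ref{lem:geometric-op-mapping} and Proposition \ref{prop:L-mapping-S} ensure $\|\mathcal P[g_k] - \mathcal P[g]\| \to 0$ as operators in this pair of spaces, so stability of the Fredholm index under small operator-norm perturbations makes $\mathcal P[g]$ Fredholm of index zero. The inclusion $\ker \mathcal P|_X \subseteq Z$ again follows from the basic embedding and Theorem \ref{thm:H-fredholm}, while $\dim\coker \ge \dim Z$ via $L_v$-independence; the vanishing index pins down $\ker \mathcal P|_X = Z$ and $\operatorname{im}\mathcal P = \{f : L_v(f)=0\ \forall v\in Z\}$.

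The principal obstacle is the gap $\mathcal D_q(R) \subsetneq \mathcal D_\infty(R)$: a given $\delta\in\mathcal D_\infty(R)$ generically lies just outside $\mathcal D_q(R)$, so the $(n-1)/q$ shift absorbed by the $X \hookrightarrow H$ embedding must later be recovered via Proposition \ref{prop:P-regularity-X}. This is precisely the structural role of the $s\ge d$ assumption in Case 1, and it is what forces the perturbation detour in Case 2. A secondary delicate point I will need to verify is the operator-norm continuity of $g \mapsto \mathcal P[g]$ in the $X^{s,p}_{0}$ topology on coefficients at the full span of parameters $(\sigma,q)\in\mathcal S^{s,p}_d$ and $\delta\in\mathcal D_\infty(R)$; this is a direct combination of the Lipschitz coefficient estimate in Lemma \ref{lem:geometric-op-mapping} with the mapping property in Proposition \ref{prop:L-mapping-S}, but it must be checked uniformly in $\delta$.
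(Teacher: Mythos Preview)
Your approach mirrors the paper's almost exactly: the same two-case split, the same use of the embedding $X^{\sigma,q}_\delta\hookrightarrow H^{\sigma,q}_{\tilde\delta}$ to invoke Theorem~\ref{thm:H-fredholm}, the same bootstrap via Proposition~\ref{prop:P-regularity-X} in the $s\ge d$ case, and the same approximation-by-smooth-metrics index argument for $s<d$.

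There is, however, one genuine gap in Case~2. You write that each $\mathcal P[g_k]$ ``acts on the same bundle $E$,'' but this need not be true: $E$ is a \emph{geometric} tensor bundle, and when its definition depends on the metric (e.g.\ $g$-trace-free symmetric tensors), the approximating metrics $g_k$ determine distinct bundles $E_k\subset F$ inside the ambient tensor bundle $F$. Consequently $\mathcal P[g_k]$ is a map $X^{\sigma,q}_\delta(M;E_k)\to X^{\sigma-d,q}_\delta(M;E_k)$, not on sections of $E$, so you cannot directly compare it with $\mathcal P[g]$ in operator norm. The paper handles this by introducing the $g$- and $g_k$-orthogonal projections $\Pi_E,\Pi_{E_k}$ on $F$, showing $\Pi_{E_k}\to\Pi_E$ in operator norm and that $\Pi_{E_k}\colon X^{\sigma,q}_\delta(M;E)\to X^{\sigma,q}_\delta(M;E_k)$ and $\Pi_E\colon X^{\sigma-d,q}_\delta(M;E_k)\to X^{\sigma-d,q}_\delta(M;E)$ are isomorphisms for large $k$; one then compares $\Pi_E\circ\mathcal P[g_k]\circ\Pi_{E_k}$ with $\mathcal P[g]$ as maps between sections of $E$. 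This is not a long argument, but it is not automatic and should be supplied. A minor related point: to conclude index zero you should invoke that $\mathcal P[g]$ is already semi-Fredholm (Proposition~\ref{prop:semi-fred-estimate}) together with local constancy of the index for semi-Fredholm operators, rather than bare ``stability of the Fredholm index,'' since a priori you have not yet shown $\mathcal P[g]$ is Fredholm.
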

\begin{proof}
First suppose $s\ge d$.  Let $f\in X^{\sigma-d,q}_\delta(M)$.
Since $\delta\in \deltrange_\infty(R)$ we can choose $\tilde\delta<\delta-(n-1)/q$
sufficiently large so that $\tilde\delta\in \deltrange_q(R)$.
Lemma \ref{lem:basic-inclusions} implies $X^{t,q}_{\delta}(M)\subset H^{t,q}_{\tilde\delta}(M)$ for all $t\in\Reals$. 

Suppose $\mathcal Pu = f$ for some $u\in X^{\sigma, q}_\delta(M)$.
Then $u\in H^{\sigma, q}_{\tilde\delta}(M)$ and Theorem \ref{thm:H-fredholm} implies that $L_v(f) =0$ for all $v\in Z$.
Conversely, suppose $L_v(f) = 0$ for all $v\in Z$. Since $f\in X^{\sigma-d,q}_\delta(M)\subset H^{\sigma-d,q}_{\tilde\delta}(M)$, Theorem \ref{thm:H-fredholm} implies the existence of $u\in H^{\sigma,q}_{\tilde\delta}(M)$ with $\mathcal Pu = f$.
Proposition \ref{prop:P-regularity-X} implies that $u\in X^{\sigma,q}_\delta(M)$.

Proposition \ref{prop:P-regularity-X} further implies $Z\subset X^{\sigma,q}_\delta(M)$ and thus $\dim(\ker \mathcal P) = \dim(Z)$.
Moreover, if $L_v$ vanishes on $X^{\sigma-d,q}_{\delta}(M)$ then $L_v(v)=0$ and therefore
$v=0$.  Thus $\coker\mathcal P=\dim(Z)$ as well and $\mathcal P:X^{\sigma,q}_{\delta}(M)\to
X^{\sigma-d,q}_{\delta}(M)$ has index 0.

Now relax the assumption that $s\ge d$ and let $g_n$
be a sequence of asymptotically hyperbolic
metrics such that $\bar g_n\in C^\infty(\bar M)$
and such that $g_n\to g$ in $X^{s,p}_0(M)$ as provided by
Corollary \ref{cor:metric-approx}.  Let $F$ be the tensor bundle
ambient to $E$ and let $\Pi_E$ denote $g$-orthogonal projection
of sections of $F$ onto sections of $E$.  For each $n$, let
$E_n$ be the geometric tensor bundle associated
with $g_n$ corresponding to $E$ and let $\Pi_{E_n}$ be the 
$g_n$ orthogonal projection onto $E_n$.  We claim 
that $\Pi_{E_n}\to \Pi_E$ as a map on $X^{\sigma,q}_{\delta}(M;F)$
and that
\begin{align*}
\Pi_{E_n}&:X^{\sigma,q}_{\delta}(M;E)\to X^{\sigma,q}_{\delta}(M;E_n),\\
\Pi_{E}&:X^{\sigma-d,q}_{\delta}(M;E_n)\to X^{\sigma-d,q}_{\delta}(M;E)
\end{align*}
are isomorphisms for $n$ sufficiently large.
Assuming this claim for the moment, 
observe from
Lemma \ref{lem:geometric-op-mapping} and Proposition \ref{prop:L-mapping-S} 
that the operators $\mathcal P_n$ converge to $\mathcal P$
as maps $X^{\sigma,q}_{\delta}(M;F)\to X^{\sigma-d,q}_{\delta}(M;F)$.
Hence $\Pi_E\circ \mathcal P_n\circ \Pi_{E_n} \to \mathcal P$
as maps $X^{\sigma,q}_{\delta}(M;E)\to X^{\sigma-d,q}_{\delta}(M;E)$.
Because the maps $\Pi_E$ and $\Pi_{E_n}$ are isomorphisms
between sections of their respective geometric tensor bundles,
each $\Pi_E\circ \mathcal P_n\circ \Pi_{E_n}$ is Fredholm with index zero.
Recall from Proposition \ref{prop:semi-fred-estimate}
that $\mathcal P$ is semi-Fredholm. Since the index of a semi-Fredholm
map is locally constant \cite{schechter2001principles} it follows
that $\mathcal P$ is also Fredholm with index zero.

To establish the claim, using local orthonormal
frames constructed via Lemma \ref{lem:GS} one readily verifies that
the operator norm of $\Pi_{E_n}-\Pi_E$ 
as a map on $X^{\sigma,q}_{\delta}(M;F)$ is controlled
by $\|g_n-g\|_{X^{s,p}_0(M)}$ and hence converges to zero.
On $X^{\sigma,q}_\delta(M;E)$ we have
$\Id - \Pi_E\circ\Pi_{E_n} = \Pi_E\circ(\Pi_E-\Pi_{E_n})$
and hence for $n$ sufficiently large $\Pi_E\circ\Pi_{E_n}$
is invertible on $X^{\sigma,q}(M;E)$
and $\Pi_{E_n}$ is therefore injective.  A parallel argument shows that for $n$
large enough $\Pi_{E_n}$ is also surjective, and similar considerations
show that $\Pi_{E}:X^{\sigma-d,q}_\delta(M;E_n)\to X^{\sigma-d,q}_\delta(M;E)$
is also an isomorphism for $n$ sufficiently large.  

It remains in the low regularity case
to show $\ker \mathcal P=Z$, and to identify
the image of $\mathcal P$.
First, picking $\tilde\delta$ as above, we have the straightforward inclusion
\begin{equation}
\label{eq:ker-P-in-Z}
\begin{aligned}
\ker \left(\mathcal P : X^{\sigma,q}_{\delta}(M) \to X^{\sigma-d,q}_{\delta}(M)\right)
&\subset \ker \left(\mathcal P : H^{\sigma,q}_{\tilde\delta}(M) \to H^{\sigma-d,q}_{\tilde\delta}(M)\right) 
\\
&= Z.
\end{aligned}
\end{equation}
Regarding the image, let
\[
Y=\{w\in X^{\sigma-d,q}_{\delta}(M;E): L_v(w)=0\text{ for all $v\in Z$}\}.
\]
We claim that $v\mapsto L_v$ is injective from $Z$ to $(X^{\sigma-d,q}_{\delta}(M))\dual$,
in which case $\codim Y=\dim Z$.  
If $v\in Z$ is not zero, the
density of smooth compactly
supported sections in $H^{d/2,2}_{0}(M)$ implies we can find such a section
$\psi$ with $L_v(\psi)\neq 0$.  The claim follows from observing $\psi\in X^{\sigma-d,q}_\delta(M)$
as well.

Theorem \ref{thm:H-fredholm} implies
\begin{equation}\label{eq:im-P-in-Y}
\im \left(\mathcal P : X^{\sigma,q}_{\delta}(M) \to X^{\sigma-d,q}_{\delta}(M)\right)
\subset Y.
\end{equation}
Recalling that $\mathcal P$ has index zero we compute
\[
\dim Z \ge \dim(\ker \mathcal P) = \codim (\im\mathcal P)
\ge \codim Y = \dim Z.
\]
So all the intermediate inequalities are equalities, which is only
possible if the containments \eqref{eq:ker-P-in-Z} and \eqref{eq:im-P-in-Y} are equalities.
\end{proof}

As a consequence of Theorem \ref{thm:X-fredholm}, we can
now show that Proposition \ref{prop:P-regularity-X} holds
without making the smooth metric hypothesis.

\begin{corollary}
Suppose $u\in H^{\sigma_0,q_0}_{\delta_0}(M;E)$
for some $(\sigma_0,q_0)\in \mathcal S^{s,p}_{d}$
and some $\delta_0\in\deltrange_{q_0}(R)$.  If $\mathcal P u\in X^{\sigma-d,q}_{\delta}(M;E)$ for some $(\sigma,q)\in \mathcal S^{s,p}_{d}$ and
$\delta\in \deltrange_\infty(R)$, then $u\in X^{\sigma,q}_{\delta}(M;E)$.
\end{corollary}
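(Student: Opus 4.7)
The plan is to combine the Sobolev regularity result of Proposition \ref{prop:P-regularity} with the Fredholm theorem Theorem \ref{thm:X-fredholm} — which was established without the assumption $s \ge d$ — in order to bypass the hypothesis that obstructed Proposition \ref{prop:P-regularity-X}. The strategy is essentially to subtract off a known element of the Gicquaud--Sakovich space whose image under $\mathcal P$ agrees with $\mathcal Pu$, and to argue that the resulting difference lies in the common kernel $Z$, which itself sits inside the desired space by Theorem \ref{thm:X-fredholm}.

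First, I would choose $\tilde\delta < \delta - (n-1)/q$ large enough that $\tilde\delta \in \deltrange_q(R)$; this is possible because $\delta \in \deltrange_\infty(R)$. The continuous embedding $X^{\sigma-d,q}_\delta(M;E) \hookrightarrow H^{\sigma-d,q}_{\tilde\delta}(M;E)$ from Lemma \ref{lem:basic-inclusions} gives $\mathcal P u \in H^{\sigma-d,q}_{\tilde\delta}(M;E)$, so Proposition \ref{prop:P-regularity} upgrades the regularity of $u$ to $u \in H^{\sigma,q}_{\tilde\delta}(M;E)$.

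Next I would invoke Theorem \ref{thm:X-fredholm}. Since the existence of $u \in H^{\sigma,q}_{\tilde\delta}(M;E)$ with $\mathcal P u$ equal to the given element of $X^{\sigma-d,q}_\delta(M;E)$ forces (by the corresponding statement in Theorem \ref{thm:H-fredholm} and the self-adjoint pairing of Lemma \ref{lem:Z-perp-continuous}) that $L_v(\mathcal P u) = 0$ for all $v \in Z$, Theorem \ref{thm:X-fredholm} produces a $\tilde u \in X^{\sigma,q}_\delta(M;E)$ with $\mathcal P \tilde u = \mathcal P u$. In particular, $\tilde u \in H^{\sigma,q}_{\tilde\delta}(M;E)$ as well, and $u - \tilde u$ lies in the kernel of $\mathcal P$ acting on $H^{\sigma,q}_{\tilde\delta}(M;E)$. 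By Proposition \ref{prop:P-regularity}, this kernel coincides with $Z$.

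Finally, Theorem \ref{thm:X-fredholm} explicitly identifies the kernel of $\mathcal P \colon X^{\sigma,q}_\delta(M;E) \to X^{\sigma-d,q}_\delta(M;E)$ as $Z$, so $u - \tilde u \in Z \subset X^{\sigma,q}_\delta(M;E)$. Adding back $\tilde u$ yields $u \in X^{\sigma,q}_\delta(M;E)$, completing the proof. The only subtle step — though not really an obstacle given what has been built — is ensuring that the $L^2$-type kernel $Z$ produced by the weak formulation is literally the same set of sections appearing as the kernel in both categories of spaces; this is guaranteed here because both Proposition \ref{prop:P-regularity} and Theorem \ref{thm:X-fredholm} have already done that identification, so the argument reduces to bookkeeping with the inclusions already at hand.
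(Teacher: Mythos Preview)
Your proof is correct and follows essentially the same approach as the paper: embed $\mathcal P u$ into a weighted Sobolev space via Lemma \ref{lem:basic-inclusions}, upgrade $u$ to $H^{\sigma,q}_{\tilde\delta}$ via Proposition \ref{prop:P-regularity}, use Theorem \ref{thm:X-fredholm} to produce a Gicquaud--Sakovich preimage of $\mathcal Pu$, and observe that the difference lies in the common kernel $Z \subset X^{\sigma,q}_\delta(M;E)$. Your justification of $L_v(\mathcal Pu)=0$ via Theorem \ref{thm:H-fredholm} is slightly more explicit than the paper's, but the argument is otherwise the same.
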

\begin{proof}

Observe that $\mathcal P u\in X^{\sigma-d,q}_{\delta}(M)\subset H^{\sigma-d,q}_{\delta}(M)$
for some $\tilde\delta\in \deltrange_{q}(R)$.  Proposition \ref{prop:P-regularity} implies
$u\in H^{\sigma,q}_{\tilde\delta}(M)$ and we wish to improve to $X^{\sigma,q}_{\delta}(M)$.

Since $\mathcal P u$ satisfies $\ip<v,\mathcal P u>_{g}=0$
for all $v\in Z$, Theorem \ref{thm:X-fredholm} implies
there exists $w\in X^{\sigma,q}_{\delta}$ such that
$\mathcal P w = \mathcal P u$.  Moreover,
\[
u-w\in \ker \mathcal P: H^{\sigma,q}_{\delta^*}(M)\to H^{\sigma-d,q}_{\delta^*}(M) = Z.
\]
Theorem \ref{thm:X-fredholm} implies $Z\subset X^{s,p}_\delta(M)$
and therefore $u = w + (u-w)\in X^{s,p}_\delta(M)$ as well.
\end{proof}

\subsection{Structure theorems}
\label{secsec:structure-theorems}
We now present structure theorems for elliptic operators satisfying Assumptions \ref{Assume-P} and \ref{Assume-I}.
For such an operator $\mathcal P[g]$, and for asymptotically hyperbolic metric $g$ of class $\mathscr H^{s,p;m}$ satisfying \eqref{intro:H-class} or class $\mathscr X^{s,p;m}$ satisfying \eqref{intro:X-class}, let 
\begin{equation*} 
Z = \ker\left( \mathcal P[g]\colon H^{d/2,2}_0(M;E) \to H^{-d/2,2}_0(M;E)\right).
\end{equation*}

For each $(\sigma,q)\in \mathcal S^{s,p}_d$ and $\delta\in \mathcal D_q(R)$, we invoke Lemma \ref{lem:Z-perp-continuous} to define
\begin{equation*}
Y(H^{\sigma,q}_\delta) = \left\{u\in H^{\sigma,q}_\delta(M;E) \colon L_v(u)=0 \text{ for all }v\in Z \right\}.
\end{equation*}
For $(\sigma,q)\in \mathcal S^{s,p}_d$ and $\delta \in \mathcal D_\infty(R)$, the space $Y(X^{\sigma,q}_\delta)$ is defined analogously.

\begin{theorem}
Suppose $g$ is an asymptotically hyperbolic metric, of either class \eqref{intro:H-class} or class \eqref{intro:X-class}.
Let $\mathcal P = \mathcal P[g]$ be a $d^\text{th}$-order elliptic operator satisfying Assumptions \ref{Assume-P} and \ref{Assume-I}.

Let $(\sigma,q)\in \mathcal S^{s,p}_{d}$ and
$\delta\in \deltrange_{q}(R)$.  
There exist bounded operators
$G,H: H^{\sigma-d,q}_{\delta}(M;E) \to H^{\sigma,d}_\delta(M;E)$
that satisfy
\begin{align*}
&\ker H = Y(H^{\sigma-d,q}_{\delta}),
\\
&\im H = Z,\\
&u = G\mathcal P u + Hu\quad \text{ for all }u \in H^{\sigma,q}_\delta(M;E),\\
&w = \mathcal P G w+ Hw\quad \text{ for all }w\in H^{\sigma-d,q}_\delta(M;E).
\end{align*}
\end{theorem}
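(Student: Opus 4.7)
The plan is to build $H$ as a finite-rank $L^2$-projection onto the kernel $Z$ and to obtain $G$ from the bounded inverse of $\mathcal{P}$ on a closed complement of $Z$, leveraging the Fredholm result of Theorem \ref{thm:H-fredholm}.

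First, I would fix a basis $v_1,\ldots,v_k$ of the finite-dimensional space $Z$ which is orthonormal with respect to the pairing $\langle\cdot,\cdot\rangle_{(M,g)}$. By Proposition \ref{prop:P-regularity} each $v_j$ lies in every weighted Sobolev space $H^{\sigma',q'}_{\delta'}(M;E)$ whose parameters satisfy $(\sigma',q')\in\mathcal{S}^{s,p}_d$ and $\delta'\in\mathcal{D}_{q'}(R)$. I would then define
\[
Hu = \sum_{j=1}^{k} L_{v_j}(u)\, v_j.
\]
Boundedness of $H\colon H^{\sigma-d,q}_\delta(M;E)\to H^{\sigma,q}_\delta(M;E)$ is immediate from Lemma \ref{lem:Z-perp-continuous}, and the identities $\ker H = Y(H^{\sigma-d,q}_\delta)$ and $\im H = Z$ follow from orthonormality (for the latter, if $v=\sum c_i v_i \in Z$ then the relation $Hv=v$ makes sense because $Z\subset H^{\sigma-d,q}_\delta(M;E)$).

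Next, I would set $W = \bigcap_j \ker L_{v_j} \subset H^{\sigma,q}_\delta(M;E)$, a closed subspace which, together with the projection $H$, gives the topological direct sum decomposition $H^{\sigma,q}_\delta(M;E) = W \oplus Z$. Theorem \ref{thm:H-fredholm} says precisely that $\mathcal{P}$ maps $W$ continuously and bijectively onto the closed subspace $Y(H^{\sigma-d,q}_\delta)$; the open mapping theorem then yields a bounded inverse $K\colon Y(H^{\sigma-d,q}_\delta) \to W$. I would then put $Gw := K(w - Hw)$, which is bounded as a map $H^{\sigma-d,q}_\delta(M;E) \to H^{\sigma,q}_\delta(M;E)$.

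The first identity $w = \mathcal{P}Gw + Hw$ holds by construction. For the second, I would use the formal self-adjointness in Assumption \ref{Assume-P}, combined with the duality framework of Proposition \ref{prop:dualH-early} and density from Proposition \ref{prop:density-early}, to obtain $L_{v_j}(\mathcal{P}u) = \langle \mathcal{P}v_j, u\rangle_{(M,g)} = 0$, so $H\mathcal{P}u = 0$ and $G\mathcal{P}u = K(\mathcal{P}u)$. Decomposing $u = u_W + u_Z$ with $u_Z = Hu$ then gives $K(\mathcal{P}u) = K(\mathcal{P}u_W) = u_W$, hence $G\mathcal{P}u + Hu = u$. The only mildly delicate step is justifying this self-adjointness pairing when $s < d$, where both sides must be interpreted as duality pairings and the identity extracted by approximating $u$ by smooth compactly supported sections; this is essentially built into the formulation of Assumption \ref{Assume-P}.
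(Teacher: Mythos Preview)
Your proposal is correct and follows essentially the same argument as the paper: define $H$ via an $L^2(M,g)$-orthonormal basis of $Z$, use Theorem~\ref{thm:H-fredholm} to see that $\mathcal P$ restricts to a bijection from the complement $W=Y(H^{\sigma,q}_\delta)$ onto $Y(H^{\sigma-d,q}_\delta)$, and take $G$ to be its bounded inverse extended by zero on $Z$ (your formula $Gw=K(w-Hw)$ is exactly this operator). The only cosmetic differences are the order in which the two identities are verified and that the paper spells out the injectivity/surjectivity of $\mathcal P|_W$ slightly more explicitly.
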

\begin{proof}
Recall that $Z\subset L^2(M)$ and let $\{v_j\}_{j=1}^m$ be an $L^2$ orthonormal basis for
$Z$.
For $u\in H^{\sigma,q}_\delta(M)$, we use Lemma \ref{lem:Z-perp-continuous} to define the continuous map $H\colon H^{\sigma-d, q}_\delta(M) \to H^{\sigma, q}_\delta(M)$ by 
\[
H(u) = \sum_{j=1}^m L_{v_j}(u) v_j 
= \sum_{j=1}^m\langle v_j, u\rangle_{(M,g)} v_j.
\]
It follows immediately from the definition that $\im H = Z$ and that $\ker H = Y(H^{\sigma -d, q}_\delta)$.

We now claim that $\mathcal P$ is a bijection $Y(H^{\sigma, q}_\delta) \to Y(H^{\sigma -d, q}_\delta)$.
That the image of $Y(H^{\sigma, q}_\delta)$ under $\mathcal P$ lies in $Y(H^{\sigma -d, q}_\delta)$ follows from the facts that $\mathcal P$ is self-adjoint and that, as noted in the proof of Lemma \ref{lem:Z-perp-continuous}, $Z\subset H^{d-\sigma, q\dual}_{-\delta}(M)$.
The injectivity is a consequence of the definition of $Y(H^{\sigma -d, q}_\delta)$, and thus it remains to show surjectivity.
For $f\in Y(H^{\sigma -d, q}_\delta)$, Theorem \ref{thm:H-fredholm} implies the existence of $\tilde u \in H^{\sigma, q}_\delta(M)$ with $\mathcal P \tilde u = f$.
Let $u = \tilde u - H \tilde u$. Since $H\tilde u\in Z$, and since $Z$ is the kernel of $\mathcal P$ acting on $H^{\sigma, q}_\delta(M)$, we have
\begin{equation*}
\mathcal P u = \mathcal P\tilde u = f.
\end{equation*}

Define $G\colon Y(H^{\sigma -d, q}_\delta) \to Y(H^{\sigma, q}_\delta)$ to be the inverse of $\mathcal P\big|_{Y(H^{\sigma, q}_\delta)}$, so that $G\mathcal P u = u$ for all $u\in Y(H^{\sigma, q}_\delta)$.
The Inverse Mapping Theorem implies $G$ is continuous, and since $Z$ is finite-dimensional, we may extend $G$ continuously to all of $H^{\sigma -d, q}_\delta(M)$ by declaring $G\big|_Z =0$.

Direct computation now shows that for $u\in H^{\sigma, q}_\delta(M)$ we have
\begin{equation*}
u - Hu = G(\mathcal P(u - Hu)) = G\mathcal P u.
\end{equation*}
Furthermore, for $w\in H^{\sigma-d, q}_\delta(M)$ we have $u- Hu\in Y(H^{\sigma -d, q}_\delta)$ and thus
\begin{equation*}
u - Hu = \mathcal P G(u - Hu) = \mathcal P Gu,
\end{equation*}
where we have used that $Hu\in Z$ and $G\big|_Z =0$.
\end{proof}

The analogous result for Gicquaud-Sakovich spaces is proved
identically using Theorem \ref{thm:X-fredholm} in place
of Theorem \ref{thm:H-fredholm}.
\begin{theorem}
Suppose $g$ is an asymptotically hyperbolic metric, of either class \eqref{intro:H-class} or class \eqref{intro:X-class}.
Let $\mathcal P = \mathcal P[g]$ be a $d^\text{th}$-order elliptic operator satisfying Assumptions \ref{Assume-P} and \ref{Assume-I}.

Suppose $(\sigma,q)\in \mathcal S^{s,p}_{d}$ and
$\delta\in \deltrange_{\infty}(R)$.  There exist bounded operators
$G,H: X^{\sigma-d,q}_{\delta}(M;E) \to X^{\sigma,d}_\delta(M;E)$
that satisfy
\begin{align*}
&\ker H = Y(X^{\sigma-d,q}_{\delta}),\\
&\im H = Z,\\
&u = G\mathcal P u + Hu\quad \text{ for all }u\in X^{\sigma,q},_\delta(M;E)\\
&w = \mathcal P G w+ Hw\quad \text{ for all }w\in X^{\sigma-d,q}_\delta(M;E).
\end{align*}
\end{theorem}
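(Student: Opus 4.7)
The plan is to mirror precisely the argument used for the weighted Sobolev space version, substituting Theorem \ref{thm:X-fredholm} for Theorem \ref{thm:H-fredholm} at the two places where the Fredholm structure is invoked. I would begin by fixing an $L^2$-orthonormal basis $\{v_j\}_{j=1}^N$ of the finite-dimensional space $Z$, and defining
\[
Hu = \sum_{j=1}^N L_{v_j}(u)\,v_j.
\]
By Lemma \ref{lem:Z-perp-continuous}, each $L_{v_j}$ is continuous on $X^{\sigma-d,q}_\delta(M;E)$ provided $\delta \in \mathcal{D}_\infty(R)$, so $H$ defines a bounded linear operator into the finite-dimensional subspace $Z$. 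The hypothesis $(\sigma,q)\in\mathcal{S}^{s,p}_d$ together with Theorem \ref{thm:X-fredholm} guarantees that $Z$ sits inside $X^{\sigma,q}_\delta(M;E)$ (this is where the Gicquaud--Sakovich version differs cosmetically from the Sobolev one: the regularity of elements of $Z$ is enough to land in the target space). The identities $\im H = Z$ and $\ker H = Y(X^{\sigma-d,q}_\delta)$ are then immediate from the definition.

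Next I would show that $\mathcal P$ restricts to a bijection $Y(X^{\sigma,q}_\delta) \to Y(X^{\sigma-d,q}_\delta)$. That $\mathcal P$ maps into $Y(X^{\sigma-d,q}_\delta)$ uses formal self-adjointness of $\mathcal P$ together with the fact that elements of $Z$ pair continuously against $X^{\sigma-d,q}_\delta$ via a density argument in the $H^{d/2,2}_0$ pairing; injectivity is immediate from the definition of $Y$. For surjectivity, given $f \in Y(X^{\sigma-d,q}_\delta)$, Theorem \ref{thm:X-fredholm} furnishes a preimage $\tilde u \in X^{\sigma,q}_\delta(M;E)$, and then $u := \tilde u - H\tilde u$ lies in $Y(X^{\sigma,q}_\delta)$ with $\mathcal P u = \mathcal P\tilde u = f$, since $H\tilde u \in Z \subset \ker\mathcal P$.

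With the bijection established, I would define $G$ on $Y(X^{\sigma-d,q}_\delta)$ as the inverse of $\mathcal P|_{Y(X^{\sigma,q}_\delta)}$; the Inverse Mapping Theorem applied to the closed-range statement of Theorem \ref{thm:X-fredholm} makes $G$ bounded there. Extending $G$ by zero on the finite-dimensional complement $Z$ of $Y(X^{\sigma-d,q}_\delta)$ within $X^{\sigma-d,q}_\delta(M;E)$ gives a bounded operator on the full space. Finally, the two identities are verified by direct computation exactly as in the Sobolev case: for $u \in X^{\sigma,q}_\delta(M;E)$, $u - Hu \in Y(X^{\sigma,q}_\delta)$ and $\mathcal P(u-Hu) = \mathcal P u$, so $G\mathcal P u = u - Hu$; and for $w \in X^{\sigma-d,q}_\delta(M;E)$, the element $w - Hw \in Y(X^{\sigma-d,q}_\delta)$ satisfies $\mathcal P G(w-Hw) = w - Hw$, so $\mathcal P Gw = w - Hw$ since $G$ vanishes on $Z$.

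No step presents a serious obstacle: all analytic content has already been packaged into Theorem \ref{thm:X-fredholm} (index zero, closed range, characterization of image as $Y(X^{\sigma-d,q}_\delta)$, and $Z \subset X^{\sigma,q}_\delta(M;E)$) and Lemma \ref{lem:Z-perp-continuous} (continuity of the pairing). The only mildly delicate bookkeeping point will be justifying that $\mathcal P$ maps $Y(X^{\sigma,q}_\delta)$ into $Y(X^{\sigma-d,q}_\delta)$ when $\sigma < d$, where the pairing $\langle v, \mathcal P u\rangle_{(M,g)} = \langle \mathcal P v, u\rangle_{(M,g)}$ must be interpreted via the duality pairing of \S\ref{secsec:duality-early}; this is handled by approximating $v \in Z$ by smooth compactly supported sections in the $H^{d/2,2}_0$ topology and passing to the limit, using that both sides of the pairing are continuous on $X^{\sigma-d,q}_\delta$ via Lemma \ref{lem:Z-perp-continuous}.
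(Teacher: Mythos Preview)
Your proposal is correct and follows exactly the approach the paper takes: the paper simply states that this result is proved identically to the weighted Sobolev version, using Theorem \ref{thm:X-fredholm} in place of Theorem \ref{thm:H-fredholm}. Your write-up faithfully expands out that identical argument with no deviations.
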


\subsection{Regularity bootstrapping and polyhomogeneity}
While stated for metrics with Sobolev-scale regularity, the theorems above apply to more regular metrics as well.
Thus one application of our work is the following regularity bootstrapping principle. 
Given a low-regularity solution to an elliptic PDE arising from a more regular metric (perhaps, for example, from variational methods), then one expects that this solution has as much regularity as the metric allows.
The following results illustrate this principle in three common regularity settings: H\"older-regular metrics, metrics smooth on $M$, and polyhomogeneous metrics.
For simplicity of exposition, we restrict attention to second-order geometric operators acting on scalar-valued functions; more general results can be obtained from analogous reasoning.

We first consider the case of metrics with H\"older regularity, and
suppose that $g$ is an asymptotically hyperbolic metric of class $\mathscr C^{k,\alpha;m}$, meaning that
\begin{equation}
\label{C-class}
\bar g = \rho^2 g\in \mathscr C^{k,\alpha;m}(M)
\quad \text{ with }1\leq m \leq k,\quad \alpha\in (0,1), 
\end{equation}
which implies $\bar g\in C^0(\bar M)$, and that
\begin{equation*}
|d\rho|_{\bar g} =1\quad \text{ along }\partial M.
\end{equation*}

\begin{theorem}
\label{thm:holder-bootstrap-app}
Suppose that $\mathcal P[g]$ is a second-order elliptic operator satisfying Assumptions \ref{Assume-P} and \ref{Assume-I}, and having indicial radius $R$.
Let $g$ be an asymptotically hyperbolic metric of class $\mathscr C^{k,\alpha;m}$, and assume that \eqref{C-class} holds with $k\geq 2$.

Suppose that $u\in H^{\sigma, q}_{\delta^\prime}(M;\mathbb R)$, where $\delta^\prime \in \mathcal D_q(R)$ and where $(\sigma, q)\in S^{k,p}_2$ for some $p>1$.
Suppose also that $\mathcal P[g]u = f$, where $f\in C^{k-2,\alpha}_\delta(M;\mathbb R)$ with $\delta \in \mathcal D_\infty(R)$.
Then $u\in C^{k,\alpha}_\delta(M;\mathbb R)$.
\end{theorem}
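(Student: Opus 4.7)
The plan is to combine the Gicquaud–Sakovich Fredholm machinery with a H\"older Schauder bootstrap. Since $\bar g = \rho^2 g \in \mathscr{C}^{k,\alpha;m}(M;T^{0,2}M)$ with $\alpha > 0$, embedding \eqref{eq:cC-to-cX-cc} places $\bar g$ in $\mathscr{X}^{k,\tilde p;m}(M;T^{0,2}M)$ for every $1 < \tilde p < \infty$, so $g$ is also asymptotically hyperbolic of class $\mathscr{X}^{k,\tilde p;m}$. A direct check of conditions \eqref{eq:S-conds} shows $\mathcal{S}^{k,p}_2 \subset \mathcal{S}^{k,\tilde p}_2$ whenever $\tilde p \geq p$; I would choose $\tilde p \geq \max(p, n/(1-\alpha), 2)$ large enough that $k > n/\tilde p$, so that $g$ lies in the class \eqref{intro:X-class}. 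By Lemma \ref{lem:basic-inclusions}, $f \in C^{k-2,\alpha}_\delta(M;\mathbb{R}) \hookrightarrow X^{k-2,\tilde p}_\delta(M;\mathbb{R})$, and one verifies directly that $(k,\tilde p) \in \mathcal{S}^{k,\tilde p}_2$, while $\delta \in \mathcal{D}_\infty(R)$ by hypothesis. The corollary following Theorem \ref{thm:X-fredholm}, applied with source data $(\sigma, q, \delta')$ and target data $(k, \tilde p, \delta)$, then yields $u \in X^{k,\tilde p}_\delta(M;\mathbb{R})$, and Proposition \ref{prop:SobolevEmbedding} sharpens this to $u \in C^{k-1,\alpha}_\delta(M;\mathbb{R})$ by our choice of $\tilde p$.

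For the final bootstrap to $C^{k,\alpha}_\delta$, fix an arbitrary M\"obius parametrization $\Phi_i$. The pullback $\Phi_i^*\mathcal{P}[g]$ is a second-order linear operator on $B_2^\mathbb{H}$ whose principal symbol is uniformly elliptic, because asymptotic hyperbolicity forces $\Phi_i^* g$ to remain uniformly close in $C^0(B_2^\mathbb{H})$ to the hyperbolic metric. By scale-naturality, the coefficient of any $j$-th coordinate derivative of $u$ involves at most $2-j$ derivatives of $\Phi_i^* g$. Since $\bar g \in \mathscr{C}^{k,\alpha;m}(M;T^{0,2}M) \subset C^{k,\alpha}_2(M;T^{0,2}M)$, we have $\|\Phi_i^*\bar g\|_{C^{k,\alpha}(B_2^\mathbb{H})} \lesssim \rho_i^2$ uniformly, so the tensor $\Phi_i^* g = (\rho_i y)^{-2}\Phi_i^*\bar g$ and its inverse are uniformly bounded in $C^{k,\alpha}(B_2^\mathbb{H})$, whence each coefficient of $\Phi_i^*\mathcal{P}[g]$ is uniformly bounded in $C^{k-j,\alpha}(B_2^\mathbb{H})$. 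Classical interior Schauder estimates then give
\begin{equation*}
\|\Phi_i^* u\|_{C^{k,\alpha}(B_{1/2}^\mathbb{H})} \lesssim \|\Phi_i^* f\|_{C^{k-2,\alpha}(B_1^\mathbb{H})} + \|\Phi_i^* u\|_{C^0(B_1^\mathbb{H})},
\end{equation*}
with implicit constant independent of $i$. Multiplying by $\rho_i^{-\delta}$ and taking the supremum over $i$ assembles these local estimates into the conclusion $u \in C^{k,\alpha}_\delta(M;\mathbb{R})$.

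The main obstacle is the uniformity of the Schauder constants and of the H\"older coefficient bounds across the infinite family of M\"obius parametrizations; this mirrors the uniform mapping-property argument in Lemma \ref{lem:geometric-op-mapping} for the Sobolev case. The scale-natural hypothesis is indispensable here: without it, a pulled-back coefficient of a $j$-th derivative could involve more than $2-j$ metric derivatives and would acquire uncontrolled factors of $\rho_i^{-1}$, destroying the uniform H\"older bounds on which the bootstrap depends. The verification closely parallels the H\"older-class framework developed in \cite{WAH}, from which the requisite uniform Schauder machinery may be imported essentially verbatim.
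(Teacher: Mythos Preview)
Your proposal is correct and follows essentially the same route as the paper: embed $\mathscr C^{k,\alpha;m}$ into $\mathscr X^{k,\tilde p;m}$ for $\tilde p$ large, apply the Gicquaud--Sakovich regularity theory (the paper uses Proposition \ref{prop:P-regularity-X}, you use the corollary after Theorem \ref{thm:X-fredholm}) to obtain $u\in X^{k,\tilde p}_\delta$, then Sobolev-embed into $C^{k-1,\beta}_\delta$, and finally bootstrap to $C^{k,\alpha}_\delta$ via uniform interior Schauder estimates in M\"obius charts. The only cosmetic difference is that the paper outsources the last step to \cite[Lemma 5.6]{WAH}, whereas you sketch that argument directly; one small slip in your sketch is the indexing ``$C^{k-j,\alpha}$'' for the coefficient of the $j$-th derivative of $u$---since that coefficient carries $2-j$ metric derivatives it actually lies in $C^{k-(2-j),\alpha}$---but this is harmless since all coefficients are at least $C^{k-2,\alpha}$, which is what the Schauder estimate requires.
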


\begin{proof}
First, note that we may increase $p$ as needed and still retain $(\sigma, q)\in S^{k,p}_2$; see \eqref{eq:S-conds}.
From \eqref{eq:cC-to-cX-cc}, we have $\bar g\in \mathscr X^{k,p;m}(M)$ for all $p>1$, and thus we may choose $p$ sufficiently large that $g$ is an asymptotically hyperbolic metric of class \eqref{intro:X-class}.

The basic inclusion of Lemma \ref{lem:basic-inclusions} implies that $f\in X^{k-2,p}_\delta(M)$.
Proposition \ref{prop:P-regularity-X} implies $u\in X^{k,p}_\delta(M)$.
With $p$ sufficiently large, the Sobolev embedding of Proposition \ref{prop:SobolevEmbedding} implies that $u\in C^{k-1, \beta}_\delta(M)$ for some $0<\beta<1$.
Finally, \cite[Lemma 5.6]{WAH} implies that $u\in C^{k,\alpha}_\delta(M)$.
\end{proof}

An immediate consequence is the following result for metrics which are smooth on $M$, but have limited regularity on $\bar M$.

\begin{corollary}
Suppose that $\mathcal P[g]$ is a second-order elliptic operator satisfying Assumptions \ref{Assume-P} and \ref{Assume-I}, and having indicial radius $R$.
Let $g$ be an asymptotically hyperbolic metric of class $\mathscr C^{\infty;m}$, and assume that \eqref{C-class} holds for all $k\in\mathbb N$.

Suppose that $u\in H^{\sigma, q}_{\delta^\prime}(M;\mathbb R)$, where $\delta^\prime \in \mathcal D_q(R)$ and where $(\sigma, q)\in S^{k,p}_2$ for some $p>1$.
Suppose also that $\mathcal P[g]u = f$, where $f\in C^{\infty}_\delta(M;\mathbb R)$ with $\delta \in \mathcal D_\infty(R)$.
Then $u\in C^{\infty}_\delta(M;\mathbb R)$.
\end{corollary}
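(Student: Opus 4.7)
The plan is to reduce the corollary to Theorem \ref{thm:holder-bootstrap-app} and simply iterate it in $k$. The key observation is that the hypothesis $g\in \mathscr C^{\infty;m}$ means $\bar g\in \mathscr C^{k,\alpha;m}(M)$ for every $k\ge m$ and every $\alpha\in(0,1)$, so the metric enjoys the regularity hypothesis \eqref{C-class} of Theorem \ref{thm:holder-bootstrap-app} at all levels simultaneously; similarly, $f\in C^\infty_\delta(M;\mathbb R)$ lies in $C^{k-2,\alpha}_\delta(M;\mathbb R)$ for every $k\ge 2$ and every $\alpha\in(0,1)$.

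First I would record that the initial low-regularity hypothesis $(\sigma,q)\in \mathcal S^{k_0,p}_2$ for some $k_0$ persists at higher regularity. Namely, an elementary inspection of the defining inequalities \eqref{eq:S-conds} shows that $\mathcal S^{s,p}_d\subset \mathcal S^{s',p}_d$ whenever $s\le s'$, so if $(\sigma,q)\in \mathcal S^{k_0,p}_2$ then $(\sigma,q)\in \mathcal S^{k,p}_2$ for every $k\ge k_0$. Since the weight $\delta'\in \mathcal D_q(R)$ and $\delta\in \mathcal D_\infty(R)$ depend only on $R$ and not on $k$, the hypotheses of Theorem \ref{thm:holder-bootstrap-app} hold verbatim at every level of regularity.

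I would then fix an arbitrary $\alpha\in(0,1)$ and, for each integer $k\ge\max(k_0,2)$, apply Theorem \ref{thm:holder-bootstrap-app} to the triple $(g,u,f)$ with these values of $k$ and $\alpha$. The conclusion is that $u\in C^{k,\alpha}_\delta(M;\mathbb R)$. Intersecting over all $k$ and $\alpha$ yields
\[
u\in \bigcap_{k\in\mathbb N,\;\alpha\in(0,1)} C^{k,\alpha}_\delta(M;\mathbb R) = C^\infty_\delta(M;\mathbb R),
\]
as desired.

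There is no genuine obstacle here, as the work has already been carried out in Theorem \ref{thm:holder-bootstrap-app}; the only point requiring a moment's care is the monotonicity $\mathcal S^{k_0,p}_2\subset \mathcal S^{k,p}_2$, which permits us to reuse the same initial regularity datum at every stage rather than running a genuine inductive bootstrap where the interior regularity of $u$ would have to be upgraded after each step.
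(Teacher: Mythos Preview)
Your proposal is correct and matches the paper's approach exactly: the paper states the corollary as ``An immediate consequence'' of Theorem \ref{thm:holder-bootstrap-app} without further proof, and your argument spells out precisely why---apply the theorem at each level $k$ and intersect. Your observation that $\mathcal S^{k_0,p}_2\subset \mathcal S^{k,p}_2$ for $k\ge k_0$, which lets you reuse the same initial datum $(\sigma,q,\delta')$ at every stage, is the only nontrivial check and you handle it correctly.
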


A second consequence of Theorem \ref{thm:holder-bootstrap-app} concerns metrics in the polyhomogeneous category.
Such metrics are smooth in the interior $M$ and admit a formal expansion near $\partial M$ of the form
\begin{equation*}
g \sim \sum_{i=0}^\infty \sum_{j=0}^{N_i}\rho^{s_i}(\log \rho)^j \bar g_{ij},
\end{equation*}
where $s_i$ is a sequence diverging to $+\infty$, $N_i\in \mathbb N_{\geq 0}$, and $\bar g_{ij}$ are smooth tensor fields on $\bar M$; see Appendix A of \cite{WAH} for a detailed definition and a discussion of elliptic operators in the polyhomogeneous category.

The following consequence of Theorem \ref{thm:holder-bootstrap-app} and \cite[Theorem A.14]{WAH} is stated for $m=2$.
Adapting the proof of \cite[Proposition A.14]{WAH}, as done in \cite[Theorem A.18]{WAH}, can relax this restriction to $m\geq 1$.
\begin{corollary}
Suppose that $\mathcal P[g]$ is a second-order elliptic operator satisfying Assumptions \ref{Assume-P} and \ref{Assume-I}, and having indicial radius $R$.
Let $g$ be an asymptotically hyperbolic metric of class $\mathscr C^{2,\alpha;2}$, and further assume that $g$ is polyhomogeneous.

Suppose that $u\in H^{\sigma, q}_{\delta^\prime}(M;\mathbb R)$, where $\delta^\prime \in \mathcal D_q(R)$ and where $(\sigma, q)\in S^{k,p}_2$ for some $p>1$.
Suppose also that $\mathcal P[g]u = f$, where $f\in C^{2,\alpha}_\delta(M;\mathbb R)$ with $\delta \in \mathcal D_\infty(R)$, and that $f$ is polyhomogeneous.
Then $u$ is polyhomogeneous as well.
\end{corollary}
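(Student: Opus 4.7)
The plan is to reduce this statement to two inputs that are already available: the H\"older bootstrap Theorem \ref{thm:holder-bootstrap-app} above and the polyhomogeneity result \cite[Theorem A.14]{WAH} from the H\"older-setting paper. The strategy is to first upgrade $u$ from its initial Sobolev regularity to the H\"older category where the polyhomogeneity machinery of \cite{WAH} directly applies.

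First, I would observe that $\mathscr C^{2,\alpha;2}$ is exactly the class $\mathscr C^{k,\alpha;m}$ with $k=m=2$, so assumption \eqref{C-class} is satisfied with $k\geq 2$. The hypothesis $f\in C^{2,\alpha}_\delta(M;\mathbb R)$ obviously implies the weaker inclusion $f\in C^{k-2,\alpha}_\delta(M;\mathbb R)=C^{0,\alpha}_\delta(M;\mathbb R)$ needed for Theorem \ref{thm:holder-bootstrap-app} at $k=2$. Thus Theorem \ref{thm:holder-bootstrap-app} applies and yields $u\in C^{2,\alpha}_\delta(M;\mathbb R)$, converting the low-regularity Sobolev solution to a solution in the H\"older scale compatible with the framework of \cite{WAH}.

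Next I would feed this H\"older-regular $u$, together with the polyhomogeneous metric $g$ and polyhomogeneous right-hand side $f$, into \cite[Theorem A.14]{WAH}. That theorem is phrased precisely for asymptotically hyperbolic metrics of class $\mathscr C^{2,\alpha;2}$ that are additionally polyhomogeneous, and for geometric elliptic operators satisfying the analogues of Assumptions \ref{Assume-P} and \ref{Assume-I} (with indicial radius $R$), asserting that a H\"older solution whose source $f$ is polyhomogeneous is itself polyhomogeneous. Since $\mathcal P[g]$ is a second-order geometric scale-natural formally self-adjoint elliptic operator satisfying our Assumptions \ref{Assume-P} and \ref{Assume-I}, and since $\delta\in\mathcal D_\infty(R)$ matches the weight range used there, the conclusion of \cite[Theorem A.14]{WAH} delivers polyhomogeneity of $u$ directly.

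The main obstacle in a careful writeup is bookkeeping of hypotheses between the two regularity categories, especially confirming that the notion of ``geometric elliptic operator with indicial radius $R$'' used in \cite{WAH} coincides with our Assumption \ref{Assume-P} when specialized to metrics in $\mathscr C^{2,\alpha;2}$, and that the weight set $\mathcal D_\infty(R)$ here is the one featured there. These are direct verifications rather than genuine new analysis; once matched, the two-step chain \emph{Theorem \ref{thm:holder-bootstrap-app}} followed by \emph{\cite[Theorem A.14]{WAH}} closes the argument without any further PDE work.
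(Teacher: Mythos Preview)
Your proposal is correct and matches the paper's approach exactly: the paper states this corollary without proof, noting only that it is a ``consequence of Theorem \ref{thm:holder-bootstrap-app} and \cite[Theorem A.14]{WAH},'' which is precisely the two-step chain you outline. Your additional remarks about checking compatibility of hypotheses between the two frameworks are appropriate due diligence but, as you note, involve no new analysis.
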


\appendix
\section{Weighted function spaces}
\label{app:bpspaces}

This section concerns weighted function spaces on
a manifold $M$ admitting a continuous conformal compactification $\bar M$.
These spaces generalize the classical weighted Sobolev spaces
$W^{k,p}_\delta(M;E)$ from, e.g., \cite{Lee-FredholmOperators} and as well
as the Gicquaud-Sakovich spaces $X^{k,p}_\delta(M;E)$
from \cite{GicquaudSakovich} to real scales of differentiability.

A \Defn{tensor bundle} $E$ over $\bar M$ is a bundle
$T^{k_1,k_2}\bar M$
with contravariant rank $k_1$ and covariant rank $k_2$, and we
use the same notation $E$ for the induced bundle over $M$.  More
generally, when $\bar M$ is equipped with a metric
a \Defn{geometric tensor bundle} over $\bar M$ is a
subbundle of a tensor bundle $T^{k_1,k_2}\bar M$ that is
associated with an invariant subspace of the standard representation
of $O(n)$ (or $SO(n)$ if $\bar M$ is orientable) on $T^{k_1,k_2}\mathbb R^n$.
Because geometric tensor bundles potentially depend on the regularity of the
underlying metric on $\bar M$ we focus for the moment on the foundation
case where $E$ is a tensor bundle over $\bar M$.
Section \ref{secsec:geometric-E} contains the details needed to treat
the full case of geometric tensor bundles.

Recall that for
 $s\in\Reals$ and $1<p<\infty$ the Bessel potential space $H^{s,p}(\Reals^n)$ is defined by
\[
H^{s,p}(\Reals^n) = \{u\in \mathcal S'(\Reals^n): \| \mathcal F^{-1} \Lambda^s \mathcal F u\|_{L^p(\mathbb R^n)}\}<\infty,
\]
where $S'(\Reals^n)$ is the dual space of the rapidly
decaying smooth functions, $\Lambda(\xi)=(1+|\xi|^2)^{s/2}$ and where $\mathcal F$
is the Fourier transform.  When $s\in\Ints$ then $H^{s,p}(\Reals^n)=W^{k,p}(\Reals^n)$.
If $\Omega$ is an open subset of $\Reals^n$, $H^{s,p}(\Omega)$ is the quotient
space of $H^{s,p}(\Reals^n)$ where two distributions are equivalent if they agree on $\Omega$,
and $H^{s,p}(\Omega)$ is given the quotient norm $\|u\|_{H^{s,p}(\Omega)} = \inf_{w|_\Omega=u} \|w\|_{H^{s,p}(\Reals^n)}$.

To define corresponding asymptotically hyperbolic weighted spaces,
we fix a cutoff function $\chi$ that is equal to 1 on $B_{1/2}^\Hyp$ and vanishes outside $B_{r}^\Hyp$ for some $1/2< r <2$.  Let $\{\Phi_i\}$ be the collection of preferred M\"obius parametrizations of $M$
defined at the end of \S\ref{sec:coords}.
\begin{definition}\label{def:bpspaces}
Let $E$ be a tensor bundle over $M$.
For $s\in\Reals$, $\delta\in\Reals$ and $1<p<\infty$ the
weighted Bessel potential
space $H^{s,p}_\delta(M;E)$ is
the set of all $E$-valued distributions $u$ such that
\begin{equation}\label{eq:Hspdelta-def}
\|u\|_{H^{s,p}_{\delta}(M;E)} = \left[ \sum_{i} \rho_i^{-\delta p} \|\chi \Phi_i^* u\|_{H^{s,p}(\Reals^n)}\right]^\frac{1}{p} <\infty
\end{equation}
where the norms over $\Reals^n$ involve the vector of coefficients
of $\Phi_i^*u$ with respect to the standard basis.

The Gicquaud-Sakovich space $X^{s,p}_\delta(M;E)$ is the set of all
$E$-valued distributions $u$ with
\begin{equation}
\|u\|_{X^{s,p}_{\delta}(M;E)}= \sup_{i} \rho_i^{-\delta} \|\chi \Phi_i^* u\|_{H^{s,p}(\Reals^n)} <\infty.
\end{equation}

To simplify notation, when $E=T^{0,0}\bar M$ we denote the associated spaces
by $H^{s,p}_\delta(M;\Reals)$ and $X^{s,p}_\delta(M;\Reals)$.
\end{definition}

The definition of $H^{s,p}_{\delta}(M;E)$
is analogous to the definition of weighted Bessel potential spaces in Euclidean spaces
found in \cite{Triebel:1976hk} and several of the proofs below rely on generalizations
of techniques found there.

The following lemma is fundamental and shows we can build functions in weighted spaces in a
controlled fashion by adding functions supported in the images of M\"obius charts.
\begin{lemma}\label{lem:Hbuilder}
Let $E$ be a tensor bundle over $M$. Suppose $u=\sum_i u_i$ where
each $u_i\in D'(M;E)$
is a distribution supported in $\Phi_i(B^\Hyp_{r})$ for some $r<2$.
Then
\begin{enumerate}
\item\label{part:builderH} If $s,\delta\in \Reals$ and $1<p<\infty$
\begin{equation}\label{eq:Hspdelta-pieces}
\|u\|_{H^{s,p}_{\delta}(M;E)}^p \lesssim
\sum_{i=1} ^\infty \rho_i^{-p\delta} \|\Phi_i^* u_i\|_{H^{s,p}(\Reals^n)}^p
\end{equation}
and $\sum_i u_i$ converges in norm.
\item\label{part:builderX} If $s,\delta\in \Reals$ and $1<p<\infty$
\begin{equation}\label{eq:Xspdelta-pieces}
\|u\|_{X^{s,p}_{\delta}(M;E)} \lesssim
\sup_{i=1} \rho_i^{-p\delta} \|\Phi_i^* u_i\|_{H^{s,p}(\Reals^n)}.
\end{equation}
\item\label{part:builderC} If $k\in\Nats_{\ge 0}$ and $0<\alpha<1$,
\begin{equation}\label{eq:Ckalphadelta-pieces}
\|u\|_{C^{k,\alpha}_{\delta}(M;E)} \lesssim
\sup_{i=1} \rho_i^{-p\delta} \|\Phi_i^* u_i\|_{C^{k,\alpha}(\Reals^n)}.
\end{equation}
\end{enumerate}
\end{lemma}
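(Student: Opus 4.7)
My plan is to exploit three structural features of the preferred countable collection of M\"obius parametrizations: uniform local finiteness of the images $\Phi_i(B_2^\Hyp)$, uniform $C^k$ control of the transition maps $\Phi_j^{-1}\circ\Phi_i$ wherever they are defined, and the comparability $\rho_i\sim\rho_j$ whenever $\Phi_i(B_2^\Hyp)\cap\Phi_j(B_2^\Hyp)\neq\emptyset$. These facts are recorded in \cite{Lee-FredholmOperators} and follow from the construction in \S\ref{sec:coords}.

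For part \eqref{part:builderH}, fix $j$ and set $I_j=\{i:\Phi_i(B_r^\Hyp)\cap\Phi_j(\supp\chi)\neq\emptyset\}$. Uniform local finiteness yields a bound $\#I_j\le N$ independent of $j$, so
\[
\|\chi\Phi_j^*u\|_{H^{s,p}(\Reals^n)}^p
\lesssim \sum_{i\in I_j}\|\chi\Phi_j^*u_i\|_{H^{s,p}(\Reals^n)}^p.
\]
When $i\in I_j$ the transition $S_{ji}=\Phi_j^{-1}\circ\Phi_i$ is defined on a neighbourhood of $B_r^\Hyp$ and admits uniform $C^k$ bounds (as does its inverse) together with uniform lower bounds on its Jacobian determinant, independent of $i,j$. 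Since $\Phi_j^*u_i$ agrees with $(S_{ji}^{-1})^*(\Phi_i^*u_i)$ on the interior of its support, a standard change-of-variables estimate for $H^{s,p}(\Reals^n)$ under uniformly bicontrolled diffeomorphisms, combined with continuity of multiplication by the fixed function $\chi$ on $H^{s,p}(\Reals^n)$, gives
\[
\|\chi\Phi_j^*u_i\|_{H^{s,p}(\Reals^n)}\lesssim \|\Phi_i^*u_i\|_{H^{s,p}(\Reals^n)}
\qquad (i\in I_j),
\]
with the implicit constant independent of $i,j$. Combining these two estimates, using $\rho_j^{-p\delta}\lesssim \rho_i^{-p\delta}$ for $i\in I_j$, and then swapping the order of summation (each $i$ lies in at most $N$ of the sets $I_j$), yields \eqref{eq:Hspdelta-pieces}.

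For the convergence claim, apply the estimate just proved to the tail partial sums $\sum_{i\ge K}u_i$: the same bound gives
\[
\Big\|\sum_{i\ge K}u_i\Big\|_{H^{s,p}_\delta(M;E)}^p\lesssim \sum_{i\ge K}\rho_i^{-p\delta}\|\Phi_i^*u_i\|_{H^{s,p}(\Reals^n)}^p,
\]
and the right-hand side tends to zero by hypothesis, so the partial sums are Cauchy and converge in norm to $u$. Part \eqref{part:builderX} is proved in the same way: the $N$-fold sum over $I_j$ is replaced by a supremum and the outer sum in $j$ by a supremum, each costing only a factor depending on $N$; again $\rho_j\sim\rho_i$ for $i\in I_j$ absorbs the weight mismatch. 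Part \eqref{part:builderC} is identical in form, replacing the local $H^{s,p}(\Reals^n)$ norm by the local $C^{k,\alpha}$ norm; the diffeomorphism invariance of $C^{k,\alpha}$ under maps with uniformly bounded $C^{k+1}$ norm (and Jacobians bounded below) replaces the Sobolev change-of-variables estimate.

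The only genuine subtlety I foresee is justifying the change-of-variables bound for $H^{s,p}(\Reals^n)$ uniformly in $i,j$ when $s\notin\Nats_{\ge 0}$; this is where the cutoff $\chi$ is essential, since the transitions $S_{ji}$ are only defined on relatively compact subsets and one must first multiply by $\chi$ to localize before transporting. Given the uniform $C^k$ control on $S_{ji}$ and its inverse together with uniform bounds on $\chi S_{ji}^*\chi'$ type composite multipliers for an auxiliary cutoff $\chi'\equiv 1$ on $\supp\chi$, this reduces to the standard invariance of $H^{s,p}$ under $C^\infty$ diffeomorphisms with controlled norms, and the constants depend only on finitely many derivatives of $S_{ji}$ which are uniformly bounded by construction.
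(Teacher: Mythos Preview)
Your proposal is correct and follows essentially the same approach as the paper: both arguments hinge on uniform local finiteness of the images $\Phi_i(B_2^\Hyp)$, the comparability $\rho_i\sim\rho_j$ for neighboring indices, and the key transfer estimate $\|\chi\Phi_j^*u_i\|_{H^{s,p}(\Reals^n)}\lesssim\|\Phi_i^*u_i\|_{H^{s,p}(\Reals^n)}$, followed by swapping the order of summation. The only minor presentational difference is that the paper handles the non-integer $s$ change-of-variables by first establishing the integer case directly and then invoking interpolation and duality, whereas you cite the general diffeomorphism invariance of $H^{s,p}$; both are standard and lead to the same conclusion.
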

\begin{proof}
The transition Jacobians
$\mathop\mathrm{D} \left(\Phi_j^{-1}\circ \Phi_i\right)$ admit uniform $C^k$ bounds for
any choice of $k$.
Indeed if $\Theta_{a_i}$ and $\Theta_{a_j}$ are background coordinates associated with
two M\"obius parametrizations
\begin{equation*}
\mathop\mathrm{D} (\Phi_j^{-1}\circ \Phi_i)_{(x,y)} =
\frac{\rho_i}{\rho_j} (\mathop\mathrm{D}(\Theta_{a_j}^{-1}\circ\Theta_{a_i}))_{(\theta_i + \rho_i y, \rho_i y)}
\end{equation*}
and the uniform bounds follow from uniform estimates for background coordinate transition
functions
the fact that
$\rho_i/\rho_j\le 8$ if the images of $\Phi_i$ and $\Phi_j$ have nontrivial intersection.
With this obsrvation in hand we establish inequality \eqref{eq:Hspdelta-pieces};
inequality \eqref{eq:Xspdelta-pieces}
is demonstrated using similar techniques
and inequality \eqref{eq:Ckalphadelta-pieces} is a straightforward consequence of
the $C^k$ estimates for transition Jacobians along with uniform local finiteness.

Consider indices $i$ and $j$ with $\Phi_i(B^\Hyp_2) \cap \Phi_j(B^\Hyp_2) \neq\emptyset$
and let $U_{j,i}=\Phi_j^*\Phi_{i,*}(B^\Hyp_2)$.
If $w$ is a smooth, compactly supported function on $\Reals^n$ we define
\[
Tw = \chi (\Phi_j^{-1} \circ \Phi_i)^* w|_{U_{j,i}}
\]
and extend this function by zero off of $U_{i,j}=\Phi_i^*\Phi_{j,*}(B^\Hyp_2)$.
Uniform $C^k$ bounds on
the transition Jacobian $D(\Phi_j^{-1} \circ \Phi_i)$
imply that for each $k\in\Nats_{\ge 0}$ that
$T$ determines a continuous map $H^{k,p}(\Reals^n)\to H^{k,p}(\Reals^n)$
with norm controlled by a constant independent of $i$ and $j$.
By interpolation and duality this same statement holds replacing $k$ with any $s\in\Reals$.
As a consequence we conclude
\begin{equation}\label{eq:Hs-transition}
\|\chi \Phi_i^* u_j\|_{H^{s,p}(\Reals^n)} \lesssim
\| \Phi_j^* u_j\|_{H^{s,p}(\Reals^n)}
\end{equation}
with implicit constant independent of $i$ and $j$.

Now fix $i$ and recall that the size of the neighbor index set
$N(i)=\{j:\Phi_i(B^\Hyp_2) \cap \Phi_j(B^\Hyp_2) \neq\emptyset\}$
admits a uniform upper bound. Then
\begin{equation*}
 \begin{aligned}
\|\chi \Phi_i^* u\|_{H^{s,p}(\Reals^n)}^p
&=
\left|\left|\chi \Phi_i^* \sum_{j\in N(i)} u_j\right|\right|_{H^{s,p}(\Reals^n)}^p\\
&\lesssim \sum_{j\in N(i)} \left|\left|\chi \Phi_i^* u_j\right|\right|_{H^{s,p}(\Reals^n)}^p\\
&\lesssim \sum_{j\in N(i)} \left|\left|\Phi_j^* u_j\right|\right|_{H^{s,p}(\Reals^n)}^p
 \end{aligned}
\end{equation*}
with implicit constant independent of $i$.
Moreover, since $\rho_i/\rho_j\le 8$ if $j\in N(i)$,
\begin{equation*}
\begin{aligned}
\|u\|_{H^{s,p}_{\delta}(M;E)}^p &= \sum_{i=1}^\infty \rho_i^{-\delta p} \|\chi \Phi_i^* u\|_{H^{s,p}(\Reals^n)}^p\\
&\lesssim \sum_{i=1}^\infty  \rho_i^{-\delta p} \sum_{j\in N(i)} \| \Phi_j^*u_j\|_{H^{s,p}(\Reals^n)}^p \\
&\lesssim \sum_{i=1}^\infty  8^{-\delta p}\sum_{j\in N(i)} \rho_j^{-\delta p}\| \Phi_j^* u_j\|_{H^{s,p}(\Reals^n)}^p \\
& \lesssim \sum_{j=1}^\infty \rho_j^{-\delta p}\| \Phi_j^* u_j\|_{H^{s,p}(\Reals^n)}^p
\end{aligned}
\end{equation*}
as required.
\end{proof}

As a first application, Lemma \ref{lem:Hbuilder} allows us to construct
a convenient partition of unity based on the cutoff function $\chi$.
Specifically, let
$\chi_i = (\Phi_i)_*\chi$, which we extend by zero to all of $M$.
Lemma \ref{lem:Hbuilder}\eqref{part:builderC} implies
\begin{equation}\label{eq:Xdef}
\bar \chi = \sum_{i=1}^\infty \chi_i \in C^{k,\alpha}_0(M;\Reals)
\end{equation}
for every $k\in\Nats$ and $0<\alpha<1$.
Moreover, since the sets $\Phi_i(B^\Hyp_{1/2})$ cover $M$ it follows that
$\bar \chi\ge 1$
and we conclude $1/\bar\chi\in C^{k,\alpha}_0(M;\Reals)$ as well. The
functions $\chi_i/\bar \chi$ then evidently form a
smooth partition of unity admitting uniform $C^{k,\alpha}_0(M;\Reals)$ bounds.

Definition \ref{def:bpspaces} relies on a number of choices, and \S\ref{sec:equivalent-norms}
shows that the spaces are independent of these choices.  For the moment, we
use Lemma \ref{lem:Hbuilder} to show that the choice of cutoff function is unimportant and
to establish an alternative formulation of the norms that does not employ
a cutoff function.  In light of Lemma \ref{lem:many-norms}, the following
result also shows that when $s\in \Nats_{\ge 0}$ the weighted Bessel potential spaces agree with the weighted
Sobolev spaces $W^{k,p}_{\delta}(M;E)$ .

\begin{lemma}\label{lemma:cutoffequiv}
Let $E$ be a tensor bundle over $\bar M$. For all
$s,\delta\in\Reals$ and $1<p<\infty$ the space $H^{s,p}_{\delta}(M;E)$
is independent of the choice of cutoff function and the norms for any two choices are equivalent.
Moreover, for each $1/2<r\le 2$
\begin{align}
\|u\|_{H^{s,p}_{\delta}(M;E)}&\sim
\left[\sum_{i} \rho_i^{-\delta p}\|\Phi_i^* u\|_{H^{s,p}(B_r^\Hyp)}^p
\right]^{\frac{1}{p}}\label{eq:cutoff-free-H},
\\
\label{eq:cutoff-free-X}
\|u\|_{X^{s,p}_{\delta}(M;E)}&\sim \sup_i \rho_i^{-\delta p}\|\Phi_i^* u\|_{H^{s,p}(B_r^\Hyp)}.
\end{align}
\end{lemma}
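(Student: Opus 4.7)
The plan is to establish the cutoff-free characterizations \eqref{eq:cutoff-free-H} and \eqref{eq:cutoff-free-X} directly; independence of $H^{s,p}_\delta(M;E)$ from the choice of cutoff $\chi$ will then follow immediately, since two cutoffs with supports in $B_{r_0}^\Hyp$ and $B_{r_0'}^\Hyp$ are both equivalent to the same cutoff-free right-hand side for any $r\in(\max(r_0,r_0'),2]$. The Sobolev case is the substantive one; the Gicquaud-Sakovich version follows by the same arguments with sums replaced by suprema. Throughout I will use three uniform-in-$i$ facts extracted in the proof of Lemma~\ref{lem:Hbuilder}: the $C^k$ bounds on the transition Jacobians $\mathop\mathrm{D}(\Phi_j^{-1}\circ\Phi_i)$, the bound $\rho_j/\rho_i\in[1/8,8]$ whenever $\Phi_i(B_2^\Hyp)$ and $\Phi_j(B_2^\Hyp)$ intersect, and the transfer estimate \eqref{eq:Hs-transition}.

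For the upper bound, given $r\in(1/2,2]$ I would pick a cutoff with $\supp\chi\subset B_{r_0}^\Hyp$ for some $1/2<r_0<r$, form the smooth partition of unity $\psi_j=\chi_j/\bar\chi$ noted after \eqref{eq:Xdef}, and decompose $u=\sum_j\psi_j u$. Each summand is supported in $\Phi_j(B_{r_0}^\Hyp)$, so Lemma~\ref{lem:Hbuilder}\eqref{part:builderH} yields
\[
\|u\|_{H^{s,p}_\delta(M;E)}^p\lesssim\sum_j\rho_j^{-\delta p}\|\Phi_j^*(\psi_j u)\|_{H^{s,p}(\Reals^n)}^p.
\]
Writing $\Phi_j^*\psi_j=\chi/\Phi_j^*\bar\chi$ and invoking $\bar\chi\ge1$ together with the transition bounds, this factor is supported in $B_{r_0}^\Hyp$ and uniformly bounded in $C^k(\Reals^n)$ for every $k$; the standard multiplier estimate on $H^{s,p}(\Reals^n)$ with a compactly supported factor then gives $\|\Phi_j^*(\psi_j u)\|_{H^{s,p}(\Reals^n)}\lesssim\|\Phi_j^*u\|_{H^{s,p}(B_r^\Hyp)}$, completing the upper bound.

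For the reverse bound I would expand $\Phi_i^*u=\sum_j\Phi_i^*(\psi_j u)$ on $\Phi_i(B_r^\Hyp)$; property~(3) of the preferred parametrizations ensures that the index set $N(i,r)=\{j:\supp\psi_j\cap\Phi_i(B_r^\Hyp)\neq\emptyset\}$ has uniformly bounded size. Combining the transfer estimate \eqref{eq:Hs-transition} with the multiplier bound on $\chi/\Phi_j^*\bar\chi$ from above produces
\[
\|\Phi_i^*(\psi_j u)\|_{H^{s,p}(B_r^\Hyp)}\lesssim\|\Phi_j^*(\psi_j u)\|_{H^{s,p}(\Reals^n)}\lesssim\|\chi\Phi_j^*u\|_{H^{s,p}(\Reals^n)}
\]
for each $j\in N(i,r)$. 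Summing $p$-th powers over $i$, using $\rho_i\sim\rho_j$ on intersecting pairs and reversing the order of summation as in the proof of Lemma~\ref{lem:Hbuilder}, produces the desired bound $\bigl[\sum_i\rho_i^{-\delta p}\|\Phi_i^*u\|_{H^{s,p}(B_r^\Hyp)}^p\bigr]^{1/p}\lesssim\|u\|_{H^{s,p}_\delta(M;E)}$. The main technical obstacle is merely careful book-keeping to ensure every implicit constant is genuinely uniform in $i$; no single step is difficult, and the structure transfers transparently to the Gicquaud-Sakovich setting.
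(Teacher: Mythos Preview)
Your argument is correct, and the ingredients are the same as the paper's (Lemma~\ref{lem:Hbuilder}, the transition estimate \eqref{eq:Hs-transition}, the construction of $\bar\chi$), but the logical order is reversed. The paper first proves cutoff independence directly: given two cutoffs $\chi$ and $\eta$, it applies Lemma~\ref{lem:Hbuilder}\eqref{part:builderH} to the pieces $u_i=((\Phi_i)_*\eta)\,u$, obtaining $\|\bar\eta\,u\|_{H^{s,p}_\delta(M;\chi)}\lesssim\|u\|_{H^{s,p}_\delta(M;\eta)}$, and then divides by $\bar\eta$. With cutoff independence in hand, the cutoff-free equivalences \eqref{eq:cutoff-free-H}--\eqref{eq:cutoff-free-X} become essentially trivial: one chooses auxiliary cutoffs $\eta_1$ supported in $B_r^\Hyp$ and $\eta_2$ equal to $1$ on $B_r^\Hyp$, and the two directions follow immediately from the quotient-norm definition of $H^{s,p}(B_r^\Hyp)$ via the one-line local estimates $\|\Phi_i^*u\|_{H^{s,p}(B_r^\Hyp)}\le\|\eta_2\Phi_i^*u\|_{H^{s,p}(\Reals^n)}$ and $\|\eta_1\Phi_i^*u\|_{H^{s,p}(\Reals^n)}\lesssim\|\Phi_i^*u\|_{H^{s,p}(B_r^\Hyp)}$.

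Your route establishes the cutoff-free characterization first by a partition-of-unity decomposition and then reads off cutoff independence. This works, but the lower-bound step duplicates the transfer and local-finiteness bookkeeping that the paper avoids by exploiting the quotient norm. One small point to make explicit in your lower bound: the estimate $\|\Phi_i^*(\psi_j u)\|_{H^{s,p}(B_r^\Hyp)}\lesssim\|\Phi_j^*(\psi_j u)\|_{H^{s,p}(\Reals^n)}$ is not literally \eqref{eq:Hs-transition} (which has a $\chi$ on the left); you need an auxiliary cutoff $\zeta$ equal to $1$ on $B_r^\Hyp$ and supported in $B_2^\Hyp$ so that the map $w\mapsto\zeta\,(\Phi_j^{-1}\circ\Phi_i)^*w$ extended by zero is the bounded operator in question. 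This is the same device used in the proof of Lemma~\ref{lem:Hbuilder}, so it is routine, but worth stating.
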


\begin{proof}
Let $\chi$ and $\eta$ be two choices of cutoff functions, and let $H^{s,p}_{\delta}(M;\chi)$ and $H^{s,p}_{\delta}(M;\eta)$ be the corresponding spaces.  
Suppose $u\in H^{s,p}_{\delta}(M;\eta)$
and let $u_i=((\Phi_i)_*\eta) u$ which we extend by zero outside of $\Phi_i(B_2)$.
Then each $u_i\in H^{s,p}_{\rm loc}(M)$
and from the definition of the norm on $H^{s,p}_{\delta}(M;\eta)$ it follows that
$
\sum_i \rho_i^{-p\delta} \|\eta \Phi_i^* u\|_{H^{s,p}(\Reals^n)}^p
$
is finite.  But then Lemma \ref{lem:Hbuilder}\eqref{part:builderH}
implies $\hat u := \sum_i u_i\in H^{s,p}_{\delta}(M;\chi)$
and
\begin{equation}\label{eq:equiv1}
\|\hat u\|_{H^{s,p}_{\delta}(M;\chi)}^p
\lesssim \sum_i \rho_i^{-p\delta} \|\eta \Phi_i^* u\|_{H^{s,p}(\Reals^n)}^p
= \|u\|_{H^{s,p}_{\delta}(M;\eta)}^p.
\end{equation}
Let $\overline\eta= \sum_i \Phi_{i,*}\eta$ and observe that $\hat u=\overline\eta u$.
Following the construction that leads to \eqref{eq:Xdef}, but using $\eta$ in place of $\chi$, we obtain 
$\bar\eta\in C^{k,\alpha}_0(M)$ with $1/\bar\eta \in C^{k,\alpha}_0(M)$
for every $k\in\Nats$ and $0<\alpha<1$.
In particular, choosing $k>s$, it is an easy consequence of Definition \ref{def:bpspaces}
that multiplication by $\bar\eta$ or $1/\bar\eta$ is a continuous automorphism of $H^{s,p}_{\delta}(M;\chi)$
 and therefore
\begin{equation}\label{eq:equiv2}
\|u\|_{H^{s,p}_{\delta}(M;\chi)}^p \lesssim \|\overline{\eta} u\|_{H^{s,p}_{\delta}(M;\chi)}^p.
\end{equation}
The continuous inclusion $H^{s,p}_{\delta}(M;\eta)\hookrightarrow H^{s,p}_{\delta}(M;\chi)$
follows from inequalities \eqref{eq:equiv1} and \eqref{eq:equiv2} and the reverse inclusion
follows from symmetry.

Suppose $1/2<r<2$. Pick two cutoff functions $\eta_1$ and $\eta_2$
supported on $B_2^\Hyp$ that both equal $1$ on $B_{1/2}^\Hyp$ and such that
 $\eta_1$ is supported in $B_r^\Hyp$ whereas $\eta_2=1$ on
$B_r^\Hyp$.  Suppose $u\in H^{s,p}_\delta(M)$. For each $i$
\begin{equation}\label{eq:cutoff-local-outer}
\|\Phi_i^* u\|_{H^{s,p}(B_r^\Hyp)} \le \|\eta_2\Phi_i^* u\|_{ H^{s,p}(\Reals^n)}.
\end{equation}
On the other hand, we can pick an extension $w$ of $\Phi_i^*u$ such that
$\|w\|_{H^{s,p}(\Reals^n)} \le 2\|\Phi_i^* u\|_{H^{s,p}(B_r^\Hyp)}$
in which case
\begin{equation}\label{eq:cutoff-local-inner}
\|\eta_1\Phi_i^*u\|_{H^{s,p}(\Reals^n)} = \|\eta_1 w\|_{H^{s,p}(\Reals^n)} \lesssim
\|w\|_{H^{s,p}(\Reals^n)} \le 2 \|\Phi_i^* u\|_{H^{s,p}(B_r^\Hyp)}.
\end{equation}
The alternative norms \eqref{eq:cutoff-free-H} and \eqref{eq:cutoff-free-X}
now follow from inequalites \eqref{eq:cutoff-local-outer} and
\eqref{eq:cutoff-local-inner} along with the norm equivalence just established for
different choices of cutoff functions.
\end{proof}

\subsection{Elementary properties}\label{secsec:funtion-space-elementary}

A standard diagonalization argument using Lemma \ref{lem:Hbuilder}
as a component shows that the spaces
$H^{s,p}(M;E)$ and $X^{s,p}(M;E)$ are complete.

In light of the equivalent norms \eqref{eq:cutoff-free-H} and
\eqref{eq:cutoff-free-X} from Lemma \ref{lemma:cutoffequiv},
the elementary inclusions from Lemma \ref{lem:basic-inclusions}
as well as Sobolev embedding from Proposition \ref{prop:SobolevEmbedding}
are established using standard embedding properties of Sobolev
spaces \cite{triebel2010theory} and identical techniques as used
for weighted spaces with integral degrees of differentiability (see, e.g.,
 \cite{Lee-FredholmOperators} Lemma 3.6(b,c) and its associated
 references).  For example, suppose $u\in X^{s,p}_{\delta}(M;E)$.
 Then if $\delta' + (n-1)/p < \delta$ we find
 \[
\sum_{i} \rho_i^{-\delta' p} \|\Phi_i^* u\|_{H^{s,p}(B_1^\Hyp)}^p
\lesssim \|u\|_{X^{s,p}_{\delta}(M;E)}^p \sum_i \rho_i^{(\delta-\delta')p}
\lesssim \|u\|_{X^{s,p}_{\delta}(M;E)}^p \|\rho^{\delta-\delta'}\|_{L^1(M;\mathbb R)}.
\]
As $\rho^a$ is integrable on $M$ precisely when $a>n-1$, and in
particular for $a=\delta-\delta'$, we conclude that $X^{s,p}_{\delta}(M;E)$
embeds into $H^{s,p}_{\delta'}(M;E)$.

Proposition \ref{prop:multiplication} concerning multiplication
is similarly proved
using the alternative norms
\eqref{eq:cutoff-free-H} and
\eqref{eq:cutoff-free-X} along with the analogous multiplication results
for unweighted spaces from, e.g., the appendix of \cite{HMT2020}.

\begin{lemma}[Rellich Lemma]\label{lem:Rellich}
Let $E$ be a tensor bundle over $M$.
Suppose $1<p<\infty$, $s,s'\in\Reals$ and $\delta,\delta'\in\Reals$.
Then
\begin{equation*}
\begin{aligned}
&H^{s,p}_\delta(M;E) \to  H^{s^\prime, p}_{\delta^\prime}(M;E),
&\quad & s> s^\prime,\quad \delta > \delta^\prime;
\\
&X^{s,p}_\delta(M;E) \to  X^{s^\prime, p}_{\delta^\prime}(M;E),
& &s> s^\prime,\quad \delta > \delta^\prime.
\end{aligned}
\end{equation*}
\end{lemma}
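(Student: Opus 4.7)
The plan is to combine three ingredients: (i) the equivalent cutoff-free norms from Lemma~\ref{lemma:cutoffequiv}, (ii) the classical Rellich theorem on the fixed bounded Euclidean domain $B_1^{\mathbb H}$ (giving compactness of $H^{s,p}(B_1^{\mathbb H})\hookrightarrow H^{s',p}(B_1^{\mathbb H})$ for $s>s'$), and (iii) the weight gap $\delta-\delta'>0$ to tame the sum near $\partial M$. A diagonal argument on the countable preferred family $\{\Phi_i\}$ will then stitch local compactness into a global statement.

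First I would take a bounded sequence $\{u_n\}$ in $H^{s,p}_\delta(M;E)$, say with norm $\le C$. By Lemma~\ref{lemma:cutoffequiv}, for each $i$ the pullback $\Phi_i^*u_n$ is bounded in $H^{s,p}(B_1^{\mathbb H})$ uniformly in $n$. Applying the standard Rellich theorem for each $i$ and extracting a Cantor-style diagonal subsequence (still denoted $\{u_n\}$), I may assume that $\Phi_i^*u_n$ converges in $H^{s',p}(B_1^{\mathbb H})$ for every $i$ simultaneously.

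Next I split the sum defining the $H^{s',p}_{\delta'}$-norm of $u_n-u_m$ according to whether $\rho_i<r$ or $\rho_i\ge r$, for a threshold $r\in(0,1)$ to be chosen. On the tail, I rewrite $\rho_i^{-\delta'}=\rho_i^{\delta-\delta'}\rho_i^{-\delta}$, use $\rho_i<r$ together with $\delta-\delta'>0$, and use the continuous inclusion $H^{s,p}(B_1^{\mathbb H})\hookrightarrow H^{s',p}(B_1^{\mathbb H})$ to estimate
\[
\sum_{\rho_i<r}\rho_i^{-\delta'p}\|\Phi_i^*(u_n-u_m)\|_{H^{s',p}(B_1^{\mathbb H})}^p
\;\lesssim\; r^{(\delta-\delta')p}(2C)^p,
\]
which is $<\varepsilon^p/2$ for $r$ small. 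On the complementary set $\{\rho_i\ge r\}$, property~(c) of the preferred collection (uniform local finiteness against diameter of a smooth compactification) ensures only finitely many indices $i$ remain, since $\{\rho\ge r/8\}$ has bounded diameter in any smooth background metric. The factor $\rho_i^{-\delta'p}$ is then bounded, so the diagonalized convergence on each of the finitely many $\Phi_i^*(u_n-u_m)$ in $H^{s',p}(B_1^{\mathbb H})$ yields a term $<\varepsilon^p/2$ for $n,m$ large. Combined, $\{u_n\}$ is Cauchy in $H^{s',p}_{\delta'}(M;E)$, proving compactness.

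For the Gicquaud--Sakovich case, the same plan works almost verbatim, replacing the $\ell^p$-sum by a supremum; the tail estimate becomes $\rho_i^{-\delta'}\|\Phi_i^*(u_n-u_m)\|_{H^{s',p}(B_1^{\mathbb H})}\lesssim r^{\delta-\delta'}\cdot 2C$ uniformly for $\rho_i<r$, while the near-interior supremum over the finitely many $\rho_i\ge r$ can be made small by the diagonalized local convergence. The main (mild) obstacle is simply bookkeeping: justifying that the local pullbacks $\Phi_i^*u_n$ really do form a bounded sequence in $H^{s,p}(B_1^{\mathbb H})$ for each $i$, for which the cutoff-free equivalence in Lemma~\ref{lemma:cutoffequiv} is essential, and verifying that $\{\rho_i\ge r\}$ is a finite index set via the diameter-based local finiteness property.
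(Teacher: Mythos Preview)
Your proposal is correct and follows essentially the same strategy as the paper: diagonalize over the countable preferred M\"obius parametrizations using the local Rellich lemma, then split into a tail (small $\rho_i$) controlled by the weight gap $\delta-\delta'>0$ and a finite head controlled by the local convergence. The only cosmetic difference is that the paper constructs an explicit limit $w=\sum_i(\Phi_i)_*w_i$ via the cutoff partition of unity and shows $\bar\chi u_n\to w$ (hence $u_n\to\bar\chi^{-1}w$), whereas you show the subsequence is Cauchy directly; your route is marginally cleaner since it avoids identifying the limit through $\bar\chi$.
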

\begin{proof}
Suppose $\{u_k\}$ is a bounded sequence in $H^{s,p}_\delta(M)$
and let $B$ be a bound for the norm of the elements of the sequence.
By a diagonalization argument and the usual Rellich Lemma on unweighted spaces
we can reduce to a subsequence and
assume that each $\chi\Phi_i^* u_k$ converges to some $w_i\in H^{s',p}(\Reals^n)$.
Since each $w_i$ is supported in some $B_r^\Hyp$, we can extend $\Phi_i^* w_i$
by zero to all of $M$ and
$w=\sum_i \Phi_i^* w_i$ is a distribution on $M$.  Fatou's Lemma and
Lemma \ref{lem:basic-inclusions} imply
\[
\sum_i \rho_i^{-\delta p}\|w_i\|_{H^{s',p}(\Reals^n)}^p \le \lim_n
\sum_i \rho_i^{-\delta p}\|\chi\Phi_i^* u\|_{H^{s',p}(\Reals^n)}^p
\lesssim B^p
\]
and we conclude from  Lemma \ref{lem:Hbuilder} that $w\in H^{s',p}_\delta(M)$
with $\|w\|_{H^{s',p}_\delta(M;\Reals)}\lesssim B$.

Recalling the function $\bar\chi$ from equation \eqref{eq:Xdef}
we claim that $\bar\chi u_n\to w$ in $H^{s',p}_{\delta'}(M)$ for $\delta'<\delta$.
Indeed, let $\epsilon>0$
and pick $K$ so that if $i\ge K$ then $\rho_i^{\delta-\delta'}<\epsilon$.
Using Lemma \ref{lem:Hbuilder} we find
\[
\sum_{i\ge I} \rho_i^{-\delta' p}\|w_i-\chi \Phi_i^* u_n\|^p_{H^{s',p}(\Reals^n)}
\lesssim \left(\sup_{i\ge I}\rho_i^{(\delta-\delta')p}\right) \|w-\bar\chi u_n\|_{H^{s',p}(M)}^p
\lesssim \epsilon^p B^p.
\]
On the other hand, by taking $n$ sufficiently large we can ensure
\[
\sum_{i<I} \rho_i^{-\delta' p}\|w_i-\chi \Phi_i^* u_n\|^p_{H^{s,p}(\Reals^n)} < \epsilon^p
\]
and hence for $n$ sufficiently large
\[
\|w-\bar\chi u_n\|_{H^{s,p}_{\delta'}(M;E)} \lesssim \epsilon.
\]
That is, $\bar\chi u_n\to w$ in $H^{s',p}_{\delta'}(M)$ and
therefore $u_n\to (\bar\chi)^{-1} w$ in the same topology.

The analogous proof in the Gicquaud-Sakovich case is similar and is indeed
easier because there is no tail to manage.
\end{proof}

\subsection{Density of smooth tensors}
The following result, a variation of \cite{GicquaudSakovich} Lemma 2.1,
establishes the density of certain classes of smooth tensors.
\begin{proposition}\label{prop:density}
Suppose $E$ is a tensor bundle over $\bar M$.
Let $s,\delta\in\Reals$ and $1<p<\infty$.
\begin{enumerate}
\item\label{part:Hsp-smooth-dense}
The compactly supported smooth sections of $E$
are dense in $H^{s,p}_{\delta}(M;E)$.
\item\label{part:Xsp-smooth-dense} The
set $C^\infty_{\rm loc}(M;E)\cap X^{s,p}_{\delta}(M;E)$ is dense in
$X^{s,p}_{\delta}(M;)$.
\end{enumerate}
\end{proposition}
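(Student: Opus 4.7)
The plan is to exploit Lemma \ref{lem:Hbuilder}, which reduces the global problem to independent chart-by-chart approximation, together with the partition of unity $\{\chi_i/\bar\chi\}$ constructed from \eqref{eq:Xdef}.  In both cases we decompose $u = \sum_i u_i$ with $u_i = (\chi_i/\bar\chi) u$, where each $u_i$ is supported in $\Phi_i(B^\Hyp_r)$ for some fixed $r<2$ determined by the cutoff function $\chi$.  Because multiplication by the smooth function $\chi_i/\bar\chi$ (which admits uniform $C^{k,\alpha}_0(M;\Reals)$ bounds) is a continuous operation on each of the spaces under consideration, we have the control $\sum_i \rho_i^{-\delta p}\|\Phi_i^* u_i\|_{H^{s,p}(\Reals^n)}^p \lesssim \|u\|_{H^{s,p}_\delta(M;E)}^p$ in the Sobolev case and $\sup_i \rho_i^{-\delta}\|\Phi_i^* u_i\|_{H^{s,p}(\Reals^n)} \lesssim \|u\|_{X^{s,p}_\delta(M;E)}$ in the Gicquaud-Sakovich case.

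For part \eqref{part:Hsp-smooth-dense}, fix $\epsilon>0$.  By the finiteness of the sum, there exists $N$ such that $\sum_{i>N}\rho_i^{-\delta p}\|\Phi_i^* u_i\|_{H^{s,p}(\Reals^n)}^p < \epsilon^p$; Lemma \ref{lem:Hbuilder}\eqref{part:builderH} then yields $\|u - \sum_{i\leq N} u_i\|_{H^{s,p}_\delta(M;E)}\lesssim \epsilon$.  Since $\sum_{i\leq N} u_i$ has compact support in $M$, it remains to approximate each compactly-supported piece $u_i$, which amounts to approximating the compactly-supported coordinate distribution $\Phi_i^* u_i \in H^{s,p}(\Reals^n)$ by a smooth compactly-supported section.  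This follows from the standard density of $C^\infty_c(\Reals^n)$ in $H^{s,p}(\Reals^n)$ (applied componentwise), after multiplying by a fixed smooth cutoff supported in $B^\Hyp_2$ that equals $1$ on $B^\Hyp_r$ to preserve compact support in $\Phi_i(B^\Hyp_2)$.  Pushing forward and summing the resulting smooth approximations yields a compactly supported smooth section within $\epsilon$ of $u$ in $H^{s,p}_\delta(M;E)$.

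For part \eqref{part:Xsp-smooth-dense}, truncation is no longer available because it would fail to control the $\ell^\infty$ tail.  Instead, given $\epsilon>0$, we approximate each $u_i$ separately to relative accuracy depending on its weight: by density of smooth compactly supported sections in $H^{s,p}(\Reals^n)$, for each $i$ we may choose a smooth $v_i$ supported in $\Phi_i(B^\Hyp_2)$ with $\rho_i^{-\delta}\|\Phi_i^*(u_i - v_i)\|_{H^{s,p}(\Reals^n)} < \epsilon$.  Define $u_\epsilon = \sum_i v_i$.  The uniform local finiteness of the M\"obius cover ensures that this sum is locally finite and hence $u_\epsilon \in C^\infty_{\rm loc}(M;E)$.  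Applying Lemma \ref{lem:Hbuilder}\eqref{part:builderX} to the difference $u - u_\epsilon = \sum_i (u_i - v_i)$, each summand supported in $\Phi_i(B^\Hyp_2)$, yields $\|u - u_\epsilon\|_{X^{s,p}_\delta(M;E)} \lesssim \sup_i \rho_i^{-\delta}\|\Phi_i^*(u_i - v_i)\|_{H^{s,p}(\Reals^n)}\leq \epsilon$, and also $u_\epsilon \in X^{s,p}_\delta(M;E)$ with norm bounded by $\|u\|_{X^{s,p}_\delta(M;E)} + \epsilon$.

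The main obstacle is conceptual rather than technical: one must recognize that in the $\ell^\infty$ setting the chart-by-chart approximation needs to be scaled to the weight $\rho_i^\delta$, and that Lemma \ref{lem:Hbuilder} provides exactly the right bookkeeping to assemble independent local approximations into a global smooth approximation without any need for global mollification or a consistent regularization scale across charts.  This is also why we obtain only local smoothness in part \eqref{part:Xsp-smooth-dense}, since the approximations in distant charts are chosen independently with no asymptotic compatibility.
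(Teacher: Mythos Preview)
Your proposal is correct and follows essentially the same approach as the paper: both arguments localize via the cutoffs $\chi_i$, approximate each piece in $H^{s,p}(\Reals^n)$ by a smooth compactly supported section (with support pushed into $B^\Hyp_{r'}$ for some $r'<2$), and reassemble using Lemma \ref{lem:Hbuilder}. The only cosmetic difference is that the paper approximates $\chi\Phi_i^*u$ directly, obtains a smooth $w$ close to $\bar\chi u$, and then divides by $\bar\chi$ at the end, whereas you pre-normalize by writing $u_i=(\chi_i/\bar\chi)u$ so that the division is built in from the start; both versions handle the tail truncation in part \eqref{part:Hsp-smooth-dense} and the weight-scaled chartwise approximation in part \eqref{part:Xsp-smooth-dense} in the same way.
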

\begin{proof}
Suppose $u\in X^{s,p}_{\delta}(M)$ and let $\epsilon>0$.
For each $i$, let $w_i$ be a smooth compactly supported tensor field on $\Reals^n$ such that
\begin{equation*}
\rho_i^{-\delta p}\|\chi \Phi_i^* u -w_i \|_{H^{s,p}(\Reals^n)} < \epsilon.
\end{equation*}
Since $\chi \Phi_i^* u$ is compactly supported in $B^\Hyp_{r}$ for some $r<2$
we can assume without loss of generality that each $w_i$ is supported in $B^\Hyp_{r'}$
for some $r<r'<2$ as well.
Observe that
$\rho_i^{-\delta}\|w_i\|_{H^{s,p}(\Reals^n)}\le \|u\|_{X^{s,p}_{\delta}}+\epsilon$.
Lemma \ref{lem:Hbuilder}\eqref{part:builderX} then implies that
 $w=\sum_i (\Phi_i)_* w_i\in X^{s,p}(M)$ and it is evidently smooth.
 Moreover, Lemma \ref{lem:Hbuilder}\eqref{part:builderX} also implies
 \[
\|\bar\chi u -w\|_{X^{s,p}_{\delta}} \lesssim \sup_{i} \rho_i^{-\delta }\| \chi\Phi_i^* u - w_i\|_{H^{s,p}(\Reals^n)} < \epsilon.
\]
Since multiplication by $\bar\chi^{-1}$ is a continuous automorphism
of $X^{s,p}_\delta(M)$ we conclude
\begin{equation*}
\|u - \bar\chi^{-1} w\|_{X^{s,p}_{\delta}(M)} \lesssim \epsilon.
\end{equation*}
Noting that $\bar\chi^{-1} w$ is smooth,
the proof of part \eqref{part:Xsp-smooth-dense} is complete.

Part \eqref{part:Hsp-smooth-dense}
is proved similarly, noting that we first ignore a tail satisfying
\[
\sum_{i\ge K}\rho_i^{-\delta p}\|\chi \Phi_i^* u\|_{H^{s,p}(\Reals^n)}^p<\epsilon^p
\]
and smoothly approximate finitely many terms to obtain a compactly supported
smooth approximating function.
\end{proof}

As a consequence of Proposition \ref{prop:density}, $C^{k,\alpha}_{\delta}(M;\Reals)$
is dense in $H^{k,p}_{\delta}(M;\Reals)$ for each $k\in\Nats_{\ge 0}$.  The question of whether $C^{k,\alpha}_{\delta}(M;\Reals)$
is also dense in $X^{k,p}_{\delta}(M;\Reals)$ was raised in \cite{GicquaudSakovich}, and we show here that this is false in general.
\begin{proposition}\label{prop:Cinf-delta-not-dense-in-X}
Let $1<p<\infty$. For each $0<\alpha<1$, $C^{0,\alpha}_0(M;\Reals)$ is not dense in $X^{0,p}_0(M;\Reals)$.
\end{proposition}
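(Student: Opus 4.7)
The plan is to exhibit an explicit function in $X^{0,p}_0(M;\mathbb{R})$ that cannot be approximated by elements of $C^{0,\alpha}_0(M;\mathbb{R})$. The guiding observation is that membership in $C^{0,\alpha}_0(M;\mathbb{R})$ forces a \emph{uniform pointwise bound}, since
\[
\|v\|_{L^\infty(M)}\le \sup_i \|\Phi_i^* v\|_{C^{0,\alpha}(B_1^\Hyp)} = \|v\|_{C^{0,\alpha}_0(M;\mathbb{R})},
\]
whereas the $X^{0,p}_0$ norm only controls local $L^p$ averages on M\"obius balls and therefore tolerates spikes of arbitrarily large height as one moves toward the boundary. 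The obstruction will be constructed by a traveling-bump argument along a sequence of M\"obius parametrizations $\Phi_{i_k}$ with $\rho_{i_k}\to 0$.

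Concretely, I would fix $\chi\in C^\infty_c(B_1^\Hyp)$ with $\chi(0)=1$ and $\chi\ge 1/2$ on $B_{1/2}^\Hyp$, set $r_k=k^{-p/n}$, and define the rescaled bumps
\[
\phi_k(z)=k\,\chi(z/r_k),\qquad k\in\Nats.
\]
Then $\|\phi_k\|_{L^p(\mathbb{R}^n)}=\|\chi\|_{L^p(\mathbb{R}^n)}$ is $k$-independent while $\|\phi_k\|_{L^\infty}=k$. Next select indices $i_k$ such that $\rho_{i_k}\to 0$ and the enlarged images $\{\Phi_{i_k}(B_5^\Hyp)\}_k$ are pairwise disjoint; since $M$ supports countably many preferred M\"obius parametrizations with $\rho_i$ arbitrarily small, such a choice is available. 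Define
\[
u=\sum_{k}(\Phi_{i_k})_*\phi_k,
\]
which is a well-defined function on $M$ by the disjointness.

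The membership $u\in X^{0,p}_0(M;\mathbb{R})$ follows from a transition-estimate argument analogous to the proof of Lemma \ref{lem:Hbuilder}: for any preferred $\Phi_i$, the pair $(\Phi_i,\Phi_{i_k})$ with overlapping images exists for at most boundedly many $k$, and the uniform $C^1$-bounds on the transition diffeomorphisms $\Phi_i^{-1}\circ\Phi_{i_k}$ yield $\|\Phi_i^*u\|_{L^p(B_1^\Hyp)}\lesssim \|\chi\|_{L^p}$. Finally, for any candidate approximant $v\in C^{0,\alpha}_0(M;\mathbb{R})$, set $M_v:=\|v\|_{C^{0,\alpha}_0}$, so $|v|\le M_v$ on $M$. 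For every $k\ge 4M_v$ the set
\[
E_k=\{z\in B_1^\Hyp:\phi_k(z)\ge k/2\}\supset B_{r_k/2}^\Hyp
\]
has Lebesgue measure $\gtrsim r_k^n=k^{-p}$; on $E_k$ we have $|\phi_k-\Phi_{i_k}^* v|\ge k/2-M_v\ge k/4$, giving
\[
\|\Phi_{i_k}^*(u-v)\|_{L^p(B_1^\Hyp)}^p\ge (k/4)^p\cdot c_n r_k^n=c_n\,4^{-p}.
\]
Hence $\|u-v\|_{X^{0,p}_0}\ge c_n^{1/p}/4$, a positive constant independent of $v$, contradicting any putative density.

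The main obstacle is really only bookkeeping: calibrating $r_k=k^{-p/n}$ so that $\|\phi_k\|_{L^p}$ stays bounded while $r_k^n(k/4)^p$ remains bounded below as $k\to\infty$, and arranging the sequence $\{\Phi_{i_k}\}$ so that the disjointness of the bump supports survives under the uniform-local-finiteness bookkeeping used to verify $u\in X^{0,p}_0$. Once those choices are in place, no analytic input beyond the elementary transition-map estimates invoked throughout Appendix \ref{app:bpspaces} is required.
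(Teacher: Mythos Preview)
Your approach is correct and is essentially the same traveling-bump argument as the paper's: both place, in pairwise-disjoint M\"obius charts, functions whose local $L^p$ norms are uniformly bounded while their $L^\infty$ norms diverge, and then exploit that $C^{0,\alpha}_0(M;\mathbb R)\hookrightarrow L^\infty(M)$ to obtain a uniform lower bound on the distance to any H\"older approximant. The only differences are cosmetic: the paper uses indicator functions $a_k\mathbf 1_{U_k}$ of shrinking sets rather than your rescaled smooth bumps, and it phrases the obstruction as $\|v_k\|_{C^{0,\alpha}}\to\infty$ via a min/max argument rather than your direct $L^p$ lower bound on the peak region.

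One small cleanup: the rescaling $z\mapsto z/r_k$ is Euclidean, so your bump $\chi$ should be centred at the Euclidean origin (or you should rescale about the centre $(0,1)$ of $B_1^\Hyp$), and the containment $E_k\supset B_{r_k/2}$ should refer to a Euclidean ball; as written, $\chi(0)=1$ with $0\notin B_1^\Hyp$ is inconsistent. This is purely notational and does not affect the argument.
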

\begin{proof}
Consider a sequence of sets $U_k\subset B_{1/2}^\Hyp$ with Euclidean
measures $|U_k|=|B_{1/2}^\Hyp|/k$
and let $a_k=|U_k|^{-1/p}$.  Let $u_k$ be a function on $B_{1/2}^\Hyp$ that equals
$a_k$ on $U_k$ and is otherwise zero and observe that $\|u_k\|_{L^p(B_{1/2}^\Hyp)}=1$.

Now suppose we have a second sequence of functions $v_k\in C^{0,\alpha}(B_{1/2}^\Hyp)$
such that $\|v_k-u_k\|_{L^p(B_{1/2}^{\Hyp})}<\min\left(1/2,(|B_{1/2}^\Hyp|/2)^{1/p}\right)$ for every $k$.
We claim that for $k\ge 2$ that $\min v_k < 1$ and that $\max v_k > a_k/2$.  Indeed,
if $\min v_k\ge 1$ then an easy computation using the condition $k\ge 2$
shows $\|v_k-u_k\|_{L^p(B_{1/2}^{\Hyp})}\ge (|B_{1/2}^\Hyp|/2)^{1/p}$.  On the other
hand if $\max v_k < a_k/2$ a similar computation shows
that $\|v_k-u_k\|_{L^p(B_{1/2}^{\Hyp})}\ge 1/2$.  As a consequence of these constraints,
$\|v_k\|_{C^{0,\alpha}(B_{1/2}^\Hyp)}\to\infty$ as $k\to\infty$.

The result now follows from selecting M\"obius parametrizations $\Phi_{i_k}$ with
disjoint images and a function $u\in X^{0,p}_0(M)$ with $\Phi_{i_k}^* u = u_k$ on $B_{1/2}^\Hyp$.
The argument above shows that if $v\in X^{0,p}_0(M;\Reals)$ is locally H\"older continuous and if
$\|u-v\|_{X^{0,p}_0(M)}$ is sufficiently small then $\|v\|_{C^{0,\alpha}_0(M)}=\infty$.
\end{proof}

\subsection{Interpolation}\label{secsec:interpolation}
We rapidly review some facts from complex interpolation theory from which, unless otherwise
noted, can be found in \cite{triebel2010theory}.
An interpolation couple is a pair $(X_0,X_1)$ of Banach spaces that
are continuously embedded subspaces of a common Hausdorff topological vector space.
For any $0<\theta<1$ there is an interpolation Banach space $[X,Y]_\theta$
that satisfies continuous inclusions
\begin{equation*}
X_0\cap X_1 \subset [X_0,X_1]_\theta \subset X_0+X_1
\end{equation*}
where $X_0\cap X_1$ and $X_0+X_1$ are endowed with the
natural optimal topologies such that each inclusion
$X_0\cap X_1\hookrightarrow X_i\hookrightarrow X_0+X_1$ is continuous.
Moreover
\begin{equation*}
\|x\|_{[X_0,X_1]_\theta} \le \|x\|_{X_0}^{1-\theta} \|x\|_{X_1}^\theta
\end{equation*}
for all $x\in X_0\cap X_1$.

If $(X_0,X_1)$ and $(Y_0,Y_1)$ are interpolation couples and
$T_i:X_i \rightarrow Y_i$, $i=0,1$, are continuous linear maps with
$T_0=T_1$ on $X_0\cap X_1$ then we have an associated continuous linear map
$T:[X_0,X_1]_\theta \rightarrow [Y_0,Y_1]_\theta$ for all $\theta\in (0,1)$.

Given a Banach space $X$, let $cX$ be the same set $X$ with its norm scaled by $c$.
If $X$ and $Y$ are Banach spaces and $X'=a X_1$ and $Y' = b Y$ for
$a,b>0$ then $[X',Y']_{\theta} = a^{1-\theta} b^\theta [X,Y]$; see \cite{Triebel:1976hk} Lemma 5.

Given a sequence $Y=\{Y_j\}_{j=1}^\infty$ of Banach spaces let
$\ell^p(\{Y_j\}_{j=1}^\infty)$ be the set of sequences $y\in\prod Y_j$
such that
\begin{equation*}
\sum_j \|y_j\|_{Y_j}^p < \infty.
\end{equation*}
This is a Banach space with the obvious norm.  If $X=\{X_j\}_{j=1}^\infty$ and
$Y=\{Y_j\}_{j=1}^\infty$ are sequences of Banach spaces, $1<p_1,p_2<\infty$ and $0<\theta<1$ then
\begin{equation*}
[\ell^{p_1}(X),\ell^{p_2}(Y)]_{\theta} = \ell^p([X,Y]_{\theta})
\end{equation*}
where $(1/p)=(1-\theta)(1/p_1) + \theta (1/p_2)$ and where $[X,Y]_{\theta}$ is the
sequence of spaces with elements $([X,Y]_{\theta})_j = [X_j,Y_j]_\theta$.

The following interpolation theorem for weighted Bessel potential spaces
is modeled on \cite{Triebel:1976hk} Theorem 3
for weighted function spaces on Euclidean $\Reals^n$.
\begin{theorem} Let $E$ be a tensor bundle over $\bar M$.
Consider spaces $H^{s_1,p_1}_{\delta_1}(M;E)$, $H^{s_2,p_2}_{\delta_2}(M;E)$
with $s_1,s_2\in\Reals$, $1<p_1,p_2<\infty$, and $\delta_1,\delta_2\in\Reals$.  Let $\theta\in(0,1)$ and let
\begin{equation}\label{eq:Hsp-interp}
\begin{aligned}
s &= (1-\theta) s_1 + \theta s_2\\
p &= (1-\theta) \frac{1}{p_1} + \theta \frac{1}{p_2}\\
\delta &= (1-\theta) \delta_1 + \theta \delta_2.
\end{aligned}
\end{equation}
Then
\begin{equation*}
[H^{s_1,p_1}_{\delta_1}(M;E),H^{s_2,p_2}_{\delta_2}(M;E)]_\theta = H^{s,p}_{\delta}(M;E).
\end{equation*}
\end{theorem}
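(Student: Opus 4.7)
The plan is to reduce the interpolation identity on $M$ to standard results in Euclidean space via a retraction–coretraction argument built from the local M\"obius coordinates. For $j=1,2$, let $Z_j = \ell^{p_j}\big(\{\rho_i^{-\delta_j}\,H^{s_j,p_j}(\mathbb{R}^n;\Phi_i^*E)\}_i\big)$, meaning sequences $(v_i)_i$ with
\[
\|(v_i)\|_{Z_j}^{p_j} = \sum_i \rho_i^{-\delta_j p_j}\,\|v_i\|_{H^{s_j,p_j}(\mathbb{R}^n)}^{p_j} < \infty,
\]
and write $X_j = H^{s_j,p_j}_{\delta_j}(M;E)$. Recalling the smooth positive function $\bar\chi$ from \eqref{eq:Xdef} with $1/\bar\chi \in C^{k,\alpha}_0(M;\mathbb R)$, define
\[
T\colon X_j \to Z_j,\qquad T(u) = \big(\chi\,\Phi_i^*(\bar\chi^{-1} u)\big)_i,
\]
and
\[
R\colon Z_j \to X_j,\qquad R\big((v_i)\big) = \sum_i (\Phi_i)_* v_i,
\]
where each $(\Phi_i)_* v_i$ is extended by zero. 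Boundedness of $T$ is immediate from Definition~\ref{def:bpspaces} combined with the fact that multiplication by $\bar\chi^{-1}$ is a continuous automorphism of each $X_j$, and boundedness of $R$ is precisely Lemma~\ref{lem:Hbuilder}\eqref{part:builderH}. Since $\sum_i \chi_i = \bar\chi$, one computes $R\circ T(u) = \sum_i \chi_i\,\bar\chi^{-1} u = u$, so $R\circ T = \mathrm{id}$ on each $X_j$.

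With the retraction–coretraction pair in hand, the standard interpolation principle (see \cite{triebel2010theory}) yields $[X_1,X_2]_\theta = R\big([Z_1,Z_2]_\theta\big)$ with equivalent norm $\|u\|_{[X_1,X_2]_\theta} \sim \|Tu\|_{[Z_1,Z_2]_\theta}$. It therefore suffices to identify $[Z_1,Z_2]_\theta$. Applying the $\ell^p$-interpolation formula recalled in \S\ref{secsec:interpolation} together with the scaling identity $[aY_0,bY_1]_\theta = a^{1-\theta}b^\theta\,[Y_0,Y_1]_\theta$ (applied termwise in $i$), one obtains
\[
[Z_1,Z_2]_\theta \;=\; \ell^p\Big(\Big\{\rho_i^{-[(1-\theta)\delta_1+\theta\delta_2]}\,\big[H^{s_1,p_1}(\mathbb{R}^n),\,H^{s_2,p_2}(\mathbb{R}^n)\big]_\theta\Big\}_i\Big).
\]
Invoking the classical interpolation result for Bessel potential spaces on $\mathbb{R}^n$, namely $[H^{s_1,p_1}(\mathbb{R}^n),H^{s_2,p_2}(\mathbb{R}^n)]_\theta = H^{s,p}(\mathbb{R}^n)$ with $(s,p,\delta)$ as in \eqref{eq:Hsp-interp}, identifies this with $\ell^p\big(\{\rho_i^{-\delta}\,H^{s,p}(\mathbb{R}^n)\}_i\big)$.

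Putting these identifications together, $u\in[X_1,X_2]_\theta$ if and only if $(\chi\,\Phi_i^*(\bar\chi^{-1}u))_i$ lies in $\ell^p\big(\{\rho_i^{-\delta}\,H^{s,p}(\mathbb{R}^n)\}_i\big)$, which by Definition~\ref{def:bpspaces} is equivalent to $\bar\chi^{-1}u \in H^{s,p}_\delta(M;E)$, and hence to $u\in H^{s,p}_\delta(M;E)$ by the automorphism property of multiplication by $\bar\chi$. The associated norms are equivalent throughout, finishing the proof. The main technical obstacle I anticipate is verifying the vector-valued/weighted $\ell^p$-interpolation identity in a form general enough to simultaneously handle distinct Lebesgue exponents $p_1,p_2$ and distinct weights $\rho_i^{-\delta_1},\rho_i^{-\delta_2}$ along the sequence index $i$; this is where the scaling lemma $[aY_0,bY_1]_\theta = a^{1-\theta}b^\theta[Y_0,Y_1]_\theta$, applied uniformly in $i$, carries the burden of the argument. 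Once that is in place, the tensor-bundle decoration is harmless because the norms in Definition~\ref{def:bpspaces} are computed componentwise on $\mathbb{R}^n$.
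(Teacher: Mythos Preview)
Your approach is essentially the same as the paper's: a retraction--coretraction argument reducing to the known $\ell^p$-valued interpolation of scaled copies of $H^{s,p}(\mathbb{R}^n)$. The only cosmetic difference is where you place the $\bar\chi^{-1}$ correction (you build it into $T$, the paper builds it into $R$); this is immaterial.

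There is, however, one genuine gap. Your map $R\big((v_i)\big)=\sum_i (\Phi_i)_* v_i$ is not well defined on $Z_j$: the elements $v_i$ are arbitrary in $H^{s_j,p_j}(\mathbb{R}^n)$ and need not be supported in $B_2^{\mathbb H}$, so $(\Phi_i)_* v_i$ has no meaning, and ``extend by zero'' does not make sense when $v_i$ fails to vanish near $\partial B_2^{\mathbb H}$. The retraction--coretraction principle requires $R$ to be bounded on all of $Z_j$, not merely on the image of $T$. The fix is exactly what the paper does: introduce an auxiliary cutoff $\eta$ supported in $B_2^{\mathbb H}$ with $\eta\chi=\chi$ and set $R\big((v_i)\big)=\sum_i (\Phi_i)_*(\eta v_i)$. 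Then $R$ is bounded by Lemma~\ref{lem:Hbuilder}\eqref{part:builderH}, and since $\eta\chi=\chi$ the identity $R\circ T=\mathrm{id}$ survives. With this correction your argument goes through and matches the paper's.
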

\begin{proof}
For simplicity we treat the case $E=\Reals$; the general case is proved
verbatim with some additional notation needed to denote spaces of vector-valued functions
over $\Reals^n$.

For the moment fix $t,\sigma\in\Reals$ and $1<p<\infty$. For $j\in\Nats$ define
\begin{equation*}
H^{t,q}_{\sigma,(j)}(\Reals^n) = \rho_j^{-\sigma} H^{t,q}(\Reals^n)
\end{equation*}
and let
\begin{equation*}
Z^{t,q}_{\sigma} = \ell^q(\{H^{t,q}_{\sigma,(j)}(\Reals^n)\}_{j=1}^\infty).
\end{equation*}

Define $S_j: H^{t,q}_{\sigma}(M) \rightarrow H^{t,q}_{\sigma,(j)}(\Reals^n)$ by
\begin{equation*}
S_j(u) = \chi \Phi_j^* u.
\end{equation*}
A straightforward computation shows
\begin{equation*}
u\mapsto (S_1 u, S_2 u, \ldots )
\end{equation*}
defines a continuous linear map $S:H^{t,q}_{\sigma}(M)\rightarrow Z^{t,q}_{\sigma}$.

Let $\eta$ be a cutoff function that equals 1 on the support of $\chi$
and equals $0$ outside $B_2$ and define
\begin{equation*}
T:Z^{t,q}_\sigma \rightarrow  H^{t,q}_{\sigma}(M)
\end{equation*}
by
\begin{equation*}
T(w_1,w_2,\ldots) = \sum_i \Phi_{i,*} (\eta w_i).
\end{equation*}
Lemma \ref{lem:Hbuilder} implies $T$ is continuous. Since $\eta\chi =\chi$,
\begin{equation*}
(T\circ S)(u) = \overline\chi \cdot u,
\end{equation*}
where $\overline\chi$ is defined in equation \eqref{eq:Xdef}.  Finally let $ R u = \overline\chi^{-1}\cdot T u$,
which is again a continuous linear map
\begin{equation*}
Z^{t,q}_{\sigma} \rightarrow  H^{t,q}_{\sigma}(M)
\end{equation*}
and moreover $R \circ S = \mathrm{Id}$.

We now apply interpolation to the maps $R$ and $S$. From \cite{Triebel:1976hk} Lemma 6,
$R$ and $S$ determine continuous linear maps
\begin{equation*}
\begin{aligned}
S&:[H^{s_1,p_1}_{\delta_1}(M),H^{s_2,p_2}_{\delta_2}(M)]_{\theta}
\rightarrow [Z^{s_1,p_1}_{\delta_1},Z^{s_2,p_2}_{\delta_2}]_{\theta}\\
R&:
[Z^{s_1,p_1}_{\delta_1},Z^{s_2,p_2}_{\delta_2}]_{\theta}
\rightarrow
[H^{s_1,p_1}_{\delta_1}(M),H^{s_2,p_2}_{\delta_2}(M)]_{\theta}
\end{aligned}
\end{equation*}
such that $R\circ S=\mathrm{Id}$.

From interpolation of vector-valued $\ell^q$ spaces we find
\begin{equation*}
\begin{aligned}\relax
[Z^{s_1,p_1}_{\delta_1},Z^{s_2,p_2}_{\delta_2}]_{\theta} &=
[\ell^{p_1}(\{H^{s_1,p_1}_{\delta_1,(j)}(\Reals^n)\}_j),
 \ell^{p_2}(\{H^{s_2,p_2}_{\delta_2,(j)}(\Reals^n)\}_j)]_{\theta}\\
 & =
\ell^p(\{[H^{s_1,p_1}_{\delta_1,(j)}(\Reals^n),
          H^{s_2,p_2}_{\delta_2,(j)}(\Reals^n)]_\theta\}_p ).
\end{aligned}
\end{equation*}
Moreover,
\begin{equation*}
\begin{aligned}\relax
[H^{s_1,p_1}_{\delta_1,(j)}(\Reals^n),
          H^{s_2,p_2}_{\delta_2,(j)}(\Reals^n)]_\theta
&= [\rho_j^{-\delta_1}H^{s_1,p_1}(\Reals^n),\rho_j^{-\delta_2}H^{s_2,p_2}(\Reals^n)]_{\theta}\\
&= \rho_j^{-[(1-\theta)\delta_1+\theta\delta_2]}[H^{s_1,p_1}(\Reals^n),H^{s_2,p_2}(\Reals^n)]_{\theta} \\
&= \rho_j^{-\delta} H^{s,p}(\Reals^n).
\end{aligned}
\end{equation*}
That is, $[Z^{s_1,p_1}_{\delta_1},Z^{s_2,p_2}_{\delta_2}]_{\theta} = Z^{s,p}_{\delta}$.

Now $S:[H^{s_1,p_1}_{\delta_1}(M),H^{s_2,p_2}_{\delta_2}(M)]_{\theta}\rightarrow Z^{s,p}_{\delta}$
is an isomorphism onto its image since it admits a continuous inverse, namely
the restriction of $R$. Hence
\begin{equation}\label{eq:norminterp}
\|u\|_{[H^{s_1,p_1}_{\delta_1}(M),H^{s_2,p_2}_{\delta_2}(M)]_{\theta}}^p
\sim \sum_i \rho_i^{-\delta p} \|\chi \Phi_i^* u\|_{H^{s,p}(\Reals^n)}^p = \|u\|_{H^{s,p}_{\delta}(M)}^p.
\end{equation}
Equation \eqref{eq:norminterp} establishes equivalence of norms, but leaves open the possibility that
the interpolation space $[H^{s_1,p_1}_{\delta_1}(M),H^{s_2,p_2}_{\delta_2}(M)]_{\theta}$
is a closed subspace
of $H^{s,p}_{\delta}(M)$.  However, Proposition \ref{prop:density} shows that
the smooth compactly supported functions are contained in
$H^{s_1,p_1}_{\delta_1}(M)\cap H^{s_2,p_2}_{\delta_2}(M)\subset [H^{s_1,p_1}_{\delta_1}(M),H^{s_2,p_2}_{\delta_2}(M)]_{\theta}$ and are dense in $H^{s,p}_{\delta}(M)$.
\end{proof}

Gicquaud-Sakovtich spaces are more technical from the perspective of interpolation
than the weighted Bessel potential spaces are
because they are based on $\ell^\infty$ rather than $\ell^p$ with $1<p<\infty$.
The following partial result suffices for the purposes of our work.
\begin{proposition}
Let $E$ be a tensor bundle over $\bar M$.
Suppose $(A_0, A_1)$ is an interpolation couple and that $T$
is a linear map
\[
T: A_0 + A_1 \to X^{s_0,p_0}_{\delta_0}(M;E) + X^{s_1,p_1}_{\delta_1}(M;E)
\]
for some $s_0,s_1\in\Reals$, $\delta_0,\delta_1\in\Reals$, and $1<p_0,p_1<\infty$. If $T$ restricts to continuous maps $T_k:A_k\to X^{s_k,p_k}_{\delta_k}(M;E)$ for $k=0,1$, then for each $\theta\in (0,1)$, then $T$
also restricts to a continuous map
\[
T_\theta: [A_0, A_1]_\theta \to X^{s,p}_{\delta}(M;E),
\]
where $s$, $p$ and $\delta$ are given by equations \eqref{eq:Hsp-interp}. Moreover,
\[
\|T_\theta\|\le \|T_0\|^{1-\theta} \|T_1\|^\theta
\]
with implicit constant independent of $T_0$, $T_1$.
\end{proposition}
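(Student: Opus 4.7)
The plan is to decouple the claim into two standard pieces via functoriality of complex interpolation. First, by the usual interpolation inequality for bounded linear maps, the hypothesis that $T_k \colon A_k \to X^{s_k,p_k}_{\delta_k}(M;E)$ is continuous for $k=0,1$ immediately yields a continuous $T_\theta \colon [A_0,A_1]_\theta \to [X^{s_0,p_0}_{\delta_0}(M;E), X^{s_1,p_1}_{\delta_1}(M;E)]_\theta$ with norm bounded by $\|T_0\|^{1-\theta}\|T_1\|^\theta$. (Note both Gicquaud-Sakovich spaces embed continuously into $\mathcal{D}'(M;E)$, so they do form an interpolation couple.) Consequently the entire result reduces to establishing the continuous inclusion
\begin{equation*}
[X^{s_0,p_0}_{\delta_0}(M;E), X^{s_1,p_1}_{\delta_1}(M;E)]_\theta \hookrightarrow X^{s,p}_\delta(M;E)
\end{equation*}
with operator norm at most a universal constant.

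To prove this inclusion, I would fix an arbitrary preferred M\"obius parametrization $\Phi_i$ and consider the linear evaluation map $E_i \colon u \mapsto \chi\, \Phi_i^* u$. Directly from the definition of the Gicquaud-Sakovich norm, one has $\|E_i u\|_{H^{s_k,p_k}(\mathbb{R}^n)} \le \rho_i^{\delta_k}\|u\|_{X^{s_k,p_k}_{\delta_k}(M;E)}$ for $k=0,1$, so $E_i$ is continuous $X^{s_k,p_k}_{\delta_k}(M;E) \to H^{s_k,p_k}(\mathbb{R}^n)$ with operator norm at most $\rho_i^{\delta_k}$. Applying the interpolation inequality to the single map $E_i$, and invoking the standard identification $[H^{s_0,p_0}(\mathbb{R}^n), H^{s_1,p_1}(\mathbb{R}^n)]_\theta = H^{s,p}(\mathbb{R}^n)$ (with equivalent norms), we obtain that $E_i$ extends to a continuous map from the interpolation space into $H^{s,p}(\mathbb{R}^n)$ with operator norm controlled by
\begin{equation*}
\rho_i^{(1-\theta)\delta_0}\,\rho_i^{\theta\delta_1} = \rho_i^{\delta}.
\end{equation*}
Thus for every $u$ in the interpolation space, $\rho_i^{-\delta}\|\chi\Phi_i^* u\|_{H^{s,p}(\mathbb{R}^n)} \lesssim \|u\|_{[X^{s_0,p_0}_{\delta_0}(M;E), X^{s_1,p_1}_{\delta_1}(M;E)]_\theta}$; taking the supremum over $i$ gives precisely the desired inclusion.

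The main obstacle here, as foreshadowed by the comments preceding Theorem \ref{thm:H-interp-early}, is that the $\ell^\infty$ flavor of the Gicquaud-Sakovich norm does not admit the clean sequence-space identification used for $H^{s,p}_\delta(M;E)$; complex interpolation of $\ell^\infty$-valued Banach space sequences is not functorial in the same way. We sidestep this difficulty by only seeking the one-sided inclusion rather than an interpolation identity, and by applying the functorial interpolation inequality to the single-point evaluations $E_i$ rather than to a global sequence-valued map. The fact that the two weights combine multiplicatively as $\rho_i^{(1-\theta)\delta_0+\theta\delta_1} = \rho_i^\delta$ is exactly what makes the pointwise estimates compatible with the supremum defining the target $X^{s,p}_\delta(M;E)$ norm.
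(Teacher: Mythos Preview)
Your proposal is correct and essentially the same as the paper's argument. The only organizational difference is that you factor the argument into two steps (first interpolate $T$ into the abstract space $[X^{s_0,p_0}_{\delta_0}, X^{s_1,p_1}_{\delta_1}]_\theta$, then prove the inclusion into $X^{s,p}_\delta$ via the evaluation maps $E_i$), whereas the paper applies interpolation directly to the composite $T_{(i)} = \chi\cdot(\Phi_i^*\circ T) = E_i\circ T$, viewed as a map $A_k\to \rho_i^{-\delta_k}H^{s_k,p_k}(\mathbb{R}^n)$, and then takes a supremum over $i$; the underlying idea of reducing to pointwise interpolation on $H^{s,p}(\mathbb{R}^n)$ and combining the weights multiplicatively is identical.
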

\begin{proof}
Consider a preferred M\"obius chart $\Phi_i$ and let
\[
T_{(i)} =  \chi\cdot(\Phi_i^*\circ T).
\]
Recall that if $Y$ is a normed space and $r>0$, the $rY$
norm is $\|\cdot\|_{rY}= r\|\cdot\|_{Y}$.
Then $T_{(i)}$ determines continuous linear maps
$A_k\to \rho_i^{-\delta_k} H^{s_k,p_k}(\Reals^n)$ for $k=0,1$
with norm bounded by $\|T_k\|$; note that $H^{s_k,p_k}(\Reals)$
denotes a space of vector-valued functions here.
It therefore also determines a continuous linear map
\[
[A_0,A_1]_{\theta} \to [\rho_i^{-\delta_0} H^{s_0,p_0}(\Reals^n),
\rho_i^{-\delta_1} H^{s_1,p_1}(\Reals^n)]_\theta =
\rho_i^{-\delta}H^{s,p}(\Reals^n)
\]
with norm bounded by $\|T_0\|^{1-\theta}\|T_1\|^{\theta}$.
But the norm of $T:[A_0,A_1]_\theta\to X^{s,p}_{\delta}(M)$
is exactly the supremum of all such norms.
\end{proof}

\subsection{Duality}

Let $g_{\mathbb E}$ be the Euclidean metric on $\Reals^n$.  The $L^2$ inner product
\begin{equation*}
\left<v,u\right>_{(\Reals^n, g_{\mathbb E})} = \int_{\Reals^n} vu\; dV_{g_{\mathbb E}}
\end{equation*}
on smooth, compactly supported functions $v$ and $u$ extends to a continuous bilinear
form on $H^{-s,p\dual}(\Reals^n)\times H^{s,p}(\Reals^n)$ where $\frac 1 p +\frac 1 {p\dual} = 1$,
and the map
$v\mapsto \left<v,\cdot\right>_{\Reals^n}$ is an isomorphism between
$H^{-s,p}(\Reals^n)$ and $(H^{s,p}(\Reals^n))\dual$.  In this section
we establish analogous results for weighted Bessel potential spaces. Because
Gicquaud-Sakovich spaces are based on $\ell^\infty$ their duality theory
is more complex and is beyond the scope of our work.

Let $\check g$ denote a metric on $M$ with smooth
compactification $\rho^2\check g$ on $\bar M$
and let $dV_{\check g}$ be its volume element.  We use the notation
\begin{equation*}
\left<v,u\right>_{(M,\check g)} = \int_M vu\; dV_{\check g}
\end{equation*}
for smooth compactly supported functions $v$ and $u$ on $M$.
In this section we show that $\left<\cdot,\cdot\right>_{(M,\check g)}$ induces a corresponding
isomorphism between $H^{-s,p\dual}_{-\delta}(M;\Reals)$ and $(H^{s,p}_{\delta}(M;\Reals))\dual$.
Theorem \ref{thm:dual-H-gtb} in \S\ref{secsec:geometric-E}
builds on this result to obtain a duality statement for general geometric tensor bundles
and less regular metrics.

We begin by showing the map $\ip<\cdot,\cdot>_{(M,\check g)}$ is well defined between
nominally dual weighted spaces.
\begin{lemma}\label{lem:dual-makes-sense}
Let $s,\delta\in\Reals$ and $1<p<\infty$ and let $p\dual$ be the dual exponent of $p$.
Then
\[
\left<v,u\right>_{(M,\check g)} \lesssim \|v\|_{H^{-s,p\dual}_{-\delta}(M;\Reals)} \|u\|_{H^{s,p}_{\delta}(M;\Reals)}\\
\]
for all $u,v\in C_{\mathrm c}^\infty(M;\Reals)$ and hence
$\left<\cdot,\cdot\right>_{(M,\check g)}$ extends to a continuous bilinear form
on $H^{-s,p\dual}_{-\delta}(M;\Reals)\times H^{s,p}_{\delta}(M;\Reals)$.
\end{lemma}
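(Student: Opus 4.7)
The plan is to localize the pairing via the smooth partition of unity
$\{\psi_i\}$ with $\psi_i = \chi_i/\bar\chi$ from \eqref{eq:Xdef}, whose summands
are supported in $\Phi_i(B_r^\mathbb{H})$ for the fixed $1/2 < r < 2$ in the
definition of $\chi$.  Writing $\int_M vu\,dV_{\check g} = \sum_i \int_M \psi_i v u\,dV_{\check g}$
and pulling each term back by $\Phi_i$, the key geometric input is that
$\Phi_i^* dV_{\check g}$ is uniformly comparable to $dV_{g_\mathbb{E}}$ on $B_2^\mathbb{H}$.
This follows by writing $\check g = \rho^{-2}\hat g$ so that
$\Phi_i^* dV_{\check g} = (\rho_i y)^{-n}\Phi_i^* dV_{\hat g}$ and invoking the
uniform bounds for $\rho_i^{-2}\Phi_i^*\hat g$ from
Lemma \ref{lem:christoffel-bound}.

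Once this comparison is in hand, each local piece reduces to a Euclidean
duality pairing: since $\Phi_i^*(\psi_i v)$ is compactly supported in $B_r^\mathbb{H}$
and $\chi \equiv 1$ there, any extension of $\Phi_i^* u$ to $\mathbb R^n$ that
agrees with it on $B_r^\mathbb{H}$ produces the same integral, so taking the infimum
over such extensions gives
\[
\left|\int_{\mathbb R^n} \Phi_i^*(\psi_i v)\,\Phi_i^* u\,dV_{g_\mathbb{E}}\right|
\lesssim \|\Phi_i^*(\psi_i v)\|_{H^{-s,p\dual}(\mathbb R^n)}\,
\|\Phi_i^* u\|_{H^{s,p}(B_r^\mathbb{H})}.
\]
Now $\Phi_i^*(\psi_i v) = (\chi/\Phi_i^*\bar\chi)\,\Phi_i^* v$, and since
$\bar\chi,\,1/\bar\chi \in C^{k,\alpha}_0(M;\mathbb R)$ for every $k$, the
pullbacks $\Phi_i^*(1/\bar\chi)$ admit uniform $C^\ell(B_2^\mathbb{H})$ bounds.
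Hence multiplication by $1/\Phi_i^*\bar\chi$ is uniformly bounded on
$H^{-s,p\dual}(\mathbb R^n)$ (choosing $\ell$ large enough relative to $|s|$),
yielding
\[
\|\Phi_i^*(\psi_i v)\|_{H^{-s,p\dual}(\mathbb R^n)}
\lesssim \|\chi\,\Phi_i^* v\|_{H^{-s,p\dual}(\mathbb R^n)}.
\]

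To assemble the global estimate, apply H\"older's inequality in the index $i$
with conjugate exponents $p,p\dual$, splitting the weight as
$1 = \rho_i^{\delta}\cdot\rho_i^{-\delta}$:
\[
\sum_i \|\chi\Phi_i^* v\|_{H^{-s,p\dual}(\mathbb R^n)}\|\Phi_i^* u\|_{H^{s,p}(B_r^\mathbb{H})}
\le
\Bigl(\sum_i \rho_i^{\delta p\dual}\|\chi\Phi_i^* v\|_{H^{-s,p\dual}}^{p\dual}\Bigr)^{1/p\dual}
\Bigl(\sum_i \rho_i^{-\delta p}\|\Phi_i^* u\|_{H^{s,p}(B_r^\mathbb{H})}^{p}\Bigr)^{1/p}.
\]
The first factor is exactly $\|v\|_{H^{-s,p\dual}_{-\delta}(M;\mathbb R)}$, while the
second is equivalent to $\|u\|_{H^{s,p}_{\delta}(M;\mathbb R)}$ by the cutoff-free
norm \eqref{eq:cutoff-free-H} of Lemma \ref{lemma:cutoffequiv}.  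The extension
from smooth compactly supported to arbitrary elements of the weighted spaces
is then immediate from Proposition \ref{prop:density}.

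The main technical point is the uniform multiplier bound on
$H^{-s,p\dual}(\mathbb R^n)$ for $1/\Phi_i^*\bar\chi$; everything else
reduces to routine bookkeeping with pullbacks on the M\"obius charts.
The argument uses the positivity $\bar\chi \ge 1$ established after
\eqref{eq:Xdef} in an essential way, since it is this that prevents the
multiplier from degenerating at infinity.
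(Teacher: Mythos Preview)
Your approach is essentially the paper's: localize via $\chi_i/\bar\chi$, pull back by $\Phi_i$, invoke Euclidean $H^{-s,p^*}\times H^{s,p}$ duality, and apply H\"older in the index $i$. The paper places $1/\bar\chi$ with $u$ rather than with $v$ and uses an auxiliary cutoff $\eta$ on the $u$ side instead of the quotient norm on $B_r^{\Hyp}$, but these are cosmetic rearrangements.

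One step needs tightening. You assert that $\Phi_i^* dV_{\check g}$ is ``uniformly comparable'' to $dV_{g_{\mathbb E}}$ and then display the local term as $\int_{\Reals^n} \Phi_i^*(\psi_i v)\,\Phi_i^* u\,dV_{g_{\mathbb E}}$. But pointwise comparability of the density $\omega_i$ (where $\Phi_i^* dV_{\check g} = \omega_i\,dV_{g_{\mathbb E}}$) is not enough when $s\neq 0$: the genuine local term is $\int \Phi_i^*(\psi_i v)\,(\omega_i\Phi_i^* u)\,dV_{g_{\mathbb E}}$, and one cannot simply drop $\omega_i$ since the integrand may change sign. What is needed is that $\omega_i$ acts as a uniform multiplier on $H^{s,p}$, i.e.\ uniform $C^k(B_2^{\Hyp})$ bounds on $\omega_i$ for some integer $k>|s|$, so that $\omega_i$ can be absorbed into one of the factors. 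Lemma~\ref{lem:christoffel-bound} does supply these $C^k$ bounds (and the paper records them explicitly as $\sup_i\|\omega_i\|_{C^k(B_2^{\Hyp})}<\infty$), so the repair is immediate: carry $\omega_i$ through and treat it exactly as you treat $1/\Phi_i^*\bar\chi$.
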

\begin{proof}
For each M\"obius parametrization $\Phi_i$ define $\omega_i$ by $\Phi_i^* dV_{\check g} = \omega_i dV_{g_{\mathbb E}}$.
Since $g$ admits a smooth compactification, an easy computation shows that for any $k\in\Nats$,
\begin{equation}\label{eq:omega-i-est}
\sup_{i} \|\omega_i\|_{C^k(B_2)} <\infty,\qquad \sup_{i} \|(\omega_i)^{-1}\|_{C^k(B_2)} <\infty.
\end{equation}
The usual change of variables formula shows that if $u$ and $v$ are smooth functions
supported in $\Phi_i(B_2^\Hyp)$ then
\[
\left<v,u\right>_{(M,\check g)} = \left< \Phi_i^* v,\, \omega_i \Phi_i^* u\right>_{(\Reals^n,g_{\mathbb E})}.
\]

Suppose $u$ and $v$ are smooth compactly supported functions. Let $\eta$ be a cutoff
function on $B_2^\Hyp$ that equals 1 on the support of $\chi$, and pick an integer $k>|s|$. Then
recalling the function $\bar\chi$ from equation \eqref{eq:Xdef} we find
\[
\begin{aligned}
\left|\left<v,u\right>_{(M,\check g)}\right| &= \left|\left<\overline\chi v,(\overline \chi)^{-1}u\right>_{(M,\check g)}\right|\\
&\le \sum_{i} \left|\left< \chi_i v, (\overline \chi)^{-1} u\right>_{(M,\check g)}\right|
\\
&= \sum_{i} \left|\left< \chi \Phi_i^*v, \omega_i \eta \Phi_i^* ((\overline\chi)^{-1} u) \right>_{(\Reals^n,g_{\mathbb E})}\right|
\\
&\lesssim \sum_{i} \|\chi \Phi_i^* v \|_{H^{-s,p\dual}(\Reals^n)}
\|\omega_i\|_{C^{k}(B_2)} \| \eta \Phi_i^*( (\overline\chi)^{-1} u) \|_{H^{s,p}(\Reals^n)}\\
&\le \|v\|_{H^{-s,p\dual}_{-\delta}}
\left[\sum_{i} \rho_i^{-\delta p} \| \eta \Phi_i^*((\overline \chi)^{-1}) u) \|_{H^{s,p}(\Reals^n)}\right]^{1/p}
\end{aligned}
\]
where we have used Sobolev duality on $\Reals^n$ along with uniform $C^k$ bounds on $\omega_i$.
To finish the proof it suffices to show that
\begin{equation*}
\left[\sum_i \rho_i^{-\delta p} \| \eta\, \Phi_i^* ( (\overline\chi)^{-1} u) \|_{H^{s,p}(\Reals^n)}^p\right]^{1/p} \lesssim
\|u \|_{H^{s,p}_{\delta}(M)}.
\end{equation*}
But this follows from Lemma \ref{lemma:cutoffequiv} and our earlier observation
that $u\mapsto (\overline \chi)^{-1}u$ is continuous on $H^{s,p}_{\delta}(M)$.
\end{proof}

\begin{theorem}\label{thm:Hdual}
Let $s,\delta\in\Reals$ and $1<p<\infty$ and let $p\dual$ be the Lebesgue dual exponent of $p$.
The map
\begin{equation}\label{eq:dualmap}
v\mapsto \left<v,\cdot\right>_{(M,\check g)}
\end{equation}
is an isomorphism between $H^{-s,p\dual}_{-\delta}(M;\Reals)$ and $(H^{s,p}_{\delta}(M;\Reals))\dual$.
\end{theorem}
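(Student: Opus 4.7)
Lemma \ref{lem:dual-makes-sense} already supplies continuity of the map \eqref{eq:dualmap}, so it remains to verify injectivity and surjectivity; the norm estimate produced along the way will yield continuity of the inverse. Injectivity is immediate: if $v\in H^{-s,p\dual}_{-\delta}(M;\Reals)$ satisfies $\ip<v,u>_{(M,\check g)}=0$ for all $u\in C^\infty_{\rm cpct}(M;\Reals)$, then in any M\"obius chart the distribution $\omega_j\Phi_j^* v$ pairs to zero against every Euclidean test function, forcing $\Phi_j^* v=0$ and hence $v=0$.

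For surjectivity, I will leverage the sequence-space framework used in the proof of Theorem \ref{thm:H-interp-early}. Recall the bounded maps
\[
S:H^{s,p}_\delta(M;\Reals)\to \ell^p(\{\rho_j^{-\delta}H^{s,p}(\Reals^n)\}),\qquad
R:\ell^p(\{\rho_j^{-\delta}H^{s,p}(\Reals^n)\})\to H^{s,p}_\delta(M;\Reals),
\]
with $S(u)=(\chi\Phi_j^*u)_j$, $R=\bar\chi^{-1}T$ where $T((w_j))=\sum_j(\Phi_j)_*(\eta w_j)$, and satisfying $R\circ S=\Id$. Given $F\in(H^{s,p}_\delta(M;\Reals))\dual$ set $G=F\circ R$; then $G$ is a bounded linear functional on the sequence space of norm $\lesssim\|F\|$. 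Standard vector-valued $\ell^p$-duality combined with the Bessel potential duality $(H^{s,p}(\Reals^n))\dual=H^{-s,p\dual}(\Reals^n)$ yields distributions $\tilde v_j\in H^{-s,p\dual}(\Reals^n)$ with
\[
G((w_j))=\sum_j\ip<\tilde v_j,w_j>_{(\Reals^n,g_{\mathbb E})},\qquad
\sum_j\rho_j^{\delta p\dual}\,\|\tilde v_j\|_{H^{-s,p\dual}(\Reals^n)}^{p\dual}\lesssim\|F\|^{p\dual}.
\]

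Next, with $\omega_j$ defined by $\Phi_j^*dV_{\check g}=\omega_j\,dV_{g_{\mathbb E}}$ as in Lemma \ref{lem:dual-makes-sense}, set $v_j:=\chi\,\omega_j^{-1}\tilde v_j$, which is supported in $\supp\chi\subset B_r^\Hyp$ for some $r<2$. The uniform $C^k$ bounds on $\chi\omega_j^{-1}$ from \eqref{eq:omega-i-est}, together with standard multiplier estimates, give $\|v_j\|_{H^{-s,p\dual}(\Reals^n)}\lesssim\|\tilde v_j\|_{H^{-s,p\dual}(\Reals^n)}$ uniformly in $j$. Define $V_j$ to be the distribution on $M$ characterized by $\Phi_j^*V_j=v_j$, supported in $\Phi_j(B_r^\Hyp)$; a change of variables gives
\[
\ip<V_j,u>_{(M,\check g)}=\int_{\Reals^n} v_j\,(\Phi_j^*u)\,\omega_j\,dV_{g_{\mathbb E}}=\ip<\tilde v_j,\chi\Phi_j^*u>_{(\Reals^n,g_{\mathbb E})}
\]
for all $u\in C^\infty_{\rm cpct}(M;\Reals)$. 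Lemma \ref{lem:Hbuilder}(\ref{part:builderH}), applied in the negative-regularity setting to the pieces $V_j$, shows that $v:=\sum_jV_j$ converges in $H^{-s,p\dual}_{-\delta}(M;\Reals)$ with $\|v\|_{H^{-s,p\dual}_{-\delta}(M;\Reals)}\lesssim\|F\|$. Summing the identity above and applying $R\circ S=\Id$ yields, for each $u\in C^\infty_{\rm cpct}(M;\Reals)$,
\[
\ip<v,u>_{(M,\check g)}=\sum_j\ip<\tilde v_j,\chi\Phi_j^*u>_{(\Reals^n,g_{\mathbb E})}=G(S(u))=F(R(S(u)))=F(u).
\]
Density of $C^\infty_{\rm cpct}(M;\Reals)$ in $H^{s,p}_\delta(M;\Reals)$ (Proposition \ref{prop:density}) extends this to all of $H^{s,p}_\delta(M;\Reals)$.

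The main technical points are bookkeeping ones: verifying that multiplication by $\chi\omega_j^{-1}$ acts boundedly on $H^{-s,p\dual}(\Reals^n)$ with constant independent of $j$ (this uses the uniform smooth bounds \eqref{eq:omega-i-est} on $\omega_j^{\pm 1}$ together with standard multiplier theorems applicable to the full Bessel scale), and checking that Lemma \ref{lem:Hbuilder}(\ref{part:builderH})—whose statement already covers arbitrary real $s$—indeed functions correctly at negative regularity when applied to the distributions $V_j$. Once bijectivity and the norm inequality $\|v\|_{H^{-s,p\dual}_{-\delta}(M;\Reals)}\lesssim\|F\|$ are established, the Open Mapping Theorem (or the explicit construction) promotes the result to a Banach space isomorphism.
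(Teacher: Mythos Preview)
Your argument is correct and follows the same architecture as the paper's proof: both localize the functional via the M\"obius parametrizations, represent each local piece using Euclidean Bessel-potential duality, and reassemble using Lemma~\ref{lem:Hbuilder}. The only substantive difference is in how the $\ell^{p\dual}$ bound on the local representatives is obtained: you invoke the abstract duality $(\ell^p(\{X_j\}))\dual=\ell^{p\dual}(\{X_j\dual\})$ directly after composing with the retraction $R$, whereas the paper establishes this bound by hand---partitioning the index set into finitely many families $N_a$ with pairwise disjoint chart images and testing against explicit sequences $\{b_i\}\in\ell^p$. Your packaging via $G=F\circ R$ and $R\circ S=\Id$ is slightly cleaner; the paper's explicit partition argument is more self-contained and avoids citing the vector-valued sequence-space duality.
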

\begin{proof}
Suppose $F\in (H^{s,p}_{\delta}(M))\dual$. Fix $i\in\Nats$ for the moment
and define $F_i\in (H^{s,p}(\Reals^n))\dual$ by
\begin{equation*}
F_i(u) = F((\Phi_i)_*(\chi u) ) = F(\chi_i (\Phi_i)_*u).
\end{equation*}
Since $F_i$ is supported on $B_{r}^\Hyp$ for some $r<2$
there exists $w_i\in H^{-s,p\dual}(\Reals^n)$ supported on $B_{r'}^{\Hyp}$, where $r'<2$, such that
\begin{equation*}
F_i(u) = \left< w_i, u\right>_{(\Reals^n,g_{\mathbb E})}
\end{equation*}
for all $u\in H^{s,p}(\Reals^n)$.  Using the usual isomorphism between
$H^{-s,p*}(\Reals^n)$ and
$(H^{s,p}(\Reals^n))\dual$ there exists
a constant $c>0$ arising from
the isomorphism and a function
$z_i\in H^{s,p}(\Reals^n)$ such that
$\|z_i\|_{H^{s,p}(\Reals^n)}=1$ and such that
\begin{equation*}
\left<w_i, z_i\right>_{(\Reals^n,g^{\mathbb E})} \ge c \|w_i\|_{H^{-s,p\dual}(\Reals^n)}.
\end{equation*}
Moreover, since $w_i$ is supported in $B_{r'}^{\Hyp}$ we can assume,
after multiplication by a cutoff function depending only on $r'$, that
$z_i$ is supported in $B_{r''}^\Hyp$ for some $r''$, and that the functions
$z_i$ remain uniformly bounded in $H^{s,p}(\Reals^n)$.
As in the proof of Lemma \ref{lem:dual-makes-sense} define
$\omega_i$ by $\Phi_i^* dV_{\check g} = \omega_i dV_{g_{\mathbb E}}$
and let
\begin{equation*}
\begin{aligned}
v_i &= (\Phi_{i})_* ( \omega_i^{-1} w_i)\\
u_i &= (\Phi_{i})_* z_i.
\end{aligned}
\end{equation*}
Recalling that $\omega_i^{-1}$ admits the uniform bounds
\eqref{eq:omega-i-est} we find that
$v_i$ and $u_i$ are well-defined elements of $H^{-s,p\dual}_{0}(M)$
and $H^{s,p}_{0}(M)$ respectively.

Partition $\Nats$ into sets finitely many sets $N_a$ such that
if $i,j\in N_a$ with $i\neq j$ then $\Phi_i(B_2^\Hyp)\cap \Phi_j(B_2^\Hyp)=\emptyset$;
the existence of such a partition
is an easy consequence of local finiteness.  Fix a partition index $a$ and let $\{b_i\}_{i\in N_a}$ be a sequence of real numbers such that
$\sum_{i\in N_a} |b_i|^p \le 1$.  Let $u_{(a)} = \sum_{i\in N_a} b_i\rho_i^{\delta} u_i$.
Then
\begin{equation*}
\sum_{i\in N_a} \rho_i^{-\delta p}
\| b_i \rho_i^{\delta} \Phi_i^* u_i\|_{H^{s,p}(\Reals^n)}^p
\lesssim  \sum_{i\in N_a} |b_i|^p \lesssim 1
\end{equation*}
where we have used uniform $H^{s,p}(\Reals^n)$ bounds for the functions $z_i=\Phi_i^*u_i$.
Lemma \ref{lem:Hbuilder} implies that $u_{(a)}$ belongs to $H^{s,p}_{\delta}(\Reals^n)$
and its norm is bounded independent of the choice of sequence $\{b_j\}$.
From continuity of the map $F$ and this uniform bound we find
\begin{equation*}
\begin{aligned}
\|F\| \gtrsim F(u_{(a)}) &= F\left(\sum_{i\in N_a} \chi_i \rho_i^\delta b_i u_i \right) \\
&= \sum_{i\in N_a} \rho_i^\delta b_i F_i(  \Phi_i^* u_i)\\
&= \sum_{i\in N_a} \rho_i^\delta b_i \left<w_i, z_i\right>_{(\Reals^n,g^{\mathbb E})}\\
&\ge  \sum_{i\in N_a} \rho_i^\delta b_i \alpha \| w_i\|_{H^{-s,p\dual}(\Reals^n)},
\end{aligned}
\end{equation*}
where the second equality relies on the fact that we only sum over $i\in N_a$.
Since the choice of sequence $\{b_i\}\in \ell^p$ with norm less than one is arbitrary
we conclude $\{ \rho_i^\delta \| w_i\|_{H^{-s,p}(\Reals^n)} \}_{i\in N_a}\in \ell^{p\dual}$.
Recall that $v_i=(\Phi_i)_*( \omega_i^{-1} w_i)$.  The uniform $C^k$ bounds on the
functions $(\omega_i)^{-1}$ and Lemma \ref{lem:Hbuilder} imply
$v_{(a)} := \sum_{i\in N_a} v_i \in H^{-s,p\dual}_{-\delta}(M)$
and consequently $v$ defined by
\begin{equation*}
v = \sum_{a} v_{(a)} = \sum_i v_i
\end{equation*}
also belongs to $H^{-s,p\dual}_{-\delta}(M)$.
Moreover, for any $u\in H^{s,p}_{\delta}(M)$
\begin{equation*}
\begin{aligned}
\left<v,u\right>_{(M,\check g)} &= \sum_{i} \left<v_i,u\right>_{(M,\check g)}\\
&= \sum_{i} \left< \omega_i^{-1} w_i, \eta \omega_i \Phi_i^*  u\right>_{(\Reals^n,g_{\mathbb E})}\\
&= \sum_{i} F( \chi_i  u ) = F(\overline\chi u )
\end{aligned}
\end{equation*}
where $\eta$ is a cutoff function on $B_2^\Hyp$ with $\eta\chi=1$ and with
$\eta w_i=w_i$ for all $i$.
But then $\overline\chi ^{-1} v \in H^{-s,p\dual}_{\delta}(M;\Reals)$ and
\begin{equation*}
\left<\overline\chi^{-1} v,  u\right>_{(M,\check g)} = F(u)
\end{equation*}
for all $u\in H^{s,p}_{\delta}(M)$.

We have now established surjectivity of the map in \eqref{eq:dualmap}.
To show the map is injective, suppose $v\neq 0$.
Then $\chi \Phi_i^* v\neq 0$ for some $i$ and we can find $w\in H^{s,p}(\Reals^n)$,
supported in $B_2$, such that $\left<\chi \Phi_i^* v,w\right>_{\Reals^n}\neq 0$.
Let $u=(\Phi_i)_*((\omega_i)^{-1}\chi w)$. Then
\begin{equation*}
\left<v,u\right>_{(M,\check g)} = \left<\chi \Phi_i^* v, \omega_i (\omega_i)^{-1} w\right>_{(\Reals^n,g_{\mathbb E})} =
\left<\Phi_i^* v, \chi u\right>_{\Reals^n} \neq 0.
\end{equation*}
Thus the map is a linear isomorphism \eqref{eq:dualmap} and its continuity
follows from Lemma \ref{lem:dual-makes-sense}.
\end{proof}

\subsection{Geometric tensor bundles}\label{secsec:geometric-E}

Until this point we have only considered tensor bundles $E=T^{k_1,k_2}M$
and now turn our attention to geometric tensor bundles, which
are subbundles of tensor bundles and which potentially depend on a metric
for their definition. In fact, subbundles such as the symmetric tensors in $T^{2,0}\bar M$
which are associated with $GL(n;\Reals)$ representations
could have largely been treated earlier with very minor modifications to the proofs.
In the general case, however, the regularity of the metric plays a role in the results.
Moreover, even for tensor bundles, the
generalization of the duality result Theorem \ref{thm:Hdual} requires a choice
of (perhaps not smooth) reference metric.
Therefore, throughout this section we fix a metric $h\in X^{s_0,p_0}_0(M)$
with $1<p_0<\infty$ and $s_0>n/p_0$ (and consequently is locally H\"older continuous)
that admits a compactification $\rho^2 h$ having
an extension to a continuous metric on $\bar M$. 
We also fix a particular geometric tensor
bundle $E$ over $\bar M$ with ambient tensor bundle F.  For technical reasons
we assume additionally that $s_0\ge 1$. In the event that $E$ happens to be
a subbundle associated with a $GL(n;\Reals)$ representation
(and therefore does not require a metric for its definition)
we can simply select an arbitrary metric $h$ with smooth conformal compactification for use in the constructions below.

The regularity of the metric restricts the permissible  regularity
classes of sections of $E$. Using the notation of \S\ref{sec:differential-ops},
let $\mathcal S^{s_0,p_0}_0$ be the set of pairs $(s,p)\in \Reals\times (1,\infty)$
satisfying
\[
\begin{gathered}
-s_0 \le s \le s_0,\\
\frac{1}{p_0}-\frac{s}{n} \le \frac{1}{p}-\frac{s}{n} \le \frac{1}{p_0\dual}+\frac{s}{n}.
\end{gathered}
\]
The significance of these conditions comes from multiplication in Sobolev spaces:
since $s_0>n/p_0$, multiplication is continuous
$H^{s_0,p_0}(\Reals^n)\times H^{s,p}(\Reals^n)\to H^{s,p}(\Reals^n)$
when $(s,p)\in \mathcal S^{s_0,p_0}_0$.  
Similarly, Proposition \ref{prop:multiplication}.
shows that when $(s,p)\in \mathcal S^{s_0,p_0}_0$, multiplication is continuous
\[
\begin{aligned}
H^{s,p}_\delta(M;\Reals)\times X^{s_0,p_0}_0(M;F)&\to H^{s,p}_\delta(M;F),\\
X^{s,p}_\delta(M;\Reals)\times X^{s_0,p_0}_0(M;F)&\to X^{s,p}_\delta(M;F).
\end{aligned}
\]

\begin{definition}\label{def:gtb} Suppose $(s,p)\in S^{s_0,p_0}_0$ and $\delta\in\Reals$. Let
$E$ be a geometric tensor bundle over $\bar M$
with ambient tensor bundle $F$.  The space
$H^{s,p}_\delta(M;E)$ is the subspace of $H^{s,p}_\delta(M;F)$ consisting
of the elements $u$ for which there exist
finitely many sections
$\sigma_\alpha$ in $X^{s_0,p_0}_{0}(M;F)$ that are also
(continuous) sections of $E$, along with coefficients
$u_\alpha\in H^{s,p}_{\delta}(M;\Reals)$, such that
\[
u = \sum_{\alpha} u_{\alpha} \sigma_\alpha.
\]
The space $X^{s,p}_\delta(M;E)$ is defined similarly, with coefficients
in $X^{s,p}_{\delta}(M;\Reals)$.
The spaces $H^{s,p}_\delta(M;E)$
and $X^{s,p}_\delta(M;E)$ inherit
their norms from the ambient spaces $H^{s,p}_\delta(M;F)$
and $X^{s,p}_\delta(M;F)$, respectively.
\end{definition}

The main strategy for the remainder of this section is to find
a single collection of basis sections in $X^{s,p}_{0}(M;E)$
and then apply earlier results to coefficients with respect to this
basis. The basis sections
themselves are obtained using well-controlled local orthonormal frames
for the tangent bundle, and to build these we start with
a local coordinate version of the frame construction.
Let $\Sym_+^2(\Reals^n)$ denote the the set of symmetric, positive definite
$n\times n$ matrices which we identify with the set of metrics on $\Reals^n$.
Let $\GS:\Sym_+^2(\Reals^n)\times\GL(n,\Reals)\to\GL(n,\Reals)$ be the map taking
a metric $g$ and a matrix $A$
to the matrix with columns consisting of a $g$-orthonormal
frame obtained via the Gram-Schmidt procedure starting at
the columns of $A$.  We extend $\GS$ to sections of metrics
and frames in the obvious way and use the same notation.

The following sequence of results are used to build well controlled
local frames for $E$ using $\GS$.

\begin{lemma}\label{lem:power} Suppose $1<p<\infty$ and $s\ge 1$ with $s>n/p$,
and that $U\subset \Reals^n$ is open and bounded.
Let $I=[a,b]$ with $0<a<b$, and let 
$$Z_I=\{u\in H^{s,p}(U): u(x)\in I \text{ for all } x\in U\}.$$
For each $a\in\Reals$ the map $u\mapsto u^a$ is
globally Lipschitz continuous as a map $Z_{I}\to H^{s,p}(U)$.
\end{lemma}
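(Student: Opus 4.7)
The plan is to derive the estimate from the pointwise factorization
$$u^{\alpha} - v^{\alpha} = (u-v)\,\Psi(u,v), \qquad \Psi(u,v) := \int_0^1 \alpha \bigl(v + t(u-v)\bigr)^{\alpha-1}\, dt,$$
(writing $\alpha$ for the exponent so as not to conflict with the interval endpoint), which holds pointwise because $u, v \in Z_I$ and the straight-line segment between them stays in the convex set $I = [a,b] \subset (0,\infty)$. I will then bound $\Psi(u,v)$ in $H^{s,p}(U)$ via a Sobolev composition theorem and appeal to the Banach-algebra property of $H^{s,p}(U)$ to control the product on the right.

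First I would control $\Psi(u,v)$. Since $F(\sigma) = \alpha\sigma^{\alpha-1}$ is smooth on $(0,\infty)$ and $a>0$, there exists an extension $\widetilde F \in C^\infty(\mathbb R)$ that coincides with $F$ on an open interval containing $[a,b]$. For every $u,v\in Z_I$ and $t\in[0,1]$, the argument $v+t(u-v)$ takes values in $[a,b]$, so $\widetilde F\bigl(v+t(u-v)\bigr) = F\bigl(v+t(u-v)\bigr)$. Because $s \ge 1$ and $s > n/p$, a local composition theorem for Bessel potential spaces (the Euclidean analog of Proposition~\ref{prop:nonlin}, based on Theorem~1 of \cite{WS-Composition-96}) yields $\widetilde F(v+t(u-v)) \in H^{s,p}(U)$ with norm controlled by finitely many $C^N$ norms of $\widetilde F$ on $[a,b]$, the $L^\infty$ bound $b$ (which is uniform on $Z_I$), and the $H^{s,p}(U)$ norms of $u$ and $v$. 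Since the integrand depends continuously on $t$, a Bochner integral in $H^{s,p}(U)$ gives the same type of bound for $\Psi(u,v)$.

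Next I would close the argument by multiplication. Since $U$ is bounded and $s > n/p$, the local version of Proposition~\ref{prop:multiplication} (see the appendix of \cite{HMT2020}) implies that $H^{s,p}(U)$ is a Banach algebra, giving
$$\|u^\alpha - v^\alpha\|_{H^{s,p}(U)} = \|(u-v)\,\Psi(u,v)\|_{H^{s,p}(U)} \lesssim \|u-v\|_{H^{s,p}(U)}\,\|\Psi(u,v)\|_{H^{s,p}(U)}.$$
Combined with the previous step, this produces the Lipschitz estimate; the Lipschitz constant depends on $\alpha$, $a$, $b$, $U$, and on $H^{s,p}(U)$ bounds for $u$ and $v$, which is exactly what is used in the application of this Lemma within the proof of Lemma~\ref{lem:geometric-op-mapping}, where perturbations $\delta g$ are constrained to a small $H^{s,p}$-ball.

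The hard part will be the composition estimate in the first step, since it rests on a Sobolev-space nonlinearity theorem for $H^{s,p}(U)$ with (possibly) noninteger $s$. The paper has already invoked \cite{WS-Composition-96} for precisely this purpose in Proposition~\ref{prop:nonlin}, and the passage from $\Reals^n$ to the bounded domain $U$ is routine via an extension operator, together with the fact that $H^{s,p}(U)$ is defined by restriction. The only other delicacy is interpreting the $t$-integral in a Banach-space-valued sense, which is immediate once one observes that $t \mapsto \widetilde F\bigl(v + t(u-v)\bigr)$ is continuous from $[0,1]$ into $H^{s,p}(U)$; this is a direct consequence of the continuity statement in the composition theorem.
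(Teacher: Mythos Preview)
Your proof is correct and follows essentially the same route as the paper: factor the difference, apply a Sobolev composition result to control the nonlinear factor, then close with the Banach-algebra property of $H^{s,p}(U)$. The paper uses the algebraic factorization $(u_1)^a-(u_0)^a=(u_0)^{a-1}G_a(\delta u/u_0)\,\delta u$ with $G_a(x)=((1+x)^a-1)/x$, whereas you use the integral mean-value form; these are the same object (your $\Psi$ equals $(u_0)^{a-1}G_a(\delta u/u_0)$ after a change of variables), so the only real difference is that you handle a Bochner integral while the paper applies the composition estimate once to $G_a$ but must first control $\delta u/u_0$ via the algebra and a separate application of the nonlinearity bound to $x\mapsto 1/x$. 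Your observation that the Lipschitz constant depends on $H^{s,p}$ bounds for $u$ and $v$ is accurate and matches what the paper's argument actually establishes; this bounded-set Lipschitz estimate is precisely what the applications (Lemmas~\ref{lem:GS} and~\ref{lem:geometric-op-mapping}) require.
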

\begin{proof}
We first recall a fact about nonlinearities and Sobolev spaces.
A straightforward bootstrap using the inequalities $s>n/p$ and $s\ge 1$
shows that if $F:\Reals\to\Reals$ is smooth
and $u\in H^{s,p}(U)$, then $F(u)\in H^{s,p}(U)$ with its
norm is controlled by a bound depending on $F$ and $\|u\|_{H^{s,p}(U)}$.
Alternatively, this fact is also a consequence of the more sophisticated estimates
of \cite[Theorem 1]{WS-Composition-96}
and \cite[Section 3.1]{taylor_pseudodifferential_1991}.

Suppose $u_0,u_1\in Z_{\epsilon,K}$ and let $\delta u =u_1-u_0$.  Then
\[
(u_1)^a - (u_0)^a = (u_0)^{a}\left[\left(1+\frac{\delta u}{u_0}\right)^a-1\right]
= (u_0)^{a-1} G_a\left(\frac{\delta u}{u_0}\right)\,\delta u
\]
where $G_a$ is a smooth function on $\Reals_{>0}$. Our initial remarks about nonlinearities
show that the $H^{s,p}(U)$ norms
of $u_0$, $u_0^{a-1}$, $\delta u/u_0$ and $G_a(u/u_0)$ are all uniformly bounded
by a constant depending on $a$ and $I$, and the result follows
the fact that $H^{s,p}(U)$ is an algebra.
\end{proof}

\begin{lemma}\label{lem:GS}
Let $U\subset \Reals^n$ be open and bounded, let $1<p<\infty$, and let $s\in\Reals$
with $s\ge 1$ and $s>n/p$. Let $\epsilon>0$ and $K>0$ and define
\begin{multline*}
Z_{K}=\big\{(g,A): g\in H^{s,p}(U;\Sym_+^2(\Reals^n)),\, 
A\in H^{s,p}(U;GL(n;\Reals)),
\\
\det g\geq \epsilon, \, \det A\ge \epsilon,\,
\|g\|_{H^{s,p}(U)}\le K,\,
\|A\|_{H^{s,p}(U)}\le K \big\}.
\end{multline*}
Then $GS: Z_{\epsilon,K}\to H^{s,p}(U;\GL(n,\Reals))$ and is globally Lipschitz continuous.
Moreover the map $(g,A)\mapsto (GS(g,A))^{-1}$ similarly determines a globally Lipschitz
map $Z_{K}\to H^{s,p}(U;\GL(n,\Reals))$.
\end{lemma}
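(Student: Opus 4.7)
The approach is to express the Gram--Schmidt map algebraically via Cholesky factorization and then verify Lipschitz bounds using the Banach algebra structure of $H^{s,p}(U)$ together with Lemma \ref{lem:power}. Writing $W = \GS(g,A)$, the relations $W^T g W = I$ and $A = W \tilde R$ (where $\tilde R$ is upper triangular with positive diagonal entries recording the Gram--Schmidt coefficients) yield $A^T g A = \tilde R^T \tilde R$. Thus $\tilde R$ is the unique upper-triangular Cholesky factor of $M := A^T g A$ with positive diagonal, and $W = A \tilde R^{-1}$. For the inverse, the identity $W^{-1} = W^T g$ (immediate from $W^T g W = I$) reduces Lipschitz continuity of $\GS(g,A)^{-1}$ to that of $\GS(g,A)$ followed by a single additional multiplication by $g$.

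First I would verify that the bilinear step $(g,A) \mapsto M$ is Lipschitz on bounded subsets of $H^{s,p}(U;\Sym_+^2(\Reals^n)) \times H^{s,p}(U;\GL(n,\Reals))$; this follows from the Banach algebra property of $H^{s,p}(U)$, which holds since $s \geq 1$ and $s > n/p$. The key linear-algebra point is that on $Z_{K}$ the matrix $M(x)$ satisfies a uniform spectral lower bound $M(x) \geq cI$ for $x \in U$, with $c > 0$ depending only on $\epsilon$, $K$, and $n$: Sobolev embedding gives a uniform $L^\infty$ bound on $M$ in terms of $K$, and combined with $\det M = (\det A)^2 \det g \geq \epsilon^3$ the elementary estimate $\lambda_{\min}(M) \geq \det M / \|M\|^{n-1}$ yields the bound. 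A Cauchy-interlacing argument (applied to principal submatrices) then shows every leading principal minor of $M$ is uniformly bounded below.

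Next I would construct $\tilde R$ entry-by-entry via the classical Cholesky recursion, whose diagonal entries have the form $\tilde R_{ii} = \sqrt{\mu_i / \mu_{i-1}}$ with $\mu_k$ the $k$-th leading principal minor of $M$, and whose off-diagonal entries are polynomial expressions in the $M_{ij}$ and the $\tilde R_{jj}^{-1}$. By the previous paragraph, every quantity to which a square root or reciprocal is applied remains in a fixed interval $[a,b] \subset (0,\infty)$ uniformly over $Z_{K}$, so Lemma \ref{lem:power} (with exponents $\pm 1/2$) provides the Lipschitz continuity into $H^{s,p}(U)$, and the Banach algebra property handles all multiplications. The same combination of tools applied to the explicit upper-triangular inversion formula gives $\tilde R^{-1} \in H^{s,p}(U;\GL(n,\Reals))$ depending Lipschitz-continuously on $\tilde R$. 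Finally $W = A \tilde R^{-1}$ and $W^{-1} = W^T g$ are each a single $H^{s,p}$ multiplication, completing the chain of Lipschitz maps.

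The main obstacle is the bookkeeping of uniform lower bounds: one must carefully propagate the spectral bound on $M$ through the Cholesky recursion to certify that every argument of Lemma \ref{lem:power} encountered along the way lies in a fixed compact subinterval of $(0,\infty)$ depending only on $\epsilon$, $K$, and $n$. Once that uniformity is in place, the Gram--Schmidt map reduces to a finite composition of operations each of which is globally Lipschitz on $Z_{K}$ by the Banach algebra property and Lemma \ref{lem:power}.
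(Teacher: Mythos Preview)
Your proposal is correct and uses the same core ingredients as the paper: both reduce $\GS$ to a finite composition of addition, multiplication, and power maps $x\mapsto x^a$ with arguments confined to a fixed compact interval of $(0,\infty)$, then invoke Lemma \ref{lem:power} and the Banach algebra property of $H^{s,p}(U)$. The differences are organizational. The paper works directly with the Gram--Schmidt recursion and obtains the interval bound by a one-line compactness argument (the image $\{(g(x),A(x)):x\in U\}$ lies in a compact subset of $\Sym_+^2(\Reals^n)\times\GL(n,\Reals)$, so any continuous function of it, in particular each argument of $x\mapsto x^{-1/2}$, ranges over a compact set); you instead recast $\GS$ as Cholesky factorization of $M=A^TgA$ and supply explicit spectral estimates via $\det M\ge\epsilon^3$ and Cauchy interlacing. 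For the inverse, the paper bounds $\det\GS(g,A)$ via the same compactness argument and then appeals to Lemma \ref{lem:power} for the cofactor formula, whereas your identity $W^{-1}=W^Tg$ is a cleaner shortcut. Your route is more explicit and quantitative; the paper's is shorter but leaves the uniform-interval bookkeeping implicit in the compactness step.
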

\begin{proof}
The coefficients of $\GS(g,A)$ are constructed iteratively from those of $g$
and $A$ using addition, multiplication, and the map $x\mapsto x^{-1/2}$.
Since $\{(g(x),A(x)):x\in U\}$ is contained in a compact subset of
$\Sym_+^2\times\, GL(n;\Reals)$ we can
find a compact interval $I$ of positive real numbers, depending only on
$\epsilon$ and $K$,
such that each application of $x\mapsto x^{-1/2}$ needed to
compute $\GS(g(x),A(x))$ for all $x\in U$ involves
an argument in $I$.  Hence using Lemma \ref{lem:power}, and the fact that
$H^{s,p}(U)$ is an algebra, we find that $\GS:Z_{\epsilon,K}\to H^{s,p}(U;GL(n;\Reals))$
and indeed globally Lipschitz continuous.  Moreover,
$\{\GS(g(x),A(x)):x\in U\}$ is contained in a compact subset of $\GL(n,\Reals)$
determined by $\epsilon$ and $K$,
and therefore there exist bounds $C\ge c>0$, depending only on $\epsilon$ and $K$,
such that $c\le \det \GS(g(x),A(x))\le C$ for all
$x\in U$; recall that the Gram Schmidt procedure is orientation preserving.  Lemma \ref{lem:power}, and the fact that $H^{s,p}(U)$
is an algebra, then imply that $g\mapsto (\GS(g,A))^{-1}$ is also globally Lipschitz continuous.
\end{proof}

The previous lemma allows us to find a local orthonormal frame for $E$ with support in
the image of each M\"obius parametrization.
\begin{lemma}\label{lem:frame-basis}
Let $d$ be the fiber dimension of $E$, and recall the cutoff function $\chi$ used in \eqref{eq:Hspdelta-def}.
For each M\"obius parametrization $\Phi_i$ there
exist sections $\sigma_{\alpha,i} \in X^{s_0,p_0}(M;E)$, $1\le\alpha\le d$,
supported in $\Phi_i(B_2^\Hyp)$ such that for each
$x$ in a neighborhood of the support of $(\Phi_i)_*\chi$,
the collection $\{\sigma_{\alpha,a}(x)\}_{\alpha}$ is an $h$-orthonormal
basis for the fiber of $E$ over $x$.  Moreover, $\|\sigma_{\alpha,i}\|_{X^{s_0,p_0}_{0}(M;E)}$
is bounded independent of $\alpha$ and $i$.
\end{lemma}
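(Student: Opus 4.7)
The plan is to work inside each Möbius chart: pull back $h$, build an $h$-orthonormal coefficient frame on $B_2^\Hyp$ via Lemma \ref{lem:GS}, use $O(n)$-invariance to convert a fixed orthonormal basis of the representation space $V$ associated with $E$ into a local $h$-orthonormal frame $\tilde\sigma_{\alpha,i}$ of $\Phi_i^* E$, cut off near $\supp\chi$, and push forward to $M$. All uniformity in $i$ will come from the fact that the $X^{s_0,p_0}_0$ norm directly encodes uniform $H^{s_0,p_0}(B_2^\Hyp)$ bounds on the pullbacks $\Phi_i^* h$, together with the continuity and positivity of the compactified metric $\rho^2 h$ on $\bar M$.

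First I would verify the hypotheses of Lemma \ref{lem:GS} uniformly. The cutoff-free equivalent norm from Lemma \ref{lemma:cutoffequiv} (applied component-wise) gives $\|\Phi_i^* h\|_{H^{s_0,p_0}(B_2^\Hyp)} \lesssim \|h\|_{X^{s_0,p_0}_0(M;T^{0,2}M)}$ independently of $i$. A direct computation in the adapted boundary coordinates of $\Phi_i$ yields
\begin{equation*}
(\Phi_i^* h)_{kl}(x,y) = y^{-2}\,\bar h_{ab}\!\circ\Phi_i(x,y),
\end{equation*}
so $\det \Phi_i^* h = y^{-2n}\,\det\bar h\circ\Phi_i$; because $\bar h$ is a continuous Riemannian metric on the compact $\bar M$ and $y$ is bounded above on $B_2^\Hyp$, this gives a uniform lower bound $\det \Phi_i^* h \ge \epsilon>0$. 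The constant identity matrix lies in $H^{s_0,p_0}(B_2^\Hyp;\GL(n,\Reals))$ on this bounded domain with determinant $1$, so the pair $(\Phi_i^* h, \Id)$ sits inside the set $Z_K$ of Lemma \ref{lem:GS} with $\epsilon$ and $K$ independent of $i$.

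Next I would apply Lemma \ref{lem:GS} to obtain $A_i = \GS(\Phi_i^* h, \Id) \in H^{s_0,p_0}(B_2^\Hyp;\GL(n,\Reals))$ whose columns encode an $h$-orthonormal frame for $TB_2^\Hyp$, with $\|A_i\|_{H^{s_0,p_0}}$ and $\|A_i^{-1}\|_{H^{s_0,p_0}}$ uniformly bounded. Fix once for all a basis $\{v_\alpha\}_{\alpha=1}^d$ of $V$ that is orthonormal for the induced $O(n)$-invariant inner product on $T^{k_1,k_2}\Reals^n$, and define $\tilde\sigma_{\alpha,i}$ on $B_2^\Hyp$ as the tensor whose components in the coordinate frame are obtained from $v_\alpha$ by applying $A_i$ and $A_i^{-1}$ on the contravariant and covariant indices, respectively. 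Because $V$ is $O(n)$-invariant, $\tilde\sigma_{\alpha,i}(x,y)$ lies in $(\Phi_i^* E)_{(x,y)}$ and the family $\{\tilde\sigma_{\alpha,i}(x,y)\}_\alpha$ is $h$-orthonormal; the multiplicative bounds in $H^{s_0,p_0}(B_2^\Hyp)$ (valid since $s_0 > n/p_0$, $s_0\ge 1$) promote the uniform control on $A_i^{\pm 1}$ to a uniform bound on $\|\tilde\sigma_{\alpha,i}\|_{H^{s_0,p_0}(B_2^\Hyp;F)}$.

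Finally, pick a smooth cutoff $\eta$ supported in $B_2^\Hyp$ that equals $1$ on a neighborhood of $\supp\chi$, set $\sigma_{\alpha,i} = (\Phi_i)_*(\eta\,\tilde\sigma_{\alpha,i})$ extended by zero, and check uniform $X^{s_0,p_0}_0(M;E)$ control by estimating $\rho_j^0\|\chi\Phi_j^*\sigma_{\alpha,i}\|_{H^{s_0,p_0}(\Reals^n)}$ for the finitely many $j$ whose Möbius image meets $\Phi_i(B_2^\Hyp)$; as in the proof of Lemma \ref{lem:Hbuilder} this reduces via uniform $C^k$ bounds on transition Jacobians to the uniform $H^{s_0,p_0}(B_2^\Hyp)$ bound on $\tilde\sigma_{\alpha,i}$. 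The main subtlety is the orchestration of the $O(n)$-invariance: since $A_i$ is merely $\GL(n,\Reals)$-valued (change of basis from a coordinate frame to an orthonormal frame), it is only the fact that $V$ is invariant under the \emph{orthogonal} group that guarantees the resulting tensor lies in the geometric subbundle $E$ rather than just in the ambient $F$, and that the frame transforms correctly across overlapping Möbius charts. Once this is in place, the regularity and uniform bounds follow directly from Lemma \ref{lem:GS}, the algebra property of $H^{s_0,p_0}(B_2^\Hyp)$, and uniform local finiteness.
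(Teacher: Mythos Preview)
Your proof is correct and follows essentially the same approach as the paper: pull back $h$ to each $B_2^\Hyp$, invoke Lemma \ref{lem:GS} (after checking uniform $H^{s_0,p_0}$ bounds and a uniform determinant lower bound via the continuous compactification), apply the resulting frame to a fixed Euclidean-orthonormal basis for $V$, cut off with $\eta$, and push forward using Lemma \ref{lem:Hbuilder} for the uniform $X^{s_0,p_0}_0$ control. Your closing remark about the ``subtlety'' is slightly overcooked—once the bundle $E$ is defined via $O(n)$-invariance, the fact that $(A_i)_* v_\alpha$ lands in $E_x$ is immediate from the definition (its components in the $h$-orthonormal frame are exactly $v_\alpha\in V$), and no cross-chart compatibility is needed since the $\sigma_{\alpha,i}$ are built independently for each $i$—but this does not affect the argument.
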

\begin{proof}
The collection of metrics $\Phi_i^* h$ are uniformly
bounded in $H^{s,p}(B_r^\Hyp)$.  Moreover,
using the fact that $h$ has a continuous compactification,
the argument of Lemma \ref{lem:geometric-op-mapping} shows that
$\det (\Phi_i^* h)$ is uniformly bounded away from zero. 
So Lemma \ref{lem:GS} implies
that for each $i$ we can find a section of matrices $A_i\in H^{s_0,p_0}(B_r^\Hyp)$,
with columns consisting of an $h$-orthonormal frame, such that the norms
$\|A_i\|_{H^{s_0,p_0}(B_r^\Hyp)}$ and
$\|(A_i)^{-1}\|_{H^{s_0,p_0}(B_r^\Hyp)}$ are uniformly bounded in $i$.

Let $\eta$ be a cutoff function supported in some $B_r^\Hyp$, with $r<2$,
that equals $1$ in a neighborhood
of the support of $\chi$.
Let $\sigma^{\mathbb E}_{\alpha}$, $1\le \alpha\le d$ be a Euclidean orthonormal frame
for the subspace of the tensor bundle over $\Reals^n$ associated with $E$.
From the bounds on $A_i$ and $A_i^*$ we find that the sections
$(A_i)_* \sigma^{\mathbb E}_{\alpha}$ induced from the tensor bundle automorphism
determined by $A_i$ are bounded in $H^{s,p}(B_r^\Hyp)$
independent of $i$ and $\alpha$.
Lemma \ref{lem:Hbuilder} implies that the sections
$\sigma_{\alpha,i} = (\Phi_i)_* ( \eta (A_i)_* \sigma^{\mathbb E}_{\alpha})$
are uniformly bounded in $X^{s,p}_0(M;E)$.
Since $\eta\chi=\chi$,
for each $i$ the sections $\sigma_{\alpha,i}$ form a pointwise $h$-orthonormal
frame for $x$ in the support of $(\Phi_i)_*\chi$ .
\end{proof}

As in the proof of Theorem \ref{thm:Hdual},
partition $\Nats$ into finitely many sets $\{N_a\}$ with finite so that if
if $i,j\in N_a$ and if $i\neq j$ then $\Phi_i(B_2^\Hyp)\cap \Phi_j(B_2^\Hyp)=\emptyset$.
Let $\chi_{(a)} = \overline\chi^{-1}\sum_{i\in N_a}(\Phi_i)_* \chi$ so
that the collection $\chi_{(a)}$ is a partition of unity in $C^k_0(M;\Reals^n)$
for any $k$.  Similarly, for each bundle index $\alpha$ let
$\sigma_{\alpha,(a)}=\sum_{i\in N_a} \sigma_{\alpha,i}$,
so each $\sigma_{\alpha,(a)}\in X^{s_0,p_0}_0(M;E)$.
For each index $(\alpha,(a))$ we define projections
\[
\pi_{\alpha,(a)}(u) = \ip<\chi_{(a)}u, \sigma_{\alpha,(a)}>_{h}.
\]
Proposition \ref{prop:multiplication}
ensures that the projections
are continuous $H^{s,p}_{\delta}(M;F)\to H^{s,p}_{\delta}(M;\Reals)$
and $X^{s,p}_{\delta}(M;F)\to X^{s,p}_{\delta}(M;\Reals)$ so long
as $(s,p)\in \mathcal S^{s_0,p_0}_0$.

\begin{lemma}\label{lem:bundle-split}
Suppose $\delta\in\Reals$ and $(s,p)\in \mathcal S^{s_0,p_0}_0$.
If $u\in H^{s,p}_{\delta}(M;E)$ or $u\in X^{s,p}_{\delta}(M;E)$
then
\begin{equation}\label{eq:g-bundle-decomp}
u = \sum_{\alpha,a} \pi_{\alpha,(a)}(u)\, \sigma_{\alpha,(a)}.
\end{equation}
\end{lemma}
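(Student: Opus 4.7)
The plan is to establish the identity first for continuous sections, where pointwise evaluation is unambiguous, and then extend to the general case by a density and continuity argument. Both sides of \eqref{eq:g-bundle-decomp} are finite sums (with $a$ ranging over the finitely many partition classes and $\alpha$ over $1,\ldots,d$), so no convergence issue arises in the sum itself.

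First I would verify that both sides of \eqref{eq:g-bundle-decomp} define continuous maps from $H^{s,p}_{\delta}(M;E)$ (respectively $X^{s,p}_{\delta}(M;E)$) into $H^{s,p}_{\delta}(M;F)$ (respectively $X^{s,p}_{\delta}(M;F)$). The left side is the inclusion, which is continuous by definition. For the right side, recall $\chi_{(a)}\in C^{\infty}_{0}(M;\Reals)$ and $\sigma_{\alpha,(a)}\in X^{s_0,p_0}_{0}(M;E)$ by construction. Since $(s,p)\in\mathcal S^{s_0,p_0}_{0}$, Proposition \ref{prop:multiplication} implies that $\pi_{\alpha,(a)}\colon H^{s,p}_{\delta}(M;F)\to H^{s,p}_{\delta}(M;\Reals)$ is continuous, as is the multiplication map $u_{\alpha,a}\mapsto u_{\alpha,a}\sigma_{\alpha,(a)}$ from $H^{s,p}_{\delta}(M;\Reals)$ into $H^{s,p}_{\delta}(M;F)$; the Gicquaud–Sakovich version is analogous.

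Next I would verify the identity pointwise for continuous sections $u$ of $E$. Fix $x\in M$. Because $\{\chi_{(a)}\}$ is a partition of unity, $u(x)=\sum_{a}\chi_{(a)}(x)u(x)$, so it suffices to prove $\chi_{(a)}(x)u(x)=\sum_{\alpha}\pi_{\alpha,(a)}(u)(x)\sigma_{\alpha,(a)}(x)$ for each $a$ with $\chi_{(a)}(x)\neq 0$. By disjointness of the images $\Phi_i(B_2^{\Hyp})$ within a single class $N_a$, there is a unique $i\in N_a$ with $x$ in $\supp((\Phi_i)_*\chi)$; on a neighborhood of this support, $\{\sigma_{\alpha,i}(x)\}_{\alpha=1}^d$ is an $h$-orthonormal basis for $E_x$ by Lemma \ref{lem:frame-basis}, and moreover $\sigma_{\alpha,(a)}(x)=\sigma_{\alpha,i}(x)$ since the other $\sigma_{\alpha,j}$ in the sum defining $\sigma_{\alpha,(a)}$ vanish at $x$. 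Since $u(x)\in E_x$, the standard orthogonal expansion gives
\begin{equation*}
\chi_{(a)}(x)u(x)
=\sum_{\alpha}\bigl\langle\chi_{(a)}(x)u(x),\sigma_{\alpha,i}(x)\bigr\rangle_{h}\sigma_{\alpha,i}(x)
=\sum_{\alpha}\pi_{\alpha,(a)}(u)(x)\,\sigma_{\alpha,(a)}(x),
\end{equation*}
and summing over $a$ yields the claimed identity pointwise.

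Finally I would extend by density. Given $u\in H^{s,p}_{\delta}(M;E)$, invoke Definition \ref{def:gtb} to write $u=\sum_{\beta}u_{\beta}\tau_{\beta}$ with $u_{\beta}\in H^{s,p}_{\delta}(M;\Reals)$ and $\tau_{\beta}\in X^{s_0,p_0}_{0}(M;E)$; since $s_0>n/p_0$, Proposition \ref{prop:SobolevEmbedding} ensures each $\tau_{\beta}$ is continuous. Approximate each $u_{\beta}$ in $H^{s,p}_{\delta}(M;\Reals)$ by smooth compactly supported functions $u_{\beta}^{(n)}$; then $u^{(n)}:=\sum_{\beta}u_{\beta}^{(n)}\tau_{\beta}$ is a continuous section of $E$ converging to $u$ in $H^{s,p}_{\delta}(M;F)$ by continuity of multiplication. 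The pointwise argument above applies to each $u^{(n)}$, and passing to the limit using the first step yields \eqref{eq:g-bundle-decomp} for $u$. The Gicquaud–Sakovich case is handled identically, using density of $C^{\infty}_{\loc}(M;\Reals)\cap X^{s,p}_{\delta}(M;\Reals)$ from Proposition \ref{prop:density-early}.

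The only mild subtlety — and really the sole point requiring care — is ensuring that the approximating sections $u^{(n)}$ truly are continuous sections of $E$ (not merely of $F$), so that the pointwise orthogonal decomposition step is legitimate; this is exactly why we represent $u$ via the decomposition from Definition \ref{def:gtb} rather than by approximating in $H^{s,p}_{\delta}(M;F)$ directly.
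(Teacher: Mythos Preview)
Your proof is correct and follows essentially the same outline as the paper's: verify the identity pointwise for continuous sections of $E$ using the orthonormal-frame property of $\{\sigma_{\alpha,(a)}\}$ on the support of $\chi_{(a)}$, then extend to general $u$ via the representation $u=\sum_\beta u_\beta\tau_\beta$ from Definition~\ref{def:gtb}.

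The one notable difference is in the extension step. You argue by continuity and density, approximating each scalar coefficient $u_\beta$ by smooth functions and passing to the limit. The paper instead observes that both sides of \eqref{eq:g-bundle-decomp} are linear over scalar functions---i.e., $\pi_{\alpha,(a)}(f\tau_\beta)=f\,\pi_{\alpha,(a)}(\tau_\beta)$ for $f\in H^{s,p}_\delta(M;\Reals)$---so once the identity holds for each $\tau_\beta\in X^{s_0,p_0}_0(M;E)$ it holds immediately for $u=\sum_\beta u_\beta\tau_\beta$, with no approximation needed. Your route is a bit more laborious but perfectly valid; the paper's module-linearity observation is the shortcut you could have taken.
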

\begin{proof}
Suppose that $w\in X^{s_0,p_0}_{0}(M)$. On each fiber over the support
of $\chi_{(a)}$ the sections $\{\sigma_{\alpha,(a)}\}$ form
an orthonormal frame for $E$ and hence a pointwise computation shows
\[
\chi_{(a)} w = \sum_{\alpha} \ip<\chi_{(a)} w, \sigma_{\alpha,(a)}>_{h} \sigma_{\alpha,(a)}.
\]
But then
\[
w = \sum_{a} \chi_{(a)} w =
\sum_{a,\alpha} \ip<\chi_{(a)} w, \sigma_{\alpha,(a)}>_{h} \sigma_{\alpha,(a)}
\]
as required.  The general case now follows from linearity.
\end{proof}

At this point, the generalizations of all the results discussed in
Section \ref{secsec:funtion-space-elementary}
(completeness, the basic inclusions of Lemma \ref{lem:basic-inclusions},
Sobolev embedding (Proposition \ref{prop:SobolevEmbedding}),
Proposition \ref{prop:multiplication}) concerning multiplication,
and the Rellich Lemma \ref{lem:Rellich}) are easy consequences
of their tensor space counterparts, Lemma \ref{lem:bundle-split} and
Definition \ref{def:gtb}, and we leave these details to the reader.

Regarding density of `smooth' sections, the regularity of the metric
provides a cap to the amount of regularity that can be expected from
a dense subset.
\begin{proposition}\label{prop:density-gtb}
Suppose $(s,p)\in \mathcal S^{s_0,p_0}_0$ and $\delta\in\Reals$.
\begin{enumerate}
  \item The compactly supported elements of $H^{s_0,p_0}_{\delta}(M;E)$
  are dense in $H^{s,p}_{\delta}(M;E)$.
  \item The set $H^{s_0,p_0}_{\rm loc}(M;E)\cap X^{s,p}_\delta(M;E)$
  is dense in $X^{s,p}_{\delta}(M;E)$.
\end{enumerate}
\end{proposition}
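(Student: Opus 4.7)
The plan is to reduce everything to the scalar density results of Proposition~\ref{prop:density} by using the finite decomposition provided by Lemma~\ref{lem:bundle-split}. Given $u$ in $H^{s,p}_\delta(M;E)$ or $X^{s,p}_\delta(M;E)$, that lemma yields the finite sum
\[
u = \sum_{\alpha,a} \pi_{\alpha,(a)}(u)\,\sigma_{\alpha,(a)},
\]
where each coefficient $\pi_{\alpha,(a)}(u)$ is a scalar-valued section in the corresponding scalar weighted space, each $\sigma_{\alpha,(a)}\in X^{s_0,p_0}_0(M;E)$, and the number of pairs $(\alpha,a)$ is bounded by $d$ times the (finite) number of partition classes.

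For part~(a), I would apply Proposition~\ref{prop:density}\eqref{part:Hsp-smooth-dense-early} to each scalar coefficient separately to obtain $\phi_{\alpha,(a)}\in C^\infty_{\rm cpct}(M;\Reals)$ converging to $\pi_{\alpha,(a)}(u)$ in $H^{s,p}_\delta(M;\Reals)$. The candidate approximation is $\tilde u = \sum_{\alpha,a} \phi_{\alpha,(a)}\sigma_{\alpha,(a)}$, which is compactly supported because only finitely many $\phi_{\alpha,(a)}$, each compactly supported, appear. To see $\tilde u\in H^{s_0,p_0}_\delta(M;E)$, I would note that $\phi_{\alpha,(a)}\in H^{s_0,p_0}_\delta(M;\Reals)$ (trivially, being smooth with compact support) and invoke Proposition~\ref{prop:multiplication}, valid because $(s_0,p_0)\in\mathcal{S}^{s_0,p_0}_0$, so the product lands in $H^{s_0,p_0}_\delta(M;F)$; it lies in $H^{s_0,p_0}_\delta(M;E)$ fiberwise by Definition~\ref{def:gtb} since $\sigma_{\alpha,(a)}$ is a section of $E$. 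Convergence in $H^{s,p}_\delta(M;E)$ is the continuity of multiplication $H^{s,p}_\delta(M;\Reals)\times X^{s_0,p_0}_0(M;F)\to H^{s,p}_\delta(M;F)$, again from Proposition~\ref{prop:multiplication}, summed over the finitely many terms.

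For part~(b), the argument parallels the above using Proposition~\ref{prop:density}\eqref{part:Xsp-smooth-dense-early} to approximate each $\pi_{\alpha,(a)}(u)\in X^{s,p}_\delta(M;\Reals)$ by $\phi_{\alpha,(a)}\in C^\infty_{\rm loc}(M;\Reals)\cap X^{s,p}_\delta(M;\Reals)$. The product $\phi_{\alpha,(a)}\sigma_{\alpha,(a)}$ lies in $X^{s,p}_\delta(M;E)$ by the Gicquaud--Sakovich variant of Proposition~\ref{prop:multiplication}; it lies in $H^{s_0,p_0}_{\rm loc}(M;E)$ because $\sigma_{\alpha,(a)}\in X^{s_0,p_0}_0(M;E)\subset H^{s_0,p_0}_{\rm loc}(M;E)$ and $\phi_{\alpha,(a)}\in C^\infty_{\rm loc}(M;\Reals)\subset H^{s_0,p_0}_{\rm loc}(M;\Reals)$, while $H^{s_0,p_0}_{\rm loc}$ is locally a Banach algebra since $s_0>n/p_0$. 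Convergence follows as in part~(a).

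The step that requires the most care is ensuring the products lie in the geometric bundle $E$ rather than merely in the ambient $F$, together with the bookkeeping that all the relevant multiplication maps are continuous with the indices $(s,p)$ and $(s_0,p_0)$ used here; but both of these are immediate from Definition~\ref{def:gtb} and the fact that $(s,p)$ and $(s_0,p_0)$ both belong to $\mathcal{S}^{s_0,p_0}_0$. There is no substantive obstacle, because the bundle-valued problem reduces cleanly—via the fixed, globally defined frame $\{\sigma_{\alpha,(a)}\}$—to finitely many scalar density problems that have already been solved.
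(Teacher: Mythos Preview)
Your proposal is correct and follows essentially the same approach as the paper: decompose $u$ via Lemma~\ref{lem:bundle-split} into a finite sum $\sum_{\alpha,a}\pi_{\alpha,(a)}(u)\,\sigma_{\alpha,(a)}$, approximate each scalar coefficient using Proposition~\ref{prop:density}, and reassemble using the continuity of multiplication. The paper's proof is terser but structurally identical; your additional verification that the products land in $E$ rather than merely $F$, and that the relevant multiplications are continuous, simply makes explicit what the paper leaves implicit.
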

\begin{proof}
Let $u\in H^{s_0,p_0}_{\delta}(M;E)$ and use Lemma \ref{lem:bundle-split} to decompose
\[
u = \sum_{\alpha,a} u_{\alpha,(a)}\sigma_{\alpha,(a)}
\]
for functions $u_{\alpha,(a)}\in H^{s,p}_{\delta}(M)$.
Let $\epsilon>0$ and use Proposition \ref{prop:density} to find
a smooth compactly supported function $\tilde u_{\alpha,(a)}$
with $\|u_{\alpha,(a)}-\tilde u_{\alpha,(a)}\|_{H^{s,p}_{\delta}(M;E)}<\epsilon$.
Setting $\tilde u = \sum_{\alpha,a} \tilde u_{\alpha,(a)} \sigma_{\alpha,(a)}$
we find $\tilde u\in H^{s_0,p_0}_{\delta}(M)$ and $\|u-\tilde u\|_{H^{s,p}_{\delta}(M)}\lesssim\epsilon$.

The Gicquaud-Sakovich case is proved similarly.
\end{proof}

Interpolation for sections of geometric tensor bundles
essentially follows from applying interpolation to the coefficients of a section.
\begin{proposition}
Consider spaces $H^{s_1,p_1}_{\delta_1}(M;E)$ and $H^{s_2,p_2}_{\delta_2}(M;E)$, with
$(s_1,p_1)$ and $(s_2,p_2)$ in $S^{s_0,p_0}_{0}$, and with
$\delta_1,\delta_2\in\Reals$.  Let $\theta\in(0,1)$ and let $s$, $p$, $d$ be given by \eqref{eq:Hsp-interp}.
Then $(s,p)\in S^{s_0,p_0}_0$ and
\begin{equation*}
[H^{s_1,p_1}_{\delta_1}(M;E),H^{s_2,p_2}_{\delta_2}(M;E)]_\theta = H^{s,p}_{\delta}(M;E).
\end{equation*}
\end{proposition}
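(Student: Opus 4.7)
The plan is to realize $H^{s,p}_\delta(M;E)$ as a complemented subspace of the ambient tensor bundle space $H^{s,p}_\delta(M;F)$ via a bounded projection that is the \emph{same} linear operator for all admissible parameter choices, and then to invoke the standard complex interpolation theorem for complemented subspaces together with the scalar-valued interpolation Theorem \ref{thm:H-interp-early}.

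First, the containment $(s,p)\in\mathcal S^{s_0,p_0}_0$ is automatic: the inequalities defining $\mathcal S^{s_0,p_0}_0$ are affine in the pair $(s,1/p)$, hence closed under the convex combination of \eqref{eq:Hsp-interp}. In particular Lemma \ref{lem:bundle-split} and Proposition \ref{prop:multiplication} both apply at each of the three parameter choices $(s_1,p_1)$, $(s_2,p_2)$, and $(s,p)$.

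Next, using the fixed basis sections $\sigma_{\alpha,(a)}\in X^{s_0,p_0}_0(M;F)$ of Lemma \ref{lem:frame-basis} and the fixed scalar projections $\pi_{\alpha,(a)}$, define a single linear operator $P$ on $F$-valued distributions by
\[
Pu := \sum_{(\alpha,a)\in J} \pi_{\alpha,(a)}(u)\,\sigma_{\alpha,(a)}.
\]
For any $(s',p')\in\mathcal S^{s_0,p_0}_0$ and any $\delta'\in\Reals$, Proposition \ref{prop:multiplication} shows that each $\pi_{\alpha,(a)}$ is bounded $H^{s',p'}_{\delta'}(M;F)\to H^{s',p'}_{\delta'}(M;\Reals)$ and that multiplication by $\sigma_{\alpha,(a)}$ is bounded in the reverse direction, so $P$ is a bounded linear endomorphism of $H^{s',p'}_{\delta'}(M;F)$ whose image lies in the subspace $H^{s',p'}_{\delta'}(M;E)$ (by Definition \ref{def:gtb}). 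Lemma \ref{lem:bundle-split} asserts that $Pu=u$ for every $u\in H^{s',p'}_{\delta'}(M;E)$, so $P^2=P$ and $P$ is a bounded projection with range exactly $H^{s',p'}_{\delta'}(M;E)$. Crucially, the formula for $P$ does not depend on the parameters $(s',p',\delta')$, so $P$ is a well-defined linear map on the ambient sum space $H^{s_1,p_1}_{\delta_1}(M;F)+H^{s_2,p_2}_{\delta_2}(M;F)$ that restricts to a bounded projection on each summand.

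With this setup, the proof will conclude by an appeal to the standard retract principle in complex interpolation: if $(X_0,X_1)$ is a Banach couple and $P$ is a single linear map on $X_0+X_1$ that restricts to a bounded projection on each $X_k$, then $\bigl[P(X_0),P(X_1)\bigr]_\theta = P\bigl([X_0,X_1]_\theta\bigr)$ with equivalent norms. Applying this with $X_k = H^{s_k,p_k}_{\delta_k}(M;F)$ and using Theorem \ref{thm:H-interp-early} to identify $[X_0,X_1]_\theta = H^{s,p}_\delta(M;F)$ gives
\[
\bigl[H^{s_1,p_1}_{\delta_1}(M;E),H^{s_2,p_2}_{\delta_2}(M;E)\bigr]_\theta
= P\bigl(H^{s,p}_\delta(M;F)\bigr) = H^{s,p}_\delta(M;E),
\]
where the final equality uses that the range of $P$ on $H^{s,p}_\delta(M;F)$ is exactly $H^{s,p}_\delta(M;E)$, already established at the interpolated parameters in the previous step. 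I do not anticipate any serious obstacle; the only delicate point is the parameter-uniformity of the projection $P$, which is precisely why Lemma \ref{lem:frame-basis} is formulated to produce a \emph{single} fixed family of frame sections $\sigma_{\alpha,(a)}$ in $X^{s_0,p_0}_0(M;E)$ that is usable at every $(s',p')\in\mathcal S^{s_0,p_0}_0$.
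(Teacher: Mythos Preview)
Your proof is correct and is organized around the same retract idea as the paper, but you choose a different ambient couple. The paper maps $H^{s_k,p_k}_{\delta_k}(M;E)$ into the finite product $\prod_{\alpha,a} H^{s_k,p_k}_{\delta_k}(M;\Reals)$ via $S(u)=(\pi_{\alpha,(a)}(u))$, with left inverse $R(\{u_{\alpha,(a)}\})=\sum u_{\alpha,(a)}\sigma_{\alpha,(a)}$, interpolates the product of scalar spaces, and then uses density (Proposition~\ref{prop:density-gtb}) to upgrade the resulting closed-subspace inclusion to equality. You instead use the ambient tensor-bundle couple $H^{s_k,p_k}_{\delta_k}(M;F)$, on which the interpolation result is already available (Theorem~\ref{thm:H-interp-early}), and apply the complemented-subspace/retract principle via the single projection $P=\sum \pi_{\alpha,(a)}(\cdot)\,\sigma_{\alpha,(a)}$. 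This is slightly cleaner: invoking the retract principle as a black box yields $[P(X_0),P(X_1)]_\theta=P([X_0,X_1]_\theta)$ directly, so no separate density argument is needed. The paper's route has the minor advantage of reducing everything to scalar interpolation rather than relying on the tensor-bundle case of Theorem~\ref{thm:H-interp-early}, but since that theorem is already in hand your shortcut is legitimate.
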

\begin{proof}
For $(s,p)\in S^{s_0,p_0}_0$
define $S:H^{s,p}_\delta(M;E)\to \prod_{\alpha,a} H^{s,p}_{\delta}(M;\Reals)$
by $S(u)= (\pi_{\alpha,(a)}(u))$ and observe that $S$ is continuous.
Define $R:\prod_{\alpha,a} H^{s,p}_{\delta}(M;\Reals) \to H^{s,p}_\delta(M;E)$
to be the map taking $(u_{\alpha,(a)})$
to $\sum_{\alpha,(a)}
u_{\alpha,(a)} \sigma_{\alpha,(a)}$, which is again continuous.  Moreover,
Lemma \ref{lem:bundle-split} implies $R\circ S=\Id$.

Using the fact that
\[
\left[\prod_{\alpha,a} H^{s_1,p_1}_{\delta_1}(M;\Reals), \prod_{\alpha,a} H^{s_2,p_2}_{\delta_2}(M;\Reals)\right]_{\theta} = \prod_{\alpha,a}H^{s,p}_{\delta}(M;\Reals)
\]
we find that
\begin{align*}
&S:\left[H^{s_1,p_1}_{\delta_1}(M;E),H^{s_2,p_2}_{\delta_2}(M;E)\right]_\theta
\to \prod_{\alpha,a} H^{s,p}_{\delta}(M;\Reals),
\\
&R:\prod_{\alpha,a} H^{s,p}_{\delta}(M;\Reals)
\to \left[H^{s_1,p_1}_{\delta_1}(M;E),H^{s_2,p_2}_{\delta_2}(M;E)\right]_\theta,
\end{align*}
and that $R\circ S = \Id$.
The image of $R$ with domain $\prod_{\alpha,a} H^{s,p}_{\delta}(M;\Reals)$
is contained in $H^{s,p}_\delta(M;E)$ and hence
$[H^{s_1,p_1}_{\delta_1}(M;E),H^{s_2,p_2}_{\delta_2}(M;E)]_\theta\subset
H^{s,p}_\delta(M;E)$.  Moreover,
$$S:[H^{s_1,p_1}_{\delta_1}(M;E),H^{s_2,p_2}_{\delta_2}(M;E)]_\theta
\to \prod_{\alpha,a} H^{s,p}_{\delta}(M;\Reals)$$ is an isomorphism
onto its image since it admits a continuous inverse, the restriction
of $R$.  Hence
\[
\|u\|_{[H^{s_1,p_1}_{\delta_1}(M;E),H^{s_2,p_2}_{\delta_2}(M;E)]_\theta}
\sim\sum_{\alpha,a} \|u_{\alpha,(a)}\|_{H^{s,p}_\delta(M;\Reals)}
\sim \|u\|_{H^{s,p}_\delta(M;E)},
\]
and we conclude $[H^{s_1,p_1}_{\delta_1}(M;E),H^{s_2,p_2}_{\delta_2}(M;E)]_\theta$
is a closed subspace of $H^{s,p}_\delta(M;E)$.  
Since it contains
the dense subspace of compactly supported elements of $H^{s_0,p_0}_\delta(M;E)$
we obtain the desired equality.
\end{proof}

The limited interpolation result Lemma \ref{lem:Xinterp-early}
for Gicquaud-Sakovich spaces generalizes to geometric tensor bundles
by using the identity
\[
Tu = \sum_{\alpha_a} (\pi_{\alpha,(a)}\circ T)(u)
\]
and applying Lemma \ref{lem:Xinterp-early} to the maps
$\pi_{\alpha,(a)}\circ T$.

The only remaining task is to generalize  the duality results of Theorem \ref{thm:Hdual} , which only treated the scalar functions and only with a smooth background
metric, to tensors defined by rough metric $h$.  
We first relax the regularity of the metric.
\begin{lemma}\label{lem:relax-dual-scalar}
Suppose $(s,p)\in \mathcal S^{s_0,p_0}_0$.  The map
\[
\ip<v,u>_{(M,h)} = \int_{M} vu\;dV_h,
\]
defined for smooth compactly supported functions $u,v$ on $M$,
extends to a continuous bilinear map
$H^{-s,p\dual}_{-\delta}(M;\Reals)\times H^{s,p}_{\delta}(M;\Reals)\to \Reals$.
Moreover, the map
\[
v\mapsto \ip<v,\cdot>_{(M,h)}
\]
is a continuous isomorphism $H^{-s,p\dual}_{-\delta}(M;\Reals) \to (H^{s,p}_{\delta}(M;\Reals))\dual$.
\end{lemma}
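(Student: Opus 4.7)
The plan is to reduce this to the smooth-metric case already handled by Theorem \ref{thm:Hdual}. Fix a smooth reference metric $\check g$ on $M$ with smooth conformal compactification $\rho^2 \check g$ on $\bar M$, and write $dV_h = \omega\, dV_{\check g}$ for a positive function $\omega$ on $M$. The entire argument hinges on establishing that $\omega$ and $\omega^{-1}$ lie in $X^{s_0,p_0}_0(M;\Reals)$, after which the multiplication properties summarized in \S\ref{secsec:geometric-E} and an identity relating the two pairings allow the result to be deduced mechanically from Theorem \ref{thm:Hdual}.

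First I would compute $\omega$ in a M\"obius parametrization. Writing $\Phi_i^* h = (\rho_i y)^{-2}\,\Phi_i^*\bar h$ and $\Phi_i^*\check g = (\rho_i y)^{-2}\,\Phi_i^*\overline{\check g}$, the factors of $\rho_i y$ cancel in the ratio, yielding
\[
\Phi_i^* \omega = \sqrt{\det(\Phi_i^*\bar h)/\det(\Phi_i^*\overline{\check g})}.
\]
Continuity of $\bar h$ on the compact $\bar M$, together with its nondegeneracy, gives a uniform upper and lower bound on $\det(\Phi_i^*\bar h)$, while $\overline{\check g}$ is smooth; hence $\Phi_i^*\omega$ is uniformly bounded above and below and, arguing as in Lemma \ref{lem:geometric-op-mapping} using $h\in X^{s_0,p_0}_0(M;T^{0,2}M)$ combined with Lemma \ref{lem:power} applied to $x\mapsto x^{\pm 1/2}$ (valid because $s_0\ge 1$ and $s_0>n/p_0$), each $\Phi_i^*\omega$ and $\Phi_i^*\omega^{-1}$ is uniformly bounded in $H^{s_0,p_0}(B_2^\Hyp)$. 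This gives $\omega, \omega^{-1}\in X^{s_0,p_0}_0(M;\Reals)$.

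Since $(s,p)\in \mathcal S^{s_0,p_0}_0$, and hence also $(-s,p\dual)\in \mathcal S^{s_0,p_0}_0$ (the set is symmetric under this involution, as noted earlier in the paper), Proposition \ref{prop:multiplication} shows that multiplication by $\omega$ is a continuous automorphism of $H^{s,p}_\delta(M;\Reals)$ with inverse given by multiplication by $\omega^{-1}$; the same holds on $H^{-s,p\dual}_{-\delta}(M;\Reals)$. For smooth compactly supported $u$ and $v$ we have the identity
\[
\ip<v,u>_{(M,h)} = \int_M vu\,\omega\,dV_{\check g} = \ip<v,\omega u>_{(M,\check g)},
\]
and continuity of the bilinear form on $H^{-s,p\dual}_{-\delta}(M;\Reals)\times H^{s,p}_\delta(M;\Reals)$ follows immediately from Lemma \ref{lem:dual-makes-sense} (applied with $\check g$) and the continuity of $u\mapsto \omega u$.

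For the isomorphism assertion, given $F\in (H^{s,p}_\delta(M;\Reals))\dual$ I would define $\tilde F(u)=F(\omega^{-1}u)$, which is a continuous functional by the multiplier property above. Theorem \ref{thm:Hdual} produces a unique $v\in H^{-s,p\dual}_{-\delta}(M;\Reals)$ with $\tilde F(u)=\ip<v,u>_{(M,\check g)}$, and then $F(u)=\tilde F(\omega u)=\ip<v,\omega u>_{(M,\check g)}=\ip<v,u>_{(M,h)}$. Injectivity and continuity of the inverse follow by the same identity together with the corresponding statement in Theorem \ref{thm:Hdual}. The main obstacle in this plan is the verification that $\omega^{\pm 1}\in X^{s_0,p_0}_0(M;\Reals)$: this requires the quantitative lower bound on $\det(\Phi_i^*\bar h)$ uniform in $i$ (coming from continuity of $\bar h$ on compact $\bar M$) together with the Nemytskii-type estimate of Lemma \ref{lem:power}, and it is precisely here that the hypotheses $s_0\ge 1$ and $s_0>n/p_0$ enter.
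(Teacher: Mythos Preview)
Your proposal is correct and follows essentially the same approach as the paper: reduce to the smooth-metric duality of Theorem \ref{thm:Hdual} via the density factor $\omega$ with $dV_h=\omega\,dV_{\check g}$, show $\omega,\omega^{-1}\in X^{s_0,p_0}_0(M;\Reals)$, and use that multiplication by $\omega^{\pm1}$ is a continuous automorphism on the relevant $H$-spaces because $(s,p),(-s,p\dual)\in\mathcal S^{s_0,p_0}_0$. The only cosmetic differences are that the paper invokes Proposition \ref{prop:nonlin} rather than Lemma \ref{lem:power} for $\omega^{-1}$, and it exhibits the inverse by writing $L=\ip<v,\cdot>_{(M,\check g)}=\ip<\omega^{-1}v,\cdot>_{(M,h)}$ instead of precomposing $F$ with multiplication by $\omega^{-1}$; these are equivalent.
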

\begin{proof}
Let $\check g$ be a smooth background metric as in Theorem \ref{thm:Hdual}.
Working in the domains of M\"obius parametrizations, using the fact that
$h$ admits a continuous compactification, one readily verifies that
$dV_h = \omega dV_{\check g}$ where $\omega\in X^{s_0,p_0}_0(M)$ and
where $\omega$ is uniformly bounded below away from zero.  Proposition \ref{prop:nonlin}
ensures that $\omega^{-1}\in X^{s_0,p_0}_0(M)$ as well.
Since
\[
\ip<\cdot,\cdot>_{(M,h)} = \ip<\cdot,\cdot\omega>_{(M,\check g)}
\]
for smooth functions, continuity of multiplication
$X^{s_0,p_0}_0(M)\times H^{s,p}_{\delta}(M)\to
H^{s,p}_{\delta}(M;\Reals)$ implies that $\ip<\cdot,\cdot>_{(M,h)}$
extends to a continuous bilinear map
$H^{-s,p\dual}_{-\delta}(M;\Reals)\times H^{s,p}_{\delta}(M;\Reals)\to \Reals$.
If $L\in (H^{s,p}_{\delta}(M))\dual$ we can find a unique
$v\in H^{-s,p\dual}_{\delta}(M)$ with $L=\ip<v,\cdot>_{(M,\check g)}$.
Since $(s,p)\in \mathcal S^{s_0,p_0}_0$ a simple computation shows
$(-s,p\dual)\in \mathcal S^{s_0,p_0}_0$ as well and hence
$v\mapsto \omega^{-1} v$ is a continuous automorphism of $H^{-s,p\dual}_{-\delta}(M)$.
But then
\[
L(\cdot) = \ip<\omega^{-1} v, \cdot \omega>_{(M,\check g)} = \ip<\omega^{-1} v,\cdot>_{(M,h)}.
\]
Since $L\mapsto v\mapsto \omega^{-1}v$ is a chain of isomorphisms, the proof is complete.
\end{proof}

Turning now to sections of geometric tensor bundles, define
\[
\ip<u,v>_{(M,h)} = \int_M \ip<u,v>_{h}\;dV_h
\]
for compactly supported sections $u,v\in H^{s_0,p_0}_0(M;E)$. 
For such sections we have
\[
\ip<u,v>_{(M,h)} = \sum_{\alpha,a,\alpha',a'} \int_M \pi_{\alpha,(a)}(u) \pi_{\alpha',(a')}(v)
\ip<\sigma_{\alpha,(a)},\sigma_{\alpha',(a')}>_{h}\; dV_h.
\]
Noting that each
$\ip<\sigma_{\alpha,(a)},\sigma_{\alpha',(a')}>_{h}\in X^{s_0,p_0}_0(M;\Reals)$
it follows from Lemma \ref{lem:relax-dual-scalar},
Propositions \ref{prop:density-gtb}
and \ref{prop:multiplication}, and the continuity of the projections that
$\ip<\cdot,\cdot>_{(M,h)}$
extends to a
continuous, bilinear functional on $H^{-s,p\dual}_{-\delta}(M;E)\times H^{s,p}_{\delta}(M;E)$.
\begin{theorem}\label{thm:dual-H-gtb}
Suppose $(s,p)\in \mathcal S^{s_0,p_0}_0$.  The map
\[
v\mapsto \ip<v,\cdot>_{(M,h)}
\]
is a continuous isomorphism $H^{-s,p\dual}_{-\delta}(M;E)\to (H^{s,p}_\delta(M;E))\dual$.
\end{theorem}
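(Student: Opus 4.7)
The plan is to bootstrap from the scalar duality of Lemma \ref{lem:relax-dual-scalar} using the local frame decomposition provided by Lemmas \ref{lem:frame-basis} and \ref{lem:bundle-split}. Continuity of $v\mapsto \langle v,\cdot\rangle_{(M,h)}$ was already established in the discussion immediately preceding the theorem, where the bilinear form is extended by expanding both arguments through $\pi_{\alpha,(a)}$ and the sections $\sigma_{\alpha,(a)}$, reducing the pairing to a finite sum of scalar pairings with coefficients $\langle \sigma_{\alpha,(a)},\sigma_{\beta,(b)}\rangle_h \in X^{s_0,p_0}_0(M;\Reals)$. By the open mapping theorem, it then suffices to prove bijectivity.

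For injectivity, I would suppose $\langle v,u\rangle_{(M,h)} = 0$ for all $u\in H^{s,p}_\delta(M;E)$ and test against $u = f\sigma_{\alpha,(a)}$ for arbitrary $f\in H^{s,p}_\delta(M;\Reals)$, which lies in $H^{s,p}_\delta(M;E)$ by Definition \ref{def:gtb}. The key identity is
\[
\langle v, f\sigma_{\alpha,(a)}\rangle_{(M,h)} = \langle \langle v,\sigma_{\alpha,(a)}\rangle_h, f\rangle_{(M,h)},
\]
where $\langle v,\sigma_{\alpha,(a)}\rangle_h\in H^{-s,p\dual}_{-\delta}(M;\Reals)$ is the distributional contraction produced by Proposition \ref{prop:multiplication}. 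This identity is obvious for smooth compactly supported inputs and extends by density (Proposition \ref{prop:density-gtb}) together with continuity of the multiplication maps (Proposition \ref{prop:multiplication}). Lemma \ref{lem:relax-dual-scalar} then forces $\langle v,\sigma_{\alpha,(a)}\rangle_h = 0$ for every $(\alpha,(a))$, so $\pi_{\alpha,(a)}(v) = \chi_{(a)}\langle v,\sigma_{\alpha,(a)}\rangle_h = 0$, and Lemma \ref{lem:bundle-split} applied to $v$ (valid since $(-s,p\dual)\in \mathcal S^{s_0,p_0}_0$) yields $v = 0$.

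For surjectivity, given $L\in (H^{s,p}_\delta(M;E))\dual$, I define scalar functionals $L_{\alpha,(a)}(f) = L(f\sigma_{\alpha,(a)})$, each continuous on $H^{s,p}_\delta(M;\Reals)$ by Proposition \ref{prop:multiplication}, and invoke Lemma \ref{lem:relax-dual-scalar} to produce representatives $w_{\alpha,(a)}\in H^{-s,p\dual}_{-\delta}(M;\Reals)$ with $L_{\alpha,(a)}(f) = \langle w_{\alpha,(a)},f\rangle_{(M,h)}$. The candidate
\[
v = \sum_{\alpha,(a)} \chi_{(a)}\,w_{\alpha,(a)}\,\sigma_{\alpha,(a)}
\]
is a finite sum in $H^{-s,p\dual}_{-\delta}(M;E)$ (again by Proposition \ref{prop:multiplication}); the cutoff $\chi_{(a)}$ is included precisely to compensate for the $\chi_{(a)}$ built into the definition of $\pi_{\alpha,(a)}$. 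For $u\in H^{s,p}_\delta(M;E)$, Lemma \ref{lem:bundle-split} gives $L(u) = \sum_{\alpha,(a)} L_{\alpha,(a)}(\pi_{\alpha,(a)}(u))$, and expanding $\pi_{\alpha,(a)}(u) = \chi_{(a)}\langle u,\sigma_{\alpha,(a)}\rangle_h$ and moving the scalar factors across the pointwise inner product yields $L_{\alpha,(a)}(\pi_{\alpha,(a)}(u)) = \langle \chi_{(a)} w_{\alpha,(a)}\sigma_{\alpha,(a)}, u\rangle_{(M,h)}$, hence $L = \langle v,\cdot\rangle_{(M,h)}$. The main technical point throughout both halves is justifying these algebraic rearrangements at the distributional level: each reduces to the pointwise identity $\psi\langle\tau,u\rangle_h = \langle \psi\tau,u\rangle_h$ for scalar $\psi$, which is transparent for smooth compactly supported inputs and extends to distributions via the density results of Propositions \ref{prop:density} and \ref{prop:density-gtb} combined with the multiplication estimates of Proposition \ref{prop:multiplication}; finiteness of the index set $\{(\alpha,(a))\}$ means no convergence issues arise in any of the sums.
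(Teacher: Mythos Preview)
Your proposal is correct and follows essentially the same approach as the paper: reduce to the scalar duality of Lemma~\ref{lem:relax-dual-scalar} via the frame decomposition from Lemmas~\ref{lem:frame-basis} and~\ref{lem:bundle-split}, arriving at the same representative $v=\sum_{\alpha,(a)}\chi_{(a)}w_{\alpha,(a)}\sigma_{\alpha,(a)}$. The only cosmetic difference is that the paper groups the surjectivity computation by partition index $a$ first (exploiting orthonormality on $\supp\chi_{(a)}$ to reassemble $\chi_{(a)}u$), whereas you apply Lemma~\ref{lem:bundle-split} in one shot and move scalar factors directly; for injectivity the paper argues the contrapositive by producing an explicit test section, while you quantify over all $f\sigma_{\alpha,(a)}$---these are logically equivalent and rely on the same algebraic identity.
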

\begin{proof}
Let $L\in H^{s,p}_\delta(M;E)$. For each index $(\alpha,(a))$ Lemma \ref{lem:relax-dual-scalar}
implies there exists $v_{\alpha,(a)}\in H^{-s,p\dual}_{-\delta}(M;\Reals)$ with
$L(w\sigma_{\alpha,(a)}) = \ip<v,u>_{(M,h)}$ for all $w\in H^{s,p}_{\delta}(M;\Reals)$.
The proof of Lemma \ref{lem:bundle-split} shows that for each $a$,
\[
\chi_{(a)} u  = \sum_{\alpha} \pi_{\alpha,(a)}(u) \sigma_{\alpha,(a)}.
\]
Using the fact that on the support of $\chi_{(a)}$ the sections $\sigma_{\alpha,(a)}$
form an orthonormal frame we  then find
\begin{align*}
L(\chi_{(a)} u ) &= \sum_{\alpha} \ip<v_{\alpha,(a)},\pi_{\alpha,(a)}(u)>\\
&= \sum_{\alpha} \ip< v_{\alpha,(a)},\chi_{(a)}
\ip<u,\sigma_{\alpha,(a)}>_h>_{(M,h)}\\
&= \sum_{\alpha} \ip< v_{\alpha,(a)}\sigma_{\alpha,(a)},
\chi_{(a)}\ip<u,\sigma_{\alpha,(a)}>_h\sigma_{\alpha,(a)}>_{(M,h)}\\
&= \ip<v_{(a)},\chi_{(a)}u>_{(M,h)}
\end{align*}
where $v_{(a)}=\sum_{\alpha} v_{\alpha,(a)}\sigma_{\alpha,(a)}\in H^{-s,p\dual}_{-\delta}(M;E)$.
Summing over $a$ we find
\[
L(u) = \ip<\sum_a \chi_{(a)}v_{(a)},u>_{(M,h)}.
\]
Since $\sum_{a} v_{(a)}\in H^{-s,p\dual}_{-\delta}(M;E)$ we find
$v\mapsto \ip<v,\cdot>_{(M,h)}$ is a surjection
$H^{-s,p\dual}_{-\delta}(M;E)\to (H^{s,p}_\delta(M;E))\dual$.
The continuity of the map follows from the remarks preceding
the statement of the theorem.
To see that it has trivial kernel, suppose
$v\in H^{-s,p\dual}_{-\delta}(M;E)\neq 0$.
Appealing to Lemma \ref{lem:bundle-split} there
is are indices $(\alpha,(a))$ with
$\pi_{\alpha,(a)}(v)\neq 0$.
Choose a smooth compactly supported function $\phi$
such that $\ip<\pi_{\alpha,(a)}(v),\phi>_{(M,h)}\neq 0$.
Since $\chi_{(a)}v=\sum_{\alpha} \pi_{\alpha,(a)}(v)\sigma_{\alpha,(a)}$,
\begin{equation*}
\ip<v,\chi_{(a)}\phi\sigma_{\alpha,(a)}>_{(M,h)} =
\ip<\sum_{\alpha'} \pi_{\alpha',(a)}(v)\sigma_{\alpha',(a)}, \phi\sigma_{\alpha,(a)}>_{(M,h)}
=\ip<\pi_{\alpha,(a)}(v),\phi>_{(M,h)}\neq 0.
\qedhere
\end{equation*}
\end{proof}

\subsection{Equivalent norms}
\label{sec:equivalent-norms}
The weighted space norms depend implicitly on a number of choices:
\begin{itemize}
\item A boundary defining function $\rho$.
\item A projection from a collar neighborhood of $\partial M$ onto the boundary.
\item A choice of finitely many charts for the boundary.
\item A selection of preferred M\"obius parametrizations $\Phi_i$.
\end{itemize}
We call these collective choices \Defn{M\"obius data} and
now show that the function spaces are independent of the choice of M\"obius
data. These considerations also lead to Corollary \ref{cor:X-no-cutoff}, 
which establishes an equivalent norm for the Gicquaud-Sakovich spaces that is
comparable to the authors' original definition.
Since spaces of sections of geometric tensor bundles inherit their norms from their
ambient spaces of sections of tensor bundles, throughout this section we work
in the simpler setting of tensor bundles, and thus the regularity of a defining metric
is not a consideration.  The resulting norm equivalences extend to geometric tensor
bundles automatically.

\begin{proposition}\label{prop:H-Mobius-data} Let $E$ be a tensor bundle over $\bar M$.
For each $s,\delta\in\Reals$ and $1<p<\infty$
the space $H^{s,p}_{\delta}(M;E)$
is independent of the choice of M\"obius data and
the norms determined by two choices are equivalent.
\end{proposition}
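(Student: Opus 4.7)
The plan is to show that two arbitrary choices of M\"obius data yield equivalent norms; the fact that they define the same space is then immediate. Write $\|\cdot\|$ and $\|\cdot\|'$ for the norms coming from primed and unprimed data (different defining function $\rho, \rho'$, collar projection, boundary charts, and preferred parametrizations $\{\Phi_i\}$, $\{\Phi_j'\}$). By symmetry it suffices to prove $\|u\| \lesssim \|u\|'$. By Lemma~\ref{lemma:cutoffequiv}, I may pick a single cutoff $\chi$ and use the cutoff-free form \eqref{eq:cutoff-free-H} for both norms, so the choice of cutoff function plays no role.

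First I would reduce to a common defining function: since $\rho$ and $\rho'$ are two smooth defining functions on $\bar M$, their ratio $\rho'/\rho$ extends to a smooth, strictly positive function on $\bar M$, hence is bounded above and below by positive constants. Consequently $\rho_i \sim \rho'(p_i)$ and, more generally, $\rho \sim \rho'$ pointwise on $M$ with constants independent of all other data. Thus the weight factors $\rho_i^{-\delta}$ and $(\rho_i')^{-\delta}$ are equivalent, and moreover if $\Phi_i(B_2^\Hyp)\cap \Phi_j'(B_2^\Hyp)\neq\emptyset$, the values $\rho_i$ and $\rho_j'$ are mutually comparable (as in the argument preceding \eqref{eq:Hs-transition}).

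Next I would establish the two ingredients needed to mimic the construction in Lemma~\ref{lem:Hbuilder} across the two datasets. (i) \emph{Uniform transition estimates}: if $\Phi_i$ and $\Phi_j'$ have overlapping images, the transition map $\Phi_j'^{-1}\circ\Phi_i$ has the form $(\rho_i/\rho_j')$ times a background-chart transition composed with translations; since both data use only finitely many boundary charts and background transitions are smooth, and since $\rho_i/\rho_j'$ is bounded above and below when the images overlap, the transition Jacobians admit uniform $C^k$ bounds. Interpolation and duality then give a uniform continuity estimate
\[
\| \chi\,\Phi_i^* w\|_{H^{s,p}(\Reals^n)} \lesssim \| (\Phi_j')^* w\|_{H^{s,p}(B_{r'}^\Hyp)}
\]
for any distribution $w$ supported in $\Phi_j'(B_{r'}^\Hyp)$, exactly as in \eqref{eq:Hs-transition}. (ii) \emph{Cross uniform local finiteness}: because the images $\Phi_i(B_2^\Hyp)$ have uniformly bounded $\check g$-diameter, property~(c) of the preferred parametrizations applied to the primed collection implies that each $\Phi_i(B_2^\Hyp)$ meets at most $N$ of the sets $\Phi_j'(B_2^\Hyp)$, with $N$ independent of $i$, and vice versa.

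With these tools in place, I would decompose $u = \sum_j (\bar\chi')^{-1}\chi_j'\,u$, where $\chi_j' = (\Phi_j')_*\chi$ and $\bar\chi' = \sum_j \chi_j'\in C^\infty_0(M;\Reals)$ has smooth positive inverse. Each summand is supported in $\Phi_j'(B_2^\Hyp)$, so (i) and (ii) give
\[
\|\chi\Phi_i^* u\|_{H^{s,p}(\Reals^n)}^p \lesssim \sum_{j\in J(i)} \| (\Phi_j')^*\bigl((\bar\chi')^{-1}\chi_j'\,u\bigr)\|_{H^{s,p}(B_{r'}^\Hyp)}^p,
\]
where $J(i)$ is the uniformly finite index set from (ii). Multiplying by $\rho_i^{-\delta p}\sim (\rho_j')^{-\delta p}$ (on overlaps), summing in $i$, and swapping the order of summation using the cross local finiteness yields $\|u\|^p \lesssim \sum_j (\rho_j')^{-\delta p}\|(\Phi_j')^*((\bar\chi')^{-1}\chi_j'u)\|_{H^{s,p}(B_{r'}^\Hyp)}^p$, which is controlled by $\|u\|'^p$ via the primed cutoff-free norm together with the fact that multiplication by $(\bar\chi')^{-1}$ is a continuous automorphism of the local Bessel potential spaces. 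The identical argument, replacing $\ell^p$ sums by suprema, handles $X^{s,p}_\delta$. The main obstacle is the cross-finiteness bookkeeping in the double sum, and verifying that the multiplier $(\bar\chi')^{-1}\chi_j'$, whose very definition depends on the primed data, still acts with uniform bounds on the local norms; both points are handled by the $C^k_0$ estimates already available for elements of $C^\infty_0(M;\Reals)$ and by property~(c).
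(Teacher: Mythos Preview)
Your argument is correct, but it takes a different route from the paper's. The paper proceeds in three stages: for $s=k\in\Nats_{\ge 0}$ it invokes Lemma~\ref{lemma:cutoffequiv} and Lemma~\ref{lem:many-norms} to identify any $H^{k,p}_\delta$ norm with the coordinate-free $W^{k,p}_\delta$ norm of~\eqref{eq:Wkp}--\eqref{eq:Wkpdelta}, which manifestly depends only on $\check g$ and $\rho$ (and the dependence on $\rho$ is removed by multiplying by the smooth factor $(\tilde\rho/\rho)^\delta$); it then obtains real $s\ge 0$ by the global interpolation Theorem~\ref{thm:H-interp-early}; and finally handles $s<0$ via the duality isomorphism of Theorem~\ref{thm:Hdual}. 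Your approach instead runs the machinery of Lemma~\ref{lem:Hbuilder} directly across two M\"obius datasets, establishing uniform transition-Jacobian bounds and cross local finiteness, and thereby treats all $s\in\Reals$ in one pass. The paper's argument is more modular---it leverages interpolation and duality theorems that are independently needed---while yours is more self-contained but requires checking the bookkeeping carefully. In fact your method is precisely how the paper handles the Gicquaud--Sakovich analogue: see Lemma~\ref{lem:X-other-charts} and Corollary~\ref{cor:X-any-rho}.

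One minor imprecision: the transition $\Phi_j'^{-1}\circ\Phi_i$ is not literally ``$(\rho_i/\rho_j')$ times a background-chart transition composed with translations,'' because the two datasets use different defining functions and projections, so the last coordinate transforms as $\rho\mapsto\rho\cdot h(\theta,\rho)$ for a smooth positive $h$ rather than by pure dilation. Nevertheless the Jacobian still factors as $(\rho_i/\rho_j')$ times a matrix with entries built from derivatives of a smooth map on $\bar M$, and each further derivative introduces an additional bounded factor of $\rho_i$, so your conclusion of uniform $C^k$ bounds stands.
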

\begin{proof}
From the density
of smooth compactly supported sections (Proposition \ref{prop:density}), to show two spaces
are the same it suffices to
show that the norms corresponding to different choices
are equivalent.

First, consider the case $s=k\in\Nats_{\ge 0}$.
Lemma \ref{lemma:cutoffequiv} and Lemma
\ref{lem:many-norms} show that any choice of $H^{k,p}_{\delta}(M)$
norm is equivalent to a $W^{k,p}_\delta(M)$ norm
from \eqref{eq:Wkpdelta}, which
depends only on a (separate, independent) choice of reference metric $\check g$
and the boundary defining function $\rho$.  Moreover, the
$W^{k,p}_\delta(M)$ norm for two choices $\rho$ and $\tilde \rho$
are equivalent because
$(\tilde\rho/\rho)^\delta$ is smooth on $\bar M$ and
hence multiplication by this function is an automorphism of
$W^{k,p}_\delta(M)$.  Thus any two choices of M\"obius data
lead to equivalent $H^{k,p}_{\delta}(M)$ norms.

Turning to the general case, the result for $s\ge 0$
follows from what we have shown for $s\in \Nats_{\ge 0}$
and the the characterization
of $H^{s,p}_\delta(M)$ as an interpolation space.
Suppose instead $s<0$.
Choosing a reference metric $\check g$ with smooth compactification,
the identification of $H^{s,p}_\delta(M)$ with $(H^{-s,p\dual}_{-\delta}(M))\dual$
via Theorem \ref{thm:dual-H-gtb} establishes the result in this case.
\end{proof}

The analogue of Proposition \ref{prop:H-Mobius-data} for Gicquaud-Sakovich
spaces is an easy consequence
of the following lemma, which is independently used in Section \ref{app:hyp-elliptic}
concerning elliptic operators on the ball model of hyperbolic space.
\begin{lemma} \label{lem:X-other-charts}
Let $E$ be a tensor bundle over $\bar M$
and let $(\tilde \Omega,\tilde \Theta)$
be a boundary chart associated with some boundary projection and
some boundary defining function $\tilde \rho$.
Suppose $s,\delta\in\Reals$ and $1<p<\infty$.
For all M\"obius parametrizations $\tilde \Phi$ associated
with $\tilde\Theta$ and all $u\in H^{s,p}_{\loc}(M)$,
\[
\tilde \rho(\tilde\Phi(0))^{-\delta}\|\chi \tilde\Phi^*u\|_{H^{s,p}(\Reals^n)} \lesssim \|u\|_{X^{s,p}_\delta(M)}
\]
with implicit constant independent of $u$ but depending on the remaining data.
\end{lemma}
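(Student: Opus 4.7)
The plan is to reduce the bound to finitely many estimates involving the preferred M\"obius parametrizations. Let $I = \{i : \Phi_i(B_2^\Hyp) \cap \tilde\Phi(B_2^\Hyp) \neq \emptyset\}$. Because the Gicquaud--Sakovich norm uses a supremum, and because I will show both that $|I|$ is uniformly bounded and that the weights $\rho_i$ for $i\in I$ are comparable to $\tilde\rho_0 := \tilde\rho(\tilde\Phi(0))$, a finite-sum estimate on the $\tilde\Phi$ side is all that is needed.

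Three technical ingredients are required. First, $\tilde\Phi(B_2^\Hyp)$ has uniformly bounded diameter with respect to any smooth compactification metric $\check g$ on $M$ (a routine consequence of $\tilde\Phi$ being a M\"obius parametrization), so property \eqref{part:diam-bound-to-I-bound} of the preferred parametrizations from \S\ref{sec:coords} yields $|I| \leq N$ independent of $\tilde\Phi$. Second, since $\rho/\tilde\rho$ is smooth and positive on $\bar M$, and by definition of the M\"obius parametrizations the values of $\rho$ on $\Phi_i(B_2^\Hyp)$ (resp.\ $\tilde\rho$ on $\tilde\Phi(B_2^\Hyp)$) lie between $e^{-2}\rho_i$ and $e^2\rho_i$ (resp.\ $e^{-2}\tilde\rho_0$ and $e^2\tilde\rho_0$), the ratio $\rho_i/\tilde\rho_0$ is uniformly bounded above and below for $i \in I$. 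Third, on $\tilde\Phi^{-1}(\Phi_i(B_2^\Hyp))$ the transition map $T_i := \Phi_i^{-1} \circ \tilde\Phi$ takes the explicit form
\[
T_i(x,y) = \rho_i^{-1}\bigl[(\Theta_i\circ\tilde\Theta^{-1})(\tilde\theta_0+\tilde\rho_0 x,\tilde\rho_0 y)-(\theta_i,0)\bigr],
\]
where $\Theta_i$ is the background boundary chart associated with $\Phi_i$. Compactness of $\partial M$ yields uniform $C^k$ bounds on the coordinate transition $\Theta_i\circ\tilde\Theta^{-1}$, which combined with the uniform two-sided bound on $\tilde\rho_0/\rho_i$ from the previous step imply that $T_i$ and $T_i^{-1}$ admit $C^k$ bounds uniform in $i\in I$ and $\tilde\Phi$ for every $k$.

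To assemble these ingredients, I would use the smooth decomposition $u = \bar\chi^{-1}\sum_i \chi_i u$ from \eqref{eq:Xdef}, noting that $\bar\chi^{-1} \in C^\infty_0(M;\Reals)$ and that each $(\chi_i/\bar\chi)u$ is supported in $\Phi_i(B_2^\Hyp)$, so only $i\in I$ contribute to $\tilde\Phi^*u$ on $B_2^\Hyp$. For such $i$, a change of variables identifies $\tilde\Phi^*((\chi_i/\bar\chi)u)$ with $T_i^*\bigl[\Phi_i^*(\chi_i/\bar\chi)\cdot\Phi_i^*u\bigr]$ on $\tilde\Phi^{-1}(\Phi_i(B_2^\Hyp))$ and zero elsewhere. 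Standard pullback-invariance of Bessel potential spaces under smooth diffeomorphisms with uniform $C^k$ bounds then gives
\[
\|\chi\tilde\Phi^*((\chi_i/\bar\chi)u)\|_{H^{s,p}(\Reals^n)} \lesssim \|\chi\Phi_i^*u\|_{H^{s,p}(\Reals^n)}
\]
with implicit constant independent of $i\in I$ and $\tilde\Phi$. Summing over the at-most-$N$ indices $i\in I$ and using $\rho_i\sim\tilde\rho_0$ to convert weights yields the claimed bound. The main technical point is invoking pullback-invariance of $H^{s,p}(\Reals^n)$ for general real $s$: this is classical for $s\in\Nats_{\ge 0}$ via the chain rule, and for negative or non-integer $s$ it follows by interpolation or by duality (via Theorem \ref{thm:Hdual} applied in coordinates) from the positive-integer case, where what is crucial is that the diffeomorphism bounds for $T_i$ and $T_i^{-1}$ are uniform in $i$ and $\tilde\Phi$.
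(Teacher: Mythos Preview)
Your proposal is correct and follows essentially the same approach as the paper's proof: bound $|I|$ via the uniform diameter property \eqref{part:diam-bound-to-I-bound}, obtain uniform $C^k$ control of the transition maps $\Phi_i^{-1}\circ\tilde\Phi$ using the comparability of $\tilde\rho_0$ and $\rho_i$, decompose $u$ via the partition of unity built from $\bar\chi$, and invoke pullback-invariance of $H^{s,p}(\Reals^n)$ (integer $s$ via the chain rule, general $s$ by interpolation and duality, exactly as in the proof of Lemma~\ref{lem:Hbuilder}). The only organizational difference is that you absorb $\bar\chi^{-1}$ into each term of the partition $\chi_i/\bar\chi$, whereas the paper first estimates $\chi\tilde\Phi^*(\bar\chi u)$ and then divides by $\bar\chi$ at the end; these are equivalent.
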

\begin{proof}
Let $I$ be the set of indices $i$ such that
$\tilde \Phi(B_2) \cap \Phi_i(B_2)\neq \emptyset$.
From Lee \cite{Lee-FredholmOperators} Lemma 2.1
the $\check g$ diameter of $\tilde \Phi(B_2)$
is bounded independent of $\tilde \Phi$ and hence
from property \eqref{part:diam-bound-to-I-bound}
of the parametrizations $\Phi_i$
enumerated at the end of Section \ref{sec:coords}
we obtain a uniform upper bound for the size of $I$.

An argument essentially similar to the one
from Lemma \ref{lem:Hbuilder}, using the fact that
$\tilde \rho_0/\rho_0$ is uniformly bounded above and below,
shows that the transition Jacobians
$
D(\Phi^{-1}_i \circ \tilde \Phi)
$
admit $C^k$ bounds for each $k$ that are uniform in $i$.

Recalling the function $\overline\chi$ from equation \eqref{eq:Xdef} we have
\[
\|\chi \tilde\Phi^* (\overline{\chi} u)\|_{H^{s,p}(\Reals^n)}
=
\|\chi \tilde\Phi^* \sum_{i\in I}( (\Phi_i)_*\chi) u\|_{H^{s,p}(\Reals^n)}
\lesssim
\sum_{i\in I} \|\chi (\Phi^{-1}_i\circ \Phi)^* (\chi \Phi_i^*u)\|_{H^{s,p}(\Reals^n)}.
\]
The uniform bound for the size of $I$,
the uniform $C^k$ control of the transition functions,
and the fact that $\tilde \rho(\tilde\Phi(0))/\rho_i$ is uniformly
bounded above and below for all $i\in I$ then imply
\[
\rho(\Phi(0))^{-\delta} \|\Phi^* (\overline\chi u)\|_{H^{s,p}(B_r)}
\lesssim \|u\|_{X^{s,p}_\delta(M)}.
\]
Since $u\mapsto u/\overline\chi $ is continuous on $H^{s,p}_\delta(M)$
the proof is complete.
\end{proof}

Lemma \ref{lem:X-other-charts} immediately implies the following.
\begin{corollary} \label{cor:X-any-rho}
Let $E$ be a tensor bundle over $\bar M$.
For each
$s,\delta\in\Reals$ and $1<p<\infty$,
the space $X^{s,p}_\delta(M;E)$ is independent of the choice of M\"obius
data and any two choices lead to equivalent norms.
\end{corollary}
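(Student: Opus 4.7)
The plan is to deduce Corollary \ref{cor:X-any-rho} from Lemma \ref{lem:X-other-charts} by a symmetric two-sided comparison argument. Let $\mathcal{D}$ and $\tilde{\mathcal{D}}$ denote two choices of Möbius data, yielding boundary defining functions $\rho$ and $\tilde\rho$, preferred Möbius parametrizations $\{\Phi_i\}$ and $\{\tilde\Phi_j\}$, and corresponding norms $\|\cdot\|_{X^{s,p}_\delta(M;E)}$ and $\|\cdot\|_{\tilde X^{s,p}_\delta(M;E)}$. Since the $X^{s,p}_\delta(M;E)$ spaces are defined by finiteness of the associated norm, equivalence of norms automatically yields equality of the spaces as sets, so it suffices to establish the two-sided norm estimate.

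First I would fix $u \in X^{s,p}_\delta(M;E)$ (with respect to $\mathcal{D}$). For each $\tilde\Phi_j$ in $\tilde{\mathcal{D}}$, Lemma \ref{lem:X-other-charts} (applied with $\tilde\Phi = \tilde\Phi_j$) gives
\[
\tilde\rho(\tilde\Phi_j(0))^{-\delta}\,\|\chi\,\tilde\Phi_j^* u\|_{H^{s,p}(\Reals^n)} \lesssim \|u\|_{X^{s,p}_\delta(M;E)},
\]
with implicit constant independent of $j$ and $u$. Taking the supremum over $j$ yields
\[
\|u\|_{\tilde X^{s,p}_\delta(M;E)} \lesssim \|u\|_{X^{s,p}_\delta(M;E)}.
\]
Next I would invoke the symmetry of the hypotheses of Lemma \ref{lem:X-other-charts}: the roles of $\mathcal{D}$ and $\tilde{\mathcal{D}}$ may be interchanged, since the lemma does not privilege one set of Möbius data over another, it only requires that $\Phi_i$ be a preferred family with the uniform-local-finiteness and diameter-control properties from \S\ref{sec:coords}. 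Applying the same argument with the roles swapped produces the reverse inequality, completing the proof.

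The main subtlety to be careful about, rather than an obstacle, is to verify that Lemma \ref{lem:X-other-charts} really is symmetric in the two data sets: the preferred collection $\{\tilde\Phi_j\}$ in the reversed argument must satisfy the same uniform-local-finiteness and diameter-control assumptions used in its proof (via property \eqref{part:diam-bound-to-I-bound} of the preferred family). This is built into the construction of preferred Möbius data in \S\ref{sec:coords}, so no additional work is needed. Since $X^{s,p}_\delta(M;E)$ for a geometric tensor bundle inherits its norm from the ambient tensor bundle space (see \S\ref{secsec:geometric-E}), the result automatically extends from tensor bundles to geometric tensor bundles.
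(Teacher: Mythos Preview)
Your proposal is correct and matches the paper's approach exactly: the paper simply states that Lemma \ref{lem:X-other-charts} ``immediately implies'' the corollary, and your symmetric two-sided application of that lemma is precisely the intended argument. The only minor point worth noting is that the preferred collection $\{\tilde\Phi_j\}$ also contains finitely many interior M\"obius parametrizations not associated with any boundary chart, but these contribute only finitely many terms to the supremum and are handled by elementary interior estimates, so this does not affect your argument.
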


We also obtain from Lemma \ref{lem:X-other-charts}
the following equivalent norm for Gicquaud-Sakovich spaces
which can be compared with the original definition of `weighted local Sobolev spaces'
from \cite{GicquaudSakovich} as well as the H\"older analogue, equation (3.2)
of \cite{Lee-FredholmOperators}.

\begin{corollary} \label{cor:X-no-cutoff}
Let $E$ be a tensor bundle over $\bar M$. Suppose $s,\delta\in\Reals$
and $1<p<\infty$.
For any $1/2<r <2$ the $X^{s,p}_\delta$ norm is equivalent to
\[
\sup_{\Phi} \rho(\Phi(0))^{-\delta} \|\Phi^* u\|_{H^{s,p}(B_r)}
\]
where the supremum is taken over all M\"obius parametrizations.
\end{corollary}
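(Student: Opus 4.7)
The plan is to obtain the two inequalities separately, with the harder direction reduced to Lemma \ref{lem:X-other-charts} and the easier direction following from the cutoff-free characterization of $X^{s,p}_\delta$ in Lemma \ref{lemma:cutoffequiv} part (ii).

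For the bound $\sup_\Phi \rho(\Phi(0))^{-\delta}\|\Phi^* u\|_{H^{s,p}(B_r)} \lesssim \|u\|_{X^{s,p}_\delta(M;E)}$, the key tool is Lemma \ref{lem:X-other-charts}, which already covers every M\"obius parametrization $\Phi$ associated with an arbitrary boundary chart and boundary defining function. Choose a cutoff $\chi \in C^\infty_{\rm cpct}(B_2^{\mathbb H})$ which equals $1$ on $B_r^{\mathbb H}$; this is permissible since $r<2$. By the quotient definition of $H^{s,p}(B_r^{\mathbb H})$, the product $\chi\, \Phi^* u$ is an extension of $(\Phi^* u)|_{B_r^{\mathbb H}}$, so
\[
\|\Phi^* u\|_{H^{s,p}(B_r^{\mathbb H})} \le \|\chi\, \Phi^* u\|_{H^{s,p}(\Reals^n)}.
\]
Applying Lemma \ref{lem:X-other-charts} to each $\Phi$ and taking the supremum yields the desired bound. (For the finitely many interior M\"obius parametrizations added so that their restricted images cover $M\setminus A_{r_0/8}$, the corresponding estimate is immediate because $\rho(\Phi(0))$ is bounded above and below on this compact set and elliptic regularity plus the uniformly bounded transition Jacobians reduces matters to the same local estimate.)

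For the reverse bound, I would invoke Lemma \ref{lemma:cutoffequiv} part (ii), which states
\[
\|u\|_{X^{s,p}_\delta(M;E)} \sim \sup_i \rho_i^{-\delta}\|\Phi_i^* u\|_{H^{s,p}(B_r^{\mathbb H})},
\]
the supremum being taken over the preferred countable collection of M\"obius parametrizations fixed at the end of \S\ref{sec:coords}. Since each preferred $\Phi_i$ is itself a M\"obius parametrization, this supremum is dominated by the supremum over all M\"obius parametrizations, giving
\[
\|u\|_{X^{s,p}_\delta(M;E)} \lesssim \sup_{\Phi} \rho(\Phi(0))^{-\delta}\|\Phi^* u\|_{H^{s,p}(B_r^{\mathbb H})}.
\]

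Combining the two inequalities gives the asserted equivalence of norms. No step here is a genuine obstacle: the substantive work has been done already in Lemma \ref{lem:X-other-charts} (uniform control of transition Jacobians plus uniform local finiteness), and what remains is the routine passage between a cutoff-based $\Reals^n$ norm and a quotient $H^{s,p}(B_r^{\mathbb H})$ norm, mediated by the choice $\chi\equiv 1$ on $B_r^{\mathbb H}$.
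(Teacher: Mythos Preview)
Your proof is correct and follows essentially the same approach as the paper: one direction from Lemma \ref{lemma:cutoffequiv} (the preferred parametrizations are a subset of all of them), the other from Lemma \ref{lem:X-other-charts} combined with a cutoff $\chi$ equal to $1$ on $B_r^{\mathbb H}$. One minor remark: in your parenthetical about the finitely many interior M\"obius parametrizations, the invocation of ``elliptic regularity'' is irrelevant here---no PDE is involved; the estimate for these charts follows simply from uniform $C^k$ bounds on transition Jacobians over a compact region and boundedness of $\rho$ there.
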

\begin{proof}
One direction follows immediately from Lemma \ref{lemma:cutoffequiv}
and the converse is a consequence of
Lemma \eqref{lem:X-other-charts} and the fact that
we can take our cutoff function $\chi$ to equal 1
on $B_r^\Hyp$.
\end{proof}

\section{Elliptic theory for hyperbolic space}\label{app:hyp-elliptic}

Following \cite{Lee-FredholmOperators} Chapter 5,
we work with the Poincar\'e ball model $\HypB$ of hyperbolic
space, so $\HypB$ is the unit ball in $\Reals^{n}$
with metric
\[
g_\HypB = 4(1-|\xi|)^{-2}\sum_i (d\xi^i)^2.
\]
For $\xi, \eta \in\HypB$ we
define $\rho(\xi, \eta)=1/\cosh(d_{g_\HypB}(\xi,\eta))$ and set
\begin{equation*}
\rho(\xi)=\rho(\xi,0) = \frac{1-|\xi|^2}{1+|\xi|^2},
\end{equation*}
which is a defining function on $\mathbb B$.

Throughout this section we consider a fixed operator $\opP = \opP[g_\HypB]$
acting on a geometric tensor bundle $E\to \HypB$
and satisfying Assumption \ref{Assume-P}.  
We assume moreover that, Assumption \ref{Assume-I} holds, so that $\opP:H^{d,2}_0(\HypB;E)\to H^{0,2}_0(\HypB; E)$
is an isomorphism.  From \cite{Lee-FredholmOperators} Lemma 4.10 this
is equivalent to assuming $\opP$ satisfies
the $L^2$ estimate at infinity, namely
there exists a compact set $K\subset \HypB$ and a constant $C$
such that for all $u\in C^\infty_{\rm cpct}(\HypB\setminus K;E)$,
\begin{equation}\label{eq:est_at_inf_H}
\|u\|_{L^2(\HypB;E)} \le C \|\opP u\|_{L^2(\HypB;E)}.
\end{equation}

The symmetries of $\HypB$ allow for a convenient alternative formulation
of weighted space norms. Following the construction of \cite{Lee-FredholmOperators}
Lemma 2.2 we can find a positive integer $N$ and
a countable collection of points $p_i\in\HypB$
such that the balls $B_{1/2}^\HypB(p_i)$ cover $\HypB$
and such that each $B_2^\HypB(p_i)$ intersects at
most $N$ others.  Moreover, for any diameter $\ell>0$ there
is a bound $K_\ell$ such that if $U\subset \HypB$ has diameter at most $\ell$,
then $B_{2}^\HypB(p_i)\cap U = \emptyset$ for all but at most $K_\ell$ points $p_i$.
For each $i$ we select a hyperbolic isometry $\Gamma_i$ with $\Gamma_i(0)=p_i$.

\begin{lemma}\label{lem:alt-B-norms}
Suppose $s,\delta\in\Reals$ and $1<p<\infty$.  For each $1/2<r<2$, we
have the following norm equivalences:
\begin{enumerate}
\item
\begin{equation}\label{eq:ball-Hspdelta}
\|u\|_{H^{s,p}_\delta(\HypB;E)}^p \sim \sum_i
\rho(\Gamma_i(0))^{-\delta p}\|\Gamma_i^* u\|_{H^{s,p}(B_r^\HypB)}^p.
\end{equation}
\item\label{eq:ball-Xspdelta}
\[
\|u\|_{X^{s,p}_\delta(\HypB;E)} \sim \sup_i
\rho(\Gamma_i(0))^{-\delta}\|\Gamma_i^* u\|_{H^{s,p}(B_r^\HypB)}.
\]
\item\label{eq:ball-Xspdelta-all}
\[
\|u\|_{X^{s,p}_\delta(\HypB;E)} \sim \sup_{\Gamma}
\rho(\Gamma(0))^{-\delta}\|\Gamma^* u\|_{H^{s,p}(B_r^\HypB)},
\]
where the supremum is taken over all hyperbolic isometries of $\HypB$.
\end{enumerate}
\end{lemma}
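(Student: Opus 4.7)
The plan is to reduce each of the three claims to the corresponding statements already available for M\"obius parametrizations, namely Definition \ref{def:bpspaces}, Lemma \ref{lemma:cutoffequiv}, Corollary \ref{cor:X-any-rho}, and Corollary \ref{cor:X-no-cutoff}. The key point is that on $\HypB$ a hyperbolic isometry $\Gamma$ with $\Gamma(0)=q$ can be compared, on the fixed-hyperbolic-radius ball $B_r^\HypB$, to a M\"obius parametrization centered at $q$, with uniform control on the transition independent of the position of $q$.

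First I would set up a specific family of M\"obius parametrizations on $\HypB$: fix finitely many boundary charts for $\partial\HypB$ and use the construction of \S\ref{sec:coords} (with the smooth defining function $\rho$ above) to produce, for each point $p_i$ sufficiently close to $\partial\HypB$, a M\"obius parametrization $\Phi_i:B_2^\Hyp\to\HypB$ centered at $p_i$. For $p_i$ in a fixed compact subset of $\HypB$, use instead the finitely many auxiliary parametrizations. The transition $\Gamma_i^{-1}\circ \Phi_i$ takes $B_r^\Hyp$ into a region of bounded hyperbolic diameter around $0\in\HypB$ (since $\Gamma_i$ and $\Phi_i$ both move $B_r^\Hyp$ into a neighborhood of $p_i$ of bounded hyperbolic diameter); arguing as in Lemma 2.1/2.2 of \cite{Lee-FredholmOperators}, together with the compactness of the stabilizer $O(n)$ of $0$ inside the isometry group of $\HypB$, one shows that these transitions and their inverses admit $C^k$ bounds independent of $i$ for every $k$. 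In particular $\Gamma_i^*u$ and $\Phi_i^*u$ have equivalent $H^{s,p}$ norms on $B_r^\HypB$ and $B_r^\Hyp$ respectively, with constants independent of $i$ (one uses the same interpolation/duality mechanism as in Lemma \ref{lem:Hbuilder}). Since $\rho(\Gamma_i(0))=\rho(p_i)=\rho_i$, the weighted sums and suprema match up, and parts \eqref{eq:ball-Hspdelta} and \eqref{eq:ball-Xspdelta} follow from the equivalent-norm statements of Lemma \ref{lemma:cutoffequiv}.

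For part \eqref{eq:ball-Xspdelta-all} one inclusion is obvious: taking the supremum over all hyperbolic isometries $\Gamma$ dominates the supremum over any chosen countable subfamily $\{\Gamma_i\}$. For the reverse direction, given an arbitrary hyperbolic isometry $\Gamma$ with $\Gamma(0)=q$, I would invoke Lemma \ref{lem:X-other-charts} (transferred from $M$ to $\HypB$ via Corollary \ref{cor:X-any-rho}) applied to a M\"obius parametrization centered at $q$; combined with the transition estimates above for $\Gamma \circ \Phi_q^{-1}$ on a hyperbolic ball of radius $r$, this gives $\rho(q)^{-\delta}\|\Gamma^*u\|_{H^{s,p}(B_r^\HypB)}\lesssim \|u\|_{X^{s,p}_\delta(\HypB;E)}$ uniformly in $q$, producing the desired estimate after taking the supremum.

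The main obstacle is the uniform control on the transition diffeomorphisms $\Gamma_i^{-1}\circ\Phi_i$ (and its analogue for an arbitrary $\Gamma$) as $p_i\to\partial\HypB$. Although this is essentially a consequence of the homogeneity of hyperbolic space under its isometry group, some care is required to choose the M\"obius parametrizations compatibly with a fixed collection of boundary charts so that the competing scalings in $\rho$ and the Euclidean coordinates balance uniformly; this is the analogue of the transition-Jacobian estimate used throughout Appendix \ref{app:bpspaces} and is the only nontrivial ingredient in turning the three claimed norm equivalences into corollaries of the general Appendix \ref{app:bpspaces} machinery.
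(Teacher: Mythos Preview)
Your proposal is correct and follows essentially the same approach as the paper: reduce to the Appendix \ref{app:bpspaces} machinery by comparing each hyperbolic isometry $\Gamma$ with a M\"obius parametrization centered at $\Gamma(0)$, the key point being uniform $C^k$ bounds on the transition Jacobians. The paper's implementation is slightly cleaner in that it \emph{constructs} a M\"obius parametrization $\Phi_\Gamma=\Gamma\circ R_\Gamma\circ\Psi_\Hyp^\HypB$ directly from $\Gamma$ (using two isometries $\Theta_\alpha:\HypB\to\Hyp$ as boundary charts to sidestep the singularity of $y$ at a point of $\partial\HypB$), so that the transition $\Gamma^{-1}\circ\Phi_\Gamma=R_\Gamma\circ\Psi_\Hyp^\HypB$ is manifestly drawn from a compact family; this replaces your appeal to compactness of the stabilizer $O(n)$ with an explicit identity.
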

\begin{proof}
The central issue is that the coordinate
function $y$ in the half space model $\Hyp$ of hyperbolic space does not arise from
a smooth boundary defining function on $\HypB$;
it is singular at a point of $\partial\HypB$.
To mitigate this, choose open cover $\{\Omega_{\alpha}\}_{\alpha\in \{1,2\}}$
for $\HypB$ and isometries $\Theta_\alpha:\HypB\to \Hyp$ such that $\Theta_\alpha(\Omega_\alpha)=Y_r$
for some number $r$ and such that for each
$p\in\HypB$ we can select an index $\alpha$ so that $B_2(p)\subset \Omega_\alpha$.

Fix an isometry $\Psi_\Hyp^{\HypB}:B_{2}^\Hyp\to B_2^\HypB$.  For each $\alpha$ we can select
a boundary defining function $\rho_\alpha$ that equals $\Theta_\alpha^*y$ on $\Omega_\alpha$.
Let $\Gamma:\HypB\to \HypB$ be an isometry and pick $\alpha$ so that
$\Gamma(B_2^\Hyp)\subset \Omega_\alpha$.
Choosing a suitable Euclidean rotation $R_\Gamma$ we can arrange so that
\[
(\Theta_\alpha\circ\Gamma\circ R_\Gamma\circ \Psi_\Hyp^{\HypB})(x,y) = (\theta_0+ \rho_\Gamma x,\rho_\Gamma y)
\]
for some $\theta_0$, where $\rho_\Gamma = \rho_\alpha(\Gamma(0))$.
Let $\Phi_\Gamma = \Gamma\circ R\circ \Psi$ so that in
effect, $\Phi_{\Gamma}$ is a M\"obius parametrization with respect
to the chart $\Omega_\alpha$ and the boundary defining function $\rho_\alpha$.
Hence, arguing as in  the
proof of Lemma \ref{lem:X-other-charts}, and using the fact that
$\rho/\rho_\alpha$ is uniformly bounded above and bounded
below away from zero,
we find that the Jacobians
$D(\Phi^{-1}\circ\Phi_\Gamma)$ and $D(\Phi^{-1}_\Gamma\circ \Phi)$ admit
uniform $C^k$ bounds independent of $\Phi$ and $\Gamma$.

Let $\eta$ be a cutoff function with support in $B_{r}^\Hyp$
that equals $1$ on $B_{1/2}^\Hyp$.
For each $i$, let $N(i)$ be the set of indices $j$ such that $\Gamma_j(B_{2}^\HypB)
\cap \Phi_i(B_2^\Hyp)\neq \emptyset$.  Since the sets $\Phi_i(B_2^\Hyp)$
admits an upper bound on their diameters, the size of $N(i)$ is bounded above
independent of $i$.  The function
\[
\sum_{j\in N(i)} \Phi_i^* (\Phi_{\Gamma_j})_*\eta
\]
is bounded below on $B_2^\Hyp$ and admits uniform $C^k$ bounds
independent of $i$. Hence, using the uniform $C^k$ bounds for
the transition Jacobians,
\[
\|\chi \Phi_i^* u\|_{H^{s,p}(\Reals^n)}^p
\lesssim \|\chi \Phi_i^*
(\sum_{j\in N(i)} (\Phi_{\Gamma_j})_*\eta) u\|_{H^{s,p}(\Reals^n)}^p
\lesssim \sum_{j\in N(i)} \|\eta \Phi_{\Gamma_j}^* u\|_{H^{s,p}(\Reals^n)}^p.
\]
Since $\rho(\Gamma_j(0))/\rho_i$ is uniformly bounded above and below
independent of $i$ and $j$, and using the fact that
that there is a uniform upper bound for the number of indices $i$
such that $\Gamma_j(B_2)\cap\Phi_i(B_2)\neq \emptyset$
we find
\[
\|u\|_{H^{s,p}_\delta(\mathbb B)}^p \lesssim
\sum_i \sum_{j\in N(i)}
\rho(\Gamma_j(0))^{-\delta p} \|\eta \Phi_{\Gamma_j}^* u\|_{H^{s,p}(\Reals^n)}^p
\lesssim \sum_j \rho(\Gamma_j(0))^{-\delta p}\|\Phi_{\Gamma_j}^* u\|_{H^{s,p}(B_r^\Hyp)}^p.
\]
From uniform $C^k$ estimates for $R_{\Gamma_k}\circ \Psi_\Hyp^\HypB$ we then find
\[
\|u\|_{H^{s,p}_\delta(\mathbb B)}^p \lesssim
\sum_j \rho(\Gamma_j(0))^{-\delta p}\| \Gamma_j^* u\|_{H^{s,p}(B_r^\HypB)}^p.
\]
The reverse inequality is proved by analogous arguments, which establishes
the equivalence \eqref{eq:ball-Hspdelta}.  The
equivalences \eqref{eq:ball-Xspdelta} and
\eqref{eq:ball-Xspdelta-all}
are proved with similar, easier techniques; see also
Lemma \ref{lem:X-other-charts}.
\end{proof}

The following elliptic regularity result is a straightforward
consequence of the alternative norms of Lemma \ref{lem:alt-B-norms}
and local elliptic regularity (Proposition \ref{prop:interior-reg}),
noting that the operator $\Gamma_i^*\opP$ is independent of $i$ and hence
the implicit constant from equation \eqref{eq:int-reg} is also independent of $i$.

\begin{lemma}\label{lem:hyp-elliptic-reg}
Suppose $s,s',\delta\in\Reals$ and $1<p<\infty$.
\begin{enumerate}
  \item Suppose $u\in H^{s,p}_\delta(\HypB;E)$ and $\opP u\in H^{s'-d,p}_\delta(\HypB;E)$.
Then $u\in H^{s',p\dual}_\delta(\HypB;E)$.
\item Suppose $u\in X^{s,p}_\delta(\HypB;E)$ and $\opP u\in X^{s'-d,p}_\delta(\HypB;E)$.
Then $u\in X^{s',p}_\delta(\HypB;E)$.
\end{enumerate}
\end{lemma}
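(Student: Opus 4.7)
The plan is to localize via the tilings supplied by Lemma \ref{lem:alt-B-norms}, apply interior elliptic regularity on each local patch, and then reassemble. The essential geometric input is that $\opP$, being a geometric operator built from the invariant metric $g_{\HypB}$, is preserved by pullback under hyperbolic isometries: since $\Gamma_i^* g_{\HypB} = g_{\HypB}$, and since geometric bundles are stable under isometries, one has $\Gamma_i^*(\opP v)=\opP(\Gamma_i^* v)$ on the bundle $E$ (identified via the natural bundle map induced by $\Gamma_i$). This is what makes the local elliptic estimate uniform across the cover.

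First, I would fix $1/2 < r' < r < 2$. For each preferred isometry $\Gamma_i$, pulling back the data produces $\Gamma_i^* u \in H^{s,p}(B_r^{\HypB};E)$ together with $\opP(\Gamma_i^* u) = \Gamma_i^*(\opP u) \in H^{s'-d,p}(B_r^{\HypB};E)$. Since $g_{\HypB}$ is smooth, the coefficients of $\opP$ are $C^\infty$ on every such ball, so the compatible-index constraint $\mathcal S^{s_0,p_0}_d$ appearing in Proposition \ref{prop:interior-reg} imposes nothing beyond the obvious conditions on $(s',p)$ (we may take the metric-regularity parameter $s_0$ as large as we wish). Proposition \ref{prop:interior-reg} therefore applies on $B_{r'}^{\HypB} \subset B_r^{\HypB}$ and yields
\[
\|\Gamma_i^* u\|_{H^{s',p}(B_{r'}^{\HypB})}
\lesssim \|\opP(\Gamma_i^* u)\|_{H^{s'-d,p}(B_r^{\HypB})} + \|\Gamma_i^* u\|_{H^{s,p}(B_r^{\HypB})},
\]
with an implicit constant independent of $i$ because $\Gamma_i^* \opP = \opP$ acts identically in each local coordinate frame.

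Next, I would raise both sides to the $p$-th power, weight by $\rho(\Gamma_i(0))^{-\delta p}$, and sum over $i$ for part (1) (respectively, take the supremum for part (2)). By the two norm equivalences of Lemma \ref{lem:alt-B-norms}, the left side controls $\|u\|_{H^{s',p}_\delta(\HypB;E)}^p$ (resp.\ $\|u\|_{X^{s',p}_\delta(\HypB;E)}$), while the right side is bounded by $\|\opP u\|_{H^{s'-d,p}_\delta(\HypB;E)}^p + \|u\|_{H^{s,p}_\delta(\HypB;E)}^p$ (resp.\ the analogous Gicquaud--Sakovich quantities). The two hypotheses of the lemma ensure this right side is finite, establishing the claim.

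The only real obstacle is making the local estimate of Proposition \ref{prop:interior-reg} uniform in $i$, and this is where isometry invariance enters decisively: pulling $\opP$ back by $\Gamma_i$ produces a coordinate expression with coefficients identical to those of $\opP$ itself, so the constant in the local elliptic estimate depends only on $r,r'$, and $\opP$, not on $i$. The lower-order term $\|\Gamma_i^* u\|_{H^{s,p}(B_r^{\HypB})}$ in the local estimate could in principle involve a Sobolev index weaker than $s$ (as in Proposition \ref{prop:interior-reg}), but in either case it is dominated by $\|\Gamma_i^*u\|_{H^{s,p}(B_r^{\HypB})}$ upon using Lemma \ref{lem:basic-inclusions} (or, if one needs an even weaker index, via a short finite bootstrap raising regularity a bounded amount at each step, which terminates because the metric is $C^\infty$).
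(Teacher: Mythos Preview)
Your proposal is correct and matches the paper's approach exactly: the paper simply remarks that the result is a straightforward consequence of the alternative norms of Lemma~\ref{lem:alt-B-norms} together with local elliptic regularity (Proposition~\ref{prop:interior-reg}), noting that $\Gamma_i^*\opP$ is independent of $i$ so that the implicit constant in \eqref{eq:int-reg} is uniform. You have faithfully spelled out precisely these details.
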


The mapping properties of $\opP$ on the weighted Sobolev
spaces is a straightforward consequence interpolation and
duality arguments; see, for example \cite[Lemma 3.5]{Maxwell-RoughAE}.
We now establish the main isomorphism theorem; recall the definition of $\mathcal D_q(R)$ from \eqref{intro:define-DqR}.
\begin{theorem}\label{thm:ball-H-iso}
Suppose $s\in\Reals$, $1<p<\infty$ and that
$\delta\in\mathcal D_q(R)$, where $R$ is the indicial radius of $\mathcal P$.
Then $\opP:H^{s,p}_\delta(\HypB)\rightarrow H^{s-d,p}_{\delta}(\HypB)$
is an isomorphism.
\end{theorem}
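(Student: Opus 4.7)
My plan is to establish Theorem \ref{thm:ball-H-iso} in three stages: (i) Fredholmness in the stated range, (ii) index zero via self-adjointness and duality, and (iii) triviality of the kernel via bootstrap to the $L^2$ setting where Assumption \ref{Assume-I} applies directly.

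For Fredholmness, I would mimic the parametrix construction of \S\ref{sec:fredholm} but exploit the homogeneity of $\mathbb B$ under its isometry group, which makes the argument substantially cleaner than in the asymptotically hyperbolic case. Using the countable family of isometries $\{\Gamma_i\}$ fixed before Lemma \ref{lem:alt-B-norms} and the alternative norm \eqref{eq:ball-Hspdelta}, the key uniformity is automatic since $\Gamma_i^*\mathcal P = \mathcal P$; no rough-coefficient approximation is needed. The indicial analysis of \cite[Chapter 5]{Lee-FredholmOperators} shows that for $\delta\in \mathcal D_q(R)$ one has the semi-Fredholm estimate
\begin{equation*}
\| u\|_{H^{s,p}_\delta(\mathbb B;E)} \lesssim \|\mathcal P u\|_{H^{s-d,p}_\delta(\mathbb B;E)} + \|u\|_{H^{s-1,p}_{\delta-1}(\mathbb B;E)},
\end{equation*}
obtained by composing the interior estimate (Lemma \ref{lem:hyp-elliptic-reg}) with the $L^2$ estimate at infinity \eqref{eq:est_at_inf_H} transported by the $\Gamma_i$. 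Combined with the compact inclusion of Proposition \ref{prop:rellich-lemma} (restricted to $\mathbb B$), this yields finite-dimensional kernel and closed range.

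For index zero, I would use the self-adjointness hypothesis in Assumption \ref{Assume-P} together with the duality pairing extended by Theorem \ref{thm:dual-H-gtb} to identify $H^{s-d,p}_\delta(\mathbb B;E)^*\cong H^{d-s,p^*}_{-\delta}(\mathbb B;E)$. A short computation shows that $\delta\in\mathcal D_q(R)$ if and only if $-\delta\in\mathcal D_{q^*}(R)$, so the cokernel of $\mathcal P$ on $H^{s,p}_\delta$ is identified with the kernel of $\mathcal P$ on $H^{d-s,p^*}_{-\delta}$. Lemma \ref{lem:hyp-elliptic-reg} then shows that both kernels consist of the same smooth sections.

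The main obstacle, and what I expect to be the heart of the argument, is showing that this common kernel is trivial. The element of the kernel automatically sits in $H^{d,p}_\delta(\mathbb B;E)$ by Lemma \ref{lem:hyp-elliptic-reg}, but to apply Assumption \ref{Assume-I} I need membership in $H^{d,2}_0(\mathbb B;E)$. The plan is to use a weight-bootstrap argument based on the indicial analysis: any kernel element that decays faster than the ``slow'' indicial exponent $\tfrac{n}2-w-R$ must in fact decay like the ``fast'' one $\tfrac{n}2-w+R$, which, translated back to weighted Sobolev scales, means decay that brings it into $L^2_0$. Concretely, if $u\in H^{s,p}_\delta$ with $\delta\in\mathcal D_q(R)$, one decomposes $u=\chi u+(1-\chi)u$ for a compactly supported cutoff $\chi$ and applies the isomorphism of the model operator (or rather, the corresponding indicial-root expansion near infinity, as in \cite[Chapter 5]{Lee-FredholmOperators}) to propagate decay until $u\in H^{d,2}_0(\mathbb B;E)$. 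Then the hypothesized $L^2$ isomorphism forces $u\equiv 0$, completing the proof.
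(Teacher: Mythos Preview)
Your route is workable in spirit but substantially more involved than the paper's, and the kernel-triviality step is underspecified in a way that matters.

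The paper's proof is essentially three lines of reduction. It takes as a black box \cite[Theorem~5.6]{Lee-FredholmOperators}, which already gives that $\mathcal P:H^{k,p}_\delta(\mathbb B;E)\to H^{k-d,p}_\delta(\mathbb B;E)$ is an isomorphism for every \emph{integer} $k\ge d$ and $\delta\in\mathcal D_p(R)$. From there: (i) interpolate between $\mathcal P_k$ and $\mathcal P_{k+1}$ to cover all real $s\ge d$; (ii) for $s\le 0$, use that $d-s\ge d$ and self-adjointness to solve $\mathcal P u=v$ by setting $\langle u,w\rangle=\langle v,\mathcal P_{d-s}^{-1}w\rangle$, noting $-\delta\in\mathcal D_{p^*}(R)$ iff $\delta\in\mathcal D_p(R)$; (iii) interpolate between $\mathcal P_0$ and $\mathcal P_d$ for $0\le s\le d$. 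No separate Fredholm, index, or kernel argument is needed: isomorphisms interpolate to isomorphisms, and the duality step produces surjectivity directly.

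Your proposal instead rebuilds the Fredholm theory on $\mathbb B$ from scratch and then argues kernel triviality by a weight bootstrap into $H^{d,2}_0(\mathbb B;E)$. The first two stages (semi-Fredholm, index zero) are plausible but duplicate exactly what Lee already packages into Theorem~5.6. The third stage is where there is a real gap: on $\mathbb B$ the operator \emph{is} its own model at infinity, so ``apply the isomorphism of the model operator to propagate decay'' is circular---that isomorphism is the theorem you are proving. The indicial-exponent expansion you allude to lives in the polyhomogeneous/$C^{k,\alpha}$ world of \cite[Chapter~5]{Lee-FredholmOperators} and does not directly move a kernel element from $H^{s,p}_\delta$ with arbitrary $(s,p,\delta)$ into $H^{d,2}_0$; you would need a mechanism (e.g., a parametrix with controlled weight-improvement, analogous to the paper's Lemma~\ref{lem:T-smoothing}) that itself presupposes an inverse somewhere. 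The paper sidesteps this entirely by inheriting triviality of the kernel from the known integer-order isomorphism and never leaving the category of isomorphisms.
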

\begin{proof}
The proof proceeds in three cases, $s\ge d$, $s\le 0$ and $s\in [0,d]$.

For $s\in\Reals$, let $\opP_s$ denote $\opP$ as
a map from $H^{s,p}_{\delta}(\HypB; E)$ to $H^{s-d,p}_{\delta}(\HypB;E)$.
Suppose $s\ge d$ and let
$k$ be the integer such that $d\le k\le s< k+1$.  From
\cite{Lee-FredholmOperators} Theorem 5.6, $\opP_{k}$ and $\opP_{k+1}$
are isomorphisms. Hence so is $\opP_s$ by interpolation.

Now suppose $s\le 0$.  Given $v\in H^{s-d,p}_{\delta}(\HypB)$
we wish to solve $\opP u =v$.  To this end we apply Theorem \ref{thm:dual-H-gtb}
with background metric $g_{\mathbb B}$ to obtain a unique
$u\in H^{s,p}_{-\delta}(\HypB)$ satisfying
\[
\ip< u, w>_{(\HypB, g_{\HypB})} = \ip< v,(\opP_{d-s})^{-1} w>_{(\HypB,g_\HypB)}
\]
for all $w\in H^{-s,p\dual}_{-\delta}(\HypB,g_{\HypB})$.
Note that in this context $\opP_{d-s}^{-1}:H^{-s,p\dual}_{-\delta}(\HypB) \to H^{d-s,p\dual}_{-\delta}(\HypB)$, 
which is well-defined by our work in the first case since $d-s\ge d$ and
since 
$-\delta \in \mathcal D_{p\dual}(R)$
if and only if 
$\delta \in \mathcal D_q(R)$.

Since $\opP_s$ is formally self-adjoint with respect to
$g^{\HypB}$, if $w$ is
any compactly supported smooth section,
\[
\ip< \opP_s u, w>_{(\HypB,g_\HypB)} =
\ip< u, \opP_{d-s} w>_{(\HypB,g_\HypB)} = \ip< v, \opP_{d-s}^{-1}\opP_{d-s} w>_{(\HypB,g_\HypB)} =
\ip<v,w>_{(\HypB,g_\HypB)}.
\]
Thus $\opP_s u = v$ in the sense of distributions, and
$\opP_s$ is surjective. That it is injective follows from the injectivity
of $\opP_{d}$ and Lemma \ref{lem:hyp-elliptic-reg}.

Finally, suppose $s\in [0,d]$. Then $\opP_0$ and $\opP_d$
are isomorphisms, and thus by interpolation $\opP_s$ is an isomorphism as well.
\end{proof}

We now turn to the analogue of Theorem \ref{thm:ball-H-iso}
for Gicquaud-Sakovich spaces, which requires additional labor
due to the absence of good interpolation/duality tools.
The following lemma from \cite{GicquaudSakovich} is the
crucial ingredient for obtaining mapping properties
on spaces $X^{s,p}_\delta(\HypB;E)$ when $s\ge m$.  In it,
$\opP^{-1}$ refers to convolution against the Greens function
of $\opP$ discussed in \cite[Chapter 5]{Lee-FredholmOperators}.

\begin{lemma}[\cite{GicquaudSakovich} Lemma A.3]\label{lem:L2-X}
Suppose $1<p<\infty$ and $\delta\in \mathcal D_\infty(R)$.
Then
\[
\|\opP^{-1} f\|_{X^{0,p}_{\delta}(\HypB;E)}  \lesssim
\|f\|_{X^{0,p}_{\delta}(\HypB;E)}
\]
for all $f\in X^{0,p}_{\delta}(M;\HypB)$.
\end{lemma}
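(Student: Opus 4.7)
The plan is to work with the Green's function representation of $\opP^{-1}$. From the construction in Chapter 5 of \cite{Lee-FredholmOperators}, the solution $u = \opP^{-1}f$ admits an integral representation
\[
u(\xi) = \int_{\HypB} G(\xi,\eta)\cdot f(\eta)\,dV_{g_\HypB}(\eta),
\]
where $G$ is the Schwartz kernel of $\opP^{-1}$. The pointwise estimates established there show that $|G(\xi,\eta)|$ decays like a negative power of $\cosh d_{g_\HypB}(\xi,\eta)$, with rate governed by the indicial radius $R$; the precise decay rates are what determine the Fredholm range for $\opP$.

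Next, I would exploit the translation-equivalent norm
\[
\|u\|_{X^{0,p}_\delta(\HypB;E)}\sim \sup_{\Gamma}\rho(\Gamma(0))^{-\delta}\|\Gamma^*u\|_{L^p(B_1^\HypB)}
\]
from Lemma~\ref{lem:alt-B-norms}\eqref{eq:ball-Xspdelta-all}, taking the supremum over all hyperbolic isometries. Fix a particular isometry $\Gamma$ with $p_0 = \Gamma(0)$. After changing variables, the goal reduces to bounding $\rho(p_0)^{-\delta}\|u\|_{L^p(B_1^\HypB(p_0))}$ uniformly in $p_0$. The plan is to cover $\HypB$ by the Whitney-type family $\{\Gamma_i(B_1^\HypB)\}$ from the paragraph preceding Lemma~\ref{lem:alt-B-norms}, and split
\[
u(\xi) = \sum_i \int_{\Gamma_i(B_1^\HypB)} G(\xi,\eta)\cdot f(\eta)\,dV_{g_\HypB}(\eta)
\]
for $\xi\in B_1^\HypB(p_0)$. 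On each piece, Minkowski's inequality in $L^p(B_1^\HypB(p_0))$ together with the uniform $L^p$ bound $\|f\|_{L^p(\Gamma_i(B_1^\HypB))}\lesssim \rho(\Gamma_i(0))^\delta\,\|f\|_{X^{0,p}_\delta(\HypB;E)}$ yields an estimate of the form
\[
\|u\|_{L^p(B_1^\HypB(p_0))}\lesssim \|f\|_{X^{0,p}_\delta(\HypB;E)}\sum_i \rho(\Gamma_i(0))^\delta\,\kappa_i,
\]
where $\kappa_i$ measures the operator $L^p(\Gamma_i(B_1^\HypB))\to L^p(B_1^\HypB(p_0))$ norm of integration against $G$, and is controlled pointwise by the Green's function decay.

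The heart of the matter, and the step I expect to be the main obstacle, is therefore the weighted summability estimate
\[
\sum_i \rho(\Gamma_i(0))^\delta\,\kappa_i \lesssim \rho(p_0)^\delta,
\]
uniformly in $p_0$. This is where the hypothesis $\delta\in\mathcal D_\infty(R)$ is used in an essential way: translating the sum into an integral over $\HypB$ via uniform local finiteness, one obtains an integral of the form $\int_\HypB \rho(\eta)^\delta |G(p_0,\eta)|\,dV_{g_\HypB}(\eta)$ (together with an analogous integral against the adjoint kernel to absorb $L^p$ factors), and the decay rates of $G$ paired with the hyperbolic volume growth make this integral finite, with $\rho(p_0)^\delta$ scaling, precisely when $|\delta - (n-1)/2|<R$. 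Standard manipulations of hyperbolic distance integrals (splitting $\HypB$ into the region where $\rho(\eta)\gtrsim \rho(p_0)$ and the complement, using the Green's function rates from Lee's asymptotic analysis) should deliver this bound; the computation is essentially the same one that enforces $\mathcal D_q(R)$ in the Sobolev case, only performed at the level of the pointwise kernel rather than under an $L^p$ integral. Taking the supremum over $\Gamma$ then concludes the proof.
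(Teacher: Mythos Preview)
The paper does not supply a proof of this lemma: it is quoted directly from \cite{GicquaudSakovich} (their Lemma~A.3) and used as a black box. So there is no ``paper's own proof'' to compare against here.

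That said, your strategy is sound and in fact matches the spirit of the surrounding material. The weighted summability estimate you isolate as the heart of the matter,
\[
\sum_i \rho(p_i)^\delta\,\kappa_i \lesssim \rho(p_0)^\delta,
\]
with $\kappa_i$ controlled by the Green's function decay $\rho(p_i,p_0)^{\delta^*}$ for $\delta^*$ tied to the indicial radius, is precisely the content of Lemma~\ref{lem:discrete-convolve} in this appendix. That lemma is proved by exactly the mechanism you describe: converting the discrete sum into an integral over $\HypB$ via uniform local finiteness and then invoking \cite[Lemma~5.4]{Lee-FredholmOperators} for the continuous convolution bound. The paper uses Lemma~\ref{lem:discrete-convolve} only for the dual case $s\le 0$ in Theorem~\ref{thm:ball-X-iso}, but the same estimate would close your argument for $s=0$ as well. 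One minor point: your kernel bound $\kappa_i$ should account for the near-diagonal singularity of $G$ (which is mild and integrable on a unit ball), not just the far-field decay; this is routine but worth mentioning.
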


The following discrete analogue of \cite{Lee-FredholmOperators}
Lemma 5.4 is the main tool needed for establishing mapping
properties for spaces with $s<0$. In it, the points $p_j$
are those associated with the isometries $\Gamma_j$ discussed
in Proposition \ref{lem:alt-B-norms}.
\begin{lemma}\label{lem:discrete-convolve}
Suppose $\delta$, $\delta^*$ are real numbers
such that $\delta< \delta*$ and
$\delta+\delta^*>n-1$.
For all $q\in\HypB$,
\[
\sum_j \rho(0,p_j)^{\delta}\rho(p_j,q)^{\delta^*}
\lesssim \rho(0,q)^\delta
\]
with implicit constant independent of $q$.

\end{lemma}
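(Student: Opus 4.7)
The strategy is to reduce the discrete sum to an integral, and then to invoke the continuous analogue \cite[Lemma 5.4]{Lee-FredholmOperators}, whose hypotheses match the ones stated here exactly. The two structural facts that make this reduction work are: (i) the points $p_j$ have uniformly bounded multiplicity in the sense that the balls $B_{1/2}^\HypB(p_j)$ cover $\HypB$ and the balls $B_{1}^\HypB(p_j)$ still overlap only a bounded number of times; and (ii) the function $\rho(a,\cdot)$ is multiplicatively stable on hyperbolic balls of bounded radius.

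\textbf{Step 1 (multiplicative stability of $\rho$).} I would first observe that if $p,p'\in \HypB$ with $d_{g_\HypB}(p,p')\le 1$, then for any $a\in\HypB$,
\[
\cosh d(a,p)\;\sim\;\cosh d(a,p'),
\]
with implicit constants independent of $a$, since $|d(a,p)-d(a,p')|\le 1$ and $\cosh$ grows multiplicatively by a bounded factor under unit translations of its argument. Hence $\rho(a,p)\sim \rho(a,p')$ uniformly in $a$. Applying this with $a=0$ and $a=q$ on $B_{1/2}^\HypB(p_j)$, the quantity $\rho(0,p)^\delta\rho(p,q)^{\delta^*}$ is comparable to its value at the center $p_j$ throughout that ball, uniformly in $j$ and $q$.

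\textbf{Step 2 (sum to integral).} Since the hyperbolic volumes of the balls $B_{1/2}^\HypB(p_j)$ are all equal to some fixed constant $V_0$ (by homogeneity of $\HypB$), Step 1 gives
\[
\rho(0,p_j)^\delta\rho(p_j,q)^{\delta^*}
\;\lesssim\; \frac{1}{V_0}\int_{B_{1/2}^\HypB(p_j)}\rho(0,p)^\delta\rho(p,q)^{\delta^*}\,dV_{g_\HypB}(p).
\]
Summing over $j$ and using the uniform local finiteness of the cover (each point of $\HypB$ lies in at most $N$ of the balls $B_{1/2}^\HypB(p_j)$) yields
\[
\sum_j \rho(0,p_j)^\delta\rho(p_j,q)^{\delta^*}
\;\lesssim\; \int_{\HypB}\rho(0,p)^\delta\rho(p,q)^{\delta^*}\,dV_{g_\HypB}(p).
\]

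\textbf{Step 3 (continuous estimate).} Under the hypotheses $\delta<\delta^*$ and $\delta+\delta^*>n-1$, \cite[Lemma 5.4]{Lee-FredholmOperators} provides the continuous estimate
\[
\int_{\HypB}\rho(0,p)^\delta\rho(p,q)^{\delta^*}\,dV_{g_\HypB}(p)\;\lesssim\;\rho(0,q)^\delta,
\]
with constant independent of $q$. Combining with Step 2 gives the desired bound. The main technical point is Step 1—verifying that $\rho(a,\cdot)$ is stable on unit hyperbolic balls uniformly in $a$—which controls the uniformity of the implicit constants required to treat the integrand as essentially constant on each ball $B_{1/2}^\HypB(p_j)$; once this is established the rest is bookkeeping.
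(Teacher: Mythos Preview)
Your proposal is correct and follows essentially the same approach as the paper: both reduce the discrete sum to the continuous convolution integral of \cite[Lemma 5.4]{Lee-FredholmOperators} via the multiplicative stability of $\rho(a,\cdot)$ on unit hyperbolic balls (the paper makes this explicit with $C=\exp(1)=\sup_{x\ge 0}\cosh(x+1)/\cosh(x)$). The only cosmetic difference is in the bookkeeping of Step~2: the paper first partitions the index set into finitely many classes with pairwise disjoint $B_1^\HypB(p_j)$ and builds step functions on each class, whereas you use the uniform overlap bound on the $B_{1/2}^\HypB(p_j)$ directly; both are standard and equivalent ways to pass from the sum to the integral.
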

\begin{proof}
From local finiteness we can partition the centers $\{p_j\}$
into finitely many sets $J_1,\ldots,J_{N+1}$ such that if
$p_i,p_j\in J_k$, then $B_2(p_i)\cap B_2(p_j)=\emptyset$. It is
then enough to show that each
\[
\sum_{j\in J_k} \rho(0,p_j)^{\delta}\rho(p_j,q)^{\delta^*}
\lesssim \rho(0,q),
\]
where the implicit constant $C$ independent of $q$.

Fix one of the partitions $J_k$ and consider the function $\chi$
that equals $\rho(0,p_j)^\delta$ on $B_1^{\HypB}(p_j)$ for each $j\in J_k$
and is otherwise zero.
Define $\hat \chi$ similarly with $\hat \chi = \rho(p_j,q)$
on each $B_{1}^\HypB(p_j)$. A straightforward computation
shows there is a constant $C>0$ such that
$C\rho(x,0)\ge \chi(x)$
and $C \rho(x,q) \ge \hat \chi(x)$
for all $x\in \HypB$; indeed,
$C=\exp(1)=\sup_{x\ge 0} \cosh(x+1)/\cosh(x)$ suffices.

Now
\[
\begin{aligned}
\sum_{j\in P_k} \rho(0,p_j)^{\delta}\rho(p_j,q)^{\delta^*}
&= \frac{1}{\mathop{\rm Vol} B_1^\HypB} \int \chi\hat\chi\;dV_{g_\HypB}\\
&\le \frac{C^2}{\mathop{\rm Vol} B_1^\HypB}
\int \rho(0,x)^\delta\rho(x,q)^{\delta^*}\;dV_{g_\HypB}(x).
\end{aligned}
\]
Lemma 5.4 of \cite{Lee-FredholmOperators}
implies there is a constant $c$, independent of $q$, such
that
\[
\int \rho(0,x)^\delta\rho(x,q)^{\delta^*}\;dV_{g_\HypB}(x)
\le   c \rho(0,q)^\delta,
\]
which proves the result.
\end{proof}

\begin{theorem}\label{thm:ball-X-iso}
Suppose $s\in\Reals$, $1<p<\infty$, and $\delta\in \mathcal D_\infty(R)$.
Then
$\opP:X^{s,p}_\delta(\HypB;E)\rightarrow X^{s-d,p}_{\delta}(\HypB;E)$
is an isomorphism.
\end{theorem}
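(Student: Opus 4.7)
My plan is to prove injectivity by reducing to the $H^{s,p}_\delta$ isomorphism already established in Theorem \ref{thm:ball-H-iso}, then obtain surjectivity by directly constructing the inverse via the Greens function of $\opP$ together with the decay estimates from \cite[Chapter 5]{Lee-FredholmOperators}. For injectivity, given $u \in X^{s,p}_\delta(\HypB;E)$ with $\opP u = 0$, since $\deltrange_\infty(R)$ is an open interval I can pick $\epsilon > 0$ so that $\delta' := \delta - (n-1)/p - \epsilon \in \deltrange_p(R)$; Lemma \ref{lem:basic-inclusions} gives $u \in H^{s,p}_{\delta'}(\HypB;E)$, and Theorem \ref{thm:ball-H-iso} forces $u = 0$.

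For surjectivity, the plan is to localize: using the alternative norm of Lemma \ref{lem:alt-B-norms}\eqref{eq:ball-Xspdelta-all}, I would decompose $f \in X^{s-d,p}_\delta(\HypB;E)$ as $f = \sum_j f_j$ using a uniformly bounded partition of unity subordinate to the cover $\{B_r^\HypB(p_j)\}$, so that each $f_j$ is compactly supported and
\[
\|\Gamma_j^* f_j\|_{H^{s-d,p}(B_r^\HypB)} \lesssim \rho(p_j)^\delta \|f\|_{X^{s-d,p}_\delta(\HypB;E)}.
\]
Theorem \ref{thm:ball-H-iso} produces unique solutions $u_j = \opP^{-1} f_j$ lying in $H^{s,p}_{\delta''}(\HypB;E)$ for any $\delta'' \in \deltrange_p(R)$. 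I aim to show that the candidate sum $u := \sum_j u_j$ converges in $X^{s,p}_\delta(\HypB;E)$ with $\|u\|_{X^{s,p}_\delta} \lesssim \|f\|_{X^{s-d,p}_\delta}$; then $\opP u = f$ will follow from the individual identities $\opP u_j = f_j$ combined with the convergence of the sum in a distributional sense.

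The main obstacle is the uniform-in-$k$ bound $\rho(p_k)^{-\delta}\|\Gamma_k^* u\|_{H^{s,p}(B_r^\HypB)} \lesssim \|f\|_{X^{s-d,p}_\delta}$. I would split the $j$-sum into a near regime $\{j : d_{g_\HypB}(p_j, p_k) \leq C\}$ and a far regime. The near regime comprises a uniformly bounded number of terms by local finiteness of $\{B_2^\HypB(p_j)\}$, each of which is controlled by local elliptic regularity for $\opP$ (Proposition \ref{prop:interior-reg} pulled back via $\Gamma_k$). For the far regime I would invoke the pointwise Greens function bound $|G(x,y)| \lesssim \rho(x,y)^{(n-1)/2 + R'}$ from \cite[Chapter 5]{Lee-FredholmOperators}, valid for any $R' < R$, which reduces the estimate to controlling the discrete convolution
\[
\sum_j \rho(p_j)^\delta \, \rho(p_j, p_k)^{(n-1)/2 + R'}.
\]
Lemma \ref{lem:discrete-convolve} bounds this by a constant times $\rho(p_k)^\delta$ provided $\delta < (n-1)/2 + R'$ and $\delta + (n-1)/2 + R' > n-1$; both inequalities hold for $R'$ chosen sufficiently close to $R$ precisely because $|\delta - (n-1)/2| < R$, i.e., $\delta \in \deltrange_\infty(R)$. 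This matches the Fredholm range appearing in the theorem, which is a reassuring consistency check on the argument.
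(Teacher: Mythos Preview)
Your injectivity argument matches the paper's exactly. Your surjectivity strategy uses the same essential ingredient (Green's function decay combined with the discrete convolution Lemma~\ref{lem:discrete-convolve}), but as stated it has a gap and differs in organization from the paper's proof.

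The gap is in the far-regime estimate when $s<d$, so that $f_j\in H^{s-d,p}$ is a genuine distribution. The pointwise bound $|G(x,y)|\lesssim \rho(x,y)^{(n-1)/2+R'}$ alone does not ``reduce the estimate to the discrete convolution'': you need to control $\|\Gamma_k^* u_j\|_{H^{s,p}(B_r^\HypB)}$, and for this you must pair $f_j$ against $G(x,\cdot)$ as a distribution (requiring decay of $y$-derivatives of $G$) and then take $x$-derivatives (requiring decay of $x$-derivatives of $G$). Such derivative bounds are available in \cite[Chapter 5]{Lee-FredholmOperators}, but you have not invoked them, and the passage from these to the single summand $\rho(p_j)^\delta\rho(p_j,p_k)^{(n-1)/2+R'}$ needs an argument (e.g., using that $\opP u_j=0$ near $p_k$ to bootstrap via local elliptic regularity from an $L^\infty$ bound). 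A secondary issue: the sum $\sum_j u_j$ will not in general converge in the $X^{s,p}_\delta$ topology (an $\ell^\infty$-type norm), since the tail $\sum_{j\ge N}$ is not uniformly small in $k$; you should instead observe that the sum converges in $H^{s,p}_{\delta'}$ and that the limit inherits the $X$ bound.

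The paper avoids both issues by a different organization. It first solves $\opP u=f$ in $H^{s,p}_{\delta'}$, so no series convergence in $X$ is needed. It then treats three regimes of $s$ separately. For $s\ge d$ the Gicquaud--Sakovich estimate Lemma~\ref{lem:L2-X} gives $u\in X^{0,p}_\delta$ directly and elliptic regularity (Lemma~\ref{lem:hyp-elliptic-reg}) upgrades to $X^{s,p}_\delta$. For $s\le 0$ it uses duality: one writes $\langle\Gamma^*u,\phi\rangle=\langle f,\Gamma_*\opP^{-1}_{d-s}\phi\rangle$, decomposes $f$ (not $\phi$) via a partition of unity, and bounds $\|\Gamma_j^*\Gamma_*\opP^{-1}_{d-s}\phi\|_{H^{d-s,p^*}}$ using the already-established $s\ge d$ case applied to the \emph{smooth} input $\phi$ in the space $X^{d-s,p^*}_{\delta^*}$ with $\delta^*>\delta$; this is where Lemma~\ref{lem:discrete-convolve} enters. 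For $0\le s\le d$ one reduces to the $s=0$ case via $X^{s-d,p}_\delta\subset X^{-d,p}_\delta$ and bootstraps with Lemma~\ref{lem:hyp-elliptic-reg}. The duality move is the key idea your sketch is missing: it lets one always apply $\opP^{-1}$ to something with nonnegative regularity, so the pointwise Green's function estimate (packaged in Lemma~\ref{lem:L2-X}) suffices without derivative bounds on $G$.
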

\begin{proof}
Let $\opP_s$ denote $\opP$ as a map
from $X^{s,p}_{\delta}(\HypB)$ to
$X^{s-d,p}_{\delta}(\HypB)$.
Pick $\delta'$
such that
$-R < \delta'+\frac{n-1}p -\frac{n-1}2< \delta-\frac{n-1}2< R$.
Noting that $\delta'\in\mathcal D_p(R)$, the injectivity
of $\opP_s$ follows from the embedding
of $X^{s,p}_{\delta}(\HypB)$ into $H^{s,p}_{\delta'}(\HypB)$
given by Lemma \ref{lem:basic-inclusions}
along with the injectivity of $\opP$ on this latter space.
Hence, by the Closed Graph Theorem,
to show that $\opP_s$ is an isomorphism
it suffices to show that $\opP_s$ is surjective.

Let $f\in X^{s-m,p}_{\delta}(\HypB)\subset H^{s-m,p}_{\delta'}(\HypB)$.
Theorem \ref{thm:ball-H-iso} implies there exists
$u\in H^{s,p}_{\delta'}(\HypB)$ such that $\opP u = f$.  Hence
the main issue is to prove that in fact $u\in X^{s,p}_{\delta}(\HypB)$.

Suppose first that $s\ge m$.  Then \cite{Lee-FredholmOperators}
Chapter 5
shows that  $u\in H^{m,p}_{\delta'}(\HypB)$ is obtained
by convolving $f\in H^{0,p}_{\delta'}(\Hyp)$
against the Greens function of $\opP$ and
Lemma \ref{lem:L2-X} therefore implies $u\in X^{0,p}_{\delta}(\HypB)$.
We conclude from Lemma \ref{lem:hyp-elliptic-reg}
that $u\in X^{s,p}_{\delta}(\HypB)$ and hence $\opP_s$ is surjective.

Now suppose $s\le 0$.
Given a smooth section $\phi$ of $E$ that is compactly supported
in $B_1^\HypB(0)$ we compute for any hyperbolic isometry $\Gamma$
\[
\ip< \Gamma^* u,  \phi>_{(\HypB,g_{\HypB})}
=  \ip< \opP^{-1}_s f, \Gamma_* \phi>_{(\HypB,g_{\HypB})}
= \ip< f, \opP^{-1}_{-s+m} \Gamma_*  \phi>_{(\HypB,g_{\HypB})}
= \ip< f, \Gamma_* \opP^{-1}_{-s+m} \phi>_{(\HypB,g_{\HypB})}
\]
where $\ip<\cdot,\cdot>_{(\HypB,g_{\HypB})}$ denotes duality
pairing on weighted Bessel potential spaces and where
each step is obvious when $f$ is smooth and compactly supported
and is hence justified by a density argument.

Let $\{\Gamma_i\}$ be the collection of isometries
considered in Lemma \ref{lem:alt-B-norms} and set $p_i=\Gamma_i(0)$.
Making straightforward adjustments, 
Lemma \ref{lem:Hbuilder} can be generalized to the current setting.
and using it we can then construct a partition of unity
$\{\chi_i\}$ subordinate to $\{B_1^\HypB(p_i)\}$ that is uniformly
bounded in $C^{k}_0(\HypB)$ for any choice of $k$.
Pick $\delta^*> \delta$ such that $\delta^*\in \mathcal D_\infty(R)$
and such that $\delta+\delta^*>n-1$.  Then
\[
\begin{aligned}
\ip< f, \Gamma_* \opP^{-1}_{m-s} \phi>_{(\HypB,g_{\HypB})}
&= \sum_j \ip< f, \chi_j \Gamma_* \opP^{-1}_{m-s}  \phi>_{(\HypB,g_\HypB)}\\
&= \sum_j \ip< \Gamma_j^* f,(\Gamma_j^* \chi_j) (\Gamma_j^* \Gamma_*\opP^{-1}_{m-s} \phi)>_{(\HypB,g_\HypB)}\\
&\lesssim \sum_j \|\Gamma_j^*f\|_{H^{s-m,p}(B_1^\HypB)}
\|(\Gamma_j^* \Gamma_*\opP^{-1}_{m-s} \phi)\|_{H^{m-s,p\dual}(B_1^\HypB)}\\
&\le \|f\|_{X^{s-m,p}_\delta(\HypB)} \sum_j \rho(p_j)^{\delta}
\|(\Gamma_j^* \Gamma_* \opP^{-1}_{m-s} \phi)\|_{H^{m-s,p\dual}(B_1^\HypB)}.
\end{aligned}
\]
From the alternate norm of Lemma \ref{lem:alt-B-norms},
the fact that $\rho(\cdot,\cdot)$ is invariant under
hyperbolic isometries, and the fact that
$m-s\ge m$ (and therefore $\opP_{m-s}$
is an isomorphism) we compute
\[
\begin{aligned}
\|\Gamma_j^* \Gamma_* \opP^{-1}_{m-s} \phi\|
_{H^{m-s,p\dual}(B_{1}(0;\HypB))} &\le
\|\opP^{-1}_{m-s} \phi\|_{X^{m-s,p\dual}_{\delta^*}(\HypB;E)}
\rho( (\Gamma^{-1}\circ\Gamma_j)(0),0)^{\delta^*}\\
&\le
\|\opP^{-1}_{m-s} \phi\|_{X^{m-s,p\dual}_{\delta^*}(\HypB;E)}
\rho( \Gamma_j(0),\Gamma(0))^{\delta^*}\\
&\lesssim
\|\phi\|_{X^{-s,p\dual}_{\delta^*}(\HypB;E)}
\rho(p_j,\Gamma(0))^{\delta^*}\\
&\lesssim \|\phi\|_{H^{-s,p\dual}(B_1^\HypB)}
\rho(p_j,\Gamma(0))^{\delta^*}.
\end{aligned}
\]
Combining these inequalities and applying
Lemma \ref{lem:discrete-convolve} we conclude
\[
\begin{aligned}
\ip< \Gamma^* u,  \phi>_{(\HypB,g_{\HypB})} &\lesssim
\|\phi\|_{H^{-s,p\dual}(B_1^\HypB)}
\|f\|_{X^{s-m,p}_\delta(\HypB)} \sum_j \rho(p_j)^{\delta}
\rho(p_j,\Gamma(0))^{\delta^*}\\
&\lesssim \|\phi\|_{H^{-s,p\dual}(B_1^\HypB)}
\|f\|_{X^{s-m,p}_\delta(\HypB)} \rho(0,\Gamma(0))^\delta
\end{aligned}
\]
for all smooth sections $\phi$ that are compactly supported in $B_1^\HypB$.
Hence
\[
\|\Gamma^* u\|_{H^{s,p}(B_1^\HypB)}
\lesssim \|f\|_{X^{s-m,p}_\delta(\HypB)}
\rho(0,\Gamma(0))^\delta
\]
and we conclude $u\in X^{s,p}_{\delta}(\HypB)$.  Thus
$\opP_s$ is surjective.

Finally, suppose $s\in[0,m]$.  Given $f\in X^{s-m,p}_{\delta}(\HypB)
\subset X^{-m,p}_{\delta}(\HypB)$
we can find $u\in X^{0,p}_{\delta}(\HypB)$ with $\opP_{0} u = f$.
Lemma \ref{lem:hyp-elliptic-reg}
then implies $u\in X^{0,p}_{\delta}(\HypB)$.
\end{proof}

\section{Smooth approximation of metrics in fortified spaces}
The proof of Theorem \ref{thm:X-fredholm} concerning Fredholm mapping
properties of elliptic operators
acting on Gicquaud-Sakovich spaces makes use, at least for
asympotically hyperbolic metrics having very low regularity, 
of the fact that
these metrics can be approximated in a limited sense
by smooth asymptotically hyperbolic metrics. 
This section contains the construction.

First, we show that if $u$ is a tensor of weight $w$ 
in some space $\mathscr H^{s,p;m}(M;E)$ with
parameters good enough to imply H\"older continuity on $\bar M$,
we can find a nearby smooth tensor on $\bar M$, but
only with respect to the $X^{s,p}_{w}(M;E)$ norm.
In the case of a metric $g$,
the compactified metric $\bar g$ is approximated by some
smooth $\tilde g$ in $X^{s,p}_{w}(M)$ and hence
$\rho^{-2}\tilde g$ is smooth on $M$ and is close to $g$ in $X^{s,p}_{0}(M)$.
Although the approximation is not in the topology of the fortified space, 
the result is sufficient to show that
differential operators derived from $\rho^{-2} \tilde g$ are close to those derived from $g$.

\begin{proposition}\label{prop:smooth-approx}
Suppose $1<p<\infty$, $s\in\Reals$, $m\in\Nats$,
$s>n/p$ and $s\ge m\ge 1$.
Let $E$ be a tensor bundle of weight $w$ over $M$
and suppose either
$u\in \mathscr H^{s,p;m}(M;E)$ with $m>n/p$
or $u\in \mathscr X^{s,p;m}(M;E)$.
For every $\epsilon>0$ there exists
$u_\epsilon\in  C^\infty(\bar M;E)$ such that
\[
\| u- \bar u_\epsilon\|_{X^{s,p}_{w}(M;E)} < \epsilon.
\]
\end{proposition}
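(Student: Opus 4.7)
The plan is to split $u$ via the Asymptotic Structure Theorem into a principal part of high interior regularity and a remainder with extra $\rho$-decay, then approximate each piece in $X^{s,p}_w(M;E)$ by a smooth section on $\bar M$.

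First I would apply Theorem \ref{thm:curly-split} to write $u = \tau + r$, with $\tau \in \mathscr X^{\infty;m}(M;E)$ (resp.\ $\mathscr H^{\infty,p;m}(M;E)$) and $r \in X^{s,p}_{w+m}(M;E)$ (resp.\ $H^{s,p}_{w+m-n/p}(M;E)$). For the remainder, choose a smooth cutoff $\chi_\delta \in C^\infty(\bar M)$ that vanishes on $A_{\delta/2}$ and equals $1$ outside $A_\delta$. Because $(1-\chi_\delta) r$ is supported where $\rho \le \delta$ and because $r$ carries an extra factor of $\rho^m$ of decay, a chart-by-chart estimate (only parametrizations $\Phi_i$ with $\rho_i \lesssim \delta$ contribute) gives
\[
\|(1-\chi_\delta) r\|_{X^{s,p}_w(M;E)} \lesssim \delta^m \|r\|_{X^{s,p}_{w+m}(M;E)};
\]
fix $\delta$ so this is under $\epsilon/3$ (the $\mathscr H$ case is analogous after passing to the Gicquaud--Sakovich topology via Lemma \ref{lem:basic-inclusions}). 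The complementary piece $\chi_\delta r$ is compactly supported in $M$, and Proposition \ref{prop:density-early} then furnishes an approximating $C^\infty_{\rm cpct}(M;E) \subset C^\infty(\bar M;E)$ section within $\epsilon/3$ in $X^{s,p}_w(M;E)$.

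The principal term $\tau$ is where the work lies. By \eqref{eq:cXinf-to-cCinf-ee} together with \cite{WAH} Lemma~2.3(c), $\tau \in \mathscr X^{\infty;m}(M;E) = \mathscr C^{\infty;m}(M;E) \subset C^{m-1,1}(\bar M;E)$. Enlarge $\bar M$ to a manifold-without-boundary $\tilde M$ on which the smooth structure and the defining function $\rho$ extend past $\partial M$, apply a Whitney-type extension to produce $\tilde \tau \in C^{m-1,1}(\tilde M;E)$ restricting to $\tau$, and convolve with a standard Euclidean mollifier of scale $\eta > 0$ to obtain $\tilde\tau_\eta \in C^\infty(\tilde M;E)$. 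Setting $\bar\tau_\eta := \tilde\tau_\eta|_{\bar M} \in C^\infty(\bar M;E)$, the candidate $\bar u_\epsilon$ is $\bar\tau_\eta$ plus the smooth compactly supported approximation of $\chi_\delta r$.

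The main obstacle is verifying that $\bar\tau_\eta \to \tau$ in $X^{s,p}_w(M;E)$ as $\eta \to 0$, since this norm requires uniform control across all M\"obius parametrizations $\Phi_i$. My plan is to split by a threshold $\eta' > 0$. For the finitely many charts with $\rho_i \ge \eta'$ (those meeting the complement of the collar $A_{\eta'}$), standard local convergence of mollification in $H^{s,p}$ applies and uniform convergence follows from finiteness of the index set. For charts with $\rho_i < \eta'$, one uses the uniform bound $\rho_i^{-w} \|\Phi_i^* \tau\|_{H^{s,p}(B_1^{\mathbb{H}})} \lesssim \|\tau\|_{\mathscr X^{s,p;m}}$ built into the ambient Gicquaud--Sakovich norm (plus an analogous uniform bound for $\bar\tau_\eta$, which requires checking that the mollification--extension commutes compatibly with the M\"obius scalings) together with the H\"older regularity $\tau \in C^{0,\alpha}(\bar M;E)$ from Proposition \ref{prop:curly-to-M-bar} to trade the weight $w$ for $w + \alpha$, bounding the near-boundary contribution to the $X^{s,p}_w$-norm by $(\eta')^\alpha$ times a constant. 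Choosing $\eta'$ first and then $\eta$ small completes the argument.
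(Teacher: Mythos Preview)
Your decomposition via the Asymptotic Structure Theorem and your treatment of the remainder $r$ are both sound. The gap is in the treatment of the principal part $\tau$, and it is precisely at the step you flag as ``requires checking.''

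First, a regularity issue in the $\mathscr H$ case: from Theorem~\ref{thm:curly-split} you obtain $\tau\in\mathscr H^{\infty,p;m}(M;E)$, and the embedding \eqref{eq:cHinf-to-cCinf-ff} only yields $\tau\in\mathscr C^{\infty;\ell}$ for $\ell<m-n/p$. When $m>n/p$ but $m\le 1+n/p$ this gives merely $\tau\in C^{0,\alpha}(\bar M;E)$, not $C^{m-1,1}(\bar M;E)$; your Whitney extension then has only H\"older regularity and the subsequent derivative bounds on $\bar\tau_\eta$ degrade badly as $\eta\to 0$.

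Second, and more fundamentally, the near-boundary estimate $\rho_i^{-w}\|\Phi_i^*(\tau-\bar\tau_\eta)\|_{H^{s,p}(B_1^{\mathbb H})}\lesssim(\eta')^\alpha$ must hold uniformly in $\eta$ for your order of limits to work, but Euclidean mollification at scale $\eta$ becomes mollification at effective scale $\eta/\rho_i$ after pullback by $\Phi_i$. To invoke Lemma~\ref{lem:r_rescale} here you would need $\|\tau-\bar\tau_\eta\|_{\mathscr X^{s,p;m}}$ (or the $\mathscr H$ analogue) bounded independently of $\eta$, and this involves $s-m$ further derivatives of $\hat\nabla^m\bar\tau_\eta$ in the hyperbolic scaling --- derivatives that can blow up when the extension $\tilde\tau$ has limited regularity across $\partial M$. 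Even in the $\mathscr X$ case, where $\tilde\tau\in C^{m-1,1}$, establishing this uniform bound across all M\"obius scales is a genuine computation you have not carried out.

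The paper avoids all of this by never invoking Theorem~\ref{thm:curly-split} or mollification. Instead, it takes a boundary partition of unity $\{\psi_j\}_{j=0}^J$ at scale $r$ from Lemma~\ref{lem:boundary-pou}, and for each $j\ge 1$ replaces $u$ by the \emph{constant-coefficient} tensor $u_j$ built from the boundary value $u(\hat p_j)$ in local coordinates. The difference $u-u_j$ vanishes at $\hat p_j$ and lies in the fortified space, so Lemma~\ref{lem:r_rescale} gives $\|\psi_j(u-u_j)\|_{X^{s,p}_w}\lesssim r^\alpha$ directly; uniform local finiteness of the partition then sums these errors without loss. The interior piece $\psi_0 u$ is handled by compactly supported approximation as you do. This sidesteps the mollification--scaling compatibility entirely and gives the result in a few lines.
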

\begin{proof}
For definiteness, suppose $u\in \mathscr H^{s,p;m}(M;E)$;
the proof in the Gicquaud-Sakovich case is similar.
Let $\{\hat p_j\}_{j=1}^J$ and
$\{\psi_j\}_{j=0}^J$ be the boundary points and
partition of unity of $M$ constructed by Lemma
\ref{lem:boundary-pou} for some $r\le r_0/2$ chosen
small enough so that the Lemma applies.

For $j\ge 1$ let $u^{A}_B[\hat p_j]$ be the components of $u$ at $\hat p_j$
in the local coordinates $\Theta$ associated with $\hat p_j$ and let
$u_j$ be a smooth tensor on $\bar M$ that equals
\[
u^{A}_B[\hat p_j] \partial_{\Theta^A}d\Theta^B
\]
on $Z_{r_0/2}(\hat p_j)$ and that vanishes outside $Z_{r_0}(p_j)$.
From the boundary continuity of $u$ we can arrange to do this so
\begin{equation}\label{eq:tilde-u-control}
\|u_j\|_{\mathscr H^{s,p;m}(M)} \lesssim \|u\|_{\mathscr H^{s,p;m}(M)}.
\end{equation}

Pick a smooth compactly supported tensor $v_0$ on $M$
such that $\|\psi_0 u-v_0\|_{X^{s,p}_0}\le r^\alpha$
and define $v = \sum_{j=0}^J \psi_j u_j$.
We claim that
\[
\|\bar u - v\|_{X^{s,p}_w(M)}  \lesssim r^\alpha
\]
with implicit constant independent of $r$.
Since $v$ has the desired smoothness,
we are done once the claim is demonstrated.

Consider a M\"obius parametrization $\Phi_i$
and let $\mathcal N(i)$ be the set of indices $j$ such that $v_j$ is not identically 0
on the image of $\Phi_i$.  The uniform local finiteness
provided by Lemma \ref{lem:boundary-pou} ensures that the size of $\mathcal N(i)$
is uniformly bounded with respect to $r$ and hence
\begin{equation}\label{eq:limited-smooth}
\begin{aligned}
\rho_i^{-w}\|\Phi^*_i( u-v)\|_{H^{s,p}(B_2^\Hyp)}
&\lesssim \sum_{j\in\mathcal N(i)}\rho_i^{-w}\|\Phi_i^*(\psi_j u - v_j)\|_{H^{s,p}(B_2^\Hyp)}\\
&\lesssim \max_j \|\psi_j u-v_j\|_{X^{s,p}_{w}(M)}.
\end{aligned}
\end{equation}

For $i\ge 1$, Lemma \ref{lem:r_rescale} and the
observation \eqref{eq:tilde-u-control} imply
\[
\left|\left| \psi_i  u - v_i\right|\right|_{X^{s,p}_w(M)}
=\left|\left| \psi_i ( u - u_i)\right|\right|_{X^{s,p}_w(M)}
\lesssim r^\alpha \| u\|_{\mathscr H^{s,p;m}(M)} .
\]
A similar inequality holds for $i=0$ by construction, and the result follows
from taking a supremum in inequality \eqref{eq:limited-smooth}.
\end{proof}

\begin{corollary}\label{cor:metric-approx}
Suppose $1<p<\infty$, $s\in\Reals$, $m\in\Nats$, $s>n/p$ and $s\ge m$.
Suppose $g$ is
an asymptotically hyperbolic metric
on $M$ of class $\mathscr H^{s,p;m}$ with $m>n/p$ or of class
$\mathscr X^{s,p;m}$.  Given $\epsilon>0$
there is an asymptotically hyperbolic metric
$g_\epsilon$ with $\rho^2 g_\epsilon\in C^\infty(\bar M)$
and
\[
\|g- g_\epsilon\|_{X^{s,p}_0(M;T^{0,2}M)} < \epsilon.
\]
\end{corollary}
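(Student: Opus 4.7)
The plan is to apply Proposition \ref{prop:smooth-approx} to the compactified metric $\bar g = \rho^2 g$ and then perform a small conformal correction in a collar of $\partial M$ to restore the asymptotically hyperbolic condition $|d\rho|_{\bar g}|_{\partial M} = 1$.

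First, I would invoke Proposition \ref{prop:smooth-approx} with $u = \bar g$, which is a tensor of weight $w = 2$ lying in the relevant fortified space by hypothesis. This produces a smooth tensor $\bar h \in C^\infty(\bar M; T^{0,2}\bar M)$ with $\|\bar g - \bar h\|_{X^{s,p}_2(M)}$ arbitrarily small. Since $X^{s,p}_\delta(M) = \rho^\delta X^{s,p}_0(M)$ by definition, this translates to $\|g - \rho^{-2}\bar h\|_{X^{s,p}_0(M)}$ being arbitrarily small. Sobolev embedding from Proposition \ref{prop:SobolevEmbedding} (valid since $s > n/p$) into a weighted H\"older space guarantees pointwise closeness, so $\rho^{-2}\bar h$ is pointwise positive-definite on $M$ for sufficiently fine approximation and defines a smooth Riemannian metric there.

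Second, to restore the boundary condition, I would set $\bar g_\epsilon = \mu \bar h$, where $\mu$ is a smooth positive function on $\bar M$ with $\mu|_{\partial M} = |d\rho|^2_{\bar h}|_{\partial M}$ and $\mu \equiv 1$ outside a fixed collar neighborhood. Concretely, with a smooth cutoff $\chi$ supported in $[0, r_0/2]$ and equal to $1$ near $0$, one may take $\mu = 1 + \chi(\rho)\bigl((|d\rho|^2_{\bar h}\circ \Pi_\partial) - 1\bigr)$, which is smooth on $\bar M$ because $|d\rho|^2_{\bar h}|_{\partial M}$ is smooth on $\partial M$. Since $(\bar g_\epsilon)^{ij} = \mu^{-1}\bar h^{ij}$, a direct computation gives $|d\rho|^2_{\bar g_\epsilon} = \mu^{-1}|d\rho|^2_{\bar h}$, so $|d\rho|^2_{\bar g_\epsilon}|_{\partial M} = 1$ by construction. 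Setting $g_\epsilon = \rho^{-2}\bar g_\epsilon$ yields an asymptotically hyperbolic metric with smooth compactification $\bar g_\epsilon$.

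The main obstacle is verifying $\|g - g_\epsilon\|_{X^{s,p}_0} = \|\bar g - \bar g_\epsilon\|_{X^{s,p}_2}$ remains small. Splitting $\bar g - \bar g_\epsilon = (\bar g - \bar h) + (1-\mu)\bar h$, the first term is controlled by Proposition \ref{prop:smooth-approx}. For the second, Proposition \ref{prop:multiplication} in the form $X^{s,p}_0(M;\mathbb R) \times X^{s,p}_2(M; T^{0,2}M) \to X^{s,p}_2(M; T^{0,2}M)$ reduces the task to bounding $\|\mu - 1\|_{X^{s,p}_0(M;\mathbb R)}$, and the chained inclusions from Lemmas \ref{lem:CMbar-into-CMw} and \ref{lem:basic-inclusions} further reduce to bounding $\|\mu - 1\|_{C^{k,\alpha}(\bar M)}$ for $k \ge s$. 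The delicate point is that while the boundary value $|d\rho|^2_{\bar h}|_{\partial M} - 1$ is pointwise small — a consequence of the $X^{s,p}_2$-closeness of $\bar h$ to $\bar g$, Sobolev embedding on M\"obius parametrizations abutting the boundary, and $|d\rho|^2_{\bar g}|_{\partial M} \equiv 1$ — its higher tangential derivatives need not be small automatically. I anticipate handling this either by refining the $r$-dependence in the proof of Proposition \ref{prop:smooth-approx} to obtain quantitative control on derivatives of the boundary trace, or by first applying a H\"older-category analogue of the approximation theorem from \cite{WAH} to reduce to an intermediate metric already smooth in a sufficiently strong sense near the boundary, and then running the present construction on that intermediate metric.
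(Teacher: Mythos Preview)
Your approach has a genuine gap at exactly the point you identify: controlling $\|\mu-1\|_{C^{k,\alpha}(\bar M)}$ for large $k$. Closeness of $\bar h$ to $\bar g$ in $X^{s,p}_2(M)$ gives only pointwise smallness of $|d\rho|^2_{\bar h}-1$ at the boundary; there is no control whatsoever on tangential derivatives of this quantity, and your proposed fixes are not carried out. Neither of them is straightforward.

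The paper avoids this difficulty entirely by two simple moves. First, it applies Proposition~\ref{prop:smooth-approx} to the \emph{contravariant} metric $\bar g^{-1}$ rather than to $\bar g$. Second, it does not use Proposition~\ref{prop:smooth-approx} as a black box but rather invokes the explicit construction in its proof: near $\partial M$ the approximating tensor is $\tilde g^{-1}=\sum_{j\ge 1}\psi_j\,\bar g^{-1}[\hat p_j]$, a partition-of-unity combination of tensors with \emph{constant components} in the coordinates $\Theta_{\hat p_j}$. Since $\Theta^n=\rho$, one has $\tilde g^{-1}(d\rho,d\rho)=\sum_j\psi_j\,(\bar g^{-1})^{nn}(\hat p_j)=\sum_j\psi_j\cdot 1=1$ along $\partial M$, so the asymptotically hyperbolic condition is preserved automatically and no correction step is needed. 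The passage from $\tilde g^{-1}$ back to a metric close to $g$ in $X^{s,p}_0$ then follows from continuity of the inversion map, as in Lemma~\ref{lem:geometric-op-mapping}.

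The reason this trick does not work when approximating $\bar g$ directly is that $|d\rho|^2_{\bar h}$ is a component of $\bar h^{-1}$, and the inverse of a convex combination of matrices is not the convex combination of the inverses.
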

\begin{proof}
We assume that $g$ is of class $\mathscr H^{s,p;m}$;
the $\mathscr X^{s,p;m}$ case is proved similarly.

Let $\bar g^{-1}$ be the contravariant tensor dual to $\bar g=\rho^2 g$
and recall, as discussed in \S\ref{sec:ah-metrics}, that
 $\bar g^{-1}\in \mathscr H^{s,p;m}(M)$.
Let $\tilde g^{-1}$ be an approximating
tensor obtained by the construction of the proof of
Proposition \ref{prop:smooth-approx}.  So in a neighborhood of
$\partial M$, $\tilde g^{-1} = \sum_{j=1}^J \psi_j \bar g^{-1}[\hat p_j]$
for some collection of boundary points $\hat p_j$
and partition of unity $\{\psi_j\}$
provided by Lemma \ref{lem:boundary-pou}.
At each $\hat p_j$, $\bar g^{-1}(d\rho,d\rho)=1$
and from the partition of unity construction it follows that
$\tilde g^{-1}(d\rho,d\rho)=1$ along all of $\partial M$.
Thus $\rho^2\tilde g^{-1}$ is dual to an asymptotically hyperbolic metric
with smooth compactification
and can be made as close to $g^{-1}$ as we want in $X^{s,p}_0(M)$.
 The result now follows, noting from the technique
of Lemma \ref{lem:geometric-op-mapping}
that the map between metrics and their duals is continuous
on $X^{s,p}_0(M)$.
\end{proof}

\section{Improved regularity for Laplacians}\label{app:Lap-extra}

The general Fredholm mapping properties from Theorems \eqref{thm:H-fredholm}
and \eqref{thm:X-fredholm} can be improved in some settings with
respect to the Sobolev parameters.  Consider the case of an asymptotically
hyperbolic metric $g$ of class $\mathscr H^{1,p;1}$ with $p>n$ and its
conformal Laplacian $L = -a_n \Delta_{g}+\R[g]$.
Proposition \ref{prop:L-mapping-S} indicates that the operator
is continuous $H^{\sigma,q}_{\delta}(M;\mathbb R)\to H^{\sigma-2,q}_{\delta}(M;\mathbb R)$
so long as $\sigma=1$ and $1/p\le 1/q \le 1/p\dual$.
The source of the very strong restriction on $\sigma$ is the scalar curvature term.
First, since $\R[g]\in H^{-1,p}_{\loc}(M;\mathbb R)$, in order for $\R[g]u$ to be well defined,
$u$ needs at least $1$ derivative and therefore $\sigma \ge 1$.  On the other hand,
we cannot expect $L u$ to have more derivatives than the low order term,
which leads to the condition $\sigma-2\ge -1$ and hence $\sigma\ge 1$ as well.

For the standard Laplacian without the scalar curvature term the mapping range can be improved.
Indeed for a metric of class $\mathscr H^{s,p;m}$ with $s>n/p$ and $s\ge m\ge 1$
a computation using Proposition \ref{prop:multiplication} shows that
$\Delta_{g} : H^{\sigma,q}_{\delta}(M;\mathbb R) \to H^{\sigma-2,q}_{\delta}(M;\mathbb R)$
is continuous so long as
\begin{equation}\label{eq:S-lap}
\begin{aligned}
\sigma&\in [2-s,s+1]\\
\frac 1q -\frac\sigma{n} &\in \left[ \frac{1}{p} - \frac{s+1}{n},
\frac{1}{p\dual}-\frac{2-s}{n}\right]
\end{aligned}
\end{equation}
which should be compared with the standard restrictions \eqref{eq:S-conds}
defining $\mathcal S^{s,p}_2$. Whereas for a general
second order operator the most regular allowable spaces are $H^{s,p}_{\delta}(M;\mathbb R)$,
for the Laplacian they are $H^{s+1,p}_{\delta}(M;\mathbb R)$.  This distinction can matter
in applications, for example, where conformal factors obtained using the Laplacian
are required to have more regularity than the metric itself.

A Fredholm theory is possible for the Laplacian
where the hypothesis
$(\sigma,q)\in S^{s,p}_2$ in Theorems \ref{thm:H-fredholm}
and \ref{thm:X-fredholm} is replaced with $(\sigma,q)$ satisfying
\eqref{eq:S-lap}.  To see this , we first observe that the only role
that $S^{s,p}_2$ plays in Section \ref{sec:differential-ops}
is to ensure continuity of the differential operator as a map
$H^{\sigma,q}_{\delta}(M)\to H^{\sigma-2,q}_{\delta}(M)$.
With a missing low-order term this range can be increased to
$(\sigma,q)$ satisfying \eqref{eq:S-lap} instead; the proofs of the results of
Section \ref{sec:differential-ops} with the weaker hypothesis on $(\sigma,q)$
then go through unchanged.  In particular, we obtain
a variation of Proposition \ref{prop:semi-fred-estimate}
specific to the Laplacian by replacing the condition
$(\sigma,q)\in S^{s,p}_2$ with $(\sigma,q)$
satisfying \eqref{eq:S-lap}, nothing that in this case the
indicial radius $R$ appearing in the Proposition is that of the Laplacian
on hyperbolic space, $(n-1)/2$.
With this semi-Fredholm result in hand, we
obtain the following Fredholm theorem with the larger range of parameters
\eqref{eq:S-lap} by using a variation of the
approximation technique from Theorem \ref{thm:X-fredholm}.

\begin{proposition}\label{prop:lap-extra}
Suppose $1<p<\infty$, $s\in\Reals$, $m\in\Nats$, with $s>n/p$ and $s\ge m \ge 1$.
Let $g$ be an asymptotically hyperbolic metric on $M$ that is
either
\begin{itemize}
  \item of class $\mathscr H^{s,p;m}$ with $m>n/p$ or
  \item of class $\mathscr X^{s,p;m}$.
\end{itemize}

Then $-\Delta_g$ is an isomorphism
\[
H^{\sigma,q}_{\delta}(M;\mathbb R)\to H^{\sigma-2,q}_{\delta}(M;\mathbb R)
\]
so long as $(\sigma,q)$ satisfy \eqref{eq:S-lap} and
\begin{equation}\label{eq:Delta-delta-H}
\left|\delta+\frac{n-1}{q}-\frac{n-1}{2}\right| < \frac{n-1}{2}.
\end{equation}
Similarly, $-\Delta_g$ is an isomorphism
\[
X^{s+1,p}_{\delta}(M;\mathbb R)\to X^{s-1,p}_{\delta}(M;\mathbb R)
\]
so long as $(\sigma,q)$ satisfy \eqref{eq:S-lap} and
\[
\left|\delta-\frac{n-1}{2}\right| < \frac{n-1}{2}.
\]
\end{proposition}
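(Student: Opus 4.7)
The plan is to mirror the arguments of Section~\ref{sec:fredholm} and the reasoning sketched in the paragraph preceding the proposition, systematically replacing the compatibility set $\mathcal{S}^{s,p}_{2}$ by the set of $(\sigma,q)$ satisfying \eqref{eq:S-lap}. An audit of Section~\ref{sec:differential-ops} shows that $\mathcal{S}^{s,p}_{2}$ enters only by guaranteeing continuity of $\mathcal{P}[g]$ between weighted spaces via Proposition~\ref{prop:L-mapping-S} and Lemma~\ref{lem:geometric-op-mapping}, and analogously for the localized operators of Lemma~\ref{lem:local-operator-compare}. Because $-\Delta_g$ has no zero-order coefficient, a direct application of Proposition~\ref{prop:multiplication} to the contractions $\bar g^{-1}\cdot\hat\nabla^{2}u$ and $\bar g^{-1}\cdot\bar g^{-1}\cdot\hat\nabla\bar g\cdot\hat\nabla u$ produces continuity on the enlarged range \eqref{eq:S-lap}. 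The localized operators $\breve{\mathcal{P}}_j$ built from $\tilde\Theta_{\hat p_j}^*\breve g$ in \S\ref{secsec:localized-ops} inherit the same mapping range, and the closeness estimate of Lemma~\ref{lem:local-operator-compare} goes through unchanged since it depends only on the uniform Lipschitz continuity of the coefficient maps of Lemma~\ref{lem:geometric-op-mapping}.

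Given the extended mapping properties, the proof of Proposition~\ref{prop:semi-fred-estimate} applies verbatim: the isomorphism properties on hyperbolic space invoked through Theorems~\ref{thm:ball-H-iso} and~\ref{thm:ball-X-iso} hold for the Laplacian on all Sobolev indices, so one obtains a priori estimates of the form \eqref{eq:semifredholm-H} and \eqref{eq:semifredholm-X} for $(\sigma,q)$ satisfying \eqref{eq:S-lap}. The parametrix construction of \S\ref{sec:fredholm} and the identities of Corollary~\ref{cor:parametrix} then hold in the enlarged range, as do the bootstrap arguments of Lemmas~\ref{lem:T-smoothing} and~\ref{lem:T-smoothing-X}, since those arguments use only Sobolev embedding, Lemma~\ref{lem:basic-inclusions}, and continuity of the operator. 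This yields the Fredholm, index-zero conclusions of Theorems~\ref{thm:H-fredholm} and~\ref{thm:X-fredholm} for the Laplacian on the extended range. For the Gicquaud-Sakovich case in the low-regularity regime, one approximates $g$ by smooth asymptotically hyperbolic metrics using Corollary~\ref{cor:metric-approx} and closes the index-theoretic argument exactly as in the proof of Theorem~\ref{thm:X-fredholm}.

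To upgrade from "Fredholm of index zero" to "isomorphism", it suffices to show the kernel is trivial. The regularity built into the Fredholm theorems forces any kernel element to lie in the common kernel
\[
Z = \ker\bigl(-\Delta_g \colon H^{1,2}_0(M;\mathbb R) \to H^{-1,2}_0(M;\mathbb R)\bigr).
\]
Since $C^\infty_{\mathrm{cpct}}(M;\mathbb R)$ is dense in $H^{1,2}_0(M;\mathbb R)$ by Proposition~\ref{prop:density-early}, an integration-by-parts argument applied to $u \in Z$ yields $\int_M |du|_g^{2}\,dV_g = 0$, so $u$ is constant; the only constant in $H^{1,2}_0(M;\mathbb R)$ is $0$. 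Thus $Z = \{0\}$, and the index-zero property forces the cokernel to vanish as well, establishing the isomorphism.

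The main technical obstacle I anticipate is the verification that the enlarged parameter range \eqref{eq:S-lap} is compatible with every multiplicative estimate in the chain leading to Proposition~\ref{prop:semi-fred-estimate}, particularly the commutator bound in Lemma~\ref{lem:commutator} and the traversal through Sobolev indices in Lemma~\ref{lem:T-smoothing}. Since these arguments depend only on continuous mapping between the relevant weighted spaces and the absence of a zero-order term is exactly what permits the endpoint $\sigma = s+1$, the extensions should be routine, but each step requires care to avoid inadvertently invoking a product estimate that fails without a lower-order term to absorb a derivative.
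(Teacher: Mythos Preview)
Your strategy differs from the paper's in how you obtain index zero on the enlarged range. You propose to re-run the full machinery of \S\ref{sec:fredholm}, including the duality argument that concludes the proof of Theorem~\ref{thm:H-fredholm}, reserving the smooth-approximation technique of Corollary~\ref{cor:metric-approx} only for the low-regularity Gicquaud--Sakovich case. The paper instead extends only the semi-Fredholm estimate of Proposition~\ref{prop:semi-fred-estimate} to the enlarged range and then applies the approximation argument uniformly in \emph{both} the $H$ and $X$ settings: for the smooth approximating metrics $g_n$ the parameters $(\sigma,q)$ lie in some larger $\mathcal{S}^{s',p'}_2$, so $\Delta_{g_n}$ is an isomorphism by the standard theory, and local constancy of the index transfers index zero to $\Delta_g$.

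There is a genuine gap in your route for the $H$ case. The cokernel identification in Theorem~\ref{thm:H-fredholm} relies on the invariance of $\mathcal{S}^{s,p}_2$ under the duality map $(\sigma,q)\mapsto(2-\sigma,q^*)$, and the range \eqref{eq:S-lap} does not have this symmetry: it is enlarged only at the top end $\sigma\le s+1$, not at the bottom. Concretely, when $\sigma\in(s,s+1]$ a functional annihilating the image is represented by some $v\in H^{2-\sigma,q^*}_{-\delta}$ with $2-\sigma<2-s$, and one checks via Proposition~\ref{prop:multiplication} that the Laplacian does not map $H^{2-\sigma,q^*}_{-\delta}$ continuously to $H^{-\sigma,q^*}_{-\delta}$ in this range (the condition $\sigma'\ge 2-s$ fails for $\sigma'=2-\sigma$). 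Hence neither the a priori estimates nor the extended regularity theory are available to place $v$ in $Z$, and your claim that the index-zero conclusion of Theorem~\ref{thm:H-fredholm} follows ``in the extended range'' is unjustified at precisely the new endpoints. The paper's approximation argument sidesteps this asymmetry entirely, which is why it is invoked even in the $H$ setting. Your integration-by-parts argument for the triviality of $Z$ is correct and is equivalent to the paper's reduction via embedding into $\mathcal{S}^{s,p}_2$ followed by Proposition~\ref{prop:Lap-Fredholm-range}.
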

\begin{proof}
Fix some $(\sigma,q)$ satisfying \eqref{eq:S-lap} and some
$\delta$ satisfying \eqref{eq:Delta-delta-H}.
As in the proof of Theorem \ref{thm:X-fredholm}, let $g_n$
be a sequence of metrics obtained from Corollary \ref{cor:metric-approx}
with compactifications $\bar g_n\in C^\infty(\bar M)$
that converge to $g$ in $X^{s,p}_0(M)$. As discussed in the proof
of Proposition \ref{prop:Lap-Fredholm-range}, the indicial radius
of the Laplacian on hyperbolic space is $(n-1)/2$ and hence for the
smooth metrics $g_n$,
$\Delta_{g_n}:H^{\sigma,q}_{\delta}(M)\to H^{\sigma-2,q}_{\delta}(M)$
is an isomorphism.  The operators $\Delta_{g_n}$ converge to $\Delta_g$,
which is semi-Fredholm by the variation of Proposition \ref{prop:semi-fred-estimate}
discussed above. Thus $\Delta_g:H^{\sigma,q}_{\delta}(M)\to H^{\sigma-2,q}_{\delta}(M)$
is Fredholm with index zero.

It remains to show that the kernel is trivial.  But if $u\in H^{\sigma,q}_{\delta}(M)$
is in the kernel and if $(\sigma,q)$ does not already satisfy $(\sigma,q)\in S^{p,s}_2$
then either by lowering $\sigma$ or by applying Lemma \ref{lem:basic-inclusions}
and lowering $q$ while simultaneously lowering $\delta$ suitably
we have $u\in H^{\sigma',q'}_{\delta'}(M)$ with $(\sigma',q')\in \mathcal S^{s,p}_2$ and
$|\delta'+(n-1)/q'-(n-1)/2|<(n-1)/2$.  At this point the parameters are within the
range covered by Proposition \ref{prop:Lap-Fredholm-range} and we conclude $u=0$.

The Gicquad-Sakovich case is proved similarly.
\end{proof}

Results analogous to Proposition \ref{prop:lap-extra} can be proved by similar techniques
for other second-order operators with either a missing low order term
such as the vector Laplacian or with a manifestly smooth low-order term
such as $-\Delta_g +\Lambda$ with $\Lambda$ a constant.

\bibliographystyle{amsalpha}
\bibliography{RoughAHManual,Rough-AH-Maxwell,RoughSF}

\end{document}